\DeclareMathOperator{\sign}{sign}
\DeclareMathOperator{\supp}{supp}
\DeclareMathOperator{\interior}{int}
\DeclareMathOperator{\var}{var}
\DeclareMathOperator{\dist}{dist}
\DeclareMathOperator{\BV}{\mathrm{BV}}
\newcommand{\df}{\buildrel\mathrm{def}\over=}
\newcommand{\Bell}{\boldsymbol{B}_{\eps}}
\newcommand{\Bellb}{\boldsymbol{B}_{\eps}^{\mathrm{b}}}
\newcommand{\Belld}{\boldsymbol{B}_{\eps}^{\mathrm{dyad}}}
\newcommand{\BMO}{\mathrm{BMO}}
\newcommand{\eps}{\varepsilon}
\newcommand{\Cii}[1]{_{{}_{\scriptstyle #1}}}
\newcommand{\cii}[1]{_{{}_{#1}}}
\newcommand{\BNorm}[1]{\|#1\|_{{}_{\BMO(I)}}}
\newcommand{\av}[2]{\langle {#1}\rangle_{{}_{#2}}}
\newcommand{\Ch}{\Omega_{\mathrm{ch}}}
\newcommand{\Bch}{B^{\mathrm{ch}}}
\newcommand{\Rt}{\Omega_{\mathrm{R}}}
\newcommand{\Lt}{\Omega_{\mathrm{L}}}
\newcommand{\Ang}{\Omega_{\mathrm{ang}}}
\newcommand{\RTroll}{\Omega_{\mathrm{tr},\mathrm{R}}}
\newcommand{\LTroll}{\Omega_{\mathrm{tr},\mathrm{L}}}
\newcommand{\Srt}{D_{\mathrm{R}}}
\newcommand{\Slt}{D_{\mathrm{L}}}
\newcommand{\mrt}{m_{\mathrm{R}}}
\newcommand{\mlt}{m_{\mathrm{L}}}
\newcommand{\bp}[1]{\mathfrak{b}_{#1}}
\newcommand{\hreff}[1]{\hyperref[#1]{\ref{#1}}}
\newcommand{\Bird}{\Omega_{\mathrm{bird}}}
\newcommand{\Fr}{F_{\mathrm{R}}}
\newcommand{\Fl}{F_{\mathrm{L}}}
\newcommand{\FFr}{\mathfrak{F}_{\mathrm{R}}}
\newcommand{\FFl}{\mathfrak{F}_{\mathrm{L}}}
\renewcommand{\le}{\leqslant}
\renewcommand{\ge}{\geqslant}
\renewcommand{\leq}{\leqslant}
\renewcommand{\geq}{\geqslant}
\newcommand{\mr}[2]{m''_{\mathrm{R}}(#1;\,#2)}
\newcommand{\ml}[2]{m''_{\mathrm{L}}(#1;\,#2)}
\newcommand{\BG}{\mathfrak{B}}
\newcommand{\BM}{\EuScript{B}}
\newcommand{\MTTR}{\Omega_{\mathrm{Mtr,R}}}
\newcommand{\MTTL}{\Omega_{\mathrm{Mtr,L}}}
\newcommand{\MTC}{\Omega_{\mathrm{Mcup}}}
\newcommand{\MTB}{\Omega_{\mathrm{Mbird}}}
\newcommand{\ClMTC}{\Omega_{\mathrm{ClMcup}}}
\newcommand{\ato}{a^{\mathrm{top}}}
\newcommand{\bto}{b^{\mathrm{top}}}
\newcommand{\abot}{a^{\mathrm{bot}}}
\newcommand{\bbot}{b^{\mathrm{bot}}}
\newcommand{\Ato}{A^{\mathrm{top}}}
\newcommand{\Bto}{B^{\mathrm{top}}}
\newcommand{\tr}{t^{\mathrm{R}}}
\newcommand{\tl}{t^{\mathrm{L}}}
\newcommand{\ur}{u_{\mathrm{R}}}
\newcommand{\ul}{u_{\mathrm{L}}}
\newcommand{\Ur}{U_{\mathrm{R}}}
\newcommand{\Ul}{U_{\mathrm{L}}}
\newcommand{\FixedBoundary}{\partial_{\mathrm{fixed}}}
\newcommand{\FreeBoundary}{\partial_{\mathrm{free}}}
\newcommand{\GammaFixed}{\Gamma^{\mathrm{fixed}}}
\newcommand{\GammaFree}{\Gamma^{\mathrm{free}}}
\newcommand{\uur}{u_{\mathrm{r}}}
\newcommand{\uul}{u_{\mathrm{l}}}
\newcommand{\evo}{\EuScript{EVO}}
\newcommand{\E}{\mathbb{E}}
\newcommand{\G}{\EuScript{G}}
\newcommand{\GFree}{\EuScript{G}^{\mathrm{free}}}
\theoremstyle{plain}
\theoremstyle{plain}\newtheorem{Le}{Lemma}[section]
\theoremstyle{definition}\newtheorem{Def}[Le]{Definition}
\theoremstyle{plain}\newtheorem{St}[Le]{Proposition}
\theoremstyle{plain}\newtheorem{Conj}[Le]{Conjecture}
\theoremstyle{plain}\newtheorem{Th}[Le]{Theorem}
\theoremstyle{plain}\newtheorem{Cor}[Le]{Corollary}
\theoremstyle{plain}\newtheorem{Rem}[Le]{Remark}
\theoremstyle{plain}\newtheorem{Fact}[Le]{Fact}
\theoremstyle{plain}\newtheorem{Cond}[Le]{Condition}
\numberwithin{equation}{section}
\renewcommand{\chaptermark}[1]{}
\author{Paata~Ivanisvili\and Dmitriy~M.~Stolyarov\and Vasily~I.~Vasyunin\and Pavel~B.~Zatitskiy}
\title{Bellman function for extremal problems in $\BMO$ II: evolution}
\date{}
\begin{document}
\maketitle
\begin{abstract}
In \cite{5A} the authors built the Bellman function for integral functionals on the $\BMO$ space. The present paper provides a development of the subject. We abandon the majority of unwanted restrictions on the function 
	that generates the
	functional. It is the new \emph{evolutional} approach that allows us to treat the problem in its natural setting. What is more, these new considerations lighten dynamical aspects of the Bellman function, in particular, 
	evolution of its picture. 
\end{abstract}
\newpage\pagestyle{empty}

{\bf The research is supported by RSF grant 14-41-00010}

\bigskip

We are grateful to Dmitriy~Chelkak, Pavel~Galashin, Alexander~Logunov, Nikolay~Osipov, Fedor~Petrov, Leonid~Slavin, Alexander~Volberg, and Vladimir~Zolotov for sharing their ideas with us.

\bigskip

The second named author would like to thank Institute for Mathematics of Polish Academy of Sciences: a big part of the present text was written while he was enjoying its hospitality.

\bigskip

\bigskip

\bigskip

\bigskip

\bigskip

\bigskip

\bigskip

\bigskip

\bigskip

\bigskip

\bigskip

\bigskip

\bigskip

\bigskip

\bigskip

\bigskip

\bigskip

\bigskip

\bigskip

\bigskip

\bigskip

\bigskip

\bigskip

\bigskip

\bigskip

\bigskip

\bigskip

Paata Ivanisvili

Kent State University, Kent, OH, 44243, USA

ivanishvili.paata@gmail.com

\bigskip

Dmitriy M. Stolyarov


Chebyshev Laboratory, St. Petersburg State University, 14th Line, 29b, St. Petersburg, 199178, Russia

St. Petersburg Department of Steklov Mathematical Institute, Russian Academy of Sciences, 27 Fontanka, St. Petersburg, 191023, Russia

dms@pdmi.ras.ru

http://www.chebyshev.spb.ru/DmitriyStolyarov

\bigskip

Vasily I. Vasyunin

St. Petersburg Department of Steklov Mathematical Institute, Russian Academy of Sciences, 27 Fontanka, St. Petersburg, 191023, Russia 

vasyunin@pdmi.ras.ru

\bigskip

Pavel B. Zatitskiy

Chebyshev Laboratory, St. Petersburg State University, 14th Line, 29b, St. Petersburg, 199178, Russia

St. Petersburg Department of Steklov Mathematical Institute, Russian Academy of Sciences, 27 Fontanka, St. Petersburg, 191023, Russia 

paxa239@yandex.ru

http://www.chebyshev.spb.ru/pavelzatitskiy

\newpage
\tableofcontents

\newpage
\pagestyle{headings}

\chapter{Introduction}
\section{Historical remarks}\label{s11}

\subsection{$\BMO$ and John--Nirenberg inequality}\label{s111}

	We begin with clarifying our notation. The symbols $I$ and $J$ always denote intervals of the real line. The symbol~$\av{\varphi}{J}$ stands for the average of a summable function $\varphi$ over~$J$:
	\begin{equation*}
	\av{\varphi}{J} \df \frac{1}{|J|}\int\limits_{J}\varphi(t)\,dt,
  \end{equation*}
	where $|J|$ is the length of $J$. Consider a summable function $\varphi\colon I \mapsto \mathbb{R}$ for some interval $I$. This function belongs to $\BMO$\index{BMO} provided
	\begin{equation*}
	\sup_{J\subset I}\av{|\varphi-\av{\varphi}{J}|^2}{J} < \infty,
	\end{equation*}
	where the supremum is taken over all the subintervals of $I$. As usual, we equip this space with a seminorm,
	\begin{equation}\label{BMOnorm2}
	\BNorm{\varphi} \!\!\df\; \Big(\sup_{J\subset I}\av{|\varphi-\av{\varphi}{J}|^2}{J}\Big)^{\frac{1}{2}}.
	\end{equation}
	To get a Banach space, one has to factorize over the set where the seminorm vanishes. In the case of~$\BMO$, we factorize over the one-dimensional space of constant functions.
	However, this operation is inconvenient for our purposes, so we leave $\BMO$ to be a space with a seminorm. What is more, we will call it a norm. The $\BMO$ space plays a crucial role in harmonic analysis and applications, 
	so we refer the reader to the books~\cite{Koosis} and~\cite{Stein} for more information about it.
	
	 We have defined the $\BMO$ space with the help of the quadratic seminorm. One gets the same space (equivalent, but surely not isometric) if both numbers $2$ in the definition above are changed for a number~$p$,~$0 < p < \infty$. 
	 This equivalence can be expressed in terms of inequalities:
	 \begin{equation}\label{bmoequivnorms}
		c_p\BNorm{\varphi} \le\; \sup_{J\subset I}\av{|\varphi-\av{\varphi}{J}|^p}{J}^{1/p}\; \le\; C_p\BNorm{\varphi},
	 \end{equation}
	 which hold for all $p$, $0 < p < \infty$ (the same~$\BMO$-space can be defined using even weaker integral restrictions, see~\cite{LSSVZ}).
	 These inequalities follow immediately from the John--Nirenberg inequality, which forces the distribution function of a $\BMO$~function to decrease exponentially:
	 \begin{equation}\label{JN}
		\frac{1}{|I|}\Big|\big\{t\in I\mid\; |\varphi(t)-\av{\varphi}{I}| \geq \lambda\big\}\Big|\le 
		c_1 e^{-c_2\lambda/\BNorm{\varphi}}.
	\end{equation}
	The John--Nirenberg inequality can, in its turn, be reformulated in terms of integrals and becomes the integral form of the John--Nirenberg inequality: there exists $\eps_{\infty} > 0$ and a positive constant~$C = C(\eps)$, 
	${0<\eps<\eps_{\infty}}$, such that
	\begin{equation}\label{intJN}
		\av{e^{\varphi}}{I} \le C(\eps) e^{\av{\varphi}{I}}
	\end{equation}
	for all functions $\varphi \in \BMO_{\eps}(I)$. The last symbol stands for the closed ball of radius~$\eps$ in~$\BMO$. One can think of this inequality as of a reverse Jensen inequality for functions of bounded mean oscillation and the exponential function, which is convex. 
	
	There are various proofs of these three inequalities. We mention the martingale proof (see \cite{Koosis}) and the proof that implicitly uses the duality between $H^1$ and $\BMO$ (see~\cite{Stein}). The method of the Bellman function,
	which we survey in the forthcoming section, treats these three inequalities as different manifestations of a general principle. What do they have in common? The idea is that all three inequalities are estimates for averages
	of expressions $f(\varphi)$, $\varphi \in \BMO$, i.e. estimates of integral functionals
	\begin{equation}\label{functional}
	f[\varphi] \df \av{f(\varphi)}{I}.
	\end{equation}   
	The function $f\colon \mathbb{R} \mapsto \mathbb{R}$ that generates the functional should satisfy some conditions. First of all, this function must be defined everywhere, though we do not need it to be continuous. Second, it should not 
	be very big: the function $f(\varphi)$ must be summable. More detailed requirements are discussed in Subsection~\ref{s212}. Integral form of the John--Nirenberg inequality~\eqref{intJN} refers to the case of $f(t) = e^t$, 
	the original John--Nirenberg inequality~\eqref{JN} refers to the case of $f(t) = \chi\cii{(-\infty,-\lambda]\cup[\lambda,\infty)}(t)$, and the first of three inequalities,~\eqref{bmoequivnorms}, refers to the case of $f(t) = |t|^p$.
	In all three inequalities, the functional is estimated on the hyperspace of $\BMO$, defined by equation $\av{\varphi}{I} = 0$. However, it occurs to be more convenient to write an estimate in the whole space. So, we want to estimate
	the functional given by formula~\eqref{functional} not only in terms of~$\BNorm{\varphi}$, but also in terms of $\av{\varphi}{I}$. And here the Bellman function\index{Bellman! function} arrives:
	\begin{equation}\label{Bf}
		\Bell(x_1,x_2;\,f) \;\df 
		\sup\big\{f[\varphi] \mid\; \av{\varphi}{I} = x_1,\av{\varphi^2}{I} = x_2,\;\varphi \in \BMO_{\eps}(I)\big\},
	\end{equation} 
	where $x_1$ and $x_2$ are real numbers. We omit those pairs of $x_1$ and $x_2$ for which the supremum is taken over the empty set. More detailed discussion of rigorous definition and basic properties of the Bellman 
	function is located in Subsection~\ref{s211}. The purpose of this paper is to find the Bellman function for a vast class of functions $f$. If one finds the Bellman function, he achieves sharp estimates of the functional given by formula~\eqref{functional} and thus gets the sharp constants in inequalities of types~\eqref{bmoequivnorms},~\eqref{JN}, and~\eqref{intJN}. 
	
\subsection{Bellman function in analysis}

	It was Burkholder who began to apply the ideas and machinery of the Bellman function to different probability problems, see his pioneering paper~\cite{Burk}. The ideas of that paper helped to prove a great number of sharp inequalities for martingales, see the book~\cite{Os} or the survey~\cite{Osekowski3}. However, the method of the Burkholder's followers (usually called \emph{the Burkholder method}) is a bit different, though highly related to, than that of the Bellman function. We also mention the paper~\cite{Hanner} of Hanner (a reconstruction of Beurling's report at a seminar in Uppsala in 1945), where the Bellman function type technique was used to calculate the moduli of convexity of the $L^p$~spaces. Surely, one cannot find the term ``the Bellman function'' there, however, a special minimal concave function solves the problem (see the paper~\cite{ISZ} for the explanation where the Bellman function is hidden).  The papers~\cite{NaTrVol} (which appeared as a preprint as early as 1995) and~\cite{NaTr} gave the Bellman function its name and were the first to apply it systematically to various problems in harmonic analysis (see also~\cite{NTV}). The sharp constants in the integral form of the John--Nirenberg inequality~\eqref{JN} and the 
	corresponding Bellman function were found in~\cite{Slavin} and~\cite{Va2} (finally, see~\cite{SlVa}). The case of the norm equivalence inequality~\eqref{bmoequivnorms} was successfully considered in~\cite{SlVa2}. And the original John--Nirenberg 
	inequality~\eqref{JN} can be found in~\cite{Va,VV2}. We should also mention that the $\BMO$ space can be replaced with the famous Muckenhoupt classes $A_p$. The reverse Holder inequality can be treated in the same way, this was 
	done in~\cite{Va3}. For the corresponding ``tail''-estimate on the Muckenhoupt classes see~\cite{Rez}. One can also apply very similar technique to estimate the norm of the maximal operator, see~\cite{Melas,SSV}, and the Carleson embedding operator, see~\cite{VaVo,Slavin3}. We refer the reader to the lecture notes~\cite{Volberg1,Volberg} for a more detailed scenery of the subject.
	
	The main idea of the Bellman function theory is that the Bellman function satisfies some differential equations that originate from self-similarity of the extremal problem. We explain this heuristic in our case. First, it should be mentioned	that the Bellman function~\eqref{Bf} does not depend on the interval $I$. Second, from the additivity of integral with respect to an interval we can easily derive that $\Bell$ is
	locally concave on its domain. And the main idea is that it is the minimal locally concave function that satisfies special boundary conditions. This part of the work is done in Section~\ref{s22}. Therefore, we arrive at some purely geometric 
	three-dimensional problem of finding the minimal locally concave surface. Such surfaces always satisfy the homogeneous Monge--Amp\`ere equation. The connection between some class of Bellman function problems and the Monge--Amp\`ere equation was first noticed by L.~Slavin (see~\cite{SSV,VaVo}). The only thing we have to do is to find the minimal locally concave solution of this equation and then prove that the solution is the desired Bellman function. 
	
	The authors' previous work~\cite{5A} was an attempt to build a general theory of Bellman functions for arbitrary~$f$  (see the short report~\cite{CR} and the older version~\cite{5AOld, Addendum} as well). The problem was solved only partly, because we did not even tackle the cases of non-smooth functions~$f$, 	in that 
	paper the function~$f$ had to be almost three times continuously differentiable. But much worse thing was that the roots of its third derivative had to be well separated in a sense. What is more, these roots had to be thin, 
	i.e. we did not allow the function~$f'''$ to vanish on an interval (for example, our theory did not cover the case of such a  simple function~$f$:~$f(t) = 0, t < 0; f(t) = t^3, t \geqslant 0$). In this work we overcome the difficulties
	that appear in the case of an arbitrary position of those roots, though we still assume that there is only a finite number of them. We also remove the technical requirement on the thinness of the roots. We also present a dynamical approach to the problem.
	
	All ideas of the present paper were clear to the authors during the preparation of~\cite{5A}. However, it took us a lot of time to produce an adequate language for the presentation and write down the results. Since then, some heuristics has been justified rigorously, and some results have been generalized. However, we will follow a more ``old-fashioned'' way of presentation by two reasons: it is more transparent for a non-specialist and allows this text to be self-contained and relatively short. We describe the recent (after 2012) development in the last chapter (Subsection~\ref{s621}).
	
\section{Structure of the paper}
	In this section, we survey the structure of the paper. Here we omit any definitions, reasonings, etc. Those readers who are unfamiliar with the Bellman function machinery can read the second chapter first and then return to
	this section to get the general scenery of the text.
	
	We begin with the rigorous setting of the problem. It is situated in the first half of the second chapter, Section~\ref{s21}. We list simple properties of the Bellman function, those that follow from its 
	definition immediately. We also establish easy properties of the function class~$f$ belongs to. In Section~\ref{s22}, we discuss more sophisticated properties of the Bellman function, its domain 
	and special functions, the \emph{optimizers}\index{optimizer} (informally, these functions $\varphi$ are the ones for which $f[\varphi]$ attains its maximum, i.e. the value of the Bellman function). This chapter ends with a detailed plan of the proof, now based 
	on rigorous definitions. We also list our results in the last subsection. To be honest, the majority of the material of the second chapter can be found in~\cite{5A}. We quote
	it here for the sake of completeness and clearness.
	
	The third chapter describes all the local types of foliation we will need. Since there are a plenty of them (from six to nine depending on the terminology), we begin with some general remarks on the classification of these types in Section~\ref{s31}. Some of these local types (called figures), have already been studied in~\cite{5A} and earlier papers. However, we repeat this study for the sake of completeness. Section~\ref{s32} is devoted to tangent domains, Section~\ref{s33} treats chordal domains (some examples are also included there). The linearity domains are described in Section~\ref{s34}. There are new ones, mostly they are the linearity domains with three or more points on the lower boundary. Finally, in Section~\ref{s35} we describe the general combinatorial properties of foliations. We explain how do neighbor figures interact with each other, in some cases two neighbor figures may be treated as a single one. We also introduce a special graph to describe the combinatorial properties of foliations (this notion has already been used in~\cite{CrazySine}). We end this section with examples.
	
	It is time to mention the role of examples. Not only do they illustrate the dynamics but also verify
the existence of the abstract objects we have built. What is more, they show that our abstract treatment
of the subject provides an approach to ``lively'' problems of calculating Bellman functions for particular
cases of~$f$ (for a superior example, which contains almost all constructions of the present paper, see~\cite{CrazySine}).
	
	The evolutional approach works as follows: first we build the Bellman function for very small~$\eps$ and then begin to increase this parameter. In the case of small~$\eps$, the picture of the Bellman function is relatively simple 
	(and the problem corresponds to the case of well-separated roots that we have already considered in~\cite{5A}, however, there are some difficulties concerning thick roots that are not covered by~\cite{5A}). But it begins to be more complicated for bigger~$\eps$, the figures begin to mix with each other. We have to monitor 
	this process.
	
	In the fourth chapter, we build the Bellman candidate for arbitrary~$f$ and all~$\eps$. As it was said, we first construct it for sufficiently small~$\eps$. This is done in Section~\ref{s41}, here the foliation of the Bellman candidate is simple. To do this, we need to study some monotonicity properties of forces (some of which have already been established in~\cite{5A}). We also provide some examples of~$f$ and~$\eps$ for which the foliation of the Bellman function is simple. Section~\ref{s42} contains auxiliary lemmas that will be needed for the evolution. These are mostly monotonicity properties for forces, tails, and roots of balance equations. 
	
	The Bellman candidate ``consists of pieces''. So, to study its evolution, we should study the local (in time) evolution of each of these pieces. This is done in Section~\ref{s43}. It appears that these local evolutional scenarios obey some rules that follow from the monotonicity lemmas of the previous section. 
	
	Finally, in Section~\ref{s44} we prove that there is a flow~$B(\eps)$ of Bellman candidates that obey the evolutional rules. This is done in the following way: we grow~$\eps$, then all the figures follow their evolutional scenarios. The problem can arise when some two (or more) figures crash (or some figure disappears). In such a case, we use special formulas from Section~\ref{s35} to show that after the crash the two figures may be treated as a single one. So, we can change the structure of the Bellman candidate (i.e. change its graph), and continue the evolution further. The moment~$\eps$ when the crash happens is called the critical point of the evolution. It is not clear whether there should be a finite number of such crashes during the whole evolution. So, we treat only some crashes (roughly speaking, those in which not only angles take part) as critical points and prove that there is only a finite number of them. And all the other ``crashes'' (with angles) are, in a sense, not so serious, and the evolution can get through them (this also needs some justifications). All in all, we are able to build the Bellman candidate for all~$\eps$. We finish the chapter with examples in Section~\ref{s45}.
			
			By the word ``build the Bellman function'' we mean that we provide an algorithm that allows to calculate the Bellman function. In our paper,
the word ``algorithm'' is used in an informal way. By an algorithm we mean the way to build the Bellman
function in a finite number of steps, where a step can be either an integration or a differentiation of a
function. It can also consist of solving some equations, but we prove that there are only a finite number of
solutions. 
The algorithm of the present paper differs dramatically from the one we had in~\cite{5A}.
	
	Chapter five contains the theory of optimizers. Section~\ref{s51} suggests general principles of constructing the optimizers, as well as the core convexity lemma. As with the Bellman candidates, we first study the local behavior of optimizers, and then glue them together. Section~\ref{s52} provides a detailed study of the optimizers for each figure. In Section~\ref{s53}, we glue them together. More or less, this is done by a simple induction over the graph of the Bellman candidate. Since we have built the Bellman candidate and constructed the optimizers for it, we have built the Bellman function for all~$\eps$.
	
	The final chapter is for supplementary material. Secion~\ref{s61} contains answers to two questions on the regularity of the Bellman function. In a sense, we show that the difficulty of some our constructions comes not from our method, but from the problem itself. Section~\ref{s62} includes some remarks on the recent development of the field, conjectures, and suggestions for further study. 
	
	\chapter{Setting and sketch of proof}

\section{Setting}\label{s21}

\subsection{Extremal problem and the Bellman function}\label{s211}

In this subsection, we summarize some easy properties of the Bellman function defined by formula~\eqref{Bf} and try to explain the choice of the Bellman function in view of the extremal problem we study. Expressions $\Bell(x_1,x_2)$, $\Bell(x ;\,f)$, or even $\Bell(x)$, where $x = (x_1,x_2)$, stand for the Bellman function. 
Now, we formulate the easiest properties of the Bellman function  that do not need any conditions on $f$.
\begin{Rem}\label{Rescaling}
The Bellman function $\Bell$ does not depend on the interval~$I$ \textup{(}the interval the $\BMO$ space is defined on\textup{)}.
\end{Rem}

Indeed, using a linear change of variables one can transform a function $\varphi \in \BMO(I)$ into 
another function $\tilde{\varphi} \in \BMO(\tilde{I})$ so that all the values of the integral functionals defined by formula~\eqref{functional} do not change. Since
$$\BNorm{\varphi}^2 = \sup_{J \subset I}\big(\av{\varphi^2}{J} - \av{\varphi}{J}^2\big),$$ 
it does not change as well. 
Thus, the supremum defined by formula~\eqref{Bf} is taken over the same subset of the real numbers.

The next remark allows us to estimate integral functionals $f[\varphi]$ from below. One can consider another Bellman function,
$$
\Bell^{\min}(x_1,x_2;\,f) = 
\inf\big\{f[\varphi] \mid\; \av{\varphi}{I} = x_1,\av{\varphi^2}{I} = x_2,\;\varphi \in \BMO_{\eps}(I)\big\}.
$$
Of course, the minimal Bellman function can be easily expressed in terms of the maximal one.
\begin{Rem}\label{minBell}
$\Bell^{\min}(x_1,x_2;\,f) = -\Bell(x_1,x_2;\,-f)$.
\end{Rem}

We also easily calculate the change of the Bellman function if we add a quadratic polynomial to $f$.
\begin{Rem}\label{quadratische}
The following equation holds for all reals $a,b,c,d$\textup:
 \begin{equation*}
\Bell\big(x_{1},x_{2};t \mapsto af(t)+bt^{2}+ct+d\big) = |a|\Bell\big(x_{1},x_{2};(\sign{a})f\big)+bx_{2}+cx_{1}+d.
\end{equation*}
\end{Rem}
We can also modify the boundary condition with the help of a linear change of variables.

\begin{Rem}\label{remm2}
	For any real numbers $\alpha$ and $\beta$\textup, we have
	$$
	\Bell\big(x_{1},x_{2};\,t \mapsto f(\alpha t +\beta)\big)=\boldsymbol{B}_{|\alpha|\eps}\big(\alpha x_{1}+\beta, \alpha^{2}x_{2}+2\alpha \beta   x_{1}+\beta^{2}; \, f\big).
	$$
	\end{Rem}

Now we begin to study the domain of $\Bell$. The following definition seems to be useful for all further reasoning.

\begin{Def}\label{BellmanPoint}\index{Bellman! point}
Consider a function $\varphi \in L^2(I)$. We call the point $\bp{\varphi} \in \mathbb{R}^2$, 
$$\bp{\varphi} \df (\av{\varphi}{I}, \av{\varphi^2}{I}),$$
the Bellman point of $\varphi$.
\end{Def} 

The Bellman function~\eqref{Bf} is defined for all pairs $(x_1,x_2) \in \mathbb{R}^2$. However, this function is uninteresting for some pairs of reals because the supremum is taken over the empty set. We drop such points
from the domain of the Bellman function. 

\begin{St}\label{parstr}
The parabolic strip
\begin{equation}\label{ParStr}
		\Omega_\eps \df \big\{(x_1, x_2) \in \mathbb{R}^2\mid\; x_1^2 \le x_2 \le x_1^2+\eps^2\big\}
\end{equation}
is the domain of $\Bell$\textup, i.e. $\Omega_{\eps}$ is the set of points $x$ such that there exists a function $\varphi \in \BMO_{\eps}$ with $\bp{\varphi} = x$.
\end{St}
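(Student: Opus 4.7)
\medskip\noindent
\textbf{Proof plan.} The proposition has two inclusions. For the easy one, suppose $\varphi\in\BMO_\eps(I)$ and set $x=\bp{\varphi}$. Jensen's inequality applied to the convex function $t\mapsto t^2$ gives $x_2=\av{\varphi^2}{I}\ge\av{\varphi}{I}^2=x_1^2$. On the other hand, taking $J=I$ in the supremum defining $\BNorm{\varphi}$ yields
\begin{equation*}
x_2-x_1^2 \;=\; \av{(\varphi-\av{\varphi}{I})^2}{I} \;\le\; \BNorm{\varphi}^2 \;\le\; \eps^2,
\end{equation*}
so $\bp{\varphi}\in\Omega_\eps$.

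For the reverse inclusion, given $(x_1,x_2)\in\Omega_\eps$ I would exhibit an explicit $\varphi$ realizing the point. Let $m$ denote the midpoint of $I$ and set $c\df\sqrt{x_2-x_1^2}\in[0,\eps]$ (well defined since $x_1^2\le x_2\le x_1^2+\eps^2$). Define the two-valued step function
\begin{equation*}
\varphi(t)\df x_1+c\,\sign(t-m),\qquad t\in I.
\end{equation*}
Then $\av{\varphi}{I}=x_1$ and $\av{\varphi^2}{I}=x_1^2+c^2=x_2$, so $\bp{\varphi}=(x_1,x_2)$. It remains to check $\BNorm{\varphi}\le\eps$. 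For any subinterval $J\subset I$ the function $\varphi$ takes at most two values on $J$, namely $x_1\pm c$, with some proportions $\lambda$ and $1-\lambda$. The variance of such a two-point distribution is $4\lambda(1-\lambda)c^2\le c^2\le\eps^2$, so
\begin{equation*}
\av{|\varphi-\av{\varphi}{J}|^2}{J}\;\le\;c^2\;\le\;\eps^2,
\end{equation*}
which gives the required BMO bound.

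\medskip\noindent
\textbf{Main obstacle.} There is no serious obstacle; the only point deserving care is the uniform bound on oscillation over \emph{all} subintervals $J\subset I$, including those straddling $m$, but the two-valued structure of $\varphi$ reduces this to the elementary variance estimate above. One could alternatively invoke Remarks~\hreff{quadratische} and~\hreff{remm2} to reduce to the normalized case $x_1=0$, $x_2\in[0,\eps^2]$ before constructing the example, but the direct construction is short enough that no reduction is needed.
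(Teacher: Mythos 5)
Your proof is correct and follows essentially the same route as the paper: Cauchy--Schwarz/Jensen plus the $J=I$ case of the supremum for one inclusion, and the same two-valued step function for the other. The only difference is that you spell out the variance bound $4\lambda(1-\lambda)c^2\le c^2$ that the paper dismisses as ``an easy computation''.
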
 

\begin{figure}[h]
\begin{center}
\includegraphics{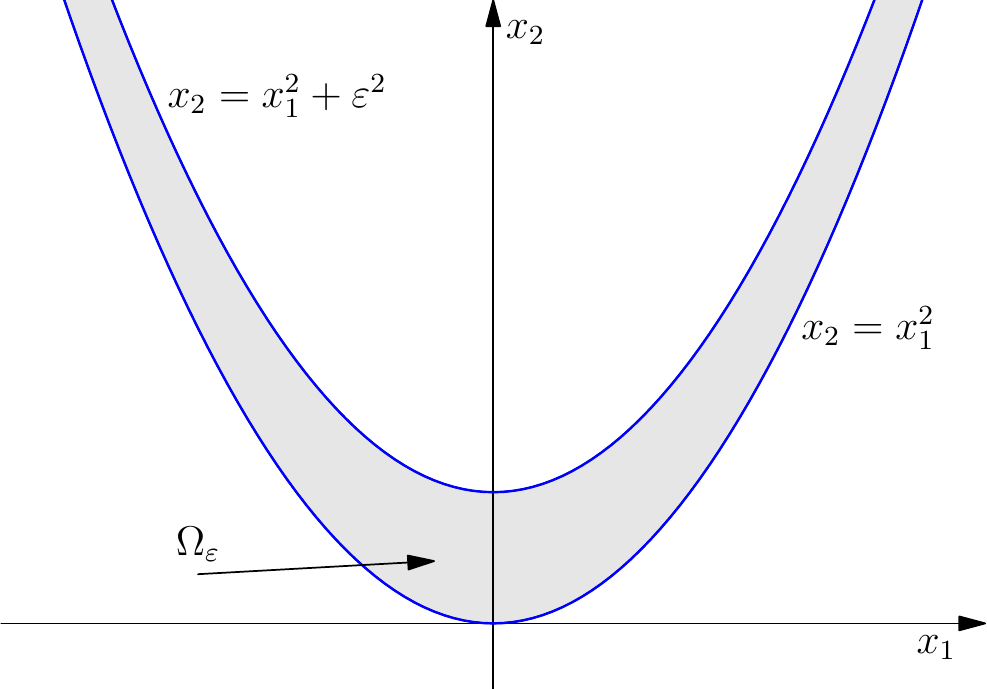}
\caption{Parabolic strip $\Omega_{\varepsilon}$.}
\label{fig:pp}
\end{center}
\end{figure}

\begin{proof}
First, we verify that $\bp{\varphi} \in \Omega_{\eps}$ for all the functions $\varphi \in \BMO_{\eps}$. It suffices to prove two inequalities: $\av{\varphi^2}{I} - \av{\varphi}{I}^2 \geqslant 0$ and $\av{\varphi^2}{I} - \av{\varphi}{I}^2 \leq \eps^2$. The first inequality follows from the Cauchy--Schwarz inequality and the second one is a straightforward consequence of the condition $\varphi \in \BMO_{\eps}$. So, the parabolic strip contains the domain of $\Bell$. Second,  we find functions~$\varphi$ such that~$\bp{\varphi}$ is an arbitrary point in~$\Omega_{\eps}$. For each point~$x = (x_1, x_2) \in \Omega_{\eps}$ the function 
\begin{equation*}			\varphi(t)=
				\begin{cases}
					 x_1+\sqrt{x_2 - x_1^2},& t\in [0,\frac{1}{2});\\
					 x_1-\sqrt{x_2 - x_1^2},& t\in [\frac{1}{2},1]
				\end{cases} 
\end{equation*}
belongs to~$\BMO_{\eps}$ (this is an easy computation). Consequently,~$\Omega_{\eps}$ is contained 
in the domain of~$\Bell$.
\end{proof}
This proposition says nothing whether the Bellman function is finite on~$\Omega_\eps$, it only cuts off those points of the plane where it is \emph{a priori} infinite.

Though this is the aim of this work and seemingly a difficult problem to calculate the Bellman function, its values at some points are already known.

\begin{St}
$\Bell(x_1, x_1^2; f) = f(x_1)$ for all $x_1 \in \mathbb{R}$.
\end{St}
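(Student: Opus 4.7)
The plan is to observe that the constraints $\av{\varphi}{I}=x_1$ and $\av{\varphi^2}{I}=x_1^2$ leave essentially no freedom at all for the admissible function $\varphi$, so the supremum in~\eqref{Bf} is in fact taken over a single function.

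More precisely, I would first note that the two equalities together say that
\begin{equation*}
\av{(\varphi-x_1)^2}{I} \;=\; \av{\varphi^2}{I} - 2x_1\av{\varphi}{I} + x_1^2 \;=\; x_1^2 - 2x_1^2 + x_1^2 \;=\; 0.
\end{equation*}
Since the integrand is non-negative and summable (recall $\BMO_\eps(I)\subset L^2(I)$ by the John--Nirenberg inequality, so $\av{\varphi^2}{I}$ is finite and all manipulations above are legitimate), this forces $\varphi(t)=x_1$ for almost every $t\in I$.

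Next I would verify that the constant function $\varphi\equiv x_1$ actually belongs to the admissible set: its $\BMO$-seminorm is zero, hence it lies in $\BMO_\eps(I)$, and clearly $\bp{\varphi}=(x_1,x_1^2)$, so the supremum in~\eqref{Bf} is not vacuous. Computing the functional for this function gives $f[\varphi]=\av{f(x_1)}{I}=f(x_1)$. Combining this with the previous step, every admissible $\varphi$ coincides a.e.\ with $x_1$, so $f[\varphi]=f(x_1)$ for every such $\varphi$, and the supremum equals $f(x_1)$.

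There is no real obstacle; the only point one must be a little careful about is that $\av{\varphi^2}{I}$ really is finite for $\varphi\in\BMO_\eps(I)$ so that the identity $\av{(\varphi-x_1)^2}{I}=0$ is meaningful. This is guaranteed by the John--Nirenberg inequality~\eqref{JN}, which is already at our disposal.
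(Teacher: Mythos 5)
Your proof is correct and follows essentially the same route as the paper: the paper phrases the key step as ``Cauchy--Schwarz turns into equality,'' while you compute $\av{(\varphi-x_1)^2}{I}=0$ directly, but these are the same observation. One tiny remark: the appeal to John--Nirenberg for finiteness of $\av{\varphi^2}{I}$ is unnecessary, since the admissible set in~\eqref{Bf} already imposes $\av{\varphi^2}{I}=x_1^2<\infty$ (and indeed the $\BMO_\eps$ condition with $J=I$ gives it too).
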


\begin{proof}
The function whose Bellman point lies on the parabola $\{x\in\mathbb{R}^2\mid x_2 = x_1^2\}$ is constant, because the Cauchy--Schwarz inequality turns into equality only for these functions. And we know this constant, because we know the average of the function (see the proof of Proposition~\ref{parstr}). Consequently, the set we are taking the supremum over consists of a single number $f(x_1)$, thus, $\Bell(x_1, x_1^2; f) = f(x_1)$.
\end{proof}

The lower parabola $\{x \in\mathbb{R}^2\mid x_2 = x_1^2\}$ is called the \emph{fixed boundary}\index{fixed boundary} and denoted by~$\FixedBoundary\Omega_{\eps}$ and the upper one, $\{x \in \mathbb{R}^2\mid x_2 = x_1^2 + \eps^2\}$, is the \emph{free boundary}\index{free boundary} denoted by~$\FreeBoundary\Omega_{\eps}$: it moves when we vary~$\eps$. We know the value of the Bellman function on the fixed boundary,
the value on the free boundary is unknown. 

The reader may have asked a question why did we fix $h[\varphi]$ for $h(t) = t$ and $h(t) = t^2$ in the defintion of the Bellman function, formula~\eqref{Bf}. The meaning of this definition is to design some function that has two properties. First,
it should estimate the functional $f[\varphi]$, so it is appropriate to take the supremum of $f[\varphi]$ over some set of functions $\varphi$. Second, it should have good analytic properties (for example, it should be concave). We 
have already noticed in Section~\ref{s11} that we should fix $h[\varphi]$ for $h(t) = t$, because all the classical estimates~\eqref{bmoequivnorms},~\eqref{JN},~\eqref{intJN} contain this expression. The idea of what is the 
second functional to fix comes from
the definition of the $\BMO$-norm.

\subsection{Conditions on $f$}\label{s212}

The function~$f$ will be under some conditions. These conditions are of two types. The first type comes from quantitative aspects. We are interested in finite Bellman functions, at least, we want the functional~\eqref{functional} to be well defined for all the points of $\BMO_{\eps}$. These conditions are expressed in terms of summability properties of $f$. The second type of conditions corresponds to continuity properties of $f$. These conditions make 
the structure of the Bellman function less complicated, thus able to be described. 

We begin with the condition of the second type. We require $f \in C^2(\mathbb{R})$. We need $f''$ to be continuous, piecewise monotone, and have only finite number of monotonicity intervals. We reformulate these conditions in terms of $f'''$ and also introduce some useful notions. The third derivative of $f$ is a measure. By virtue of the condition $f \in C^2$, $f'''$ does not have atoms. Also, this measure is regular in a sense. We introduce an object that was completely ignored in~\cite{5A}.

\begin{Def}\label{solidroot}
Let $\mu$ be a measure on the line. The complement of its support is an open subset of the line, so it is a union of several intervals (finite or countable number of them). We call the closure of each such interval \emph{a solid root} of $\mu$. If $\mu$ is neither positive nor negative in every neighborhood of its solid root, then such a solid root is called \emph{essential}. 
\end{Def}\index{solid root}\index{essential root}

The measure $f'''$ can be thought of as a measure on $\supp f'''$. It has the Hahn decomposition on this set, $f''' = f'''_+ + f'''_-$. The set $\supp f'''_+ \cap \supp f'''_-$ is the set of points where $f'''$ ``changes its sign''. This set is closed. The points of~$\supp f'''_+ \cap \supp f'''_-$ are also called essential roots. Therefore, an essential root is a maximal by inclusion connected subset of the line such that $f'''$ vanishes on it but is neither negative nor positive in every its neighborhood. The regularity condition we impose on $f'''$ is that it has only finite number of essential roots. If $f$ were $C^3$-smooth, this condition would be the same as if the function $f'''$ had only finite number of changes of sign.
The reformulation in terms of $f'''$ is
more useful, because the Bellman function depends on this expression in a more direct way. 

The function $f$ also requires some summability conditions at infinity. The considerations below will help us to guess these conditions. We know that the Bellman
function does not depend on the interval, so we may assume that $I = [0,1]$. One can see that the function $\varphi(t) = \eps \ln t$ lies in $\BMO_{\eps}$, it follows from direct calculations. If the Bellman function is finite, then $f[\eps \ln t]$ is finite:
\begin{equation*}
f[\eps\ln t] = \int\limits_0^1 f(\eps \ln t) dt = \frac{1}{\eps}\int\limits_{-\infty}^0 f(x) e^{\frac{x}{\eps}} dx.
\end{equation*}
Therefore, the integral on the right must converge. We can also substitute the function $\varphi(t) = - \eps \ln t$ into the functional $f[\varphi]$. Thus, $f$ must be summable with the weight $e^{-\frac{|t|}{\eps}}$. A more detailed discussion of the summability properties of $f$ whose Bellman function is finite is postponed until Subsection~\ref{s612} (now we are searching only for some sufficient conditions, not for the necessary ones). For our purposes, we need the following summability condition: $w_{\eps_{\infty}}(t) \df e^{-{|t|}/{\eps_{\infty}}} \in L^1(f''')$ for some $\eps_{\infty}$. This condition differs a bit from $w_{\eps} \in L^1(f)$, which is necessary for the finiteness of the Bellman function $\Bell$. Here are the requirements for $f$\index{condition! conditions on~$f$}.
\begin{Cond}\label{reg}
The function~$f$ is two times continuously differentiable,~$f''$ is piecewise monotone
and has only finite number of monotonicity intervals.
\end{Cond}
\begin{Cond}\label{sum}
The integral~$\int_{-\infty}^{\infty}w_{\eps_{\infty}}(t)\,df''(t)$ is absolutely convergent.
\end{Cond}
The essential roots of $f'''$ will play a significant role in what follows, therefore we fix the notation for them.

\begin{Def}\label{roots}
The essential roots of~$f'''$ are closed intervals (which can be single points or rays) $c_0, c_1, \ldots, c_n$ and $v_1, v_2, \ldots, v_n$ such that $c_0 < v_1 < c_1 < v_2 < \cdots < v_n < c_n$ and $(\cup_i v_i) \bigcup (\cup_i c_i)$ is complement to the set of the growth points of $f''$ (i.e. the points, in the neighborhood of which $f''$ either strictly increases or decreases). The function $f'''$ ``changes sign'' from '$-$' to '+' at $v_i$, from '+' to '$-$' at~$c_i$. 
\end{Def} 
We make an agreement that if in a neighborhood of $-\infty$ we have $f''' < 0$, then $c_0 = -\infty$. Similarly, if in a neighborhood of $+\infty$ we have $f''' > 0$, then $c_n = \infty$. What is more, $v_i$ or $c_i$ is an interval (not a point) if and only if it is an essential solid root in the sense of Definition~\ref{solidroot}. 

In the light of our definition, sometimes we will have to treat the intervals as if they were points. We write $\dist(x,y)$ for the usual distance between subsets $x$ and $y$ of the real line. We will need it only to denote the distance between either two intervals or an interval and a point. Moreover, sometimes we will write, for example, $a_n \rightarrow w$ where $w$ is a root, e.g. can be an interval. In such situations we mean that for every neighborhood of $w$ all but finite number of members of $\{a_n\}_n$ lie in it. What is more, the set of intervals has an essential ordering: $[a,b]$ is less than $[c,d]$ if and only if $b < c$. We have already used this ordering in Defition~\ref{roots}. We will also often use the notation~$\mathfrak{a}^{\mathrm{r}}$ and~$\mathfrak{a}^{\mathrm{l}}$ for the right and the left endpoints of the interval~$\mathfrak{a}$.

We also establish a corollary of our summability assumptions. We begin with an easy proposition. In it, the symbol~$\BV_{\mathrm{loc}}$ denotes the set of functions with locally bounded variation.
\begin{St}\label{summability}
Let $g \in \BV_{\mathrm{loc}}$ and let $w_{\eps_{\infty}} \in L^1(dg)$. Then~$w_{\eps_{\infty}} \in L^1(g)$. 
\end{St}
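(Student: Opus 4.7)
The proof strategy is to bound $|g(t)|$ pointwise by $|g(0)|$ plus the total variation of $g$ on the interval between $0$ and $t$, and then swap the order of integration against the weight $w_{\eps_\infty}$. The essential observation is that the tail integral of $w_{\eps_\infty}$ decays as $w_{\eps_\infty}$ itself, so that swapping produces back the assumed hypothesis.

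In detail, I would first recall that any $g \in \BV_{\mathrm{loc}}$ admits a representative (which we may use since we are only interested in Lebesgue integrals) satisfying, for almost every $t$,
\begin{equation*}
	g(t) \;=\; g(0) + \int_{(0,t]} dg \quad \text{if } t > 0, \qquad g(t) \;=\; g(0) - \int_{(t,0]} dg \quad \text{if } t < 0,
\end{equation*}
where $dg$ denotes the signed Lebesgue--Stieltjes measure associated with $g$. Taking absolute values and bounding the signed measure by its total variation $|dg|$ gives, for a.e.\ $t$,
\begin{equation*}
	|g(t)| \;\le\; |g(0)| + \int_{\mathcal{I}(t)} d|dg|,
\end{equation*}
where $\mathcal{I}(t)$ denotes the half-open interval between $0$ and $t$.

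Next I would multiply this inequality by $w_{\eps_\infty}(t)$ and integrate over $\mathbb{R}$. The first term contributes $2\eps_\infty|g(0)|$, which is harmless. For the second term, I apply Fubini's theorem (all integrands are nonnegative, so this is legitimate) to interchange the order of integration. The inner integral becomes
\begin{equation*}
	\int_{\{t \,:\, s \in \mathcal{I}(t)\}} w_{\eps_\infty}(t)\,dt \;=\; \begin{cases} \int_{s}^{\infty} e^{-t/\eps_\infty}\,dt, & s > 0,\\ \int_{-\infty}^{s} e^{t/\eps_\infty}\,dt, & s < 0, \end{cases}
\end{equation*}
each of which equals precisely $\eps_\infty w_{\eps_\infty}(s)$. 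Thus
\begin{equation*}
	\int_{-\infty}^{\infty} w_{\eps_\infty}(t)\,|g(t)|\,dt \;\le\; 2\eps_\infty|g(0)| + \eps_\infty\!\int_{-\infty}^{\infty} w_{\eps_\infty}(s)\,d|dg|(s),
\end{equation*}
and the right-hand side is finite by the assumption $w_{\eps_\infty} \in L^1(dg)$.

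The only delicate point is the preliminary one of choosing a BV representative so that the fundamental-theorem-of-calculus identity for $g$ holds pointwise a.e.; everything after that is a one-line Fubini computation, exploiting the self-reproducing tail bound $\int_s^\infty e^{-t/\eps_\infty}\,dt = \eps_\infty w_{\eps_\infty}(s)$ that is characteristic of the exponential weight.
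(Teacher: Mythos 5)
Your proof is correct and follows essentially the same route as the paper's: bound $|g(t)|$ by $|g(0)|$ plus the total variation of $g$ between $0$ and $t$, multiply by $w_{\eps_\infty}$, swap the order of integration by Fubini, and use the self-reproducing identity $\int_{|t|}^\infty w_{\eps_\infty}(x)\,dx = \eps_\infty w_{\eps_\infty}(t)$. The paper leaves the choice of representative implicit, so your remark about it is a harmless clarification rather than a deviation.
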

\begin{proof}
First, using the definition of $dg$ we can write
$$
g(x)-g(0)=\int\limits_0^x dg(t)
$$
for every $x$. Thus, 

\begin{equation*}
|g(x)| \leq |g(0)| + \int_{0}^{x} \sign x |dg(t)|.
\end{equation*}
Multiplying this equation by $w_{\eps_{\infty}}(x)$, we get
\begin{align*}
|g(x)|w_{\eps_{\infty}}(x) \leq |g(0)|w_{\eps_{\infty}}(x) + \int_{0}^{x} \sign x \, w_{\eps_{\infty}}(x) |dg(t)|.
\end{align*}
Integrating over the real line with respect to $x$, we get
\begin{align*}
&\|w_{\eps_{\infty}}\|_{L^{1}(g)} \leq 2 \eps_{\infty} |g(0)|+\int_{-\infty}^{\infty}\int_{0}^{x}\sign x \, w_{\eps_{\infty}}(x) \,|dg(t)|\, dx = \\
&2 \eps_{\infty} |g(0)| +\int_{-\infty}^{\infty}\int_{|t|}^{\infty} w_{\eps_{\infty}}(x) \,dx \,|dg(t)| = \eps_{\infty}\big(2 |g(0)|+\|w_{\eps_{\infty}}\|_{L^{1}(dg)}\big).
\end{align*}
The proposition is proved. 
\end{proof}

\begin{Le}\label{emb}
Suppose $f$ satisfies the summability Condition~\textup{\ref{sum}}. Then the functions $f''$, $f'$, and~$f$ are also in $L^1(w_{\eps_{\infty}})$ and
\begin{equation*}
			f^{(r)}(u)e^{-|u|/\eps_{\infty}} \to 0\quad\mbox{as}\quad u\to\pm\infty\quad\mbox{for}\quad r = 0,1,2.
\end{equation*}
\end{Le}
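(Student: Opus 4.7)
The plan is to establish the three $L^1$ statements first, by iterating Proposition~\ref{summability} three times, and then deduce the pointwise decay bounds by combining the fundamental theorem of calculus with the same exponential-weight trick used in the proof of Proposition~\ref{summability}.

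For the $L^1$ part: Condition~\ref{reg} says $f''$ is piecewise monotone with finitely many monotonicity intervals, so $f'' \in \BV_{\mathrm{loc}}$; Condition~\ref{sum} says $w_{\eps_{\infty}} \in L^1(df'')$. Applying Proposition~\ref{summability} with $g = f''$ gives $w_{\eps_{\infty}} \in L^1(f'')$. Next, since $f \in C^2$, both $f'$ and $f$ are in $\BV_{\mathrm{loc}}$, and $df' = f''(t)\,dt$, $df = f'(t)\,dt$; so $w_{\eps_\infty} \in L^1(df')$ is equivalent to $w_{\eps_\infty}|f''| \in L^1(dt)$, which we just proved. Proposition~\ref{summability} applied with $g = f'$ yields $w_{\eps_\infty} \in L^1(f')$, and one more iteration with $g = f$ yields $w_{\eps_\infty} \in L^1(f)$.

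For the pointwise decay, I handle $r=2$ first. For $u > 0$, the fundamental theorem of calculus gives
\begin{equation*}
|f''(u)| \le |f''(0)| + \int_{0}^{u} |df''(t)|.
\end{equation*}
Fix $M > 0$ and multiply by $e^{-u/\eps_\infty}$. Using that $e^{-u/\eps_\infty} \le e^{-t/\eps_\infty} = w_{\eps_\infty}(t)$ for $0 \le t \le u$, I split:
\begin{equation*}
e^{-u/\eps_\infty}|f''(u)| \le |f''(0)|e^{-u/\eps_\infty} + e^{-u/\eps_\infty}\!\!\int_{0}^{M}\!|df''(t)| + \int_{M}^{\infty} w_{\eps_\infty}(t)|df''(t)|.
\end{equation*}
For fixed $M$, the first two terms vanish as $u \to +\infty$, while the third term is independent of $u$ and tends to $0$ as $M \to \infty$ by Condition~\ref{sum}. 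Hence $\limsup_{u \to +\infty} e^{-u/\eps_\infty}|f''(u)| = 0$; the case $u \to -\infty$ is symmetric (split at $0$ and estimate on $(-\infty,0]$). The same argument handles $r = 1$ by replacing $f''$ with $f'$ and $df''$ with $f''(t)\,dt$, invoking $f'' \in L^1(w_{\eps_\infty})$, and likewise for $r = 0$ via $f' \in L^1(w_{\eps_\infty})$.

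The only subtle point is recognizing that the pointwise decay is driven by the same weight-monotonicity observation as Proposition~\ref{summability}: for $t$ between $0$ and $u$, $e^{-|u|/\eps_\infty}$ can be absorbed into $w_{\eps_\infty}(t)$, turning an unweighted integral bound into a weighted one whose tail is controllable. No other monotonicity or regularity properties of $f$ are needed beyond Conditions~\ref{reg} and~\ref{sum}.
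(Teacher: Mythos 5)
Your proof is correct. The $L^1$ part is identical to the paper's (three applications of Proposition~\ref{summability}). For the pointwise decay, though, you take a genuinely different route. The paper first shows that $\lim_{u\to\pm\infty}f^{(r)}(u)e^{-u/\eps_\infty}$ \emph{exists}, via the integration-by-parts identity
\[
f^{(r)}(u)e^{-u/\eps_{\infty}}=f^{(r)}(u_1)e^{-u_1/\eps_{\infty}}+\int_{u_1}^{u}e^{-t/\eps_{\infty}}\,df^{(r)}(t)-\frac{1}{\eps_{\infty}}\int_{u_1}^{u}e^{-t/\eps_{\infty}}f^{(r)}(t)\,dt,
\]
whose right-hand side converges as $u\to\pm\infty$ by the just-established summability; then it invokes the general fact that an integrable function with a limit at infinity must have limit zero. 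You instead estimate the $\limsup$ directly: after multiplying the FTC bound $|f^{(r)}(u)|\le|f^{(r)}(0)|+\int_0^u|df^{(r)}|$ by $e^{-u/\eps_\infty}$ and splitting the integral at $M$, you absorb the weight into the integrand on $[M,u]$ exactly as in Proposition~\ref{summability}, send $u\to\infty$, then $M\to\infty$. This avoids the two-step ``exists, hence zero'' argument, needs no integration by parts, and is quantitative (it gives an explicit $\limsup$ bound $\int_M^\infty w_{\eps_\infty}\,|df^{(r)}|$). The paper's version is slightly slicker and foreshadows the integration-by-parts manipulations that reappear in the force functions of Section~\ref{s33}; yours is more elementary and self-contained. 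Both are valid.
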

\begin{proof}
First, we apply Proposition~\ref{summability} with $g = f''$. We obtain that $\int_{-\infty}^{\infty} |f''(t)|w_{\eps_{\infty}}(t) \,dt$ is finite. Then we can apply the same proposition with $g = f'$, this yields $f' \in L^1(w_{\eps_{\infty}})$. Finally, applying it once more with $g = f$, we get $f \in L^1(w_{\eps_{\infty}})$.
We only have to cope with the limits. It is easy to see that those limits do exist, because
\begin{equation*}
\lim\limits_{u\to \pm\infty} f^{(r)}(u)e^{-u/\eps_{\infty}}=
f^{(r)}(u_1)e^{-u_1/\eps_{\infty}}+ \int\limits_{u_1}^{\pm\infty} e^{-t/\eps_{\infty}}df^{(r)}(t) -
\frac{1}{\eps_{\infty}} \int\limits_{u_1}^{\pm\infty} e^{-t/\eps_{\infty}}f^{(r)}(t)dt,
\end{equation*}
and the integrals on the right-hand side are finite.
The function $f^{(r)}(u)e^{-u/\eps_{\infty}}$ is summable itself, so the limit equals zero. 
\end{proof}
\section{On concavity of surfaces and functions}\label{s22}

\subsection{Main inequality}

We have already noted that our extremal problem is self-similar. Now we try to exploit this fact. Assume~$I$ to be divided into two smaller intervals, $I_-$ and $I_+$. Let
$\varphi_{-}$ be a function in $\BMO_{\eps}(I_-)$ and let $\varphi_{+}$ be a function in $\BMO_{\eps}(I_+)$. Assume that the point 
$$x \df \frac{|I_-|}{|I|}\bp{\varphi_-} + \frac{|I_+|}{|I|}\bp{\varphi_+}$$
belongs to $\Omega_{\eps}$ (we use the notation for a Bellman point, see Definition~\ref{BellmanPoint}). 
We can choose functions~$\varphi_-$ and~$\varphi_+$ almost realizing the supremum in the definition of the Bellman function, formula~\eqref{Bf}. That is
$$\langle{f(\varphi\Cii\pm)}\rangle\Cii{I_{\pm}} \ge \Bell(\bp{\varphi_{\pm}}) - \eta$$
for a small positive $\eta$. Therefore, their concatenation, the function $\varphi \in L^2(I)$ defined by the formula
    $$
		\varphi(t)=\left\{
			\begin{aligned}
				&\varphi\Cii{-}(t),\quad t\in I_-;\\
				&\varphi\Cii{+}(t),\quad t\in I_+,
			\end{aligned} \right.
	$$
corresponds to the point $x$, $\bp{\varphi} = x$. Suppose that $\varphi \in \BMO_{\eps}$ (we will comment this assumption a bit later). Then, 
\begin{multline*}
		\Bell\bigg(\frac{|I_-|}{|I|}\bp{\varphi_-} + \frac{|I_+|}{|I|}\bp{\varphi_+}\bigg) 
		\ge \av{f(\varphi)}{I} = 
		\frac{|I_-|}{|I|}\av{f(\varphi\Cii{-})}{I_{-}}+\frac{|I_+|}{|I|}\av{f(\varphi\Cii{+})}{I_{+}} \\\ge
		\frac{|I_-|}{|I|}\Bell(\bp{\varphi_-}) + \frac{|I_+|}{|I|}\Bell(\bp{\varphi_+})-\eta,
	\end{multline*}
so we get the \emph{concavity} of the function $\Bell$, because $\eta$ can be arbitrarily small. Indeed, we can argue vise versa: first choose three points $x, x_+, x_-$ such that $x = \alpha_+x_+ +  \alpha_-x_-$, then divide
$I$ into $I_+$ and $I_-$ in such a way that $\alpha_+ = \frac{|I_+|}{|I|}$, $\alpha_- = \frac{|I_-|}{|I|}$. In such a case, the achieved inequality turns into 
\begin{equation}\label{MIneq}
\Bell(\alpha_-x_- + \alpha_+x_+) 
		\ge
		\alpha_-\Bell(x_-)+\alpha_+\Bell(x_+).
\end{equation}
We did not mark this reasoning as a proof because the assumption we did ($\varphi \in \BMO_{\eps}$) is significantly irreducible. What is more, the Bellman function is not concave, but only \emph{locally concave}\index{locally concave function}.
\begin{Def}\label{LocalConcavity}
The function $G\colon \Omega \mapsto \mathbb{R}$ is said to be locally concave on $\Omega$ if it is concave on every convex subdomain of $\Omega$. 
\end{Def}

It is not difficult to see that a function is locally concave if and only if it is concave on every segment that belongs to $\Omega$ entirely. 

We suppose that the Bellman function is locally concave and try to guess it. After we have an appropriate locally concave candidate, we try to prove that it is the true Bellman function. If we succeed, we get the local concavity 
of the Bellman function.

The inequality that follows from self-similarity of the problem (\eqref{MIneq} in our case) is usually called \emph{the main inequality}\index{main inequality}.

\subsection{Locally concave majorants}

In the previous subsection, we saw that concavity properties play a significant role in the subject. We introduce a useful class of functions:
\begin{equation}\label{Lambda}
		\Lambda_{\eps,f} \df \big\{G\in C(\Omega_\eps)\mid\;  \mbox{$G$ is locally concave};\; \forall u\in\mathbb{R} \quad
		G(u,u^2) = f(u)\big\}.
\end{equation}
The functions from this class have one very important property: they majorize the Bellman function.
\begin{Le}\label{LMaj}
		Let a continuous function~$f$ satisfy Condition~\textup{\ref{sum}}. If $G \in \Lambda_{\eps,f}$\textup, then $\Bell(x;\,f) \le G(x)$ for all $x \in \Omega_\eps$. 
\end{Le}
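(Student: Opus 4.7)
I would fix an arbitrary test function $\varphi\in\BMO_{\eps}(I)$ with $\bp{\varphi}=x$ and show $f[\varphi]\le G(x)$; the lemma then follows on taking the supremum over such~$\varphi$. The strategy is the standard one: iterate local concavity of $G$ along a dyadic decomposition of $I$ and pass to the limit.

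First I would discretize $\varphi$ along the usual dyadic filtration. Let $\mathcal{D}_n$ be the partition of $I$ into $2^n$ equal subintervals, set $y_J\df\bp{\varphi|_J}\in\Omega_{\eps}$ for $J\in\mathcal{D}_n$, and define $\psi_n\colon I\to\Omega_{\eps}$ piecewise constant by $\psi_n|_J\equiv y_J$. By the Lebesgue differentiation theorem applied to $\varphi$ and $\varphi^2$ (both locally integrable, in fact in $L^p_{\mathrm{loc}}$ by John--Nirenberg), $\psi_n(t)\to(\varphi(t),\varphi(t)^2)$ for almost every $t\in I$; continuity of $G$ together with the boundary identity $G(u,u^2)=f(u)$ then yields $G(\psi_n(t))\to f(\varphi(t))$ a.e.

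Next I would run the main-inequality induction on $n$. At every dyadic split $J=J_-\sqcup J_+$ one has $y_J=\tfrac12 y_{J_-}+\tfrac12 y_{J_+}$, so local concavity of $G$ on the segment $[y_{J_-},y_{J_+}]$ gives the two-point bound $G(y_J)\ge\tfrac12 G(y_{J_-})+\tfrac12 G(y_{J_+})$; inducting down to generation $n$ and weighting by $|J|/|I|$ produces
\begin{equation*}
G(x)\;\ge\;\sum_{J\in\mathcal{D}_n}\frac{|J|}{|I|}\,G(y_J)\;=\;\av{G\circ\psi_n}{I}.
\end{equation*}
To pass to the limit I would invoke dominated convergence: the John--Nirenberg inequality~\eqref{JN} forces $|\varphi|$ to have exponentially decaying distribution with rate $c_2/\eps$, which combined with Condition~\ref{sum} and Lemma~\ref{emb} (controlling $G$ through its fixed-boundary values) yields an integrable dominant for $G\circ\psi_n$. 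The limit equals $\av{f(\varphi)}{I}=f[\varphi]$, closing the chain $G(x)\ge f[\varphi]$.

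\textbf{Main obstacle.} The delicate point, masked above, is that local concavity on $[y_{J_-},y_{J_+}]$ is only available when this whole segment lies in $\Omega_{\eps}$. Writing $y_{J_\pm}=(a_\pm,b_\pm)$, a direct computation gives
\begin{equation*}
(1-t)b_-+tb_+-\big((1-t)a_-+ta_+\big)^2=(1-t)(b_--a_-^2)+t(b_+-a_+^2)+t(1-t)(a_--a_+)^2,
\end{equation*}
and this can exceed $\eps^2$ for some $t\in[0,1]$ even when $\varphi\in\BMO_{\eps}(I)$, so the segment may escape through the free boundary at coarse scales. I expect this to be handled by starting the induction from a generation $n_0$ large enough that, by Lebesgue differentiation, all children's Bellman points cluster near the fixed parabola (making the variance terms $t(1-t)(a_--a_+)^2$ negligible and keeping segments well inside $\Omega_{\eps}$), and relating $G(x)$ to $\av{G\circ\psi_{n_0}}{I}$ at coarse scales by replacing the binary split with an $m$-ary equal-length split for $m$ large, whose children Bellman points automatically have pairwise segments in $\Omega_{\eps}$.
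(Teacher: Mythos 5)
You correctly identify the crux: local concavity of $G$ can only be iterated along a split $J=J_-\cup J_+$ if the whole segment $[y_{J_-},y_{J_+}]$ stays inside $\Omega_{\eps}$, and the dyadic midpoint split gives no such guarantee (your variance computation is exactly right). Unfortunately both of your proposed repairs fail. Starting the induction at a large generation $n_0$ does not help: Lebesgue differentiation gives pointwise a.e.\ convergence of $y_J$ toward the fixed parabola but no uniform statement, and one can easily build $\varphi\in\BMO_\eps$ whose oscillation at every dyadic scale and location stays comparable to $\eps$, so no single $n_0$ moves all $y_J$ uniformly close to $\FixedBoundary\Omega_\eps$. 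The $m$-ary equal-length split is worse: the assertion that consecutive children's Bellman points ``automatically have pairwise segments in $\Omega_\eps$'' is simply unproved, and the same quadratic variance term $t(1-t)(a_--a_+)^2$ can push a segment above the free boundary for an $m$-ary split just as for the binary one. And even if the $m$-ary trick worked, you would still need to relate $G(x)$ to $\av{G\circ\psi_{n_0}}{I}$ at the coarse scale, which runs straight into the same segment-escape problem you began with.

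The paper's proof circumvents this with two devices you did not use, and both are essential. First, the splits are \emph{adapted} to $\varphi$: Proposition~\ref{L5} produces, for any $\tilde{\eps}>\eps$, a non-dyadic partition $I=I_-\cup I_+$ (with ratios uniformly bounded away from $0$ and $1$) whose segment is guaranteed to lie inside the \emph{slightly larger} strip $\Omega_{\tilde{\eps}}$, not in $\Omega_\eps$ itself. Second, to exploit concavity on the enlarged strip, the paper rescales $G$: set $G_\tau(x_1,x_2)=G(\tau x_1,\tau^2 x_2)$ for $\tau<1$, so that $G_\tau$ is locally concave on $\Omega_{\eps/\tau}$ with boundary data $f_\tau(x_1)=f(\tau x_1)$. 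The Bellman induction is then run for $G_\tau$, the splits of Proposition~\ref{L5} being chosen with $\tilde{\eps}=\eps/\tau$, and one lets $\tau\to1$ at the very end. There is a further subtlety you gloss over in the limit pass: dominated convergence is applied first for essentially bounded $\varphi$ (where boundedness of $G_\tau\circ x_n$ is automatic), and only afterwards is the boundedness assumption removed by the truncation $\varphi_m=\varphi_{-m,m}$ (Proposition~\ref{Stcutoff}), the summable majorant $Ce^{|\varphi|/\eps_\infty}$ being provided by Lemma~\ref{emb} together with the sharp integral John--Nirenberg inequality. Without this two-step structure the domination you invoke is not available, because $G$ itself is only known to be continuous on $\Omega_\eps$ and the values $G\circ\psi_n$ are not a priori controlled before the truncation.
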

Here we may not suppose that $f$ satisfies Condition~\ref{reg} from Subsection~\ref{s212}. We recall easy statements about functions in the $\BMO$ space and simple geometry.

\begin{St}\label{L5}
		Suppose $\tilde{\eps} > \eps$\textup, $I\subset\mathbb{R}$ and $\varphi \in \BMO_\eps(I)$. Then there exists a partition $I={I_{-}\cup I_{+}}$ such that the straight line segment $[\bp{\varphi \mid_{I_{-}}}, \bp{\varphi \mid_{I_{+}}}]$ lies
		inside $\Omega_{\tilde{\eps}}$. What is more\textup, the parameters $\alpha_{\pm} = |I_\pm|/|I|$ can be chosen uniformly \textup{(}in~$I$ and $\varphi$\textup{)} separated from $0$ and $1$.
	\end{St}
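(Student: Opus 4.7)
My plan is to parametrize candidate partitions by a single split point and reduce the geometric condition ``segment lies in $\Omega_{\tilde\eps}$'' to a one-parameter inequality controlled by the $\BMO$ constraint on $I$ itself. By the scaling invariance of the $\BMO$ seminorm, I may assume $I=[0,1]$. For $s\in(0,1)$, set $I_-(s)=[0,s]$, $I_+(s)=[s,1]$, $\alpha_\pm=|I_\pm(s)|$, and abbreviate $a_\pm=\av{\varphi}{I_\pm}$, $\sigma_\pm^2=\av{\varphi^2}{I_\pm}-a_\pm^2$, so that the Bellman points are $P_\pm:=\bp{\varphi|_{I_\pm}}=(a_\pm,\,a_\pm^2+\sigma_\pm^2)$.

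Expanding directly gives the identity
\begin{equation*}
\bigl((1-t)P_-+tP_+\bigr)_2-\bigl((1-t)P_-+tP_+\bigr)_1^2 \;=\; (1-t)\sigma_-^2+t\sigma_+^2+t(1-t)(a_+-a_-)^2 \;=:\; g_s(t).
\end{equation*}
Hence the segment $[P_-,P_+]$ lies in $\Omega_{\tilde\eps}$ iff $\max_{t\in[0,1]}g_s(t)\le\tilde\eps^2$. Moreover, the $\BMO_\eps$ condition applied to $J=I$ reads
\begin{equation*}
s\,\sigma_-^2+(1-s)\sigma_+^2+s(1-s)(a_+-a_-)^2 \le \eps^2.
\end{equation*}

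First, taking the midpoint split $s=\tfrac12$, the $\BMO$ inequality becomes $\tfrac12(\sigma_-^2+\sigma_+^2)+\tfrac14 D\le\eps^2$ with $D:=(a_+-a_-)^2$. A short Lagrange-multiplier computation, jointly over $t\in[0,1]$ and admissible triples $(\sigma_-,\sigma_+,D)$ with $\sigma_\pm\le\eps$, yields $\max_t g_{1/2}(t)\le\tfrac{9}{8}\eps^2$, extremized at $(\sigma_-,\sigma_+,D,t)=(\eps,0,2\eps^2,\tfrac14)$. This already proves the proposition with the uniform choice $\alpha_\pm=\tfrac12$ whenever $\tilde\eps\ge\tfrac{3}{2\sqrt 2}\,\eps$.

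For the harder range $\eps<\tilde\eps<\tfrac{3}{2\sqrt 2}\eps$, I would observe that the extremal triple above is actually incompatible with $\varphi\in\BMO_\eps(I)$: the $\BMO$ bound on a subinterval crossing the midpoint (e.g.\ $[0,\tfrac34]$) forces a strictly sharper inequality on $D$. I would then slide the split point $s$ slightly away from $\tfrac12$ in a direction adapted to $\varphi$, using the continuity of $s\mapsto(a_\pm(s),\sigma_\pm(s))$ together with these additional subinterval $\BMO$ constraints to push $\max_t g_s(t)$ below $\tilde\eps^2$. The main obstacle will be quantitative: producing a uniform lower bound $\delta=\delta(\eps,\tilde\eps)>0$ on $\min(\alpha_-,\alpha_+)$ that stays positive as $\tilde\eps\downarrow\eps$, which requires carefully balancing the finitely many subinterval $\BMO$ constraints invoked.
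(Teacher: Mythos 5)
Your setup is clean and your computation is correct as far as it goes: the parabolic height $g_s(t)=(1-t)\sigma_-^2+t\sigma_+^2+t(1-t)(a_+-a_-)^2$ is indeed the right quantity, the midpoint split together with the three constraints $\sigma_\pm^2\le\eps^2$ and $g_{1/2}(1/2)\le\eps^2$ does give $\max_t g_{1/2}(t)\le\tfrac98\eps^2$ (your extremal triple and the value at $t=1/4$ check out), and this does settle the case $\tilde\eps\ge\tfrac{3}{2\sqrt2}\eps$. For the record, the paper itself does not prove this proposition; it cites~\cite{Va2} and~\cite{SlVa}.

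The problem is that the range $\eps<\tilde\eps<\tfrac{3}{2\sqrt2}\eps$ is the entire content of the statement, and there your argument stops being a proof and becomes a plan. You correctly observe that the midpoint-extremal triple is not attainable by an actual $\BMO_\eps$ function because of the $\BMO$ constraint on intervals straddling the midpoint, and you propose to ``slide the split point'' and ``carefully balance the finitely many subinterval constraints.'' But nothing is actually carried out: you do not exhibit which subinterval constraints you invoke, you do not show that the resulting one-parameter family $s\mapsto\max_t g_s(t)$ dips below $\tilde\eps^2$ somewhere, and --- most importantly --- you give no argument that the split point can be taken in a fixed window $[\delta,1-\delta]$ with $\delta$ depending only on $\tilde\eps/\eps$. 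That last uniformity is exactly the delicate part: $a_\pm(s)$ need not stay bounded as $s\to0$ or $s\to1$ (e.g.\ for $\varphi=\eps\log$), so there is a genuine tension between pushing $s$ toward the ends to satisfy the geometric condition and keeping $\alpha_\pm$ uniformly away from $0$ and $1$. Naming the obstacle is not the same as overcoming it, and as written the proof has a gap precisely where the proposition is nontrivial.
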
 
The proof can be found either in~\cite{Va2} or in~\cite{SlVa}.

\begin{St}\label{Stcutoff}
		Suppose $\varphi \in \BMO(I)$\textup, $c,d \in \mathbb{R}$\textup{,} and $c<d$.
		Consider the truncation of $\varphi$\textup{:}
		\begin{equation}\label{truncation}
			\varphi_{c,d}(t) \df 
			\begin{cases} 
				\ d,&\ \varphi(t) > d;\\
				\varphi(t),&  c \le \varphi(t) \le d;\\
				\ c,&\ \varphi(t) < c.
			\end{cases}  
		\end{equation}
		Then $\av{\varphi_{c,d}^2}{J} - \av{\varphi_{c,d}}{J}^2 \le \av{\varphi^2}{J} - \av{\varphi}{J}^2$ for every segment $J \subset I$.
	\end{St}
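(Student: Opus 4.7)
The plan is to reduce everything to the pointwise fact that truncation is $1$-Lipschitz on $\mathbb{R}$, and then combine this with the well-known identity that expresses variance as a symmetric double integral of squared differences.

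First I would record the identity
\begin{equation*}
\av{g^{2}}{J} - \av{g}{J}^{2} \;=\; \frac{1}{2|J|^{2}} \int_{J}\!\!\int_{J} \bigl(g(s) - g(t)\bigr)^{2}\, ds\, dt,
\end{equation*}
which is valid for any $g \in L^{2}(J)$ and is proved by expanding the square on the right. Applying it with $g = \varphi$ and with $g = \varphi_{c,d}$ reduces the proposition to the pointwise inequality
\begin{equation*}
\bigl(\varphi_{c,d}(s) - \varphi_{c,d}(t)\bigr)^{2} \;\leq\; \bigl(\varphi(s) - \varphi(t)\bigr)^{2}
\qquad\text{for a.e. } s,t \in J.
\end{equation*}

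Next I would verify this pointwise inequality by noting that the truncation map $T\colon \mathbb{R} \to [c,d]$ given by $T(x) = \max(c, \min(d, x))$ is $1$-Lipschitz. One can see this either by checking the three cases case-by-case or by observing that $T$ is the composition of $x \mapsto \min(d,x)$ and $x \mapsto \max(c,\cdot)$, each of which is $1$-Lipschitz. Since $\varphi_{c,d} = T \circ \varphi$, we obtain $|\varphi_{c,d}(s) - \varphi_{c,d}(t)| \leq |\varphi(s) - \varphi(t)|$ for all $s,t$, which gives the required pointwise domination after squaring.

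Combining the two steps, integrating the pointwise inequality over $J\times J$ and dividing by $2|J|^{2}$, produces the variance inequality. There is no real obstacle here; the only mild subtlety is to remember that the double-integral formula for variance requires only $L^{2}$-summability, which is automatic from $\varphi \in \BMO(I) \subset L^{2}_{\mathrm{loc}}(I)$ and the boundedness of $\varphi_{c,d}$.
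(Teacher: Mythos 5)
Your proposal is correct and coincides exactly with the paper's proof (attributed to F.~V.~Petrov): the double-integral variance identity combined with the pointwise $1$-Lipschitz bound $|\varphi_{c,d}(s)-\varphi_{c,d}(t)|\le|\varphi(s)-\varphi(t)|$. The paper even records, just after the proof, the same generalization you implicitly observe, that composing with any $1$-Lipschitz function does not increase the $\BMO$-norm.
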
	
This statement is accurately proved in~\cite{SlVa2} (Lemma~$6.3$ of that paper). Such statements are usual for the Bellman function method, for example,  see~\cite{RVV} for a similar statement for Muckenhoupt weights. Here we give a shorter reasoning, communicated to us by F. V. Petrov.
\begin{proof}
Combining the formula
\begin{equation*}
\av{h^2}{J} - \av{h}{J}^2 = \frac{1}{2|J|^2} \int\limits_{J^2}|h(x) - h(y)|^2\, dx\, dy
\end{equation*}
with the evident inequality
$$
|\varphi_{c,d}(x)-\varphi_{c,d}(y)|^2 \le|\varphi(x) - \varphi(y)|^2,
$$
we obtain the desired estimate. 
\end{proof}

\begin{Cor}
	If ${\varphi \in \BMO_\eps(I)}$\textup{,} then $\varphi_{c,d} \in \BMO_\eps(I)$.
	\end{Cor}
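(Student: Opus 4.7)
The corollary is essentially an immediate repackaging of Proposition~\ref{Stcutoff}, so my plan is to give a one-line argument that extracts it.

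The plan is to start from the pointwise-in-$J$ variance bound
$$\av{\varphi_{c,d}^2}{J} - \av{\varphi_{c,d}}{J}^2 \le \av{\varphi^2}{J} - \av{\varphi}{J}^2,$$
which Proposition~\ref{Stcutoff} supplies for every segment $J \subset I$. Since this holds uniformly in $J$, I simply take the supremum over all $J \subset I$ on both sides. Using the equivalent form of the $\BMO$ seminorm
$$\BNorm{\psi}^2 = \sup_{J \subset I}\big(\av{\psi^2}{J} - \av{\psi}{J}^2\big)$$
already recalled in Remark~\ref{Rescaling}, this yields $\BNorm{\varphi_{c,d}} \le \BNorm{\varphi} \le \eps$, which is exactly the claim $\varphi_{c,d} \in \BMO_\eps(I)$.

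There is essentially no obstacle here: all the work is done by Proposition~\ref{Stcutoff}, and passing from a pointwise-in-$J$ inequality to a supremum is harmless because the inequality is monotone. The only thing worth flagging is that $\varphi_{c,d}$ is automatically measurable and bounded (hence locally summable), so the averages on the left are well defined and the supremum makes sense.
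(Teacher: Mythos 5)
Your argument is correct and is exactly the intended (immediate) deduction: Proposition~\ref{Stcutoff} gives the variance inequality for every $J\subset I$, and taking the supremum over $J$ together with the formula $\BNorm{\psi}^2=\sup_{J\subset I}\big(\av{\psi^2}{J}-\av{\psi}{J}^2\big)$ yields $\BNorm{\varphi_{c,d}}\le\BNorm{\varphi}\le\eps$. The paper offers no separate proof precisely because the corollary is this one-liner, so there is nothing to compare.
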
 
We note that the proof above gives more: a composition of any $1$-Lipshitz function with a $\BMO$-function does not increase its $\BMO$-norm. 

\paragraph{Proof of lemma \ref{LMaj}.}

By the Bellman function definition, formula~\eqref{Bf}, we need to prove that~$f[\varphi] \leq G(\bp{\varphi})$ (here we use the notion of a Bellman point, see Definition~\ref{BellmanPoint}) for all the functions~$\varphi \in\BMO_{\eps}$. We begin with proving this assertion for essentially bounded $\varphi$ and then use
a limit argument to verify this inequality for the remaining functions. But before that we have to make a small trick that allows us to enlarge $\Omega_{\eps}$. Let $0 < \tau < 1$. Consider two new functions, namely,
$$
G_\tau(x_1,x_2) = G(\tau x_1,\tau^2x_2), \qquad f_\tau(x_1) = f(\tau x_1).
$$
The function $G_{\tau}$ is still continuous and locally concave on $\Omega_{\eps/\tau}$. What is more, these two functions,~$f_{\tau}, G_{\tau}$, satisfy the same equation on the boundary:
$$
G_\tau(x_1,x_1^2) = f_\tau(x_1).
$$
We fix some function $\varphi \in \BMO_{\eps}(I) \cap L^{\infty}(I)$. By Proposition~\ref{L5}, there exist two intervals $I_-$ and $I_+$ such that the whole segment $[\bp{\varphi \mid_{I_-}},\bp{\varphi \mid_{I_+}}]$ belongs to $\Omega_{\eps/\tau}$. Therefore, by the local concavity of $G_{\tau}$, we have
$$
G_{\tau}(\bp{\varphi}) \geqslant \frac{|I_-|}{|I|}G_{\tau}(\bp{\varphi \mid_{I_-}}) + \frac{|I_+|}{|I|}G_{\tau}(\bp{\varphi \mid_{I_+}}).
$$
We can repeat this procedure for each of the intervals $I_-$ and $I_+$ to divide the Bellman point of $\varphi$ into a sum of four Bellman points. Then we subdivide each of them, and so on. On the $n$-th step we get
$$
G_{\tau}(\bp{\varphi}) \geqslant \sum\limits_{\sigma} \frac{|\sigma|}{|I|}G_{\tau}(\bp{\varphi \mid_{\sigma}}),
$$
where $\sigma$ runs over the $2^n$ subintervals of $I$. The sum on the right is nothing but an integral. Indeed, we can introduce a step function $x_n(t)\colon I \mapsto \mathbb{R}^2$ that equals $\bp{\varphi \mid_{\sigma}}$ on
$\sigma$ for every $\sigma$ of the $n$-th partition of $I$. Therefore,
$$
G_{\tau}(\bp{\varphi}) \geqslant \frac{1}{|I|}\int\limits_{I} G_{\tau}(x_n(t))\, dt.
$$
Now we let $n \rightarrow \infty$. By Proposition~\ref{L5}, the size of the partition tends to zero as $n$ tends to infinity, therefore, by the Lebesgue  dominated convergence theorem, $x_n(t) \rightarrow (\varphi(t),\varphi^2(t))$ pointwise a.e. Thus, $G_{\tau}(x_n(t)) \rightarrow f_{\tau}(\varphi(t))$. All these functions are essentially bounded, because we have supposed that $\varphi$ is bounded and $G_{\tau}$ is continuous. Consequently, 
$$
\frac{1}{|I|}\int\limits_{I} G_{\tau}(x_n(t))\, dt \rightarrow f_{\tau}[\varphi],
$$  
so, sending $\tau$ to $1$, we get
$$
G(\bp{\varphi}) \geqslant f[\varphi].
$$
Now we have to get rid of the boundedness assumption. Let $\varphi$ be an arbitrary function belonging to~$\BMO_{\eps}$. We can make this function bounded by cutting off its bigger part:
$$
				\varphi_m(t) =
				\begin{cases}
					\phantom{-}m,& \varphi(t) > m;\\
					\varphi(t),&  |\varphi(t)| \le m;\\
					 -m,& \varphi(t) < -m.
				\end{cases}  
	  $$
	  By virtue of Proposition~\ref{Stcutoff}, $\varphi_m \in \BMO_{\eps}$. Therefore,
$$
G(\bp{\varphi_m}) \geqslant f[\varphi_m].
$$
Obviously, $\bp{\varphi_m} \rightarrow \bp{\varphi}$ as $m \rightarrow \infty$. The convergence of the right parts of the inequalities is a bit more puzzling. We need to verify that
$$
\int\limits_I f(\varphi_m(t))\, dt \rightarrow \int\limits_I f(\varphi(t))\, dt.
$$
We have the pointwise convergence of the integrands. So we seek a summable majorant. It is the function~$e^{\frac{|t|}{\eps_{\infty}}}$ that plays the role of the majorant for $f$. Indeed, by Lemma~\ref{emb},
$$
|f(\varphi_m(t))| \leq C e^{\frac{|\varphi_m(t)|}{\eps_{\infty}}} \leq C e^{\frac{|\varphi(t)|}{\eps_{\infty}}}.
$$
The latter expression is summable for~$\eps < \eps_{\infty}$ by the integral form of John--Nirenberg inequality~\eqref{intJN} in its sharp form  proved in~\cite{SlVa}. 
So we are finished. \qed

The procedure just described is usually called the \index{Bellman! induction}\emph{Bellman induction}. The last implication of the proof may seem confusing: we have used the John--Nirenberg inequality to prove something very similar. However, one can first prove Lemma~\ref{LMaj} for the case~$f(t) = e^{|t|/\eps}$ using the monotone convergence theorem instead of the Lebesgue theorem, then find the Bellman function for this very particular case (this can be done with the theory of Subsection~\ref{s32}) and verify that it is finite, thus, providing the wanted majorant (more or less, this is what is done in~\cite{SlVa}).
	
\subsection{Monge--Amp\`ere equation}\label{s223}
We see that functions from $\Lambda_{\eps,f}$ provide good estimates for the Bellman function. It may be useful to find the (pointwise) minimal function from this class. Surely, such a function exists, because an infimum of an arbitrary 
set of locally concave functions is also locally concave. Denote such a minimal function by~$\BG$. Using some easy convex geometry arguments one can see that~$\BG$ has to be linear in some directions. The precise statement looks like this.
\begin{Th}\label{ConvexGeometry}
Let~$f$ satisfy Conditions~\textup{\ref{reg}, \ref{sum}}. The minimal function $\BG$ from the set $\Lambda_{\eps,f}$ satisfies the following conditions.
	\textup{\begin{enumerate}
	\item \emph{For every point $x \in \interior\Omega_{\eps}$ there is a dichotomy\textup: either there exists a vector $\Theta(x)$ such that $\BG$ is linear along the line $\ell(x)= x + \mathbb{R}\Theta(x)$ in a neighborhood of $x$ or $\BG$ is linear in a neighborhood of~$x$. We call the lines $\ell(x)$ the {\bf extremals}\index{extremal}.}
	\item \emph{The function~$\BG$ is differentiable and its differential is constant for all the points on a single extremal.}
	\item \emph{The extremals cannot intersect the free boundary\textup, but only touch upon it.}
	\end{enumerate}}
\end{Th}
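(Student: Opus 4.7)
My plan is to derive all three items from the minimality of $\BG$ in $\Lambda_{\eps,f}$ via a single \emph{trimming} construction. Given an interior point $x_0$, a supporting hyperplane $L$ of $\BG$ at $x_0$ with $L-\BG>0$ on a punctured neighborhood of $x_0$, a closed disk $\overline{D}\subset\interior\Omega_{\eps}$ around $x_0$, and a $\delta>0$ with $L-\BG\ge\delta$ on $\partial D$, set
\begin{equation*}
\tilde\BG(y) \df \begin{cases} \min\{\BG(y),\,L(y)-\delta\}, & y\in\overline{D},\\ \BG(y), & y\notin\overline{D}. \end{cases}
\end{equation*}
Continuity follows from $\tilde\BG=\BG=L-\delta$ on $\partial D$; the boundary condition on $\FixedBoundary\Omega_{\eps}$ is preserved because $\overline{D}$ lies in the interior; $\tilde\BG(x_0)<\BG(x_0)$; and local concavity holds inside $\overline{D}$ as a minimum of two concave functions, outside by definition, and across $\partial D$ by the matching identity together with $\tilde\BG\le\BG$ in $\overline{D}$. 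Hence $\tilde\BG\in\Lambda_{\eps,f}$ is strictly smaller than $\BG$, contradicting minimality.

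For item (1), fix $x\in\interior\Omega_{\eps}$ and pick any supporting hyperplane $L$ of $\BG$ at $x$. The germ $K_x$ at $x$ of $\{y:\BG(y)=L(y)\}$ is convex by local concavity, so by dimension dichotomy either $K_x$ contains a two-dimensional neighborhood of $x$ (the local linearity alternative), or $K_x$ contains a proper segment through $x$ (whose direction is $\Theta(x)$ and generates $\ell(x)$), or $K_x=\{x\}$. In the last case the trimming applies at $x_0=x$ and contradicts minimality. For item (2), write $L_\ell$ for the linear function agreeing with $\BG$ on $\ell(x)$. Any supporting hyperplane of $\BG$ at $y\in\ell(x)\cap\interior\Omega_{\eps}$ must coincide with $L_\ell$ on direction $\Theta(x)$, since $\BG$ is linear there with the prescribed slope. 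If two distinct such hyperplanes existed at $y$, their difference would be a nonzero linear functional vanishing on $\Theta(x)$; averaging one can construct a supporting hyperplane $L'$ with $L'-\BG>0$ on a punctured neighborhood of a point $y'$ off $\ell(x)$, and the trimming at $x_0=y'$ again contradicts minimality. Uniqueness of the supporting hyperplane gives differentiability at $y$, and the constant slope of $L_\ell$ yields constancy of $\nabla\BG$ along $\ell(x)$.

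For item (3), suppose $\ell(x)$ met $\FreeBoundary\Omega_{\eps}$ transversally at $p$. I would construct a linear function $L'$ obtained by tilting $L_\ell$ slightly in the transverse direction and translating so that $L'$ just touches $\BG$ at a point $y'$ slightly displaced from $\ell(x)$ inside $\Omega_{\eps}$ close to $p$. Transverse crossing of $\ell(x)$ with $\FreeBoundary\Omega_{\eps}$ combined with strict convexity of the upper parabola forces this tilt to exist with an isolated contact point, so $L'-\BG>0$ on a punctured neighborhood of $y'$, and the trimming at $x_0=y'$ contradicts minimality. Tangential touching escapes this obstruction because the quadratic contact with the free parabola at the tangent point prevents the perturbed $L'$ from producing an isolated interior contact; any small tilt either detaches $L'$ from $\BG$ entirely or keeps the contact on the extremal itself. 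The most delicate step throughout is the concavity verification for $\tilde\BG$ across $\partial D$, resolved by the matching identity plus $\tilde\BG\le\BG$ inside $\overline{D}$; a secondary subtlety, localized to item (3), is constructing the tilted hyperplane so that transversality genuinely yields an isolated contact, which I expect to be the main obstacle in writing out the proof rigorously.
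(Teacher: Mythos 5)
The paper does not actually prove Theorem~\ref{ConvexGeometry} directly; immediately after its statement the authors say that, from a formal point of view, they do not need it and that it will follow from the full construction (Theorem~\ref{Final}), since the explicit Bellman candidate $B_\eps$ is eventually shown to equal $\Bell$, hence (via Lemma~\ref{LMaj} and minimality) to equal $\BG$, and $B_\eps$ carries the claimed foliation by construction. Your proposal is therefore a genuinely different, purely convex-geometric route. The trimming construction itself is sound: on any segment $\sigma\subset\Omega_\eps$, the set $\{L-\delta<\BG\}\cap\sigma$ is a subinterval contained strictly inside $\sigma\cap D$, and on it $\tilde\BG\vert_\sigma$ replaces $\BG\vert_\sigma$ by a chord lying below it with matching endpoint values, which preserves concavity; so $\tilde\BG\in\Lambda_{\eps,f}$ and strictly undercuts $\BG$, contradicting minimality.

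However, the dichotomy you invoke for item (1) has a genuine gap. The three cases you list for the contact germ $K_x=\{y:\BG(y)=L(y)\}$ — a two-dimensional neighborhood of $x$, a segment \emph{through} $x$, or $\{x\}$ — do not exhaust the possibilities for a convex germ containing $x$. The point $x$ may be an extreme point of $K_x$ while $K_x$ is not a singleton: a ray emanating from $x$, or a convex sector with apex $x$. In these configurations there is no line through $x$ along which $\BG$ is affine on \emph{both} sides, $\BG$ is not affine in a full neighborhood of $x$, and the trimming at $x$ fails, because $L-\BG$ vanishes on a nontrivial subset of $\overline D$ meeting $\partial D$, so no uniform $\delta>0$ exists. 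The same obstruction reappears in item (2): if $\BG$ has a ridge along $\ell(x)$, the contact set of the averaged supporting hyperplane still contains a segment of $\ell(x)$ reaching $\partial D$, so the punctured-neighborhood hypothesis of your trimming is again unavailable; the step where you produce a nearby $y'$ with isolated contact is asserted, not proved. Item (3) is sketched only heuristically and inherits the same difficulty. To close these gaps you would need a sharper trimming device that tolerates contact along a line or a cone (e.g.\ subtracting a concave bump that vanishes on the contact set), or a separate argument ruling out ridges and conical contact germs for the minimal $\BG$; as written, the argument would go through only under the additional, unjustified assumption that contact germs are either full-dimensional, balanced segments, or singletons.
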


This theorem provides a partition of $\Omega_{\eps}$ into sets of two types. The sets of the first type are extremals, line segments, along which $\BG$ is linear. We note (and this can be easily proved) that the extremals
cannot ``stop'': either both ends of an extremal lie on the fixed boundary or one of them is the point of tangency with the free boundary. Sets of the second type are the two-dimensional domains where $\BG$ is linear.

We should make a remark on the term ``domain''. We call a domain every open connected set united with some (or none) part of its boundary.

We will not prove Theorem~\ref{ConvexGeometry} right now, because, from a formal point of view, we do not need it (however, it will follow from our general considerations, e.g. Theorem~\ref{Final} far below). It only helps us to guess the Bellman function. It leads us to the notion of a \emph{Bellman candidate}\index{Bellman! candidate}.
\begin{Def}\label{candidate}
Suppose $\Omega$ to be some subdomain of $\Omega_{\eps}$, which in its turn is a union of several domains,~$\Omega = \cup_{i}\Omega^i$. Let the continuous function $B$ be locally concave and satisfy the boundary condition on the intersection of $\Omega$ with the fixed boundary, 
let also $B \in C^1({\Omega^i})$ for all $i$. Then we call $B$ a Bellman candidate provided for each index $i$ the function $B$ is either linear in $\Omega^i$ or $\Omega^i$ is foliated by the extremals along which the differential
of $B$ is constant.
\end{Def}
 
If $B$ is twice differentiable at some inner point $x \in \Omega_{\eps}$, then $\frac{d^2\!B}{dx^2} \le 0$, i.e. the second differential of $B$ is negative-definite, because $B$ is locally concave. On the other hand, this matrix has a non-trivial kernel, because the function $B$ is linear in the direction of $\Theta(x)$. So, the determinant of the second differential is zero. This remark clarifies the name of the subsection: the achieved equation is called the \emph{homogeneous Monge--Amp\`ere equation}\index{Monge--Amp\'ere equation}:
\begin{equation}\label{MA}
B_{x_1x_1}B_{x_2x_2} - B_{x_1x_2}^2 = 0.
\end{equation}
The homogeneous Monge-Amp\'ere equation must hold a.e. for the function $\BG$, because a locally concave function is a.e. twice differentiable. However, it does not have to hold everywhere even for very smooth boundary values $f$, because the Bellman function is rarely $C^2$-smooth.
 
\subsection{Optimizers}\label{s224}

Let $B$ be a Bellman candidate in the whole domain $\Omega_{\eps}$, i.e. let it satisfy Definition \ref{candidate} with $\Omega = \Omega_{\eps}$. This subsection provides a method of verification that the candidate $B$ coincides with the Bellman function. By Lemma~\ref{LMaj}, there is the inequality $\Bell \leq B$. To prove the reverse inequality, $B(x) \leq \Bell(x)$, for a point $x$, $x \in \Omega_{\eps}$, it is sufficient to find a function $\varphi \in \BMO_{\eps}$ with $\bp{\varphi} = x$ such that $B(x) \leq f[\varphi]$. Indeed, by the definition of the Bellman function, formula \eqref{Bf}, $f[\varphi] \leq \Bell(x)$, consequently, $B(x) \leq \Bell(x)$. We introduce some notions.
\begin{Def}\label{TestFunction}\index{test function}
Let $x \in \Omega_{\eps}$. We call a function $\varphi \in \BMO_{\eps}$ a \emph{test function} for $x$ if $\bp{\varphi} = x$. 
\end{Def} 
\begin{Def}\label{Opt}\index{optimizer}
Let $x \in \Omega_{\eps}$ and let $B$ be a Bellman candidate. We call a measurable function $\varphi$ an \emph{optimizer} for $B$ at $x$ if it satisfies two conditions:
  \begin{itemize}
  \item $\varphi$ is a test function for $x$; 
  
  \item $B(x) = f[\varphi]$.
 
 \end{itemize} 
\end{Def}
So, in order to prove that $B$ coincides with the true Bellman function, one has to provide at least one optimizer for each point $x$ in $\Omega_{\eps}$ and this candidate $B$. What is the way to do this? We can consider only non-decreasing optimizers. This follows from the fact that the monotonic rearrangement\index{monotonic rearrangement} does not increase the $\BMO$ norm. We remind the reader this useful statement.
\begin{Th}\label{MonotonicRearrangement}
Let $\varphi \in \BMO (I)$\textup, let $\varphi^*$ be its monotonic rearrangement \textup{(}i.e. a non-decreasing function such that the sets $\{t \mid \varphi(t) \geqslant \lambda\}$ and \mbox{$\{t \mid \varphi^*(t) \geqslant \lambda\}$} have equal measure for every $\lambda$\textup{)}. Then 
$$\BNorm{\varphi^*} \leq \BNorm{\varphi}.$$
\end{Th}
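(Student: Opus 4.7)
The plan is to show that for every subinterval $J\subset I$, $\av{(\varphi^* - \av{\varphi^*}{J})^2}{J}\le \BNorm{\varphi}^2$. The approach relies on two tools already developed in this section: the double-integral identity communicated by F.~V.~Petrov and used in the proof of Proposition~\ref{Stcutoff}, and the truncation lemma itself.

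First, I would rewrite the variance via Petrov's identity,
\begin{equation*}
\var(\varphi^*;J) = \frac{1}{2|J|^2}\int_J\int_J\big(\varphi^*(x)-\varphi^*(y)\big)^2\,dx\,dy,
\end{equation*}
and, for $J=[\alpha,\beta]$, set $c=\varphi^*(\alpha)$ and $d=\varphi^*(\beta)$. The truncation $\tilde\varphi = \varphi_{c,d}$ satisfies $\BNorm{\tilde\varphi}\le\BNorm{\varphi}$ by Proposition~\ref{Stcutoff}, and a direct check shows $(\tilde\varphi)^*\equiv\varphi^*$ on $J$, so it suffices to prove the inequality for the bounded function $\tilde\varphi$.

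Next, by equimeasurability, the joint distribution of $(\tilde\varphi^*(x),\tilde\varphi^*(y))$ on $J\times J$ equals the joint distribution of $(\tilde\varphi(x),\tilde\varphi(y))$ on $E\times E$, where $E = \{t\in I: c\le \varphi(t)\le d\}$ and $|E|=|J|$. Consequently, $\var(\tilde\varphi^*;J)$ coincides with the ``generalized variance'' of $\tilde\varphi$ on the measurable set $E$, obtained from the same double-integral formula. Atoms in the distribution of $\varphi$ (at $c$ or $d$) are handled by an approximation argument, replacing each atom by a thin interval of linearly varying values and then passing to the limit.

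The crux is then to find a subinterval $J'\subset I$ for which $\var(\tilde\varphi;J')\ge\var(\tilde\varphi;E)$. The naive candidate, the convex hull of $E$, does \emph{not} always suffice: gaps of $E$ may contain values of $\tilde\varphi$ near $c$ or $d$ that dilute the variance on the hull while the rearrangement concentrates them in $J$. The correct $J'$ must be located through a level-set/stopping-time decomposition of $I$: one identifies a minimal subinterval on which $\tilde\varphi$ carries roughly balanced mass near the two extremes $c$ and $d$ — such a subinterval must exist, because the distributional matching $\tilde\varphi^*|_J\sim\tilde\varphi|_E$ forces enough interleaving of the level sets $\{\tilde\varphi\approx c\}$ and $\{\tilde\varphi\approx d\}$ inside $I$. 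Rigorously exhibiting $J'$ is the \emph{main obstacle}; this is essentially the content of Korenovskii's classical argument for the rearrangement invariance of mean-oscillation norms and is what replaces the direct swap-by-swap reduction (which is delicate for the quadratic oscillation).
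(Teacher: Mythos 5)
The paper does not actually prove Theorem~\ref{MonotonicRearrangement}; it states it and refers to Klemes~\cite{Ivo} (with Korenovskii~\cite{Kor} cited for a sharper application). So there is no internal proof to compare against, and you are effectively attempting to reconstruct the cited result.

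Your preliminary reductions are correct and genuinely useful: truncation commutes with monotone rearrangement, so $(\varphi_{c,d})^* = (\varphi^*)_{c,d}$, which equals $\varphi^*$ on $J$; Proposition~\ref{Stcutoff} shows truncation does not increase the $\BMO$ norm; and equimeasurability alone (one does not even need the joint-distribution formulation) gives $\var(\tilde\varphi^*;J) = \var(\tilde\varphi;E)$. But the statement you label the ``main obstacle'' --- producing an \emph{interval} $J'\subset I$ with $\var(\tilde\varphi;J')\ge\var(\tilde\varphi;E)$ for the generally disconnected level set $E$ --- is not a technical loose end; it is the entire content of the theorem. After the reductions, the claim ``rearrangement does not increase quadratic mean oscillation'' has been transformed into exactly this interval-selection problem, and your proposal offers only the heuristic that level sets of $\tilde\varphi$ near $c$ and $d$ must interleave. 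No stopping-time or level-set construction is actually exhibited, and you have already noted (correctly) that the convex hull of $E$ fails and that a naive swap argument is delicate for the $L^2$ oscillation. Appealing to ``Korenovskii's classical argument'' at precisely this point is circular: that argument \emph{is} the proof. As written, then, the proposal is a sound reduction followed by a citation, not a proof, so the gap is genuine. (Separately, the passage from $\varphi^*(\alpha),\varphi^*(\beta)$ to $|E|=|J|$ requires the distribution of $\varphi$ to have no atoms at $c$ and $d$; you flag the approximation fix, which is fine, but it should be carried out rather than merely mentioned.)
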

This theorem was proved in~\cite{Ivo}. The importance of this theorem and its relationship with sharp constants in inequalities of the form~\eqref{bmoequivnorms},~\eqref{JN},~\eqref{intJN} are emphasized by the paper~\cite{Kor}, where this theorem was used to find the exact value of maximal $\eps_{\infty}$ for \eqref{intJN} in the case of the~$\BMO$~$1$-semi-norm. There is a very close relationship between monotonic rearrangements and Bellman functions, but we postpone this discussion to Section~\ref{s62}.

Monotonic rearrangement has one useful property:
$$f[\varphi] = f[\varphi^*],$$
which shows, together with Theorem~\ref{MonotonicRearrangement} that $\varphi^*$ is an optimizer provided $\varphi$ is an optimizer. We do not need this consideration formally, but it helps us to guess the optimizers. In the light of it, we consider only monotone optimizers. All  other considerations concerning optimizers are postponed until the fifth chapter.

\subsection{General principles and description of results}

We have seen that to find the Bellman function~$\Bell$, one has to construct the minimal function~$\BG$ from~$\Lambda_{\eps,f}$ and then prove that~$\BG = \Bell$ using optimizers. We recall that $\BG$ is a Bellman candidate in the whole domain~$\Omega_{\eps}$, i.e. the domain~$\Omega_{\eps}$ is foliated by its extremals and linearity domains~(see Subsection~\ref{s223}). We note that we used Theorem~\ref{ConvexGeometry} to verify that $\BG$ is a Bellman candidate, but in what follows we will first construct some Bellman candidate in the whole domain $\Omega_{\eps}$ that seems to be minimal, and then prove that it coincides with $\BG$ and $\Bell$ using optimizers. 

We call such a foliation\index{foliation} by extremals and linearity domains a \emph{picture} of a candidate~$B$. We use this notion only for descriptive needs, not for the proofs. We will not formalize it, because there are some difficulties in such a formalization, which we do not need to overcome. For example, some functions $B$ have several pictures (for the case of the constant function~$B$ we can draw any picture we want). But if one knows the picture for~$B$ and the boundary condition~$f$, he can calculate the Bellman candidate with such a picture in many situations. There is also some non-uniqueness of the Bellman candidate $\BG$, which disappears if one recalls that $\BG$ is minimal. Anyway, the structure of the picture plays the crucial role in the proof, all the reasonings deal not with the candidate itself, but with its picture. The expression ``to build the Bellman function'' informally means ``to build the foliation for the minimal candidate''. The global foliations are built from the local ones, which are of several different types. First, we study the local structure of each such~\emph{figure} and then ``glue'' the picture from them. 

We formulate the main theorem of the paper. We recall that~$\BG$ is the minimal locally concave function in~$\Omega_{\eps}$ that coincides with~$f$ on the boundary.
\begin{Th}\label{MT}
Let $f$ satisfy Conditions~\textup{\ref{reg},~\ref{sum}}. Then 
\begin{equation*}
\Bell(x;\,f) = \BG_{\eps,f}.
\end{equation*}
\end{Th}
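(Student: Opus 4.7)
The proof has a natural two-sided structure. The inequality $\Bell(x;\,f)\le\BG_{\eps,f}(x)$ is already in hand: by Lemma~\ref{LMaj}, any $G\in\Lambda_{\eps,f}$ majorizes $\Bell$, and $\BG_{\eps,f}$ is the pointwise infimum of this family (which is again locally concave, continuous, and attains the correct boundary values, hence lies in $\Lambda_{\eps,f}$). So the entire content of the theorem is the reverse inequality $\BG_{\eps,f}(x)\le\Bell(x;\,f)$, and my plan is to produce, for every $x\in\Omega_{\eps}$, an optimizer or a sequence of test functions $\varphi_n\in\BMO_{\eps}(I)$ with $\bp{\varphi_n}=x$ and $f[\varphi_n]\to\BG_{\eps,f}(x)$.

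Rather than attack $\BG_{\eps,f}$ directly, I would work with an explicit Bellman candidate $B=B(\eps)$ constructed by hand, and prove two things in parallel: $(a)$ $B$ lies in $\Lambda_{\eps,f}$, so $\BG_{\eps,f}\le B$ by minimality; and $(b)$ for every $x\in\Omega_{\eps}$ there is a test function $\varphi$ for $x$ with $f[\varphi]=B(\bp{\varphi})$, whence $B\le\Bell$. Chaining with Lemma~\ref{LMaj} yields $B=\BG_{\eps,f}=\Bell$. The construction of $B$ is carried out by the \emph{evolutional} method announced in the introduction: for sufficiently small $\eps$ the foliation of $\Omega_{\eps}$ consists of a simple concatenation of tangent, chordal, and linearity figures, and $B(\eps)$ is defined piecewise on them via the boundary data $f$ and the Monge--Amp\`ere equation~\eqref{MA}. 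One then grows $\eps$, tracking each figure through its local evolutional scenario; at the discrete (and, as one must show, finite) set of critical values of $\eps$ where figures collide, the combinatorial type of the picture changes and the formulas of Section~\ref{s35} let us reassemble the neighboring figures into a new admissible configuration, after which the evolution continues. Local concavity of $B$ across figure boundaries follows from $C^1$-matching (Definition~\ref{candidate}) together with concavity inside each figure; the boundary condition on $\FixedBoundary\Omega_{\eps}$ holds by construction of the tangent and chordal pieces.

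Producing optimizers is the second half of the plan. Each figure comes with a canonical family of monotone test functions, constructed from the extremals: on a linearity domain the optimizer is a two-valued function whose Bellman point splits $x$ along the chord prescribed by the linearity structure, while on a tangent/chordal domain one takes a self-similar Bellman induction — cut $I$ into two subintervals sized according to the weights along the extremal through $x$, assign each piece the Bellman point at the endpoints of the extremal, and iterate. The convexity lemma of Section~\ref{s51} then forces $f[\varphi]=B(\bp{\varphi})$, and Theorem~\ref{MonotonicRearrangement} lets us restrict to monotone candidates throughout. Gluing is performed by induction on the graph of the Bellman candidate: once an optimizer is available for every boundary segment shared by two neighboring figures, the figure-interior optimizer is built by concatenating across that segment, and the $\BMO_{\eps}$-norm control is preserved because the seams lie on extremals. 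The summability condition~\ref{sum}, via Lemma~\ref{emb}, supplies the dominated-convergence majorant needed to pass to $L^1$ convergence of $f[\varphi_n]$, exactly as in the proof of Lemma~\ref{LMaj}.

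The principal obstacle I anticipate is the evolutional bookkeeping, i.e. proving that the procedure terminates and covers all $\eps$. It is not a priori obvious that the set of critical $\eps$ — those at which two non-angular figures crash or at which a figure degenerates — is finite on every bounded interval, nor that ``angle crashes'' can be crossed without losing local concavity or $C^1$-continuity of $B$. Handling this requires the monotonicity properties of the forces, tails, and roots of the balance equations developed in Section~\ref{s42}, combined with the finiteness assertion built into Condition~\ref{reg} (only finitely many essential roots $v_i,c_i$ of $f'''$). A secondary difficulty is optimizer gluing across the critical $\eps$: one must check that the families constructed just before and just after a crash are compatible on the common boundary, so that the assertion $B(x)\le\Bell(x)$ survives the change of combinatorial type. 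Once these two technical issues are settled, the equality $\Bell=\BG_{\eps,f}$ drops out of the chain $\Bell\le\BG_{\eps,f}\le B\le\Bell$.
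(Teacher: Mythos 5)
Your proposal matches the paper's proof exactly: the chain of inequalities $\Bell \leq \BG_{\eps,f} \leq B_{\eps} \leq \Bell$, with the first from Lemma~\ref{LMaj}, the second from minimality of $\BG_{\eps,f}$ once the evolutionally-constructed candidate $B_{\eps}$ is shown to lie in $\Lambda_{\eps,f}$ (Theorem~\ref{BC} and Remark~\ref{AdmissibleCondidate}), and the third from the existence of optimizers (Theorem~\ref{GlobalOptimizer}) --- this is precisely Theorem~\ref{Final}. The one small misapprehension is your ``secondary difficulty'': no compatibility of optimizer families across critical values of $\eps$ is needed, because for each fixed $\eps$ the optimizers are built from scratch over the entire foliation by a single pass along the graph (Theorem~\ref{GlobalOptimizer}), and the evolution in $\eps$ serves only to produce the candidate $B_{\eps}$, not to transport optimizers between values of $\eps$.
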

This theorem holds in a much more general setting, see Section~\ref{s62}. 

\bigskip

\bigskip

\bigskip

\bigskip

\bigskip

\centerline{\bf Main results}
\begin{enumerate}
\item We build the Bellman function for $f$ satisfying Conditions~\ref{reg},~\ref{sum} and describe its evolution in~$\eps$;
\item We provide an algorithm that allows one to calculate the Bellman function; 
\item Theorem \ref{MT}.
\end{enumerate} 
Some explanation is needed. By building the Bellman function we mean only that there is a foliation of $\Omega_{\eps}$ of rather simple type (for example, it has only finite number of linearity domains) over which the Bellman function $\Bell(f)$ is built. This foliation evolves continuously and obey some monotonicity rules that are also described. In the second point we intend to provide some expression for $\Bell$ that contains integrals, differentiation, and solution of some implicit equations. We always prove that those equations are well solvable, i.e. do not have infinite number of solutions. The strength of the evolutional approach is that it provides the Bellman function for all $\eps$ simultaneously.

Some miscellaneous results are also situated in the sixth chapter.

\chapter{Patterns for Bellman candidates}

\section{Preliminaries}\label{s31}
The purpose of this section is to construct Bellman candidates (see Definition~\ref{candidate}) on various domains. The global foliation for the Bellman function may occur to be rather complicated, but its local structure is easy to describe. We give some heuristics to classify local Bellman candidates.

Consider a minimal locally concave function and its foliation provided by Theorem~\ref{ConvexGeometry}. We recall that this foliation consists of segments, which are called extremals, and linearity domains\index{domain! linearity domain}. 

The extremals are of two types: those that connect two points on the lower boundary and those that connect a point on the lower (fixed) boundary with a point on the upper (free) one. First type extremals are called \emph{chords}\index{chords}, second type extremals are called \emph{tangents}\index{tangents}. We note that a chord can touch the upper parabola. Such a chord is called a \emph{long} one. Long chords are the ones whose projection on the~$x_1$-axis has length~$2\eps$. Other chords have smaller projections.

It is convenient to classify linearity domains by the number of their points on the lower boundary. Indeed, if a linearity domain has at least three points on the lower boundary, then the value of the Bellman candidate can be calculated immediately. Moreover, a linearity domain that has more than three points on the lower boundary can be present in the foliation of a Bellman candidate not for all functions~$f$. If this linearity domain crosses the lower boundary at the points~$A_i = \big(a_i,a_i^2\big)$, then the points~$\big(a_i,a_i^2,f(a_i)\big)$ lie in one plane in~$\mathbb{R}^3$. This provides a restriction on the function~$f$. Summing up all we have said about linearity domains, we distinguish the ones that have one point on the lower boundary, the ones that have two points on the lower boundary, and all the others. A more detailed classification will be provided later.

A global foliation is glued from local ones. We explain the informal meaning of the word ``glue'' we use. Consider two subdomains~$\Omega^1$ and~$\Omega^2$ of~$\Omega_{\eps}$. Let~$B_1$ be a Bellman candidate on~$\Omega^1$, let~$B_2$ be a Bellman candidate on~$\Omega^2$. Suppose that~$B_1 = B_2$ on~$\Omega^1 \cap \Omega^2$. Consider the function~$B$ defined on the union domain~$\Omega = \Omega^1 \cup \Omega^2$ as a concatenation of~$B_1$ and~$B_2$ (i.e.~$B = B_1$ on~$\Omega^1$ and~$B = B_2$ on~$\Omega^2$). Suppose that this function~$B$ is~$C^1$-smooth. In such a case, it is locally concave, provided the functions~$B_1$ and~$B_2$ are locally concave. Thus it is a Bellman candidate on~$\Omega$. Its foliation coincides with the foliation for~$B_1$ on~$\Omega^1$ and with the foliation for~$B_2$ on~$\Omega^2$. We see that the foliation for~$B$ is glued from the foliations for~$B_1$ and~$B_2$. 

We have used the following fact in the explanation : a~$C^1$-concatenation of two locally concave functions is locally concave. To formulate this claim rigorously, we need a new notion\footnote{This definition was suggested to us by Pavel Galashin and Vladimir Zolotov, we are grateful to them.}. 
\begin{Def}\label{InducedConvexSet}\index{induced convex set}\index{induced convex set}
Suppose that~$\Omega$ is a subdomain of~$\Omega_{\eps}$. We call~$\Omega$ an induced convex set if for every segment~$l \subset \Omega_{\eps}$ the set~$\Omega \cap l$ is convex.
\end{Def}

All the domains we use for building Bellman candidates are induced convex.

\begin{St}\label{ConcatenationOfConcaveFunctions}
Suppose that the domains~$\Omega^1$ and~$\Omega^2$ are induced convex in~$\Omega_{\eps}$. Suppose that a~$C^1$-smooth function~$B$ is locally concave on each of the domains~$\Omega^i$\textup,~$i=1,2$. Then it is locally concave on~$\Omega^1 \cup\Omega^2$.
\end{St}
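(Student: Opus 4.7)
The plan is to reduce everything to a one-dimensional statement about the restriction of~$B$ to an arbitrary segment. Fix any segment~$l\subset\Omega^1\cup\Omega^2$. Because~$\Omega^1,\Omega^2$ are induced convex in~$\Omega_\eps$, the sets $A\df l\cap\Omega^1$ and $A'\df l\cap\Omega^2$ are each convex, hence intervals of~$l$, and their union is $l$. Parametrize $l$ by $t\in[0,1]$ and put $g(t)\df B(l(t))$. By hypothesis $g\in C^1([0,1])$, and $g$ is concave on each of the two sub-intervals~$A,A'$ (since $B$ is locally concave on each~$\Omega^i$ and these sub-intervals lie in the corresponding~$\Omega^i$). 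Our task is to promote these two partial concavities to concavity of $g$ on the whole of~$[0,1]$.

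The key step is that $A$ and $A'$ must overlap. Indeed, $[0,1]$ is connected and cannot be written as the disjoint union of two non-empty intervals, so $A\cap A'\neq\varnothing$; this intersection is itself an interval $J$. I claim that for any two points~$s<t$ in~$[0,1]$ one can find a point~$u\in J$ lying between them, or in~$A$, or in~$A'$. If $J\cap[s,t]\neq\varnothing$, take $u$ there. Otherwise $J$ lies entirely on one side of $[s,t]$; a short case analysis using that $A,A'$ are intervals containing $J$ together with the points~$s\in A$ and~$t\in A'$ forces either $t\in A$ (if $J$ lies to the right of $[s,t]$) or $s\in A'$ (if $J$ lies to the left), which furnishes a point of $A\cap A'$ in $[s,t]$ at one of the endpoints.

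With such a~$u$ in hand, I use that $g'$ is continuous on $[0,1]$ (the $C^1$ hypothesis!) and non-increasing on each of the intervals~$A$ and~$A'$ (as the derivative of a concave function of one variable). Hence
\begin{equation*}
g'(s)\;\ge\;g'(u)\;\ge\;g'(t),
\end{equation*}
where the first inequality holds by monotonicity on the interval of $A$ or $A'$ containing both~$s$ and~$u$, and the second by monotonicity on the one containing both~$u$ and~$t$. Thus $g'$ is non-increasing on~$[0,1]$, which means $g$ is concave there. Since $l$ was an arbitrary segment inside $\Omega^1\cup\Omega^2$, the function $B$ is locally concave on the union, as required.

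The only real subtlety is the chaining step: without the~$C^1$ assumption one could have a concave-up kink at a common boundary point of $A$ and $A'$, so the continuity of $g'$ at the junction is precisely what lets the two monotonicities glue. The induced convex hypothesis, on the other hand, is what guarantees that the intersection of~$l$ with each~$\Omega^i$ is a single interval rather than several disjoint pieces; otherwise the same segment~$l$ could enter and leave~$\Omega^i$ several times and no intermediate overlap point~$u$ need exist.
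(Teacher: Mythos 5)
Your proof follows the same route as the paper's: restrict $B$ to an arbitrary segment $l$, note via induced convexity that $l\cap\Omega^1$ and $l\cap\Omega^2$ are two intervals covering $l$, and glue the two partial concavities of $g=B\circ l$ through the $C^1$ hypothesis. You unpack the gluing step as monotonicity of $g'$ chained through a shared point, which the paper leaves implicit, and your closing remark about the respective roles of the $C^1$-smoothness and the induced-convexity hypotheses captures exactly why both are needed.

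One small flaw: the assertion that connectedness of $[0,1]$ forbids a disjoint cover by two non-empty intervals is false. For instance $[0,1]=[0,\tfrac12]\cup(\tfrac12,1]$ is such a cover; connectedness only rules out a separation into two relatively \emph{open} (or two relatively closed) pieces, and the sets $A=l\cap\Omega^1$ and $A'=l\cap\Omega^2$ need not be open, since the paper's ``domains'' are allowed to include part of their boundary. So $A\cap A'$ may well be empty. This does not sink the argument, though: if, say, $A=[0,a]$ and $A'=(a,1]$, then $g'$ is non-increasing on $(a,1]$ and continuous at $a$, so $g'(a)=\lim_{\tau\to a^+}g'(\tau)\ge g'(t)$ for any $t>a$, and the chain $g'(s)\ge g'(a)\ge g'(t)$ still runs for $s\le a<t$. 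In other words, you should chain through a common \emph{boundary} point of $A$ and $A'$ and use continuity of $g'$, rather than insisting on a point of $A\cap A'$. The paper's one-line proof speaks of ``a common point of these segments'' and is equally loose on this score, so the imprecision is inherited rather than new.
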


\begin{proof}
To prove the claim we establish that the restriction of~$B$ to every segment~$l \subset \Omega^1 \cup\Omega^2$ is concave. Obviously,~$l = \big(l\cap\Omega^1\big) \cup \big(l\cap\Omega^2\big)$. Each of the sets~$l\cap\Omega^1$ and~$l\cap\Omega^2$ is convex, i.e. they are either segments or empty sets (the latter case is obvious). By the hypothesis,~$B$ is concave on each of these segments. Using~$C^1$-smoothness of~$B$ in a common point of these segments, we get that~$B|_{l}$ is concave.  
\end{proof}
Surely, this proposition can be generalized in many ways, for us it is only a useful tool. Now we can state that a~$C^1$-smooth concatenation of two Bellman candidates is a Bellman candidate provided their domains are induced convex. 

We turn to building Bellman candidates. Usually, we will give only sufficient conditions for a foliation and a function~$f$ that generate a Bellman candidate. However, to be ready to construct the Bellman function, we have to examine all possible local Bellman candidates. So, the conditions we provide are usually also necessary. To make the story shorter, sometimes we will not prove this necessity, because we do not need it.

To describe combinatorial properties of foliations, we associate a special oriented graph with each foliation. Generally, its vertices correspond to the linearity domains, whereas its edges correspond to the domains of extremals. A vertex is incident to an edge if the corresponding two domains are adjacent. We postpone a more detailed description of the graph (concerning the orientation of edges and the numbers that correspond to different vertices and edges) to Subsection~\ref{s345}. 

\section{Tangent domains}\label{s32}\index{domain! tangent domain}
In this section we build Bellman candidates on \emph{tangent domains}, i.e. domains that consist of tangents to the free boundary. These domains can be formed either by the right tangents, or by the left ones (see Fig.~\ref{fig:tang}). The right tangents are those that lie on the right of their tangency points.
\begin{figure}[h!]
\includegraphics[height=3.8cm]{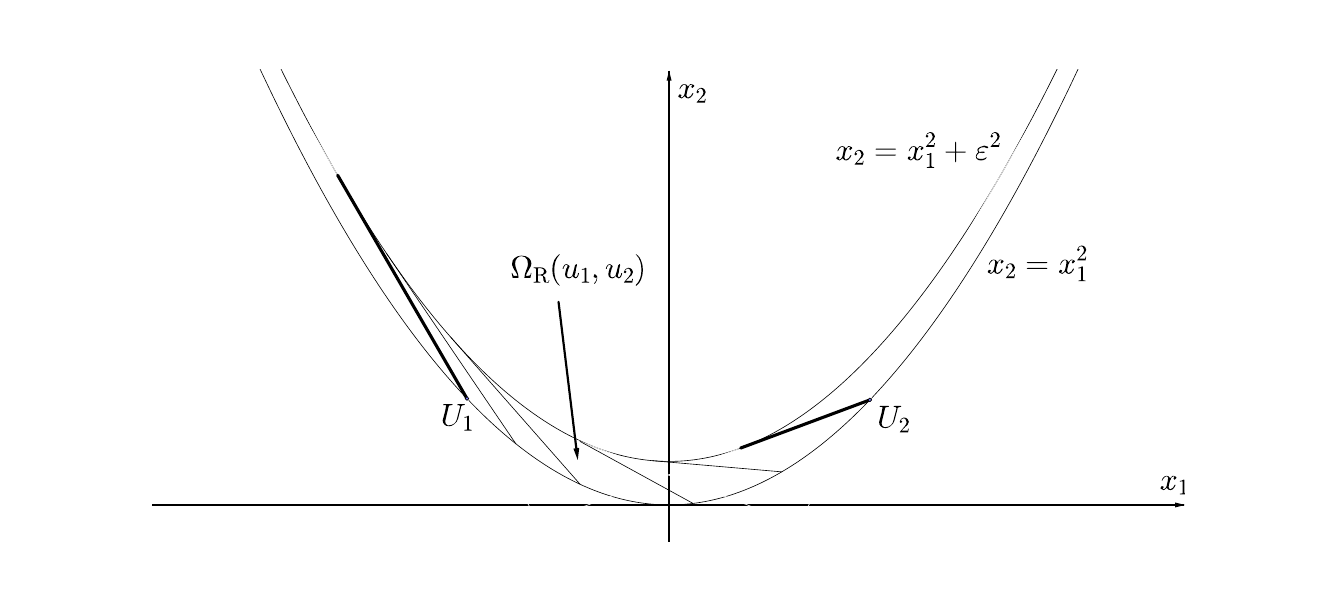}
\raisebox{-10pt}{\includegraphics[height=4.1cm]{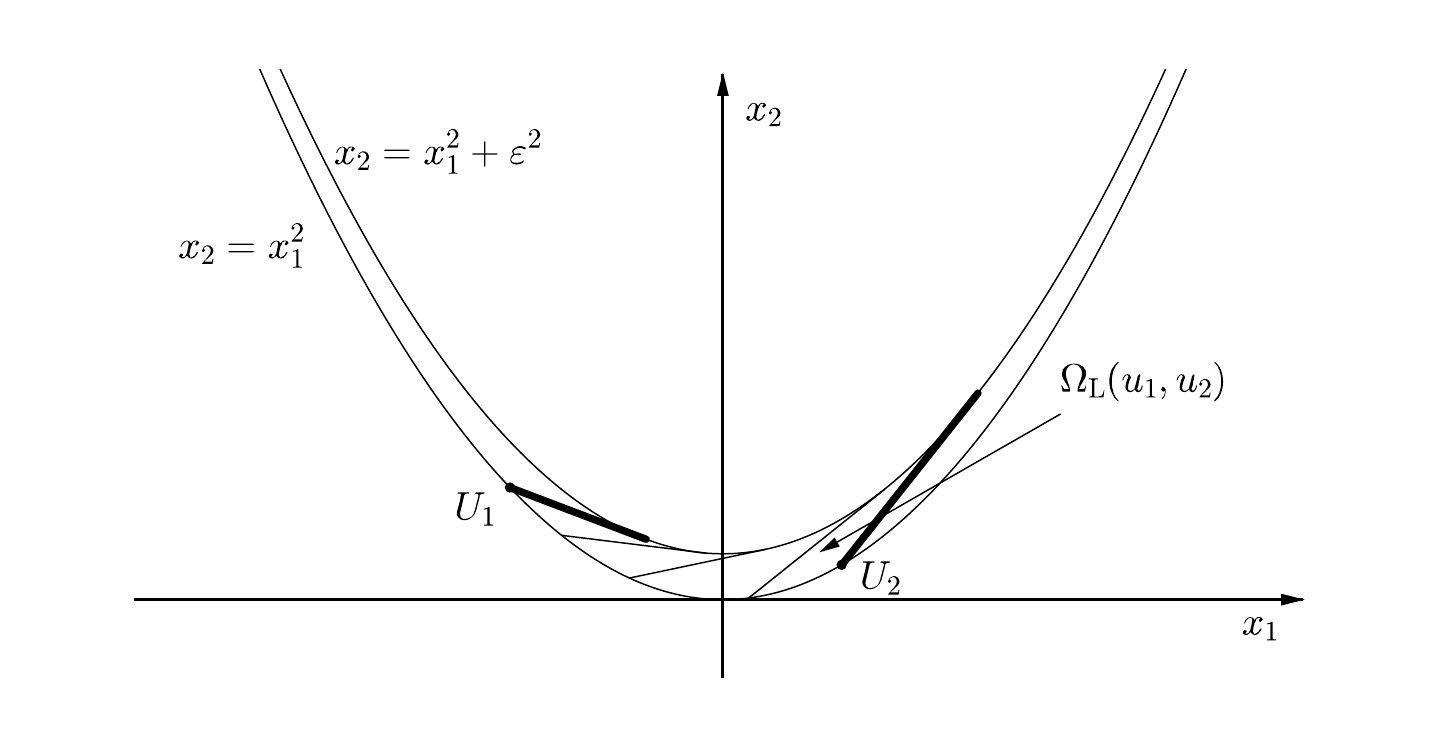}}
\caption{Domains~$\Rt$ and~$\Lt$ with right and left tangents.}
\label{fig:tang}
\end{figure}

We introduce notation. Points on the lower parabola are denoted by capital letters, whereas their first coordinates are denoted by the corresponding small letters (e.g.~$A = (a,a^2)$). For each point~$x = (x_1,x_2)$ in~$\Omega_{\eps}$ there is exactly one right tangent passing through it. We denote the point where it intersects the lower boundary by~$\Ur = \Ur(x)$. Then 
\begin{equation}\label{ur}
		\ur = x_1 + \eps - \sqrt{x_1^2 + \eps^2 - x_2}.
\end{equation}
In the symmetric case, when we draw the left tangent through~$x$, the equation for~$\Ul = \Ul(x)$ and~$x$ becomes
\begin{equation}\label{ul}
		\ul = x_1 - \eps + \sqrt{x_1^2 + \eps^2 - x_2}.
\end{equation}
Now we can define the right tangent domain
\begin{equation*}
\Rt(u_1,u_2;\eps) \df \{x \in \Omega_{\eps} \mid \ur(x) \in [u_1,u_2]\}.
\end{equation*}
Similarly,
\begin{equation*}
\Lt(u_1,u_2;\eps) \df \{x \in \Omega_{\eps} \mid \ul(x) \in [u_1,u_2]\}.
\end{equation*}
We will often omit~$\eps$ in the notation for tangent domains. We turn to the description of the Bellman candidates on the domains introduced. The Bellman candidate is linear on each extremal (Definition~\ref{candidate}),
\begin{equation}\label{linearity}	
		B(x_1,x_2) = m(u)(x_1 - u) + f(u),
\end{equation}
where~$u$ is either~$\ur$ or~$\ul$ depending on the orientation of the tangent family. The coefficient~$m$ depends  on the tangent only (i.e. on~$u$).
We cite two propositions from~\cite{5A} that give sufficient conditions for the function~\eqref{linearity} to be a Bellman candidate.

\begin{St}\label{RightTangentsCandidate}
Suppose that the function~$m$ satisfies the differential equation
\begin{equation}\label{difeq}
\eps m'(u) + m(u) - f'(u) = 0
\end{equation}
and the inequality~$m''(u) \leq 0$ for~$u \in (u_1,u_2)$. Then the function~$B$ given by the formula~\eqref{linearity} is a Bellman candidate on~$\Rt(u_1,u_2)$.
\end{St}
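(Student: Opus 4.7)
The plan is to verify each clause of Definition~\ref{candidate}: continuity and $C^{1}$-smoothness of~$B$ on $\Rt(u_1,u_2)$, agreement with~$f$ on the fixed boundary, linearity of~$B$ along each right tangent, constancy of~$\nabla B$ along each tangent, and local concavity. Most items are either immediate from~\eqref{linearity} or follow from a short chain-rule computation. At a point $(x_1,x_1^2)\in\FixedBoundary\Omega_\eps$ one has $\sqrt{x_1^2+\eps^2-x_2}=\eps$, so $\ur(x_1,x_1^2)=x_1$ by~\eqref{ur} and $B(x_1,x_1^2)=f(x_1)$; the very shape of~\eqref{linearity} shows that $B$ is affine along every tangent $\{\ur=u\}$.

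The crucial step is that $\nabla B$ is constant on each tangent. Introduce $D\df\sqrt{x_1^2+\eps^2-x_2}$ so that $x_1-u=D-\eps$, and implicit differentiation of~\eqref{ur} gives $u_{x_1}=(D-x_1)/D$ and $u_{x_2}=1/(2D)$. The chain rule applied to~\eqref{linearity} produces
\begin{equation*}
B_{x_2}=u_{x_2}\bigl[m'(u)(x_1-u)-m(u)+f'(u)\bigr],\qquad B_{x_1}=m(u)+u_{x_1}\bigl[m'(u)(x_1-u)-m(u)+f'(u)\bigr].
\end{equation*}
This is where~\eqref{difeq} enters decisively: substituting $f'(u)-m(u)=\eps m'(u)$ reduces the common bracket to $D\,m'(u)$, so that $B_{x_2}=m'(u)/2$ and $B_{x_1}=m(u)+(\eps-u)m'(u)$. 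Both components depend only on~$u$, which yields the required constancy of~$\nabla B$ along each tangent and, together with continuity of $u(x)$, the $C^{1}$-regularity of~$B$ on $\Rt(u_1,u_2)$.

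For local concavity, differentiating $B_{x_2}=m'(u)/2$ once more gives $B_{x_2 x_2}=m''(u)/(4D)$, which is nonpositive in the interior by the hypothesis $m''\le 0$. Because $B$ is affine in the tangent direction $(1,\,2(u-\eps))$, this vector lies in the kernel of the Hessian~$H$, so $H$ has rank at most one and can be written as $H=\mu ww^{\top}$ with $w\perp(1,2(u-\eps))$. Normalizing $w$ to have second coordinate~$1$ and matching the $(2,2)$-entry of~$H$ forces $\mu=B_{x_2 x_2}\le 0$, so $H\preceq 0$ in the interior. Local concavity on the whole closed tangent domain then follows by continuity of $\nabla B$ up to the free boundary.

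The main obstacle, and the only truly non-routine point, is recognizing that~\eqref{difeq} is exactly the integrability condition that cancels the $(x_1-u)$-dependence of the bracket $m'(u)(x_1-u)-m(u)+f'(u)$. Without this identity, $\nabla B$ would acquire nontrivial dependence on the transverse coordinate along each tangent, destroying both the $C^{1}$-property and the rank-one structure of the Hessian; once~\eqref{difeq} is invoked, local concavity reduces to the one-dimensional sign condition $m''\le 0$.
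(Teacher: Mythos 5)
Your proof is correct and takes essentially the same route as the paper: both compute $\partial B/\partial x_2$ via the chain rule, observe that~\eqref{difeq} cancels the transverse dependence so $\partial B/\partial x_2 = m'(u)/2$, and conclude negative semi-definiteness of the Hessian from $B_{x_2x_2}=m''(u)/(4D)\le 0$ together with the rank-one structure forced by the tangent being in the kernel. The only cosmetic difference is that you also compute $B_{x_1}$ explicitly, while the paper infers constancy of $\nabla B$ along extremals from $B_{x_2}$ plus linearity along the tangent.
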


\begin{proof}
Differentiating formula~\eqref{ur}, we get
\begin{equation*}
\frac{\partial u}{\partial x_2} = \frac{1}{2(x_1-u+\eps)},
\end{equation*}
thus, formula~\eqref{linearity} leads to
\begin{equation}\label{BSubX2Tangents}
\frac{\partial B}{\partial x_2} = \frac{m'(u)}{2} - \frac{\eps m'(u) + m(u) - f'(u)}{2(x_1 - u +\eps)} \stackrel{\scriptscriptstyle{\eqref{difeq}}}{=} \frac{m'(u)}{2}.
\end{equation}
So, we see that~$\frac{\partial B}{\partial x_2}$ is constant along the extremals. Since~$B$ is linear on each extremal and the extremals are not parallel to the~$x_2$-axis,~$\nabla B$ is constant along each extremal as well. It remains to notice that
\begin{equation*}
\frac{\partial^2 B}{\partial x_2^2} = \frac{m''(u)}{4(x_1 - u + \eps)} \leq 0.
\end{equation*} 
For every~$x\in\Rt(u_1,u_2)$, the matrix~$\frac{\partial^2 B}{\partial x^2} (x)$ is symmetric and has zero eigenvalue with the eigenvector collinear with the right tangent passing through~$x$. Since~$\frac{\partial^2 B}{\partial x_2^2}(x) \leq 0$ and the~$x_2$-direction is never collinear with the tangent, the second eigenvalue of~$\frac{\partial^2 B}{\partial x^2} (x)$ is non-positive, and thus~$\frac{\partial^2 B}{\partial x^2} (x) \leq 0$ as a matrix. We have proved that~$B$ is locally concave.
\end{proof}

\begin{Def}\label{StandardCandidateTangentRight}\index{standard candidate! in right tangent domain}
The function~$B$ constructed in Proposition~\textup{\ref{RightTangentsCandidate}} is called a {\it standard candidate} on the domain~$\Rt(u_1,u_2)$ if $m''<0$ on $(u_1,u_2)$.
\end{Def}

\begin{St}\label{LeftTangentsCandidate}
Suppose that the function~$m$ satisfies the differential equation
\begin{equation}\label{difeq2}
-\eps m'(u) + m(u) - f'(u) = 0
\end{equation}
and the inequality~$m''(u) \geq 0$ for~$u \in (u_1,u_2)$. Then the function~$B$ given by the formula~\eqref{linearity} is a Bellman candidate on~$\Lt(u_1,u_2)$.
\end{St}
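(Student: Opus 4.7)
My plan is to imitate the proof of Proposition~\ref{RightTangentsCandidate} step by step, using the symmetry between the right and left tangent constructions. The two changes are the sign in the differential equation~\eqref{difeq2} and the sign of $m''$; all the rest of the computation should go through with the opposite sign in $\partial u/\partial x_2$ compensating for the opposite sign in $\pm \eps m'$.

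First I would differentiate formula~\eqref{ul} with respect to $x_2$ to obtain
\begin{equation*}
\frac{\partial u}{\partial x_2} = -\frac{1}{2\sqrt{x_1^2 + \eps^2 - x_2}} = \frac{1}{2(x_1 - u - \eps)},
\end{equation*}
the second equality following from $\sqrt{x_1^2+\eps^2-x_2} = \eps - x_1 + u$ for the left tangent. Plugging this, together with~\eqref{difeq2} rewritten as $m(u) - f'(u) = \eps m'(u)$, into the derivative of~\eqref{linearity} should give, after the same cancellation as in~\eqref{BSubX2Tangents},
\begin{equation*}
\frac{\partial B}{\partial x_2} = \frac{m'(u)(x_1 - u) - m(u) + f'(u)}{2(x_1 - u - \eps)} = \frac{m'(u)(x_1 - u - \eps)}{2(x_1 - u - \eps)} = \frac{m'(u)}{2}.
\end{equation*}
Thus $\partial B/\partial x_2$ is constant along each left tangent, and since $B$ is linear on the tangent (which is not parallel to the $x_2$-axis) the full gradient $\nabla B$ is constant along each extremal.

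Next I would compute
\begin{equation*}
\frac{\partial^2 B}{\partial x_2^2} = \frac{m''(u)}{2} \cdot \frac{\partial u}{\partial x_2} = \frac{m''(u)}{4(x_1 - u - \eps)}.
\end{equation*}
Here the denominator $x_1 - u - \eps = -\sqrt{x_1^2+\eps^2-x_2}$ is non-positive, so the hypothesis $m''(u) \geq 0$ yields $\partial^2 B/\partial x_2^2 \leq 0$. As in the right-tangent proof, the Hessian of $B$ at $x$ is symmetric and has a zero eigenvalue in the tangent direction; since the $x_2$-direction is never tangent and $\partial^2 B/\partial x_2^2 \le 0$, the other eigenvalue is non-positive, so the Hessian is negative semidefinite, establishing local concavity.

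I do not anticipate any genuinely new obstacle: this is the mirror image of Proposition~\ref{RightTangentsCandidate}, and the only potential pitfall is keeping track of signs, specifically verifying that the opposite signs in $\partial u/\partial x_2$, in the differential equation~\eqref{difeq2}, and in the required inequality $m'' \geq 0$ all assemble consistently so that the cancellation in $\partial B/\partial x_2$ goes through and $\partial^2 B/\partial x_2^2 \leq 0$. Once that bookkeeping is right, the geometric concavity argument is identical.
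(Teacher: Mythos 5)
Your proof is correct and is precisely the approach the paper intends: the paper's own proof of this proposition is the single sentence ``The proof of this proposition is completely similar to the previous one,'' and your computation is exactly that symmetric argument spelled out, with the sign change in $\partial u/\partial x_2$ (from $x_1-u-\eps \le 0$ on the left side) compensating the sign changes in~\eqref{difeq2} and in the hypothesis $m''\ge 0$.
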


The proof of this proposition is completely similar to the previous one.

\begin{Def}\label{StandardCandidateTangentLeft}\index{standard candidate! in left tangent domain}
The function~$B$ constructed in Proposition~\textup{\ref{LeftTangentsCandidate}} is called a {\it standard candidate} on the domain~$\Lt(u_1,u_2)$ if $m''>0$ on $(u_1,u_2)$.
\end{Def}

Differential equations~\eqref{difeq} and~\eqref{difeq2} have one-parameter families of solutions~$m$ for given~$f$. Namely, a solution of~\eqref{difeq} is given by
\begin{equation}\label{ExplicitFormulaForm}
m(u) = e^{-u/\eps}\bigg(e^{u_1/\eps}m(u_1) + \eps^{-1}\int\limits_{u_1}^u f'(t)e^{t/\eps}\,dt\bigg).
\end{equation}
The inequality~$m''(u) \leq 0$ can be rewritten as
\begin{equation}\label{mr''_firstformula}
m''(u) = e^{(u_1-u)/\eps}m''(u_1) + 
\eps^{-1}e^{-u/\eps}\int\limits_{u_1}^u e^{t/\eps}\,df''(t) \leq 0.
\end{equation}
We note that the constant terms in the previous two formulas are related by the equation
\begin{equation}\label{difeqSecondDer}
m''(u_1) = \frac{m(u_1) - f'(u_1) + \eps f''(u_1)}{\eps^2},
\end{equation}
which follows from~\eqref{difeq}.

Similarly, a solution of~\eqref{difeq2} is given by
\begin{equation}\label{ExplicitFormulaForm2}
  	m(u) = e^{u/\eps}\bigg(e^{-u_2/\eps}m(u_2) + \eps^{-1}\int\limits_{u}^{u_2} f'(t)e^{-t/\eps}\,dt\bigg).  
  \end{equation}
The inequality~$m''(u) \geq 0$ in this situation is
\begin{equation*}
  	m''(u) = e^{(u-u_2)/\eps}m''(u_2) + \eps^{-1}e^{u/\eps}\int\limits_{u}^{u_2} e^{-t/\eps}\,df''(t) \geq 0.
  \end{equation*}
Similarly, here
\begin{equation}\label{difeq2SecondDer}
m''(u_2) = \frac{m(u_2) - f'(u_2) - \eps f''(u_2)}{\eps^2}.
\end{equation}
Using limit relations provided by Lemma~\ref{emb} for the function~$f$ in the case of right tangents, one can get Bellman candidates on the domains~$\Rt(-\infty,u_2)$. 

\begin{St}\label{RightTangentsCandidateInfty}
Suppose that the function~$m$ is given by the formula
\begin{equation}\label{minfty}
m(u) = \eps^{-1}e^{-u/\eps}\int\limits_{-\infty}^u f'(t)e^{t/\eps}\,dt,
\end{equation}
and the inequality~$m''(u) \leq 0$ is fulfilled for~$u \in (-\infty,u_2)$. Then the function~$B$ given by formula~\eqref{linearity} is a Bellman candidate on~$\Rt(-\infty,u_2)$.
\end{St}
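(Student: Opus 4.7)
The approach is to reduce this proposition to Proposition~\ref{RightTangentsCandidate} by exhaustion: pick any $u_1 \in (-\infty, u_2)$, apply the earlier proposition on $\Rt(u_1, u_2)$, and then let $u_1 \to -\infty$, using that being a Bellman candidate is a local property.

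First I would verify that the improper integral in \eqref{minfty} converges absolutely for every $u < u_2$ (under the implicit hypothesis $\eps \leq \eps_\infty$). By Lemma~\ref{emb}, $f' \in L^1(w_{\eps_\infty})$, so $\int |f'(t)|\,e^{-|t|/\eps_\infty}\,dt < \infty$. Since $e^{t/\eps} \leq e^{t/\eps_\infty} = e^{-|t|/\eps_\infty}$ for $t \leq 0$, the integrand $f'(t)e^{t/\eps}$ is absolutely integrable near $-\infty$. In particular, $m$ is finite and $C^1$-smooth on $(-\infty,u_2)$, and a direct differentiation of \eqref{minfty} under the integral sign yields $\eps m'(u) + m(u) - f'(u) = 0$, i.e., \eqref{difeq}.

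Next I would observe that the formula \eqref{minfty} is nothing but the explicit solution \eqref{ExplicitFormulaForm} on $[u_1, u_2)$ with the particular initial condition
\begin{equation*}
m(u_1) \;=\; \eps^{-1} e^{-u_1/\eps}\!\!\int\limits_{-\infty}^{u_1}\! f'(t)e^{t/\eps}\,dt,
\end{equation*}
as one sees by splitting the integral $\int_{-\infty}^u = \int_{-\infty}^{u_1} + \int_{u_1}^u$. Since $m''(u) \leq 0$ on $(u_1,u_2)$ by hypothesis, Proposition~\ref{RightTangentsCandidate} applies and gives that the function $B$ of \eqref{linearity} is a Bellman candidate on $\Rt(u_1,u_2)$ for every such $u_1$.

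Finally, I would pass to the limit. The domain $\Rt(-\infty,u_2)$ is the monotone union $\bigcup_{u_1 \to -\infty} \Rt(u_1,u_2)$, and every point and every segment contained in $\Rt(-\infty,u_2)$ lies in some $\Rt(u_1,u_2)$ for $u_1$ small enough. The $C^1$-smoothness, the linearity of $B$ along each extremal (with constant $\nabla B$ along the extremal, by \eqref{BSubX2Tangents}), and local concavity are all local properties, so they automatically pass from each $\Rt(u_1,u_2)$ to the full domain. Hence $B$ is a Bellman candidate on $\Rt(-\infty,u_2)$. The only delicate point of this argument is the convergence of the integral defining $m$ near $-\infty$; this is precisely the role played by the summability Condition~\ref{sum}, and, as noted, it is the reason this construction requires $\eps$ to be no larger than $\eps_\infty$.
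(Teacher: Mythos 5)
Your argument is correct. The paper states Proposition~\ref{RightTangentsCandidateInfty} without an explicit proof, hinting only that it follows ``using limit relations provided by Lemma~\ref{emb}''; the implied route is to rerun the direct argument from Proposition~\ref{RightTangentsCandidate} verbatim, with Lemma~\ref{emb} supplying the absolute convergence of the integral in~\eqref{minfty}. You instead reduce to the finite-interval case by exhaustion: observe that~\eqref{minfty} restricted to $[u_1,u_2)$ is exactly an instance of~\eqref{ExplicitFormulaForm} with the specified value of $m(u_1)$, invoke Proposition~\ref{RightTangentsCandidate} on each $\Rt(u_1,u_2)$, and then let $u_1\to-\infty$, noting that every constituent of ``Bellman candidate'' in Definition~\ref{candidate} --- $C^1$-smoothness, linearity (with constant differential) along each extremal, and local concavity (concavity on segments) --- is local, and every point, extremal, and segment inside $\Rt(-\infty,u_2)$ already lies in some $\Rt(u_1,u_2)$. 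Both approaches rest on the same three facts (the ODE~\eqref{difeq}, $m''\le 0$, and integrability via Lemma~\ref{emb}); yours avoids re-doing the second-differential computation of Proposition~\ref{RightTangentsCandidate} at the cost of the (trivial) observation that the candidate property is local, while the paper's direct route needs no exhaustion but has to re-justify differentiating~\eqref{minfty} under the integral and verifying~\eqref{BSubX2Tangents} on the unbounded domain. Either is clean; your locality step is so elementary that the two proofs are essentially interchangeable.
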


\begin{Def}\label{StandardCandidateTangentRightInfty}\index{standard candidate! in infinite right tangent domain}
The function~$B$ constructed in Proposition~\textup{\ref{RightTangentsCandidateInfty}} is called a {\it standard candidate} on the domain~$\Rt(-\infty,u_2)$ if $m''<0$ on $(-\infty,u_2)$.
\end{Def}

\begin{St}\label{LeftTangentsCandidateInfty}
Suppose that the function~$m$ is given by the formula
\begin{equation}\label{minfty2}
m(u) = \eps^{-1}e^{u/\eps}\int\limits_u^{+\infty} f'(t)e^{-t/\eps}\,dt,
\end{equation}
and the inequality~$m''(u) \geq 0$ is fulfilled for~$u \in (u_1,+\infty)$. Then the function~$B$ given by formula~\eqref{linearity} is a Bellman candidate on~$\Lt(u_1,+\infty)$.
\end{St}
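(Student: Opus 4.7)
The plan is to mirror the proof of Proposition~\ref{RightTangentsCandidateInfty} and reduce the infinite left-tangent case to Proposition~\ref{LeftTangentsCandidate} applied on truncated subdomains, with the summability results of Lemma~\ref{emb} doing the work at the upper endpoint.

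First I would verify that the formula~\eqref{minfty2} makes sense. The integrand is $f'(t)e^{-t/\eps}$, and for $t$ large and positive we have $e^{-t/\eps} \le e^{-t/\eps_\infty}$ (since $\eps \le \eps_\infty$), so integrability follows from $f' \in L^1(w_{\eps_\infty})$, which is part of Lemma~\ref{emb}. Next I would check by direct differentiation that $m$ defined by~\eqref{minfty2} solves the ODE~\eqref{difeq2}: setting $F(u) = \int_u^{+\infty} f'(t)e^{-t/\eps}\,dt$ so that $F'(u) = -f'(u)e^{-u/\eps}$, a short computation gives
\begin{equation*}
m'(u) = \eps^{-2}e^{u/\eps}F(u) - \eps^{-1}f'(u) = \eps^{-1}m(u) - \eps^{-1}f'(u),
\end{equation*}
which is exactly~\eqref{difeq2}.

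The key observation is that the formula~\eqref{minfty2} is nothing but the limit of~\eqref{ExplicitFormulaForm2} as $u_2 \to +\infty$, provided $e^{-u_2/\eps}m(u_2) \to 0$. Using~\eqref{difeq2SecondDer} and~\eqref{difeq2} itself, the quantity $e^{-u_2/\eps}m(u_2)$ is controlled by $e^{-u_2/\eps}|f'(u_2)|$ plus tails of $\int e^{-t/\eps}\,df''(t)$, each of which vanishes in the limit thanks to Lemma~\ref{emb} (which gives $f'(u)e^{-|u|/\eps_\infty} \to 0$ and absolute convergence of $\int w_{\eps_\infty}\,df''$). Therefore, for any fixed $u_2 \in (u_1, +\infty)$, the restriction of $m$ to $(u_1,u_2)$ coincides with a solution of~\eqref{difeq2} on that interval, and by hypothesis $m''(u) \ge 0$ on $(u_1,u_2)$. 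Proposition~\ref{LeftTangentsCandidate} then immediately implies that $B$ is a Bellman candidate on $\Lt(u_1,u_2)$.

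Finally I would observe that $\Lt(u_1,+\infty) = \bigcup_{u_2 > u_1}\Lt(u_1,u_2)$, and that being a Bellman candidate (see Definition~\ref{candidate}) is a local property: continuity, $C^1$-smoothness inside the domain, the boundary condition on the fixed parabola, and local concavity each can be checked in a neighborhood of any point. Since every point of $\Lt(u_1,+\infty)$ lies in some $\Lt(u_1,u_2)$ on which these properties already hold, they extend to the full infinite tangent domain. The main subtlety, and essentially the only nontrivial step, is the justification that the limit formula~\eqref{minfty2} is well defined and that the resulting $m$ inherits the required sign of $m''$; everything else reproduces verbatim the argument of Proposition~\ref{LeftTangentsCandidate}.
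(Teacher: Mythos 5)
Your argument is correct and follows the route the paper itself indicates (the paper offers no detailed proof, only the remark that one obtains these infinite-domain propositions from the finite ones plus the limit relations of Lemma~\ref{emb}). The essential steps are the right ones: \eqref{minfty2} is well defined because $f'\in L^1(w_{\eps_\infty})$, a direct differentiation shows $m$ solves \eqref{difeq2}, hence on every bounded interval $(u_1,u_2)$ the hypotheses of Proposition~\ref{LeftTangentsCandidate} are met, and the Bellman-candidate properties are all local so they extend to $\Lt(u_1,+\infty)=\bigcup_{u_2>u_1}\Lt(u_1,u_2)$.

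One small remark: the middle paragraph, in which you control $e^{-u_2/\eps}m(u_2)$ via \eqref{difeq2SecondDer} and the tails of $\int e^{-t/\eps}\,df''(t)$, is both unnecessary for the proof and a bit of a detour. Since $m$ is given explicitly by \eqref{minfty2}, one simply has
\begin{equation*}
e^{-u_2/\eps}m(u_2)=\eps^{-1}\int\limits_{u_2}^{+\infty}f'(t)e^{-t/\eps}\,dt,
\end{equation*}
which is the tail of a convergent integral and therefore tends to zero with no further argument. Moreover, once you have checked directly that $m$ satisfies \eqref{difeq2}, the identification of \eqref{minfty2} as a limit of \eqref{ExplicitFormulaForm2} is not needed at all; the application of Proposition~\ref{LeftTangentsCandidate} on each $(u_1,u_2)$ already goes through. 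The rest of your proof is exactly what the paper has in mind.
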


\begin{Def}\label{StandardCandidateTangentLeftInfty}\index{standard candidate! in infinite left tangent domain}
The function~$B$ constructed in Proposition~\textup{\ref{LeftTangentsCandidateInfty}} is called a {\it standard candidate} on the domain~$\Lt(u_1,+\infty)$ if $m''>0$ on $(u_1,+\infty)$.
\end{Def}

In the case of Proposition~\ref{RightTangentsCandidateInfty} the inequality~$m''(u) \leq 0$ turns into
\begin{equation*}
m''(u) =  \eps^{-1}e^{-u/\eps}\int\limits_{-\infty}^u e^{t/\eps}\,df''(t) \leq 0.
\end{equation*}
In the case of Proposition~\ref{LeftTangentsCandidateInfty} the inequality~$m''(u) \geq 0$ turns into
\begin{equation*}
m''(u) = \eps^{-1}e^{u/\eps}\int\limits_{u}^{+\infty} e^{-t/\eps}\,df''(t) \geq 0.
\end{equation*}

\section{Around the cup}\label{s33}
As it was mentioned in Section~\ref{s31}, the extremals are of two types: chords and tangents. In Section~\ref{s32}, we dealt with the tangent domains. This section provides a study of \emph{chordal domains}, i.e. domains that consist of chords (see Fig.~\ref{fig:chd}).\index{domain! chordal domain}
\begin{figure}[!h]
\begin{center}
\includegraphics[width = 1 \linewidth]{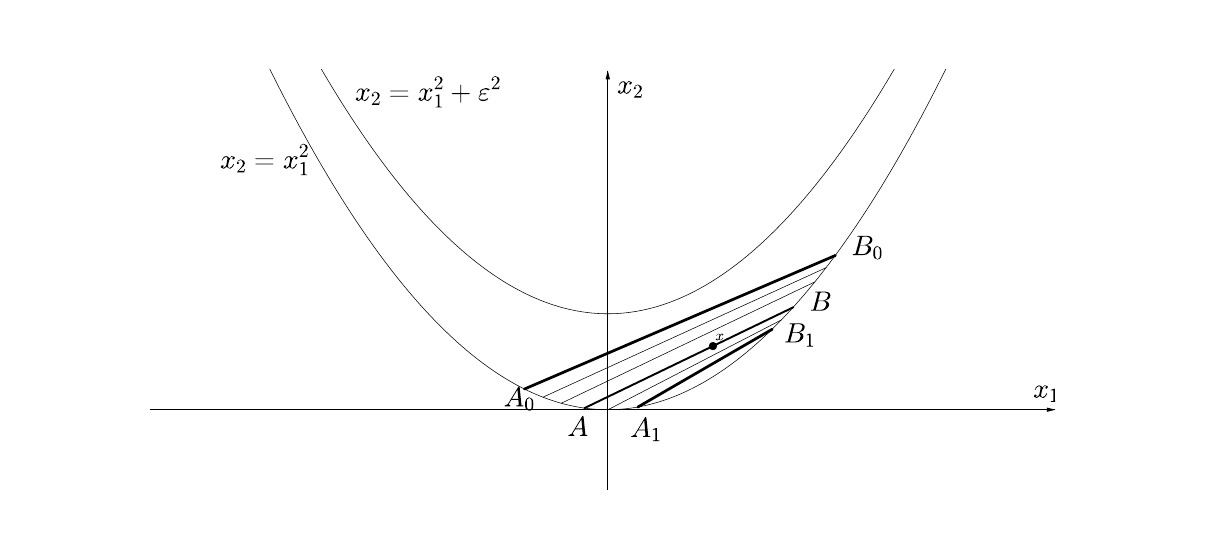}
\caption{Chordal domain $\Ch([a_0,b_0],[a_1,b_1])$.}
\label{fig:chd}
\end{center}
\end{figure}
  
\subsection{Chordal domain}\label{s331}
We introduce notation. Consider two chords,~$[A_0,B_0]$ and~$[A_1,B_1]$, that belong to~$\Omega_{\eps}$ entirely,~$a_0 < a_1 < b_1 < b_0$. Suppose that the whole parabolic quadrilateral~$A_0A_1B_1B_0$ is foliated by the chords that do not have common interior points and connect the points of its left side with the points of its right side (see Fig.~\ref{fig:chd}). In such a case, we call this quadrilateral a chordal domain and denote by~$\Ch([a_0,b_0],[a_1,b_1])$. A chordal domain with~$b_1=a_1$ is called a \emph{cup}\index{cup}, the point~$a_1$ is its \emph{origin}\index{cup! origin of a cup}.
Surely, for each interior point~$x$ of~$\Ch([a_0,b_0],[a_1,b_1])$ there is a unique chord of the family passing through~$x$. Denote the length of its projection onto the~$x$-axis by~$\ell(x)$, thus~$\ell\colon \Big(\Ch([a_0,b_0],[a_1,b_1]) \setminus \FixedBoundary\Omega_{\eps}\Big) \to [b_1 - a_1, b_0 - a_0]$. If the chords do not intersect even on the boundary, then this function is well defined on~$\Ch([a_0,b_0],[a_1,b_1])$.

On the other hand, for each~$l$ between~$b_1 - a_1$ and~$b_0 - a_0$ there is a unique chord of the family whose projection length equals~$l$. Denote this chord by~$[A(l),B(l)]$, where~$a\colon [b_1 - a_1, b_0 - a_0] \to [a_0,a_1]$ and~$b\colon [b_1 - a_1, b_0 - a_0] \to [b_1,b_0]$ (we remind the reader our notation~$P = (p,p^2)$). Surely,~$a$ is a non-increasing function,~$b$ is a non-decreasing function,~$b(l) - a(l) = l$, and~$\big[A\big(\ell(x)\big),B\big(\ell(x)\big)\big]$ is the chord that passes through~$x$. Though all this notation may seem bulky, it is rather useful for formalization. 

By Definition~\ref{candidate}, the Bellman candidate is linear along the extremals and satisfies the boundary conditions. Therefore, 
\begin{equation}\label{vallun}
		B(x)=\frac{f(b)-f(a)}{b-a}x_1+\frac{bf(a)-af(b)}{b-a},
	\end{equation}
where~$a = a\big(\ell(x)\big)$ and~$b = b\big(\ell(x)\big)$.

Assume that the foliation is sufficiently smooth. Namely, we suppose the functions~$a$ and~$b$ 
to be differentiable. We need two more definitions.

\begin{Def}\label{CupEquation}\index{equation! cup equation}
We say that a pair of points~$(a,b)$\textup,~$a < b$\textup, satisfies the cup equation 
for the function~$f$ if
\begin{equation}
		\label{urlun}
		\av{f'}{[a,b]}=\frac{f'(a)+f'(b)}{2}.
	\end{equation}
\end{Def}

\begin{Def}\label{differentials}\index{differentials}
Suppose that the pair~$(a,b)$ satisfies the cup equation~\eqref{urlun}. We call the following two expressions the differentials, the left and the right one correspondingly\textup:
	\begin{equation}\label{e334}
		\Slt(a,b) \df f''(a)-\av{f''}{[a,b]}\quad\textrm{and}\quad
		\Srt(a,b) \df f''(b)-\av{f''}{[a,b]}.
	\end{equation}
\end{Def}
We formulate a proposition from~\cite{5A} that gives sufficient conditions for the function~$B$ defined by formula~\eqref{vallun} to be a Bellman candidate on the domain~$\Ch([a_0,b_0],[a_1,b_1])$.

\begin{St}\label{LightChordalDomainCandidate}
Consider~$\Ch([a_0,b_0],[a_1,b_1])$ with the functions~$\ell$\textup,~$a$\textup, and~$b$ associated with it. Assume the following\textup:
\begin{itemize}
\item the functions~$a$ and~$b$ are differentiable on the interior of their domain\textup,~$a' < 0$\textup,~$b' > 0$\textup;
\item for each~$l \in [b_1 - a_1, b_0 - a_0]$ the pair~$(a(l),b(l))$ satisfies the cup equation~\eqref{urlun}\textup;
\item $\Slt\big(a(l),b(l)\big) \leq 0$,~$\quad\Srt\big(a(l),b(l)\big) \leq 0$.
\end{itemize}
Then the function~$B$ defined by formula~\eqref{vallun} is a Bellman candidate on~$\Ch([a_0,b_0],[a_1,b_1])$.
\end{St}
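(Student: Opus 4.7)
The plan is to verify the four defining properties of a Bellman candidate (Definition~\ref{candidate}) for the function $B$ given by formula~\eqref{vallun} on~$\Ch([a_0,b_0],[a_1,b_1])$: the boundary condition on the fixed boundary, $C^1$-smoothness, linearity along the foliation extremals, and local concavity. The first two are essentially free: substituting $x_1=a$, $x_2=a^2$ (resp.\ $x_1=b$, $x_2=b^2$) into~\eqref{vallun} yields $f(a)$ (resp.\ $f(b)$), and, for fixed $l$, the expression~\eqref{vallun} is already an affine function of $x_1$, which is exactly the linearity along the chord through $x$ (since on that chord $x_2=(a+b)x_1-ab$, so linearity in $x_1$ is equivalent to linearity in $(x_1,x_2)$). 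The $C^1$-regularity of $B$ on the open chordal domain follows from the assumed differentiability of $a(l)$, $b(l)$ together with the implicit function theorem applied to $x_2=(a(l)+b(l))x_1-a(l)b(l)$ (the non-degeneracy $a'<0$, $b'>0$ ensuring $l$ is differentiable in $(x_1,x_2)$).

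Next I would prove that $\nabla B$ is constant along each chord, thereby showing that the chords are genuine extremals in the sense of Definition~\ref{candidate}. The key observation is that a would-be gradient $(\beta_1,\beta_2)$ constant along the chord joining $A(l)$ and $B(l)$ must satisfy three linear conditions: the two boundary conditions $\beta_1+2a\beta_2=f'(a)$ and $\beta_1+2b\beta_2=f'(b)$ (obtained by differentiating the identity $B(u,u^2)=f(u)$ at $u=a$ and $u=b$), together with the directional-derivative identity $\beta_1+(a+b)\beta_2=\frac{f(b)-f(a)}{b-a}$ enforced by the prescribed slope of $B$ along the chord. The first two determine $\beta_2=\frac{f'(b)-f'(a)}{2(b-a)}$ and $\beta_1=f'(a)-2a\beta_2$; substituting into the third equation and simplifying gives precisely the cup equation~\eqref{urlun}. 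Thus the cup equation is exactly the compatibility condition that makes the gradient of $B$ extend continuously to a single vector on each chord, and one verifies by direct differentiation of~\eqref{vallun} that the gradient computed from the formula indeed equals $(\beta_1(l),\beta_2(l))$ at every point of the chord.

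The main obstacle is the local concavity. Since $\nabla B$ is constant along each chord, the Hessian of $B$ annihilates the chord direction, hence has rank at most one; consequently, $B$ is locally concave if and only if any second directional derivative transverse to the chords is nonpositive, and it suffices to control $\partial^2 B/\partial x_2^2$ (the $x_2$-direction is never tangent to a chord). From the previous step, $\partial B/\partial x_2=\beta_2(l)=\frac{f'(b(l))-f'(a(l))}{2(b(l)-a(l))}$ depends on $l$ alone, so
\begin{equation*}
\frac{\partial^2 B}{\partial x_2^2}=\beta_2'(l)\,\frac{\partial l}{\partial x_2}.
\end{equation*}
A direct computation, replacing $\frac{f'(b)-f'(a)}{b-a}$ by $\av{f''}{[a,b]}$, gives
\begin{equation*}
\beta_2'(l)=\frac{b'(l)\,\Srt(a,b)-a'(l)\,\Slt(a,b)}{2(b-a)},
\end{equation*}
which is $\le 0$ by the hypotheses $a'<0$, $b'>0$, $\Slt\le 0$, $\Srt\le 0$. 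Finally, implicit differentiation of $x_2=(a+b)x_1-ab$ yields $\partial l/\partial x_2=\bigl(a'(x_1-b)+b'(x_1-a)\bigr)^{-1}$, a quantity that is strictly positive on the interior of the chordal domain since both summands are nonnegative (using $a\le x_1\le b$, $a'<0$, $b'>0$) and at least one is strictly positive. Hence $\partial^2 B/\partial x_2^2\le 0$, the Hessian of $B$ is negative semidefinite, and Proposition~\ref{ConcatenationOfConcaveFunctions} (applied if needed to glue with the boundary chords) yields local concavity of $B$ on the full chordal domain.
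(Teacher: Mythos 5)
Your argument is correct and follows essentially the same route as the paper's proof: compute $\partial\ell/\partial x_2$ from the chord equation, show $\partial B/\partial x_2 = \frac{f'(b)-f'(a)}{2(b-a)}$ depends on $\ell$ alone (so $\nabla B$ is constant along each chord), and reduce local concavity to $\frac{\partial^2 B}{\partial x_2\,\partial\ell} = \frac{b'\Srt - a'\Slt}{2(b-a)} \leq 0$. Your derivation of $\partial B/\partial x_2$ via the three compatibility conditions is a nice way to explain why the cup equation enters, and your explicit verification that $\partial\ell/\partial x_2>0$ spells out what the paper states as "$\ell$ is increasing in $x_2$"; the closing appeal to Proposition~\ref{ConcatenationOfConcaveFunctions} is unnecessary here, since the single chordal domain already requires no gluing.
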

\begin{proof}
We begin with the equation of the chord~$[A,B]$: 
\begin{equation*}
x_2 = (a + b)x_1 - ab;\quad a = a\big(\ell(x)\big),\  b = b\big(\ell(x)\big).
\end{equation*}
Differentiating this equation with respect to~$x_2$ (keeping~$x_1$ fixed), we see that
\begin{equation*}
\frac{\partial \ell}{\partial x_2} = \frac{1}{(a' + b')x_1 - (a'b + ab')} = \frac{1}{a'(x_1 - b) + b'(x_1 - a)}.
\end{equation*}
Note that this value is finite since~$a' < 0$,~$b' > 0$. After some calculations involving the cup equation~\eqref{urlun} and formula~\eqref{vallun}, we see that
\begin{equation}\label{BsubX2Chords}
\frac{\partial B}{\partial x_2}(x) = \frac{f'(b) - f'(a)}{2(b-a)}; \quad a = a\big(\ell(x)\big),\  b = b\big(\ell(x)\big).
\end{equation}
As in the proof of Proposition~\ref{RightTangentsCandidate}, this shows that~$\nabla B$ is constant along each chord. Again, as it was explained in the proof of Proposition~\ref{RightTangentsCandidate}, it suffices to prove that~$\frac{\partial^2 B}{\partial x_2^2} \leq 0$. Since~$\ell$ is an increasing function of~$x_2$ (when~$x_1$ is fixed), this inequality is equivalent to~$\frac{\partial^2 B}{\partial x_2\partial \ell} \leq 0$, which can be easily computed:
\begin{equation*}
\frac{\partial^2 B}{\partial x_2\partial \ell} = \frac{b'\Srt(a,b) - a'\Slt(a,b)}{2(b-a)} \leq 0.
\end{equation*}
\end{proof}

\begin{Def}\label{StandardCandidateChordalDomain}\index{standard candidate! in chordal domain}
The function~$B$ given by formula~\eqref{vallun} on~$\Ch([a_0,b_0],[a_1,b_1])$ described in Proposition~\textup{\ref{LightChordalDomainCandidate}} is called a {\it standard candidate} on this domain if $\Slt(a(l),b(l))<0$ and $\Srt(a(l),b(l))<0$ for~$l \in (b_1-a_1,b_0-a_0)$.
\end{Def}

We note that the condition~$a' > 0$,~$b' > 0$ is not needed (in the proof above we used these inequalities to have the function~$\ell$ well defined and differentiable up to the boundary). Namely, one can prove the following.
\begin{St}\label{ChordalDomainCandidate}
Consider~$\Ch([a_0,b_0],[a_1,b_1])$ with the functions~$\ell$\textup,~$a$\textup, and~$b$ associated with it. Assume that\textup:
\begin{itemize}
\item the functions~$a$ and~$b$ are differentiable\textup;
\item for each~$l \in [b_1 - a_1, b_0 - a_0]$ the pair~$(a(l),b(l))$ satisfies the cup equation~\eqref{urlun}\textup;
\item $\Slt\big(a(l),b(l)\big) \leq 0$,~$\quad\Srt\big(a(l),b(l)\big) \leq 0$.
\end{itemize}
Then the function~$B$ defined by formula~\eqref{vallun} is a Bellman candidate on~$\Ch([a_0,b_0],[a_1,b_1])$.
\end{St}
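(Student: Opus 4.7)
My plan is to argue that the proof of Proposition \ref{LightChordalDomainCandidate} adapts essentially verbatim, by isolating the single point where strict monotonicity of $a$ and $b$ was used and replacing it with a consequence of the identity $b(l)-a(l)=l$. Reviewing that earlier proof, the inequalities $a'<0$ and $b'>0$ entered in exactly one place: to ensure that the denominator $a'(x_1-b)+b'(x_1-a)$ in the implicit-function formula for $\partial\ell/\partial x_2$ is strictly positive, so that $\ell$ is a well-defined differentiable function of $x_2$ along vertical fibers. Every other step of the computation --- the use of the cup equation~\eqref{urlun} to reduce $\partial B/\partial x_2$ to the tidy form~\eqref{BsubX2Chords}, the derivation of $\partial^2 B/(\partial x_2\,\partial\ell)=(b'\Srt-a'\Slt)/(2(b-a))$, and the Hessian analysis --- is insensitive to whether the monotonicity inequalities on $a'$ and $b'$ are strict or weak.

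First I would recover the positivity of that denominator under the weaker hypothesis. Differentiating the built-in relation $b(l)-a(l)=l$ gives $b'(l)-a'(l)=1$ wherever $a$ and $b$ are differentiable. Combined with $a'\le 0\le b'$ (forced by the fact that distinct chords in $\Ch$ are disjoint in the interior, together with differentiability), this implies $a'(l)\in[-1,0]$ and $b'(l)=1+a'(l)\in[0,1]$, so the pair $(a'(l),b'(l))$ never vanishes simultaneously. For $x$ interior to its chord, so that $x_1\in(a,b)$, both summands $a'(x_1-b)$ and $b'(x_1-a)$ are non-negative, and at least one of them is strictly positive; hence $a'(x_1-b)+b'(x_1-a)>0$.

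With this positivity in hand, the rest of the proof of Proposition \ref{LightChordalDomainCandidate} applies unchanged. The identity $\partial^2 B/(\partial x_2\,\partial\ell)=(b'\Srt-a'\Slt)/(2(b-a))$ has a non-positive right-hand side, since $b'\Srt\le 0$ and $-a'\Slt\le 0$ under the sign hypotheses $\Slt,\Srt\le 0$ together with $a'\le 0\le b'$. Multiplying by the positive quantity $\partial\ell/\partial x_2$ produces $\partial^2 B/\partial x_2^2\le 0$, and together with the linearity of $B$ along each chord (which supplies a null eigenvector of the Hessian transverse to the $x_2$-direction) this gives $d^2B\le 0$ as a quadratic form in the interior of $\Ch$, i.e.\ local concavity.

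The main obstacle I foresee is verifying the $C^1$-regularity of $B$ on $\Ch$, since at chords where $a'$ or $b'$ vanishes $\ell$ need not be differentiable as a function of $x$, so the implicit function theorem is unavailable there. I would sidestep this by observing that formulas~\eqref{vallun} and~\eqref{BsubX2Chords} express $\nabla B$ at $x$ purely as a function of $\big(a(\ell(x)),b(\ell(x))\big)$, and this composed map is continuous because $\ell$ is continuous (as the inverse of the continuous parameterization of a foliation by pairwise disjoint chords) and $a,b$ are continuous. Continuity of $\nabla B$ on $\Ch$ is exactly the $C^1$-regularity required in Definition~\ref{candidate}, and the proof is complete.
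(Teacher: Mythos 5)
Your proposal is correct, but it takes a genuinely different route from the paper, which in fact offers no proof of this proposition: immediately after stating it, the text remarks that it is not needed and merely asserts that it ``will follow from considerations in Section~\ref{s411}'' (where chordal domains with strictly monotone $a,b$ are built up from their bottom chord), without spelling out the implication. Your argument instead adapts the proof of Proposition~\ref{LightChordalDomainCandidate} directly. The decisive observation---that differentiating $b(l)-a(l)=l$ yields $b'-a'=1$, so that together with $a'\le 0\le b'$ (monotonicity is built into the definition of a chordal domain) the denominator $a'(x_1-b)+b'(x_1-a)$ is strictly positive at every interior point of every chord---is exactly what is needed, and it lets every subsequent computation of the original proof go through unchanged, now under weak rather than strict sign conditions on $a'$, $b'$, $\Slt$, and $\Srt$. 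This direct route is arguably cleaner and more elementary than the one the paper gestures at. One small remark: your worry that ``the implicit function theorem is unavailable'' at chords where $a'$ or $b'$ vanishes is largely defused by your own computation, which shows the IFT hypothesis holds throughout the interior; the genuine degeneracy is only up to the lower-parabola boundary (where $x_1\to a$ or $x_1\to b$), and your resolution---expressing $\nabla B$ through formulas~\eqref{vallun} and~\eqref{BsubX2Chords} as a continuous function of $\big(a(\ell(x)),b(\ell(x))\big)$ and using continuity of $\ell$---is exactly what Definition~\ref{candidate} requires for $C^1$ on the closed chordal domain.
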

However, we will not need this proposition, moreover it will follow from considerations in Section~\ref{s411} below.
The essential difference between the two propositions is that we forbid the extremals to intersect even on the boundary. 

We want to lighten the relationship between the cup equation~\eqref{urlun} and the differentials given by Definition~\ref{differentials}. For that purpose, we introduce the function~$\Phi\colon \mathbb{R}^2 \to \mathbb{R}$,
\begin{equation}\label{Phi}
\Phi(a,b) \df f'(a)+f'(b) - 2\av{f'}{[a,b]}.
\end{equation}
Surely, the cup equation is equivalent to~$\Phi(a,b) = 0$. The partial derivatives of~$\Phi$ on the set~$\{\Phi(a,b) = 0\}$ are expressed in terms of the differentials:
\begin{equation*}
d\Phi(a,b) = \Slt(a,b)\,da + \Srt(a,b)\,db.
\end{equation*}
Therefore, if at least one of the differentials is non-zero for the pair~$(a,b) = (a_0,b_0)$, the set~$\{(a,b) \in \mathbb{R}^2\mid \Phi(a,b) = 0\}$ is a one-dimensional submanifold of~$\mathbb{R}^2$ in a neighborhood of~$(a_0,b_0)$.
\begin{Le}
The relations
\begin{align}
\label{dDlt}
d\Slt(a,b)=df''(a)+\frac{2\Slt(a,b)}{b-a}da;
\\
\label{dDrt}
d\Srt(a,b)=df''(b)-\frac{2\Srt(a,b)}{b-a}db,
\end{align}
hold on the manifold~$\{\Phi(a,b) = 0\}$. 
\end{Le}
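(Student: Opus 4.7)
The plan is to differentiate the definition of each differential, rewrite the mean value $\av{f''}{[a,b]}$ in a form convenient for differentiation, and then use the manifold constraint $\Phi(a,b)=0$ (more precisely, its differential consequence $d\Phi = \Slt\,da + \Srt\,db = 0$, which has just been displayed in the excerpt immediately before the lemma) to eliminate one of the terms.

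First, I would rewrite
\begin{equation*}
\av{f''}{[a,b]} = \frac{f'(b)-f'(a)}{b-a},
\end{equation*}
so that a direct (Leibniz) computation gives, as a genuine identity of $1$-forms on $\{a<b\}$,
\begin{equation*}
d\av{f''}{[a,b]} = \frac{f''(b)\,db - f''(a)\,da}{b-a} - \frac{f'(b)-f'(a)}{(b-a)^2}(db-da) = \frac{\Srt(a,b)}{b-a}\,db - \frac{\Slt(a,b)}{b-a}\,da,
\end{equation*}
where in the last equality I just grouped the $da$- and $db$-coefficients using the definition of $\Slt$ and $\Srt$. Note that this formula for $d\av{f''}{[a,b]}$ holds off the manifold as well; the cup equation has not been invoked yet.

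Next, from $\Slt(a,b) = f''(a) - \av{f''}{[a,b]}$ I obtain
\begin{equation*}
d\Slt(a,b) = df''(a) + \frac{\Slt(a,b)}{b-a}\,da - \frac{\Srt(a,b)}{b-a}\,db,
\end{equation*}
and analogously
\begin{equation*}
d\Srt(a,b) = df''(b) + \frac{\Slt(a,b)}{b-a}\,da - \frac{\Srt(a,b)}{b-a}\,db + (f''(b)-f''(a))\cdot 0,
\end{equation*}
which after using $\Srt(a,b) = f''(b)-\av{f''}{[a,b]}$ simplifies to the same cancellation pattern. Now I would restrict to the manifold $\{\Phi(a,b)=0\}$: by the formula $d\Phi = \Slt\,da + \Srt\,db$ displayed just above the lemma, the vanishing of $\Phi$ gives $\Srt\,db = -\Slt\,da$. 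Substituting this into the expression for $d\Slt$ converts the two terms $\frac{\Slt}{b-a}da - \frac{\Srt}{b-a}db$ into $\frac{2\Slt}{b-a}\,da$, proving \eqref{dDlt}. Substituting it into the expression for $d\Srt$ in the symmetric form (where I instead eliminate $\Slt\,da = -\Srt\,db$) yields \eqref{dDrt}.

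There is no real obstacle here: the only step that requires care is making sure that the formula for $d\av{f''}{[a,b]}$ is treated as an identity of forms on the open half-plane $a<b$ (so that afterwards one can restrict to the manifold and use the constraint), rather than only along the manifold. Once that distinction is kept in mind, the two identities drop out by a one-line substitution using $\Slt\,da + \Srt\,db = 0$.
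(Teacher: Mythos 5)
Your proof is correct and is precisely the ``straightforward calculation'' the paper alludes to when it refers to Lemma~6.1 of~\cite{5A} without spelling out details: you differentiate $\av{f''}{[a,b]}=\tfrac{f'(b)-f'(a)}{b-a}$ by the Leibniz rule, recognize the coefficients as $\Slt$ and $\Srt$, and then restrict to the cup-equation manifold via the tangency relation $\Slt\,da+\Srt\,db=0$ to collapse the two cross-terms into the single term in each claimed identity. The stray summand $(f''(b)-f''(a))\cdot 0$ in your intermediate expression for $d\Srt$ is a harmless typo that does not affect the argument.
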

This lemma is Lemma~$6.1$ of~\cite{5A}, it is a straightforward calculation.


The cup equation and the differentials can be visualized. Indeed, consider the graph of the function~$f'$. The expression~$\av{f'}{[a,b]}$ is the subgraph area, whereas~$\frac{f'(a)+f'(b)}{2}$ is the trapezoid area (see Fig.~\ref{fig:area_differentials}). The cup equation states that these two areas are equal. Surely, we are talking about oriented areas (integrals), i.e. the part of the area gained below the~$x_1$-axis is calculated with the minus sign.

\begin{figure}[!h]
\begin{center}
\includegraphics[width = 0.54 \linewidth]{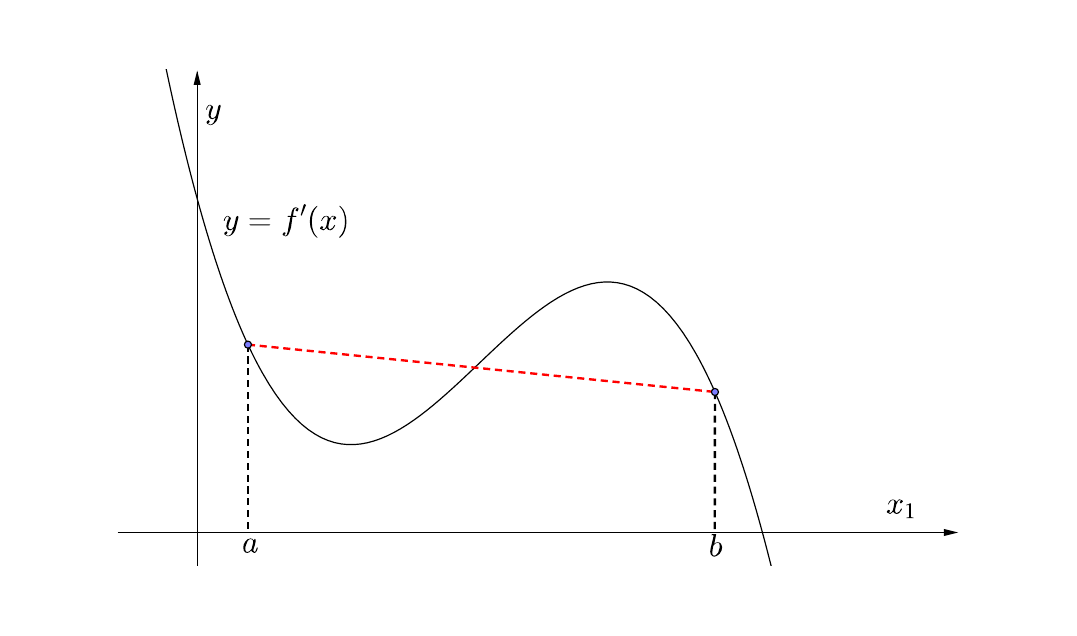}
\includegraphics[width = 0.45 \linewidth]{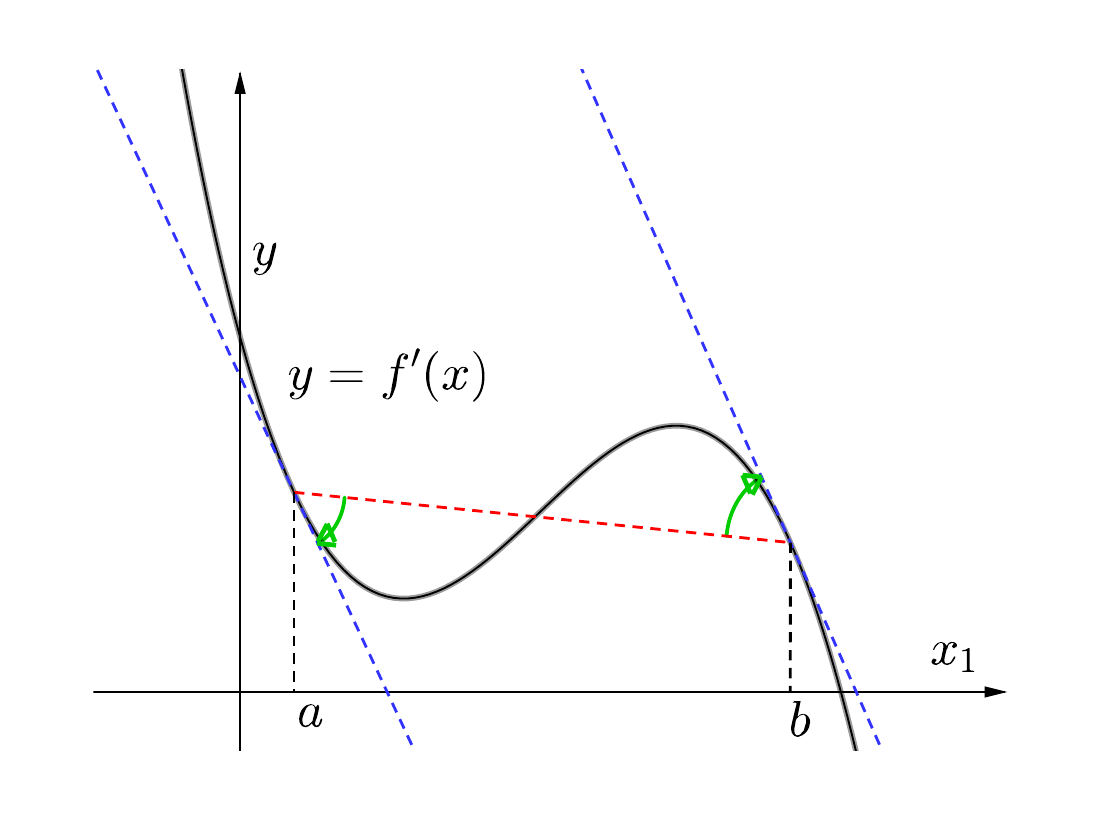}
\caption{Areas of the trapezoid and the subgraph must be equal. 
Differentials correspond to the marked angles.}
\label{fig:area_differentials}
\end{center}
\end{figure}

A line that passes through the points~$\big(s,f'(s)\big)$ and~$\big(t,f'(t)\big)$ is denoted by~$L_{s,t}$. By~$L_{s,s}$ we mean the tangent to~$\{(u,f'(u))\}$ at the point~$s$. The expression~$f''(a)$ is the slope of~$L_{a,a}$, whereas~$\av{f''}{[a,b]}$ is the slope of~$L_{a,b}$. Therefore, the left differential is the difference between these slopes. We are more concerned with the signs of the differentials than with themselves. It follows that the signs of the differentials coincide with the signs of the angles marked on Figure~\ref{fig:area_differentials}.

\subsection{Gluing chords with tangents}\label{s332}
We have studied both types of extremals. Now we investigate some foliations that contain the extremals of both types. Namely, consider a full chordal domain~$\Ch([a_0,b_0],*)$ (i.e.~$b_0 - a_0 = 2\eps$; the star denotes a chord whose name is unimportant) and the standard candidate~$\Bch$ on it. We want to glue the ``wings'' to the chordal domain (see Fig.~\ref{fig:cup_graph}). By this we mean the tangent domains~$\Rt(b_0,u_2)$ and~$\Lt(u_1,a_0)$ with standard candidates there. 

We first deal with the right tangent domain. We want to obtain a Bellman candidate on the union domain~$\Ch([a_0,b_0],*) \cup \Rt(b_0,u_2)$ from standard candidates on~$\Ch([a_0,b_0],*)$ and~$\Rt(b_0,u_2)$. Such a construction in a general setting was described in Section~\ref{s31}. We are going to use Proposition~\ref{ConcatenationOfConcaveFunctions}. We note that the domains we are dealing with are induced convex. 

We remind the reader that there is a one-parameter family of standard candidates on~$\Rt(b_0,u_2)$ given by Definition~\ref{StandardCandidateTangentRight}. The value~$m(b_0)$ (this is the slope of the Bellman candidate restriction to~$[A_0,B_0]$) plays the role of the parameter (see formula~\eqref{ExplicitFormulaForm}). We choose this value by the following rule:
\begin{equation}\label{m(b_0)}
m(b_0) = \frac{f'(a_0) + f'(b_0)}{2}.
\end{equation} 
Such a choice of~$m(b_0)$ guarantees the continuity of the concatenation. Luckily, this concatenation is also $C^1$-smooth. In fact, the proposition below is equivalent to Propostion~$5.3$ in~\cite{5A} (however, the latter is formulated for cups).

\begin{St}\label{CupMeetsRightTangents}
Consider~$\Ch([a_0,b_0],*) \cup \Rt(b_0,u_2)$\textup,~$b_0 - a_0 = 2\eps$. Let the function~$B$ coincide with the standard candidates on~$\Ch([a_0,b_0],*)$ and~$\Rt(b_0,u_2)$\textup{,} where the parameter~$m(b_0)$ is determined by~\eqref{m(b_0)}. Then $B$ is a $C^1$-smooth Bellman candidate on~$\Ch([a_0,b_0],*) \cup \Rt(b_0,u_2)$.
\end{St}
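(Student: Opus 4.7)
The plan is to verify the hypotheses of Proposition~\ref{ConcatenationOfConcaveFunctions}, so I need to check three things: that the two domains are induced convex, that the concatenated function~$B$ is continuous on the union, and that it is~$C^1$-smooth across the interface. Once these hold, local concavity on each piece lifts to local concavity on the union, making~$B$ a Bellman candidate.

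The first step is to identify the interface between~$\Ch([a_0,b_0],*)$ and~$\Rt(b_0,u_2)$. Since~$b_0-a_0=2\eps$, the outer chord~$[A_0,B_0]$ is tangent to the free boundary at its midpoint~$M_0=\big(\frac{a_0+b_0}{2},\frac{a_0^2+b_0^2}{2}\big)$. A direct computation using~\eqref{ur} shows that~$\ur(x)=b_0$ for every~$x$ on the segment~$[M_0,B_0]$, so this segment is simultaneously an outer extremal of the chordal family and the leftmost tangent of~$\Rt(b_0,u_2)$. Thus the two induced convex domains share the one-dimensional interface~$[M_0,B_0]$, and elsewhere they have disjoint interiors. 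Both are easily seen to be induced convex.

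Next I check continuity. On~$[M_0,B_0]$ the chordal candidate~$\Bch$ is the restriction to the long chord, so it is affine with slope~$\frac{f(b_0)-f(a_0)}{b_0-a_0}$ and equals~$f(b_0)$ at~$B_0$ and~$\frac{f(a_0)+f(b_0)}{2}$ at~$M_0$. The tangent candidate~$\Brt(x)=m(u)(x_1-u)+f(u)$ evaluated on the same segment (with~$u=b_0$) equals~$f(b_0)$ at~$B_0$ and~$f(b_0)-\eps m(b_0)$ at~$M_0$. Agreement at~$B_0$ is automatic; agreement at~$M_0$ is equivalent to~$m(b_0)=\frac{f(b_0)-f(a_0)}{2\eps}$, which coincides with~\eqref{m(b_0)} precisely because~$(a_0,b_0)$ satisfies the cup equation~\eqref{urlun} (since it defines a chord in a standard chordal candidate). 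So~$B$ is continuous.

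For~$C^1$-smoothness across~$[M_0,B_0]$, it suffices to match~$\partial B/\partial x_2$ on the interface, since~$B$ is affine in~$x_1$ along the interface on both sides and already matches in value. Formula~\eqref{BsubX2Chords} gives~$\partial\Bch/\partial x_2=\frac{f'(b_0)-f'(a_0)}{2(b_0-a_0)}=\frac{f'(b_0)-f'(a_0)}{4\eps}$ along the long chord, while~\eqref{BSubX2Tangents} gives~$\partial\Brt/\partial x_2=m'(b_0)/2$ along the tangent~$\{\ur=b_0\}$. The ODE~\eqref{difeq} combined with the choice~\eqref{m(b_0)} yields
\begin{equation*}
m'(b_0)=\frac{f'(b_0)-m(b_0)}{\eps}=\frac{f'(b_0)-\frac{f'(a_0)+f'(b_0)}{2}}{\eps}=\frac{f'(b_0)-f'(a_0)}{2\eps},
\end{equation*}
so the two values of~$\partial B/\partial x_2$ agree. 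Since~$B$ is~$C^1$ on each closed piece up to~$[M_0,B_0]$ by the standard-candidate construction, and the gradients match along the interface, the global~$C^1$-smoothness follows. Applying Proposition~\ref{ConcatenationOfConcaveFunctions} then gives local concavity on the union, and since the extremal structure on each piece is inherited, $B$ is a Bellman candidate. The main subtle point is recognizing that the ``interface'' is the right half of the long chord---not the whole chord and not just the point~$B_0$---so that the cup equation and the ODE together conspire to give both continuity and~$C^1$ matching simultaneously.
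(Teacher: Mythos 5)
Your proof is correct and takes essentially the same route as the paper: both invoke Proposition~\ref{ConcatenationOfConcaveFunctions}, reduce to matching $\partial B/\partial x_2$ across the interface (using that the interface direction is not parallel to the $x_2$-axis), and verify the match by chaining~\eqref{BSubX2Tangents},~\eqref{difeq},~\eqref{m(b_0)}, and~\eqref{BsubX2Chords}. You spell out the identification of the interface as the half-chord~$[M_0,B_0]$ and explicitly check continuity, whereas the paper treats both as read (continuity being guaranteed by the hypothesis that $m(b_0)$ is chosen by~\eqref{m(b_0)}), but the underlying argument is identical.
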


\begin{proof}
By Proposition~\ref{ConcatenationOfConcaveFunctions}, it suffices to prove that~$B$ is~$C^1$-smooth. Since the~$x_2$-axis is never parallel with a chord or a tangent, we may verify that~$\frac{\partial B}{\partial x_2}$ is a continuous function in a neighborhood of the common boundary of~$\Ch([a_0,b_0],*)$ and~$\Rt(b_0,u_2)$. Indeed,
\begin{equation*}
\frac{\partial B}{\partial x_2}\Big|_{\Rt(b_0,u_2)}(b_0,b_0^2) \stackrel{\scriptscriptstyle{\eqref{BSubX2Tangents}}}{=} \frac{m'(b_0)}{2} \stackrel{\scriptscriptstyle{\eqref{difeq}}}{=} \frac{f'(b_0) - m(b_0)}{2\eps} \stackrel{\scriptscriptstyle{\eqref{m(b_0)}}}{=} \frac{f'(b_0) - f'(a_0)}{2(b_0-a_0)} \stackrel{\scriptscriptstyle{\eqref{BsubX2Chords}}}{=}\frac{\partial B}{\partial x_2}\Big|_{\Ch([a_0,b_0],*)}(b_0,b_0^2).
\end{equation*}
\end{proof}


\begin{St}\label{CupMeetsLeftTangents}
Consider~$\Lt(u_1,a_0)\cup \Ch([a_0,b_0],*)$\textup,~$b_0 - a_0 = 2\eps$. Let the function~$B$ coincide with the standard candidates on~$\Ch([a_0,b_0],*)$ and~$\Lt(u_1,a_0)$\textup{,} where the parameter~$m(a_0)$ is determined by
\begin{equation}\label{m(a_0)}
m(a_0) = \frac{f'(a_0) + f'(b_0)}{2}.
\end{equation}
Then $B$ is a $C^1$-smooth Bellman candidate on~$\Lt(u_1,a_0)\cup \Ch([a_0,b_0],*)$.
\end{St}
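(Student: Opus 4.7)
The proposition is the mirror image of Proposition~\ref{CupMeetsRightTangents}, so the strategy is completely parallel. The plan is to verify that the glued function $B$ is $C^1$ across the common chord $[A_0,B_0]$, and then invoke Proposition~\ref{ConcatenationOfConcaveFunctions} together with the fact that $\Lt(u_1,a_0)$ and $\Ch([a_0,b_0],*)$ are induced convex subdomains of $\Omega_\eps$.

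First I would observe that $B$ is continuous across the seam: on the chord $[A_0,B_0]$ the chordal formula~\eqref{vallun} gives $B(x) = m(a_0)(x_1-a_0)+f(a_0)$ thanks to the choice~\eqref{m(a_0)}, which coincides with the left-tangent formula~\eqref{linearity} evaluated at $u=a_0$. Since the $x_2$-direction is nowhere parallel to the chord or to the left tangents, $C^1$-smoothness of $B$ is equivalent to continuity of $\partial B/\partial x_2$ across the seam. Differentiating~\eqref{ul} with respect to $x_2$ and using~\eqref{difeq2}, one finds, exactly as in the proof of Proposition~\ref{RightTangentsCandidate} but with the sign change coming from the left tangent orientation,
\begin{equation*}
\frac{\partial B}{\partial x_2}\Big|_{\Lt(u_1,a_0)} = \frac{m'(u_L)}{2},
\end{equation*}
which is the analogue of~\eqref{BSubX2Tangents}.

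Evaluating at a point of the seam $u_L=a_0$ and using~\eqref{difeq2} at $u=a_0$ together with the gluing condition~\eqref{m(a_0)} and $b_0-a_0=2\eps$, I would compute
\begin{equation*}
\frac{\partial B}{\partial x_2}\Big|_{\Lt(u_1,a_0)}(a_0,a_0^2) = \frac{m'(a_0)}{2} = \frac{m(a_0)-f'(a_0)}{2\eps} = \frac{f'(b_0)-f'(a_0)}{2(b_0-a_0)},
\end{equation*}
which by~\eqref{BsubX2Chords} is exactly $\partial B/\partial x_2$ computed from the chordal side at $(a_0,a_0^2)$. Hence $\partial B/\partial x_2$ is continuous across the seam, so $B\in C^1$ on the whole union, and Proposition~\ref{ConcatenationOfConcaveFunctions} yields local concavity on $\Lt(u_1,a_0)\cup\Ch([a_0,b_0],*)$. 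Combined with the boundary condition on the fixed boundary, which is already built into each standard candidate, this makes $B$ a Bellman candidate there.

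The only step that requires any care is the sign bookkeeping in the derivation of $\partial B/\partial x_2 = m'(u_L)/2$, because the left-tangent analogue of~\eqref{ur} has the opposite sign in front of the square root, which in turn flips the sign of $\partial u_L/\partial x_2$; one has to see that this sign flip is exactly compensated by the sign change in~\eqref{difeq2} versus~\eqref{difeq}, so the final identity is formally the same as on the right-tangent side. Everything else is a direct transcription of the right-tangent proof.
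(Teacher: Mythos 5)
Your proof is correct and follows exactly the approach the paper has in mind: the paper explicitly states the proof is ``completely symmetric'' to Proposition~\ref{CupMeetsRightTangents}, and you have carried out that symmetrization faithfully, including the key check that the sign flips from~\eqref{ul} and from~\eqref{difeq2} cancel so that $\partial B/\partial x_2 = m'(u_{\mathrm L})/2$ still holds.
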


As usual, the proof of this proposition is completely symmetric.


The inequality~$m''(u) < 0$ required in Proposition~\ref{CupMeetsRightTangents} for the candidate~$B$ to be standard on~$\Rt(b_0,u_2)$ can be rewritten as 
\begin{equation}\label{InequalitiesForConcatenationRight}
m''(u) = \eps^{-1}\Srt(a_0,b_0)e^{(b_0-u)/\eps} + 
		\eps^{-1}e^{-u/\eps}\int\limits_{b_0}^u e^{t/\eps}\,df''(t) < 0, \quad u \in (b_0,u_2),
\end{equation}
with the help of equations~\eqref{m(b_0)},~\eqref{difeqSecondDer}, and~\eqref{mr''_firstformula}.
Similarly, in the case of Proposition~\ref{CupMeetsLeftTangents}, the inequality~$m''(u) > 0$ turns into
\begin{equation}\label{InequalitiesForConcatenationLeft}
	m''(u)	= -\eps^{-1}\Slt(a_0,b_0)e^{(u-a_0)/\eps} + 
		\eps^{-1}e^{u/\eps}\int\limits_{u}^{a_0} e^{-t/\eps}\,df''(t) > 0, \quad u \in (u_1,a_0).
	\end{equation}
We introduce one of the main ``heroes'' of the story: the \emph{force function}\index{force! function}.

\begin{Def}\label{Forces}
Let~$a_0$ and~$b_0$ be a pair of points satisfying the cup equation~\eqref{urlun}. Then the function~$\Fr$ given by the formula
\begin{equation}\label{RightForce}
\Fr(u; a_0, b_0; \eps) \df \Srt(a_0,b_0)e^{(b_0-u)/\eps} + 
		e^{-u/\eps}\int\limits_{b_0}^u e^{t/\eps}\,df''(t), \quad u \geq b_0,
\end{equation}
is the right force of~$(a_0,b_0)$. The left force is the function~$\Fl$ defined as
\begin{equation}\label{LeftForce}
\Fl(u; a_0, b_0; \eps) \df -\Slt(a_0,b_0)e^{(u-a_0)/\eps} + 
		e^{u/\eps}\int\limits_{u}^{a_0} e^{-t/\eps}\,df''(t), \quad u \leq a_0.
\end{equation}
There are also forces coming from the infinities\textup:
\begin{equation}\label{RightForceInfinity}
\Fr(u; -\infty; \eps) \df  
		e^{-u/\eps}\int\limits_{-\infty}^u e^{t/\eps}\,df''(t),\quad u \in \mathbb{R},
\end{equation}
\begin{equation}\label{LeftForceInfinity}
\Fl(u; \infty; \eps) \df  
		e^{u/\eps}\int\limits_{u}^{\infty} e^{-t/\eps}\,df''(t),\quad u \in \mathbb{R}.
\end{equation}
\end{Def}

Note that inequalities~\eqref{InequalitiesForConcatenationRight} and~\eqref{InequalitiesForConcatenationLeft} turn into
\begin{equation*}
\Fr(u;a_0,b_0;\eps) < 0,\ u \in (b_0,u_2), \quad \hbox{and} \quad \Fl(u;a_0,b_0;\eps) > 0,\ u \in (u_1,a_0),
\end{equation*}
correspondingly. In a similar way, the inequalities in Definitions~\ref{StandardCandidateTangentRightInfty} and~\ref{StandardCandidateTangentLeftInfty} turn into
\begin{equation*}
\Fr(u;-\infty;\eps) < 0,\ u \in (-\infty,u_2), \quad \hbox{and} \quad \Fl(u;+\infty;\eps) > 0,\ u \in (u_1,+\infty).
\end{equation*}

Though the expressions for the forces are well defined for arbitrary pairs of points~$(A_0, B_0)$, when we write a force concerning such a pair, we always assume that the pair~$(a_0,b_0)$ satisfies the cup equation. We study differential properties of forces. First,
\begin{equation}\label{DifForcePoU}
\frac{\partial \Fr}{\partial u} = f''' - \frac{\Fr}{\eps}; \quad
\frac{\partial \Fl}{\partial u} = -f''' + \frac{\Fl}{\eps}.
\end{equation}
These equations should be understood in the distributional sense. The second claim concerns differentiation with respect to the second and the third arguments.  

\begin{Le}\label{DifForcePoL}
Let~$\Ch([a_0,b_0],[a_1,b_1])$ be a chordal domain satisfying the assumptions of Proposition~\textup{\ref{LightChordalDomainCandidate}}. Then for all~$l \in (b_1 - a_1, b_0 - a_0)$ we have the following\textup:
\begin{equation}\label{Fpoell}
\begin{aligned}
\frac{\partial}{\partial l}\bigg[\Fl\big(u; a(l), b(l); \eps\big)\bigg]=
\phantom{-}\Big(\frac{2}{l}-\frac{1}{\eps}\Big)\frac{\Slt\Srt}{\Slt+\Srt}
e^{-(a(l)-u)/\eps};\\
\frac{\partial}{\partial l}\bigg[\Fr\big(u; a(l), b(l); \eps\big)\bigg] = -\Big(\frac{2}{l}-\frac{1}{\eps}\Big)\frac{\Slt\Srt}{\Slt+\Srt}e^{-(u-b(l))/\eps}.
\end{aligned}
\end{equation} 
Here the differentials take their values at the points~$a(l)$ and~$b(l)$.
\end{Le}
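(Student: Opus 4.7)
The strategy is a direct chain-rule computation in which the singular contribution from~$df''$ is absorbed by the identities~\eqref{dDlt} and~\eqref{dDrt} from the preceding lemma (which hold on the manifold~$\{\Phi(a,b)=0\}$).

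First I would extract the velocities $a'(l)$ and $b'(l)$. Since $b(l)-a(l)=l$ and $\Phi(a(l),b(l))=0$, differentiating in~$l$ yields $b'(l)-a'(l)=1$ and, using $d\Phi=\Slt\,da+\Srt\,db$, also $\Slt\,a'(l)+\Srt\,b'(l)=0$. Solving,
\begin{equation*}
a'(l)=-\frac{\Srt}{\Slt+\Srt}, \qquad b'(l)=\frac{\Slt}{\Slt+\Srt}.
\end{equation*}
The denominator is non-zero on the interior of the chordal domain because both differentials are strictly negative there (the assumption of Proposition~\ref{LightChordalDomainCandidate} giving $a'<0$, $b'>0$).

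Next I would differentiate $\Fl(u;a(l),b(l);\eps)$ in~$l$. The troublesome piece is the Stieltjes integral $\int_u^{a(l)} e^{-t/\eps}\,df''(t)$: since $f''$ is only $\BV$ and continuous, one cannot simply write $f'''(a(l))a'(l)$ for its rate of accumulation. Integration by parts rewrites this integral as $e^{-a(l)/\eps}f''(a(l))-e^{-u/\eps}f''(u)+\eps^{-1}\!\int_u^{a(l)}e^{-t/\eps}f''(t)\,dt$; differentiating in~$l$, the two $\frac{1}{\eps}$-contributions cancel and one obtains
\begin{equation*}
\frac{d}{dl}\!\int\limits_u^{a(l)} e^{-t/\eps}\,df''(t) = e^{-a(l)/\eps}\,\frac{d}{dl}f''(a(l)).
\end{equation*}
The key point is that although $f''$ itself is not differentiable, the composite $l\mapsto f''(a(l))$ is differentiable along the curve: writing out~\eqref{dDlt} along $(a(l),b(l))$ and using the cup-equation relation $\Slt a'+\Srt b'=0$ to eliminate~$b'$, one gets
\begin{equation*}
\frac{d}{dl}f''(a(l)) = \Slt'(l) - \frac{2\Slt}{l}\,a'(l).
\end{equation*}

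Substituting this into the chain-rule derivative of~$\Fl$ and combining with the contribution coming from differentiating~$-\Slt\,e^{(u-a)/\eps}$, the two $\Slt'(l)$-pieces cancel and what survives is $\Slt\,a'(l)\bigl(\frac{1}{\eps}-\frac{2}{l}\bigr)e^{(u-a)/\eps}$; plugging in the expression for~$a'(l)$ gives the claimed identity. The argument for~$\Fr$ is a mirror image, using~\eqref{dDrt} and~$b'(l)$ in place of~\eqref{dDlt} and~$a'(l)$. The only real obstacle is the meaning of the singular $df''(a(l))$-term, and this is resolved entirely by the combination of integration by parts with the preceding lemma, which promotes $f''\circ a$ to a differentiable function of~$l$ along the constraint manifold.
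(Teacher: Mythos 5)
Your proof takes essentially the same route as the paper's: simplify~$\partial_l \Fl$ using~\eqref{dDlt}, then substitute~$a'=-\Srt/(\Slt+\Srt)$, and the integration by parts usefully unpacks a computation the paper's proof performs implicitly. However, the justification of the key differentiability step is not quite right. Formula~\eqref{dDlt} does not ``promote~$f''\circ a$ to a differentiable function of~$l$'': since~$\Slt(a(l),b(l))=f''(a(l))-\av{f''}{[a(l),b(l)]}$ and the second term is~$C^1$ in~$l$, the map~$l\mapsto\Slt(a(l),b(l))$ has exactly the same regularity as~$l\mapsto f''(a(l))$, so~\eqref{dDlt} cannot bootstrap one from the other. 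The argument survives nonetheless, precisely for the reason you note a line later: the~$\Slt'(l)$-pieces cancel, so~$\partial_l\Fl$ is well defined even at points where~$\Slt'(l)$ and~$\frac{d}{dl}f''(a(l))$ fail to exist separately. The cleanest way to make this rigorous is to plug your integrated-by-parts identity back into the definition of~$\Fl$ \emph{before} differentiating: the two~$f''(a)$-terms cancel exactly, leaving
\begin{equation*}
\Fl(u;a,b;\eps)=\av{f''}{[a,b]}\,e^{(u-a)/\eps}-f''(u)+\eps^{-1}e^{u/\eps}\!\int_u^a e^{-t/\eps}f''(t)\,dt,
\end{equation*}
which is manifestly~$C^1$ in~$(a,b)$ because~$f'\in C^1$ and~$f''$ is continuous; the chain rule then applies without any caveats and recovers the same final formula.
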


\begin{proof}
Let us prove the formula for the left force, the one for the right is similar. By the definition of forces and formula~\eqref{dDlt},
\begin{equation*}
\frac{\partial}{\partial l}\bigg[\Fl\big(u; a(l), b(l); \eps\big)\bigg] = \Slt(a(l),b(l))\Big(\frac{1}{\eps} - \frac{2}{l}\Big)e^{-\frac{a(l)-u}{\eps}}a'(l).
\end{equation*}
It remains to prove that
\begin{equation*}
a' = -\frac{\Srt}{\Slt + \Srt},
\end{equation*}
which is a consequence of the trivial identities~$b'-a' = 1$ and~$a'\Slt + b'\Srt = 0$ (the second identity follows by differentiation of the cup equation~\eqref{urlun}).
\end{proof}

This lemma is a particular case of Lemma~$6.12$ in~\cite{5A}. The next statement deals with differentiation with respect to~$\eps$. For a moment, we let chordal domains fall out the parabolic strip. Surely, when we were working with chordal domains, we did not need the upper boundary, therefore, such an assumption does not break all the results concerning chordal domains. The last lemma of this subsection is the first moment when a chordal domain interacts with the upper boundary.

\begin{Le}\label{difequal}
Let~$\Ch([a_0,b_0],[a_1,b_1])$ be a chordal domain satisfying the assumptions of Proposition~\textup{\ref{LightChordalDomainCandidate}}. Then\textup, for all~$\eps \in (\frac{b_1 - a_1}{2}, \frac{b_0 - a_0}{2})$
\begin{equation*}
\frac{\partial}{\partial \eps}\bigg[F\big(u; a(l), b(l); \eps\big)\bigg]\Bigg|_{l = 2 \eps}= \frac{\partial}{\partial \eps}\bigg[F\big(u; a(l), b(l); \eps\big)\Big|_{l = 2 \eps}\bigg],
\end{equation*}
where $F$ stands either for $\Fr$ or $\Fl$. 
\end{Le}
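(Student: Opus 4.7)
The plan is to write out the right-hand side using the chain rule and then appeal directly to Lemma~\ref{DifForcePoL}. The key point is that, on the foliation,~$a(l)$ and~$b(l)$ are $\eps$-independent (they are determined by the geometric foliation of~$\Ch([a_0,b_0],[a_1,b_1])$, not by the width of the parabolic strip). Thus, in the expression~$F\big(u;a(l),b(l);\eps\big)$, the parameter~$\eps$ appears only in the last slot of the force.

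First I would unfold the right-hand side: since~$l$ is replaced by~$2\eps$ after the differentiation, the chain rule yields
\begin{equation*}
\frac{\partial}{\partial\eps}\bigg[F\big(u;a(l),b(l);\eps\big)\Big|_{l=2\eps}\bigg] = \frac{\partial F}{\partial\eps}\bigg|_{l=2\eps} + 2\,\frac{\partial F}{\partial l}\bigg|_{l=2\eps}.
\end{equation*}
The first summand on the right is exactly the left-hand side of the identity to be proved. Hence the lemma reduces to the single claim
\begin{equation*}
\frac{\partial F}{\partial l}\big(u;a(l),b(l);\eps\big)\bigg|_{l=2\eps} = 0.
\end{equation*}

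The last equality follows at once from Lemma~\ref{DifForcePoL}: both formulas in~\eqref{Fpoell} carry the common factor~$\big(\tfrac{2}{l}-\tfrac{1}{\eps}\big)$, and substituting~$l=2\eps$ we get~$\tfrac{2}{2\eps}-\tfrac{1}{\eps}=0$. Consequently the chain-rule correction term vanishes and the two expressions in the statement coincide.

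There is no real obstacle here; the whole content of the lemma is the algebraic cancellation coming from the prefactor in~\eqref{Fpoell} at the distinguished chord length~$l=2\eps$ (the long-chord regime, where the chord touches the free boundary). One has only to make sure, when invoking Lemma~\ref{DifForcePoL}, that the hypotheses of Proposition~\ref{LightChordalDomainCandidate} hold, which is assumed in the statement, and that the range of~$\eps$ given in the lemma,~$\eps\in\big(\tfrac{b_1-a_1}{2},\tfrac{b_0-a_0}{2}\big)$, corresponds to~$l=2\eps$ lying strictly inside the interval~$(b_1-a_1,b_0-a_0)$, so that the differentials~$\Slt,\Srt$ in~\eqref{Fpoell} are evaluated at interior chords and the expressions are well defined.
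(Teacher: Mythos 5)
Your proof is correct and follows essentially the same route as the paper: both expand the right-hand side by the chain rule and observe that the correction term $2\,\partial F/\partial l\big|_{l=2\eps}$ vanishes because the common prefactor $\big(\tfrac{2}{l}-\tfrac{1}{\eps}\big)$ in Lemma~\ref{DifForcePoL} (formula~\eqref{Fpoell}) is zero at $l=2\eps$. No further comment is needed.
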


\begin{proof}
Surely,
\begin{equation*} 
\frac{\partial}{\partial \eps}\bigg[F\big(u; a(2\eps), b(2\eps); \eps\big)\bigg] =
2\frac{\partial}{\partial l}\bigg[F\big(u; a(l), b(l); \eps\big)\bigg]\Bigg|_{l = 2 \eps} + 
\frac{\partial}{\partial \eps}\bigg[F\big(u; a(l), b(l); \eps\big)\bigg]\Bigg|_{l = 2 \eps}.
\end{equation*}
The first summand is zero by formula~\eqref{Fpoell}, so, the lemma is proved.
\end{proof}

Finally, we give a graphical representation of Propositions~\ref{CupMeetsRightTangents} and~\ref{CupMeetsLeftTangents}. We draw a fictious vertex that corresponds to the ``linearity domain'' that consists of the single chord~$[A_0,B_0]$ that has three outcoming edges representing the chordal domain and the tangent domains. See Figure~\ref{fig:cup_graph}. Numbers written on vertices and edges correspond to the horizontal length of domains, and the direction of edges correspond to the direction of tangent domains. In particular, on Figure~\ref{fig:cup_graph}, the vertex with the number~$2\eps$ visualizes the long chord, and the edges with the numbers~$a-u_1$ and~$u_2-b$ indicate the tangent domains~$\Lt(u_1,a)$ and~$\Rt(b,u_2)$.  
\begin{figure}[!h]
\hskip10pt
\includegraphics[width = 0.55 \linewidth]{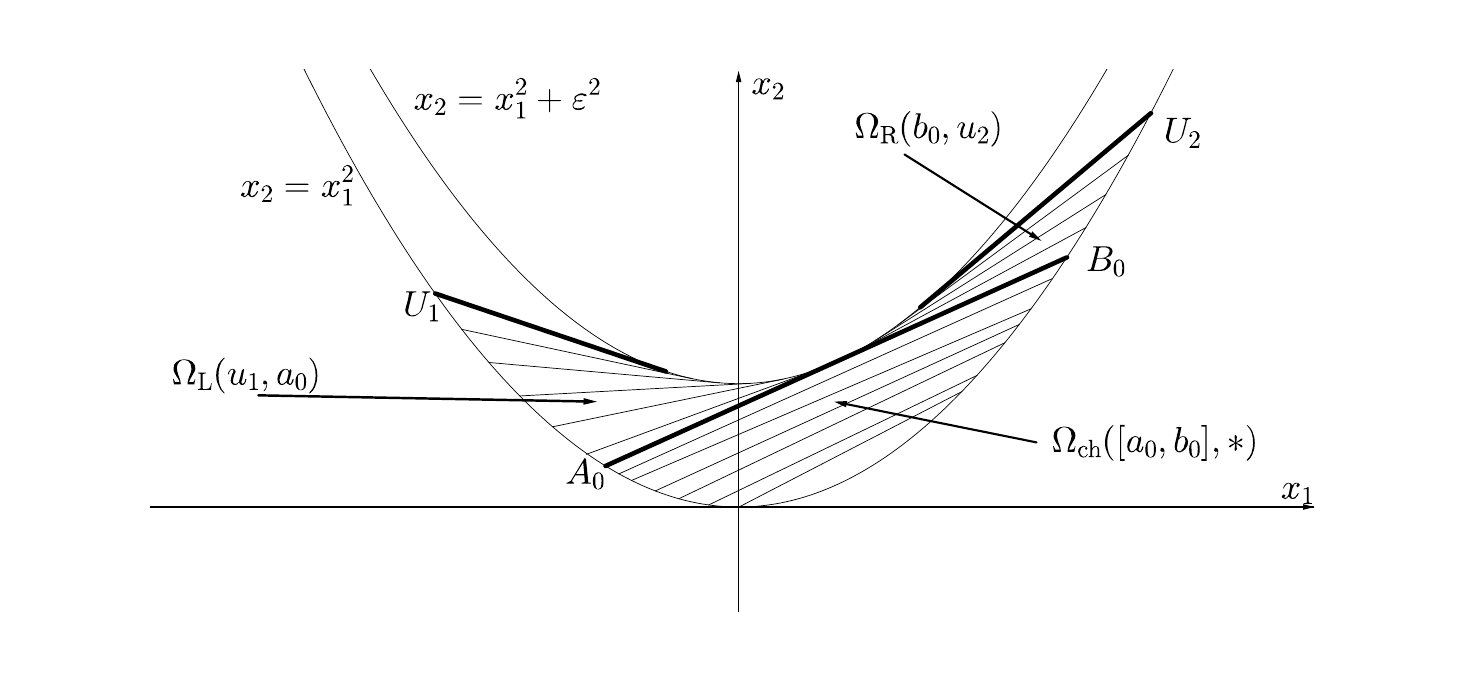}
\hskip-40pt
\raisebox{-10pt}{\includegraphics[width = 0.6\linewidth]{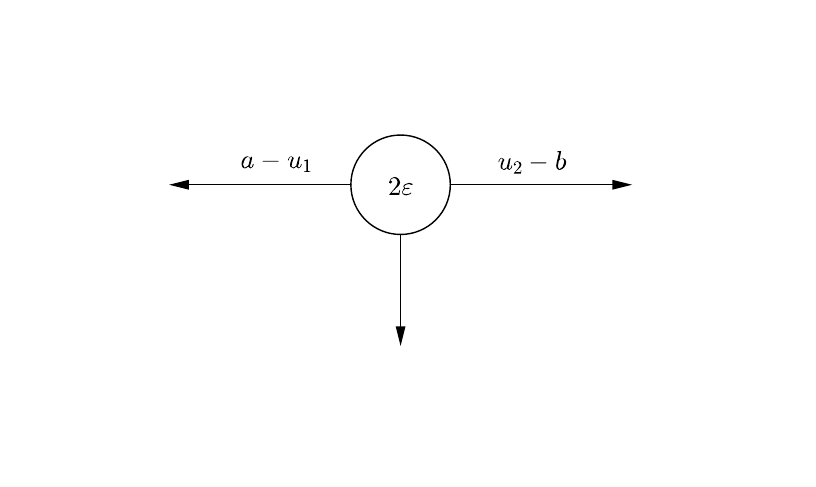}}
\vskip-10pt
\caption{A chordal domain $\Ch([a_0, b_0],*)$ with tangent domains attached to it and the corresponding graph.}
\label{fig:cup_graph}
\end{figure}

\subsection{Examples}\label{s333}
\paragraph{Exponential function.}
Consider the case~$f(t)= e^t$, which corresponds to the integral John--Nirenberg inequality~\eqref{intJN}, and was considered in~\cite{SlVa}. First, we have to verify Conditions~\ref{reg} and~\ref{sum}. The first one is satisfied, because~$f'''$ does not have any root. The second one is equivalent to the inequality~$\eps_{\infty} < 1$. What is more, it is easy to see that the Bellman function is everywhere infinite for~$f(t) = e^t$ and~$\eps \geq 1$. So, we take~$\eps_{\infty}$ to be some number less than~$1$.

The function~$f'''$ is everywhere positive, so we can use Proposition~\ref{LeftTangentsCandidateInfty} and construct a Bellman candidate on the domain~$\Lt(-\infty,\infty)$ given by formulas~\eqref{linearity} and~\eqref{minfty2}. We do not prove that it coincides with the Bellman function, this will be done in Subsection~\ref{s521}. For the explicit formulas for this Bellman function, see~\cite{SlVa}.

For the case~$f(t) = -e^t$, we can use Proposition~\ref{RightTangentsCandidateInfty} to construct a Bellman candidate on the domain~$\Rt(-\infty,\infty)$. This candidate also coincides with the Bellman function.

\paragraph{Polynomial of third degree.}
Consider the case where~$f$ is a polynomial of third degree. Surely, such a function satisfies Conditions~\ref{reg} and~\ref{sum}. We see that~$f'''$ is a constant. If this constant is positive, then the situation falls under the scope of Proposition~\ref{LeftTangentsCandidateInfty}. In the opposite case~$f'''$ is negative, we can use Proposition~\ref{RightTangentsCandidateInfty}. These Bellman candidates coincide with the true Bellman functions. For more details, see~\cite{5A}.

\paragraph{A chordal domain sitting on a solid root.}
This example illustrates the notion of a solid root given in Definition~\ref{solidroot}. Consider the function 
$$
f(t)=\begin{cases}
-(|t|-1)^3\quad&\text{for }|t|\geq1,
\\
0&\text{for }|t|<1.
\end{cases}
$$
It is not difficult to see that for each~$\eps > 0$ the foliation~$\Lt(-\infty,-\eps) \cup \Ch([-\eps,\eps],[0,0]) \cup \Rt(\eps,\infty)$ provides a Bellman candidate. Let us prove this claim. We start from the chordal domain. The foliation is symmetric, thus~$a(l) = -\frac{l}{2}$,~$b(l) = \frac{l}{2}$. It is not difficult to verify that such a foliation satisfies the assumptions of Proposition~\ref{LightChordalDomainCandidate}. We note that~$f'''(t) \leq 0$ when~$t \geq 0$ and~$f'''(t) \geq 0$ when~$t \leq 0$. Therefore, the functions built by Proposition~\ref{CupMeetsRightTangents} on~$\Rt(\eps,\infty)$ and by Proposition~\ref{CupMeetsLeftTangents} on~$\Lt(-\infty, -\eps)$, are Bellman candidates. Indeed, the corresponding right force function is non-positive, whereas the corresponding left force function is non-negative (see formulas~\eqref{RightForce} and~\eqref{LeftForce}).

We will see in Subsection~\ref{s343} that it is convenient to treat the part~$\Ch([-1,1],[0,0])$ of the cup as a special linearity domain.

\section{Linearity domains}\label{s34}\index{domain! linearity domain}
\subsection{Angle}\label{s341}
As it was stated in Section~\ref{s31}, we classify the linearity domains by the number of points on the lower parabola. The first linearity domain we study, an \emph{angle}\index{angle}, has only one point~$W$ on the lower parabola. See Figure~\ref{fig:ang} to get the idea. 
We define the angle at~$w$ by the formula
\begin{equation*}
\Ang(w;\eps) = \big\{x \in \mathbb{R}^2 \mid w-\eps \leq x_1 \leq w+\eps,\; 2wx_1-w^2+2\eps|w-x_1|\leq x_2 \leq x_1^2+\eps^2\big\}.
\end{equation*}
The point~$W$ is called the vertex of~$\Ang(w)$ (we will often omit~$\eps$ in the notation for angles). We suppose that a function~$B$ coincides with the Bellman candidate on~$\Ang(w)$ and is~$C^1$-smooth at~$W$. The function~$B$ is linear on~$\Ang(w)$, therefore, 
\begin{equation}\label{ValAng}
 B(x) = \beta_0+  \beta_1 x_1 + \beta_2 x_2, \quad x \in \Ang(w),
\end{equation}
for some reals~$\beta_i$,~$i=0,1,2$. We have two conditions on the coefficients~$\beta$. First, the boundary condition
$B(W)=f(w)$. Second, by the~$C^1$-smoothness assumption, the tangent vector $(1,2w,f'(w))$ to the curve $\{(t,t^2,f(t))\,|\; t \in \mathbb{R}\}$ at $w$ lies in the plane~$\{(x_1, x_2, \beta_0+  \beta_1 x_1 + \beta_2 x_2)\,|\; x_1,x_2 \in \mathbb{R}\}$. Therefore, 
\begin{equation}\label{betaequations}
\begin{aligned}
\beta_1 =& f'(w) - 2\beta_2 w; \\
\beta_0 =& f(w) - wf'(w)+\beta_2 w^2.
\end{aligned}
\end{equation} 
So, we have the following family of linear functions:  
\begin{equation}\label{StandardCandidateAngleFormula}
B(x)= f(w) - wf'(w)+\beta_2 w^2+ (f'(w) - 2\beta_2 w)x_1+ \beta_2 x_2
\end{equation}
parametrized by~$\beta_2 \in \mathbb{R}$.

\begin{Def}\label{StandardCandidateAngle}\index{standard candidate! in angle}
The function~$B$ defined by formula~\eqref{StandardCandidateAngleFormula} in~$\Ang(w)$ is called a {\it standard candidate} there. 
\end{Def}

We suppose that an angle $\Ang(w)$ is adjacent to a right tangent domain~$\Rt(u_1,w)$ from the right. Our aim is to glue a Bellman candidate on~$\Rt(u_1,w)\cup\Ang(w)$ from standard candidates on~$\Rt(u_1,w)$ and~$\Ang(w)$. The continuity of the glued function implies that the parameters~$\mrt(w)$ of the standard candidate on~$\Rt(u_1,w)$ and~$\beta_2$ of the standard candidate on $\Ang(w)$ satisfy the equation
\begin{equation}\label{AngleMeetRightTangentsFormula}
\mrt(w) = f'(w)-2\eps \beta_2. 
\end{equation}

\begin{St}\label{AngleMeetRightTangents}
Let the function~$B$ coincide with standard candidates on~$\Rt(u_1,w)$ and~$\Ang(w)$. If the corresponding parameters satisfy  relation~\eqref{AngleMeetRightTangentsFormula}\textup{,} then~$B$ is a~$C^1$-smooth Bellman candidate on the domain~$\Rt(u_1,w) \cup \Ang(w)$.
\end{St}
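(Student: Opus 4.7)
The plan is to deduce the statement from Proposition~\ref{ConcatenationOfConcaveFunctions} by verifying that $B$ is a $C^1$-smooth function on $\Rt(u_1,w)\cup\Ang(w)$, and that it satisfies all the conditions of Definition~\ref{candidate}. Local concavity on each of the two pieces has already been established by Proposition~\ref{RightTangentsCandidate} (for the right tangent domain) and is obvious from \eqref{StandardCandidateAngleFormula} (since a linear function is locally concave); both domains are induced convex, so once the global $C^1$-smoothness is in place the conclusion follows.

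First I would identify the common boundary of $\Rt(u_1,w)$ and $\Ang(w)$: it is the right tangent issuing from~$W=(w,w^2)$, which simultaneously is the extremal with $u=w$ in the right tangent family and the left boundary segment of the angle $\Ang(w)$. On this common extremal, from the $\Rt(u_1,w)$ side formula~\eqref{linearity} gives
\begin{equation*}
B(x_1,x_2)=\mrt(w)(x_1-w)+f(w),
\end{equation*}
while on the $\Ang(w)$ side $B$ is the affine function~\eqref{StandardCandidateAngleFormula} with parameter~$\beta_2$. Both restrictions agree at $W$ (they equal $f(w)$) and, by a direct computation using~\eqref{AngleMeetRightTangentsFormula}, both take the value $f(w)-\eps\mrt(w)$ at the tangency point $(w-\eps,(w-\eps)^2+\eps^2)$ on the free boundary. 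Since two affine functions on a segment agreeing at the endpoints coincide, $B$ is continuous across the common tangent.

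Next I would check that the gradients match. From~\eqref{BSubX2Tangents}, on the $\Rt(u_1,w)$ side one has $\partial B/\partial x_2=\mrt'(w)/2$ along the $u=w$ extremal, while on the angle side $\partial B/\partial x_2=\beta_2$. Combining the ODE~\eqref{difeq} at $u=w$ with the matching condition~\eqref{AngleMeetRightTangentsFormula} gives
\begin{equation*}
\frac{\mrt'(w)}{2}=\frac{f'(w)-\mrt(w)}{2\eps}=\frac{2\eps\beta_2}{2\eps}=\beta_2,
\end{equation*}
so the $x_2$-derivative is continuous across the common boundary. Since the common extremal is not parallel to the $x_2$-axis, continuity of $B$ together with continuity of $\partial B/\partial x_2$ forces continuity of the full gradient, and thus $B\in C^1$ on $\Rt(u_1,w)\cup\Ang(w)$.

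Once $C^1$-smoothness is in hand, Proposition~\ref{ConcatenationOfConcaveFunctions} yields local concavity of $B$ on the union, and the remaining items of Definition~\ref{candidate} (boundary values on the fixed parabola, $C^1$-smoothness on each piece, linear on the angle, foliated by extremals with constant differential on the tangent piece) come from the fact that both pieces are standard candidates. There is no real obstacle here: the whole argument is a matching computation, and the only point requiring any care is the verification that the two affine restrictions on the common tangent actually coincide, which is precisely what the choice~\eqref{AngleMeetRightTangentsFormula} is designed to ensure.
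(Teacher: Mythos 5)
Your proof is correct and follows essentially the same route as the paper: reduce to $C^1$-smoothness via Proposition~\ref{ConcatenationOfConcaveFunctions}, then match $\partial B/\partial x_2$ across the common tangent using~\eqref{BSubX2Tangents},~\eqref{difeq}, and~\eqref{AngleMeetRightTangentsFormula}. The only difference is that you also explicitly verify continuity of $B$ across the common tangent, which the paper treats as built into the hypothesis (the relation~\eqref{AngleMeetRightTangentsFormula} was already derived from continuity in the preceding discussion); this is harmless and arguably clarifying.
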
 

\begin{proof}
By Proposition~\ref{ConcatenationOfConcaveFunctions}, it suffices to prove that~$B$ is~$C^1$-smooth. Since the~$x_2$-axis is never parallel to a tangent, it suffices to verify that~$\frac{\partial B}{\partial x_2}$ is a continuous function in a neighborhood of the common boundary of~$\Ang(w)$ and~$\Rt(u_1,w)$. This is a simple calculation:
\begin{equation*}
\frac{\partial B}{\partial x_2}\Big|_{\Rt(u_1,w)}(w,w^2) \stackrel{\scriptscriptstyle{\eqref{BSubX2Tangents}}}{=} \frac{\mrt'(w)}{2} \stackrel{\scriptscriptstyle{\eqref{difeq}}}{=} \frac{f'(w) - \mrt(w)}{2\eps} \stackrel{\scriptscriptstyle{\eqref{AngleMeetRightTangentsFormula}}}{=} \beta_2 \stackrel{\scriptscriptstyle{\eqref{StandardCandidateAngleFormula}}}{=}\frac{\partial B}{\partial x_2}\Big|_{\Ang(w)}.
\end{equation*}
\end{proof}

A similar proposition for a left tangent domain is symmetric.
\begin{St}\label{AngleMeetLeftTangents}
Let the function~$B$ coincide with standard candidates on~$\Ang(w)$ and~$\Lt(w,u_2)$. If the corresponding parameters satisfy the relation $\mlt(w) = f'(w)+2\eps \beta_2$\textup{,} then~$B$ is a~$C^1$-smooth Bellman candidate on~$\Ang(w) \cup \Lt(w,u_2)$.
\end{St}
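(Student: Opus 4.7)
The plan is to mirror the proof of Proposition~\ref{AngleMeetRightTangents} verbatim, exchanging the roles of right and left tangents and keeping track of the sign change in the differential equation defining the slope function $m$. The key observation is that the only input specific to right tangents in the previous argument was equation~\eqref{difeq}; for left tangents we have instead equation~\eqref{difeq2}, which produces the opposite sign $\mlt(w) = f'(w) + 2\eps\beta_2$ in the gluing relation.

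First I would invoke Proposition~\ref{ConcatenationOfConcaveFunctions}. Both $\Ang(w)$ and $\Lt(w,u_2)$ are induced convex subdomains of $\Omega_{\eps}$ (the angle is a parabolic quadrilateral, and the left tangent domain is foliated by non-intersecting segments tangent to the free boundary), and each carries a locally concave function by Definition~\ref{StandardCandidateAngle} and Proposition~\ref{LeftTangentsCandidate} respectively. So once $C^1$-smoothness of the concatenation $B$ is established on a neighborhood of the common boundary segment, local concavity on the union follows.

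Next I would verify $C^1$-smoothness. The common boundary is the left tangent through $W$, which is not parallel to the $x_2$-axis; hence continuity of $\partial B/\partial x_2$ across it is enough (continuity of $\partial B/\partial x_1$ is then automatic from the fact that $B$ is continuous and linear along this boundary segment). On the angle side, formula~\eqref{StandardCandidateAngleFormula} gives $\partial B/\partial x_2 \equiv \beta_2$. On the left tangent side, the analogue of \eqref{BSubX2Tangents} (derived exactly as in the proof of Proposition~\ref{RightTangentsCandidate}, but using \eqref{ul} and \eqref{difeq2} in place of \eqref{ur} and \eqref{difeq}) yields $\partial B/\partial x_2 = \mlt'(u)/2$ along the tangent corresponding to $u$. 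Evaluating at $u=w$ and using \eqref{difeq2} followed by the hypothesis $\mlt(w) = f'(w) + 2\eps\beta_2$ gives
\begin{equation*}
\frac{\mlt'(w)}{2} = \frac{\mlt(w) - f'(w)}{2\eps} = \beta_2,
\end{equation*}
so the two one-sided values of $\partial B/\partial x_2$ at the point $W$ (and indeed along the entire tangent, since $\nabla B$ is constant along each extremal) agree.

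I do not anticipate any real obstacle: the argument is completely parallel to that of Proposition~\ref{AngleMeetRightTangents}, and the only thing worth double-checking is the sign convention, namely that the left-tangent version of \eqref{difeq} is \eqref{difeq2} with the $-\eps m'(u)$ term, which accounts for why the gluing condition reads $\mlt(w) = f'(w) + 2\eps\beta_2$ rather than $\mlt(w) = f'(w) - 2\eps\beta_2$.
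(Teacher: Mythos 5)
Your proof is correct and follows exactly the strategy the paper uses (the paper simply declares the proof "symmetric" to Proposition~\ref{AngleMeetRightTangents} without writing it out): reduce to $C^1$-smoothness via Proposition~\ref{ConcatenationOfConcaveFunctions}, check continuity of $\partial B/\partial x_2$ across the common tangent using the left-tangent analogue of \eqref{BSubX2Tangents} together with \eqref{difeq2} and the gluing relation $\mlt(w)=f'(w)+2\eps\beta_2$. Your sign-tracking is correct.
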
 

The following proposition is a straightforward corollary of Propositions~\ref{AngleMeetRightTangents} and~\ref{AngleMeetLeftTangents} (one can find its analog in~\cite{5A}, which is Proposition~$4.1$ in that paper).

\begin{St}\label{AngleProp}
Let the function~$B$ coincide with standard candidates on~$\Rt(u_1,w)$\textup,~$\Ang(w)$\textup, and~$\Lt(w,u_2)$. If the corresponding parameters satisfy the relation 
\begin{equation*}
\mrt(w)+\mlt(w) = 2f'(w),
\end{equation*}
and the parameter~$\beta_2$ is determined by 
\begin{equation}\label{beta2}
\beta_2 = \frac{\mlt(w)-\mrt(w)}{4\eps},
\end{equation}
then~$B$ is a~$C^1$-smooth Bellman candidate on~$\Rt(u_1,w)\cup \Ang(w) \cup \Lt(w,u_2)$.\
\end{St}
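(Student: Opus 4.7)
My plan is to derive Proposition~\ref{AngleProp} as a direct consequence of Propositions~\ref{AngleMeetRightTangents} and~\ref{AngleMeetLeftTangents}, by showing that the two hypotheses on $\mrt(w),\mlt(w),\beta_2$ are exactly equivalent to the pair of single-sided compatibility conditions required for attaching each tangent domain to the angle.

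First I would carry out the algebraic reduction. The assumption $\beta_2 = \frac{\mlt(w)-\mrt(w)}{4\eps}$ combined with $\mrt(w)+\mlt(w) = 2f'(w)$ is linearly equivalent, by addition and subtraction, to the system
\begin{equation*}
\mrt(w) = f'(w) - 2\eps\beta_2, \qquad \mlt(w) = f'(w) + 2\eps\beta_2.
\end{equation*}
The first equation is precisely relation~\eqref{AngleMeetRightTangentsFormula}, and the second is the symmetric condition appearing in Proposition~\ref{AngleMeetLeftTangents}. Thus both single-sided gluing hypotheses are simultaneously satisfied, and the value of $\beta_2$ that makes the right-hand concatenation $C^1$-smooth is the same as the one that makes the left-hand concatenation $C^1$-smooth, so no inconsistency arises in the definition of $B$ on $\Ang(w)$.

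Next, I would apply Proposition~\ref{AngleMeetRightTangents} to conclude that $B$ is a $C^1$-smooth Bellman candidate on $\Rt(u_1,w)\cup\Ang(w)$, and independently Proposition~\ref{AngleMeetLeftTangents} to conclude the same on $\Ang(w)\cup\Lt(w,u_2)$. To assemble the final claim, I would invoke Proposition~\ref{ConcatenationOfConcaveFunctions} with the two induced convex subdomains $\Omega^1 = \Rt(u_1,w)\cup\Ang(w)$ and $\Omega^2 = \Ang(w)\cup\Lt(w,u_2)$; their intersection is exactly $\Ang(w)$, where $B$ agrees (being the same standard angle candidate with parameter $\beta_2$), so the concatenation is unambiguous. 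The $C^1$-smoothness across both interfaces provided by the two gluing propositions upgrades to $C^1$-smoothness on $\Omega^1\cup\Omega^2 = \Rt(u_1,w)\cup\Ang(w)\cup\Lt(w,u_2)$, and local concavity follows from Proposition~\ref{ConcatenationOfConcaveFunctions}.

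Almost nothing here is a genuine obstacle: the content is purely the bookkeeping observation that adding and subtracting the two single-sided compatibility equations reproduces the hypotheses of the proposition. The only mild care is to check that the two subdomains involved are induced convex so that Proposition~\ref{ConcatenationOfConcaveFunctions} applies, which is immediate from the definitions of $\Rt$, $\Lt$, and $\Ang(w)$: each is cut out of $\Omega_{\eps}$ by linear inequalities (together with the parabolic constraints), so their pairwise unions intersect any straight segment in a convex set.
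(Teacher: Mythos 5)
Your proof is correct and matches what the paper intends: it observes that the two stated relations are, by addition and subtraction, equivalent to the pair of single-sided gluing conditions $\mrt(w)=f'(w)-2\eps\beta_2$ (Proposition~\ref{AngleMeetRightTangents}) and $\mlt(w)=f'(w)+2\eps\beta_2$ (Proposition~\ref{AngleMeetLeftTangents}), then invokes those propositions together with Proposition~\ref{ConcatenationOfConcaveFunctions} to glue the two-piece unions across $\Ang(w)$. The paper treats this statement as a straightforward corollary of the same two one-sided propositions, so your route is essentially the paper's.
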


For purposes of further development, it is more convenient to rewrite the relation~$\mrt(w)+\mlt(w) = 2f'(w)$ in terms of forces. 
\begin{Def}\index{equation! balance equation}\label{BalanceEquationDefinition}
Suppose~$\Fr$ and~$\Fl$ to be the right and left forces of the two chords\textup,~$[A_1,B_1]$ and~$[A_2,B_2]$ \textup(or forces coming from the infinities\textup) correspondingly\textup,~$b_1 \leq a_2$. The equation
\begin{equation*}
\Fr(u) + \Fl(u) = 0
\end{equation*}
for the point~$u$ in~$[b_1,a_2]$ is called the balance equation for the forces~$\Fr$ and~$\Fl$. 
\end{Def}
\begin{figure}[!h]
\includegraphics[width = 0.55 \linewidth]{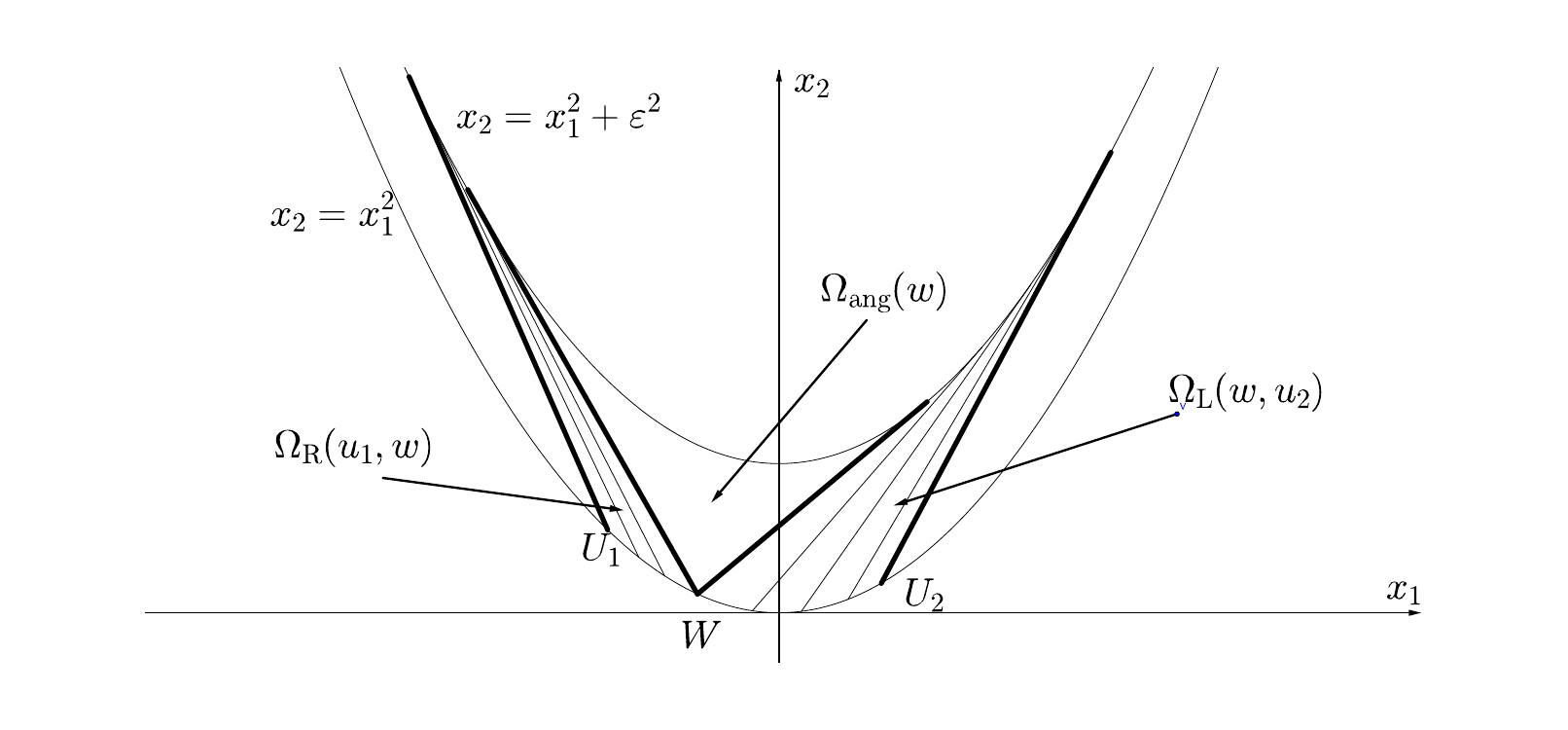}
\hskip-50pt
\raisebox{-10pt}{\includegraphics[width = 0.6 \linewidth]{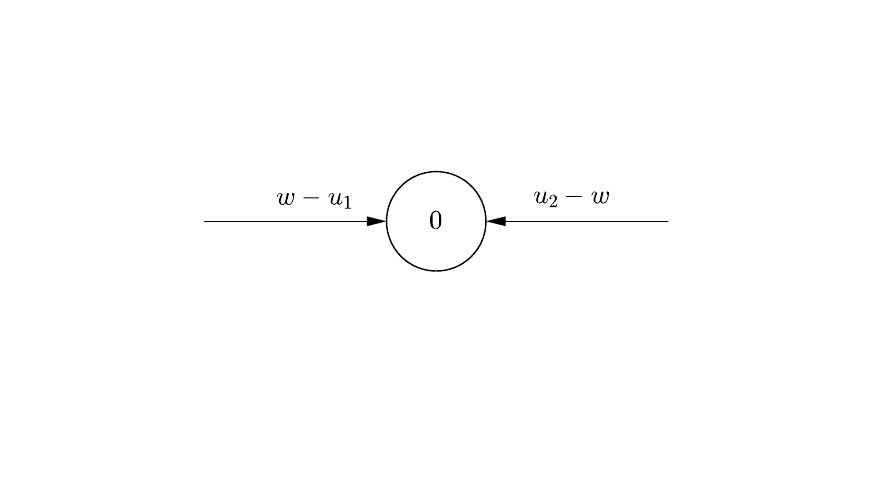}}
\vskip-10pt
\caption{An angle $\Ang(w)$ with adjacent domains and their graph.}
\label{fig:ang}
\end{figure}
The situation described above is represented graphically on Figure~\ref{fig:ang}. The vertex corresponds to the angle~$\Ang(w)$, it has two incoming edges representing the tangent domains.

The balance equation is equivalent to the relation~$\mrt(w)+\mlt(w) = 2f'(w)$. Indeed, by formulas~\eqref{difeq} and~\eqref{difeq2}, this condition is equivalent to~$\mrt''(w)+\mlt''(w) = 0$. Suppose that there are some full chordal domains~$\Ch([u_1 - 2\eps,u_1],*)$ and~$\Ch([u_2,u_2 + 2\eps],*)$ that are attached to~$\Rt(u_1,w)$ and~$\Lt(w,u_2)$ correspondingly in the sense described in Subsection~\ref{s332}. 
Then~$\Fr(u) = \eps\mrt''(u)$, where~$\Fr(u) = \Fr(u;u_1-2\eps,u_1;\eps)$, and~$\Fl(u) = \eps\mlt''(u)$, where~$\Fl(u) = \Fl(u;u_2,u_2 + 2\eps;\eps)$ (compare~\eqref{InequalitiesForConcatenationRight}, \eqref{InequalitiesForConcatenationLeft} with the formulas from Definition~\ref{Forces}). All these leads to the following proposition.
\begin{St}\label{AngleBetweenChords}
Suppose that $\Ch([a_1,b_1],*)$ and~$\Ch([a_2,b_2],*)$\textup, $b_1 \leq a_2$\textup, are full chordal domains. 
Suppose there exists a point~$w$\textup,~$w \in [b_1,a_2]$\textup, that is a root of the balance equation for~$\Fr(\cdot\,;a_1,b_1;\eps)$ and~$\Fl(\cdot\,;a_2,b_2;\eps)$\textup, i.e.
\begin{equation*}
\Fr(w;a_1,b_1;\eps) + \Fl(w;a_2,b_2;\eps) = 0.
\end{equation*}
Let the continuous function~$B$ coincide with the standard candidates on~$\Rt(b_1,w)$\textup, $\Lt(w,a_2)$,\textup, $\Ch([a_2,b_2],*)$\textup, and~$\Ang(w)$ with the parameter $\beta_2$ given by~\eqref{beta2}. Then~$B$ is a~$C^1$-smooth Bellman candidate on
\begin{equation*}
\Ch([a_1,b_1],*) \cup \Rt(b_1,w) \cup \Ang(w) \cup \Lt(w,a_2) \cup \Ch([a_2,b_2],*).
\end{equation*}
\end{St}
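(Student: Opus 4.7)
The plan is to build the candidate on the five-piece domain by iteratively $C^1$-gluing already-constructed standard candidates, each step being justified by a proposition established just above. I split the domain along the chord $[A_1,B_1]$ and along the pair of rays $x_1=w$: on the left of the angle we have $\Ch([a_1,b_1],*)\cup\Rt(b_1,w)$, on the right $\Lt(w,a_2)\cup\Ch([a_2,b_2],*)$, and in the middle we concatenate $\Rt(b_1,w)\cup\Ang(w)\cup\Lt(w,a_2)$. Each individual subdomain is induced convex, and the pairwise intersections consist of single extremals across which we only need to check $C^1$-smoothness; local concavity then follows from Proposition~\ref{ConcatenationOfConcaveFunctions}.

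First I would apply Proposition~\ref{CupMeetsRightTangents} to glue $\Ch([a_1,b_1],*)$ with $\Rt(b_1,w)$. This forces
$$m_R(b_1)=\tfrac{f'(a_1)+f'(b_1)}{2},$$
so the standard candidate on $\Rt(b_1,w)$ is completely determined via the explicit formula~\eqref{ExplicitFormulaForm}; in particular $m_R(w)$ is a definite number. Symmetrically, Proposition~\ref{CupMeetsLeftTangents} glues $\Lt(w,a_2)$ with $\Ch([a_2,b_2],*)$ and fixes $m_L(w)$. It then remains to plug into Proposition~\ref{AngleProp}: the two remaining seams (across $x_1=w-0$ and $x_1=w+0$) become $C^1$ exactly when $m_R(w)+m_L(w)=2f'(w)$, with the free parameter of the angle forced to be $\beta_2=(m_L(w)-m_R(w))/(4\eps)$, which is precisely~\eqref{beta2}.

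The heart of the argument is therefore to convert the balance equation into this compatibility condition. By formulas~\eqref{mr''_firstformula} and~\eqref{difeqSecondDer}, the initial data $m_R(b_1)=\tfrac{f'(a_1)+f'(b_1)}{2}$ gives $\eps^2 m_R''(b_1)=\Srt(a_1,b_1)$, and then~\eqref{mr''_firstformula} together with Definition~\ref{Forces} yields the key identity
$$\eps\, m_R''(u)=\Fr(u;a_1,b_1;\eps)\quad\text{for all }u\in[b_1,w],$$
and analogously $\eps\, m_L''(u)=\Fl(u;a_2,b_2;\eps)$ on $[w,a_2]$. On the other hand, using~\eqref{difeq} and its derivative one obtains $m_R=f'-\eps f''+\eps^2 m_R''$, and similarly $m_L=f'+\eps f''+\eps^2 m_L''$, whence
$$m_R(w)+m_L(w)-2f'(w)=\eps^2\bigl(m_R''(w)+m_L''(w)\bigr)=\eps\bigl(\Fr(w;a_1,b_1;\eps)+\Fl(w;a_2,b_2;\eps)\bigr).$$
So the hypothesized balance equation is exactly the gluing condition required by Proposition~\ref{AngleProp}, and the proof is complete.

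The main technical point — and the only place that is not a pure citation of an earlier result — is the identification $\eps m_R''=\Fr$ globally on $[b_1,w]$ (not merely at the endpoint), which is what lets us transfer information from the cup on the far left all the way to $w$. Once this is noted, the rest is bookkeeping: the seams along the chord $[A_1,B_1]$, along $[A_2,B_2]$, and along the two lateral sides of the angle are each handled by a proposition that has already been proved, and induced convexity of every constituent piece (hence validity of Proposition~\ref{ConcatenationOfConcaveFunctions}) is immediate from their explicit descriptions.
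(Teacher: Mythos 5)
Your proof is correct and follows essentially the same route as the paper: the paper also derives the proposition by noting that the balance equation is equivalent (via $\Fr=\eps m_{\mathrm R}''$, $\Fl=\eps m_{\mathrm L}''$, together with the first-order ODEs~\eqref{difeq},~\eqref{difeq2}) to the angle-gluing condition $m_{\mathrm R}(w)+m_{\mathrm L}(w)=2f'(w)$, and then invokes Propositions~\ref{CupMeetsRightTangents},~\ref{CupMeetsLeftTangents}, and~\ref{AngleProp}; you simply spell the algebra out. One small slip: from~\eqref{difeqSecondDer} together with $m_{\mathrm R}(b_1)=\tfrac12(f'(a_1)+f'(b_1))$ and $b_1-a_1=2\eps$ one gets $\eps\, m_{\mathrm R}''(b_1)=\Srt(a_1,b_1)$, not $\eps^2 m_{\mathrm R}''(b_1)=\Srt(a_1,b_1)$ as you wrote; but your subsequent (and correct) identity $\eps\, m_{\mathrm R}''(u)=\Fr(u;a_1,b_1;\eps)$ is what actually carries the argument, so the conclusion stands.
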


\begin{Rem}
The necessary and sufficient conditions for the existence of the standard candidates on the domains~$\Ch([a_1,b_1],*)\cup \Rt(b_1,w)$ and $\Lt(w,a_2)\cup \Ch([a_2,b_2],*)$ are~$\Fr(u;a_1,b_1;\eps) < 0$\textup,~$u \in (b_1,w)$\textup, and~$\Fl(u;a_2,b_2;\eps) > 0$\textup,~$u \in (w,a_2)$\textup, respectively.
\end{Rem}

This situation gives an example of a more interesting graph, see Figure~\ref{fig:abtcd}.
\begin{figure}[!h]
\begin{center}
\vskip-20pt
\includegraphics{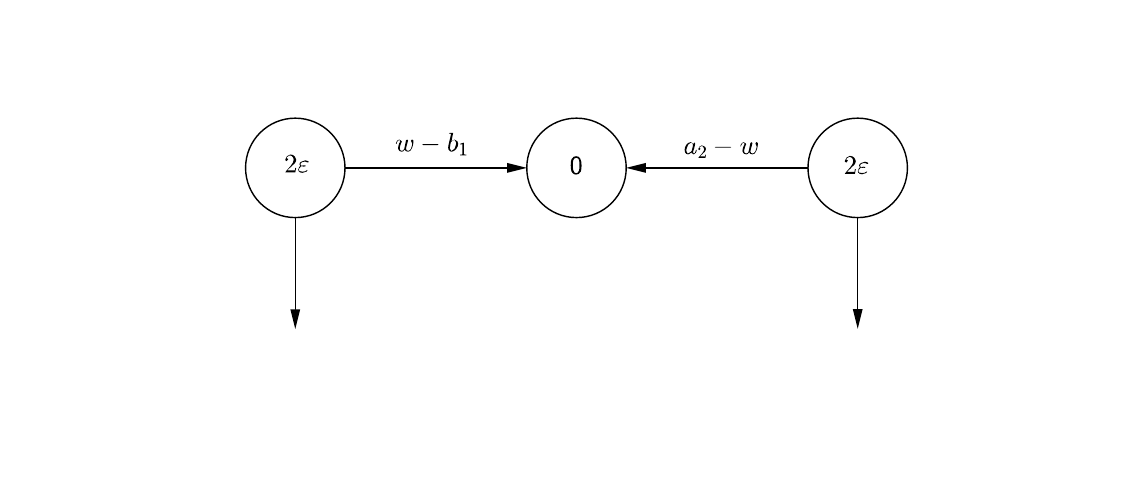}
\vskip-20pt
\caption{A graph for an angle between two full chordal domains.}
\label{fig:abtcd}
\vskip-20pt
\end{center}
\end{figure}
\begin{Rem}
Any of the two chordal domains in Proposition~\textup{\ref{AngleBetweenChords}} may be replaced by the corresponding infinity\textup, i.e. one may consider~$\Rt(-\infty,w)$ instead of~$\Ch([a_1,b_1],*) \cup \Rt(b_1,w)$ and~$\Lt(w,\infty)$ instead of~$\Lt(w,a_2) \cup \Ch([a_2,b_2],*)$. 
\end{Rem}

The corresponding graphs are presented on Figure~\ref{fig:abtcd_infty}.
\begin{figure}[!h]
\begin{center}
\vskip-20pt
\hskip-40pt
\includegraphics[width = 0.6 \linewidth]{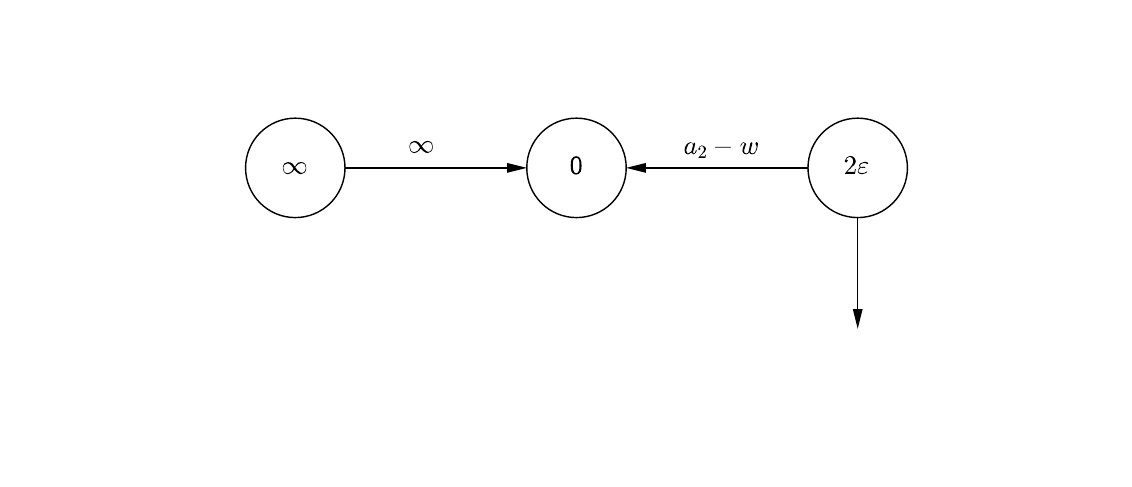}
\hskip-80pt
\includegraphics[width = 0.6 \linewidth]{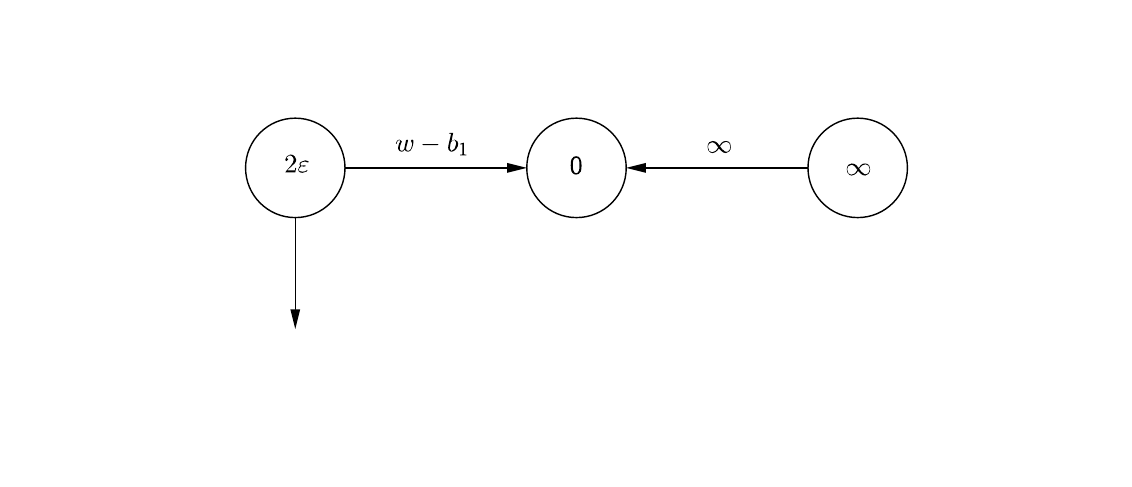}
\vskip-20pt
\caption{Graphs for an angle between a full chordal domain and infinity.}
\label{fig:abtcd_infty}
\vskip-20pt
\end{center}
\end{figure}

\subsection{Linearity domains with two points on the lower boundary}\label{s342}
Consider a linearity domain~$\mathfrak{L}$ that has two points~$A_0$ and~$B_0$ on the lower boundary. Surely, the segment~$[A_0,B_0]$ is a part of the boundary for the linearity domain. It is natural to assume that there are two extremals touching the free boundary, ending at~$A_0$ and~$B_0$, and bounding our linearity domain from the left and right. If they have the same orientation, then the linearity domain is called a~\emph{trolleybus}\index{trolleybus} (see Figure~\ref{fig:ptr}). The right trolleybus is denoted by~$\RTroll(a_0,b_0;\eps)$, and~$\LTroll(a_0,b_0;\eps)$ stands for the left one.
\begin{figure}[!h]
\begin{center}
\includegraphics[width = 0.49 \linewidth]{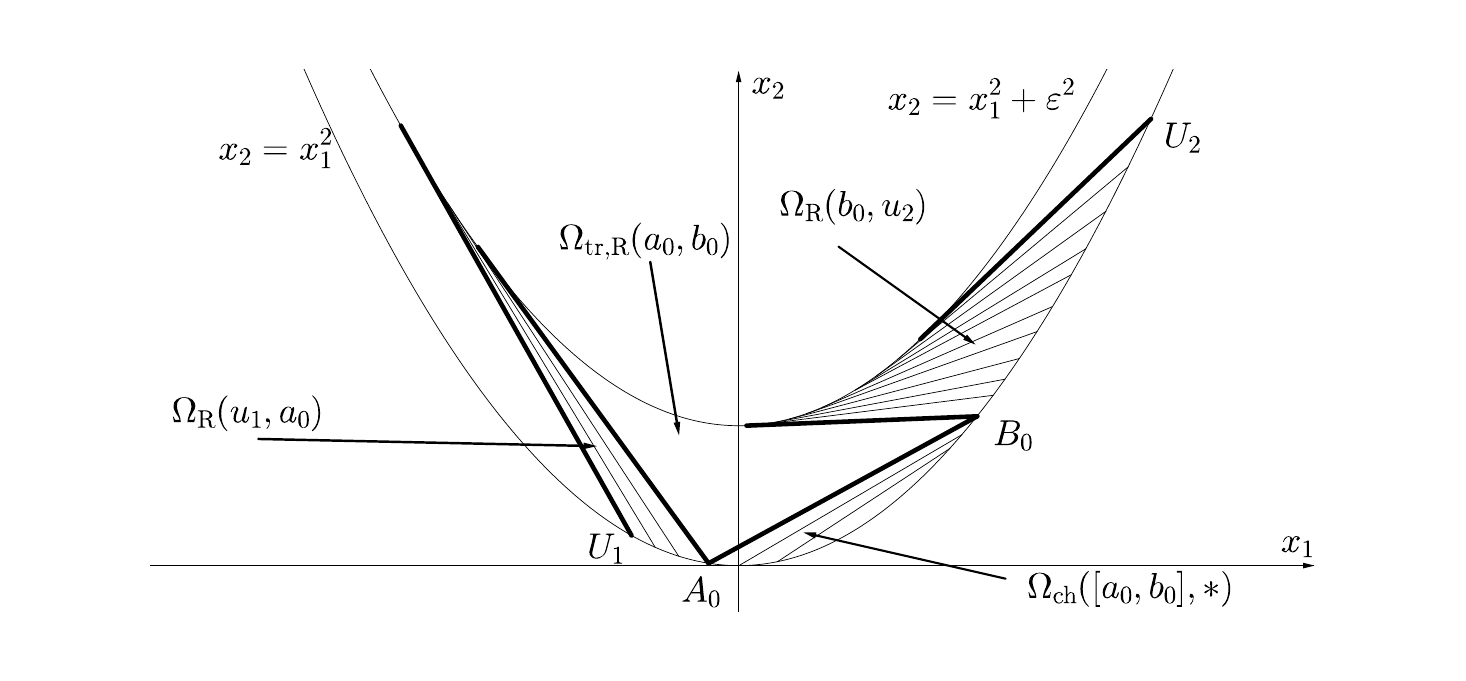}
\includegraphics[width = 0.49 \linewidth]{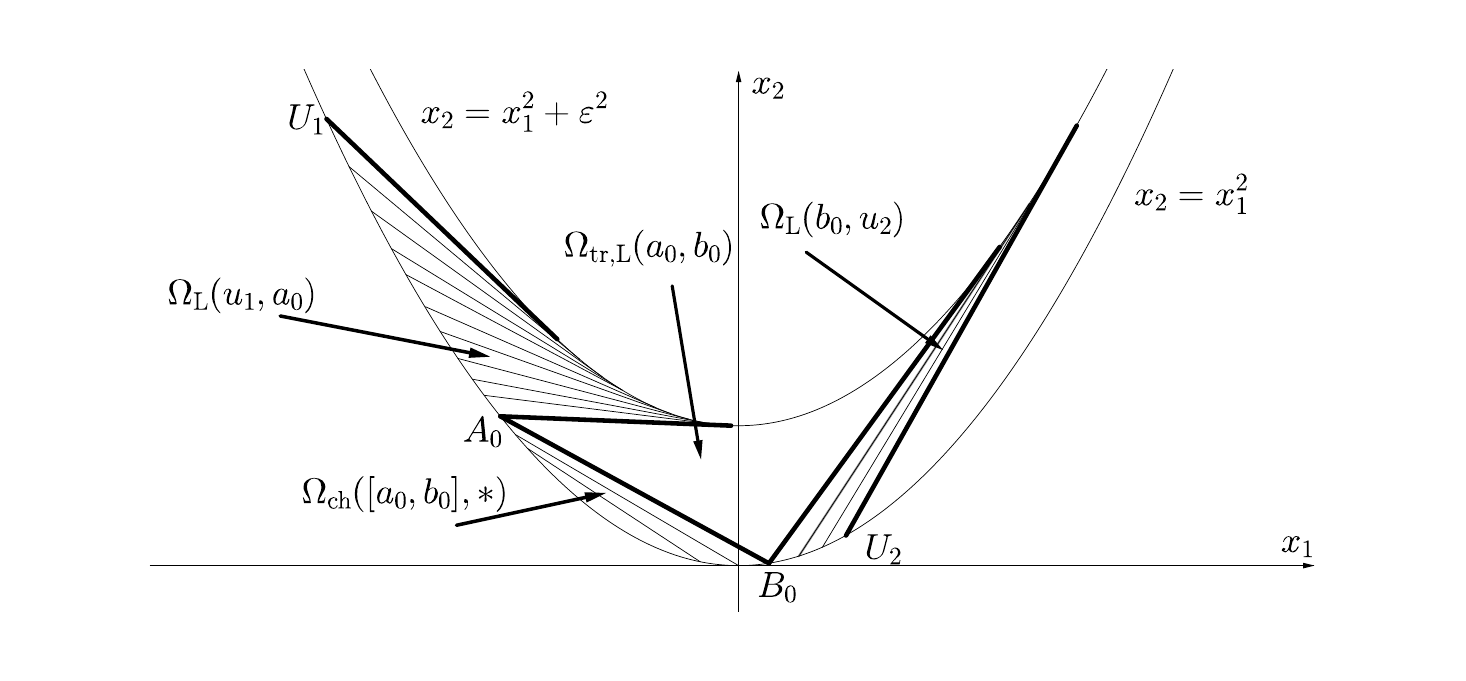}
\caption{Trolleybuses~$\RTroll$ and~$\LTroll$ and their adjacent domains.}
\label{fig:ptr}
\label{fig:ltr}
\end{center}
\end{figure}
The linearity domain whose border tangents have different orientation is called a~\emph{birdie}\index{birdie}, see Figure~\ref{fig:birdie}. We denote it by~$\Bird(a_0,b_0;\eps)$. We will often omit~$\eps$ in the notation above. 

As in the case of an angle, we look for a Bellman candidate~$B$ linear in~$\mathfrak{L}$:
\begin{equation}\label{LinearInTrolleybus}
B(x)=\beta_0+\beta_1 x_1 + \beta_2 x_2.
\end{equation}
We have a pair of conditions on the parameters~$\beta$ at each of the points~$A_0$ and~$B_0$ (see~\eqref{betaequations}):
\begin{align}
\beta_1 =& f'(a_0) - 2\beta_2 a_0 =  f'(b_0) - 2\beta_2 b_0; \label{betaequations1}\\
\beta_0 =& f(a_0) - a_0f'(a_0)+\beta_2 a_0^2=  f(b_0) - b_0f'(b_0)+\beta_2 b_0^2. \label{betaequations2}
\end{align} 
Solving the pair of equations~\eqref{betaequations1} we find
\begin{equation}\label{coef1}
\begin{split}
&\beta_2 = \frac{f'(b_0)-f'(a_0)}{2(b_0-a_0)}=\frac{1}{2}\av{f''}{[a_0,b_0]};\\
&\beta_1 = \frac{f'(b_0)+f'(a_0)}{2}-\frac{1}{2}(b_0+a_0)\av{f''}{[a_0,b_0]}.
\end{split}
\end{equation} 
The compatibility condition for the pair of equations~\eqref{betaequations2} is equivalent to the cup equation~\eqref{urlun} for the pair~$(a_0,b_0)$. Thus we can write down the following:
\begin{equation}\label{coef}
\begin{split}
&\beta_0=\frac{b_0 f(a_0)-a_0 f(b_0)}{b_0-a_0}+\frac{1}{2}a_0 b_0\av{f''}{[a_0,b_0]};\\
&\beta_1 = \av{f'}{[a_0,b_0]} - \frac{1}{2}(b_0+a_0)\av{f''}{[a_0,b_0]};\\
&\beta_2 = \frac{1}{2}\av{f''}{[a_0,b_0]}.
\end{split}
\end{equation} 

\begin{Def}\index{standard candidate! in trolleybus}\index{standard candidate! in birdie}
The function~$B$ defined by formulas~\eqref{LinearInTrolleybus} and~\eqref{coef} in the linearity domain~$\mathfrak{L}$ with two points on the fixed boundary is called a \emph{standard candidate} there. 
\end{Def}

So, we get the standard candidates in trolleybuses and birdies.

\begin{St}\label{DomainOfLinearityWithTwoPointsOnTheLowerBoundaryMeetsChordalDomain}
Let~$\mathfrak{L}$ be a linearity domain with the points $A_0$ and $B_0$\textup, $a_0<b_0\leq a_0+2\eps$\textup, on the fixed boundary. Let the function~$B$ coincide with the standard candidates on~$\mathfrak{L}$ and on~$\Ch([a_0,b_0],*)$. Then~$B$ is a~$C^1$-smooth Bellman candidate on~$\mathfrak{L}\cup \Ch([a_0,b_0],*)$.
\end{St}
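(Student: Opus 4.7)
The strategy is exactly the one already employed in Propositions~\ref{CupMeetsRightTangents}, \ref{CupMeetsLeftTangents}, and~\ref{AngleMeetRightTangents}: invoke Proposition~\ref{ConcatenationOfConcaveFunctions}. Both $\mathfrak{L}$ and $\Ch([a_0,b_0],*)$ are induced convex subdomains of $\Omega_\eps$, so once the concatenated $B$ is shown to be $C^1$-smooth across their common boundary, local concavity on the union follows automatically. The common boundary is exactly the chord $[A_0,B_0]$, and this chord is never parallel to the $x_2$-axis. Consequently it suffices to prove (i)~continuity of $B$ across $[A_0,B_0]$ and (ii)~matching of $\partial B/\partial x_2$ along $[A_0,B_0]$.

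For continuity, I would note that the restriction of $B$ to $[A_0,B_0]$ from the $\mathfrak{L}$-side is affine in $x_1$ by~\eqref{LinearInTrolleybus}, and from the $\Ch$-side it is also affine in $x_1$, since formula~\eqref{vallun} specialises on this particular chord to $B(x) = \frac{f(b_0)-f(a_0)}{b_0-a_0}\,x_1 + \frac{b_0 f(a_0)-a_0 f(b_0)}{b_0-a_0}$. Both affine functions take the value $f(a_0)$ at $A_0$ and $f(b_0)$ at $B_0$: on $\mathfrak{L}$ this is built into the defining equations~\eqref{betaequations2}, and on $\Ch$ this is the fixed-boundary condition. Two affine functions of $x_1$ that agree at two distinct points must coincide, so the two one-sided restrictions of $B$ to $[A_0,B_0]$ are identical.

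For the derivative, on the $\mathfrak{L}$-side formula~\eqref{coef} gives directly $\partial B/\partial x_2 = \beta_2 = \tfrac{1}{2}\av{f''}{[a_0,b_0]}$. On the $\Ch$-side, the chord $[A_0,B_0]$ is the extremal corresponding to $l=b_0-a_0$, so formula~\eqref{BsubX2Chords} yields $\partial B/\partial x_2 = \frac{f'(b_0)-f'(a_0)}{2(b_0-a_0)} = \tfrac{1}{2}\av{f''}{[a_0,b_0]}$ at every point of the chord. The two values match, and combined with the already-established continuity across a non-vertical chord this forces the full gradient to match from both sides, as in the proofs of the earlier propositions.

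There is really no substantial obstacle; the only thing to keep in mind is a consistency check. The existence of the standard candidate on $\mathfrak{L}$ requires the compatibility of equations~\eqref{betaequations2}, and this compatibility is precisely the cup equation~\eqref{urlun} for the pair $(a_0,b_0)$. Exactly the same cup equation is one of the hypotheses under which the standard candidate on $\Ch([a_0,b_0],*)$ is defined (Proposition~\ref{LightChordalDomainCandidate}), so both sides of the construction are speaking about the same pair $(a_0,b_0)$, and the two short calculations above complete the proof.
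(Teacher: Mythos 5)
Your proposal is correct and follows essentially the same route as the paper: the paper's (one-sentence) proof simply compares $\partial B/\partial x_2$ from~\eqref{BsubX2Chords} on the chordal side with $\beta_2$ from~\eqref{coef} on the linearity-domain side, then invokes Proposition~\ref{ConcatenationOfConcaveFunctions} implicitly, exactly as you do. You have merely made explicit the (routine) continuity check across $[A_0,B_0]$ and the consistency of the cup-equation hypothesis, which the paper leaves to the reader.
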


\begin{proof}
In order to prove the~$C^1$-smoothness of the function~$B$, we verify the continuity of~$B_{x_2}$ comparing~\eqref{BsubX2Chords} with the formula for $\beta_2$ in~\eqref{coef}.  
\end{proof}

\begin{St}\label{DomainOfLinearityWithTwoPointsOnTheLowerBoundaryMeetsTangentDomain}
Let~$\mathfrak{L}$ be a linearity domain with the points $A_0$ and $B_0$\textup, $a_0<b_0\leq a_0+2\eps$\textup, on the fixed boundary. Let~$W$ be one of the points~$A_0$ and~$B_0$. Let the function~$B$ coincide with the standard candidates on~$\mathfrak{L}$ and on a tangent domain adjacent to~$\mathfrak{L}$ along a tangent ending at~$W$. If~$B$ is continuous\textup, then it is a~$C^1$-smooth Bellman candidate on the union of the domains.
\end{St}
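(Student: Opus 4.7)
The plan is to reduce the statement to the core gluing mechanism used already in Propositions~\ref{CupMeetsRightTangents}, \ref{AngleMeetRightTangents}, and \ref{DomainOfLinearityWithTwoPointsOnTheLowerBoundaryMeetsChordalDomain}. Both $\mathfrak{L}$ and the adjacent tangent domain are induced convex subsets of $\Omega_{\eps}$, and on each of them $B$ is a standard candidate, hence locally concave. By Proposition~\ref{ConcatenationOfConcaveFunctions} it therefore suffices to verify that $B$ is $C^{1}$-smooth across the common boundary. That common boundary is the single tangent extremal issuing from $W$ and touching the free boundary, which is never parallel to the $x_2$-axis. Consequently $C^{1}$-smoothness will follow once we check that $\partial B/\partial x_2$ agrees from both sides at one point of this tangent.

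First I would record what $\partial B/\partial x_2$ looks like on each side. On $\mathfrak{L}$ the function $B$ is affine, so by formula~\eqref{coef},
\begin{equation*}
\frac{\partial B}{\partial x_2}\Big|_{\mathfrak{L}}\equiv \beta_2=\tfrac{1}{2}\av{f''}{[a_0,b_0]}.
\end{equation*}
On the tangent domain side the standard candidate has the form~\eqref{linearity}, and by \eqref{BSubX2Tangents} (in the right-tangent case, or the analogous identity for left tangents)
\begin{equation*}
\frac{\partial B}{\partial x_2}\Big|_{\text{tang.}}=\frac{m'(u)}{2},
\end{equation*}
which is moreover constant along each tangent. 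So it is enough to evaluate both expressions at $W$ and to show $m'(w)/2=\beta_2$.

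Next I would exploit the continuity hypothesis. Restricted to the common tangent, $B$ is on the tangent side the affine function $u\mapsto m(w)(x_1-w)+f(w)$, and on the $\mathfrak{L}$ side the restriction of $\beta_0+\beta_1 x_1+\beta_2 x_2$ to the line $x_2=2(w\pm\eps)x_1-(w\pm\eps)^2+\eps^2$, the sign depending on whether the tangent goes to the right or to the left. Continuity of $B$ together with the boundary condition at $W$ forces the two affine expressions to coincide as functions of $x_1$, whence, using \eqref{betaequations1},
\begin{equation*}
m(w)=\beta_1+2(w\pm\eps)\beta_2=f'(w)\pm 2\eps\,\beta_2.
\end{equation*}
This is precisely the analogue of \eqref{AngleMeetRightTangentsFormula}. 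Plugging it into the differential equation \eqref{difeq} (respectively \eqref{difeq2}) for $m$ yields $m'(w)/2=\pm\beta_2\cdot(\pm1)=\beta_2$, which is the required matching of normal derivatives.

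The only real subtlety is bookkeeping: four sub-cases arise according to whether $W=A_0$ or $W=B_0$ and whether the adjacent tangent domain is of type $\Omega_{\mathrm{R}}$ or $\Omega_{\mathrm{L}}$. In each of them the signs in the identity $m(w)=f'(w)\pm 2\eps\beta_2$ and in the differential equation combine to give the same conclusion $m'(w)/2=\beta_2$, so the check reduces to four almost identical calculations. Once $C^{1}$-smoothness is established, Proposition~\ref{ConcatenationOfConcaveFunctions} delivers local concavity of $B$ on the union, and since $B$ inherits the boundary condition $B(u,u^2)=f(u)$ and the foliation properties from the two standard candidates it is glued from, it is a Bellman candidate on $\mathfrak{L}$ united with the adjacent tangent domain.
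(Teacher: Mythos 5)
Your proof is correct and follows essentially the same line as the paper's (very terse) proof: use continuity along the shared tangent to derive the relation $m(w)=f'(w)\mp 2\eps\beta_2$, then feed it into the differential equation for $m$ and the formula $\partial B/\partial x_2=m'(u)/2$ to match the $x_2$-derivatives, and conclude with Proposition~\ref{ConcatenationOfConcaveFunctions}. The paper simply cites the computation already done in Proposition~\ref{AngleMeetRightTangents}; you spell out the same calculation explicitly, correctly handling the sign bookkeeping across the four sub-cases.
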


\begin{proof}
For the case of the right tangent domain, the continuity of the function~$B$ implies $\mrt(w) = f'(w)-2\eps \beta_2$, which in its turn yields the continuity of $B_{x_2}$ (see the proof of Propositions~\ref{AngleMeetRightTangents}).
For the case of the left tangent domain we come to the same result using the relation $\mlt(w) = f'(w)+2\eps \beta_2$.
\end{proof}


We obtain Proposition~$6.1$ from~\cite{5A} that gives sufficient conditions for a function in a neighborhood of a trolleybus to be a Bellman candidate.

\begin{St}\label{S15}
Let $u_1<a_0<b_0<u_2$ and let $b_0-a_0\le 2\eps$. Let the function~$B$ coincide with the standard candidates on~$\Ch([a_0,b_0],*)$\textup, $\Rt(u_1,a_0)$\textup, $\RTroll(a_0,b_0)$\textup, and $\Rt(b_0,u_2)$. Suppose that 
\begin{align}
\mrt(a_0) = f'(a_0) - \eps \av{f''}{[a_0,b_0]}\label{mrta0};\\ 
\mrt(b_0) = f'(b_0) - \eps \av{f''}{[a_0,b_0]}\label{mrtb0}.
\end{align}
Then\textup, the function~$B$ is a~$C^1$-smooth Bellman candidate on
$$
\Rt(u_1,a_0) \cup \RTroll(a_0,b_0) \cup \Ch([a_0,b_0],*) \cup \Rt(b_0,u_2).
$$
\end{St}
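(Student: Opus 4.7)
My plan is to verify that $B$ is a $C^1$-smooth Bellman candidate by gluing the four given standard candidates using Proposition \ref{ConcatenationOfConcaveFunctions}. Each of the four pieces is already a Bellman candidate on its own (induced convex) subdomain by the corresponding propositions from this chapter: Proposition \ref{RightTangentsCandidate} for $\Rt(u_1,a_0)$ and $\Rt(b_0,u_2)$, the trolleybus construction of Subsection \ref{s342} (the linear function defined by \eqref{LinearInTrolleybus} and \eqref{coef}) for $\RTroll(a_0,b_0)$, and Proposition \ref{LightChordalDomainCandidate} for $\Ch([a_0,b_0],*)$. What remains is to verify $C^1$-smoothness at the three interfaces.

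The three interfaces are the right tangent from $A_0$ (separating $\Rt(u_1,a_0)$ from $\RTroll(a_0,b_0)$), the chord $[A_0,B_0]$ (separating $\RTroll(a_0,b_0)$ from $\Ch([a_0,b_0],*)$), and the right tangent from $B_0$ (separating $\RTroll(a_0,b_0)$ from $\Rt(b_0,u_2)$). For the two tangent interfaces I invoke Proposition \ref{DomainOfLinearityWithTwoPointsOnTheLowerBoundaryMeetsTangentDomain} with $W=A_0$ and $W=B_0$ respectively; its continuity hypothesis, by the same $\partial B/\partial x_2$ computation as in the proof of Proposition \ref{AngleMeetRightTangents}, reduces to $\mrt(w)=f'(w)-2\eps\beta_2$, and substituting the trolleybus value $\beta_2=\frac{1}{2}\av{f''}{[a_0,b_0]}$ from \eqref{coef} yields exactly the hypotheses \eqref{mrta0} and \eqref{mrtb0}. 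For the chord interface I invoke Proposition \ref{DomainOfLinearityWithTwoPointsOnTheLowerBoundaryMeetsChordalDomain}: continuity along $[A_0,B_0]$ is automatic because both sides are linear functions agreeing at the endpoints $A_0$ and $B_0$, and the $C^1$-matching is immediate from $\beta_2=\frac{1}{2}\av{f''}{[a_0,b_0]}$ coinciding with the chordal side's $\partial B/\partial x_2$ computed via \eqref{BsubX2Chords}.

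With $C^1$-smoothness established across every interface and each of the four subdomains being induced convex, the argument from the proof of Proposition \ref{ConcatenationOfConcaveFunctions} extends directly to the four-way union: for any segment $l$ contained in the union, $l$ intersects each of the four subdomains in a (possibly empty) convex subset on which $B$ is concave, and $C^1$-smoothness of $B$ along $l$ assembles these pieces into a concave restriction $B|_l$. Combined with the foliation inherited from the four pieces, this shows $B$ is a $C^1$-smooth Bellman candidate on $\Rt(u_1,a_0)\cup\RTroll(a_0,b_0)\cup\Ch([a_0,b_0],*)\cup\Rt(b_0,u_2)$ in the sense of Definition \ref{candidate}. The only delicate bookkeeping point — and the step I would double-check most carefully — is the sign in $\mrt(w)=f'(w)-2\eps\beta_2$: this is the \emph{continuity} relation along the entire tangent, not merely the pointwise $C^1$-match at the vertex $W$; once the trolleybus linear formula is carried along the tangent as in the proof of Proposition \ref{AngleMeetRightTangents}, the two conditions coincide, so no genuine obstacle is anticipated.
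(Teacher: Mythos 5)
Your proof is correct and takes essentially the same route as the paper: the paper's two-line proof simply asserts that \eqref{mrta0} and \eqref{mrtb0} guarantee continuity of $B$, and then invokes Propositions~\ref{DomainOfLinearityWithTwoPointsOnTheLowerBoundaryMeetsChordalDomain} and~\ref{DomainOfLinearityWithTwoPointsOnTheLowerBoundaryMeetsTangentDomain} for the $C^1$-smoothness — exactly the decomposition you use, just unspelled. Your expansion (reducing the tangent-interface continuity to $\mrt(w)=f'(w)-2\eps\beta_2$ and substituting $\beta_2=\frac12\av{f''}{[a_0,b_0]}$, checking the chord interface via \eqref{BsubX2Chords}, and assembling with Proposition~\ref{ConcatenationOfConcaveFunctions}) is a faithful and correct unpacking of what the paper compresses.
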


\begin{proof}
Relations~\eqref{mrtb0} and~\eqref{mrta0} guarantee the continuity of~$B$. Proposition~\ref{DomainOfLinearityWithTwoPointsOnTheLowerBoundaryMeetsChordalDomain} 
and~Proposition~\ref{DomainOfLinearityWithTwoPointsOnTheLowerBoundaryMeetsTangentDomain} imply the~$C^1$-smoothness of~$B$.
\end{proof}

We state a symmetric proposition for the left trolleybus.
\begin{St}\label{S16}
Let $u_1<a_0<b_0<u_2$ and let $b_0-a_0\le 2\eps$. Let the function~$B$ coincide with the standard candidates on~$\Ch([a_0,b_0],*)$\textup, $\Lt(u_1,a_0)$\textup, $\LTroll(a_0,b_0)$\textup, and $\Lt(b_0,u_2)$. Suppose that 
\begin{align}
\mlt(a_0) = f'(a_0) + \eps \av{f''}{[a_0,b_0]}\label{mlta0};\\ 
\mlt(b_0) = f'(b_0) + \eps \av{f''}{[a_0,b_0]}\label{mltb0}.
\end{align}
Then\textup, the function~$B$ is a~$C^1$-smooth Bellman candidate on
$$
\Lt(u_1,a_0) \cup \LTroll(a_0,b_0) \cup \Ch([a_0,b_0],*) \cup \Lt(b_0,u_2).
$$
\end{St}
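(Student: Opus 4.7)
The plan is to mirror the proof of Proposition~\ref{S15} verbatim, using the left-tangent analogues of every ingredient. First, I would invoke Proposition~\ref{ConcatenationOfConcaveFunctions} to reduce the task to verifying $C^1$-smoothness of the concatenated function: local concavity already holds separately on each of $\Lt(u_1,a_0)$, $\LTroll(a_0,b_0)$, $\Ch([a_0,b_0],*)$, and $\Lt(b_0,u_2)$ by the definition of a standard candidate there, and each of these four domains is induced convex in $\Omega_\eps$.

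Next, I would check continuity across the three interfaces. Along the chord $[A_0,B_0]$, the trolleybus candidate given by~\eqref{LinearInTrolleybus}--\eqref{coef} takes the values $f(a_0)$ at $A_0$ and $f(b_0)$ at $B_0$ by construction, and extends linearly along the chord; this matches the chordal candidate~\eqref{vallun} on the long chord $[A_0,B_0]$ automatically. Along the left tangent through $A_0$, continuity with $\Lt(u_1,a_0)$ is imposed by~\eqref{mlta0}: it is exactly the scalar equation that equates the two linear functions at $A_0$, with the slope $\beta_2 = \tfrac12\av{f''}{[a_0,b_0]}$ from~\eqref{coef}. The analogous check at $B_0$ uses~\eqref{mltb0}.

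For $C^1$-smoothness, I would apply Proposition~\ref{DomainOfLinearityWithTwoPointsOnTheLowerBoundaryMeetsChordalDomain} at the chord $[A_0,B_0]$ (this only requires the $\beta_2$ coefficient~\eqref{coef} to agree with~\eqref{BsubX2Chords}, which is immediate) and Proposition~\ref{DomainOfLinearityWithTwoPointsOnTheLowerBoundaryMeetsTangentDomain} at both tangent boundaries. The hypothesis of the latter proposition in the left-tangent case is the identity $\mlt(w) = f'(w) + 2\eps\beta_2$; substituting $\beta_2 = \tfrac12\av{f''}{[a_0,b_0]}$ recovers precisely~\eqref{mlta0} for $w=a_0$ and~\eqref{mltb0} for $w=b_0$, so both interfaces are $C^1$.

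There is no real obstacle here, as every step is a symmetric restatement of the right-hand case. The only point that requires care is the sign of $\eps$ in~\eqref{mlta0}--\eqref{mltb0} relative to~\eqref{mrta0}--\eqref{mrtb0}. This sign change traces back to the governing differential equation on the tangent family: left tangents obey~\eqref{difeq2}, i.e. $-\eps m'(u) + m(u) - f'(u) = 0$, whereas right tangents obey~\eqref{difeq}. Consequently the relation $\mlt(w) = f'(w) + 2\eps\beta_2$ carries the opposite sign of its right-tangent counterpart $\mrt(w) = f'(w) - 2\eps\beta_2$, which is exactly what is reflected in the statement.
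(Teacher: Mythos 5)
Your proposal is correct and takes exactly the approach the paper uses: the paper proves Proposition~\ref{S15} by noting that the two slope relations give continuity and then invoking Propositions~\ref{DomainOfLinearityWithTwoPointsOnTheLowerBoundaryMeetsChordalDomain} and~\ref{DomainOfLinearityWithTwoPointsOnTheLowerBoundaryMeetsTangentDomain} for $C^1$-smoothness, and then simply states Proposition~\ref{S16} as the symmetric counterpart with no separate proof. Your reconstruction, including the identification of $\mlt(w)=f'(w)+2\eps\beta_2$ as the left-tangent form of the matching condition and the sign change traced to~\eqref{difeq2} versus~\eqref{difeq}, is exactly the symmetric argument the authors have in mind.
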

\begin{figure}[!h]
\vskip-30pt
\begin{center}
\includegraphics{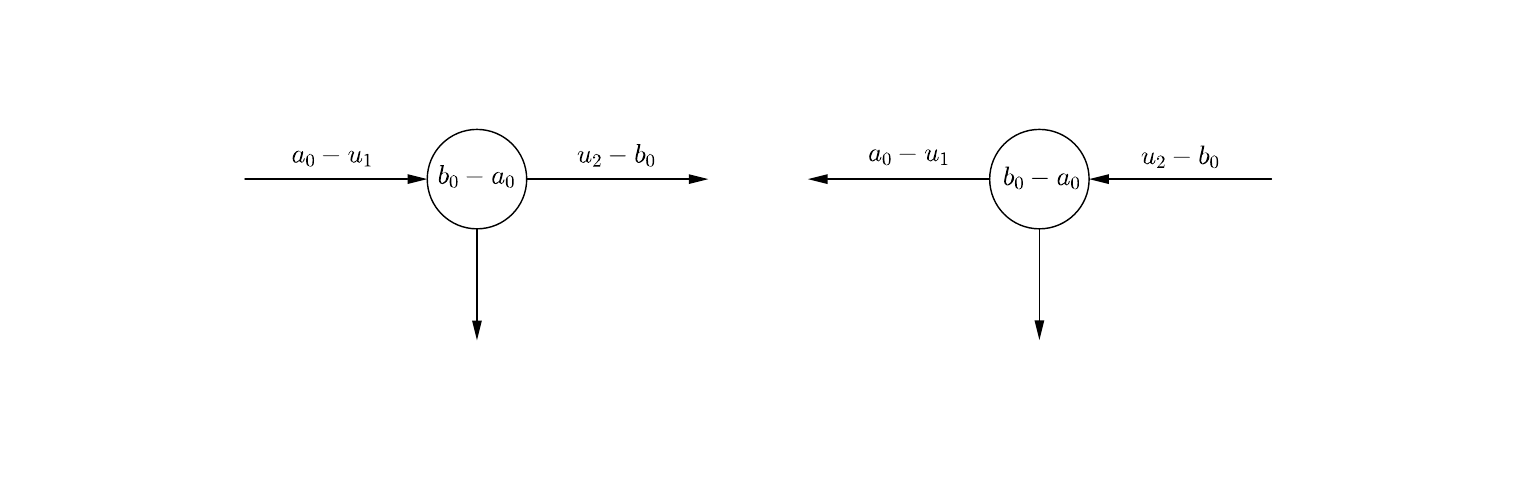}
\vskip-40pt
\caption{Graphs for the trolleybuses with the adjacent domains.}
\label{fig:gtr}
\end{center}
\end{figure}
Both types of trolleybuses are represented graphically on Figure~\ref{fig:gtr}.

We turn to the birdie (see Figure~\ref{fig:bgr} for the graphical representation). 

\begin{figure}[!h]
\vskip-10pt
\hskip10pt
\includegraphics[width = 0.6\linewidth]{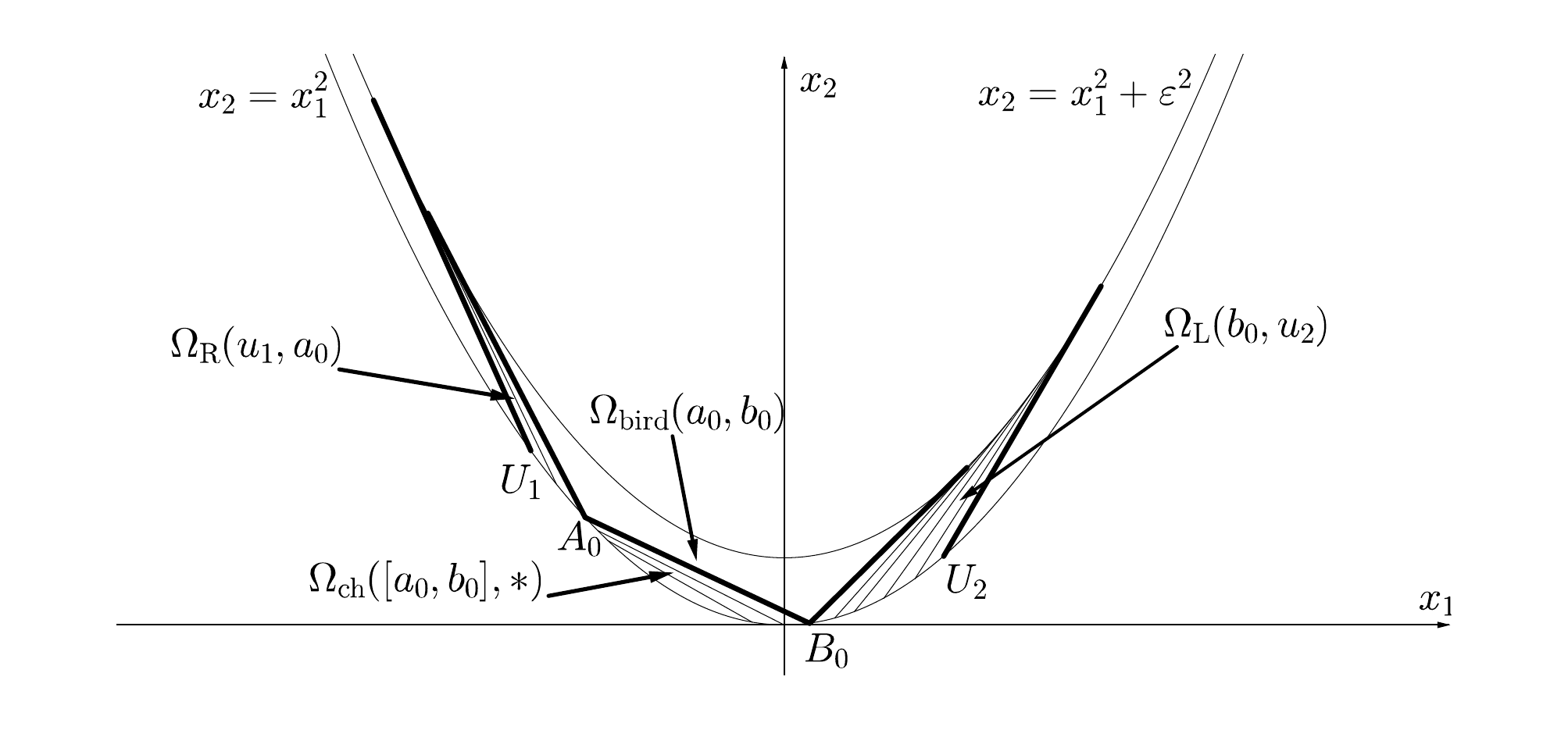}
\hskip-50pt
\raisebox{-10pt}{\includegraphics[width = 0.6\linewidth]{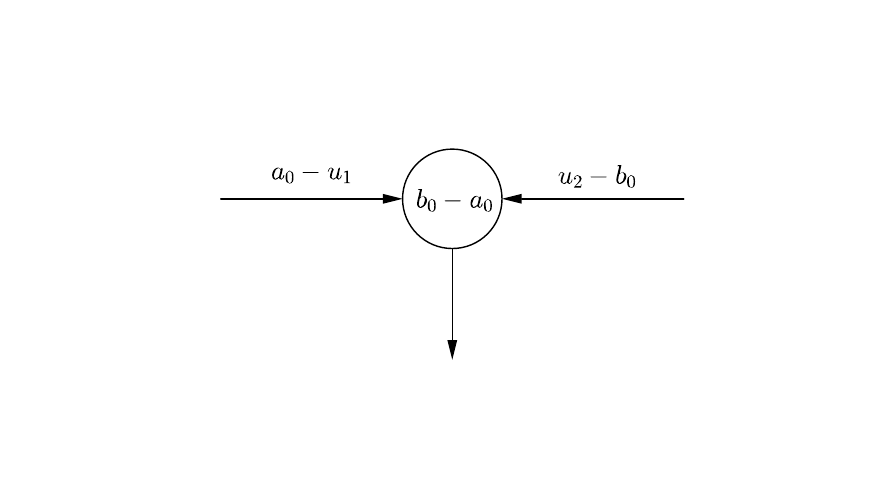}}
\caption{A birdie $\Bird(a_0,b_0)$ with the adjacent domains and their graph.}
\label{fig:birdie}
\label{fig:bgr}
\end{figure}

The corresponding proposition looks like this.

\begin{St}\label{S17}
Let $u_1<a_0<b_0<u_2$ and let $b_0-a_0\le 2\eps$. Let the function~$B$ coincide with the standard candidates on~$\Ch([a_0,b_0],*)$\textup, $\Rt(u_1,a_0)$\textup, $\Bird(a_0,b_0)$\textup, and $\Lt(b_0,u_2)$. Suppose that  
\begin{align*}
\mrt(a_0) = f'(a_0) - \eps \av{f''}{[a_0,b_0]};\\ 
\mlt(b_0) = f'(b_0) + \eps \av{f''}{[a_0,b_0]}.
\end{align*}
Then\textup, the function~$B$ is a~$C^1$-smooth Bellman candidate on
$$
\Rt(u_1,a_0) \cup \Bird(a_0,b_0) \cup \Ch([a_0,b_0],*) \cup \Lt(b_0,u_2).
$$
\end{St}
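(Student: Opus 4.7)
The proof will be essentially a concatenation argument, mirroring the pattern of Proposition~\ref{S15}. The four domains $\Rt(u_1,a_0)$, $\Bird(a_0,b_0)$, $\Ch([a_0,b_0],*)$, and $\Lt(b_0,u_2)$ are each individually induced convex, and they meet pairwise along the following interfaces: the tangent through $A_0$ (common to $\Rt(u_1,a_0)$ and $\Bird(a_0,b_0)$), the chord $[A_0,B_0]$ (common to $\Bird(a_0,b_0)$ and $\Ch([a_0,b_0],*)$), and the tangent through $B_0$ (common to $\Bird(a_0,b_0)$ and $\Lt(b_0,u_2)$). So the whole plan is: verify $C^1$-smoothness across each of these three interfaces, then invoke Proposition~\ref{ConcatenationOfConcaveFunctions} (iteratively) to conclude that $B$ is locally concave on the union, hence a Bellman candidate.

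The first step is to extract the value $\beta_2 = \tfrac12 \av{f''}{[a_0,b_0]}$ of the standard candidate on $\Bird(a_0,b_0)$ from formula~\eqref{coef}. With this $\beta_2$, the two hypotheses of the proposition read precisely
\begin{equation*}
\mrt(a_0) = f'(a_0) - 2\eps\beta_2, \qquad \mlt(b_0) = f'(b_0) + 2\eps\beta_2,
\end{equation*}
which are exactly the continuity conditions needed on the left tangent through $A_0$ and on the right tangent through $B_0$, respectively. Now Proposition~\ref{DomainOfLinearityWithTwoPointsOnTheLowerBoundaryMeetsTangentDomain}, applied once with $W = A_0$ and the right tangent domain, and once with $W = B_0$ and the left tangent domain, gives $C^1$-smoothness across these two tangent interfaces. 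For the remaining interface along $[A_0,B_0]$, Proposition~\ref{DomainOfLinearityWithTwoPointsOnTheLowerBoundaryMeetsChordalDomain} applies directly, since the pair $(a_0,b_0)$ satisfies the cup equation (built into the definition of the chordal domain) and therefore the standard candidate on $\Bird(a_0,b_0)$ is well defined by~\eqref{coef}.

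Having established $C^1$-smoothness and local concavity on each pair, a straightforward induction using Proposition~\ref{ConcatenationOfConcaveFunctions} (noting that finite unions of induced convex sets inherit the concatenation property after each gluing is made $C^1$) yields that $B$ is a $C^1$-smooth locally concave function on the full union. Since each of the four constituent functions is, by definition, a Bellman candidate on its own domain, the concatenation $B$ is a Bellman candidate on $\Rt(u_1,a_0) \cup \Bird(a_0,b_0) \cup \Ch([a_0,b_0],*) \cup \Lt(b_0,u_2)$.

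The only point requiring any care, and hence the ``main obstacle,'' is bookkeeping of orientations: the birdie sits between tangent families of \emph{opposite} orientation, so one must be careful that the sign in the matching formula $\mrt(a_0) = f'(a_0) - 2\eps\beta_2$ comes from Proposition~\ref{AngleMeetRightTangents} while $\mlt(b_0) = f'(b_0) + 2\eps\beta_2$ comes from Proposition~\ref{AngleMeetLeftTangents}; once these two sign conventions are respected, no further computation is needed beyond what has already been assembled in Subsections~\ref{s331} and~\ref{s341}.
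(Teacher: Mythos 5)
Your proof is correct and follows exactly the paper's approach (the paper gives essentially the same one-line argument for Proposition~\ref{S15}, and Proposition~\ref{S17} is a verbatim analogue): the two hypothesized relations give continuity of $B$ across the tangents at $A_0$ and $B_0$, Propositions~\ref{DomainOfLinearityWithTwoPointsOnTheLowerBoundaryMeetsChordalDomain} and~\ref{DomainOfLinearityWithTwoPointsOnTheLowerBoundaryMeetsTangentDomain} upgrade this to $C^1$-smoothness, and Proposition~\ref{ConcatenationOfConcaveFunctions} gives local concavity of the concatenation.
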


We may rewrite equality~\eqref{mrta0} in terms of forces:
\begin{equation}\label{RightTrollBalanceLeft}
\eps \mrt''(a_0) + \Fl(a_0; a_0, b_0; \eps) \!\!
\stackrel{\scriptscriptstyle{\eqref{LeftForce}}}{=} \!\!\eps \mrt''(a_0) - \Slt(a_0,b_0)\!\! 
\stackrel{{\genfrac{}{}{0pt}{-2}{\scriptscriptstyle\eqref{difeqSecondDer}}{\scriptscriptstyle\eqref{e334}}}}{=}\!\! \frac{\mrt(a_0)-f'(a_0)}{\eps}+\av{f''}{[a_0,b_0]}\!\!\!
\stackrel{\scriptscriptstyle{\eqref{mrta0}}\phantom{a}}{=}\!\!0.
\end{equation}
Similarly, equality~\eqref{mrtb0} is equivalent to
\begin{equation}\label{RightTrollBalanceRight}
\eps \mrt''(b_0) - \Fr(b_0; a_0, b_0; \eps)=0,
\end{equation}
which by formulas~\eqref{mr''_firstformula} and~\eqref{RightForce} implies that
\begin{equation}\label{ForceFromRightTrolleybus}
\eps \mrt''(u) = \Fr(u;a_0,b_0;\eps)
\end{equation} 
for~$u \in \Rt(b_0,u_2)$.

In the case of a left trolleybus we can rewrite relation~\eqref{mltb0} in the form
\begin{equation}\label{LeftTrollBalanceRight}
\eps \mlt''(b_0) + \Fr(b_0; a_0, b_0; \eps) =0,
\end{equation}
and~\eqref{mlta0} in the form
\begin{equation*}
\eps \mlt''(a_0) - \Fl(a_0; a_0, b_0; \eps) =0,
\end{equation*}
which implies
\begin{equation*}
\eps \mlt''(u) - \Fl(u; a_0, b_0; \eps) =0
\end{equation*}
for~$u \in \Lt(u_1,a_0)$. In the case of a birdie, we obtain relations~\eqref{LeftTrollBalanceRight} and~\eqref{RightTrollBalanceLeft} in a similar way.

A birdie is a union of a trolleybus and an angle:
\begin{equation}\label{FirstFormula}
\Bird(a_0,b_0) = \RTroll(a_0,b_0) \biguplus \Rt(b_0,b_0) \biguplus \Ang(b_0) = \Ang(a_0)\biguplus \Lt(a_0,a_0) \biguplus \LTroll(a_0,b_0). 
\end{equation}
This equality can be presented in the terms of graphs as it is shown on Figure~\ref{fig:BirdieAsAnglePlusTrol}
\begin{figure}[!h]
\vskip-30pt
\hskip-80pt\includegraphics[scale=1.3]{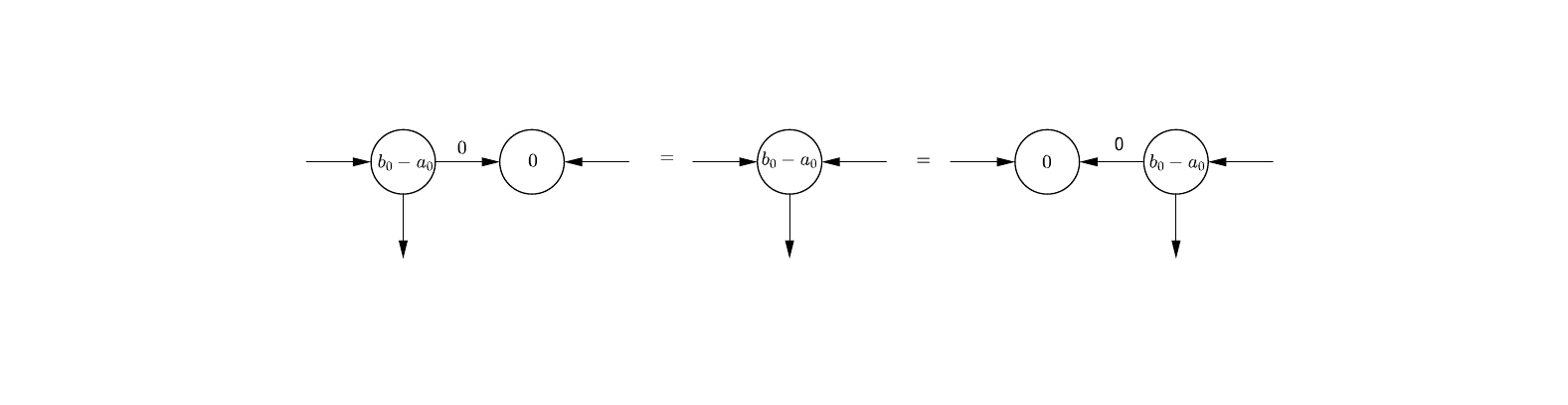}
\vskip-40pt
\caption{The equality ``birdie = angle + troleybus'' in terms of graphs.}
\label{fig:BirdieAsAnglePlusTrol}
\end{figure}
The symbol~$\biguplus$ in equality~\eqref{FirstFormula} means the following: if a function~$B$ on this domain (note that all three parts of the equation are equal as planar domains provided we substitute~$\bigcup$ for~$\biguplus$) is continuous and its restriction to each single subdomain of one part of the formula is a standard candidate, then this function~$B$ is a standard candidate for each subdomain of another part of the formula. Indeed, consider the foliation~$\RTroll(a_0,b_0) \cup \Rt(b_0,b_0)\cup \Ang(b_0)$. Let us apply Propositions~\ref{S15} and~\ref{AngleMeetRightTangents}. The parameter~$\mrt(b_0)$ of the standard candidate in~$\Rt(b_0,b_0)$ is then defined by formula~\eqref{mrtb0}. The parameter~$\beta_2$ in an angle~$\Ang(b_0)$ is determined by~\eqref{AngleMeetRightTangentsFormula}, therefore it coincides with the corresponding parameter in~$\RTroll(a_0,b_0)$ given by~\eqref{coef}. Thus,~$B$ is linear on~$\Bird(a_0,b_0)$ and is a standard candidate there. Note that the conditions required for the existence of such a continuous candidate for different sides of formula~\eqref{FirstFormula}, are the same. What is more, the equation arising from gluing a neighbor tangent or chordal domain to~$\Bird(a_0,b_0)$ is the same as when we glue with the corresponding summand in formula~\eqref{FirstFormula} instead of the whole birdie.


We did have birdies in the previous paper~\cite{5A}, however we treated them as a union of a trolleybus and an angle, not giving them this name. The birdie is a very capricious figure from the evolutional point of view, that is why it needs a separate study.

\index{standard candidate! in a single chord}
It is convenient to introduce two more ``linearity domains'' for the purposes of formalization. First, sometimes we will treat a single chord~$[A_0,B_0]$,~$b_0 - a_0 \leq 2\eps$,~$(a_0,b_0)$ satisfies the cup equation~\eqref{urlun}, as a linearity domain. The standard candidate~$B$ inside~$[A_0,B_0]$ is then given by the formula~\eqref{LinearInTrolleybus}. Since this linearity domain has two points on the fixed boundary, the~$\beta$ are given by~\eqref{coef}\footnote{Note that the coefficients~$\beta$ are not uniquely defined by the trace of~$B$ on~$[A_0,B_0]$. To restore them, we use the traditional smoothness assumption.}. With this definition at hand, we see that Propositions~\ref{CupMeetsRightTangents} and~\ref{CupMeetsLeftTangents} say the usual truth: if~$B$ is continuous on the union of a long chord~$[A_0,B_0]$, a chordal domain~$\Ch([a_0,b_0],*)$, and two tangent domains adjacent to~$[A_0,B_0]$, and coincides with the standard candidates there, then it is a~$C^1$-smooth Bellman candidate on this union. One can glue the standard candidate on a chord~$[A_0,B_0]$,~$b_0 - a_0 < 2\eps$, with the standard candidates on~$\Ch(*,[a_0,b_0])$ and~$\Ch([a_0,b_0],*)$ in a similar manner.

Second, sometimes it is useful to treat a single tangent~$\Rt(w,w)$ or~$\Lt(w,w)$ as a linearity domain. Moreover, no matter how strange it seems, it is natural to think of it as of a domain with two points on the fixed boundary\footnote{In a sense, we treat this single tangent as a trolleybus of zero width, i.e. its base is the chord~$[W,W]$.}. We will consider such a construction only when~$w = c_i$ for some~$i$, where~$c_i$ is a single point root from Definition~\ref{roots}. Therefore, the standard candidate~$B$ in this domain is given by formulas~\eqref{LinearInTrolleybus} and~\eqref{betaequations} with
\begin{equation}\label{Beta2InTheFifthType}\index{standard candidate! in a single tangent}
\beta_2 = \frac12 f''(w)
\end{equation} (compare with~\eqref{coef}). The concatenation of this ``linearity domain'' with adjacent tangent domains is performed in the same way as for any linearity domain with the given~$\beta$ (see Proposition~\ref{DomainOfLinearityWithTwoPointsOnTheLowerBoundaryMeetsTangentDomain}).

\subsection{Multifigures}\index{multifigures}\label{s343}
We begin with a structural agreement. For each linearity domain~$\mathfrak{L}$, consider its intersection with the fixed boundary. We assume that it is a union of finite number of arcs (however, one or two of these arcs may be infinite),
\begin{equation*}
\mathfrak{L} \cap \FixedBoundary\Omega_{\eps} = \cup_{i=1}^k \{(t,t^2) \mid t \in \mathfrak{a}_i\},
\end{equation*} 
where~$\{\mathfrak{a}_i\}_{i=1}^k$ is a finite set of disjoint closed intervals, which can be single points. The parabolic arc that corresponds to~$\mathfrak{a}_i$ is called~$\mathfrak{A}_i$. We remind the reader the notation introduced in Subsection~\ref{s212}: the left endpoint of~$\mathfrak{a}_i$ is~$\mathfrak{a}_{i}^{\mathrm l}$ and the right endpoint is~$\mathfrak{a}_i^{\mathrm r}$. As we will see, all the linearity domains needed to construct the Bellman function for~$f$ satisfy this finiteness assumption due to Condition~\ref{reg}.

Consider some linearity domain~$\mathfrak{L}$. We know that all the points~$\big(a, a^2, f(a)\big)$, $A \in \mathfrak{L} \cap \FixedBoundary\Omega_{\eps}$, lie in one and the same two-dimensional plane in~$\mathbb{R}^3$. Therefore, there exists a quadratic polynomial~$P_{\mathfrak{L}}$ such that 
\begin{equation}\label{PolynomialForLinearityDomain}
f(a) = P_{\mathfrak{L}}(a), \quad A \in \mathfrak{L} \cap \FixedBoundary\Omega_{\eps}.
\end{equation}
Surely, the converse is also true: if there exists a quadratic polynomial~$P_{\mathfrak{L}}$ such that equality~\eqref{PolynomialForLinearityDomain} holds true, then there exists a linear function~$B_{\mathfrak{L}}$ such that~$B_{\mathfrak{L}}(a,a^2) = f(a)$ for all~$A \in \mathfrak{L} \cap \FixedBoundary\Omega_{\eps}$ (this is the assertion of Remark~\ref{quadratische}). Specifically, if~$P_{\mathfrak{L}}(t) = \beta_0+\beta_1 t+ \beta_2 t^2$, then
\begin{equation}\label{CandidateInMultifigure}
B_{\mathfrak{L}}\big(x_{1},x_{2}\big) = \beta_0+\beta_1 x_1+\beta_2 x_2.
\end{equation}
This function~$B_{\mathfrak{L}}$ is a Bellman candidate in~$\mathfrak{L}$. Similar to the case where the linearity domain has only two points on the fixed boundary, the system~\eqref{coef} and~\eqref{urlun} holds true for any~$A_0,B_0 \in \mathfrak{L} \cap \FixedBoundary\Omega_{\eps}$. Indeed, under the same smoothness assumptions, we can write equations~\eqref{betaequations1},~\eqref{betaequations2} for~$A_0$ and~$B_0$ and then derive the system~\eqref{coef} from them, which, in its turn, leads to the cup equation~\eqref{urlun} for the pair~$(a_0,b_0)$. In particular, all the 
 points~$\big(a,f'(a)\big) \in \mathbb{R}^2$, $A \in\mathfrak{L} \cap \FixedBoundary\Omega_{\eps}$, lie on one line, whose slope is~$2\beta_2$. 
 
\begin{Def}\index{standard candidate! in multifigure}\index{standard candidate! in multifigure}
The function~$B$ defined by formulas~\eqref{CandidateInMultifigure} and~\eqref{coef} in the linearity domain~$\mathfrak{L}$, where~$A_0$ and~$B_0$ are arbitrary points from $\mathfrak{L} \cap \FixedBoundary\Omega_{\eps}$\textup,
is called a \emph{standard candidate} there. 
\end{Def}
 
As we have verified, the standard candidate in~$\mathfrak{L}$ does not depend on the choice of~$A_0$ and~$B_0$ in the definition.
 
 In the following lemma, we use  Definition~\ref{differentials}.
\begin{Le}\label{ThreePointsOnOneLine}
Let~$A_1$\textup,~$A_2$\textup, and~$A_3$ be three points such that the points~$\big(a_i,f'(a_i)\big)$\textup, $i = 1,2,3$\textup, lie on one line. If~$a_1 \leq a_2 \leq a_3$\textup, then
\begin{align*}
\Slt(a_1,a_2)&=\Slt(a_1,a_3);
\\
\Srt(a_1,a_3)&=\Srt(a_2,a_3);
\\
\Srt(a_1,a_2)&=\Slt(a_2,a_3).
\end{align*}
\end{Le}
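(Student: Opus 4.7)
The plan is to unwind the definitions of the differentials and reinterpret the quantity $\av{f''}{[a,b]}$ geometrically. By the fundamental theorem of calculus (applicable since Condition~\ref{reg} gives $f\in C^2$), one has
\[
\av{f''}{[a,b]} = \frac{1}{b-a}\int_a^b f''(t)\,dt = \frac{f'(b)-f'(a)}{b-a},
\]
which is precisely the slope of the secant to the graph $\{(u,f'(u))\}$ through the points $\big(a,f'(a)\big)$ and $\big(b,f'(b)\big)$.

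Now I would invoke the hypothesis that $\big(a_1,f'(a_1)\big)$, $\big(a_2,f'(a_2)\big)$, $\big(a_3,f'(a_3)\big)$ lie on a single line. Call $k$ the slope of that line. Then for any pair $i<j$ with $i,j\in\{1,2,3\}$, the secant slope through $\big(a_i,f'(a_i)\big)$ and $\big(a_j,f'(a_j)\big)$ coincides with $k$. Consequently
\[
\av{f''}{[a_1,a_2]}=\av{f''}{[a_1,a_3]}=\av{f''}{[a_2,a_3]}=k.
\]

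Finally I would substitute directly into Definition~\ref{differentials}. For the first identity,
\[
\Slt(a_1,a_2)=f''(a_1)-k=\Slt(a_1,a_3);
\]
for the second,
\[
\Srt(a_1,a_3)=f''(a_3)-k=\Srt(a_2,a_3);
\]
and for the third,
\[
\Srt(a_1,a_2)=f''(a_2)-k=\Slt(a_2,a_3).
\]
There is no genuine obstacle here: once the geometric reading of $\av{f''}{[a,b]}$ as a secant slope on the graph of $f'$ is made explicit, the three equalities are immediate. The only subtlety worth flagging is that the definition of the differentials was originally introduced under the cup equation, but the formulas themselves make sense for any pair, and in the multifigure context of Subsection~\ref{s343} the cup equation holds for every pair of fixed-boundary abscissas anyway.
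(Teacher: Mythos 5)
Your proof is correct and is essentially the paper's own argument made explicit: the paper appeals to the geometric picture of Figure~\ref{fig:area_differentials} (the differentials as differences of slopes of tangents and secants to the graph of $f'$), and you have simply written out the computation that $\av{f''}{[a,b]}$ is the secant slope and that collinearity forces all three secant slopes to coincide.
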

\begin{proof}
One can easily ``observe'' this lemma from the geometric interpretation of the differentials, Figure~\ref{fig:area_differentials}. In all these cases the slopes of the same lines determine the values of the differentials on the left-hand side and the right-hand side of each equality.
\end{proof}

\begin{Le}\label{ThreePointsAndTheArea}
Let~$A_1$\textup,~$A_2$\textup, and~$A_3$ be three points such that the points~$\big(a_i,f'(a_i)\big)$\textup, $i = 1,2,3$\textup, lie on one line. Suppose that the pairs~$(a_1,a_2)$ and~$(a_2,a_3)$ satisfy the cup equation. Then~$(a_1,a_3)$ satisfies the cup equation as well.
\end{Le}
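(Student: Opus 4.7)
The plan is to use the integral form of the cup equation directly, combined with the fact that the trapezoidal rule is exact on affine functions. The cup equation~\eqref{urlun} for a pair $(a,b)$ is equivalent to the statement
\begin{equation*}
\int_a^b f'(t)\,dt \;=\; \frac{f'(a)+f'(b)}{2}(b-a),
\end{equation*}
i.e. the area under the graph of $f'$ over $[a,b]$ coincides with the area of the corresponding trapezoid (see Figure~\ref{fig:area_differentials}). So I would first add the two given cup equations for $(a_1,a_2)$ and $(a_2,a_3)$ to obtain
\begin{equation*}
\int_{a_1}^{a_3} f'(t)\,dt \;=\; \frac{f'(a_1)+f'(a_2)}{2}(a_2-a_1) + \frac{f'(a_2)+f'(a_3)}{2}(a_3-a_2).
\end{equation*}

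Now the goal is to identify the right-hand side with $\tfrac{f'(a_1)+f'(a_3)}{2}(a_3-a_1)$, which is exactly where the collinearity hypothesis enters. Let $L(t) = \alpha t + \beta$ be the affine function whose graph passes through the three points $\big(a_i,f'(a_i)\big)$, $i=1,2,3$; by hypothesis such an $L$ exists and $L(a_i)=f'(a_i)$. Since the trapezoidal rule computes the exact integral of any affine function,
\begin{equation*}
\frac{L(a_1)+L(a_2)}{2}(a_2-a_1) + \frac{L(a_2)+L(a_3)}{2}(a_3-a_2) \;=\; \int_{a_1}^{a_3} L(t)\,dt \;=\; \frac{L(a_1)+L(a_3)}{2}(a_3-a_1).
\end{equation*}
Substituting $L(a_i)=f'(a_i)$ and combining with the previous display yields exactly the cup equation for $(a_1,a_3)$.

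There is no real obstacle here; the only substantive ingredient is that collinearity of the three points $(a_i,f'(a_i))$ makes the middle point $a_2$ ``invisible'' to the trapezoidal approximation, so the three trapezoidal areas add as if $f'$ itself were linear on $[a_1,a_3]$. One could equivalently phrase the argument geometrically, directly on Figure~\ref{fig:area_differentials}: the trapezoid over $[a_1,a_3]$ with top side lying on $L$ is cut by the vertical line $t=a_2$ into precisely the two trapezoids associated with $(a_1,a_2)$ and $(a_2,a_3)$, and each of the latter agrees in area with the corresponding subgraph of $f'$ by hypothesis.
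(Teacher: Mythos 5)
Your proof is correct and is essentially the paper's argument, just made algebraically explicit: the paper observes that both the subgraph area and the trapezoid area over $[a_1,a_3]$ split additively at $a_2$ (the former trivially, the latter because of collinearity), which is exactly what your trapezoidal-rule computation with the affine function $L$ establishes. The geometric rephrasing you give at the end is precisely how the paper states it.
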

\begin{proof}
We have to prove that the subgraph area for the pair~$(a_1,a_3)$ equals the corresponding trapezoid area (see the geometric interpretation of the cup equation provided at the end of Subsection~\ref{s331} and Figure~\ref{fig:area_differentials} as well). Surely, each of these quantites is a sum of the corresponding quantities for~$(a_1,a_2)$ and~$(a_2,a_3)$. This is always true for the subgraph area. For the trapezoid area it is a consequence of the lemma hypothesis. 
\end{proof}

\begin{Rem}
We recall that for any two points~$A_1,A_2 \in \mathfrak{L}\cap \FixedBoundary \Omega_{\eps}$\textup, where~$\mathfrak{L}$ is a multifigure\textup, we have
\begin{equation*}
\frac{\partial B}{\partial x_2}\Big|_{\mathfrak{L}} = \beta_2 = \frac{1}{2}\av{f''}{[a_1,a_2]} =\frac{f'(a_2) - f'(a_1)}{2(a_2-a_1)}.
\end{equation*}
\end{Rem}

Now we are equipped to describe all the remaining linearity domains. We start with the domains 
that are not separated from the upper parabola. 
\begin{figure}[h]
\vskip-20pt
\hskip10pt
\includegraphics[width = 0.5\linewidth]{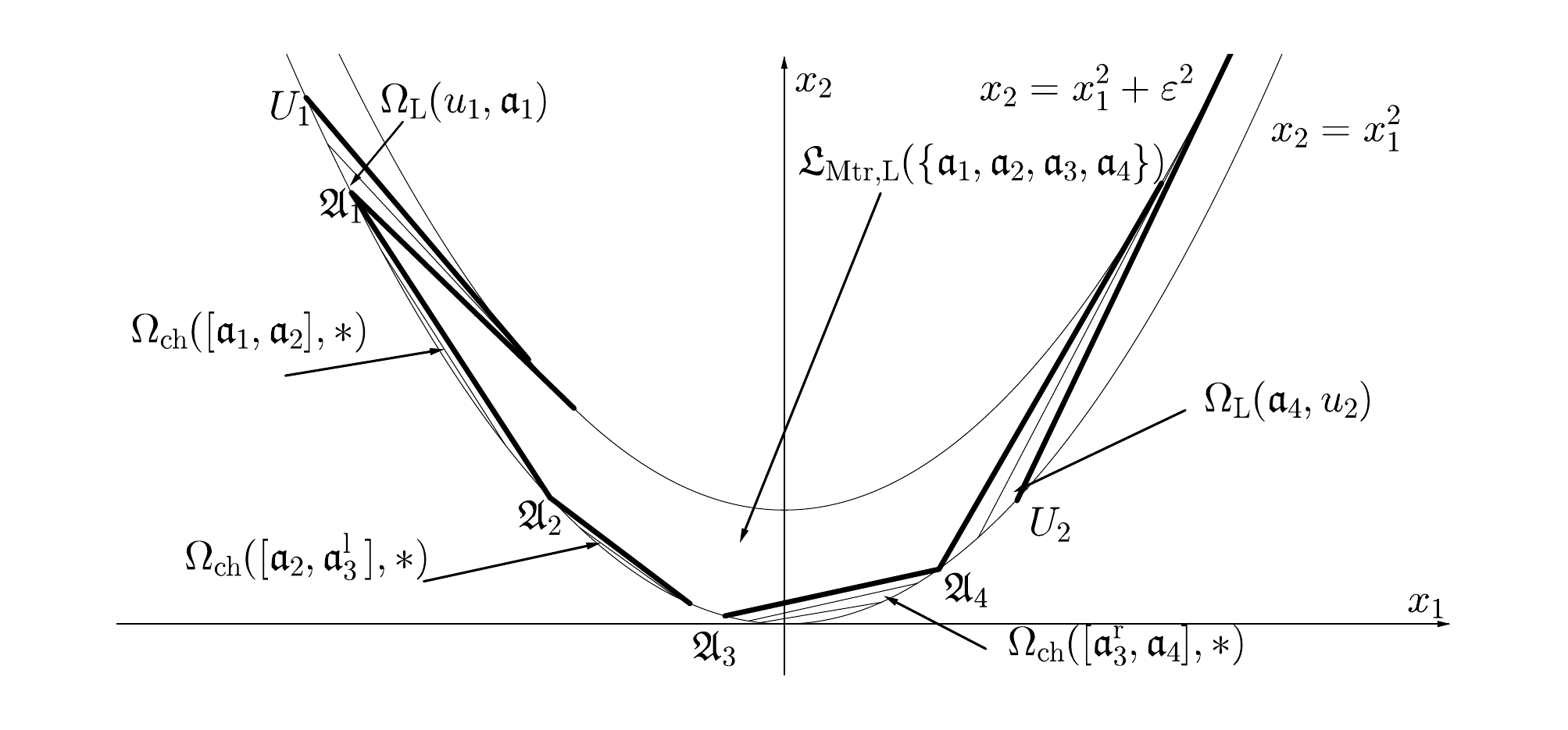}
\hskip-30pt
\raisebox{-20pt}{{\includegraphics[scale=1.4]{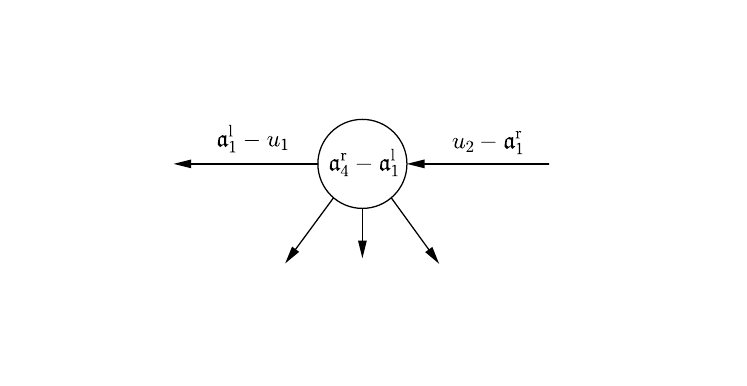}}}
\caption{A multitrolleybus for $k = 4$ with the adjacent domains and their graphical representation.}
\label{fig:multitroll}
\label{fig:MtrGr}
\end{figure}
The boundary of such a bounded domain consists of the arcs~$\mathfrak{A}_i$, $i = 1,2,\ldots, k$, 
the chords~$\mathfrak{A}_i^{\mathrm r}\mathfrak{A}_{i+1}^{\mathrm l}$, $i = 1,2, \ldots, k-1$, two tangents 
from the points~$\mathfrak{A}_1^{\mathrm l}$ and~$\mathfrak{A}_{k}^{\mathrm r}$, and the arc of the free boundary. 
We classify the multifigures with respect to the orientation of these tangents. Namely, if 
the edge tangents are both right, then we get a right \emph{multitrolleybus}\index{multitrolleybus} 
denoted by~$\MTTR(\{\mathfrak{a}_i\}_{i=1}^k;\eps)$; if they are both left, then we have a left 
multitrolleybus denoted by~$\MTTL(\{\mathfrak{a}_i\}_{i=1}^k;\eps)$ (see Figure~\ref{fig:multitroll}).
If both tangents ``look inside'' the domain, then it is called a~\emph{multicup}\index{multicup}, 
see Figure~\ref{fig:multicup4},~$\MTC(\{\mathfrak{a}_i\}_{i=1}^k;\eps)$. 
\begin{figure}[h]
\hskip10pt
\includegraphics[width = 0.55\linewidth]{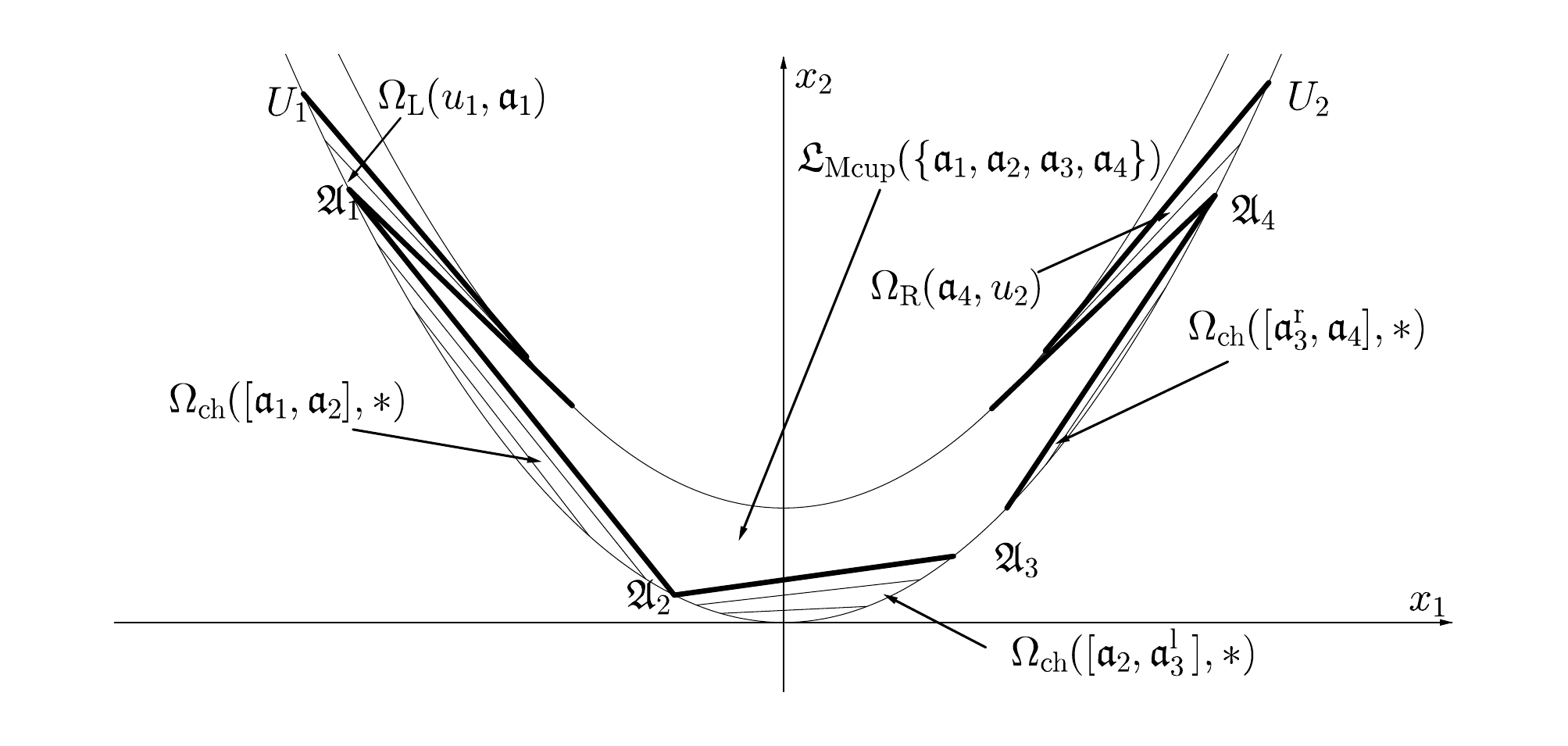}
\hskip-40pt
\raisebox{-10pt}{{\includegraphics[scale=1.4]{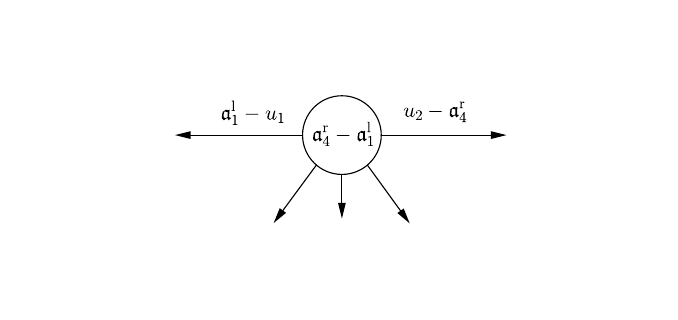}}}
\caption{A mutlicup for $k=4$ with the adjacent domains and their graphical representation.}
\label{fig:multicup4}
\label{fig:McupGr}
\end{figure}
We distinguish the case where the two border tangents lie on one line 
(i.e.~$\mathfrak{a}_k^{\mathrm r} - \mathfrak{a}_1^{\mathrm l} = 2\eps$) 
and say that in this case the multicup is full. Finally, if both tangents ``look outside'' 
the domain, then it is called a~\emph{multibirdie}\index{multibirdie}, see 
Figure~\ref{fig:multibirdie},~$\MTB(\{\mathfrak{a}_i\}_{i=1}^k;\eps)$.
\begin{figure}[h]
\hskip10pt
\includegraphics[width = 0.55\linewidth]{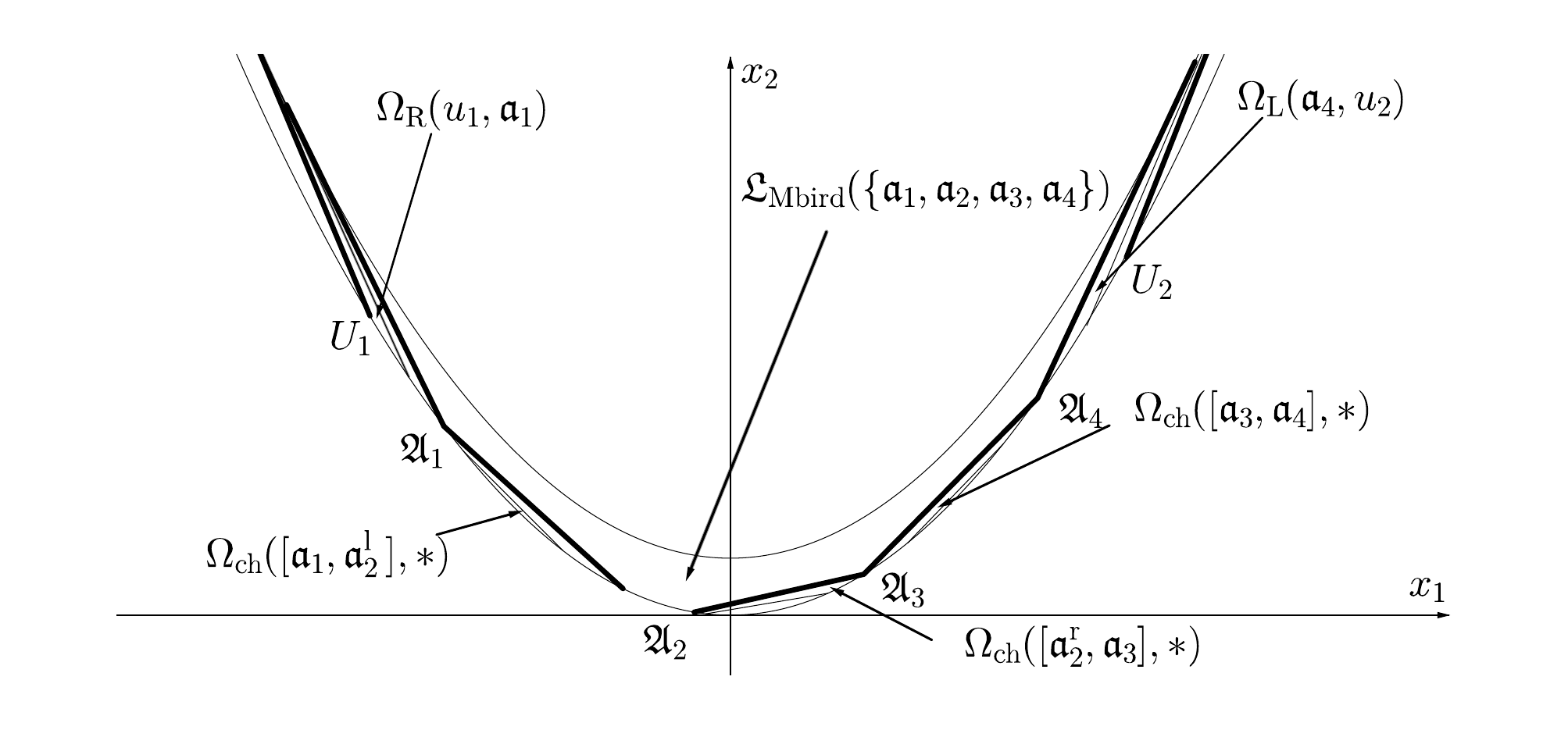}
\hskip-40pt
\raisebox{-10pt}{{\includegraphics[scale=1.4]{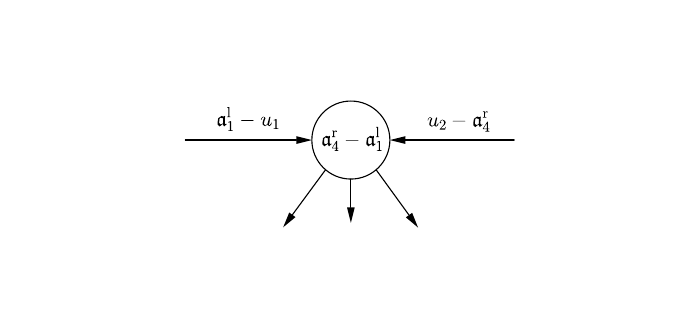}}}
\caption{A multibirdie for $k=4$ with the adjacent domains and their graphical representation.}
\label{fig:multibirdie}
\label{fig:MtbGr}
\end{figure}
Graphical representation for a multifigure~$\mathfrak{L}$ built over~$\{\mathfrak{a}_i\}_{i=1}^k$ 
is drawn by the following rule. The domain~$\mathfrak{L}$ corresponds to a single vertex. 
It has~$k-1$ outcoming edges representing the chordal
domains~$\Ch([\mathfrak{a}_i^{\mathrm r},\mathfrak{a}_{i+1}^{\mathrm l}],*)$,~$i = 1,2,\ldots,k-1$. 
There are two more edges corresponding to two tangent domains surrounding~$\mathfrak{L}$. 
They are both outcoming if~$\mathfrak{L}$ is a multicup and both incoming in the case
where~$\mathfrak{L}$ is a multibirdie. If~$\mathfrak{L}$ is a multitrolleybus, then it 
has one incoming and one outcoming edge. There is one exception: in the case where 
a multicup or a multitrolleybus lasts to infinity (i.e. one or both of its border arcs are rays), 
then it does not have a border tangent. In such a case, its vertex does not have the corresponding
outcoming edge. We provide examples of graphs for the multifigures drawn on
Figures~\ref{fig:multitroll},~\ref{fig:multicup4}, and~\ref{fig:multibirdie}.

Now we treat the multifigures separately. Our aim is to give sufficient conditions for concatenation 
with the tangent domains surrounding the linearity domain. We begin with a multicup. 

\begin{St}\label{MulticupCandidate}
Suppose~$\mathfrak{a}_i = [\mathfrak{a}_i^{\mathrm l}, \mathfrak{a}_i^{\mathrm r}]$\textup,~$i = 1,2,\ldots,k$\textup, to be disjoint intervals on~$\mathbb{R}$ \textup(these intervals can be single points or rays\textup) such that~$0 < \mathfrak{a}_{i+1}^{\mathrm l} - \mathfrak{a}_{i}^{\mathrm r} \leq 2\eps$\textup, and~$\mathfrak{a}_k^{\mathrm r} - \mathfrak{a}_1^{\mathrm l} \geq 2\eps$. Assume that all the points~$\big(a,f'(a)\big)$\textup,~$a \in \cup_{i=1}^k\mathfrak{a}_i$\textup, lie on one line\textup, and any pair of points from~$\cup_{i=1}^k\mathfrak{a}_i$ satisfies~\eqref{urlun}. Let the function~$B$ coincide with the standard candidates on~$\MTC(\{\mathfrak{a}_i\}_{i=1}^k)$\textup, on each~$\Ch([\mathfrak{a}_i^{\mathrm r}, \mathfrak{a}_{i+1}^{\mathrm l}],*)$\textup, $i=1,\dots, k-1$\textup, on~$\Lt(u_1,\mathfrak{a}_1^{\mathrm l})$ with the parameter~$\mlt(\mathfrak{a}_1^{\mathrm l})$ given by formula~\eqref{mlta0}\textup(with $a_0=\mathfrak{a}_1^{\mathrm l}$ and $b_0=\mathfrak{a}_k^{\mathrm r}$\textup)\textup, and on~$\Rt(\mathfrak{a}_k^{\mathrm r},u_2)$ with the parameter~$\mrt(\mathfrak{a}_k^{\mathrm r})$ given by formula~\eqref{mrtb0} \textup(with $a_0=\mathfrak{a}_1^{\mathrm l}$ and $b_0=\mathfrak{a}_k^{\mathrm r}$\textup). 
%
%
Then the function~$B$ 
is a~$C^1$-smooth Bellman candidate on the domain
\begin{equation*}
\Lt(u_1,\mathfrak{a}_1^{\mathrm l}) \cup \MTC(\{\mathfrak{a}_i\}_{i=1}^k) \cup \Big(\cup_{i=1}^{k-1} \Ch([\mathfrak{a}_i^{\mathrm r}, \mathfrak{a}_{i+1}^{\mathrm l}],*)\Big) \cup \Rt(\mathfrak{a}_k^{\mathrm r},u_2).
\end{equation*}
\end{St}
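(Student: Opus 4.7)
The plan is to apply Proposition~\ref{ConcatenationOfConcaveFunctions} iteratively across all interior boundaries of the foliation. Each individual piece is already known to be a Bellman candidate on its domain: the standard candidate on $\MTC(\{\mathfrak{a}_i\}_{i=1}^k)$ is a single linear function and is trivially locally concave; the standard candidate on each chordal domain $\Ch([\mathfrak{a}_i^{\mathrm r}, \mathfrak{a}_{i+1}^{\mathrm l}],*)$ is a Bellman candidate by Proposition~\ref{LightChordalDomainCandidate} (using the hypothesis that $(\mathfrak{a}_i^{\mathrm r}, \mathfrak{a}_{i+1}^{\mathrm l})$ satisfies the cup equation); and the tangent domains are covered by Propositions~\ref{RightTangentsCandidate} and~\ref{LeftTangentsCandidate}. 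All of these domains are induced convex by direct geometric inspection. So the entire proof reduces to checking $C^1$-smoothness of the concatenation across each interior interface.

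The key observation is that the standard candidate on the multicup has coefficient $\beta_2 = \tfrac12 \av{f''}{[a_0,b_0]}$ for \emph{any} pair $(a_0, b_0)$ taken from $\cup_{i=1}^k \mathfrak{a}_i$; this is precisely the content of Lemma~\ref{ThreePointsOnOneLine} applied to the colinearity hypothesis on the points $\big(a,f'(a)\big)$, combined with Lemma~\ref{ThreePointsAndTheArea}, which together guarantee that $\beta_2$ is independent of the chosen pair and that the cup equation is inherited. Thus, along each interior chord $[\mathfrak{A}_i^{\mathrm r}, \mathfrak{A}_{i+1}^{\mathrm l}]$, we may simply select $(a_0,b_0) = (\mathfrak{a}_i^{\mathrm r}, \mathfrak{a}_{i+1}^{\mathrm l})$ when representing the multicup's standard candidate; this presents the multicup candidate locally as a standard candidate on a two-point linearity domain with endpoints $A_0, B_0$ of the adjacent chord. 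Proposition~\ref{DomainOfLinearityWithTwoPointsOnTheLowerBoundaryMeetsChordalDomain} then immediately yields $C^1$-smoothness across that interface.

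For the two outer interfaces where the multicup meets the tangent domains, the prescribed parameter values rewrite as
\begin{equation*}
\mlt(\mathfrak{a}_1^{\mathrm l}) = f'(\mathfrak{a}_1^{\mathrm l}) + 2\eps\beta_2, \qquad \mrt(\mathfrak{a}_k^{\mathrm r}) = f'(\mathfrak{a}_k^{\mathrm r}) - 2\eps\beta_2,
\end{equation*}
since $\beta_2 = \tfrac12 \av{f''}{[\mathfrak{a}_1^{\mathrm l}, \mathfrak{a}_k^{\mathrm r}]}$ (using the pair of outermost endpoints, admissible by the discussion above). These are precisely the continuity relations from Propositions~\ref{AngleMeetLeftTangents} and~\ref{AngleMeetRightTangents}, and hence Proposition~\ref{DomainOfLinearityWithTwoPointsOnTheLowerBoundaryMeetsTangentDomain} delivers $C^1$-smoothness at both outer interfaces; this proposition was stated for two-point linearity domains, but its proof inspects only $\beta_2$, so it applies verbatim to the multicup.

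The main conceptual point — rather than a serious obstacle — is recognizing that all the pairwise gluing lemmas developed earlier for two-point linearity domains extend automatically to the multifigure setting because the coefficient $\beta_2$ entering each smoothness criterion is a single well-defined number independent of which pair of boundary points one uses to compute it. Once $C^1$-smoothness is secured at every interface, a finite induction using Proposition~\ref{ConcatenationOfConcaveFunctions} (applied to induced convex domains) upgrades the piecewise concavity to local concavity on the full union, and the resulting function is a $C^1$-smooth Bellman candidate there.
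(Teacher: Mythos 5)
Your proposal is correct and follows the same route as the paper's (very terse) proof, which simply cites Propositions~\ref{DomainOfLinearityWithTwoPointsOnTheLowerBoundaryMeetsChordalDomain} and~\ref{DomainOfLinearityWithTwoPointsOnTheLowerBoundaryMeetsTangentDomain}. What you make explicit --- that those two gluing propositions, though nominally stated for two-point linearity domains, depend only on the coefficient~$\beta_2$, which by Lemma~\ref{ThreePointsOnOneLine} and the colinearity/cup-equation hypotheses is a single well-defined number for the whole multicup --- is precisely the point the paper takes for granted via the Remark following Lemma~\ref{ThreePointsAndTheArea}.
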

If one of the intervals~$\mathfrak{a}_1$ and~$\mathfrak{a}_k$ is infinite, then we do not consider the corresponding domain~$\Lt$ or~$\Rt$. 
\begin{proof}
This is a direct consequence of Propositions~\ref{DomainOfLinearityWithTwoPointsOnTheLowerBoundaryMeetsChordalDomain} and~\ref{DomainOfLinearityWithTwoPointsOnTheLowerBoundaryMeetsTangentDomain}.
\end{proof}

A similar proposition holds for the multibirdie. 

\begin{St}\label{MultibirdieCandidate}
Suppose~$\mathfrak{a}_i = [\mathfrak{a}_i^{\textup l}, \mathfrak{a}_i^{\textup r}]$\textup,~$i = 1,2,\ldots,k$\textup, to be disjoint intervals on~$\mathbb{R}$ \textup(these intervals can be single points\textup) such that~$0 < \mathfrak{a}_{i+1}^{\textup l} - \mathfrak{a}_{i}^{\textup r} \leq 2\eps$. Assume that all the points~$\big(a,f'(a)\big)$\textup,~$a \in \cup_{i=1}^k\mathfrak{a}_i$\textup, lie on one line\textup, and any pair of points from~$\cup_{i=1}^k\mathfrak{a}_i$ satisfies~\eqref{urlun}. Let the function~$B$ coincide with the standard candidates on~$\MTB(\{\mathfrak{a}_i\}_{i=1}^k)$\textup, on each~$\Ch([\mathfrak{a}_i^{\mathrm r}, \mathfrak{a}_{i+1}^{\mathrm l}],*)$\textup, $i=1,\dots, k-1$\textup, on~$\Rt(u_1,\mathfrak{a}_1^{\mathrm l})$ with the parameter~$\mrt(\mathfrak{a}_1^{\mathrm l})$ given by formula~\eqref{mrta0} \textup(with $a_0=\mathfrak{a}_1^{\mathrm l}$ and $b_0=\mathfrak{a}_k^{\mathrm r}$\textup)\textup, and on~$\Lt(\mathfrak{a}_k^{\mathrm r},u_2)$ with the parameter~$\mlt(\mathfrak{a}_k^{\mathrm r})$ given by formula~\eqref{mltb0} \textup(with $a_0=\mathfrak{a}_1^{\mathrm l}$ and $b_0=\mathfrak{a}_k^{\mathrm r}$\textup). 
Then the function~$B$ is a~$C^1$-smooth Bellman candidate on the domain
\begin{equation*}
\Rt(u_1, \mathfrak{a}_1^{\mathrm l}) \cup \MTB(\{\mathfrak{a}_i\}_{i=1}^k) \cup \Big(\cup_{i=1}^{k-1} \Ch([\mathfrak{a}_i^{\mathrm r}, \mathfrak{a}_{i+1}^{\mathrm l}],*)\Big) \cup \Lt(\mathfrak{a}_k^{\mathrm r},u_2).
\end{equation*}
\end{St}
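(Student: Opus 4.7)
The plan is to mimic the proof of Proposition~\ref{MulticupCandidate} almost verbatim, reducing the statement to a sequence of local gluings handled by Propositions~\ref{DomainOfLinearityWithTwoPointsOnTheLowerBoundaryMeetsChordalDomain} and~\ref{DomainOfLinearityWithTwoPointsOnTheLowerBoundaryMeetsTangentDomain}. The only difference from the multicup case is that the two outer tangent domains now have the opposite orientation: they are $\Rt(u_1,\mathfrak{a}_1^{\mathrm l})$ on the left and $\Lt(\mathfrak{a}_k^{\mathrm r},u_2)$ on the right, so the relevant matching conditions are~\eqref{mrta0} and~\eqref{mltb0} rather than~\eqref{mlta0} and~\eqref{mrtb0}.

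First I would check that the standard candidate on $\MTB(\{\mathfrak{a}_i\}_{i=1}^k)$, given by~\eqref{CandidateInMultifigure}, is well-defined. The line hypothesis pins down $\beta_1$ and $\beta_2$ by demanding $f'(t) = 2\beta_2 t + \beta_1$ for $t \in \cup_i \mathfrak{a}_i$; fixing $\beta_0$ so that $f = P$ at one point of $\mathfrak{a}_1$, the cup-equation hypothesis (combined with Lemma~\ref{ThreePointsAndTheArea} and a direct integration using the trapezoidal rule) propagates the equality $f(a) = P(a) = \beta_0 + \beta_1 a + \beta_2 a^2$ to every $a \in \cup_i \mathfrak{a}_i$. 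In particular, $\beta_2 = \tfrac{1}{2}\av{f''}{[a,b]}$ for \emph{every} pair $(a,b)$ drawn from $\cup_i \mathfrak{a}_i$, so this single value of $\beta_2$ governs all interfaces of the multibirdie simultaneously.

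Next I would glue $B_{\mathfrak{L}}$ to each inner chordal domain: for $i = 1,\ldots,k-1$, Proposition~\ref{DomainOfLinearityWithTwoPointsOnTheLowerBoundaryMeetsChordalDomain} applied with $A_0 = \mathfrak{A}_i^{\mathrm r}$ and $B_0 = \mathfrak{A}_{i+1}^{\mathrm l}$ (the pair satisfies the cup equation by hypothesis, and the $\beta_2$ produced there agrees with the one on $\mathfrak{L}$ by the previous step) yields $C^1$-smoothness of $B$ across the chord $[\mathfrak{A}_i^{\mathrm r},\mathfrak{A}_{i+1}^{\mathrm l}]$. For the two outer tangent domains I would invoke Proposition~\ref{DomainOfLinearityWithTwoPointsOnTheLowerBoundaryMeetsTangentDomain}: with $(a_0,b_0) = (\mathfrak{a}_1^{\mathrm l},\mathfrak{a}_k^{\mathrm r})$, formulas~\eqref{mrta0} and~\eqref{mltb0} are exactly the continuity conditions $\mrt(\mathfrak{a}_1^{\mathrm l}) = f'(\mathfrak{a}_1^{\mathrm l}) - 2\eps\beta_2$ and $\mlt(\mathfrak{a}_k^{\mathrm r}) = f'(\mathfrak{a}_k^{\mathrm r}) + 2\eps\beta_2$ required to glue with $\Rt(u_1,\mathfrak{a}_1^{\mathrm l})$ and $\Lt(\mathfrak{a}_k^{\mathrm r},u_2)$ respectively, and the proposition then promotes continuity to $C^1$-smoothness. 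All the pieces involved are induced convex subdomains of $\Omega_\eps$, each piece is locally concave, and the concatenation is $C^1$-smooth across every interface, so Proposition~\ref{ConcatenationOfConcaveFunctions} applied inductively delivers local concavity on the entire union. The only delicate point — really just bookkeeping — is verifying that the single $\beta_2$ of $\mathfrak{L}$ matches the value demanded by every neighboring piece at once, and this is precisely what the line hypothesis on $\{(a,f'(a)) : a \in \cup_i\mathfrak{a}_i\}$ guarantees.
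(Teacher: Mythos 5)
Your proposal is correct and matches the approach the paper intends: Proposition~\ref{MultibirdieCandidate} is stated without proof immediately after Proposition~\ref{MulticupCandidate}, whose proof is simply ``a direct consequence of Propositions~\ref{DomainOfLinearityWithTwoPointsOnTheLowerBoundaryMeetsChordalDomain} and~\ref{DomainOfLinearityWithTwoPointsOnTheLowerBoundaryMeetsTangentDomain},'' and the multibirdie case differs only in that the two outer tangent domains have the opposite orientation, so~\eqref{mrta0} and~\eqref{mltb0} replace~\eqref{mlta0} and~\eqref{mrtb0}. Your additional remarks about well-definedness of the standard candidate and the single $\beta_2$ controlling all interfaces are sound (note only that Proposition~\ref{DomainOfLinearityWithTwoPointsOnTheLowerBoundaryMeetsTangentDomain} formally requires $b_0 - a_0 \le 2\eps$, whereas $\mathfrak{a}_k^{\mathrm r}-\mathfrak{a}_1^{\mathrm l}$ may exceed $2\eps$; but since the matching condition depends only on $\beta_2$ and the endpoint $W$, one can take $(A_0,B_0)$ to be a closer pair from $\cup_i \mathfrak{a}_i$, exactly as the paper does implicitly for the multicup).
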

Finally, the propositions for the right multitrolleybus looks like this.
\begin{St}\label{MultitrolleybusCandidate}
Suppose~$\mathfrak{a}_i = [\mathfrak{a}_i^{\textup l}, \mathfrak{a}_i^{\textup r}]$\textup,~$i = 1,2,\ldots,k$\textup, to be disjoint intervals on~$\mathbb{R}$ \textup(these intervals can be single points\textup, and the last one can be a ray\textup) such that~$0 < \mathfrak{a}_{i+1}^{\textup l} - \mathfrak{a}_{i}^{\textup r} \leq 2\eps$. Assume that all the points~$\big(a,f'(a)\big)$\textup,~$a \in \cup_{i=1}^k\mathfrak{a}_i$\textup, lie on one line\textup, and any pair of points from~$\cup_{i=1}^k\mathfrak{a}_i$ satisfies~\eqref{urlun}. Let the function~$B$ coincide with the standard candidates on~$\MTTR(\{\mathfrak{a}_i\}_{i=1}^k)$\textup, on each~$\Ch([\mathfrak{a}_i^{\mathrm r}, \mathfrak{a}_{i+1}^{\mathrm l}],*)$\textup, $i=1,\dots, k-1$\textup, on~$\Rt(u_1,\mathfrak{a}_1^{\mathrm l})$ with the parameter~$\mrt(\mathfrak{a}_1^{\mathrm l})$ given by formula~\eqref{mrta0} \textup(with $a_0=\mathfrak{a}_1^{\mathrm l}$ and $b_0=\mathfrak{a}_k^{\mathrm r}$\textup)\textup, and on~$\Rt(\mathfrak{a}_k^{\mathrm r},u_2)$ with the parameter~$\mrt(\mathfrak{a}_k^{\mathrm r})$ given by formula~\eqref{mrtb0} \textup(with $a_0=\mathfrak{a}_1^{\mathrm l}$ and $b_0=\mathfrak{a}_k^{\mathrm r}$\textup). 
Then the function~$B$ is a~$C^1$-smooth Bellman candidate on the domain
\begin{equation*}
\Rt(u_1, \mathfrak{a}_1^{\mathrm l}) \cup \MTTR(\{\mathfrak{a}_i\}_{i=1}^k) \cup \Big(\cup_{i=1}^{k-1} \Ch([\mathfrak{a}_i^{\mathrm r}, \mathfrak{a}_{i+1}^{\mathrm l}],*)\Big) \cup \Rt(\mathfrak{a}_k^{\mathrm r},u_2).
\end{equation*}
\end{St}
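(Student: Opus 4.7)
My plan is to reduce this proposition to the already-established concatenation principles, in direct analogy with the proof of Proposition~\ref{MulticupCandidate}. The main work is verifying that all the gluing data are mutually compatible; once that is done, Proposition~\ref{ConcatenationOfConcaveFunctions} (applied to the induced convex domains $\Rt(u_1,\mathfrak{a}_1^{\mathrm r})$, $\MTTR(\{\mathfrak{a}_i\}_{i=1}^k)$, each $\Ch([\mathfrak{a}_i^{\mathrm r},\mathfrak{a}_{i+1}^{\mathrm l}],*)$, and $\Rt(\mathfrak{a}_k^{\mathrm r},u_2)$) upgrades the pieces to a locally concave function on the union.

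First, I would check that the standard candidate on $\MTTR(\{\mathfrak{a}_i\}_{i=1}^k)$ is unambiguously defined. By hypothesis the points $\bigl(a,f'(a)\bigr)$ with $a\in\bigcup_i\mathfrak{a}_i$ are collinear and every pair from $\bigcup_i\mathfrak{a}_i$ satisfies the cup equation~\eqref{urlun}; Lemmas~\ref{ThreePointsOnOneLine} and~\ref{ThreePointsAndTheArea} then guarantee that the coefficients $\beta_0,\beta_1,\beta_2$ in~\eqref{coef} do not depend on the particular pair $(a_0,b_0)$ chosen, so the linear function $B_{\mathfrak L}$ on the multitrolleybus is well defined. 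In particular, $\beta_2=\tfrac12\av{f''}{[\mathfrak{a}_1^{\mathrm l},\mathfrak{a}_k^{\mathrm r}]} =\tfrac12\av{f''}{[\mathfrak{a}_i^{\mathrm r},\mathfrak{a}_{i+1}^{\mathrm l}]}$ for every $i$.

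Second, I would glue the internal chordal domains one by one. For each $i$, the pair $(\mathfrak{a}_i^{\mathrm r},\mathfrak{a}_{i+1}^{\mathrm l})$ satisfies~\eqref{urlun}, so the standard candidate on $\Ch([\mathfrak{a}_i^{\mathrm r},\mathfrak{a}_{i+1}^{\mathrm l}],*)$ exists, and by the equality above its $\partial_{x_2}$-derivative along $[\mathfrak{A}_i^{\mathrm r},\mathfrak{A}_{i+1}^{\mathrm l}]$, computed from~\eqref{BsubX2Chords}, equals $\beta_2$ of the multitrolleybus. This is exactly the compatibility condition in Proposition~\ref{DomainOfLinearityWithTwoPointsOnTheLowerBoundaryMeetsChordalDomain}, yielding a $C^1$-smooth concatenation across each shared chord.

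Third, I would glue the two right tangent domains. At $\mathfrak{a}_1^{\mathrm l}$ the prescribed parameter is $\mrt(\mathfrak{a}_1^{\mathrm l})=f'(\mathfrak{a}_1^{\mathrm l})-\eps\av{f''}{[\mathfrak{a}_1^{\mathrm l},\mathfrak{a}_k^{\mathrm r}]}=f'(\mathfrak{a}_1^{\mathrm l})-2\eps\beta_2$, which is exactly relation~\eqref{AngleMeetRightTangentsFormula} that forces continuity (and hence, after Proposition~\ref{DomainOfLinearityWithTwoPointsOnTheLowerBoundaryMeetsTangentDomain}, $C^1$-smoothness) of the concatenation of the multitrolleybus with $\Rt(u_1,\mathfrak{a}_1^{\mathrm l})$. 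An identical check at $\mathfrak{a}_k^{\mathrm r}$, using~\eqref{mrtb0}, handles $\Rt(\mathfrak{a}_k^{\mathrm r},u_2)$. Since every piece is a Bellman candidate and every pairwise gluing is $C^1$-smooth across an interface between induced convex subdomains, Proposition~\ref{ConcatenationOfConcaveFunctions} gives local concavity, and hence the Bellman-candidate property, on the full union.

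The only real subtlety, and the step I would want to double-check carefully, is the third one: unlike the multicup case, the two border tangents here have the \emph{same} orientation, so one of them ($\Rt(u_1,\mathfrak{a}_1^{\mathrm l})$) is glued to the multitrolleybus at its ``far'' endpoint $\mathfrak{a}_1^{\mathrm l}$ rather than at $\mathfrak{a}_k^{\mathrm r}$. The fact that a single slope $\beta_2$ still governs both concatenation conditions relies crucially on the collinearity of all points $(a,f'(a))$, $a\in\bigcup_i\mathfrak{a}_i$, together with the identity $\tfrac12\av{f''}{[\mathfrak{a}_1^{\mathrm l},\mathfrak{a}_k^{\mathrm r}]}=\beta_2$ coming from Lemma~\ref{ThreePointsOnOneLine}; this is what makes the two formulas~\eqref{mrta0} and~\eqref{mrtb0} mutually consistent in the multitrolleybus setting.
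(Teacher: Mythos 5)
Your proof is correct and follows essentially the same route the paper would take: as with Proposition~\ref{MulticupCandidate}, the statement is a direct consequence of Propositions~\ref{DomainOfLinearityWithTwoPointsOnTheLowerBoundaryMeetsChordalDomain} and~\ref{DomainOfLinearityWithTwoPointsOnTheLowerBoundaryMeetsTangentDomain}, combined with the well-definedness of the standard candidate in the multifigure (which you justify via Lemmas~\ref{ThreePointsOnOneLine} and~\ref{ThreePointsAndTheArea} and the collinearity hypothesis).

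Two small remarks. First, in your opening paragraph you list $\Rt(u_1,\mathfrak{a}_1^{\mathrm r})$ among the induced convex pieces; this should read $\Rt(u_1,\mathfrak{a}_1^{\mathrm l})$, matching the statement and your own later usage. Second, the ``subtlety'' you flag in your final paragraph is not actually there: in the multicup case the tangent domain $\Lt(u_1,\mathfrak{a}_1^{\mathrm l})$ is already glued at $\mathfrak{a}_1^{\mathrm l}$ with parameter $\mlt(\mathfrak{a}_1^{\mathrm l})=f'(\mathfrak{a}_1^{\mathrm l})+\eps\av{f''}{[\mathfrak{a}_1^{\mathrm l},\mathfrak{a}_k^{\mathrm r}]}$, so the gluing condition there equally involves the whole interval $[\mathfrak{a}_1^{\mathrm l},\mathfrak{a}_k^{\mathrm r}]$. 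In both cases a single $\beta_2$ governs every interface, and this follows uniformly from the collinearity hypothesis; there is no asymmetry specific to the multitrolleybus. The rest of the argument is sound and complete.
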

The statement for the left multitrolleybus is symmetric.
\begin{St}\label{LeftMultitrolleybusCandidate}
Suppose~$\mathfrak{a}_i = [\mathfrak{a}_i^{\textup l}, \mathfrak{a}_i^{\textup r}]$\textup,~$i = 1,2,\ldots,k$\textup, to be disjoint intervals on~$\mathbb{R}$ \textup(these intervals can be single points\textup, and the first one can be a ray\textup) such that~$0 < \mathfrak{a}_{i+1}^{\textup l} - \mathfrak{a}_{i}^{\textup r} \leq 2\eps$. Assume that all the points~$\big(a,f'(a)\big)$\textup,~$a \in \cup_{i=1}^k\mathfrak{a}_i$\textup, lie on one line\textup, and any pair of points from~$\cup_{i=1}^k\mathfrak{a}_i$ satisfies~\eqref{urlun}. Let the function~$B$ coincide with the standard candidates on~$\MTTL(\{\mathfrak{a}_i\}_{i=1}^k)$\textup, on each~$\Ch([\mathfrak{a}_i^{\mathrm r}, \mathfrak{a}_{i+1}^{\mathrm l}],*)$\textup, $i=1,\dots, k-1$\textup, on~$\Lt(u_1,\mathfrak{a}_1^{\mathrm l})$ with the parameter~$\mlt(\mathfrak{a}_1^{\mathrm l})$ given by formula~\eqref{mlta0} \textup(with $a_0=\mathfrak{a}_1^{\mathrm l}$ and $b_0=\mathfrak{a}_k^{\mathrm r}$\textup)\textup, and on~$\Lt(\mathfrak{a}_k^{\mathrm r},u_2)$ with the parameter~$\mlt(\mathfrak{a}_k^{\mathrm r})$ given by formula~\eqref{mltb0} \textup(with $a_0=\mathfrak{a}_1^{\mathrm l}$ and $b_0=\mathfrak{a}_k^{\mathrm r}$\textup). 
Then the function~$B$ is a~$C^1$-smooth Bellman candidate on the domain
\begin{equation*}
\Lt(u_1, \mathfrak{a}_1^{\mathrm l}) \cup \MTTL(\{\mathfrak{a}_i\}_{i=1}^k) \cup \Big(\cup_{i=1}^{k-1} \Ch([\mathfrak{a}_i^{\mathrm r}, \mathfrak{a}_{i+1}^{\mathrm l}],*)\Big) \cup \Lt(\mathfrak{a}_k^{\mathrm r},u_2).
\end{equation*}
\end{St}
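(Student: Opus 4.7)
The plan is to follow exactly the pattern established for the right multitrolleybus in Proposition~\ref{MultitrolleybusCandidate}: the present statement is its mirror image under the reflection $x_1 \mapsto -x_1$, which swaps $\Lt$ with $\Rt$, swaps $\MTTL$ with $\MTTR$, and turns the parameter relations~\eqref{mrta0},~\eqref{mrtb0} into~\eqref{mlta0},~\eqref{mltb0} (consistently, since $\av{f''}{[\mathfrak{a}_1^{\mathrm l},\mathfrak{a}_k^{\mathrm r}]}$ is invariant while $f'$ picks up a sign, in accordance with Remark~\ref{remm2}). So the proof may be carried out by reprising the proof of Proposition~\ref{MultitrolleybusCandidate} verbatim, which reduces to two ingredients applied seam by seam.

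First I would observe that each piece $\Lt(u_1,\mathfrak{a}_1^{\mathrm l})$, $\MTTL(\{\mathfrak{a}_i\}_{i=1}^k)$, every inner chordal domain $\Ch([\mathfrak{a}_i^{\mathrm r},\mathfrak{a}_{i+1}^{\mathrm l}],*)$, and $\Lt(\mathfrak{a}_k^{\mathrm r},u_2)$ is an induced convex subset of $\Omega_{\eps}$, so that Proposition~\ref{ConcatenationOfConcaveFunctions} is applicable to concatenate the pairwise standard candidates across their common boundaries as soon as $C^1$-smoothness on each seam has been verified.

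Next I would check the $C^1$-smoothness seam by seam. For the two seams where $\MTTL(\{\mathfrak{a}_i\}_{i=1}^k)$ meets a bordering left tangent domain, Proposition~\ref{DomainOfLinearityWithTwoPointsOnTheLowerBoundaryMeetsTangentDomain} applies: the continuity hypothesis $\mlt(w) = f'(w) + 2\eps\beta_2$ (see the proof of Proposition~\ref{AngleMeetLeftTangents}), combined with $\beta_2 = \tfrac12\av{f''}{[\mathfrak{a}_1^{\mathrm l},\mathfrak{a}_k^{\mathrm r}]}$ from~\eqref{coef}, becomes precisely~\eqref{mlta0} at $w=\mathfrak{a}_1^{\mathrm l}$ and~\eqref{mltb0} at $w=\mathfrak{a}_k^{\mathrm r}$. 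For each inner seam between $\MTTL(\{\mathfrak{a}_i\}_{i=1}^k)$ and $\Ch([\mathfrak{a}_i^{\mathrm r},\mathfrak{a}_{i+1}^{\mathrm l}],*)$, Proposition~\ref{DomainOfLinearityWithTwoPointsOnTheLowerBoundaryMeetsChordalDomain} applies; here the hypothesis that all the points $(a,f'(a))$, $a \in \cup_i\mathfrak{a}_i$, lie on one line, combined with Lemma~\ref{ThreePointsOnOneLine}, guarantees that the multitrolleybus value $\beta_2 = \tfrac12\av{f''}{[\mathfrak{a}_1^{\mathrm l},\mathfrak{a}_k^{\mathrm r}]}$ agrees with the chordal-domain value $\tfrac12\av{f''}{[\mathfrak{a}_i^{\mathrm r},\mathfrak{a}_{i+1}^{\mathrm l}]}$ for every $i$, which is the continuity of $B_{x_2}$ across that seam.

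I do not expect any genuine obstacle: after the induced-convexity bookkeeping, the argument is just a $(k+1)$-fold iterated application of the two cited gluing propositions along the successive seams. The only mild subtlety is that when $\mathfrak{a}_1$ is an infinite ray the leftmost tangent domain $\Lt(u_1,\mathfrak{a}_1^{\mathrm l})$ is absent and the corresponding parameter condition is vacuous, so the argument then simply omits the leftmost seam; this is the direct analog of the convention already noted after Proposition~\ref{MulticupCandidate}.
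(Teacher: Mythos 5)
Your proposal is correct and follows essentially the same route the paper takes: the paper simply says the left multitrolleybus case "is symmetric," pointing implicitly to the proof of Proposition~\ref{MulticupCandidate}, which is exactly the seam-by-seam application of Propositions~\ref{DomainOfLinearityWithTwoPointsOnTheLowerBoundaryMeetsChordalDomain} and~\ref{DomainOfLinearityWithTwoPointsOnTheLowerBoundaryMeetsTangentDomain} together with Proposition~\ref{ConcatenationOfConcaveFunctions} that you spell out.
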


The case of a linearity domain that is separated from the free boundary is easier than the ones considered. Indeed, in such a case the boundary of the linearity domain consists of the arcs~$\mathfrak{A}_i$,~$i = 1,2,\ldots,k$, the chords~$\mathfrak{A}_{i}^{\textup r}\mathfrak{A}_{i+1}^{\textup l}$,~$i = 1,2,\ldots,k-1$, and the chord~$\mathfrak{A}_{1}^l\mathfrak{A}_{k}^{r}$. Surely, such a construction, called a \emph{closed multicup}\index{multicup! closed multicup}, requires the inequality~$\mathfrak{a}_{k}^r - \mathfrak{a}_1^l < 2\eps$. The following proposition is a consequence of Proposition~\ref{DomainOfLinearityWithTwoPointsOnTheLowerBoundaryMeetsChordalDomain},
and the reasoning that proved the~$C^1$-smoothness of the concatenation of a multifigure with 
the chordal domains lying below it (we claim that applies to the case where the chordal domain 
lies above the linearity domain without any modifications).

\begin{St}\label{ClosedMulticupCandidate}
Suppose~$\mathfrak{a}_i = [\mathfrak{a}_i^{\textup l}, \mathfrak{a}_i^{\textup r}]$\textup,~$i = 1,2,\ldots,k$\textup, to be disjoint intervals on~$\mathbb{R}$ \textup(these intervals can be single points) such that~$\mathfrak{a}_k^{\mathrm r} - \mathfrak{a}_{1}^{\mathrm l} < 2\eps$. Assume that all the points~$\big(a,f'(a)\big)$\textup,~$a \in \cup_{i=1}^k\mathfrak{a}_i$\textup, lie on one line\textup, and any pair of points from~$\cup_{i=1}^k\mathfrak{a}_i$ satisfies~\eqref{urlun}. Let the function~$B$ coincide with the standard candidates on~$\ClMTC(\{\mathfrak{a}_i\}_{i=1}^k)$\textup, on each~$\Ch([\mathfrak{a}_i^{\mathrm r}, \mathfrak{a}_{i+1}^{\mathrm l}],*)$\textup, $i=1,\dots, k-1$\textup, and on~$\Ch(*,[\mathfrak{a}_1^{\mathrm l},\mathfrak{a}_k^{\mathrm r}])$. Then the function~$B$ is a~$C^1$-smooth Bellman candidate on the domain
%
\begin{equation*}
\Ch(*,[\mathfrak{a}_1^{\mathrm l},\mathfrak{a}_k^{\mathrm r}]) \cup \ClMTC(\{\mathfrak{a}_i\}_{i=1}^k) \cup \Big(\cup_{i=1}^{k-1} \Ch([\mathfrak{a}_i^{\mathrm r}, \mathfrak{a}_{i+1}^{\mathrm l}],*)\Big).
\end{equation*}
\end{St}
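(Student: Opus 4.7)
The plan is to apply Proposition~\ref{ConcatenationOfConcaveFunctions} iteratively across every interface of the decomposition, reducing the claim to (a) induced convexity of each piece, and (b) $C^1$-smoothness of $B$ across every interface chord (which together with continuity of $B$ suffices, since no chord in the picture is parallel to the $x_2$-axis, so $C^1$-smoothness reduces to continuity of $B_{x_2}$). Each piece is already a Bellman candidate on its own domain: $\ClMTC$ by the multifigure construction of Subsection~\ref{s343}, and each chordal domain by Proposition~\ref{LightChordalDomainCandidate}. The domains $\ClMTC$, the ``bottom'' chordal domains $\Ch([\mathfrak{a}_i^{\mathrm r},\mathfrak{a}_{i+1}^{\mathrm l}],*)$, and the ``top'' chordal domain $\Ch(*,[\mathfrak{a}_1^{\mathrm l},\mathfrak{a}_k^{\mathrm r}])$ are all induced convex in $\Omega_\eps$, being intersections of $\Omega_\eps$ with convex polygons bounded by chords that belong to $\Omega_\eps$.

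For each of the $k-1$ bottom interfaces $[\mathfrak{A}_i^{\mathrm r},\mathfrak{A}_{i+1}^{\mathrm l}]$, $C^1$-smoothness is literally Proposition~\ref{DomainOfLinearityWithTwoPointsOnTheLowerBoundaryMeetsChordalDomain}: continuity of $B$ along the chord follows from the standard-candidate formula (both sides are linear along the chord, and they agree at the two endpoints via $B(a,a^2)=f(a)$), while continuity of $B_{x_2}$ is the matching $\beta_2=\frac{1}{2}\av{f''}{[\mathfrak{a}_i^{\mathrm r},\mathfrak{a}_{i+1}^{\mathrm l}]}$, valid because by Lemma~\ref{ThreePointsOnOneLine} the slope $\beta_2$ of the linear function on $\ClMTC$ computed from any pair in $\cup_j\mathfrak{a}_j$ is the same, and in particular coincides with the expression~\eqref{BsubX2Chords} on the adjacent chordal domain.

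The only interface not covered by a previously stated proposition is the top chord $[\mathfrak{A}_1^{\mathrm l},\mathfrak{A}_k^{\mathrm r}]$. Here, however, the derivation of $B_{x_2}$ from Proposition~\ref{LightChordalDomainCandidate} is insensitive to the side of the chord on which the foliation lies: formula~\eqref{BsubX2Chords} applied at this chord still yields
\begin{equation*}
\frac{\partial B}{\partial x_2}\Big|_{\Ch(*,[\mathfrak{a}_1^{\mathrm l},\mathfrak{a}_k^{\mathrm r}])} = \frac{f'(\mathfrak{a}_k^{\mathrm r})-f'(\mathfrak{a}_1^{\mathrm l})}{2(\mathfrak{a}_k^{\mathrm r}-\mathfrak{a}_1^{\mathrm l})},
\end{equation*}
which by~\eqref{coef1} is exactly $\beta_2$ on the $\ClMTC$-side. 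Continuity of $B$ along this chord follows from the cup equation~\eqref{urlun} for the pair $(\mathfrak{a}_1^{\mathrm l},\mathfrak{a}_k^{\mathrm r})$, which is hypothesized: this is the compatibility condition~\eqref{betaequations2} that turned into~\eqref{urlun} and which makes the two linear traces on the top chord agree. Applying Proposition~\ref{ConcatenationOfConcaveFunctions} now gives local concavity on the union, completing the proof.

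I do not anticipate a genuine obstacle: the entire content is that the same computation used for a chord separating a linearity domain from a chordal domain \emph{below} it applies verbatim to a chord with the chordal domain \emph{above} it, because the chord equation $x_2=(a+b)x_1-ab$ and the cup equation are symmetric in this respect. The only bookkeeping issue is verifying that the iterative application of Proposition~\ref{ConcatenationOfConcaveFunctions} is legitimate, i.e.\ that the intermediate unions remain induced convex; this is immediate from Definition~\ref{InducedConvexSet} since all the pieces are cut from $\Omega_\eps$ by chords.
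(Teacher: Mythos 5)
Your proof is correct and follows essentially the same route as the paper: the paper's own justification is precisely a one-sentence reference to Proposition~\ref{DomainOfLinearityWithTwoPointsOnTheLowerBoundaryMeetsChordalDomain} together with the observation that the $C^1$-gluing computation via formula~\eqref{BsubX2Chords} and formula~\eqref{coef1} works unchanged when the chordal domain lies \emph{above} the linearity domain, and you have written out exactly that verification. The only small extra thing you flag (induced convexity of the intermediate unions when applying Proposition~\ref{ConcatenationOfConcaveFunctions} iteratively) is handled at the same informal level as the paper itself, and in any case the direct argument in the proof of that proposition applies to any finite union of induced convex pieces once the global $C^1$-smoothness is established, so no genuine gap is introduced.
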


A closed multicup is represented graphically in the following way. It has one incoming edge representing~$\Ch(*,[\mathfrak{a}_1^{\mathrm l},\mathfrak{a}_k^{\mathrm r}])$ and several outcoming edges corresponding to the chordal domains~$\Ch([\mathfrak{a}_i^{\mathrm r},\mathfrak{a}_{i+1}^{\mathrm l}],*)$,~$i=1,2,\ldots,k-1$. For example, it may look like the one on Figure~\ref{fig:ClMcupGr}.
\begin{figure}[h]
\vskip-32pt
\begin{center}
\includegraphics[scale=1.1]{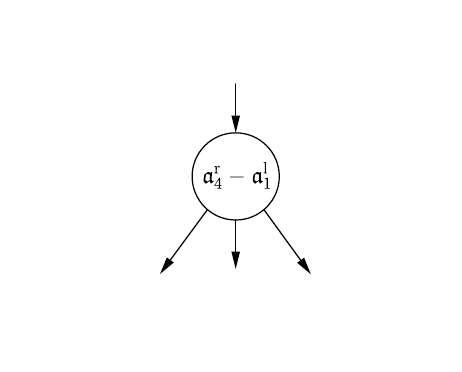}
\vskip-30pt
\caption{
An example of the graph for a closed multicup with the adjacent chordal domains.}
\label{fig:ClMcupGr}
\end{center}
\end{figure}

\section{Combinatorial properties of foliations}\label{s35}
\subsection{Gluing composite figures}\label{s344}
We begin with a detailed example. Consider a long chord~$[A_0,B_0]$ and an angle~$\Ang(a_0)$. Both these figures have their own standard candidates. In our foliations, an angle is usually surrounded by two tangent domains. Though its neighbor on the right is a long chord~$[A_0,B_0]$, it is natural to consider a degenerate tangent domain~$\Lt(a_0,a_0)$ as such a neighbor. Consider a function~$B$ that is continuous and whose restriction to each of the domains~$[A_0,B_0]$,~$\Lt(a_0,a_0)$, and~$\Ang(a_0)$, is a standard candidate there. By Proposition~\ref{DomainOfLinearityWithTwoPointsOnTheLowerBoundaryMeetsTangentDomain}, the function~$m$ for~$\Lt(a_0,a_0)$ is given by the formula
\begin{equation*}
\mlt(a_0) = f'(a_0) + \eps\av{f''}{[a_0,b_0]}.
\end{equation*}
By Proposition~\ref{AngleMeetLeftTangents}, we see that the standard candidate in~$\Ang(a_0)$ is chosen by the formula~$2\beta_2 = \av{f''}{[a_0,b_0]}$. So,~$B$ is~$C^1$-smooth and coincides with the standard candidate in~$\RTroll(a_0,b_0)$. All this leads to the following formula visualized on Figure~\ref{CupPlusAngleRGraph}:
\begin{equation}\label{CupPlusAngleR}
\Ang(a_0) \biguplus \Lt(a_0,a_0) \biguplus [A_0,B_0] = 
\RTroll(a_0,b_0),\quad b_0-a_0 = 2\eps.
\end{equation}
\begin{figure}[h]
\vskip-50pt
\hskip10pt
\includegraphics[width = 0.5\linewidth]{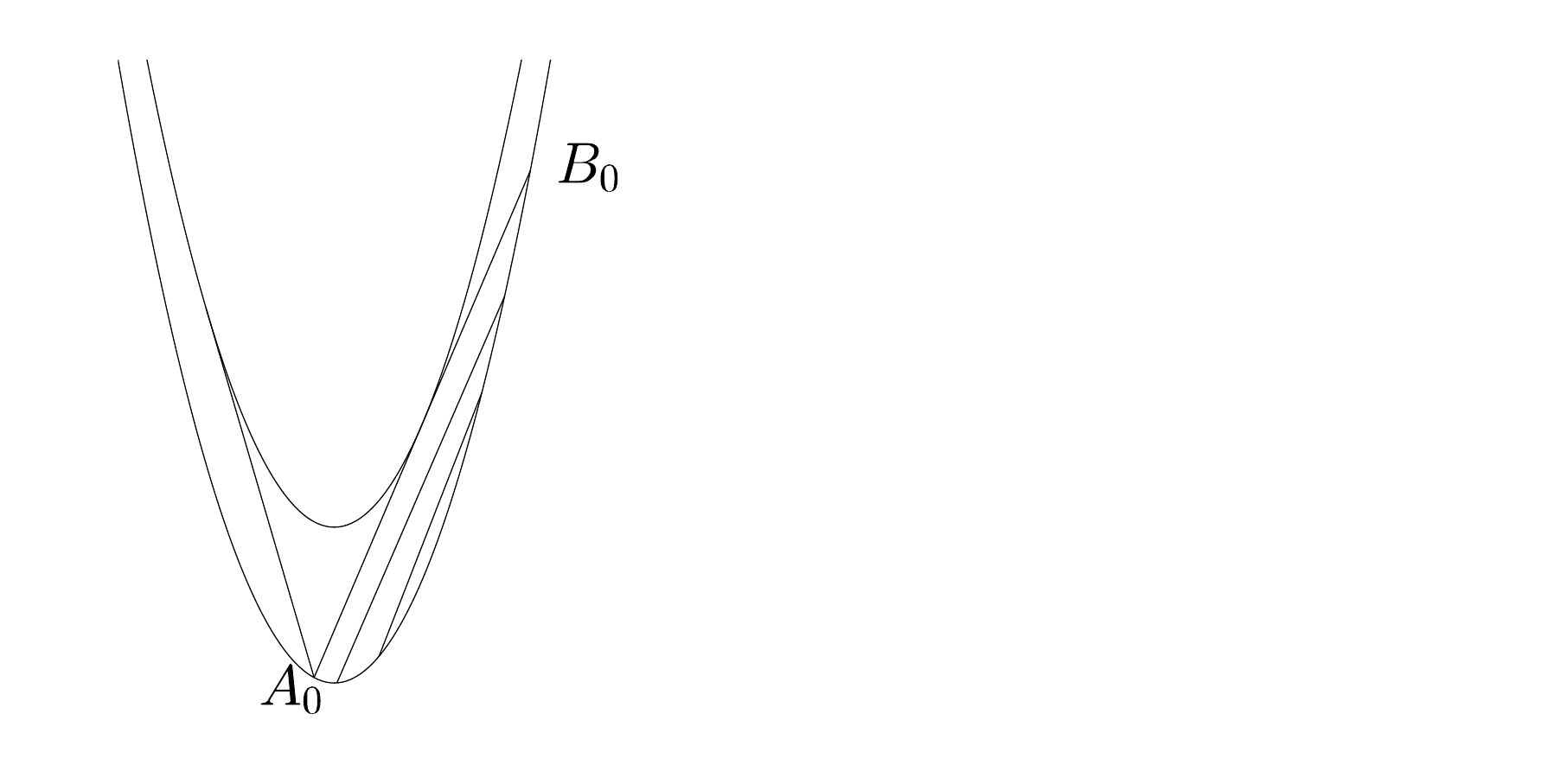}
\hskip-180pt
\raisebox{-35pt}{\includegraphics[width = 1.0\linewidth]{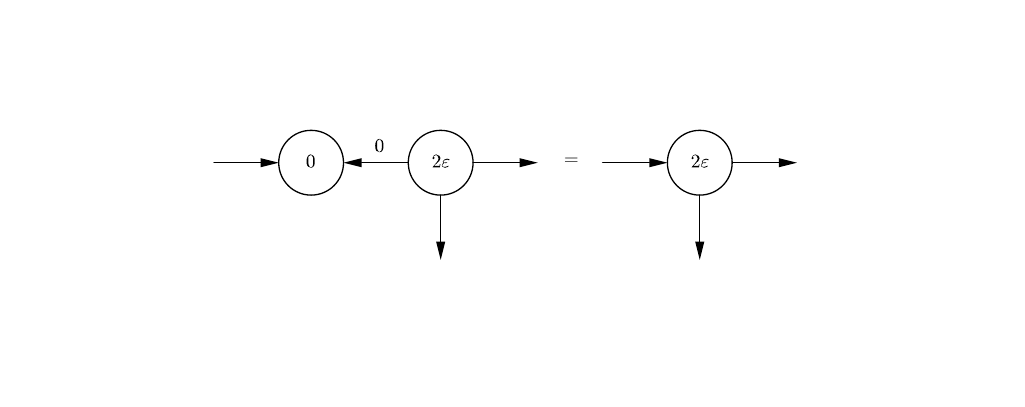}}
\vskip-50pt
\caption{A graphical representation of formula~\eqref{CupPlusAngleR}.}
\label{CupPlusAngleRGraph}
\end{figure}


Similarly,
\begin{equation}\label{CupPlusAngleL}
\Ang(b_0) \biguplus \Rt(b_0,b_0) \biguplus [A_0,B_0] = \LTroll(a_0,b_0), \quad b_0-a_0 = 2\eps.
\end{equation}
Both these formulas can be informally named as~``{\bf angle + long chord = trolleybus}''.

We have already considered an example of a more complicated formula~\eqref{FirstFormula}:
\begin{equation}\label{RTrolleybusPlusAngle}
\RTroll(a_0,b_0)  \biguplus \Rt(b_0,b_0) \biguplus \Ang(b_0) = \Bird(a_0,b_0);
\end{equation}
\begin{equation}\label{LTrolleybusPlusAngle}
\Ang(a_0) \biguplus \Lt(a_0,a_0)  \biguplus \LTroll(a_0,b_0)= \Bird(a_0,b_0),
\end{equation}
which can be informally named as ``{\bf birdie = angle + trolleybus}''.



We provide the same-fashioned formulas for other domains. We leave their verification to the reader.

\paragraph{Angle + multicup = multitrolleybus}
\begin{equation}\label{AnglePlusMulticupLeft}
\Ang(\mathfrak{a}_1^{\mathrm l}) \biguplus \Lt(\mathfrak{a}_1^{\mathrm l},\mathfrak{a}_1^{\mathrm l}) \biguplus \MTC(\{\mathfrak{a}_i\}_{i=1}^k) = \MTTR(\{\mathfrak{a}_i\}_{i=1}^k);
\end{equation}

\begin{figure}[h]
\vskip-42pt
\hskip-70pt
\includegraphics[width=0.8\linewidth]{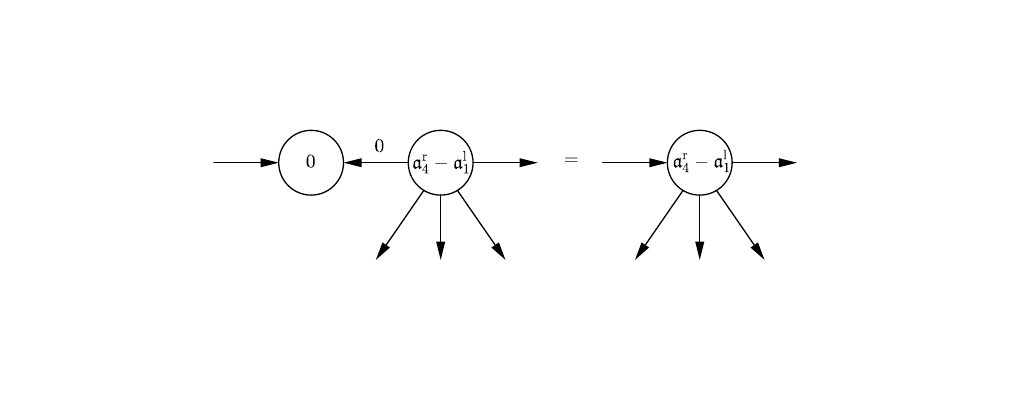}
\hskip-130pt
\includegraphics[width=0.8\linewidth]{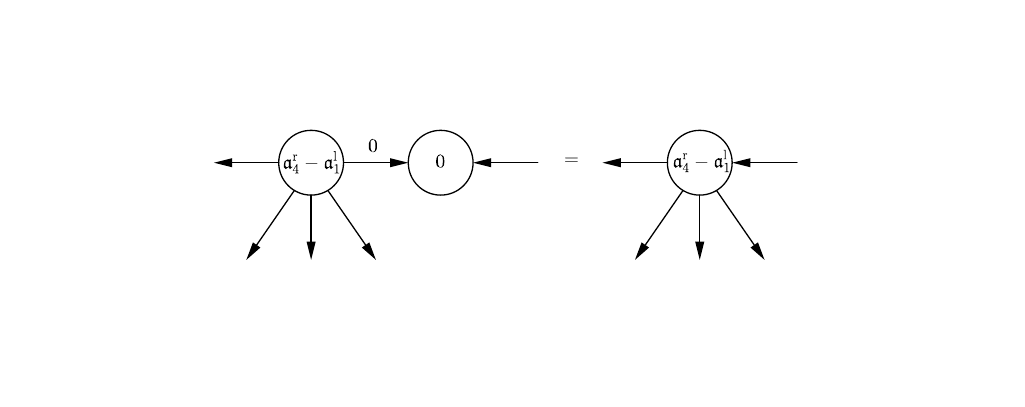}
\hskip-60pt
\vskip-60pt
\caption{The graphs for formulas~\eqref{AnglePlusMulticupLeft} and~\eqref{AnglePlusMulticupRight}.}
\label{fig_AnglePlusMcup}
\end{figure}

\begin{equation}\label{AnglePlusMulticupRight}
\MTC(\{\mathfrak{a}_i\}_{i=1}^k) \biguplus  \Rt(\mathfrak{a}_k^{\mathrm r},\mathfrak{a}_k^{\mathrm r}) \biguplus \Ang(\mathfrak{a}_k^{\mathrm r})  = \MTTL(\{\mathfrak{a}_i\}_{i=1}^k).
\end{equation}


\paragraph{Long chord + multibirdie = multitrolleybus}
\begin{equation}\label{ChordalDomainPlusMultibirdieL}
[\mathfrak{A}_0, \mathfrak{A}_1^{\mathrm l}] \biguplus \Rt(\mathfrak{a}_1^{\mathrm l},\mathfrak{a}_1^{\mathrm l})\biguplus \MTB(\{\mathfrak{a}_i\}_{i=1}^k) = \MTTL(\{\mathfrak{a}_0\} \cup\{\mathfrak{a}_i\}_{i=1}^k), \quad \mathfrak{a}_1^{\mathrm{l}} - \mathfrak{a}_0 = 2\eps;
\end{equation}
\begin{equation}\label{ChordalDomainPlusMultibirdieR}
\MTB(\{\mathfrak{a}_i\}_{i=1}^k) \biguplus \Lt(\mathfrak{a}_k^{\mathrm r},\mathfrak{a}_k^{\mathrm r}) \biguplus [\mathfrak{A}_k^{\mathrm r}, \mathfrak{A}_{k+1}] =\MTTR(\{\mathfrak{a}_i\}_{i=1}^k \cup \{\mathfrak{a}_{k+1}\}),\quad \mathfrak{a}_{k+1} - \mathfrak{a}_{k}^{\mathrm r} = 2\eps.
\end{equation}

\begin{figure}[h]
\vskip-40pt
\hskip10pt
\includegraphics[width = 0.6\linewidth]{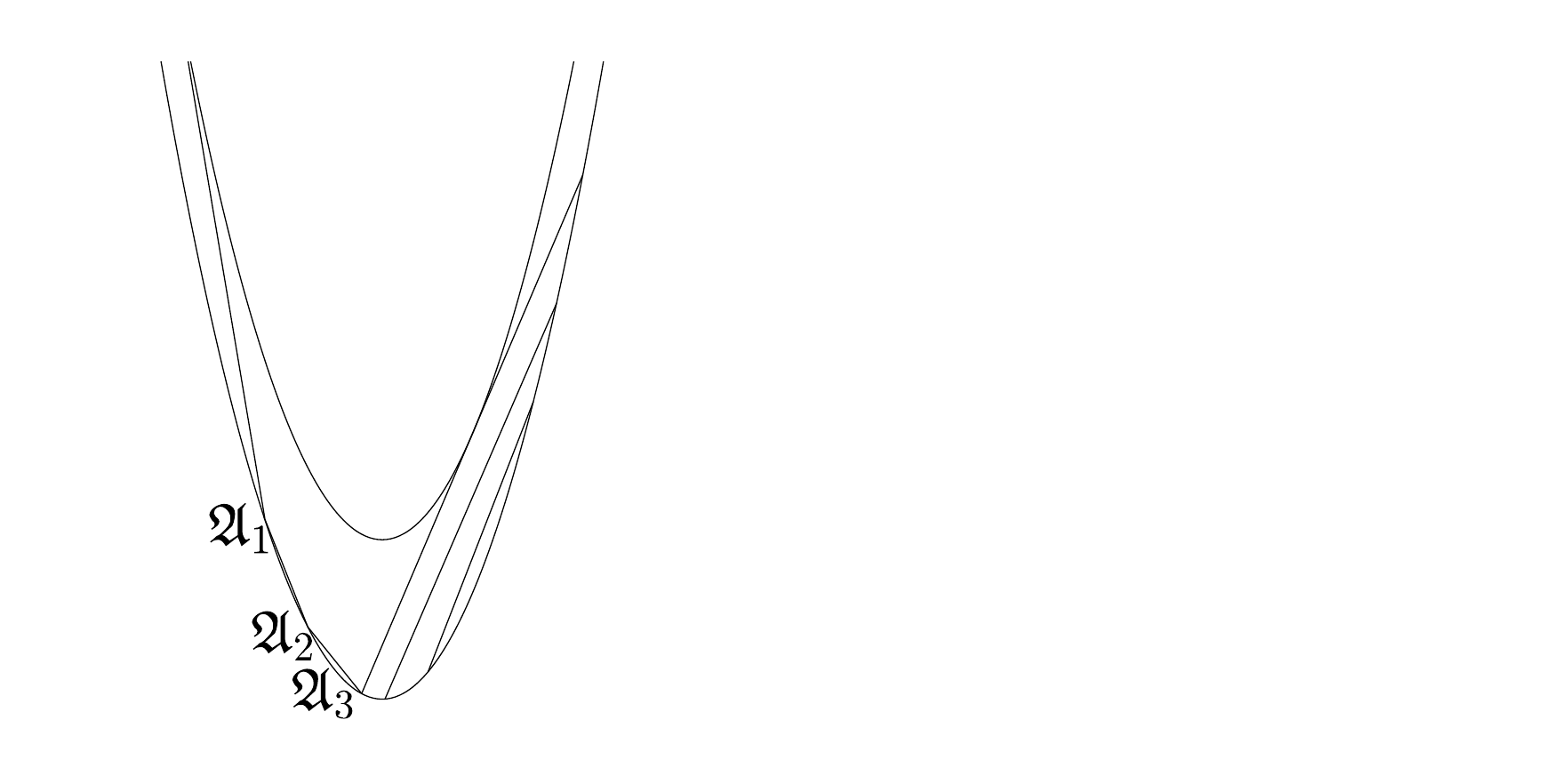}
\hskip-250pt
\raisebox{-35pt}{\includegraphics[width = 1.1\linewidth]{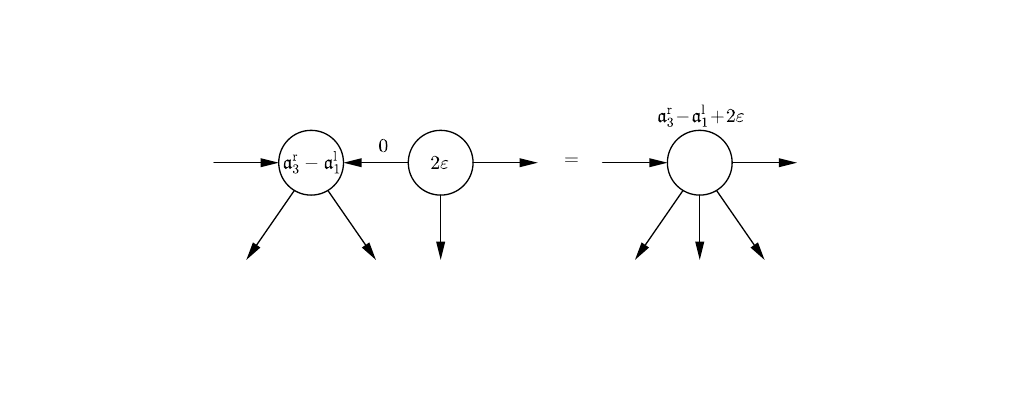}}
\vskip-40pt
\caption{A graphical representation of formula~\eqref{ChordalDomainPlusMultibirdieR}.}
\label{ChordalDomainPlusMultibirdieRGraph}
\end{figure}

\paragraph{Angle + multitrolleybus = multibirdie}
\begin{equation}\label{AnglePlusMultiTrollebusR}
\MTTR(\{\mathfrak{a}_i\}_{i=1}^k) \biguplus \Rt(\mathfrak{a}_k^{\mathrm r},\mathfrak{a}_k^{\mathrm r}) \biguplus \Ang(\mathfrak{a}_k^{\mathrm r}) = \MTB(\{\mathfrak{a}_i\}_{i=1}^k);
\end{equation}
\begin{figure}[h]
\vskip-20pt
\hskip-70pt
\includegraphics[width=0.8\linewidth]{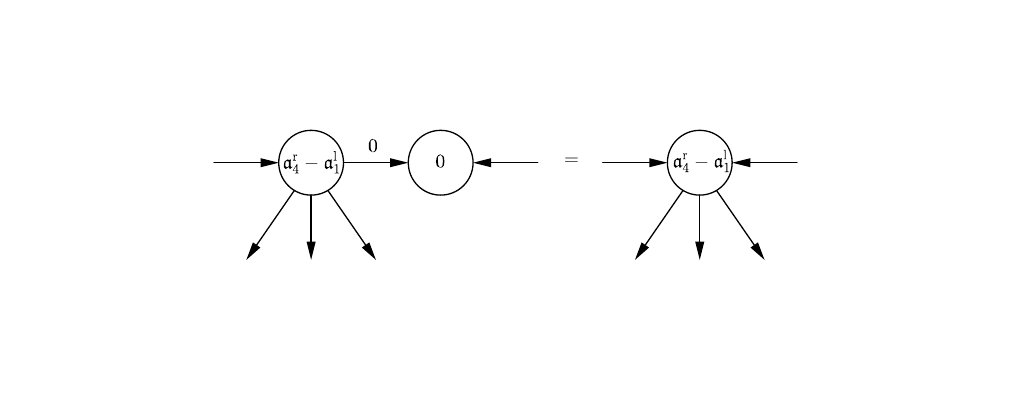}
\hskip-130pt
\includegraphics[width=0.8\linewidth]{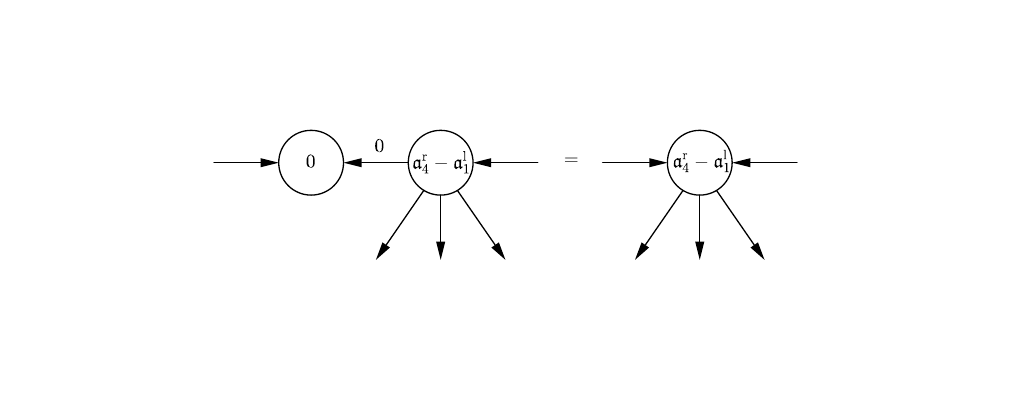}
\hskip-60pt
\vskip-60pt
\caption{The graphs for formulas~\eqref{AnglePlusMultiTrollebusR} and~\eqref{AnglePlusMultiTrollebusL}.}
\end{figure}
\begin{equation}\label{AnglePlusMultiTrollebusL}
\Ang(\mathfrak{a}_1^{\mathrm l}) \biguplus \Lt(\mathfrak{a}_1^{\mathrm l},\mathfrak{a}_1^{\mathrm l}) \biguplus \MTTL(\{\mathfrak{a}_i\}_{i=1}^k)= \MTB(\{\mathfrak{a}_i\}_{i=1}^k).
\end{equation}

\paragraph{Long chord + multitrolleybus = multicup}
\begin{equation}\label{ChordalDomainPlusMultitrolleybusR}
[\mathfrak{A}_0, \mathfrak{A}_1^{\mathrm l}] \biguplus \Rt(\mathfrak{a}_1^{\mathrm l},\mathfrak{a}_1^{\mathrm l})\biguplus \MTTR(\{\mathfrak{a}_i\}_{i=1}^k) = \MTC(\{\mathfrak{a}_0\} \cup\{\mathfrak{a}_i\}_{i=1}^k),\quad \mathfrak{a}_1^{\mathrm l} - \mathfrak{a}_0 = 2\eps;
\end{equation}
\begin{figure}[h]
\vskip-20pt
\hskip-70pt
\includegraphics[width=0.8\linewidth]{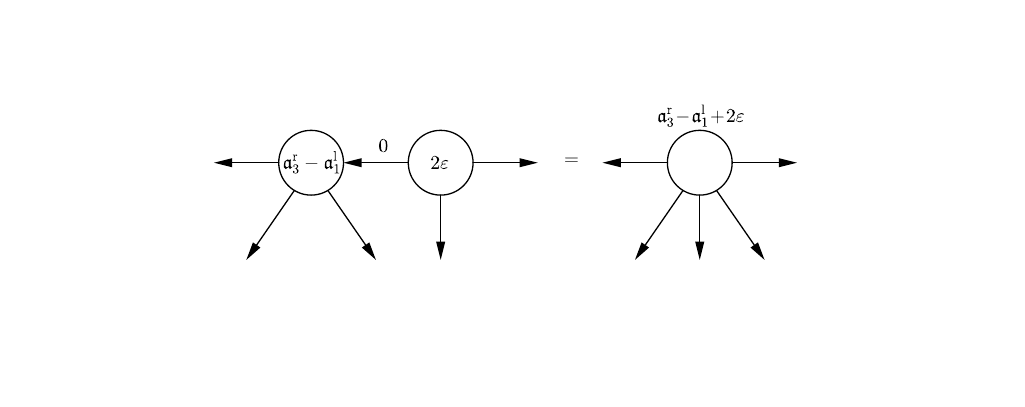}
\hskip-130pt
\includegraphics[width=0.8\linewidth]{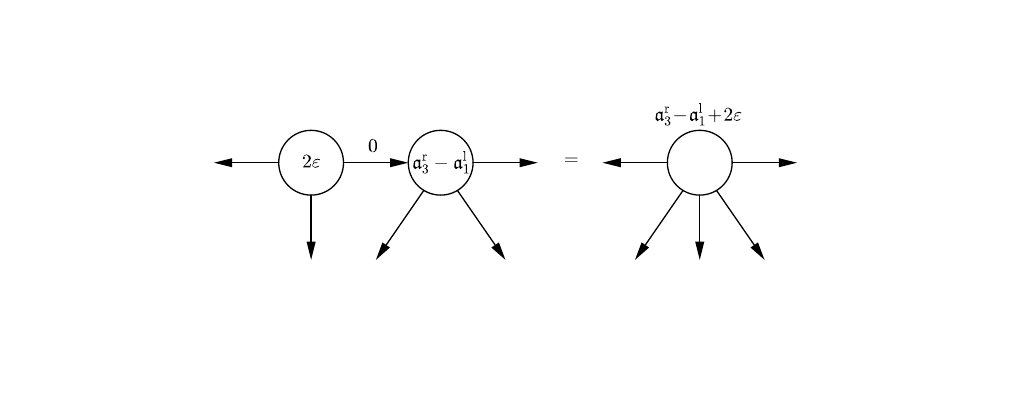}
\hskip-60pt
\vskip-60pt
\caption{The graphs for formulas~\eqref{ChordalDomainPlusMultitrolleybusR} and~\eqref{ChordalDomainPlusMultitrolleybusL}.}
\end{figure}
\begin{equation}\label{ChordalDomainPlusMultitrolleybusL}
\MTTL(\{\mathfrak{a}_i\}_{i=1}^k) \biguplus \Lt(\mathfrak{a}_k^{\mathrm r},\mathfrak{a}_k^{\mathrm r}) \biguplus [\mathfrak{A}_k^{\mathrm r}, \mathfrak{A}_{k+1}] = \MTC(\{\mathfrak{a}_i\}_{i=1}^k \cup \{\mathfrak{a}_{k+1}\})\quad \mathfrak{a}_{k+1} - \mathfrak{a}_k^{\mathrm r} = 2\eps.
\end{equation}

\paragraph{Multicup + multitrolleybus = multicup}
\begin{equation}\label{MTCplusMTTR}
\MTC(\{\mathfrak{a}_i\}_{i=1}^k) \biguplus \Rt(\mathfrak{a}_k^{\mathrm r},\mathfrak{a}_k^{\mathrm r}) \biguplus \MTTR(\{\mathfrak{a}_i\}_{i=k+1}^{m}) = \MTC(\{\mathfrak{a}_i\}_{i=1}^m),\quad \mathfrak{a}_k^{\mathrm r} = \mathfrak{a}_{k+1}^{\mathrm l}, m \geq k+1;
\end{equation}
\begin{equation}\label{MTCplusMTTL}
\MTTL(\{\mathfrak{a}_i\}_{i=1}^k) \biguplus \Lt(\mathfrak{a}_k^{\mathrm r},\mathfrak{a}_k^{\mathrm r}) \biguplus \MTC(\{\mathfrak{a}_i\}_{i=k+1}^{m}) = \MTC(\{\mathfrak{a}_i\}_{i=1}^m),\quad \mathfrak{a}_k^{\mathrm r} = \mathfrak{a}_{k+1}^{\mathrm l}, m \geq k+1;
\end{equation}

\paragraph{Multicup + birdie = multitrolleybus}
\begin{equation}\label{MTCplusBirdieR}
\MTC(\{\mathfrak{a}_i\}_{i=1}^k) \biguplus \Rt(\mathfrak{a}_k^{\mathrm r},\mathfrak{a}_k^{\mathrm r}) \biguplus \MTB(\{\mathfrak{a}_i\}_{i=k+1}^{m}) = \MTTL(\{\mathfrak{a}_i\}_{i=1}^m),\quad \mathfrak{a}_k^{\mathrm r} = \mathfrak{a}_{k+1}^{\mathrm l}, m \geq k+1;
\end{equation}
\begin{equation}\label{MTCplusBirdieL}
\MTB(\{\mathfrak{a}_i\}_{i=1}^k) \biguplus \Lt(\mathfrak{a}_k^{\mathrm r},\mathfrak{a}_k^{\mathrm r}) \biguplus \MTC(\{\mathfrak{a}_i\}_{i=k+1}^{m}) = \MTTR(\{\mathfrak{a}_i\}_{i=1}^m),\quad \mathfrak{a}_k^{\mathrm r} = \mathfrak{a}_{k+1}^{\mathrm l}, m \geq k+1;
\end{equation}

\paragraph{Multitrolleybus = trolleybus parade}
\begin{equation}\label{RMultitrolleybusDesintegration}
\MTTR(\{\mathfrak{a}_i\}_{i=1}^k) = \Big(\biguplus_{i=1}^{k}\MTTR(\{\mathfrak{a}_i\})\Big) \biguplus \Big(\biguplus_{i=1}^{k-1}\RTroll(\mathfrak{a}_i^{\mathrm r},\mathfrak{a}_{i+1}^{\mathrm l})\Big);
\end{equation}
\begin{equation}\label{LMultitrolleybusDesintegration}
\MTTL(\{\mathfrak{a}_i\}_{i=1}^k) = \Big(\biguplus_{i=1}^{k}\MTTL(\{\mathfrak{a}_i\})\Big) \biguplus \Big(\biguplus_{i=1}^{k-1}\LTroll(\mathfrak{a}_i^{\mathrm r},\mathfrak{a}_{i+1}^{\mathrm l})\Big).
\end{equation}
In these formulas, the multitrolleybuses on the right should be changed for the degenerate tangent domains~$\Rt(\mathfrak{a}_i,\mathfrak{a}_i)$ or~$\Lt(\mathfrak{a}_i,\mathfrak{a}_i)$, provided the corresponding~$\mathfrak{a}_i$ is a single point. 

\paragraph{Multibirdie = trolleybus parade + angle + trolleybus parade}
\begin{multline}\label{MultibirdieDesintegration}
\MTB(\{\mathfrak{a}_i\}_{i=1}^k) = \Bigg(\Big(\biguplus_{i=1}^{j-1}\RTroll(\mathfrak{a}_i^{\mathrm r},\mathfrak{a}_{i+1}^{\mathrm l})\Big) \biguplus \Big(\biguplus_{i=1}^{j-1}\MTTR(\{\mathfrak{a}_i\})\Big)\Bigg) \biguplus \\
\MTB(\{\mathfrak{a}_j\}) \biguplus \Bigg(\Big(\biguplus_{i=j+1}^{k}\MTTL(\{\mathfrak{a}_i\})\Big) \biguplus \Big(\biguplus_{i=j}^{k-1}\LTroll(\mathfrak{a}_i^{\mathrm r},\mathfrak{a}_{i+1}^{\mathrm l})\Big)\Bigg)
\end{multline}
Here~$j$ is an arbitrary number,~$j = 1,2, \ldots,k$. Similarly to the previous case, the multitrolleybuses on the right-hand side of the formula should be changed for the degenerate tangent domains, provided the arcs they are sitting on are single points. If~$\mathfrak{a}_j$ is a single point, then one should change~$\MTB(\{\mathfrak{a}_j\})$ for~$\Ang(\mathfrak{a}_j)$. 
\begin{figure}[h]
\vskip-37pt
\hskip-5pt
\includegraphics[width = 0.4\linewidth]{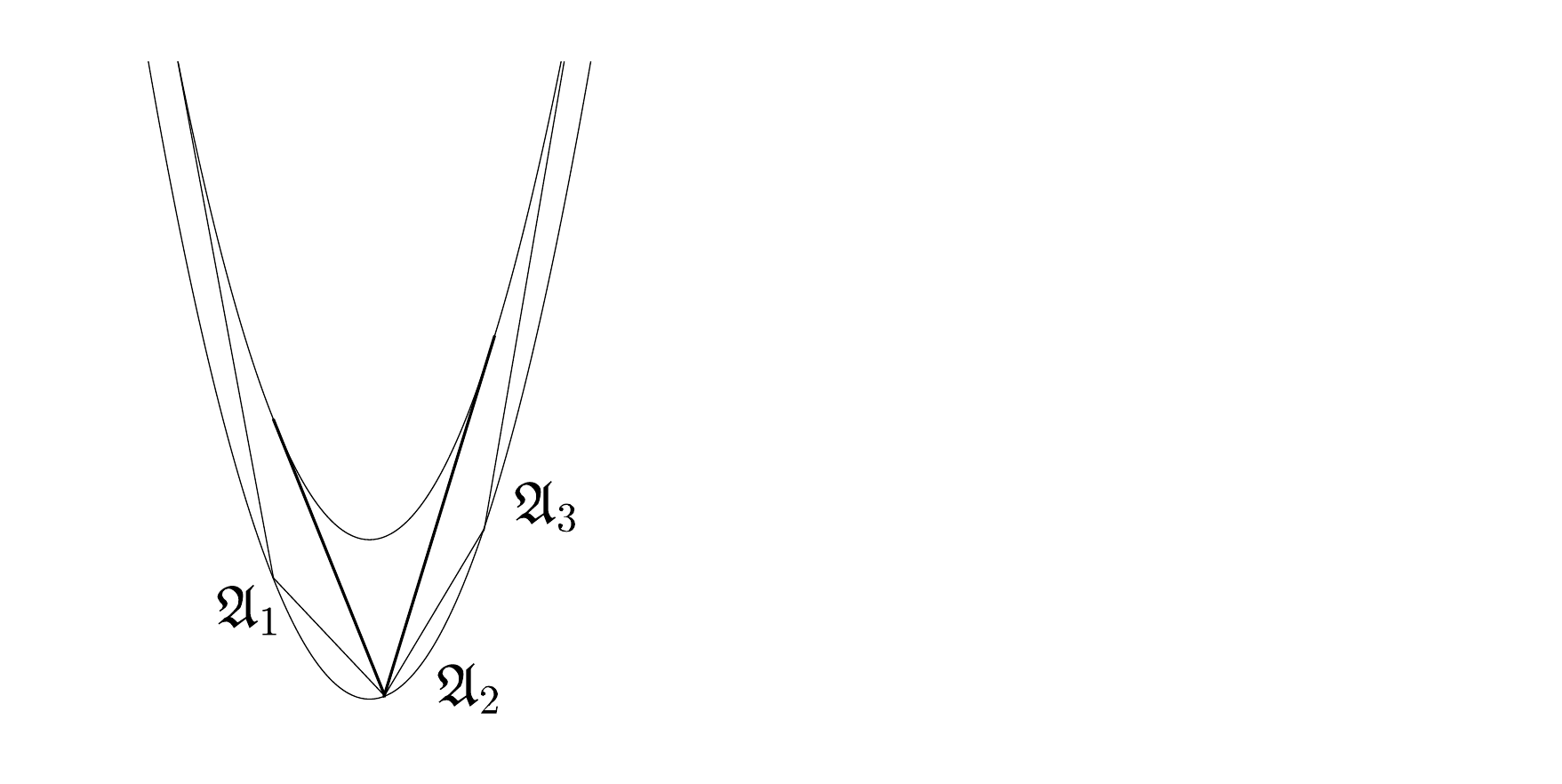}
\hskip-205pt
\raisebox{-25pt}{\includegraphics[width = 1.3\linewidth]{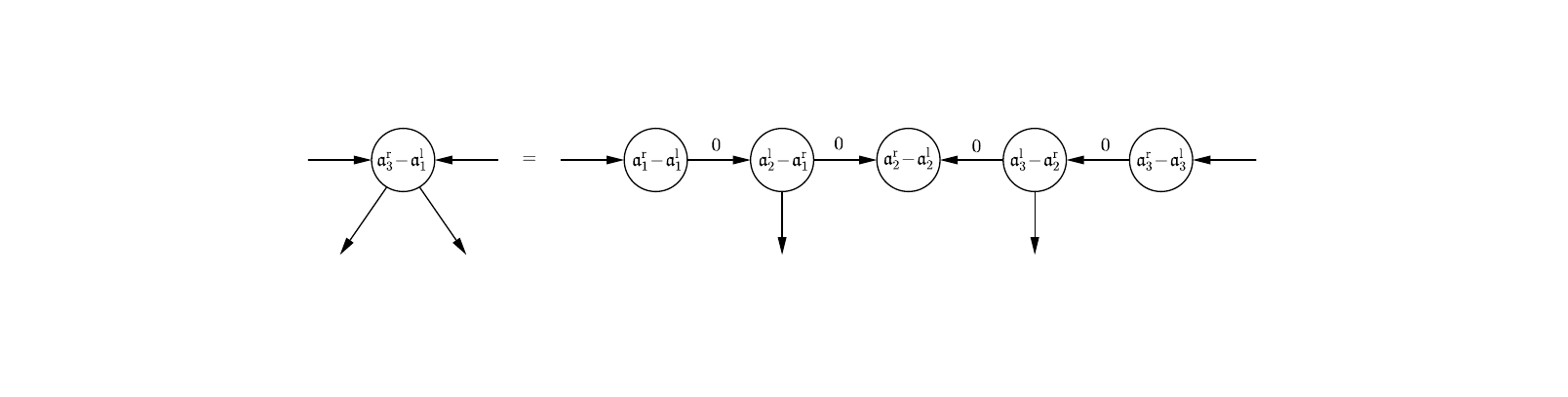}}
\vskip-40pt
\caption{A graphical representation of formula~\eqref{MultibirdieDesintegration}.}
\label{MultibirdieDesintegrationGraph}
\end{figure}

\paragraph{Closed multicup + trolleybus = multitrolleybus}
\begin{equation}\label{ClosedMulticupTrolleybusR}
\ClMTC(\{\mathfrak{a_i}\}_{i=1}^k)\biguplus\RTroll(\mathfrak{a}_1^{\mathrm l},\mathfrak{a}_k^{\mathrm r}) = \MTTR(\{\mathfrak{a_i}\}_{i=1}^k).
\end{equation}
\begin{equation}\label{ClosedMulticupTrolleybusL}
\ClMTC(\{\mathfrak{a_i}\}_{i=1}^k)\biguplus\LTroll(\mathfrak{a}_1^{\mathrm l},\mathfrak{a}_k^{\mathrm r}) = \MTTL(\{\mathfrak{a_i}\}_{i=1}^k).
\end{equation}
\paragraph{Closed multicup + birdie = multibirdie}
\begin{equation}\label{ClosedMulticupBirdie}
\ClMTC(\{\mathfrak{a_i}\}_{i=1}^k)\biguplus\Bird(\mathfrak{a}_1^{\mathrm l},\mathfrak{a}_k^{\mathrm r}) = \MTB(\{\mathfrak{a_i}\}_{i=1}^k).
\end{equation}

\subsection{General foliations}\label{s345}
It is natural to draw a special graph~$\Gamma$\index{graph!} corresponding to a foliation to describe its combinatorial properties. The vertices correspond to the linearity domains. Two vertices are joined with an edge if there is a domain of extremals that is their common neighbor. Such a graph is drawn in the plane by the mapping~$\nabla B\colon \Omega_{\eps} \to \mathbb{R}^2$. However, we need to clarify some details.

We will use a small amount of graph terminology. Since we study very special graphs, the use of the terminology will also be special. Our graphs are oriented trees (i.e.  trees whose edges possess orientation). We call a vertex that does not have incoming edges a root, a vertex that does not have outcoming edges a leaf (a leaf may have several incoming edges). By a path we call an oriented path, i.e. we move from the beginning of the edge to its end while exploring the path. Other terminology is clear.

The vertices of the graph will be denoted by~$\{\mathfrak{L}_i\}_i$, the edges will be denoted by~$\{\mathfrak{E}_i\}_i$. Edges and vertices are of different types, moreover, they are also equipped with numerical parameters to be specified later. We begin the description with the edges.

Each edge~$\mathfrak{E}$ represents either a chordal domain~$\Ch([\ato,\bto],[\abot,\bbot])$ or a tangent domain~$\Rt(\uul,\uur)$ or~$\Lt(\uul,\uur)$. The edge of~$\Rt(\uul,\uur)$ is oriented from the vertex of its left neighbor to the vertex of its right neighbor. The edge representing~$\Lt(\uul,\uur)$ is oriented symmetrically. 
The edge representing a chordal domain~$\Ch([\ato,\bto],[\abot,\bbot])$ is oriented from its upper neighbor to its lower neighbor. We consider the functions~$a$ and~$b$ associated to a chordal domain as its numeric parameters.

The vertices correspond to the linearity domains. For angles, trolleybuses, birdies, and multifigures the graphical representation was given in the subsections where they were introduced. These vertices are of their individual types (i.e. there are several vertices of the type ``angle'' in the graph, several vertices of the type ``birdie'', etc.). Each such vertex is equipped with its points on the lower parabola as its numerical characteristics. For example, a vertex of the type ``angle'' has one numerical parameter~---  the first coordinate~$w$ of the point~$W$ the angle is sitting on, whereas the collection of the intervals~$\{\mathfrak{a}_i\}_{i=1}^k$ plays the role of the numerical parameter for a vertex that has the type ``multicup'', or ``right multitrolleybus'', or ``left multitrolleybus'', or ``multibiridie'', or ``closed multicup''.

However, we also need some fictious vertices, which do not correspond to any linearity domain of non-zero area. For example, on Figure~\ref{fig:abtcd} two vertices representing long chords are fictious. There will be five types of such vertices.

First, there will be some~$\mathfrak{L}_i$ that correspond to the long chords (the chords that touch the upper parabola). Namely, suppose that we have a full chordal domain~$\Ch([\ato,\bto],*)$ such that~$\Srt(\ato,\bto) \ne 0$ and~$\Slt(\ato,\bto) \ne 0$, and two tangent domains,~$\Rt(\bto,u_2)$ and~$\Lt(u_1,\ato)$. In other words, this part of the foliation falls under the scope of Propostions~\ref{CupMeetsRightTangents} and~\ref{CupMeetsLeftTangents}. Then, the vertex~$\mathfrak{L}$ corresponding to the chord~$[\Ato,\Bto]$ has three outcoming edges representing~$\Ch([\ato,\bto],*)$, $\Rt(\bto,u_2)$, and~$\Lt(u_1,\ato)$. The pair~$(\ato,\bto)$ is the numerical parameter for~$\mathfrak{L}$. The example is given on Figure~\ref{fig:cup_graph}. 

Second, there will be some vertices~$\mathfrak{L}_i$ that correspond to points of the fixed boundary. Suppose we have a chordal domain~$\Ch(*,[\abot,\bbot])$ with~$\abot = \bbot$ (we recall that such chordal domains are called cups). In our foliations, all such points will coincide with some~$c_j$ from Definition~\ref{roots}. Then, the vertex~$\mathfrak{L}$ corresponding to~$\abot = \bbot$ has one incoming edge matching~$\Ch(*,[\abot,\bbot])$ and one numerical parameter that equals~$\abot$.

Third, sometimes we will need to paste a chord between two chordal domains (this will be done when one of the differentials, see Definition~\ref{differentials}, vanishes). Suppose we have two chordal domains,~$\Ch([a_1,b_1],[a_2,b_2])$ and~$\Ch([a_2,b_2],[a_3,b_3])$. In such a case, there is some vertex~$\mathfrak{L}_i$ that corresponds to the chord~$[A_2,B_2]$. It has one incoming edge and one outcoming edge and two numerical parameters~$a_2$ and~$b_2$. Long chords, one or both differentials of which vanish, are also considered as fictious vertices of the third type.  

Fourth, there might be one or two vertices at infinity. If we have the domain~$\Rt(-\infty,\uur)$, then there is a vertex~$\mathfrak{L}$ that corresponds to~$-\infty$. It has the numerical parameter~$-\infty$ and one outcoming edge represnting~$\Rt(-\infty,\uur)$. Similarly, if~$\uur = \infty$, then there is a vertex~$\mathfrak{L}$ that corresponds to~$+\infty$ and has one incoming edge and~$+\infty$ as a numerical parameter. We also draw vertices at infinity by the symmetric rule in the case of left tangents running to infinity.

Fifth, there might be vertices corresponding to single tangents. Suppose we have a tangent domain~$\Rt(u_1,u_2)$ such that~$m'' >0$ on~$[u_1,u_2]$ except for some point~$u$, where~$m''$ equals zero\footnote{In such a situation,~$u = c_j$ for some root~$c_j$ (see Definition~\ref{roots}).}. Then, it is useful to decompose~$\Rt(u_1,u_2)$ as
\begin{equation*}
\Rt(u_1,u) \biguplus \Rt(u,u) \biguplus \Rt(u,u_2)
\end{equation*}
and paste a vertex representing~$\Rt(u,u)$ (alternatively, one can consider it as a multitrolleybus on a single point). It has one outcoming edge~$\Rt(u,u_2)$ and one incoming edge~$\Rt(u_1,u)$. Its numerical parameter is~$u$. The same things can be done for the case of left tangents. We note that the fictious vertices of the fifth type may be right and left (as well as the trolleybuses).

The rules listed above define the graph of the foliation. However, we provide further description to make its structure more transparent. It is useful to introduce a partial ordering on the set of linearity domains. 
\begin{Def}\label{Ordering}
Let~$A$ and~$B$ be two linearity domains. We say that~$B$ is subordinate to~$A$ and write~$B \prec A$ if~$A$ separates~$B$ from the upper parabola. 
\end{Def} 
Note that in such a case~$B$ is a closed multicup.  We can also let~$A$ and~$B$ to be chords, and let~$B$ to be a point on the fixed boundary. Therefore, the partial ordering introduced can be extended naturally to the set~$\mathfrak{L}$ of all vertices of~$\Gamma$. One more thing to notice is that the numerical parameters of the vertices~$\mathfrak{L}_1$ and~$\mathfrak{L}_2$ are sufficient to define whether the statement~$\mathfrak{L}_1 \prec \mathfrak{L}_2$ is true.  

We explain how to construct the graph from a foliation. Our graph is a tree if we disregard the orientation. First, we describe its subgraph~$\GammaFree$ spanned by the edges representing tangent domains. This subgraph describes the trace of the foliation on the free boundary. Formally, we can define~$\GammaFree$ to be the set of vertices that are not subordinated by any other vertex, and the edges between them. If we forget the orientation of edges,~$\GammaFree$ is a path, i.e. a tree whose vertices have degree two, except, possibly for two leaves at infinity. The leaves are usually fictious vertices of the fourth type, however, if there is a multicup or a multitrolleybus that lasts to infinity, its vertex is a leaf (in such a case there is no fictious vertex representing the corresponding infinity). The orientation of edges has already been described. We only say that the roots of~$\GammaFree$ are the fictious vertices of the first and third type (the latter, of course, should belong to~$\GammaFree$, i.e. represent a long chord), the vertices that correspond to the multicups, and possibly, the vertices at infinity. The leaves  in~$\GammaFree$ correspond to angles, birdies, multibirdies, and possibly, vertices at infinity. Surely, the necessary and sufficient condition for~$\GammaFree$ to be a subgraph spanned by the edges corresponding to tangent domains of some foliation is that the foliation reconstructed from it covers the free boundary without intersections. We note that the graph~$\GammaFree$ was called a signature in our previous paper~\cite{5A}.
\begin{Def}\label{Height}\index{height}
Define the height of a vertex in~$\GammaFree$ as the number of edges in the shortest path from a root of~$\GammaFree$ to this vertex.
\end{Def}

Second, we describe the graph~$\GammaFixed$ spanned by the edges corresponding to chordal domains. The graph~$\GammaFixed$ is a forest (i.e. a finite collection of trees). Each tree of the forest is oriented from its root, being any vertex of~$\GammaFree$ except for fictious vertices of the fourth or fifth types, and multifigures sitting on single arcs, to its leaves. The leaves of~$\GammaFixed$ are the fictious vertices of the second type (corresponding to the origins of cups) and closed multicups sitting on single arcs. All other vertices are closed multicups and fictious vertices of the third type. We note that this graph is generated by the ordering introduced in Definition~\ref{Ordering}: each edge~$\mathfrak{E}$ goes from~$\mathfrak{L}_1$ to $\mathfrak{L}_2$ if and only if~$\mathfrak{L}_2 \prec \mathfrak{L}_1$ and there are no vertices~$\mathfrak{L}_3$ such that~$\mathfrak{L}_2 \prec \mathfrak{L}_3 \prec \mathfrak{L}_1$. The necessary and sufficient condition for~$\GammaFixed$ to be a subgraph spanned by the edges corresponding to chordal domains of some foliation is that the linearity domains built from its vertices do not intersect, the edges are generated by the ordering from Definition~\ref{Ordering}, and the roots and the leaves correspond to the vertices of the type they should correspond to. 

So, the graph of the foliation is a finite oriented tree whose vertices and edges have type (they correspond either to some figures or to fictious constructions described above) and several numerical characteristics regarding their type.

We warn the reader that we do not write down all the numerical parameters when we draw graphs, this makes our illustrations more clear. We only indicate some of the parameters, for example, we usually write the number~$b_0-a_0$ on the vertex represnting a long chord~$[A_0,B_0]$ or a trolleybus~$\RTroll(a_0,b_0)$, etc.  

\subsection{Examples}\label{s353}
The examples we provide here do not only demonstrate how to use the methods we have developed, but also give some hints to the evolutional behavior of foliations to be discussed in the forthcoming section. So, after the Bellman candidate is constructed, we give a short summary concerning its evolution in~$\eps$.
\paragraph{Stable angle.}
This example originates from inequality~\eqref{bmoequivnorms}. It was already considered in~\cite{SlVa2}. Consider the function $f(t) = |t|^p$.  If $p>2$, this function satisfies Conditions~\ref{reg}~and~\ref{sum}. We do not write down the formula for the Bellman function itself (this was done in~\cite{SlVa2}). But we verify that the situation falls under the scope of Proposition~\ref{AngleProp}. Indeed, the expression
\begin{equation*}
\mrt''(u;\,-\infty) + \mlt''(u;\,+\infty)
=\eps^{-1}\int\limits_{-\infty}^{\infty}\sign t \cdot p(p-1)(p-2)|t|^{p-3}e^{-|u-t|/\eps}\,dt
\end{equation*}
has a single root on the line, $u = 0$ (here~$\mrt$ denotes the slope coefficient for the domain of right tangents,~$\mlt$ denotes the slope coefficient for the domain of left tangents). Moreover, the function~$f'''$ is positive on the left of~$u=0$ and is negative on the right (and thus~$\mrt''$ and~$\mlt''$ have the signs required by Proposition~\ref{AngleProp}). So, the vertex of the angle sits at $(0,0)$ and does not move at all when one changes $\eps$.

\paragraph{Escaping angle.}
In the previous paper~\cite{5A} (see Example~$6$ there), we gave an example of an angle, which staggers from side to side. Now we illustrate another phenomenon: an escaping angle. Take any~$f\in C^2$ such that
\begin{equation*}
f'''(t)=\begin{cases}
-1\quad&\text{for }t<0,
\\
e^{-t}&\text{for }t>0.
\end{cases}
\end{equation*}

The vertex of the angle (see Proposition~\ref{AngleProp}) is the point on the lower boundary whose first
coordinate is the root of the equation~$g_\eps(u)=0$, where
$$
g_\eps(u)\df \eps\mrt''(u;\,-\infty) + \eps\mlt''(u;\,+\infty) = \int_{-\infty}^\infty e^{-|u-t|/\eps}f'''(t)\,dt.
$$
Let us find this root.
It is clear that~$g_\eps(u)<0$ for~$u<0$, therefore we consider the case~$u\ge0$ only:
\begin{align*}
g_\eps(u)&=-\int_{-\infty}^0 e^{(t-u)/\eps}\,dt+\int_0^u e^{(t-u)/\eps-t}\,dt
+\int_u^{\infty} e^{(u-t)/\eps-t}\,dt
\\
&=\frac{\eps(2-\eps)}{\eps-1} e^{-u/\eps}-\frac{2\eps}{\eps^2-1} e^{-u}\,.
\end{align*}
This expression is negative everywhere if~$\eps\ge2$. For smaller~$\eps$ we
have one root
$$
u=\frac\eps{\eps-1}\log\frac{2}{(2-\eps)(1+\eps)}\,,
$$
which goes to infinity as~$\eps\to2$.

The evolution scenario is rather simple. For small~$\eps$ we have a single angle near zero surrounded by two tangent domains. As~$\eps$ grows, it moves to the right. At the moment~$\eps = 2$ the angle reaches infinity and disappears. For~$\eps \geq 2$ the whole parabolic strip is foliated by the right tangents.

\paragraph{Polynomial of fourth degree.}
This example has already been considered in the previous paper~\cite{5A}, so we only sketch the results.
In the previous series of examples in Subsection~\ref{s333}, we considered the case of a third degree polynomial. Now we are sufficiently equipped to raise the degree by one. The Bellman function depends heavily on the sign of the leading coefficient, so we deal with the cases of different signs separately. We begin with the case of a positive one. We recall Remark~\ref{quadratische} and observe that it suffices to consider only the polynomials of the form $f = \frac{1}{24}(t - v)^{4}$, $v \in \mathbb{R}$. Therefore, $f'''(t) = t-v$. As usually, we do not write down the exact formula for the Bellman function, we are more interested in the picture itself. The function $f(t) = \frac{1}{24}(t - v)^{4}$ satisfies the conditions of Proposition~\ref{AngleProp}. Indeed, the expression
\begin{equation*}
\mrt''(u;\,-\infty) + \mlt''(u;\,+\infty) = \eps^{-1}\int\limits_{-\infty}^{\infty}(t-v)e^{-|u-t|/\eps}dt=2(u-v)
\end{equation*}
has a unique root $u=v$. So, the Bellman foliation consists of the stable angle at the point $(v,v^{2})$ and two families of tangents. 

The structure of the picture does not change at all as $\eps$ grows.

In the case of a negative leading coefficient it is more convenient to consider polynomials of the type $f(t)=-\frac{1}{24}(t-c)^{4}$, it is the same Remark~\ref{quadratische} that permits us to deal with the described case only. The third derivative of this function is $-(t-c)$, so it is positive on the left of~$c$, zero at $c$, and negative on the right. Thus one can build a symmetric full cup with the origin at $t = c$ (i.e.~$a(\ell) = c - \frac{\ell}{2}$ and~$b(\ell) = c + \frac{\ell}{2}$). The hypothesis of Proposition~\ref{LightChordalDomainCandidate} for this chordal domain is easily verified. Using Propositions~\ref{RightTangentsCandidate} and~\ref{LeftTangentsCandidate}, one can foliate the remaining two domains with the tangents. More detailed information on the formula for the Bellman function can be found in the seventh example in~\cite{5A}. 

The structure of the picture does not change at all as $\eps$ grows, only the cup grows as the free boundary rises.

\paragraph{Polynomial of fifth degree.}
In this example, we treat the cases of a fifth degree polynomial, i.e.~$f(t) = t^{5} + pt^{4}+qt^{3}+P_{2}(t)$ and~$f(t) = -t^{5} + pt^{4}-qt^{3}+P_{2}(t)$, where~$P_{2}$ is an arbitrary quadratic polynomial. Bellman functions for these examples were calculated in~\cite{5A}. The picture is completely determined by the discriminant~$d$ of $f'''$, where~$d=\frac{p^2}{25}-\frac{q}{10}$. We note that if~$d \leq 0$, then $f'''$ has no essential roots and the situation falls under the scope of Subsection~\ref{s32}: the parabolic strip is fully foliated by the left tangents if the leading coefficient of~$f$ is positive and by the right ones in the other case. So, we assume~$d$ to be positive. Then~$f'''$ has two essential roots,~$u_{+}$ and~$u_-$,~$u_+ > u_-$. We see that~$u_+ - u_- = 2 \sqrt{d}$. 
\begin{figure}[h]
\vskip-30pt
\hskip-50pt
\includegraphics[width = 1.2 \linewidth]{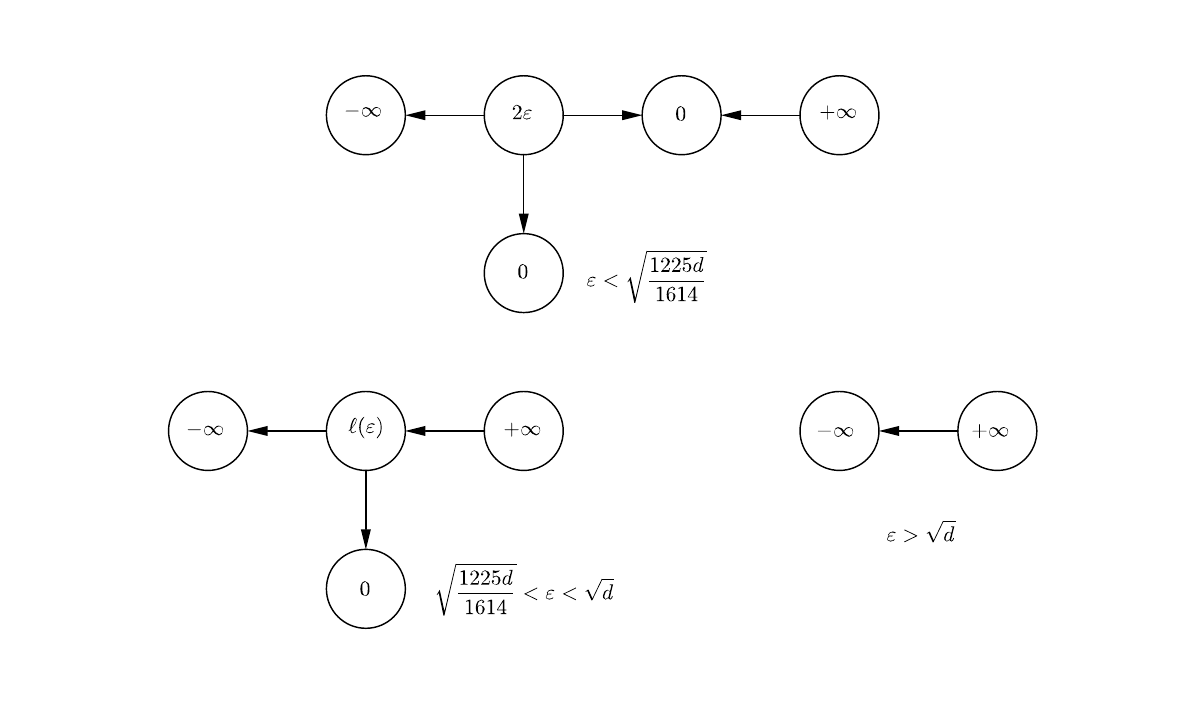}
\vskip-20pt
\caption{Evolution of~$\Gamma$ for the polynomial of fifth degree with positive leading coefficient and positive discriminant~$d$ of~$f'''$.}
\label{fig:efd}
\end{figure}

If the leading coefficient is positive and~$\eps \leq \sqrt{\frac{1225d}{1614}}$, then the picture is simple. There is a cup with the origin~$u_-$ and an angle on the right of it (we do not give a precise expression for its vertex, it is rather complicated. However, an interested reader may consult~\cite{5A}). The angle meets the cup at the moment~$\eps = \sqrt{\frac{1225d}{1614}}$ and they form a trolleybus by formula~\eqref{CupPlusAngleL}.

If the leading coefficient is negative and~$\eps \leq \sqrt{\frac{1225d}{1614}}$, then the picture is symmetric, there is a cup with the origin in~$u_+$ and an angle on the left of it. 

If~$\sqrt{\frac{1225d}{1614}} \leq \eps \leq \sqrt{d}$, then the foliation consists of a trolleybus and two families of tangents. The trolleybus is left for the case of positive leading coefficient and is right in the opposite case. 

If~$\eps > \sqrt{d}$, then the situation is described by Section~\ref{s32}. The parabolic strip is foliated by the left tangents in the case of positive leading coeffictient and by the right ones in the remaining case. 

Therefore, for the fifth-degree polynomial the evolution consists of three periods: for~$\eps \leq \sqrt{\frac{1225d}{1614}}$ the picture is simple, it consists of a cup and an angle, at the moment~$\eps = \sqrt{\frac{1225d}{1614}}$ they form a trolleybus, which begins to decrease and dies at the moment~$\eps = \sqrt{d}$, after which there are no figures at all. See Figure~\ref{fig:efd} for the evolution of the corresponding graph.

\chapter{Evolution of Bellman candidates}
To prove Theorem~\ref{MT}, we have to build a Bellman candidate for each~$\eps$,~$0 < \eps < \eps_{\infty}$, in the whole parabolic strip~$\Omega_{\eps}$. We begin with sufficiently small~$\eps$. In such a case, the foliation for the Bellman candidate can be composed of cups (multicups), angles, and tangent domains. Then we grow~$\eps$, building the Bellman candidates for greater~$\eps$. Formally, there will be statements of two kinds (they can be called ``induction steps of the first and second kinds'')\index{induction steps}. The first ones state that the set of~$\eps$ for which there is a Bellman candidate of a given structure, is open. They are of the form: ``if for some~$\eta$ there is a Bellman candidate with the graph~$\Gamma$, then there is some~$\delta$ such that for all~$\eps$ in $[\eta, \eta + \delta]$ the foliation with the graph~$\Gamma$ and perturbed numerical parameters provides a Bellman candidate for~$f$ in~$\Omega_{\eps}$''. The second ones state that the set of those~$\eps$ for which there is a graph~$\Gamma$ and a collection of numerical parameters that provide a Bellman candidate for~$f$ and~$\eps$, is closed. They are of the form: ``if for each~$\eps_n$ there is a Bellman candidate with the graph~$\Gamma$,~$\eps_n \nearrow \eps$, and the numerical parameters converge to some limits as~$\eps_n \to \eps$, then~$\Gamma$ with the limiting numerical parameters provide a foliation for~$f$ in~$\Omega_{\eps}$''. We note that the limits of numerical parameters may be degenerate in a sense (for example, a trolleybus may become a fictious vertex of the fifth type), so~$\Gamma$ changes after passing to the limit. Each such induction step, in its turn, can be reduced to similar local statements, i.e. statements about the evolutional behavior of lonely figures, e.g. cups, angles, etc.

The main law that rules the evolution of the foliation is ``the forces grow in absolute value as~$\eps$ grows''. As a result, long chords and multicups grow (Propositions~\ref{InductionStepForChordalDomain} and~\ref{InductionStepForMulticup}), trolleybuses decrease (Propositions~\ref{InductionStepForRightTrolleybus} and~\ref{InductionStepForLeftTrolleybus}), multitrolleybuses, birdies, and multibirdies disintegrate (Propositions~\ref{InductionStepForMultitrolleybus},~\ref{InductionStepForLeftMultitrolleybus}, and~\ref{MultibirdieDesintegrationSt}). What is more, single figures can crash, formally this happens in the induction steps of the second kind when one of the edges has zero length at the limit. In the case of a crash, we use formulas from Subsection~\ref{s344} to continue the evolution.

\section{Simple picture}\label{s41}
\begin{Def}\label{Simple picture}
Let~$\Gamma$ be a foliation graph. We call it \textup(and the foliation itself\textup) simple if it has no oriented paths longer than one and no closed multicups. 
\end{Def}\index{graph! simple graph}
For example, the first and the third graphs on Figure~\ref{fig:efd} are simple, whereas the second one is not (there is a path of length two).

Simple foliations consist of alternating cups (or multicups on single arcs; by a multicup on a single arc we mean~$\MTC(\{\mathfrak{a}\})$, where~$\mathfrak{a}$ is an interval) and angles connected by tangent domains. If~$\Gamma$ is a graph of a simple foliation consisting of~$N$ edges, then there are either~$\frac{N}{2}$, or~$\frac{N-1}{2}$, or~$\frac{N-2}{2}$ angles in the foliation. In~$\GammaFree$, the vertices corresponding to angles alternate the vertices representing multicups and long chords. Each multicup is sitting on an arc, whose projection on the~$x$-axis exceeds~$2\eps$. Each long chord has a cup below it. See Figure~\ref{fig:exsimpict} for the visualization. 

\begin{figure}
\begin{center}
\includegraphics[width = 0.99 \linewidth]{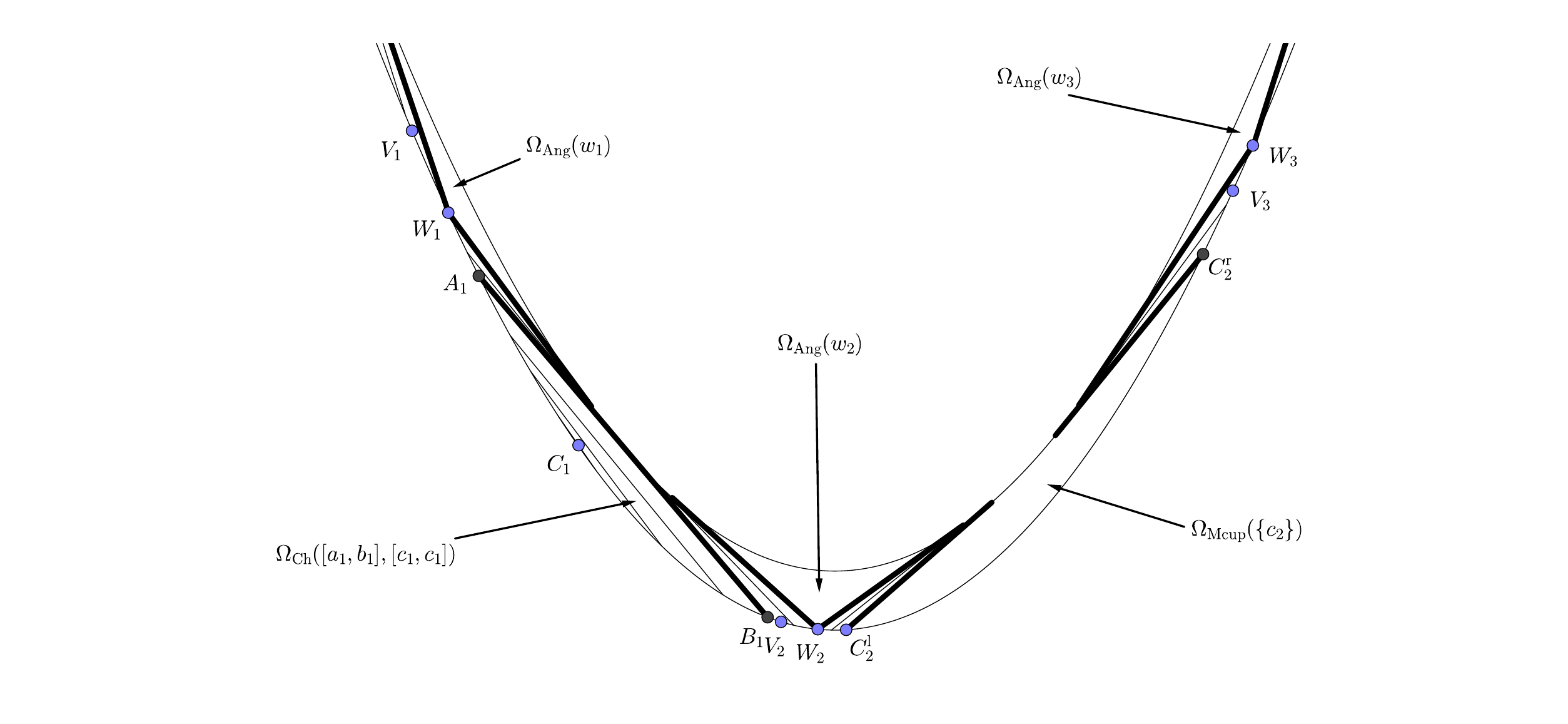}
\caption{An example of simple picture.}
\label{fig:exsimpict}
\end{center}
\end{figure}

For didactic reasons, we explain how does a simple graph generate a Bellman candidate~$B$ (similar essence for general graphs will be explained in Section~\ref{s44}). Suppose~$\Gamma$ to be a simple graph. First, we consider its roots, which are long chords, multicups on single arcs, and vertices at infinity. For long chords, we build the standard candidate with the help of formula~\eqref{vallun}, for multicups~--- with the help of formulas~\eqref{LinearInTrolleybus} and~\eqref{coef}. Second, consider the edges of~$\Gamma$. For the edges corresponding to chordal domains, we construct standard candidates with the help of formula~\eqref{vallun}. For each edge corresponding to a tangent domain, we continuously glue a standard candidate in it to the already built standard candidate corresponding to its source (for tangent domains whose source is infinity, we do not have to glue anything, we simply consider the standard candidate in such a domain given by formulas~\eqref{linearity} and~\eqref{minfty} or~\eqref{minfty2}). In the angles, we choose the standard candidate by Proposition~\ref{AngleProp}. Proposition~\ref{DomainOfLinearityWithTwoPointsOnTheLowerBoundaryMeetsTangentDomain} guarantees that~$B$ is~$C^1$-smooth, and thus, by Proposition~\ref{ConcatenationOfConcaveFunctions}, it is locally concave.

In the theorem below, we use the notation for the essential roots of~$f'''$, see Definition~\ref{roots}.
\begin{Th}\label{SimplePicture}
For any function~$f$ satisfying Conditions~\textup{\ref{reg}} and~\textup{\ref{sum}} there exists~$\eps_1 > 0$ such that for any~$\eps < \eps_1$ there exists a simple graph and a collection of numerical parameters such that the function~$B$ constructed from this graph\textup,~$f$\textup, and~$\eps$ as described above is a~$C^1$-smooth Bellman candidate. Moreover\textup, its foliation satisfies the following properties\textup: the origins of the cups coincide with those~$c_i$ that are single points\textup; the multicups are sitting on those~$c_i$ that are intervals\textup; for any~$k = 1,2,\ldots,n$ the vertex~$w_k$ of the~$k$-th angle in~$\GammaFree$ tends to~$v_k$ as~$\eps \to 0$. 
\end{Th}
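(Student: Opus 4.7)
The plan is to construct, for small enough $\eps$, a simple foliation whose graph has roots at the $c_i$'s (as cups or multicups depending on whether $c_i$ is a single point or an interval), angle vertices near the $v_k$'s, and tangent domains connecting them; then verify all the sign and balance conditions required by the constructions of Chapter~3.

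First I would handle the cups/multicups rooted at each $c_i$. If $c_i$ is a single point, I solve the cup equation $\Phi(a,b)=0$ of Definition~\ref{CupEquation} by the implicit function theorem around the diagonal point $(c_i,c_i)$: since $c_i$ is an essential root, $f'''$ genuinely changes sign there, so the level set $\{\Phi=0\}$ contains a smooth branch with $a(\ell),b(\ell)\to c_i$ as $\ell\to 0$, yielding a cup family satisfying Proposition~\ref{LightChordalDomainCandidate}. The sign conditions $\Slt,\Srt<0$ required by Definition~\ref{StandardCandidateChordalDomain} follow from the fact that $f''$ has a strict local maximum at $c_i$ (because $f'''$ crosses from~$+$ to~$-$ there), combined with the geometric interpretation of the differentials in Figure~\ref{fig:area_differentials}. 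If $c_i$ is an interval, the function $f''$ is constant on $c_i$, so all points $(a,f'(a))$ with $a\in c_i$ are collinear, the cup equation is automatic for every pair in $c_i$ by Lemma~\ref{ThreePointsAndTheArea}, and the multicup $\MTC(\{c_i\})$ on the single arc $c_i$ is admissible by Proposition~\ref{MulticupCandidate}, with the differentials again negative by the same local-maximum argument.

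Next I would place an angle in each gap between two consecutive cups/multicups, i.e.\ in a neighborhood of each $v_k$. Writing the balance equation of Definition~\ref{BalanceEquationDefinition} for the right force of the cup at $c_{k-1}$ and the left force of the cup at $c_k$, the leading term as $\eps\to 0$ is dictated by the behaviour of $f'''$ near $u$, since the kernel $\eps^{-1}e^{-|u-t|/\eps}$ is an approximate identity: the equation $\Fr(u;\cdot)+\Fl(u;\cdot)=0$ reduces asymptotically to $f'''(w_k)=0$ with the correct sign change, so the intermediate value theorem produces a root $w_k(\eps)\in(c_{k-1},c_k)$ with $w_k(\eps)\to v_k$. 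For the boundary cases $c_0=-\infty$ or $c_n=\infty$ I use the infinite tangent domains of Propositions~\ref{RightTangentsCandidateInfty} and~\ref{LeftTangentsCandidateInfty} and the forces \eqref{RightForceInfinity}, \eqref{LeftForceInfinity}. At each angle $\Ang(w_k)$ I apply Proposition~\ref{AngleProp} with $\beta_2$ determined by \eqref{beta2}; the required sign conditions $\Fr<0$ on $(b_{k-1},w_k)$ and $\Fl>0$ on $(w_k,a_k)$ in the adjacent tangent domains follow because the forces are monotone in $u$ by \eqref{DifForcePoU} (with $f'''$ having the correct sign in each subinterval between $c_i$'s and $v_i$'s) and vanish at $w_k$ by construction.

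Finally I would assemble the global candidate. For $\eps<\eps_1$ small enough each cup/multicup is contained in a small neighborhood of its $c_i$, so the cups stay disjoint from the angle vertices $w_k$ and from one another, and the connecting right and left tangent domains $\Rt(b_k,w_{k+1})$ and $\Lt(w_k,a_k)$ are nonempty. Propositions~\ref{CupMeetsRightTangents}, \ref{CupMeetsLeftTangents}, \ref{AngleMeetRightTangents}, \ref{AngleMeetLeftTangents}, \ref{MulticupCandidate}, together with Proposition~\ref{ConcatenationOfConcaveFunctions} for gluing on induced convex pieces, then produce a $C^1$-smooth locally concave function on all of $\Omega_\eps$ that agrees with $f$ on $\FixedBoundary\Omega_\eps$, i.e.\ a Bellman candidate whose graph is simple. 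The hardest part will be the fine analysis of the case where some $c_i$ is a solid (interval) root, which was not treated in~\cite{5A}: one must show that the multicup construction is genuinely admissible (all collinearity, cup-equation, and differential-sign conditions of Proposition~\ref{MulticupCandidate} hold uniformly for small $\eps$) and that it glues correctly to the adjacent tangent domains whose forces are defined by the \emph{endpoints} of the interval $c_i$ rather than a single point.
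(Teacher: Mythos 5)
Your overall architecture matches the paper's (cups or multicups centered at each~$c_i$, angles near the~$v_k$'s, tangent domains gluing them), but the key step would not survive scrutiny. You propose to obtain the cup branch near a single-point root~$c_i$ by applying the implicit function theorem to~$\Phi(a,b)=0$ around the diagonal point~$(c_i,c_i)$. The implicit function theorem is inapplicable there: the whole diagonal~$\{a=b\}$ lies inside~$\{\Phi=0\}$, and along~$\{\Phi=0\}$ one has~$d\Phi=\Slt\,da+\Srt\,db$ with~$\Slt(a,a)=\Srt(a,a)=0$, so the gradient of~$\Phi$ vanishes identically at every diagonal point. The branch you want is the \emph{other} sheet of a degenerate level set, and producing it requires a genuinely different argument: an intermediate-value argument on~$t\mapsto\Phi(t,t+l)$ exploiting the sign change of~$f'''$ at~$c_i$ (Remark~\ref{nonsmoothgrowthClassical}, or Lemma~\ref{nonsmoothgrowth}), the negativity of the differentials at the resulting off-diagonal solutions (Remark~\ref{nonvanishdif2Classsical}, Lemma~\ref{nonvanishdif2}), and then the infimum/bootstrapping argument (as in Corollary~\ref{ChordDomFromAChord}, packaged as Lemma~\ref{CupNearRoot}) to push the local solution down to the whole interval~$\ell\in(0,\ell_0]$. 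Without this, the existence of the foliation by chords near~$(c_i,c_i)$ is unproved. (Incidentally, the solid-root case you single out as the hardest is actually the easy one, see Remark~\ref{MulticupInARoot}; the delicate case is precisely the single-point~$c_i$.)

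The placement of the angles is also argued more loosely than it can be made precise. The paper does not rely on an approximate-identity asymptotic for the forces but on the tail machinery of Definition~\ref{Tail}: the right tail of the figure at~$c_k$ and the left tail of the figure at~$c_{k+1}$ both contain~$v_{k+1}$ for every~$\eps$ (since~$f'''$ keeps a fixed sign between~$c_k$ and~$v_{k+1}$), their endpoints converge to~$v_{k+1}$ as~$\eps\to0$ (Lemma~\ref{TailForSmallEps}), and on the overlap the sum of the two forces is \emph{strictly increasing} (Lemma~\ref{moninttails}). These three facts give, respectively, existence, localisation at~$v_{k+1}$, and uniqueness of the balance root — the uniqueness, which you need in order to speak of "the"~$k$-th angle, is not addressed in your sketch at all. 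Your appeal to~\eqref{DifForcePoU} as giving monotonicity of the forces is also not correct as stated: that formula reads~$\Fr'=f'''-\Fr/\eps$, which by itself is not a monotonicity statement; the monotonicity that matters is that of~$\Fr+\Fl$ on the overlap of the tails, and its derivation (via formula~\eqref{difpowsum}) uses the signs of~$\Fr$ and~$\Fl$ on the tails, not the sign of~$f'''$.
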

The proof of this theorem needs preparation and will occupy us for almost a section.

\subsection{How to grow a chordal domain}\label{s411}\index{domain! chordal domain}
We begin with investigation on chordal domains. In Subsection~\ref{s331}, we gave sufficient conditions for a function given by formula~\eqref{vallun} to be a Bellman candidate inside the domain. However, we said nothing where such figures can be built. Now we are going to fill this gap. We do a bit more than needed to prove Theorem~\ref{SimplePicture}. This is useful for further study. We will be growing a chordal domain either from a single chord (whose endpoints satisfy the cup equation), or from some~$c_i$ that is a point (not an interval). These two cases are very similar, and the second one is even easier (it was considered in~\cite{5A}). So, we treat the case of a non-zero chord in full detail and after that comment on the case of a root~$c_i$.

\begin{Le}\label{SmoothChordalDomain}
Suppose that the pair~$(a_0,b_0)$ satisfies the cup equation~\eqref{urlun} and that the inequalities~$\Slt(a_0,b_0) < 0$ and~$\Srt(a_0,b_0) < 0$ are fulfilled. Then\textup, for any sufficiently small~$\delta$ there exists a unique pair~$(a,b)$ of real-valued functions defined on~$[l_0 - \delta,l_0+\delta]$ such that~$b(l)-a(l) = l$\textup, the pair~$(a(l),b(l))$ satisfies the cup equation\textup,~$a' < 0$\textup,~$b' > 0$\textup, and~$a(l_0) = a_0$\textup,~$b(l_0) = b_0$. Here~$l_0 = b_0 - a_0$.
\end{Le}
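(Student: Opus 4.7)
\emph{Plan of proof.} The plan is to apply the implicit function theorem to the map
\begin{equation*}
F\colon (a,b)\longmapsto \bigl(\Phi(a,b),\; b-a\bigr),
\end{equation*}
where~$\Phi$ is the cup-equation function from~\eqref{Phi}, and then to read off the sign conditions on~$a'$ and~$b'$ by differentiating the two defining identities. Since~$f \in C^2$ and~$a_0 < b_0$, the function~$\Phi(a,b) = f'(a)+f'(b) - 2\,\frac{f(b)-f(a)}{b-a}$ is~$C^1$ in a neighborhood of~$(a_0,b_0)$, so~$F$ is~$C^1$ there.

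The key computation is that, on the zero set of~$\Phi$, one has~$d\Phi = \Slt\,da + \Srt\,db$ (this identity is recorded just above Lemma~\hreff{dDlt} in the excerpt). Consequently, the Jacobian of~$F$ at~$(a_0,b_0)$ is
\begin{equation*}
J = \begin{pmatrix} \Slt(a_0,b_0) & \Srt(a_0,b_0) \\ -1 & 1 \end{pmatrix}, \qquad \det J = \Slt(a_0,b_0)+\Srt(a_0,b_0),
\end{equation*}
which is strictly negative by the hypothesis~$\Slt(a_0,b_0)<0$,~$\Srt(a_0,b_0)<0$. The inverse function theorem yields a~$C^1$ local inverse of~$F$ in a neighborhood of~$F(a_0,b_0)=(0,l_0)$. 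Restricting this inverse to the line~$\{\Phi=0\}$ and parametrizing by the second coordinate produces unique~$C^1$ functions~$a(l),\;b(l)$ on some interval~$[l_0-\delta,l_0+\delta]$ with~$a(l_0)=a_0$,~$b(l_0)=b_0$,~$b(l)-a(l)=l$, and~$\Phi(a(l),b(l))=0$.

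To verify the sign conditions, I would differentiate the two identities with respect to~$l$, obtaining the linear system
\begin{equation*}
\Slt\,a' + \Srt\,b' = 0, \qquad b' - a' = 1,
\end{equation*}
which solves (on account of~$\Slt+\Srt\neq 0$) as
\begin{equation*}
a'(l) = -\frac{\Srt(a(l),b(l))}{\Slt(a(l),b(l))+\Srt(a(l),b(l))}, \qquad b'(l) = \frac{\Slt(a(l),b(l))}{\Slt(a(l),b(l))+\Srt(a(l),b(l))}.
\end{equation*}
At~$l=l_0$ the numerators and the common denominator are all strictly negative, so~$a'(l_0)<0$ and~$b'(l_0)>0$. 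By continuity of the differentials along~$(a(l),b(l))$, these strict inequalities persist on a (possibly smaller) interval~$[l_0-\delta,l_0+\delta]$, which is what was claimed.

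There is no serious obstacle here: the entire statement is an exercise in the implicit function theorem, and the strict negativity of the two differentials serves precisely to guarantee both the non-vanishing of~$\det J$ and the correct monotonicity of~$a$ and~$b$. The only point requiring a word of caution is the smoothness of the averaged quantity~$\av{f'}{[a,b]}$ across~$a=b$; this does not arise in our setting because~$a_0<b_0$, so throughout the construction one works in the open region~$\{a<b\}$ on which~$\Phi$ is manifestly~$C^1$.
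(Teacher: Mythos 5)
Your proof is correct and follows essentially the same route as the paper: both apply the implicit (equivalently, inverse) function theorem to the cup equation via the identity~$d\Phi = \Slt\,da + \Srt\,db$ on~$\{\Phi=0\}$, and both read off the sign conditions on~$a'$ and~$b'$ from the resulting formulas~$a' = -\Srt/(\Slt+\Srt)$,~$b' = \Slt/(\Slt+\Srt)$ together with continuity of the differentials.
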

\begin{proof}
Consider the function~$\Phi \colon \mathbb{R}^2 \mapsto \mathbb{R}$ defined by the equality~$\Phi(a,b) \df f'(a)+f'(b)-2\av{f'}{[a,b]}$ (we have already used this function in Subsection~\ref{s331}, formula~\eqref{Phi}). We see that~$\frac{\partial}{\partial a}\Phi = \Slt(a,b)$,~$\frac{\partial}{\partial b}\Phi = \Srt(a,b)$ on the set~$\Phi(a,b) = 0$. So, by the implicit function theorem, the cup equation has a unique solution~$(a,b)$ with~$b-a = l$, and this solution is a~$C^1$-smooth function of~$l$, when~$l$ is in a neighborhood of~$l_0$. What is more, we can calculate~$a'$ and~$b'$:
\begin{equation}\label{Derivativesabb}
\begin{aligned}
a' = \frac{-\Srt(a,b)}{\Slt(a,b)+\Srt(a,b)};\\ 
b' = \frac{\Slt(a,b)}{\Slt(a,b)+\Srt(a,b)},
\end{aligned}
\end{equation}
so~$b' > 0$ and~$a' < 0$ in a neighborhood of~$(a_0,b_0)$, because the differentials are continuous functions. 
\end{proof}
By~Proposition~\ref{LightChordalDomainCandidate}, the function built by formula~\eqref{vallun} provides a Bellman candidate for the domain
\begin{equation*}
\Ch\Big([a(l_0+\delta),b(l_0+\delta)],[a(l_0-\delta),b(l_0-\delta)]\Big).
\end{equation*}
For the next two lemmas, we use the notation introduced at the end of Subsection~\ref{s331}.
\begin{Le}\label{nonsmoothgrowth}
Suppose that the pair~$(a_0,b_0)$\textup,~$a_0 < b_0$\textup, satisfies the cup equation. Suppose that  the graph of~$f'$ lies strictly below~$L_{a_0,b_0}$ in a right neighborhood of~$b_0$\textup, and in a left neighborhood of~$a_0$\textup, it lies strictly above~$L_{a_0,b_0}$. Then\textup, for every sufficiently small~$\delta$ there exists a pair of points~$(a_{\delta},b_{\delta})$ satisfying the cup equation and such that~$a_\delta<a_0$\textup,~$b_\delta>b_0$\textup, and~$b_\delta-a_\delta=l_\delta\df l_0+\delta$. 
\end{Le}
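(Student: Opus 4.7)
The plan is to apply the intermediate value theorem to a continuous one-parameter family of pairs. Parameterize pairs $(a, b)$ with $b - a = l_0 + \delta$ by the left endpoint $a \in [a_0 - \delta, a_0]$, setting $b(a) := a + l_0 + \delta \in [b_0, b_0 + \delta]$, and consider the continuous function $\psi(a) := \Phi(a, a + l_0 + \delta)$, where $\Phi$ is as in~\eqref{Phi}. Note that $\Phi(a,b) = \tfrac{2 A(a,b)}{b-a}$, where $A(a, b) := \tfrac{(b-a)(f'(a)+f'(b))}{2} - (f(b) - f(a))$ is the signed area between the chord connecting $\big(a, f'(a)\big)$ and $\big(b, f'(b)\big)$ and the graph of $f'$ on $[a,b]$. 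It therefore suffices to show $A(a_0, b_0+\delta) < 0 < A(a_0-\delta, b_0)$ for all sufficiently small $\delta > 0$; the IVT then produces $a_\delta \in (a_0-\delta, a_0)$ with $\psi(a_\delta) = 0$, and $b_\delta := a_\delta + l_0 + \delta \in (b_0, b_0+\delta)$ gives a pair with the required properties.

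For the right endpoint, I compare the chord $L_{a_0, b_0+\delta}$ with $L := L_{a_0, b_0}$; both pass through $\big(a_0, f'(a_0)\big)$, and the hypothesis $f'(b_0+\delta) < L(b_0+\delta)$ forces $L_{a_0, b_0+\delta}$ to lie strictly below $L$ on $(a_0, b_0+\delta]$. Using the cup equation $A(a_0, b_0) = 0$ to cancel the contribution over $[a_0, b_0]$, a direct calculation gives
\begin{equation*}
A(a_0, b_0+\delta) = \int_{b_0}^{b_0+\delta} g(t)\, dt - \tfrac{l_0+\delta}{2}\, g(b_0+\delta),
\end{equation*}
where $g(t) := L(t) - f'(t)$. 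By hypothesis, $g$ is continuous, $g(b_0) = 0$, and $g > 0$ on $(b_0, b_0+\eta)$ for some $\eta > 0$; moreover $g \in C^1$ with $g'(b_0) = -\Srt(a_0, b_0)$, since $f \in C^2$. Symmetrically, with $h(t) := f'(t) - L(t)$ (positive on $(a_0-\eta', a_0)$, zero at $a_0$), one finds $A(a_0-\delta, b_0) = \tfrac{l_0+\delta}{2}\, h(a_0-\delta) - \int_{a_0-\delta}^{a_0} h(t)\, dt$.

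The hard part will be establishing strict negativity of the above expression (and strict positivity of its symmetric counterpart) for every sufficiently small $\delta$. In the smooth subcase $\Srt(a_0, b_0) < 0$, one has $g'(b_0) > 0$; continuity of $g'$ yields $g$ monotonically increasing on a one-sided neighborhood $[b_0, b_0+\eta'']$, so $\int_{b_0}^{b_0+\delta} g(t)\, dt < \delta\, g(b_0+\delta) < \tfrac{l_0+\delta}{2}\, g(b_0+\delta)$ for $\delta < \min(\eta'', l_0)$. In the genuinely non-smooth subcase $\Srt(a_0, b_0) = 0$ (forcing $g'(b_0) = 0$), rewriting $g(t) = \int_{b_0}^{t} g'(s)\, ds$ and interchanging the order of integration converts the formula into
\begin{equation*}
A(a_0, b_0+\delta) = \tfrac12\int_{b_0}^{b_0+\delta} g'(s)\bigl[\delta - l_0 - 2(s-b_0)\bigr]\, ds,
\end{equation*}
with a strictly negative weight on $[b_0, b_0+\delta]$ for $\delta < l_0$. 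Combining this with the identity $\int_{b_0}^{b_0+\delta} g'(s)\, ds = g(b_0+\delta) > 0$ and using the positivity of $g$ on a right neighborhood of $b_0$ to constrain the distribution of the (possibly sign-changing) $g'$, one concludes strict negativity of $A(a_0, b_0+\delta)$ for all sufficiently small $\delta$. The analogous argument applies verbatim to the left endpoint.
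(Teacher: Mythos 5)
Your overall strategy coincides with the paper's: normalize by the line $L_{a_0,b_0}$, write the cup equation as an area identity, reduce to showing that the signed area $A(a,b)$ flips sign at the two endpoint configurations $(a_0,b_0+\delta)$ and $(a_0-\delta,b_0)$, then invoke the intermediate value theorem. Your formula $A(a_0,b_0+\delta)=\int_{b_0}^{b_0+\delta}g-\tfrac{l_0+\delta}{2}\,g(b_0+\delta)$ is correct, and the ``smooth'' subcase $\Srt(a_0,b_0)<0$ is handled cleanly.

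The gap is in the ``genuinely non-smooth'' subcase $\Srt(a_0,b_0)=0$. The closing sentence --- ``using the positivity of $g$ on a right neighborhood of $b_0$ to constrain the distribution of the (possibly sign-changing) $g'$, one concludes strict negativity'' --- is not a proof, and the conclusion does not in fact follow from the stated data. Positivity of $g$ on $(b_0,b_0+\eta)$ together with $g\in C^1$, $g(b_0)=g'(b_0)=0$, is not enough to force $\int_{b_0}^{b_0+\delta} g<\tfrac{l_0+\delta}{2}g(b_0+\delta)$: one can construct $C^1$ functions $g$ with bumps accumulating at $b_0$ (say heights $\sim 2^{-1.5k}$ on dyadic scales, and much smaller values $\sim 2^{-10k}$ at the dyadic points themselves) for which $\int_{b_0}^{b_0+\delta_k} g$ is comparable to $\delta_k^{2.5}$ while $g(b_0+\delta_k)\sim\delta_k^{10}$, so $A(a_0,b_0+\delta_k)>0$ for arbitrarily small $\delta_k$. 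The reason such $g$ are excluded is precisely Condition~\ref{reg}: $f''$ is piecewise monotone with finitely many pieces, so $g'=\av{f''}{[a_0,b_0]}-f''$ is monotone on some fixed $[b_0,b_0+\eta_0]$. With $g'(b_0)=0$ and $g>0$ to the right, $g'$ must be nonnegative there (if $g'$ were nonpositive, $g$ would be nonincreasing from $g(b_0)=0$, contradicting strict positivity), hence $g$ is nondecreasing on $[b_0,b_0+\eta_0]$, and the bound $\int_{b_0}^{b_0+\delta}g\le\delta\, g(b_0+\delta)<\tfrac{l_0+\delta}{2}g(b_0+\delta)$ goes through. The paper invokes Condition~\ref{reg} explicitly at exactly this point (``the function $f'$ is decreasing on $[a_0-\delta,a_0]$, because there exists a left neighborhood of $a_0$ in which $f'$ is either convex or concave, due to Condition~\ref{reg}''); your proof must do the same to close the argument. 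Note also that once you invoke piecewise monotonicity, the case split on $\Srt$ becomes unnecessary --- a single monotonicity argument covers both.
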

As in the previous lemma,~$l_{0} = b_0 -a_0$.
\begin{proof}
To make the notation simpler, we assume that~$L_{a_0,b_0}$ coincides with the $x$-axis. If it does not, one can subtract a quadratic polynomial from~$f$ to move this line onto the~$x$-axis. Such a transform changes neither the cup equation and the differentials nor the inequalities prescribed for~$f'$.

Let~$\delta$ be fixed. Consider the function~$\Phi(s,s+l)\df f'(s)+f'(s+l) - 2\av{f'}{[s,s+l]}$. Geometrically, this function times~$\frac12l$ is the difference between the area of the trapezoid with vertices~$(s,0)$,~$(s,f'(s))$,~$(s+l,f'(s+l))$, and~$(s+l,0)$ and the area of the subgraph of~$f'$ between the points~$s$ and~$s+l$. As usually, we consider oriented areas. 

We are going to prove that~$\Phi(a_0 - \delta, b_0) > 0$ and~$\Phi(a_0, b_0 + \delta) < 0$ provided~$\delta$ is sufficiently small. Assuming these two inequalities and using the Bolzano--Weierstrass principle for the continuous function~$t \mapsto \Phi(t, t+ l_{\delta})$, we see that there exists a point~$a_{\delta}$, $a_0 - \delta < a_{\delta} < a_0$, such that~$\Phi(a_{\delta}, a_{\delta} + l_{\delta})=0$. This means that the pair~$(a_{\delta}, b_{\delta})$, where~$b_{\delta} \df a_{\delta} + l_{\delta}$, satisfies the cup equation. 

The inequalities~$\Phi(a_0 - \delta, b_0) > 0$ and~$\Phi(a_0, b_0 + \delta) < 0$ are symmetric, we prove only the first one. We analyze the behavior of each of the two areas (of the trapezoid and of the subgraph) separately.
 
The area of the subgraph of~$f'$ between the points~$a_0 - \delta$ and~$b_0$ equals the area of the subgraph between the points~$a_0 - \delta$ and~$a_0$ (by the cup equation and the assumption that~$L_{a_0,b_0}$ coincides with the~$x$-axis). This figure is contained in the rectangle of height~$f'(a_0 - \delta)$ and width~$\delta$ (note that for~$\delta$ sufficiently small the function~$f'$ is decreasing on~$[a_0 - \delta,a_0]$, because there exists a left neighborhood of~$a_0$ in which~$f'$ is either convex or concave, due to Condition~\ref{reg}), so its area is less than~$f'(a_0 - \delta)\delta$.

By virtue of the assumption that~$L_{a_0,b_0}$ coincides with the~$x$-axis, the trapezoid becomes a triangle with vertices~$\big(a_0 - \delta,0\big)$, $\big(a_0 - \delta, f'(a_0 - \delta)\big)$, and~$\big(b_0,0\big)$. The area of this triangle equals~$\frac12(l_0 + \delta)f'(a_0 - \delta)$. 

We see that the desired value (the difference of the two areas) exceeds~$\frac12 f'(a_0 - \delta)(l_0 - \delta)$. This value is strictly positive, because~$b_0 \ne a_0$ and~$f'$ is strictly positive in a left neighborhood of~$a_0$. The lemma is proved.
\end{proof}
\begin{Rem}\label{nonsmoothgrowthClassical}
Let~$c_i$ be a root of~$f'''$ that is a point \textup(see Definition~\textup{\ref{roots}}\textup). Then\textup, for every~$\delta \leq \min(\dist(c_i,v_i),\dist(c_i,v_{i+1}))$ there exists a pair of points~$(a_{\delta},b_{\delta})$ satisfying the cup equation~\eqref{urlun} and such that~$a_\delta<c_i$\textup,~$b_\delta>c_i$\textup, and~$b_\delta-a_\delta=\delta$.
\end{Rem}
\begin{proof}
One can prove the remark in the same manner as Lemma~\ref{nonsmoothgrowth}. The only difference is that the inequalities~$\Phi(c_i - \delta, c_i) > 0$ and~$\Phi(c_i, c_i + \delta) < 0$ are much easier to show in this case (the first follows from the convexity of~$f'$ on the left of~$c_i$, the second from the concavity of~$f'$ on the right of~$c_i$).
\end{proof}

\begin{Le}\label{nonvanishdif2}
Suppose that the points~$a_0$ and~$b_0$ are under the same conditions as in the previous lemma and let the points~$a_{\delta}$ and~$b_{\delta}$ be such that~$a_{\delta} < a_0 < b_0 < b_{\delta}$  and~$(a_{\delta}, b_{\delta})$ satisfies the cup equation. Then\textup,~$\Slt(a_{\delta},b_{\delta}) < 0$\textup,~$\Srt(a_{\delta},b_{\delta}) < 0$ provided~$\delta$ is sufficiently small. 
\end{Le}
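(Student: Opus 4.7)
The approach I propose mirrors the normalization already used in the proof of Lemma~\ref{nonsmoothgrowth}: subtract a quadratic polynomial from $f$ so that the line $L_{a_0,b_0}$ coincides with the $x$-axis. This leaves $\Phi$, $\Slt$, and $\Srt$ invariant, and it gives $f'(a_0)=f'(b_0)=0$, $\int_{a_0}^{b_0} f'(t)\,dt = 0$, $f'(t)>0$ strictly on $(a_0-\eta, a_0)$, and $f'(t)<0$ strictly on $(b_0, b_0+\eta)$. Under this normalization $\av{f''}{[a_0,b_0]}=0$, hence $\Slt(a_0,b_0)=f''(a_0)$ and $\Srt(a_0,b_0)=f''(b_0)$; and since $f'$ strictly decreases through zero at each of $a_0$ and $b_0$, both of these values are non-positive. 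I will treat $\Slt$; the case of $\Srt$ is symmetric.

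For sufficiently small $\delta$ one has $f'(a_\delta)>0$ and $f'(b_\delta)<0$, hence the average $\av{f''}{[a_\delta,b_\delta]} = (f'(b_\delta)-f'(a_\delta))/l_\delta$ is strictly negative. If $f''(a_0)<0$ strictly, the conclusion follows at once from continuity of $(a,b)\mapsto\Slt(a,b)$ and $(a_\delta,b_\delta)\to(a_0,b_0)$. The interesting case is $f''(a_0)=0$. Here Condition~\ref{reg} implies that $f''$ is monotone on a left neighborhood $(a_0-\eta',a_0]$; combining this with $f'(a_\delta) = -\int_{a_\delta}^{a_0} f''(s)\,ds >0$ and $f''(a_0)=0$ forces $f''$ to be strictly increasing to $0$ on this neighborhood, with $f''(t)<0$ for $t<a_0$. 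In particular $f''(a_\delta)<0$ strictly.

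To conclude in the degenerate case I would rewrite $\Slt(a_\delta,b_\delta)<0$ as $\int_{a_\delta}^{b_\delta} \bigl(f''(t)-f''(a_\delta)\bigr)\,dt > 0$ and split the integral at $a_0$. Using $\int f'' = f'(\mathrm{upper})-f'(\mathrm{lower})$, the left-hand piece evaluates to $-f'(a_\delta) - (a_0-a_\delta)f''(a_\delta)$, which is strictly positive by the strict monotonicity of $f''$ on $[a_\delta, a_0]$ (indeed $f'(a_\delta)=\int_{a_\delta}^{a_0}(-f'')\le (a_0-a_\delta)\,(-f''(a_\delta))$ with strict inequality). The right-hand piece equals $f'(b_\delta) - (b_\delta-a_0)f''(a_\delta)$ and is a priori of indefinite sign. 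I plan to use the cup equation for $(a_\delta, b_\delta)$, together with $\int_{a_0}^{b_0} f'=0$, to couple the small parameters $\alpha=a_0-a_\delta$ and $\beta=b_\delta-b_0$ quantitatively, and then Taylor-type expansion of $f'$ at $a_0$ and $b_0$ (whose leading orders are controlled by the monotonicity provided by Condition~\ref{reg}) to show that the positive left-hand piece dominates the right-hand piece for $\delta$ sufficiently small.

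The main obstacle is the doubly degenerate sub-case $f''(a_0)=f''(b_0)=0$: there every term approaches zero simultaneously, and one must identify the correct leading order of decay of each of $\alpha$, $\beta$, $f''(a_\delta)$, $f'(a_\delta)$, and $f'(b_\delta)$ as $\delta\to0^+$. The finiteness of monotonicity intervals of $f''$ (Condition~\ref{reg}) is precisely what ensures that such definite one-sided leading orders exist, and matching them via the cup equation should yield the required strict inequality.
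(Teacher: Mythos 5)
Your set-up is correct and matches the paper's: the normalization to $L_{a_0,b_0}=\{x_2=0\}$, the non-degenerate case by continuity, the forced strict monotonicity of $f''$ on a one-sided neighborhood of the degenerate endpoint, and the positivity of $\int_{a_\delta}^{a_0}(f''(t)-f''(a_\delta))\,dt$ are all right. But the proof stops exactly where the substance begins: the degenerate case is left as a plan, not an argument, and you say so yourself. More importantly, the plan is unlikely to succeed under the paper's hypotheses. Condition~\ref{reg} gives only $f\in C^2$ together with piecewise monotonicity of $f''$; it ensures that $f''$ is strictly increasing and negative on a left neighborhood of $a_0$, but it supplies no ``leading order'' of decay at $a_0$ --- there is no higher Taylor coefficient, no power law, nothing quantitative. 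A function with $f''(t)\sim -1/\log\bigl(1/(a_0-t)\bigr)$ as $t\to a_0^-$ is admissible and defeats any leading-order matching. The claim that ``the finiteness of monotonicity intervals of $f''$ \dots\ ensures that such definite one-sided leading orders exist'' is therefore a false premise, and the proposed asymptotic coupling of $\alpha$, $\beta$, $f''(a_\delta)$, $f'(a_\delta)$, $f'(b_\delta)$ via the cup equation cannot be carried out in the stated generality.

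The paper avoids asymptotics entirely and argues by contradiction. Treating the right differential (the left is symmetric), one rewrites the negation $\Srt(a_\delta,b_\delta)\geq 0$ as the statement that the point $(a_\delta,f'(a_\delta))$ lies on or above the tangent $L_{b_\delta,b_\delta}$ to the graph of $f'$ at $b_\delta$. One then uses only the \emph{qualitative} shape information that Condition~\ref{reg} actually provides --- $f'$ is concave on a right neighborhood of $b_0$, and either convex or concave on a left neighborhood of $a_0$ --- together with the observation that $L_{a_\delta,b_\delta}$ crosses the $x$-axis at some $x\geq b_0$, to show that the graph of $f'$ lies strictly below the chord $L_{a_\delta,b_\delta}$ on all of $[a_\delta,a_0]\cup[b_0,b_\delta]$, while $L_{a_\delta,b_\delta}\geq 0$ on $[a_0,b_0]$. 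Integrating and invoking the cup equation for $(a_0,b_0)$ (so that $\int_{a_0}^{b_0}f'=0$), the subgraph area over $[a_\delta,b_\delta]$ comes out strictly smaller than the trapezoid area, contradicting the cup equation for $(a_\delta,b_\delta)$. The crucial difference from your proposal is that the cup equation is used here as a contradiction mechanism, not as a quantitative constraint to be matched at leading order: the argument is purely qualitative and so works under $C^2$ regularity alone.
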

This lemma says that every chord we may build with the help of the previous lemma has negative differentials. One can prove that there is only one such chord of the fixed length, however, we do not need this fact.

\begin{proof}
The inequalities are symmetric, we treat the case of the right differential only. As in the proof of Lemma~\ref{SmoothChordalDomain}, we are going to argue geometrically. We assume that~$L_{a_0,b_0}$ coincides with the~$x$-axis. Therefore,~$f'(a_0) = f'(b_0) = 0$ and~$f''(b_0) = \Srt(a_0,b_0)$. Consider the two cases:~$f''(b_0) < 0$ and~$f''(b_0) = 0$ (this value cannot exceed zero, because the function~$f'$ is negative on the right of~$b_0$). 

In the first case the inequality~$\Srt(a_{\delta},b_{\delta}) < 0$ is a consequence of the continuity of~$f''$. 

The proof for the second case uses the structural properties we imposed on the function~$f$ in Subsection~\ref{s212} (Condition~\ref{reg}). By assumption that~$f'$ is strictly negative on the right of~$b_0$, we see that~$b_0$ neither belongs to the interior of any solid root of~$f'''$, nor can be the left endpoint of such a solid root (indeed, in both cases described,~$f''' = 0$ in a right neighborhood of~$b_0$ and thus~$f' = 0$ there). Therefore,~$f'$ is either convex or concave in a right neighborhood of~$b_0$. If it is convex, then~$f'$ is positive in a right neighborhood of~$b_0$ (we recall that~$f''(b_0) = 0$). So,~$f'$ is concave on the right of~$b_0$. 

Consider now the tangent line~$L_{b_{\delta},b_{\delta}}$ to the graph of~$f'$ at the point~$b_{\delta}$. The inequality~$\Srt(a_{\delta},b_{\delta}) < 0$ is equivalent to the fact that the point~$(a_{\delta},f'(a_{\delta}))$ lies below this tangent line (see the geometric interpretation of the differentials at the end of Subsection~\ref{s331}). Assume the contrary, i.e. that the point~$(a_{\delta},f'(a_{\delta}))$ does not lie below~$L_{b_{\delta},b_{\delta}}$. We claim that in such a case the whole part of the graph on the intervals~$[a_{\delta},a_0]$ and~$[b_0,b_{\delta}]$ lies below the line~$L_{a_{\delta},b_{\delta}}$. Once the claim is proved, we get a contradiction with the cup equation for the pair~$(a_{\delta},b_{\delta})$ (the area of the subgraph is strictly smaller than the area of the trapezoid, here we use the cup equation for the pair~$(a_0, b_0)$).

To finish the proof of the lemma, we must verify the claim. The part of the graph that corresponds to the interval~$[b_0,b_{\delta}]$ lies under~$L_{a_{\delta},b_{\delta}}$ because it lies below~$L_{b_{\delta},b_{\delta}}$ due to the concavity of~$f'$ on this interval. To deal with the remaining interval, we mark the point where~$L_{a_{\delta}, b_{\delta}}$ crosses the~$x$-axis by~$X = (x,0)$. By concavity and the assumption, we have~$x \geq b_0$. The function~$f'$ is either convex or concave on the left of~$a_0$, due to the structural properties of~$f$. In the first case, the claim follows form the fact that the graph of~$f'$ on~$[a_{\delta},a_0]$ lies below the line~$L_{a_{\delta},a_0}$, which lies below~$L_{a_{\delta}, b_{\delta}}$ because~$x > a_0$. In the second case, the graph of~$f'$ on the interval~$[a_{\delta},a_0]$ lies below the tangent at~$a_{\delta}$. By continuity, the slope of this line is uniformly negative (otherwise~$f'$ is negative on the left of~$a_0$), whereas the slope of~$L_{a_{\delta},b_{\delta}}$ tends to zero. Thus, the tangent at~$a_{\delta}$ lies below~$L_{a_{\delta},b_{\delta}}$ on the right of~$\big(a_{\delta},f'(a_{\delta})\big)$. The claim is proved together with the lemma.
\end{proof}
\begin{Rem}\label{nonvanishdif2Classsical}
Let~$c_i$ be a root of~$f'''$ that is a point\textup,~$\delta \leq \min(\dist(c_i,v_i),\dist(c_i,v_{i+1}))$. Suppose that the points~$a_{\delta}$ and~$b_{\delta}$ are such that~$a_{\delta} < c_i < b_{\delta}$\textup, $b_{\delta} - a_{\delta} = \delta$\textup, and the pair~$(a_{\delta},b_{\delta})$ satisfies the cup equation. Then\textup,~$\Srt(a_{\delta},b_{\delta}) < 0$ and~$\Slt(a_{\delta},b_{\delta}) < 0$.
\end{Rem}
\begin{proof}
The proof is verbatim the proof of Lemma~\ref{nonvanishdif2} (and even a bit easier, because we do not have to investigate the structure of~$f'''$ in a neighborhood of~$c_i$).
\end{proof}
Lemmas~\ref{nonsmoothgrowth} and~\ref{nonvanishdif2} have an obvious corollary.
\begin{Cor}\label{ChordDomFromAChord}
Suppose that the pair~$(a_0,b_0)$\textup,~$a_0 < b_0$\textup, satisfies the cup equation. Suppose that in a right neighborhood of~$b_0$ the graph of~$f'$ lies strictly below~$L_{a_0,b_0}$\textup, and in a left neighborhood of~$a_0$ it lies strictly above. Then\textup, there exists a chordal domain satisfying the assumptions of Proposition~\textup{\ref{LightChordalDomainCandidate}} whose bottom chord is~$[A_0,B_0]$.
\end{Cor}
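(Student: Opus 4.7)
The plan is to build, for some small $\delta_1>0$, a smooth one-parameter family of chords $l\mapsto(a(l),b(l))$ defined on $[l_0,l_0+\delta_1]$ and satisfying all hypotheses of Proposition~\ref{LightChordalDomainCandidate}, with $(a(l_0),b(l_0))=(a_0,b_0)$. Once such a family is available, the chordal domain $\Ch([a(l_0+\delta_1),b(l_0+\delta_1)],[a_0,b_0])$ has $[A_0,B_0]$ as its bottom chord and supplies the desired Bellman candidate. The obstacle to a one-line proof via Lemma~\ref{SmoothChordalDomain} is that its hypothesis requires strictly negative differentials at the base chord, and the statement of the corollary assumes only strict sign conditions on $f'$ near $a_0$ and $b_0$ but allows the differentials at $(a_0,b_0)$ themselves to vanish. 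I therefore build the family not from $(a_0,b_0)$ but from one step outside, and extend it inward.

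Choose $\delta_*>0$ small enough that both Lemmas~\ref{nonsmoothgrowth} and~\ref{nonvanishdif2} apply for all $\delta\in(0,\delta_*]$. Fix $\delta_1\in(0,\delta_*)$ and let $(a_1,b_1)$ be the pair produced by Lemma~\ref{nonsmoothgrowth}, so that $a_1<a_0<b_0<b_1$, $b_1-a_1=l_0+\delta_1$, and $(a_1,b_1)$ satisfies the cup equation. Lemma~\ref{nonvanishdif2} gives $\Slt(a_1,b_1)<0$ and $\Srt(a_1,b_1)<0$, so Lemma~\ref{SmoothChordalDomain} applies at $(a_1,b_1)$ and yields a unique smooth family $l\mapsto(a(l),b(l))$ defined in a neighborhood of $l_0+\delta_1$, solving the cup equation, with $b(l)-a(l)=l$, $a'<0$, $b'>0$.

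Extend this family to the left by iterating Lemma~\ref{SmoothChordalDomain}. By $a'<0$ and $b'>0$, the monotonicity of $a$ and $b$ forces $a(l)<a_0$ and $b(l)>b_0$ for every $l<l_0+\delta_1$ in the extended domain; the hypotheses of Lemma~\ref{nonvanishdif2} are thus satisfied along the whole family, and both differentials remain strictly negative, so Lemma~\ref{SmoothChordalDomain} can be reapplied at each point. Let $l^*$ denote the infimum of the maximal domain of the extended family. If $l^*>l_0$, then monotonicity and boundedness of $a(l)$ and $b(l)$ provide limits as $l\to l^{*+}$ that lie in the same region of strictly negative differentials, and one more application of Lemma~\ref{SmoothChordalDomain} would extend the family past $l^*$, contradicting maximality. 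Hence $l^*=l_0$, and the constraint $b(l)-a(l)=l$ combined with $a(l)\le a_0$, $b(l)\ge b_0$ forces $\lim_{l\to l_0^+}a(l)=a_0$ and $\lim_{l\to l_0^+}b(l)=b_0$.

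Setting $(a(l_0),b(l_0))=(a_0,b_0)$ produces a family on $[l_0,l_0+\delta_1]$, differentiable on the interior, satisfying the cup equation, with $a'<0$, $b'>0$, and strictly negative differentials throughout $(l_0,l_0+\delta_1]$; by Proposition~\ref{LightChordalDomainCandidate} this yields a Bellman candidate on $\Ch([a_1,b_1],[a_0,b_0])$, whose bottom chord is $[A_0,B_0]$. The only delicate step is the continuation argument in the previous paragraph, where Lemma~\ref{nonvanishdif2} is precisely what prevents the differentials from degenerating before the family reaches $l_0$.
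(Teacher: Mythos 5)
Your overall strategy is the same as the paper's: produce an outer chord via Lemmas~\ref{nonsmoothgrowth} and~\ref{nonvanishdif2}, extend a smooth family inward with Lemma~\ref{SmoothChordalDomain}, and run a continuation argument down to~$l_0$. The gap is in the sentence ``By $a'<0$ and $b'>0$, the monotonicity of $a$ and $b$ forces $a(l)<a_0$ and $b(l)>b_0$.'' Monotonicity gives a bound in the \emph{wrong direction}: since $a'<0$, decreasing $l$ from $l_0+\delta_1$ makes $a$ increase from $a_1$, so all monotonicity yields is $a(l)>a_1$; it says nothing about the upper barrier $a_0$. It is a priori possible that $a(l)$ reaches $a_0$ at some $l_-\in(l_0,l_0+\delta_1)$, with $b(l_-)=a_0+l_->b_0$, and then Lemma~\ref{nonvanishdif2} (which requires the strict inclusions $a_\delta<a_0<b_0<b_\delta$) no longer applies, so the extension could stall before reaching~$l_0$.

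Excluding this scenario is a genuine extra step that uses the hypotheses on $f'$ near $a_0$, $b_0$ together with Condition~\ref{reg}. With $L_{a_0,b_0}$ normalized to the $x$-axis, the cup equation for $(a_0,b)$ with $b$ slightly above $b_0$ reads $\int_{b_0}^{b}f'=\tfrac12(b-a_0)f'(b)$, and an analysis of the signs of $f''(b_0)$ and $f'''$ near $b_0$ (the same case-split as in the proof of Lemma~\ref{nonvanishdif2}) shows the left side strictly exceeds the right for $b$ in a punctured right neighborhood of $b_0$, so no such pair exists; symmetrically for $(a,b_0)$ with $a$ slightly below $a_0$. This is precisely what the paper disposes of in the line ``if $a(l^*)=a_0$ or $b(l^*)=b_0$, then both these equalities are valid and $l^*=l_0$,'' and the paper sidesteps the ambiguity by building the constraints $a<a_0$, $b>b_0$ into the \emph{definition} of the extended family, so the case analysis at $l^*$ is isolated cleanly. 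The same issue reappears at your limit step: the strict inequalities along the family only give $a(l^*)\le a_0$ and $b(l^*)\ge b_0$, and the case of equality with $l^*>l_0$ must be ruled out before Lemma~\ref{nonvanishdif2} can be applied at the limit chord. Once this point is supplied, the rest of your argument matches the paper's.
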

\begin{proof}
We use Lemma~\ref{nonsmoothgrowth} to construct a pair~$(a_{\delta},b_{\delta})$ with~$b_\delta-a_\delta=\delta+b_0-a_0$ that satisfies the cup equation and~$a_{\delta} < a_0< b_0 < b_{\delta}$ (here~$\delta$ is sufficiently small). By Lemma~\ref{nonvanishdif2}, $\Srt(a_{\delta},b_{\delta}) < 0$ and~$\Slt(a_{\delta},b_{\delta}) < 0$. Consider the infimum~$l^*$ of the set of~$l$, $l \leq l_0$, such that there exist a unique pair of differentiable functions~$a,b\colon(l,l_{\delta}]\to\mathbb{R}$ such that~$(a,b)$ satisfies the cup equation,~$b(s) - a(s) = s$, $\Slt(a,b) < 0$,~$\Srt(a,b) < 0$,~$a < a_0$, and~$b > b_0$. By Lemma~\ref{SmoothChordalDomain}, we have~$l^* < l_{\delta}$. To prove the corollary, it suffices to prove that~$l^* = l_0 = b_0-a_0$ (the strict monotonicity of~$a$ and~$b$ follows from formulas~\eqref{Derivativesabb}). Note that for~$l_1, l_2 \in (l^*,l_0)$, $l_1<l_2$, the corresponding unique functions~$a$ and~$b$ constructed for~$l_2$ are restrictions of the functions constructed for~$l_1$. Therefore we have functions~$a$ and~$b$ uniquely defined on~$(l^*,l_0]$. 

By passing to the limit, we see that the pair~$(a(l^*),b(l^*))$, where
\begin{equation*}
a(l^*) = \lim_{l \to l^*\!{\scriptscriptstyle+}} a(l), \qquad b(l^*) = \lim_{l \to l^* \!{\scriptscriptstyle+}} b(l),
\end{equation*}
satisfies the cup equation, and moreover,~$a(l^*) \leq a_0 < b_0 \leq b(l^*)$. 
It is easy to see that if~$a(l^*) = a_0$ or~$b(l^*) = b_0$, then both these equalities are valid and~$l^* = l$. If not, then we can use Lemma~\ref{nonvanishdif2} to see that~$\Srt(a(l^*),b(l^*)) < 0$ and~$\Slt(a(l^*),b(l^*)) < 0$, and then apply Lemma~\ref{SmoothChordalDomain} to obtain that~$l^*$ is not minimal.
\end{proof}

Remarks~\ref{nonsmoothgrowthClassical} and~\ref{nonvanishdif2Classsical} lead to a similar conclusion. Namely, now we can show that for every~$c_i$ that is a single point one can build a small cup whose origin is~$c_i$. This was Lemma~$5.5$ in~\cite{5A}.

\begin{Le}\label{CupNearRoot}
Consider the interval $\Delta = [c-l_0,c+l_0]$. Suppose ${f''}$ strictly increases on the left half 
		${[c-l_0,c]}$ 
		of $\Delta$ and ${f''}$ strictly decreases on the right half ${[c,c+l_0]}$. 		
		Then there exist two functions $a=a(l)$ and $b=b(l)= a(l) + l$\textup, 
		$l \in (0,l_0]$\textup, with the
		following properties\textup:
		\begin{enumerate}
			\item[\textup{1)}] $a(l)<c<b(l)$\textup;	
			\item[\textup{2)}] the pair $(a(l),b(l))$ solves the cup equation\textup;
			\item[\textup{3)}] 
				$\Slt\big(a(l),b(l)\big)<0$ and $\Srt\big(a(l),b(l)\big)<0$\textup;
			\item[\textup{4)}] $a$ and~$b$ are differentiable functions such that $a'<0$ and $b'>0$.
		\end{enumerate}
\end{Le}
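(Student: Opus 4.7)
The plan is to combine the pointwise existence of suitable pairs (from Remarks~\ref{nonsmoothgrowthClassical} and~\ref{nonvanishdif2Classsical}) with the local smooth extension guaranteed by the implicit function theorem in Lemma~\ref{SmoothChordalDomain}, and then extend maximally as in the proof of Corollary~\ref{ChordDomFromAChord}.

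First, I observe that strict monotonicity of $f''$ on each half of $\Delta$ places $c$ in the role of a single-point essential root $c_i$ of $f'''$ (in the sense of Definition~\ref{roots}), and excludes any further sign change of $f'''$ inside $\Delta$. Consequently $l_0 \le \min(\dist(c,v_i),\dist(c,v_{i+1}))$, so Remark~\ref{nonsmoothgrowthClassical} applies for every $\delta\in(0,l_0]$, supplying a pair $(a_\delta,b_\delta)$ with $a_\delta<c<b_\delta$, $b_\delta-a_\delta=\delta$, satisfying the cup equation~\eqref{urlun}; this gives (1) and (2). Remark~\ref{nonvanishdif2Classsical} then gives (3): $\Slt(a_\delta,b_\delta)<0$ and $\Srt(a_\delta,b_\delta)<0$.

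For (4), I would pick a single pair $(a^*,b^*)$ above with $l^*=b^*-a^*\in(0,l_0)$ and apply Lemma~\ref{SmoothChordalDomain} to extend it to a differentiable family $(a,b)$ in a neighborhood of $l^*$ with $b(l)-a(l)=l$, $a'<0$, $b'>0$, and strictly negative differentials. Let $(\alpha,\beta)$ be the maximal open interval around $l^*$ on which such an extension lives; I claim $\alpha\le 0$ and $\beta\ge l_0$. Suppose $\beta\le l_0$; by monotonicity, the limits $a(\beta)=\lim_{l\to\beta^{-}}a(l)$ and $b(\beta)=\lim_{l\to\beta^{-}}b(l)$ exist in $[c-l_0,c+l_0]$ and satisfy the cup equation with $b(\beta)-a(\beta)=\beta$. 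Strict convexity of $f'$ on $[c-l_0,c]$ and strict concavity on $[c,c+l_0]$ (immediate from the monotonicity of $f''$) rule out the degenerate cases $a(\beta)=c$ and $b(\beta)=c$: in either case the subgraph-versus-trapezoid reading of the cup equation from the end of Subsection~\ref{s331} would give strict inequality. Hence $a(\beta)<c<b(\beta)$; Remark~\ref{nonvanishdif2Classsical} then ensures strictly negative differentials at $(a(\beta),b(\beta))$, and Lemma~\ref{SmoothChordalDomain} extends the family past $\beta$, contradicting maximality. The symmetric argument shows $\alpha\le 0$. Restricting to $(0,l_0]$ gives the required functions, and the signs $a'<0$, $b'>0$ are automatic from formulas~\eqref{Derivativesabb} since both differentials are negative.

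The only real obstacle I anticipate is ruling out $a(\beta)=c$ or $b(\beta)=c$ at a limit: these degeneracies would mean the cup equation holds on an interval having $c$ as an endpoint, which is incompatible with $f'$ being strictly convex or strictly concave on that interval — and strict convexity/concavity of $f'$ on the two halves of $\Delta$ is exactly what strict monotonicity of $f''$ provides, so the obstacle dissolves.
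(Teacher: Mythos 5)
Your proof is correct and faithfully expands the paper's one-line proof (which reads simply ``Similar to the proof of Corollary~\ref{ChordDomFromAChord}''): you use Remarks~\ref{nonsmoothgrowthClassical} and~\ref{nonvanishdif2Classsical} to obtain pointwise cup-equation solutions with strictly negative differentials, then combine Lemma~\ref{SmoothChordalDomain} with a maximal-interval continuation argument, handling the endpoint degeneracies via the strict convexity/concavity of $f'$ on the two halves of $\Delta$. Starting the continuation from an interior $l^*$ and extending in both directions, rather than from the endpoint $l_0$, is an inessential variation on the paper's route.
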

\begin{proof}
Similar to the proof of Corollary~\ref{ChordDomFromAChord}.
\end{proof}
Lemma~\ref{CupNearRoot} states, in particular, that if~$c_i$ is a point and~$l_0 = \min(\dist(c_i,v_i),\dist(c_i,v_{i+1}))$, then one can build a cup with the origin~$c_i$ and width~$l_0$.

\begin{Rem}\label{MulticupInARoot}
When we are building cups\textup, the situation for those~$c_i$ that are intervals\textup, is much simpler. 
Consider some~$c_i$ and assume that~$\eps$ is less than a half of the length of~$c_i$. Then\textup, the multicup based on the interval~$c_i$ surely satisfies the assumptions of Proposition~\textup{\ref{LightChordalDomainCandidate}}.
\end{Rem}


\subsection{Monotonicity properties of forces}\label{s412}
We begin this subsection with the definition of a force and a tail of a chordal domain.
\begin{Def}\label{ForceOfChordalDomain}\index{force! of a chordal domain}
Let~$\Ch([a_0,b_0],[a_1,b_1])$ be a chordal domain. The function
\begin{equation}
\Fr(u;\Ch([a_0,b_0],[a_1,b_1]);\eps) =
\begin{cases}
\Srt\big(a(l),b(l)\big), \quad u=b(l);\\
\Fr(u;a_0,b_0;\eps),\quad u > b_0  
\end{cases}
\end{equation}
acting from~$(b_1,\infty)$ to~$\mathbb{R}$ is called the right force of~$\Ch([a_0,b_0],[a_1,b_1])$. The function
\begin{equation}
\Fl(u;\Ch([a_0,b_0],[a_1,b_1]);\eps) =
\begin{cases}
\Fl(u;a_0,b_0;\eps),\quad u < a_0;\\
-\Slt\big(a(l),b(l)\big), \quad u=a(l)
\end{cases}
\end{equation}
defined on~$(-\infty,a_1)$ is called the left force.
\end{Def}
We note that the forces are continuous functions.

\begin{St}\label{difforcepou}
Let~$\Ch([a,b],*)$ be a chordal domain. Then its forces satisfy the following differential equations\textup{:}
\begin{equation}\label{leftder}
\Fr'(u) = \begin{cases}
f'''(u) - \frac{2\Fr(u)}{\ell(u)}, &u < b;\\
f'''(u) - \frac{\Fr(u)}{\eps}, &u > b,
\end{cases}
\end{equation}
the function~$\ell$ is defined at the beginning of Subsection~\textup{\ref{s331}}\footnote{There is an ambiguity here:~$\ell$ is a function on the chordal domain (a domain in the plane)\textup, we write~$\ell(u)$ instead of~$\ell(u,u^2)$ for brevity.}.
Here we write derivatives with respect to~$u$\textup,~$u$ is supposed to belong to the domain of~$\Fr$. Similar formula holds for the left force\textup{:}
\begin{equation}\label{rightder}
\Fl'(u) = \begin{cases}
-f'''(u) + \frac{\Fl(u)}{\eps}, &u < a;\\
-f'''(u) + \frac{2\Fl(u)}{\ell(u)}, &u > a
\end{cases}
\end{equation}
with the same words about the domain.
\end{St}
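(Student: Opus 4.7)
The statement naturally splits into two regions for each force, and by symmetry it suffices to describe the plan for the right force; the argument for $\Fl$ will be identical after swapping the roles of $a$ and $b$ and using \eqref{dDlt} in place of \eqref{dDrt}.

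The plan is to separate the outer tangent region $u > b$ from the inner region $u < b$ lying above the chordal foliation. In the outer region, Definition \ref{ForceOfChordalDomain} gives $\Fr(u;\Ch([a,b],*);\eps) = \Fr(u;a,b;\eps)$, so the desired identity $\Fr'(u) = f'''(u) - \Fr(u)/\eps$ is simply the first half of \eqref{DifForcePoU}, which was already established. Nothing further is needed here.

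The real work is in the inner region $u < b$. Here the strategy is to pass from $u$ to the length parameter $l$ of the chord through $(u,u^2)$ via $u = b(l)$, and then apply \eqref{dDrt}. Since $\Ch([a,b],*)$ is a chordal domain in the sense of Proposition \ref{LightChordalDomainCandidate}, one has $b'>0$, so $l \mapsto b(l)$ is a valid change of variable and $\ell(u) = b(l)-a(l)$. Definition \ref{ForceOfChordalDomain} identifies $\Fr(u) = \Srt(a(l),b(l))$, and the differential identity \eqref{dDrt} on the manifold $\{\Phi=0\}$ gives
\begin{equation*}
\frac{d}{dl}\Srt(a(l),b(l)) \;=\; \Big(f'''(b(l)) - \frac{2\Srt(a(l),b(l))}{b(l)-a(l)}\Big)\,b'(l).
\end{equation*}
Dividing by $du/dl = b'(l)$ and substituting $u=b(l)$, $\ell(u)=b(l)-a(l)$, $\Fr(u) = \Srt(a(l),b(l))$ yields exactly $\Fr'(u) = f'''(u) - 2\Fr(u)/\ell(u)$.

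The only real obstacle is a matter of interpretation rather than of ideas: under Condition \ref{reg} the function $f'''$ is only a (signed) measure, so \eqref{leftder} and \eqref{rightder} must be read distributionally, precisely as \eqref{DifForcePoU} and the base identities \eqref{dDrt}, \eqref{dDlt} are. Once this convention is fixed, the chain-rule computation above is literal, and nothing else needs to be checked; in particular, one need not verify continuity of $\Fr'$ across $u=b$, since the statement is already given piecewise and the two expressions agree there only in the "long chord" case $b-a=2\eps$.
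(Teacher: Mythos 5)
Your proof is correct and follows exactly the same route as the paper's (one-line) proof: apply \eqref{DifForcePoU} outside the chordal domain and \eqref{dDlt}, \eqref{dDrt} together with the chain rule via $u=b(l)$ (resp.\ $u=a(l)$) inside. The chain-rule division by $b'(l)$ and the identification $\ell(u)=b(l)-a(l)$ are the only details the paper leaves implicit, and you handle them, as well as the distributional reading, correctly.
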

\begin{proof}
The formulas have already been calculated, they follow from formulas~\eqref{dDlt} and~\eqref{dDrt} inside the chordal domain and formula~\eqref{DifForcePoU} outside the chordal domain.
\end{proof}
\begin{Def}\label{Tail}\index{tail}
Consider~$\Ch([a_0,b_0],[a_1,b_1])$. Let
\begin{align*}
\tr(\Ch([a_0,b_0],[a_1,b_1]);\eps) = \sup \{t \mid t > b_1, \quad \forall s \in (b_0,t) \quad \Fr(s) < 0\}; \\
\tl\,(\Ch([a_0,b_0],[a_1,b_1]);\eps) = \inf\, \{t \mid t < a_1, \quad \forall s \in (t,a_0) \quad \Fl(s) > 0\}.
\end{align*}
The interval~$(b_1,\tr)$ is called the right tail of~$\Ch([a_0,b_0],[a_1,b_1])$\textup, the interval~$(\tl,a_1)$ is called the left tail of~$\Ch([a_0,b_0],[a_1,b_1])$. We also define the tails of a chord~$[A_0,B_0]$ as the intervals~$(b_0,\tr)$ and~$(\tl,a_0)$\textup, where the points~$\tl$ and~$\tr$ are given by the same formulas with the forces of this chord.
\end{Def}
We note that the tails of a chordal domain do not coincide with the tails of its upper chord. The tails of the upper chord (as well as forces) do not remember any information about the situation below the chord. Sometimes it is convenient to use this information. The forces and tails of chordal domains are designed for this purpose.
\begin{Def}\label{TailInfinity}
The ray~$(-\infty,\tr)$ is called the right tail of~$-\infty$\textup, the interval~$(\tl,\infty)$ is called the left tail of~$\infty$, where
\begin{align*}
\tr(-\infty;\eps) = &\sup \{t \in\mathbb{R}\mid  \forall s \in (-\infty,t) \quad \Fr(s;-\infty;\eps) < 0\}; \\
\tl\,(\infty;\eps) = &\inf\, \{t \in \mathbb{R} \mid  \forall s \in (t,\infty) \quad \Fl(s;\infty;\eps) > 0\}.
\end{align*}
\end{Def}
The tails indicate up to what extent one can proceed the tangent family from a full chordal domain, a trolleybus, or a multifigure. We note that the definition above differs from the one we had in~\cite{5A} (Definition~$6.6$ there), this definition is more convenient for the treatment of solid roots and evolution. We also need the same notions for multicups.
\begin{Def}\label{ForceForMulticup}\index{force! of a multicup}
Let~$\MTC(\{\mathfrak{a}_i\}_{i=1}^k)$ be a multicup. Then its forces are defined by the formulas
\begin{align*}
\Fr(u;\MTC(\{\mathfrak{a}_i\}_{i=1}^k);\eps) = \Fr(u;\mathfrak{a}_1^l,\mathfrak{a}_k^r,\eps), \quad &u \in (\mathfrak{a}_k^r,\infty);\\
\Fl(u;\MTC(\{\mathfrak{a}_i\}_{i=1}^k);\eps) = \Fl(u;\mathfrak{a}_1^l,\mathfrak{a}_k^r,\eps), \quad &u \in (-\infty,\mathfrak{a}_1^l).
\end{align*}
\end{Def} 
\begin{Def}
Let~$\MTC(\{\mathfrak{a}_i\}_{i=1}^k)$ be a multicup. Let
\begin{align*}
&\tr(\MTC(\{\mathfrak{a}_i\}_{i=1}^k);\eps) = \sup \{t \mid t > \mathfrak{a}_k^{r}, \quad \forall s \in (\mathfrak{a}_k^{r},t)\quad \Fr(s) < 0\}; \\
&\tl\,(\MTC(\{\mathfrak{a}_i\}_{i=1}^k);\eps) = \inf \,\{t \mid t < \mathfrak{a}_1^{l}, \quad \forall s \in (t,\mathfrak{a}_1^{l}) \quad \Fl(s) > 0\},
\end{align*}
here the forces are the multicup forces. The segment~$(\mathfrak{a}_k^r,\tr)$ is the right tail of~$\MTC(\{\mathfrak{a}_i\}_{i=1}^k)$\textup, the segment~$(\tl,\mathfrak{a}_1^l)$ is the left tail.
\end{Def}

By definition, the tails of a chordal domain always contain some part of~$[a_0,b_0]$. We say that a tail is non-zero, if it has some part outside~$[a_0,b_0]$ (for example, this holds true if the differential at the corresponding endpoint is non-zero).
 
Consider two neighbor roots of~$f'''$,~$c_k$ and~$v_{k+1}$. By Lemma~\ref{CupNearRoot}, we can build a cup or a multicup around~$c_k$. Let its upper chord be~$[A_k,B_k] = [A_k(\eps),B_k(\eps)]$ (for the cup, we take its upper chord, whereas for the multicup, we consider the chord connecting its endpoints). Then, by~\eqref{RightForce}, we have~$\Fr(u;a_k,b_k;\eps) < 0$ when~$u \in (b_k,v_{k+1}]$. So, the right tail of the cup or multicup built on~$c_k$ always contains~$v_{k+1}$. Similarly, the left tail of the cup or multicup built over~$c_{k+1}$ contains~$v_{k+1}$. The following lemma says that this result is asymptotically sharp as~$\eps \to 0$.
\begin{Le}\label{TailForSmallEps}
Let~$(A_k(\eps),B_k(\eps))$ be the upper chord of the cup or the multicup built over~$c_k$\textup, let~$\tr_k = \tr_k(\eps)$ be the endpoint of its right tail. Then~$\tr_k \to v_{k+1}$ as~$\eps \to 0$. Similarly\textup, the endpoint~$\tl_k(\eps)$ of the left tail tends to~$v_k$. A similar convergence statement holds for the forces coming from the infinities.
\end{Le}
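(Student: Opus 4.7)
The plan is to work with the explicit expression~\eqref{RightForce} for the right force of the upper chord of the cup (or of the closing chord of the multicup),
\begin{equation*}
\Fr(u;a_k(\eps),b_k(\eps);\eps)=\Srt(a_k,b_k)\,e^{(b_k-u)/\eps}+e^{-u/\eps}\int_{b_k}^{u}e^{t/\eps}\,df''(t),\qquad u\geq b_k,
\end{equation*}
and to exploit the concentration of the weight $e^{t/\eps}$ near $t=u$ as $\eps\to 0$. By Definition~\ref{roots} the sign structure of $f'''$ on the right of $c_k$ is dictated by the chain $c_k<v_{k+1}<c_{k+1}$: $f'''\le 0$ on $(c_k^{\mathrm r},v_{k+1}^{\mathrm l})$ and $f'''\ge 0$ on $(v_{k+1}^{\mathrm r},c_{k+1}^{\mathrm l})$. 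The key is to split the integral at $v_{k+1}$ and estimate each piece separately; I will prove the convergence by showing a matching lower and upper bound on $\tr_k$.

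For the lower bound I would argue that $\tr_k\ge v_{k+1}^{\mathrm l}$ for all small $\eps$. By Lemma~\ref{nonvanishdif2} in the cup case and by Remark~\ref{MulticupInARoot} in the multicup case one has $\Srt(a_k,b_k)\le 0$, so the first summand is non-positive. On $(b_k,v_{k+1}^{\mathrm l})$ the function $f''$ is strictly decreasing, because no essential root lies in this open interval, so the integral term is strictly negative. Hence $\Fr<0$ throughout $(b_k,v_{k+1}^{\mathrm l})$, which by definition forces $\tr_k\ge v_{k+1}^{\mathrm l}$, and thus $\tr_k$ eventually lies to the right of $v_{k+1}^{\mathrm l}-\delta$ for every $\delta>0$.

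The main step, and what I expect to be the principal obstacle, is the upper bound. Fix $\delta>0$ and choose a point $u^*\in (v_{k+1}^{\mathrm r},c_{k+1}^{\mathrm l})$ with $u^*-v_{k+1}^{\mathrm r}<\delta$ and with $f'''$ continuous and strictly positive at $u^*$; such a $u^*$ exists because essentiality of $v_{k+1}$ prevents $f'''$ from vanishing identically in any right neighborhood of $v_{k+1}^{\mathrm r}$. After the substitution $t=u^*+s$ the force reads
\begin{equation*}
\Fr(u^*;a_k,b_k;\eps)=\Srt(a_k,b_k)\,e^{(b_k-u^*)/\eps}+\int_{b_k-u^*}^{0}e^{s/\eps}\,df''(u^*+s),
\end{equation*}
and the first summand is exponentially small in $1/\eps$ because $u^*$ is bounded away from $c_k$. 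Pick $\eta>0$ so small that $[u^*-\eta,u^*]\subset (v_{k+1}^{\mathrm r},c_{k+1}^{\mathrm l})$ and $f'''\ge\tfrac12 f'''(u^*)$ on this subinterval. Splitting the integral at $s=-\eta$, on $[b_k-u^*,-\eta]$ the total variation of $f''$ is bounded by a constant and is multiplied by a factor at most $e^{-\eta/\eps}$, yielding an exponentially small negative contribution; on $[-\eta,0]$ the measure $df''$ is non-negative with density at least $\tfrac12 f'''(u^*)$, whence
\begin{equation*}
\int_{-\eta}^{0}e^{s/\eps}\,df''(u^*+s)\ge \tfrac12 f'''(u^*)\int_{-\eta}^{0}e^{s/\eps}\,ds=\tfrac12 f'''(u^*)\,\eps\,(1-e^{-\eta/\eps}),
\end{equation*}
which is of order $\eps$ and hence dominates all the exponentially small errors. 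Therefore $\Fr(u^*;a_k,b_k;\eps)>0$ once $\eps$ is small, which by the definition of $\tr_k$ gives $\tr_k\le u^*$; as $\delta$ was arbitrary, $\tr_k$ enters every right neighborhood of $v_{k+1}^{\mathrm r}$.

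The left-tail assertion $\tl_k(\eps)\to v_k$ is handled by the symmetric argument applied to~\eqref{LeftForce}. The case of forces coming from the infinities, given by~\eqref{RightForceInfinity} and~\eqref{LeftForceInfinity}, carries no boundary term $\Srt$ at all, so the same Laplace-type estimate applies even more directly. The entire analytic difficulty is concentrated in the third paragraph: one must verify that the positive mass produced by the rising portion of $f''$ immediately to the right of $v_{k+1}$ asymptotically dominates both the boundary term and the accumulated negative mass between $b_k$ and $v_{k+1}$, and this is precisely the one-dimensional Laplace asymptotic sketched above.
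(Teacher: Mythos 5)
Your overall strategy mirrors the paper's: rewrite the force as a boundary term plus a Laplace-type integral against $e^{t/\eps}\,df''(t)$, split the integral at $v_{k+1}$, and show the positive mass accumulated just beyond $v_{k+1}^{\mathrm r}$ eventually dominates the boundary term and the negative mass from $(b_k,v_{k+1})$. The lower bound $\tr_k\ge v_{k+1}$ and the infinity case are handled as in the paper's surrounding text.

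However, there is a genuine gap in the third paragraph, precisely where you flagged the difficulty. You write that one can choose $u^*$ ``with $f'''$ continuous and strictly positive at $u^*$'' and then pick $\eta$ so that $f'''\ge\tfrac12 f'''(u^*)$ on $[u^*-\eta,u^*]$, obtaining a positive contribution $\gtrsim \eps$. This treats $f'''$ as a pointwise-defined function with a continuous, locally-bounded-below density. Condition~\ref{reg} only guarantees $f\in C^2$ with $f''$ continuous and piecewise monotone; the measure $f'''=df''$ may well be singular continuous (nowhere a density, nowhere a well-defined finite pointwise value). The essentiality of $v_{k+1}$ only gives $f''(w_+)>f''(\omega)$ for suitable $v_{k+1}^{\mathrm r}<\omega<w_+$; it does not give control of a density. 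Consequently your lower bound $\int_{-\eta}^{0}e^{s/\eps}\,df''(u^*+s)\ge \tfrac12 f'''(u^*)\,\eps\,(1-e^{-\eta/\eps})$ is unjustified, and the remainder of the comparison (``$O(\eps)$ beats $O(e^{-\eta/\eps})$'') collapses.

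The fix, and what the paper actually does, is to bound the increment of $f''$ rather than a density. Fix $\omega\in(v_{k+1}^{\mathrm r},w_+)$. Since $df''\ge0$ on $[\omega,w_+]$ and $e^{t/\eps}\ge e^{\omega/\eps}$ there, one has $\int_{v_{k+1}^{\mathrm r}}^{w_+}e^{t/\eps}\,df''(t)\ge e^{\omega/\eps}\bigl(f''(w_+)-f''(\omega)\bigr)$, a strictly positive multiple of $e^{\omega/\eps}$. Normalizing by $e^{\omega/\eps}$ instead of by $\eps$, the boundary term and the negative piece from $(b_k,v_{k+1}^{\mathrm r})$ acquire factors $e^{(b_k-\omega)/\eps}$ and $e^{(v_{k+1}^{\mathrm r}-\omega)/\eps}$ that vanish as $\eps\to0$, while the target term stays equal to the constant $f''(w_+)-f''(\omega)>0$. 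This avoids any appeal to a density of $f'''$. (Alternatively, within your framework: split once more at $u^*-\eta/2$ and bound the positive piece below by $e^{-\eta/(2\eps)}\bigl(f''(u^*)-f''(u^*-\eta/2)\bigr)$, which still beats $e^{-\eta/\eps}$; but then you must check $f''(u^*)>f''(u^*-\eta/2)$, which again uses only increments, not densities.)
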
 
\begin{proof}
It suffices to prove that for each point~$w_{+}$ such that~$v_{k+1} < w_{+}$ (we also assume that~$w_+$ is not far from~$v_{k+1}$, we want~$f''$ to increase on~$(v_{k+1},w_+)$), the inequality~$\tr_k < w_{+}$ holds eventually as~$\eps \to 0$. We recall the formula for the force from Definition~\ref{Forces},
\begin{equation}\label{forcie}
\Fr(u;a_k,b_k;\eps) = \eps^{-1}\Srt(a_k,b_k)e^{(b_k-u)/\eps} + 
		\eps^{-1}e^{-u/\eps}\int\limits_{b_k}^u e^{t/\eps}\,df''(t).
\end{equation}
It suffices to prove~$\Fr(w_{+}) > 0$. 
We first deal with the integral summand (as usual, by~$v^{\mathrm{r}}_{k+1}$ we denote the right endpoint of~$v_{k+1}$),
\begin{multline*}
\eps^{-1}e^{-w_{+}/\eps}\int\limits_{b_k}^{w_{+}} e^{t/\eps}\,df''(t) = \eps^{-1}e^{-w_{+}/\eps}\bigg(\int\limits_{b_k}^{v^{\mathrm{r}}_{k+1}} e^{t/\eps}\,df''(t) + \int\limits_{v^{\mathrm{r}}_{k+1}}^{w_{+}} e^{t/\eps}\,df''(t)\bigg) \geq \\
 \eps^{-1}e^{-w_{+}/\eps}\Big(- \var(df''\mid_{[b_k,v^{\mathrm{r}}_{k+1}]})e^{v^{\mathrm{r}}_{k+1}/\eps} + e^{\omega/\eps}(f''(w_{+}) - f''(\omega))\Big),
\end{multline*}
where~$\omega \in (v_{k+1}^{\mathrm{r}},w_+)$. Then,
\begin{equation*}
\eps e^{(w_+-\omega)/\eps}\Fr(w_{+}) \geq 
\Srt(a_k,b_k)e^{(b_k-\omega)/\eps} - \var(df''\mid_{[b_k,v^{\mathrm{r}}_{k+1}]})e^{(v^{\mathrm{r}}_{k+1} -\omega)/\eps}+(f''(w_{+}) - f''(\omega)),
\end{equation*}
which tends to the positive number~$f''(w_{+}) - f''(\omega)$ as~$\eps$ tends to zero and~$\omega$ is fixed, because~$\Srt(a_k,b_k)$ is bounded.
%
\end{proof}
The notation in the following lemma is the same as in the previous one.
\begin{Le}\label{moninttails}
The sum of forces\textup,~$\Fr(u;a_k,b_k;\eps) + \Fl(u;a_{k+1},b_{k+1};\eps)$\textup, is increasing \textup(as a function of~$u$\textup) on the interval~$(\tl_{k+1},\tr_k) \cap (b_k,a_{k+1})$. 
\end{Le}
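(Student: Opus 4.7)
The plan is to exploit the explicit ordinary differential equations for the forces outside the chordal domains, which are recorded in Proposition~\ref{difforcepou}. Since the point $u$ in the interval $(\tl_{k+1},\tr_k) \cap (b_k,a_{k+1})$ lies strictly to the right of $b_k$ and strictly to the left of $a_{k+1}$, we are outside both chordal domains, so for each individual force the governing equations are the second (``outside'') branches
\begin{equation*}
\Fr'(u) = f'''(u) - \frac{\Fr(u)}{\eps},\qquad \Fl'(u) = -f'''(u) + \frac{\Fl(u)}{\eps},
\end{equation*}
understood in the distributional sense as noted after~\eqref{DifForcePoU}.

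Next I would simply add the two equations. The singular $f'''$-terms cancel, leaving
\begin{equation*}
(\Fr+\Fl)'(u) = \frac{\Fl(u)-\Fr(u)}{\eps}.
\end{equation*}
Because the right-hand side is continuous (both forces are continuous functions of $u$), this already shows that $\Fr+\Fl$ is in fact $C^1$ on the interval in question, and its monotonicity is governed purely by the sign of $\Fl(u)-\Fr(u)$.

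Finally I would invoke the definitions of the tails (Definition~\ref{Tail}) to fix that sign. On $(b_k,\tr_k)$ the right force $\Fr(\cdot;a_k,b_k;\eps)$ is strictly negative by the very definition of $\tr_k$; symmetrically, on $(\tl_{k+1},a_{k+1})$ the left force $\Fl(\cdot;a_{k+1},b_{k+1};\eps)$ is strictly positive. Hence on the intersection $(\tl_{k+1},\tr_k)\cap (b_k,a_{k+1})$ we have $\Fl(u)-\Fr(u) > 0$, so $(\Fr+\Fl)'(u) > 0$, and the sum is increasing there.

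There is essentially no obstacle: the lemma is immediate once one notices the cancellation of $f'''$ in the sum of the ODEs. The only thing to be mildly careful about is the distributional interpretation of the derivatives, but since the $f'''$-terms vanish after addition, the result reduces to the classical monotonicity statement for a $C^1$ function with a manifestly positive derivative.
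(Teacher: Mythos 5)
Your proof is correct and follows the same route as the paper: differentiate the sum, observe that the $f'''$ terms from~\eqref{DifForcePoU} cancel to give $(\Fr+\Fl)'(u)=\eps^{-1}(\Fl(u)-\Fr(u))$, and conclude positivity from the sign conditions $\Fr<0$, $\Fl>0$ on the intersection of the tails. The only cosmetic difference is that you spell out the remark about $C^1$ regularity, which the paper leaves implicit.
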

\begin{proof}
We differentiate the function in question with respect to~$u$, use formulas~\eqref{DifForcePoU}, and get 
\begin{equation}\label{difpowsum}
\big(\Fr(u) + \Fl(u)\big)' = \eps^{-1}\big(-\Fr(u) + \Fl(u)\big).
\end{equation}
This derivative is positive because~$\Fr(u) < 0$,~$\Fl(u) > 0$ on~$(\tl_{k+1},\tr_k)\cap (b_k,a_{k+1})$. The proposition is proved.
\end{proof}
\begin{Cor}\label{BalanceEquationForSmallEps}
The balance equation 
\begin{equation}
\Fr(w;a_k,b_k;\eps) + \Fl(w;a_{k+1},b_{k+1};\eps) = 0
\end{equation}
has a unique root~$w = w_{k+1}$ in~$(\tl_{k+1},\tr_k)$ for sufficiently small~$\eps$. 
\end{Cor}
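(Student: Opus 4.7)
The plan is to derive the corollary as a direct consequence of Lemmas~\ref{TailForSmallEps} and~\ref{moninttails}, combined with a careful evaluation of the sum of forces at the endpoints of $(\tl_{k+1},\tr_k)$.

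First I would verify that the interval $(\tl_{k+1},\tr_k)$ is non-empty and sits inside $(b_k,a_{k+1})$ for all sufficiently small $\eps$. The observation made right before Lemma~\ref{TailForSmallEps} (together with continuity of the forces) gives the strict inclusions $\tr_k > v_{k+1}^{\mathrm r}$ and $\tl_{k+1} < v_{k+1}^{\mathrm l}$, so $\tl_{k+1} < v_{k+1} < \tr_k$ for every $\eps$ in question. Lemma~\ref{TailForSmallEps} then says both endpoints collapse onto $v_{k+1}$ as $\eps\to 0$. Since Lemma~\ref{CupNearRoot} (and Remark~\ref{MulticupInARoot}) forces $b_k\to c_k$ and $a_{k+1}\to c_{k+1}$, and since $v_{k+1}$ lies strictly between $c_k$ and $c_{k+1}$, for small $\eps$ the shrinking interval $(\tl_{k+1},\tr_k)$ is indeed contained in $(b_k,a_{k+1})$.

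Next I would evaluate the sum of forces at the endpoints of this interval using the very definition of the tails (Definition~\ref{Tail}) and the sign conventions of the forces:
\begin{align*}
\Fr(\tl_{k+1};a_k,b_k;\eps) &< 0, & \Fl(\tl_{k+1};a_{k+1},b_{k+1};\eps) &= 0,\\
\Fr(\tr_{k};a_k,b_k;\eps) &= 0, & \Fl(\tr_{k};a_{k+1},b_{k+1};\eps) &> 0.
\end{align*}
(The vanishing holds by continuity at the tail endpoints, while the strict inequalities on the other force use that $\tl_{k+1}$ lies strictly inside the right tail of the cup over $c_k$ and $\tr_k$ lies strictly inside the left tail of the cup over $c_{k+1}$, by the previous paragraph.) Therefore $\Fr+\Fl$ is strictly negative at $\tl_{k+1}$ and strictly positive at $\tr_k$. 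The intermediate value theorem applied to the continuous function $w \mapsto \Fr(w;a_k,b_k;\eps) + \Fl(w;a_{k+1},b_{k+1};\eps)$ gives existence of a root $w_{k+1}$.

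Finally, uniqueness follows from Lemma~\ref{moninttails}: since $(\tl_{k+1},\tr_k) \subset (b_k,a_{k+1})$ for small $\eps$ (established in step one), the sum of forces is strictly increasing on the whole interval, so the root $w_{k+1}$ is unique. The only delicate point of the argument is ensuring the tails genuinely overlap, i.e.\ $\tl_{k+1} < \tr_k$, so that the sign evaluations at the endpoints actually make sense and force an interior zero; but this is automatic from the strict inclusions $\tl_{k+1} < v_{k+1} < \tr_k$ noted above, so no further work is needed.
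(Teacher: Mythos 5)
Your proof is correct and follows essentially the same route as the paper: you evaluate the sum of forces at the two endpoints $\tl_{k+1}$ and $\tr_k$, use Definition~\ref{Tail} and continuity to see that one force vanishes at each endpoint while the other has the expected strict sign, apply the intermediate value theorem for existence, and invoke Lemma~\ref{moninttails} for uniqueness. Your preliminary paragraph (using the observation before Lemma~\ref{TailForSmallEps} together with $b_k\to c_k$, $a_{k+1}\to c_{k+1}$) is a correct and slightly more explicit justification of the paper's one-line claim that $[\tl_{k+1},\tr_k]\subset(b_k,a_{k+1})$ for small $\eps$.
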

\begin{proof}
First, by Lemma~\ref{TailForSmallEps},  we have~$[\tl_{k+1},\tr_k]\subset(b_k,a_{k+1})$ for sufficiently small~$\eps$. By Definition~\ref{Tail} and continuity of forces,~$\Fr(\tr_k;a_k,b_k;\eps) = 0$ and~$\Fl(\tl_{k+1};a_{k+1},b_{k+1};\eps) = 0$. Therefore,
\begin{equation*}
\Fr(\tl_{k+1};a_k,b_k;\eps) + \Fl(\tl_{k+1};a_{k+1},b_{k+1};\eps) = \Fr(\tl_{k+1};a_k,b_k;\eps) < 0,
\end{equation*}
because~$\tl_{k+1} \subset (b_k,\tr_k)$. Similarly,
\begin{equation*}
\Fr(\tl_k;a_k,b_k;\eps) + \Fl(\tl_k;a_{k+1},b_{k+1};\eps) = \Fl(\tl_k;a_k,b_k;\eps) > 0.
\end{equation*}
By the Bolzano--Weierstrass principle, the balance equation has a root~$w_{k+1}$ on~$(\tl_{k+1},\tr_k)$. By Lemma~\ref{moninttails}, this root is unique.
\end{proof}
\begin{Rem}\label{InfiniteVersion}
The results of the preceding lemma and the corollary hold true if one of the cups \textup(or both\textup) sit at infinity\textup, i.e. the corresponding~$c_k$ is infinite.
\end{Rem}
\paragraph{Proof of Theorem~\ref{SimplePicture}.}
Now we have all the ingredients to prove Theorem~\ref{SimplePicture}. First, we take~$\eps$ to be smaller than a half of the minimal length among solid roots of~$f'''$, and smaller than a half of the minimal distance between two distinct essential roots of~$f'''$. Then, by the results of Subsection~\ref{s411} (Lemma~\ref{CupNearRoot} and Remark~\ref{MulticupInARoot}), one can build either a full cup or a multicup on each~$c_k$. But then, if~$\eps$ satisfies the assumptions of Corollary~\ref{BalanceEquationForSmallEps} (together with Remark~\ref{InfiniteVersion}) for each~$k$, one can paste an angle between each pair of consecutive cups or multicups. The relation~$w_{k+1}\to v_k$ is an immediate consequence of Lemma~\ref{TailForSmallEps} and the inclusion~$w_{k+1} \in (\tl_{k+1},\tr_k)$.\qed

\subsection{Examples}\label{s413}
We omit any calculations in this subsection  and only describe the behavior of simple pictures for two specific examples.

\paragraph{Polynomial of sixth degree: simple picture.}
The calculations for this example may be found in Section~\ref{s45} below. Let~$f$ be a polynomial of sixth degree. It will be explained in Section~\ref{s45} that it suffices to consider the typical cases~$f'''(t) = t^3 - 3t + c$ and~$f'''(t) = -t^3 + 3t - c$,~$0 \leq c \leq 2$. The latter inequality makes~$f'''$ have exactly three essential roots.

If the leading coefficient is positive, then, by Theorem~\ref{SimplePicture}, the foliation consists of two angles and a cup for sufficiently small~$\eps$. As we increase~$\eps$, the cup grows, and the angles move. It appears that there exists a moment~$\eps_1 =\eps_1(c)\leq \frac{\sqrt{35}}{9}$ (the first critical point of the evolution) at which the right angle attacks the cup (i.e. its vertex coincides with the right endpoint of the upper chord of the cup). This picture (a long chord and an angle) may be interpreted as a left trolleybus with the help of formula~\eqref{CupPlusAngleL}.

If the leading coefficient is negative, then, by Theorem~\ref{SimplePicture}, the foliation consists of two cups and an angle. Again, as we increase~$\eps$, the cups grow and the angle moves.

If~$c=0$, then the picture is symmetric, and the angle is stable. It appears that both cups attack the angle at~$0$ at the moment~$\eps_1=\sqrt{\frac{15}{8}}$. Here we may use formulas~\eqref{CupPlusAngleR} and~\eqref{ChordalDomainPlusMultitrolleybusR} (or symmetrically, formulas~\eqref{CupPlusAngleL} and~\eqref{ChordalDomainPlusMultitrolleybusL}) and think of the resulting figure as of a multicup on three points.

If~$c > 0$, then the angle attacks the right cup at some moment~$\eps_1 = \eps_1(c) \leq \sqrt{5\big(1 - \big(\frac{c}{2}\big)^{\frac 23}\big)}$ forming a right trolleybus by formula~\eqref{CupPlusAngleR}.
 
The graph representation of the evolution see on Fig.~\ref{evolutionP6+} and Fig.~\ref{evolutionP6-}.

\paragraph{The sine monster: simple picture.} Consider the function
\begin{equation}\label{SineMonsterFormula}
f(t) = \begin{cases}
-\cos t, \quad &|t| \leq \alpha;\\
\frac 12(t^2 - \alpha^2) \cos \alpha + (\sin \alpha - \alpha\cos\alpha)(|t|-\alpha) - \cos \alpha,\quad &|t| \geq \alpha.
\end{cases}
\end{equation}
Here~$\alpha \geq 0$ is a parameter we are going to vary. 
We have~$f'''(t) = -\sin t$ when~$|t| \leq \alpha$ and~$f'''(t) = 0$ otherwise. So, this function may have a lot of roots of~$f'''$ (and the bigger~$\alpha$ is, the more complicated the Bellman candidate will be). This example was considered in paper~\cite{CrazySine}, and the reader is welcome to consult it for calculations. Here we only outline the behavior of the simple picture (before the first critical value) for different~$\alpha$.

\emph{Case~$0 < \alpha \leq \pi$.} In this case, we have only one essential root at zero. So, the foliation consists of a symmetric cup at zero and two tangent domains that last up to infinity. The picture is simple for all~$\eps$.

\emph{Case~$\pi < \alpha \leq 2\pi$.} Now we have five essential roots:~$(-\infty,-\alpha], -\pi, 0, \pi$, and~$[\alpha,\infty)$. So, for sufficiently small~$\eps$ there are two multicups lasting to infinities, two angles near the points~$\pm\pi$, and a symmetric cup at zero. As we increase~$\eps$, the cup grows, and the angles move. It appears that there might be two evolutional scenarios. Namely, there exists a point~$\alpha_1 \in (\pi,2\pi]$ ($\alpha_1 \approx 4.49341$) such that for all~$\alpha \in (\pi,\alpha_1)$ the angles join the multicups at the first critical moment (and thus forming multitrolleybuses by formulas~\eqref{AnglePlusMulticupLeft},~\eqref{AnglePlusMulticupRight}); for all~$\alpha \in (\alpha_1,2\pi]$, the angles simultaneously attack the cup (forming a birdie by formulas~\eqref{CupPlusAngleR} and~\eqref{RTrolleybusPlusAngle}); for~$\alpha = \alpha_1$ we have a huge multicup that contains two infinities (here we use many formulas from Subsection~\ref{s344})!

\emph{Case~$2\pi < \alpha \leq 3\pi$.} We have five essential roots:~$-2\pi,-\pi,0,\pi$,~and~$2\pi$. At the very beginning, there are three cups at the points~$-2\pi,0$, and~$2\pi$, two stable angles at~$-\pi$ and~$\pi$, and tangent domains that fill the gaps between these figures. The angles sit at their points until the moment~$\eps = \alpha-2\pi$, when the border cups begin to grow asymmetrically. Now the angles move towards the middle cup and attack it at the first critical moment forming a birdie (see formulas~\eqref{CupPlusAngleR} and~\eqref{RTrolleybusPlusAngle}). If~$\alpha = 3\pi$, then the angles are stable up to the moment~$\eps = 3\pi$ when the cups attack them (all three cups meet together forming a multicup on four points).

\emph{Case~$3\pi < \alpha \leq 4\pi$.} We have nine essential roots (two of which are infinite rays). There exist  points~$\beta_3< \mu_3 \approx 11.4912$ such that for~$\alpha < \beta_3$ the first critical value occurs when the border angles join the multicups on infinite rays forming infinite multitrolleybuses, for~$\beta_3< \alpha < \mu_3$ the first critical point appears when the three cups at the points~$-2\pi,0$, and~$2\pi$ meet together at the moment~$\eps = \pi$ forming a multicup on four points; for~$\alpha > \mu_3$ the first critical point appears when the border angles (those which started their ways from the points~$\pm 3\pi$) attack their neighbor cups (sitting at~$\pm 2\pi$ correspondingly) forming trolleybuses; when~$\alpha = \mu_3$ these two scenarios happen simultaneously, i.e. all three cups and four angles meet together at the moment~$\eps = \pi$ and form a multibirdie on four points!

Fortunately, for each~$\alpha > 4\pi$, the evolution before the first critical value is similar to the one of the surveyed cases (however, this is not the case for all evolution).

\section{Preparation to evolution}\label{s42}
In this section, we collect technical lemmas that are useful for the evolution. There will be three groups of lemmas. The first group consists of lemmas that describe the places where the fictious vertices of the third type may occur, the second is about tails and forces, and the third one works with the balance equation.

\subsection{Structural lemmas for chords}\label{s421}\index{domain! chordal domain}
We make a convention on chordal domains: the inequalities~$\Slt < 0$ and~$\Srt<0$ hold true inside the chordal domain. Note that the same inequalities are required to build a standard candidate in a chordal domain.
\begin{Le}\label{zerodifrootsInside}
Let~$\Ch([a_0,b_0],*)$ be a chordal domain. If~$\Slt(a_0,b_0) = 0$\textup, then~$f''$  decreases on the right of~$a_0$\textup; if~$\Srt(a_0,b_0) = 0$\textup, then~$f''$ increases on the left of~$b_0$. 
\end{Le}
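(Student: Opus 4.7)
The two claims are symmetric, so the plan is to prove only the first one: assuming $\Slt(a_0,b_0)=0$, deduce that $f''$ decreases on the right of $a_0$. The strategy is to analyze the rate at which $\Slt(a(l),b(l))$ tends to $0^-$ as $l\to l_0^-$ (where $l_0=b_0-a_0$) using the differential identity~\eqref{dDlt}. Inside the chordal domain we have $\Slt<0$ strictly, while at the top chord $\Slt$ vanishes, and this boundary behavior will impose structural restrictions on $f''$ just above $a_0$.

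First I would reparametrize the cup-equation curve by $a$ rather than $l$ in a left neighborhood of $l_0$. Since $\Srt(a_0,b_0)<0$ (the case of simultaneous vanishing can be argued symmetrically from the right), formulas~\eqref{Derivativesabb} yield $a'(l)\to -1$ and $b'(l)\to 0$ as $l\to l_0^-$, so $a$ is a legitimate parameter and $a\in(a_0,a_1]$ traces the curve as $l$ decreases from $l_0$. Integrating~\eqref{dDlt} along this curve from $a_0$ to $a$, and using $\Slt(a_0,b_0)=0$, yields the Volterra integral equation
\begin{equation*}
\Slt(a)=\bigl(f''(a)-f''(a_0)\bigr)+\int_{a_0}^{a}\frac{2\Slt(s)}{b(s)-s}\,ds.
\end{equation*}
The integrating factor $G(a)\df\exp\bigl(-\int_{a_0}^{a}\tfrac{2}{b(s)-s}\,ds\bigr)>0$ converts this equation into
\begin{equation*}
G(a)\,\Slt(a)=\int_{a_0}^{a}G(s)\,df''(s).
\end{equation*}

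Because $\Slt(a)<0$ for all $a\in(a_0,a_1]$ and $G>0$, we obtain $\int_{a_0}^{a}G(s)\,df''(s)<0$ on a right neighborhood of $a_0$. By Condition~\ref{reg}, $f''$ is monotone on some interval $(a_0,a_0+\delta)$. If $f''$ were non-decreasing there, then the measure $df''$ would be non-negative on that interval and the integral would be non-negative too, contradicting the previous inequality. Hence $f''$ must be (strictly) decreasing on the right of $a_0$, which is the claim.

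The main technical point to handle carefully is the measure-theoretic interpretation of~\eqref{dDlt}: under Conditions~\ref{reg} and~\ref{sum} the function $f''$ is only continuous and of bounded variation, so $df''$ is a signed Radon measure and the integral equation above is meant in the Lebesgue--Stieltjes sense. I would verify the integrating-factor manipulation by treating $G\cdot\Slt$ as a BV function along the curve and applying integration by parts against $dG$, which is absolutely continuous. The symmetric claim for $\Srt$ follows by reflecting $a\leftrightarrow b$ and using~\eqref{dDrt} with the parametrization in $b$ instead.
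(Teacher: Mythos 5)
Your proof is correct and takes a genuinely different route than the paper's. The paper works geometrically: after normalizing $L_{a_0,b_0}$ to coincide with the $x$-axis, it argues by contradiction, first showing (via the cup equation inside the chordal domain) that the contrary assumption forces $f''(a_0)=0$ with $f''$ increasing just to the right of $a_0$, and then deriving a sign conflict with $\Slt<0$ inside the domain by comparing the slopes of $L_{a_\delta,a_\delta}$ and $L_{a_\delta,b_\delta}$. You instead treat the identity~\eqref{dDlt} as a first-order linear equation for $\Slt$ along the foliation, parametrized by $a$, and integrate it with the integrating factor $G$ to obtain $G(a)\Slt(a)=\int_{a_0}^a G\,df''$; the vanishing of $\Slt$ at the top chord together with its strict negativity inside then immediately forces $df''$ to be negative near $a_0^+$. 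Your version is shorter and bypasses both the WLOG normalization and the multi-step sign chase, at the cost of invoking the Lebesgue--Stieltjes/BV machinery; the paper's argument is entirely elementary, using only the cup equation and the geometric reading of the differentials. One small remark: the re-parametrization by $a$ is legitimate on all of $(a_0,a_1]$ even when $\Srt(a_0,b_0)=0$, because $a'<0$ holds strictly in the interior by the standing convention on chordal domains, so the caveat you raise about simultaneous vanishing of the differentials is superfluous (though harmless). Also, when concluding, it is worth noting explicitly that $\int_{a_0}^a G\,df''<0$ for all small $a>a_0$ rules out $f''$ being constant as well as non-decreasing on $(a_0,a_0+\delta)$, so together with Condition~\ref{reg} this gives genuine decrease.
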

\begin{proof}
We treat the case of the right differential only. The remaining case is symmetric. As usual, we may assume that~$f'(b_0) = f'(a_0) = 0$ ($L_{a_0,b_0}$ coincides with the~$x$-axis). Then the assumption~$\Srt(a_0,b_0) = 0$ is reformulated as~$f''(b_0) = 0$.

Let us assume the contrary: suppose that~$f''$ does not increase in a left neighborhood of~$b_0$. In particular, it means that~$f' \leq 0$ in a left neighborhood of~$b_0$. First, we claim that in such a case~$f''(a_0) = 0$ and~$f''$ increases in a right neighborhood of~$a_0$.  

To prove the claim, we note that~$f''(a_0) \leq 0$ (because~$\Slt(a_0,b_0) \leq 0$), so, by Condition~\ref{reg}, the claim is equivalent to the fact that~$f' > 0$ in a right neighborhood of~$a_0$. Assume the contrary, let~$f' < 0$ in a right neighborhood of~$a_0$ (the strict inequality sign here is not a misprint, we will comment on it a bit later). We are going to find a contradiction with the cup equation inside~$\Ch([a_0,b_0],*)$, i.e. for~$l < l_0$ (here~$l_0 = b_0 - a_0$, we also use the notation related to chordal domains). Consider the points~$a_\delta=a(l_{\delta})$,~$b_\delta=b(l_{\delta})$,~$l_{\delta}\df l_0-\delta$ where~$\delta$ is a small number. Since~$f'$ is negative in a right neighborhood of~$a_0$ and is non-positive in a left neighborhood of~$b_0$, we have
\begin{equation*}
\frac{f'(a_{\delta}) + f'(b_{\delta})}{2} < 0.
\end{equation*}   
On the other hand, the same inequalities (together with the cup equation~$\av{f'}{[a_0,b_0]} = 0$) lead to
\begin{equation*}
\av{f'}{[a_{\delta},b_{\delta}]} > 0.
\end{equation*}
So, the cup equation~\eqref{vallun} cannot be fulfilled for~$\big(a_{\delta},b_{\delta}\big)$, and we see that~$f' \geq 0$ in a right neighborhood of~$a_0$. We claimed a slightly stronger statement:~$f' > 0$. Examining the reasoning above once more, we see that if~$f' = 0$ on the right of~$a_0$  and the cup equation for~$\big(a_{\delta},b_{\delta}\big)$ holds true, then~$f' = 0$ in a left neighborhood of~$b_0$. But this contradicts the conditions that the differentials inside the chordal domain are strictly negative. So, we have finally proved that~$f''(a_0) = 0$ and~$f''$ increases in a right neighborhood of~$a_0$. 
\begin{figure}[h!]
\begin{center}
\vskip-30pt
\includegraphics[width = 0.99 \linewidth]{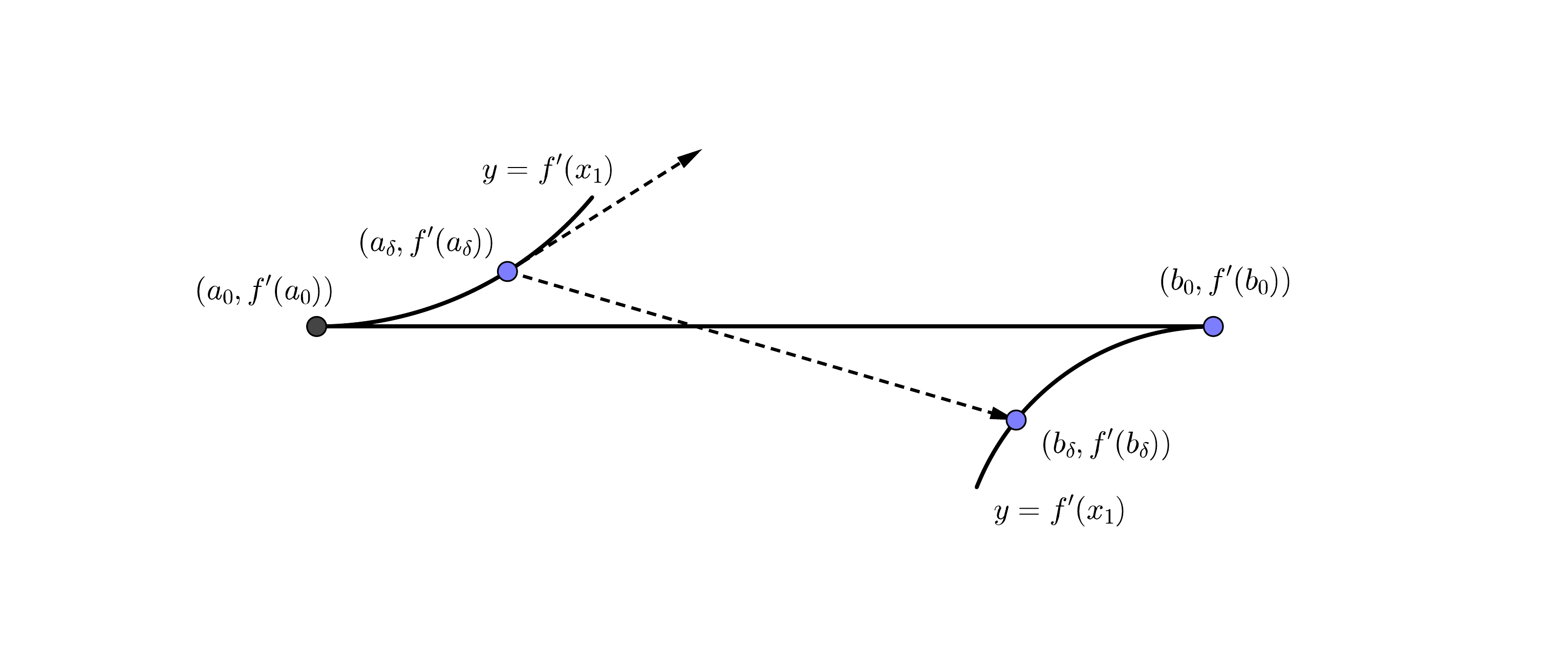}
\vskip-30pt
\caption{Left differential has wrong sign.}
\label{fig:leftdif}
\end{center}
\end{figure}

Now we will come to a contradiction with the inequality~$\Slt < 0$ inside the chordal domain. We will argue geometrically and hope that Figure~\ref{fig:leftdif} may help the reader to understand the heuristics. We see that the slope of the line~$L_{a_{\delta},a_{\delta}}$ increases, because~$f''$ increases in a right neighborhood of~$a_0$. So, it is greater than zero. On the other hand, the slope of the line~$L_{a_{\delta},b_{\delta}}$ is negative, because~$f'(a_{\delta}) > 0$ and~$f'(b_{\delta}) \leq 0$. Recalling the geometric interpretation of the differentials (discussed at the end of Subsection~\ref{s331}), we see that~$\Slt(a_{\delta},b_{\delta}) > 0$, which contradicts the assumptions about the chordal domain. So, we have found a contradiction and finally have proved the lemma. 
\end{proof}
\begin{Le}\label{zerodifrootsOutside}
Suppose that the pair~$(a_0,b_0)$ satisfies the cup equation and the chord~$[A_0,B_0]$ has nonzero tails. If~$\Slt(a_0,b_0) = 0$\textup, then~$f''$ increases on the left of~$a_0$\textup; if~$\Srt(a_0,b_0) = 0$\textup, then~$f''$ decreases on the right of~$b_0$. 
\end{Le}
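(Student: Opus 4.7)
The plan is to read the conclusions off the explicit formulas \eqref{LeftForce} and \eqref{RightForce} for the forces, using that a vanishing differential collapses the boundary term. The two assertions are symmetric, so I will focus on the case $\Slt(a_0, b_0) = 0$ and aim to prove that $f''$ is increasing on the left of $a_0$; the case $\Srt(a_0, b_0) = 0$ then follows by an entirely parallel argument applied to the right force.

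First, I would substitute $\Slt(a_0, b_0) = 0$ into formula \eqref{LeftForce}, which kills the exponential term and leaves
\begin{equation*}
\Fl(u; a_0, b_0; \eps) = e^{u/\eps} \int_u^{a_0} e^{-t/\eps}\, df''(t), \qquad u \leq a_0.
\end{equation*}
The nonzero left tail hypothesis together with Definition~\ref{Tail} furnishes a point $u_1 < a_0$ such that $\Fl(u; a_0, b_0; \eps) > 0$ for every $u \in (u_1, a_0)$. Dividing out the positive prefactor translates this into
\begin{equation*}
\int_u^{a_0} e^{-t/\eps}\, df''(t) > 0, \qquad u \in (u_1, a_0).
\end{equation*}

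Next I would invoke Condition~\ref{reg}: $f''$ has only finitely many monotonicity intervals, so there is some $u_2 \in (u_1, a_0)$ on which $f''$ is either non-decreasing or non-increasing throughout $(u_2, a_0)$. If $f''$ were non-increasing on $(u_2, a_0)$ (including the constant case), then $df''$ would be a non-positive measure there, and the strictly positive weight $e^{-t/\eps}$ would force $\int_u^{a_0} e^{-t/\eps}\, df''(t) \leq 0$ for every $u \in (u_2, a_0)$, in direct contradiction with the positivity displayed above. Hence $f''$ is (strictly) increasing in a left neighborhood of $a_0$, as required.

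The symmetric case $\Srt(a_0, b_0) = 0$ is handled identically: formula \eqref{RightForce} reduces to $\Fr(u; a_0, b_0; \eps) = e^{-u/\eps} \int_{b_0}^u e^{t/\eps}\, df''(t)$, the nonzero right tail delivers $\Fr < 0$ on some $(b_0, u_1')$, and piecewise monotonicity of $f''$ near $b_0$ rules out everything except $f''$ decreasing. There is no real obstacle to the argument — once the degenerate boundary term is observed to vanish, the proof is a direct unwinding of definitions, with Condition~\ref{reg} doing the only nontrivial work by excluding pathological sign oscillations of $df''$ arbitrarily close to the endpoints.
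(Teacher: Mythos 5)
Your proof is correct and follows essentially the same route as the paper's: both observe that the vanishing differential collapses the force to its pure integral term, then invoke Condition~\ref{reg} to reduce to a sign dichotomy for $f'''$ near the endpoint, and rule out the wrong sign (and the constant case) by the nonzero-tail hypothesis. The only cosmetic difference is that the paper phrases the dichotomy in terms of the sign of the measure $f'''$ rather than the monotonicity type of $f''$, which are equivalent statements.
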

\begin{proof}
We treat the case of the right differential only. The remaining case is symmetric. 

We look at formula~\eqref{RightForce} and see that it consists of the integral term only. By Condition~\ref{reg} for the function~$f$, the measure~$f'''$ is either negative or non-negative in a right neighborhood of~$b_0$. If it is non-negative, then the force is non-negative as well. But it must be negative in a right neighborhood of~$b_0$, because~$[A_0,B_0]$ has nonzero right tail. Therefore,~$f''$ decreases on the right of~$b_0$.
\end{proof}
We use notation from Definition~\ref{roots} in the corollary below. This lemma says that during the evolution, the differentials can vanish only in very special situations.
\begin{Cor}\label{RootCorollary}
Suppose that the chordal domain~$\Ch([a_0,b_0],*)$ has nonzero tails. If~$\Srt(a_0,b_0) = 0$\textup, then~$b_0 = c_i$ for some~$i$\textup; if~$\Slt(a_0,b_0) = 0$\textup, then~$a_0 = c_i$ for some~$i$.
\end{Cor}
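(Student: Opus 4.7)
The plan is to observe that this is a direct combination of Lemma~\ref{zerodifrootsInside} and Lemma~\ref{zerodifrootsOutside}, together with the definition of the essential roots $c_i$ given in Definition~\ref{roots}. I will treat the case $\Srt(a_0,b_0)=0$ in detail; the case $\Slt(a_0,b_0)=0$ is completely symmetric.

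First, I will verify that the hypothesis of Lemma~\ref{zerodifrootsOutside} is met for the top chord $[A_0,B_0]$. By Definition~\ref{ForceOfChordalDomain}, the right force of $\Ch([a_0,b_0],*)$ coincides with $\Fr(u;a_0,b_0;\eps)$ for all $u > b_0$, and similarly for the left force. Hence the assumption that the chordal domain has nonzero tails passes automatically to the chord $[A_0,B_0]$, which is exactly what Lemma~\ref{zerodifrootsOutside} requires.

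Next, I combine the two lemmas. From Lemma~\ref{zerodifrootsInside} applied to $\Ch([a_0,b_0],*)$, the vanishing of $\Srt(a_0,b_0)$ forces $f''$ to increase in some left neighborhood of $b_0$; in particular $f''' \geq 0$ there (and is not identically zero on any left neighborhood, since $f''$ strictly increases). From Lemma~\ref{zerodifrootsOutside}, the same assumption forces $f''$ to decrease in some right neighborhood of $b_0$, so $f''' \leq 0$ there and is not identically zero either. Therefore $b_0$ is a point at which $f'''$ changes sign from $+$ to $-$ in the sense of Definition~\ref{roots}; consulting that definition, $b_0$ lies in (and, up to the convention of identifying points with degenerate intervals, equals) some $c_i$. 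The case of $\Slt(a_0,b_0)=0$ is handled by the symmetric halves of the two lemmas: $f''$ decreases on the right of $a_0$ and increases on the left of $a_0$, so $f'''$ changes sign from $+$ to $-$ at $a_0$, which places $a_0$ in some $c_i$ as well.

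There is no serious obstacle here; the whole content of the corollary is bookkeeping. The only mildly delicate point is making sure that the ``nonzero tail'' hypothesis of Lemma~\ref{zerodifrootsOutside}, stated for a single chord, is the same as the analogous hypothesis stated for the chordal domain in the corollary, which is why I begin with that verification.
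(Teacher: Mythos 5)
Your proof is correct and matches the paper's intended argument: the corollary is stated immediately after Lemmas~\ref{zerodifrootsInside} and~\ref{zerodifrootsOutside} precisely because it is their direct combination with Definition~\ref{roots}, and you have verified the one potential pitfall (that the nonzero-tail hypothesis stated for the chordal domain does carry over to the single chord $[A_0,B_0]$, since by Definition~\ref{ForceOfChordalDomain} the two forces agree outside $[a_0,b_0]$). The bookkeeping that $f''$ strictly increases on the left and strictly decreases on the right of $b_0$, hence $b_0$ is a non-growth point isolated among growth points with the sign change from $+$ to $-$, is exactly what identifies it as a single-point $c_i$.
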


\subsection{Tails growth lemmas}\label{s422}\index{force! function}
\begin{Le}\label{biggercupsmallerforce}
Suppose~$\Ch([a_1,b_1],*)$ is embedded into~$\Ch([a_0,b_0],*)$\textup, i.e. the foliation of the former chordal domain coincides with some part of the foliation of the latter. Then the left force of~$\Ch([a_1,b_1],*)$ is not less than the left force of~$\Ch([a_0,b_0],*)$\textup, whereas the right force of~$\Ch([a_1,b_1],*)$ does not exceed the right force generated by~$\Ch([a_0,b_0],*)$. Outside~$[a_1,b_1]$ the inequalities are strict.
\end{Le}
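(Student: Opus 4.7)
The plan is to compare the two right forces through the first-order ODEs recorded in Proposition~\ref{difforcepou}; the left-force statement will follow from the mirror argument using the second identity of the same proposition. First I would observe that by the very definition of an embedded chordal domain, the foliation of $\Ch([a_1,b_1],*)$ is literally a sub-foliation of that of $\Ch([a_0,b_0],*)$, so below the chord $[A_1,B_1]$ the two right forces coincide with the same differential $\Srt$ evaluated along the common foliation. They agree, in particular, at $u=b_1$, where both equal $\Srt(a_1,b_1)$. Hence no inequality has to be proved on this common range, and it remains to establish strict inequality for $u>b_1$.

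Denote the outer right force by $F_0$, the inner one by $F_1$, and set $D=F_0-F_1$. For $u\in(b_1,b_0]$ the outer force is still ``inside'' its chordal domain and obeys $F_0'=f'''-2F_0/\ell(u)$, whereas $F_1$ has already left its chordal domain and obeys $F_1'=f'''-F_1/\eps$. A one-line subtraction gives the linear equation
\begin{equation*}
D'(u) + \frac{D(u)}{\eps} \,=\, -F_0(u)\Big(\frac{2}{\ell(u)}-\frac{1}{\eps}\Big).
\end{equation*}
By the standing convention on chordal domains, $F_0=\Srt<0$ strictly inside the outer foliation, and $\ell(u)<2\eps$ on $(b_1,b_0)$ because only the top chord of a full chordal domain can be long. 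Therefore the right-hand side is strictly positive. Multiplying by the integrating factor $e^{u/\eps}$ and integrating from $b_1$, where $D(b_1)=0$, produces $D(u)>0$ on all of $(b_1,b_0]$.

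For $u>b_0$, both forces now satisfy the exterior equation $F'=f'''-F/\eps$, so $D'=-D/\eps$, and thus $D(u)=D(b_0)e^{(b_0-u)/\eps}>0$ by the previous paragraph. This closes the strict right-force inequality on $(b_1,\infty)$. The hard part, modest as it is, will be to manage the piecewise change of the outer ODE at $u=b_0$: I plan to run the comparison on $(b_1,b_0)$ and $(b_0,\infty)$ separately and glue the two conclusions via the continuity of $F_0$ at $b_0$, since both the interior differential $\Srt(a(l),b(l))$ and the exterior formula from Definition~\ref{Forces} evaluate to $\Srt(a_0,b_0)$ there. The degenerate case $b_1-a_1=2\eps$ collapses to $[a_1,b_1]=[a_0,b_0]$ and is vacuous.
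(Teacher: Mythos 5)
Your proof is correct, but it takes a genuinely different route from the paper. The paper's proof is a one-liner: it invokes formula~\eqref{Fpoell} from Lemma~\ref{DifForcePoL}, which gives the derivative of the chord force with respect to the chord length $l$. Since~$\Slt,\Srt<0$ inside the foliation and~$l<2\eps$, that derivative is strictly positive, and the comparison follows immediately by integrating in~$l$ from~$l_1=b_1-a_1$ up to either~$\ell(u)$ (when~$u\in(b_1,b_0)$) or~$l_0=b_0-a_0$ (when~$u>b_0$). Your argument instead fixes the two forces~$F_0$ and~$F_1$, subtracts the two $u$-differentiation formulas from Proposition~\ref{difforcepou} to obtain the linear ODE
\begin{equation*}
D'+\frac{D}{\eps}=-F_0\Big(\frac{2}{\ell(u)}-\frac{1}{\eps}\Big),
\end{equation*}
and integrates with the factor~$e^{u/\eps}$ starting from~$D(b_1)=0$; past~$b_0$ the inequality propagates via~$D'=-D/\eps$. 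Both approaches exploit the same underlying sign structure (negative differentials, $\ell<2\eps$), but they use disjoint differentiation identities: the paper differentiates in~$l$ at fixed~$u$, you differentiate in~$u$ at fixed chord and compare solutions. The paper's argument is shorter once~\eqref{Fpoell} is available and avoids splitting into the two regimes~$(b_1,b_0)$ and~$(b_0,\infty)$; yours is a self-contained Gronwall-type comparison that relies only on the simpler $\partial_u$-identities. Your handling of the degenerate long-chord case and of the seam at~$u=b_0$ (via continuity of~$F_0$) is correct, and your observation that strictness follows from the strict positivity of the forcing term and then propagates exponentially is exactly the content of the phrase ``outside~$[a_1,b_1]$ the inequalities are strict.''
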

\begin{proof}
This is a straightforward consequence of formulas~\eqref{Fpoell}.
\end{proof}

\begin{Cor}\label{TailsGrowthNonEv}
Suppose~$\Ch([a_1,b_1],*)$ is embedded into~$\Ch([a_0,b_0],*)$ in the sense of Lemma\textup{~\ref{biggercupsmallerforce}}. Then the tails of the former chordal domain strictly contain the tails of the latter. 
\end{Cor}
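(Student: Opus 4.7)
The plan is to deduce the strict containment of tails directly from the pointwise force inequality supplied by Lemma~\ref{biggercupsmallerforce}. I treat the right tail; the left case is completely symmetric. Write $\tr_i$ for the outer endpoint of the right tail of $\Ch([a_i,b_i],*)$, $i=0,1$.

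First I will apply Lemma~\ref{biggercupsmallerforce}, which yields the pointwise inequality
\begin{equation*}
\Fr\big(u;\Ch([a_1,b_1],*);\eps\big)\le\Fr\big(u;\Ch([a_0,b_0],*);\eps\big)
\end{equation*}
on the common domain, together with strict inequality whenever $u\notin[a_1,b_1]$. For every $s$ in the open right tail of $\Ch([a_0,b_0],*)$ the definition of $\tr_0$ forces $\Fr(s;\Ch([a_0,b_0],*);\eps)<0$, and the lemma then gives $\Fr(s;\Ch([a_1,b_1],*);\eps)<0$. This already shows that every such $s$ belongs to the right tail of $\Ch([a_1,b_1],*)$, so $\tr_1\ge\tr_0$, while simultaneously the inner endpoint of the smaller tail cannot lie to the right of the inner endpoint of the larger.

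To upgrade the weak inequality to a strict one I will inspect the value of the force at $s=\tr_0$. Continuity of the force yields $\Fr(\tr_0;\Ch([a_0,b_0],*);\eps)=0$, whereas the chain $\tr_0>b_0\ge b_1$ places $\tr_0$ strictly outside $[a_1,b_1]$. The strict case of Lemma~\ref{biggercupsmallerforce} then forces $\Fr(\tr_0;\Ch([a_1,b_1],*);\eps)<0$; continuity propagates this inequality to a right neighborhood of $\tr_0$, and hence $\tr_1>\tr_0$. The mirror argument at the left endpoint finishes the proof.

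The one delicate point I anticipate is making sure that the strict case of Lemma~\ref{biggercupsmallerforce} is invoked at a point legitimately outside $[a_1,b_1]$. This is precisely what the chain $\tr_0>b_0\ge b_1$ guarantees; the assumption that $\Ch([a_0,b_0],*)$ has a genuine right tail \textup(i.e.\ $\tr_0>b_0$\textup) is exactly what makes the step work, and otherwise the statement is vacuous.
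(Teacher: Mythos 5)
Your proof follows the same route as the paper's (which is merely the line ``This follows from Definition~\ref{Tail} and Lemma~\ref{biggercupsmallerforce}''): combine the pointwise force inequality of Lemma~\ref{biggercupsmallerforce} with the definition of the tail endpoint, use continuity to get $\Fr=0$ at~$\tr_0$, and invoke strictness to push past~$\tr_0$. This is the intended argument and your main chain of reasoning is sound.

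There is, however, a slip in your closing paragraph. You write that the assumption $\tr_0>b_0$ (a ``genuine right tail'') is needed and that the statement is otherwise vacuous. This misreads Definition~\ref{Tail}: the right tail of $\Ch([a_0,b_0],*)$ is the interval $(b_{\mathrm{bot}},\tr_0)$, where $b_{\mathrm{bot}}$ is the right endpoint of the common \emph{bottom} chord~$*$, not of the top chord $[A_0,B_0]$. Since always $\tr_0\ge b_0>b_{\mathrm{bot}}$, this tail is never empty, and the case $\tr_0=b_0$ is not vacuous --- one still has to show $\tr_1>b_0$. The good news is that your argument does not actually require $\tr_0>b_0$: all you use is that $\tr_0$ lies strictly outside $[a_1,b_1]$, and that already follows from $\tr_0\ge b_0>b_1$ (the embedding is non-trivial, so $b_1<b_0$). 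When $\tr_0=b_0$, the identity $\Fr_0(\tr_0)=0$ still holds, because the paper's convention gives $\Fr_0(b_0)=\Srt(a_0,b_0)\le0$ while the definition of $\tr_0=b_0$ forces $\Fr_0(b_0)\ge 0$ by continuity. So you should simply replace ``$\tr_0>b_0$'' by ``$\tr_0\ge b_0>b_1$'' and drop the vacuousness claim.

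One further small omission: to conclude $\tr_1\ge\tr_0$ from the definition of $\tr_1$ you also need $\Fr_1<0$ on $(b_1,b_0]$ (the piece of the larger chordal domain lying outside the smaller one). This follows immediately --- there $\Fr_0$ equals $\Srt$ of an interior chord, which is strictly negative by the paper's standing convention on chordal domains, and Lemma~\ref{biggercupsmallerforce} gives $\Fr_1<\Fr_0$ strictly because these points lie outside $[a_1,b_1]$ --- but it is a step you pass over silently while examining only the part of the tail where the ``definition of $\tr_0$'' applies.
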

\begin{proof}
This follows from Definition~\ref{Tail} and Lemma~\ref{biggercupsmallerforce}. 
\end{proof}

In other words, the previous statements can be reformulated informally: the less the chordal domain is, the larger the tails are and the bigger the absolute values of the forces inside the tails are. 

\begin{Le}\label{EvolutionalTailGrowthLemma}
Let~$\Ch([a_0,b_0],*)$ be a chordal domain\textup, let~$\Fl$ and~$\Fr$ be its left and right forces respectively. If~$u$ belongs to the closure of the left tail of~$\Ch([a_0,b_0],*)$\textup, then 
\begin{equation*}
\frac{\partial \Fl(u)}{\partial \eps} \geq 0,
\end{equation*}
and if~$u$ belongs to the closure of the right tail\textup, then
\begin{equation*}
\frac{\partial \Fr(u)}{\partial \eps} \leq 0.
\end{equation*}
The inequalities are strict outside~$[a_0,b_0]$.
\end{Le}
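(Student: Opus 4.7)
The plan is to exploit the explicit formula for the force together with the definition of a tail as the interval on which the force has a definite sign. By symmetry, I focus on the right force $\Fr$. Consider a point $u$ in the closure of the right tail. I split the argument at $u = b_0$.

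For $u \in [b_1, b_0]$ (the part of the right tail lying inside the chordal domain), the definition gives $\Fr(u) = \Srt(a(l), b(l))$ where $u = b(l)$. The chord $(a(l), b(l))$ is determined purely by the cup equation~\eqref{urlun} and the condition $b(l) - a(l) = l$, neither of which involves $\eps$. Hence $\partial_\eps \Fr(u) = 0$ on $[b_1, b_0]$.

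For $u > b_0$, I differentiate the ODE $\eps \Fr'(u) + \Fr(u) = \eps f'''(u)$ from Proposition~\ref{difforcepou} with respect to $\eps$. Setting $G = \partial_\eps \Fr$, this gives
\begin{equation*}
\eps G'(u) + G(u) = \frac{\Fr(u)}{\eps}, \qquad u > b_0.
\end{equation*}
Since $\Fr(b_0) = \Srt(a_0, b_0)$ is independent of $\eps$, we have $G(b_0) = 0$. Multiplying by the integrating factor $e^{u/\eps}/\eps$ and integrating from $b_0$ to $u$,
\begin{equation*}
G(u) = \frac{e^{-u/\eps}}{\eps^2}\int_{b_0}^u \Fr(s)\,e^{s/\eps}\,ds.
\end{equation*}
(One can check this agrees with the derivative of the explicit formula for $\Fr$ in Definition~\ref{Forces} by an application of Fubini to the double integral.) On the closure of the right tail, $\Fr(s) \leq 0$ for $s \in [b_0, u]$, and $\Fr(s) < 0$ on the open tail $(b_0, \tr)$. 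Therefore $G(u) \leq 0$, with strict inequality whenever $u > b_0$, i.e. outside $[a_0, b_0]$. The left force is treated by the symmetric computation, yielding
\begin{equation*}
\partial_\eps \Fl(u) = \frac{e^{u/\eps}}{\eps^2}\int_u^{a_0} \Fl(s)\,e^{-s/\eps}\,ds \geq 0,
\end{equation*}
with the analogous strictness statement.

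I expect no real obstacle here: the main point is simply the observation that once we are outside the chordal domain, $\Fr$ satisfies a linear first-order ODE in $u$ whose coefficient depends on $\eps$, and that differentiating that ODE in $\eps$ yields another linear ODE of the same type with driving term $\Fr/\eps$ whose sign is precisely the one controlled by the tail hypothesis. The only care needed is to verify that the boundary contribution at $u = b_0$ vanishes, which follows because $\Srt(a_0,b_0)$ depends only on the geometry of the fixed top chord, not on $\eps$.
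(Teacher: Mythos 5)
Your proof is correct and follows essentially the same route as the paper: differentiate the first-order ODE from Proposition~\ref{difforcepou} in~$\eps$, solve the resulting linear ODE for~$\partial_\eps \Fr$ with zero initial data at~$b_0$, and read off the sign from~$\Fr \leq 0$ on the tail. Your justification of the boundary condition~$G(b_0)=0$ — that~$\Fr(b_0)=\Srt(a_0,b_0)$ depends only on the geometry of the fixed top chord and hence not on~$\eps$ — is slightly more explicit than the paper's ``straightforward calculation,'' but the argument is the same.
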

\begin{proof}
We treat the case of the right force only, the other one is symmetric. First, we note that inside~$[a_0,b_0]$ the force does not depend on~$\eps$, therefore,~$\frac{\partial \Fr(u)}{\partial \eps} = 0$. Second, we use formula~\eqref{leftder} and see that  
\begin{equation*}
\Fr'(u) + \frac{\Fr(u)}{\eps} - f'''(u)=0
\end{equation*}
outside this interval.
We differentiate this equation with respect to~$\eps$ and see that 
\begin{equation*}
\Big(\Fr'(u)\Big)_{\eps}' + \frac{\Big(\Fr(u)\Big)_{\eps}'}{\eps} - \frac{\Fr(u)}{\eps^2} = 0.
\end{equation*}
After interchanging the differentiations with respect~$u$ and~$\eps$ in the first summand we see that~$\Big(\Fr(u)\Big)_{\eps}'$ is a solution of the first-order differential equation with respect to~$u$: 
\begin{equation*}
\varkappa'(u) + \frac{\varkappa(u)}{\eps} - \frac{\Fr(u)}{\eps^2} = 0.
\end{equation*}
It is similar to equation~\eqref{difeq}, we can solve it: 
\begin{equation*}
\Big(\Fr(u)\Big)_{\eps}' = \eps^{-2}e^{-u / \eps}\int\limits_{b_0}^u \Fr(t)e^{t /\eps}dt,
\end{equation*}
we begin the integration from~$b_0$, because~$\lim_{z \to b_0+}\Big(\Fr(z)\Big)_{\eps}' = 0$ (this limit relation can be verified by a straightforward calculation using formula~\eqref{RightForce}).
Now, the result follows immediately, because~$\Fr < 0$ inside the tail.
\end{proof}
\begin{Cor}\label{TailsGrowthCorollary}
Let~$\Ch([a_0,b_0],*)$ be a chordal domain with nonzero tails. If we grow~$\eps$ a little\textup, its tails strictly grow. 
\end{Cor}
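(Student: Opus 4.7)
\medskip
\noindent\textbf{Plan of proof.}
The plan is to treat only the right tail, since the left tail is fully symmetric. Fix $\eps_0$ and write $\tr_0 = \tr(\Ch([a_0,b_0],*);\eps_0)$. Inside the chordal domain (i.e.\ for $u \in (b_1,b_0)$) the right force equals $\Srt(a(l),b(l))$ by Definition~\ref{ForceOfChordalDomain} and hence does not depend on~$\eps$ at all; the chordal domain itself depends only on the cup equation~\eqref{urlun} and the sign conditions on the differentials, both of which are $\eps$-free. So only the ``outer'' piece of the tail, beyond $b_0$, can possibly change with~$\eps$, and the non-vanishing hypothesis guarantees precisely that $\tr_0 > b_0$.

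First I would record two simple consequences of the definition of $\tr_0$. Since $\Fr(s;\eps_0)<0$ for all $s\in(b_0,\tr_0)$, continuity of the force in~$u$ (Proposition~\ref{difforcepou}) yields $\Fr(\tr_0;\eps_0)\le 0$. Next, I would apply Lemma~\ref{EvolutionalTailGrowthLemma} at the point $u=\tr_0$: because $\tr_0>b_0$ lies strictly outside $[a_0,b_0]$, the strict inequality
\[
\frac{\partial \Fr}{\partial \eps}(\tr_0) < 0
\]
holds, and for the same reason it holds at every $s \in (b_0,\tr_0]$ as well.

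The main step is then to propagate this into a statement about $\tr(\eps)$ for $\eps$ slightly larger than $\eps_0$. For such $\eps$, the strict monotonicity above gives $\Fr(\tr_0;\eps) < \Fr(\tr_0;\eps_0)\le 0$ and, analogously, $\Fr(s;\eps)<\Fr(s;\eps_0)<0$ for every $s\in(b_0,\tr_0)$. On the inner piece $s\in(b_1,b_0)$ the force is unchanged, so it remains strictly negative. Finally, since $\Fr(\cdot\,;\eps)$ is continuous and strictly negative at $\tr_0$, there exists $\delta>0$ such that $\Fr(s;\eps)<0$ on $[\tr_0,\tr_0+\delta]$. Combining these three pieces, the defining condition of $\tr$ from Definition~\ref{Tail} is met on $(b_1,\tr_0+\delta)$, whence $\tr(\eps)\ge\tr_0+\delta>\tr_0$.

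The only subtlety I would pay attention to is the smooth dependence of $\Fr(u;\eps)$ jointly in $(u,\eps)$ so that passing from the pointwise strict derivative bound at $\tr_0$ to a uniform statement on a small closed interval $[\tr_0,\tr_0+\delta]$ is legitimate; this is what allows the last continuity step to conclude with a \emph{strict} gain. That joint regularity is immediate from the explicit formula~\eqref{RightForce} for the force outside $[a_0,b_0]$, so no further work is needed, and the argument terminates.
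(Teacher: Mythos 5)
Your proof is correct and takes essentially the same route as the paper: the key step in both is applying Lemma~\ref{EvolutionalTailGrowthLemma} at the endpoint $\tr_0$ (where the strict inequality $\partial\Fr/\partial\eps < 0$ is available since $\tr_0 > b_0$) and then using continuity to push the tail past $\tr_0$. The paper's version is terser (it simply notes that the forces grow in absolute value on the tails, so the tails cannot shrink, and then that the force at the end of the tail grows), while you make explicit the inner/outer decomposition, the observation that the force is $\eps$-free inside $[b_1,b_0]$, and the joint $(u,\eps)$-regularity needed to run the continuity argument at $\tr_0$; these are sound elaborations of the same mechanism rather than a different idea.
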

\begin{proof}
By Lemma~\ref{EvolutionalTailGrowthLemma}, the forces increase in absolute value on the corresponding tails, therefore, the tails cannot decrease. What is more, if we grow~$\eps$ a little, the force at the end of the tail will grow, and the tail will enlarge.
\end{proof}
\begin{Rem}\label{TailGrowthInfinity}
The results of the previous lemma and corollary hold for the forces coming from the infinities and multicups as well.
\end{Rem}
\begin{Le}\label{FullCupEvolutionTails}
Let~$\Ch([a(2\eps),b(2\eps)],*)$ be a full chordal domain\textup, i.e.~$b(2\eps) - a(2\eps) = 2\eps$. Then the tails of~$\Ch([a(2\eps),b(2\eps)],*)$ strictly grow in~$\eps$. 
\end{Le}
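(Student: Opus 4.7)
The plan is to reduce the statement to a single derivative computation that combines Lemma~\ref{difequal} and Lemma~\ref{EvolutionalTailGrowthLemma}. Write $\Fr(u;\eps)\df\Fr\big(u;a(2\eps),b(2\eps);\eps\big)$ for the right force of the full chordal domain, viewed as a function of $u$ and $\eps$. I would argue that $\frac{d}{d\eps}\Fr(u;\eps)<0$ at every $u$ strictly outside $[a(2\eps),b(2\eps)]$ that lies in the closure of the current right tail; by symmetry, the same reasoning applied to $\Fl$ gives $\frac{d}{d\eps}\Fl(u;\eps)>0$ on the left tail, and the strict growth of the tails then follows from the implicit-function/continuity argument already used in Corollary~\ref{TailsGrowthCorollary}.

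The first step is to observe that the $\eps$-derivative splits into two contributions, one coming from the explicit $\eps$-dependence of the force formula and one coming from the variation of the endpoints through $l=2\eps$. Lemma~\ref{difequal} asserts that these two contributions combine to just the explicit partial derivative, i.e.
\begin{equation*}
\frac{d}{d\eps}\Fr(u;\eps)=\frac{\partial}{\partial\eps}\bigg[\Fr\big(u;a(l),b(l);\eps\big)\bigg]\bigg|_{l=2\eps}.
\end{equation*}
The underlying mechanism is exactly formula~\eqref{Fpoell}: the factor $\frac{2}{l}-\frac{1}{\eps}$ vanishes when $l=2\eps$, so the $l$-variation of the force along the foliation drops out at the top chord of a \emph{full} chordal domain. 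This is the whole reason one has to treat long chords separately.

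The second step is to apply Lemma~\ref{EvolutionalTailGrowthLemma} to the right-hand side: for $u$ in the closure of the right tail and strictly outside $[a(2\eps),b(2\eps)]$ the partial $\eps$-derivative is strictly negative. Chaining the two steps yields $\frac{d}{d\eps}\Fr(u;\eps)<0$ on the portion of the tail outside the chord. Finally, if $\tr(\eps)<\infty$ denotes the right endpoint of the tail, then $\Fr(\tr(\eps);\eps)=0$ and $\tr(\eps)>b(2\eps)$; for $\eps'>\eps$ slightly larger, $\tr(\eps)$ still lies strictly to the right of $b(2\eps')$ by continuity, and the strict monotonicity just proved gives $\Fr(\tr(\eps);\eps')<0$, forcing $\tr(\eps')>\tr(\eps)$. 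The case $\tr(\eps)=\infty$ is automatic, and the left tail is handled by the symmetric inequality.

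There is no serious obstacle: the lemma is essentially a packaging of Lemmas~\ref{difequal} and~\ref{EvolutionalTailGrowthLemma}. The only point requiring care is the bookkeeping between total and partial $\eps$-derivatives, which is precisely where the identity $l=2\eps$ (i.e.\ the fullness of the chordal domain) becomes crucial via the vanishing of $\frac{2}{l}-\frac{1}{\eps}$.
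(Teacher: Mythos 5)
Your proposal is correct and follows essentially the same route as the paper: the key step is to combine Lemma~\ref{difequal} (whose mechanism you correctly trace to the vanishing of $\frac{2}{l}-\frac{1}{\eps}$ in~\eqref{Fpoell} when $l=2\eps$) with Lemma~\ref{EvolutionalTailGrowthLemma}, and then use the strict sign of the derivative to push the tail endpoint outward. You spell out the terminal continuity argument a bit more explicitly than the paper does, but the content is the same.
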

\begin{proof}
Indeed, consider some point~$u$ that belongs, say, to the left tail of~$\Ch([a(2\eps),b(2\eps)],*)$ for some~$\eps$. First, we need to prove that~$\Fl(u; a(2\eps), b(2\eps);\eps)$ grows in~$\eps$. But its derivative with respect to~$\eps$ is non-negative, because, by Lemma~\ref{difequal}, for each~$\eps$ it equals the corresponding derivative taken as if the chordal domain had fixed upper chord, which is nonnegative by Lemma~\ref{EvolutionalTailGrowthLemma}. Thus, the tails do not decrease. Moreover, the derivative of the corresponding force is nonzero at the end of each tail, again by Lemma~\ref{EvolutionalTailGrowthLemma}, so, the tail must grow.
\end{proof}
\begin{Def}\label{FlowOfChordalDomains}\index{domain! chordal domain! flow of chordal domains}
Let~$0 \leq \eps_1 < \eps_2 < \eps_{\infty}$. We call a chordal domain~$\Ch([a_0,b_0],[a_1,b_1])$ with a continuous function~$\mathfrak{l}\colon(\eps_1,\eps_2) \to (b_1-a_1,b_0-a_0]$ a flow of chordal domains. We define the forces of such a flow as
\begin{equation*}
\begin{aligned}
&\Fr\Big(u;\Ch([a_0,b_0],[a_1,b_1]),\mathfrak{l};\eps\Big) = \Fr\Big(u;\Ch\big(\big[a(\mathfrak{l}(\eps)),b(\mathfrak{l}(\eps))\big],[a_1,b_1]\big);\eps\Big), \quad &u \in (b_1,+\infty);\\
&\Fl\Big(u;\Ch([a_0,b_0],[a_1,b_1]),\mathfrak{l};\eps\Big) = \Fl\Big(u;\Ch\big(\big[a(\mathfrak{l}(\eps)),b(\mathfrak{l}(\eps))\big],[a_1,b_1]\big);\eps\Big), \quad &u \in (-\infty, a_1).
\end{aligned} 
\end{equation*}
We also define the tails of the flow as the intervals~$(b_1,\tr(\eps))$ and~$(\tl(\eps),a_1)$\textup, where~$\tr$ and~$\tl$ are the right and the left endpoints of the tails of~$[A(\mathfrak{l}(\eps)),B(\mathfrak{l}(\eps))]$. We say that a flow is decreasing if~$\mathfrak{l}$ is decreasing. We say that a flow is full if~$\mathfrak{l}(\eps) = 2\eps$ for all~$\eps \in (\eps_1,\eps_2)$.
\end{Def}
\begin{Cor}\label{FlowMonotonicityCorollary}
Consider a decreasing or full flow of chordal domains. The forces strictly increase in absolute value as functions of~$\eps$ on its tails outside the chordal domain and are constant inside the chordal domain. As a consequence\textup, the tails grow.
\end{Cor}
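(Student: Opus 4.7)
\textbf{Proof plan for Corollary~\ref{FlowMonotonicityCorollary}.} I would split the statement into three assertions and treat them in order: (a) constancy of the forces inside the chordal domain along the flow; (b) strict monotonicity of $|F|$ in $\eps$ outside the chordal domain; (c) strict growth of the tails. The main technical tools already available are Lemma~\ref{DifForcePoL}, which computes $\partial F/\partial l$ along the foliation, and Lemma~\ref{EvolutionalTailGrowthLemma} (plus Lemma~\ref{FullCupEvolutionTails}), which handles the $\eps$-derivative for a frozen upper chord.

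For (a), note that the chordal domain $\Ch([a_0,b_0],[a_1,b_1])$, together with its foliation, is a fixed geometric object; only the selector $\mathfrak{l}(\eps)$ varies. Hence, for a point $u = b(l^*)$ on the right side of the fixed domain with $l^* \leq \mathfrak{l}(\eps)$, the force at $u$ equals $\Srt(a(l^*),b(l^*))$ by Definition~\ref{ForceOfChordalDomain}, which depends only on $l^*$. This is manifestly independent of $\eps$; the argument for the left force is symmetric.

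For (b), fix $u$ above the top chord at time $\eps$, so that $u > b(\mathfrak{l}(\eps))$. Write the total derivative
\begin{equation*}
\frac{d}{d\eps}\Fr(u;\mathrm{flow};\eps) = \frac{\partial \Fr}{\partial l}\bigg|_{l=\mathfrak{l}(\eps)} \mathfrak{l}'(\eps) + \frac{\partial \Fr}{\partial \eps}\bigg|_{l=\mathfrak{l}(\eps)}.
\end{equation*}
Lemma~\ref{DifForcePoL} identifies the first partial as $-\bigl(\tfrac{2}{l}-\tfrac{1}{\eps}\bigr)\tfrac{\Slt\Srt}{\Slt+\Srt}e^{-(u-b(l))/\eps}$; since $\Slt,\Srt<0$, the factor $\Slt\Srt/(\Slt+\Srt)$ is negative. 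For a decreasing flow one has $\mathfrak{l}(\eps)<2\eps$ and $\mathfrak{l}'(\eps)<0$, so the first term is strictly negative; the second term is strictly negative by Lemma~\ref{EvolutionalTailGrowthLemma} because $u$ lies strictly outside $[a(\mathfrak{l}(\eps)),b(\mathfrak{l}(\eps))]$. Thus $\Fr$ strictly decreases, i.e.\ $|\Fr|$ strictly increases. For a full flow, $\mathfrak{l}(\eps)=2\eps$ makes $\tfrac{2}{l}-\tfrac{1}{\eps}$ vanish so the first term drops out; the conclusion then follows either from the second term alone or directly from Lemma~\ref{FullCupEvolutionTails}. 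The left force is handled symmetrically.

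For (c), let $\tr(\eps)$ be the right endpoint of the right tail of the flow at time $\eps$, so $\Fr(\tr(\eps);\eps)=0$. By (b), for $\eps'>\eps$ close to $\eps$ the value $\Fr(\tr(\eps);\eps')$ is strictly negative, so by continuity of the force and Definition~\ref{Tail} the tail extends strictly past $\tr(\eps)$, giving $\tr(\eps')>\tr(\eps)$; again the left tail is symmetric. The only mildly delicate point in the whole plan is the bookkeeping in (b): one must notice that the algebraic sign produced by Lemma~\ref{DifForcePoL} combines with the sign of $\mathfrak{l}'(\eps)$ so as to reinforce, rather than cancel, the contribution from Lemma~\ref{EvolutionalTailGrowthLemma}, and that the full-flow case is precisely the degenerate case where the first summand vanishes identically.
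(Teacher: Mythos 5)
Your proof is correct, and the underlying decomposition is the same as the paper's. The paper cites Lemma~\ref{biggercupsmallerforce} (forces grow in absolute value when the chordal domain shrinks) together with Lemma~\ref{EvolutionalTailGrowthLemma} (forces grow in absolute value when $\eps$ grows at a fixed top chord); you unfold the first of these by computing $\partial \Fr/\partial l$ directly from Lemma~\ref{DifForcePoL}. Since Lemma~\ref{biggercupsmallerforce} is itself proved in the paper as ``a straightforward consequence of formulas~\eqref{Fpoell}'', your argument is mathematically the same, just presented at one level lower of abstraction. Your sign bookkeeping is exact, including the observation that $\tfrac{2}{l}-\tfrac{1}{\eps}$ vanishes for a full flow so the $l$-term drops out, which is precisely the content of Lemma~\ref{difequal} used in the proof of Lemma~\ref{FullCupEvolutionTails}.

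One small technical caveat in your step (b): you write $\mathfrak{l}'(\eps)$ in the chain rule, but Definition~\ref{FlowOfChordalDomains} only requires $\mathfrak{l}$ to be continuous and monotone, not differentiable. The paper's phrasing sidesteps this by comparing forces at two distinct moments $\eps_1<\eps_2$: first apply Lemma~\ref{EvolutionalTailGrowthLemma} to pass from $\eps_1$ to $\eps_2$ with the top chord frozen at $[A(\mathfrak{l}(\eps_1)),B(\mathfrak{l}(\eps_1))]$, then apply Lemma~\ref{biggercupsmallerforce} to shrink the top chord to $[A(\mathfrak{l}(\eps_2)),B(\mathfrak{l}(\eps_2))]$. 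Both steps only need monotonicity of $\mathfrak{l}$ and each one already gives a one-sided (and outside the domain strict) inequality, so no chain rule is required. Your version is fine wherever $\mathfrak{l}$ is known to be smooth (which it is in all the induction steps that actually produce these flows), but to match the generality of the stated definition it should be replaced by this finite-comparison argument.
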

\begin{proof}
For the case of a decreasing flow, we use Lemmas~\ref{EvolutionalTailGrowthLemma} and~\ref{biggercupsmallerforce}. For the case of a full flow, we use Lemma~\ref{FullCupEvolutionTails}.
\end{proof}
\begin{Def}\label{MFF}\index{force! monotone force flow}
Let~$0 \leq \eps_1 < \eps_2 <\eps_{\infty}$\textup,~$u \in \mathbb{R}\cup\{-\infty\}$. We say that a continuous function~$\FFr\colon (u,+\infty)\times[\eps_1,\eps_2]\to \mathbb{R}$ is a right monotone force flow if
\textup{\begin{itemize}
\item \emph{for any~$\eps \in [\eps_1,\eps_2]$\textup, the function~$\FFr(\cdot\,;\eps)$ is a force of some chord situated on the left of~$u$ or of~$-\infty$\textup, for~$\eps \in (\eps_1,\eps_2)$\textup, the tail of this force contains~$u$};
\item \emph{for any~$\eta_1$ and~$\eta_2$ such that~$\eps_1 \leq \eta_1 < \eta_2 \leq \eps_2$\textup, we have~$\FFr(v;\eta_1) > \FFr(v;\eta_2)$ whenever~$v \in [u,\tr(\eta_1)]$\textup, here~$\tr(\eta_1)$ is the right endpoint of the tail of~$\FFr(\cdot\,;\eta_1)$.}
\end{itemize}}
Let~$u \in \mathbb{R}\cup \{+\infty\}$. We say that a continuous function~$\FFl\colon (-\infty,u)\times[\eps_1,\eps_2]\to \mathbb{R}$ is a left monotone force flow if
\textup{\begin{itemize}
\item \emph{for any~$\eps \in [\eps_1,\eps_2]$\textup, the function~$\FFl(\cdot\,;\eps)$ is a force of some chord situated on the right of~$u$ or of~$+\infty$\textup, for~$\eps \in (\eps_1,\eps_2)$\textup, the tail of this force contains~$u$};
\item \emph{for any~$\eta_1$ and~$\eta_2$ such that~$\eps_1 \leq \eta_1 < \eta_2 \leq \eps_2$\textup, we have~$\FFl(v;\eta_1) < \FFl(v;\eta_2)$ whenever~$v \in [\tl(\eta_1),u]$\textup, here~$\tl(\eta_1)$ is the left endpoint of the tail of~$\FFl(\cdot\,;\eta_1)$.}
\end{itemize}}
\end{Def}
It follows immediately that~$\tr$ is an increasing function of~$\eps$ for the right force flow, whereas~$\tl$ is decreasing for the left force flow. A trivial example of a monotone force flow is given by a force of a chord that does not depend on~$\eps$ (the monotonicity follows from Lemma~\ref{EvolutionalTailGrowthLemma}). Another trivial remark is that one can take a restriction of a monotone force flow to a smaller domain to obtain another monotone force flow (this will be implicitly used several times in the sequel).  
\begin{Rem}\label{FCDMFF}
Let~$\{\Ch([a_0,b_0],*),\mathfrak{l}\}$ be a flow of chordal domains for~$\eps \in (\eta_1,\eta_2)$. Let
\begin{equation*}
\FFr(u\,;\eps) = \Fr(u\,;\Ch([a_0,b_0,*]),\mathfrak{l};\eps),\quad (u,\eps)\in (b_0,\infty)\times (\eta_1,\eta_2).
\end{equation*}
If the flow is either decreasing or full\textup, then~$\FFr$ is a right monotone force flow\textup(to fulfill Definition~\textup{\ref{MFF},} it has to be extended to the border cases~$\eps = \eta_1$ and~$\eps = \eta_2$ by continuity\textup). The same with the left force flow\textup:
\begin{equation*}
\FFl(u\,;\eps) = \Fl(u\,;\Ch([a_0,b_0,*]),\mathfrak{l};\eps),\quad (u,\eps)\in (-\infty,a_0)\times (\eta_1,\eta_2)
\end{equation*}
is a left monotone force flow if the generating flow of chordal domains is either decreasing or full. 
\end{Rem}

\subsection{Balance equation lemmas}\label{s423}
\begin{Def}\label{baleq}\index{equation! balance equation}
Let~$\Fr$ and~$\Fl$ be forces of chordal domains, infinities, or multicups such that their domains \textup(the definitions are given at the beginning of Subsection~\ref{s412}\textup) intersect. Then the balance equation is 
\begin{equation}\label{baleqformula}
\Fr(u) + \Fl(u) = 0,
\end{equation}
where~$u$ belongs to the intersection of the domains of the forces.
\end{Def} 
We are looking for solutions of balance equations. It was Proposition~\ref{moninttails} that helped us to establish the existence of the solution in Subsection~\ref{s412}. 
\begin{Le}\label{monbaleq}
Let~$\Fr$ and~$\Fl$ be two forces of chordal domains\textup, infinities\textup, multicups\textup, or simply chords such that their tails intersect. Then the function~$\Fr(u) + \Fl(u)$ increases on this intersection.
\end{Le}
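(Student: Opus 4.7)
The plan is to reduce the lemma to a direct differentiation, mimicking the reasoning that gave Lemma~\ref{moninttails} (which was the special case in which both forces live outside their generating chords). The main point is that on the intersection of the two tails we automatically have $\Fr(u)<0$ and $\Fl(u)>0$ by Definition~\ref{Tail}, so what has to be checked is simply that the derivative $\big(\Fr+\Fl\big)'(u)$ is strictly positive; the $f'''$-terms appearing in the formulas for $\Fr'$ and $\Fl'$ will cancel, and what remains will be a sum of two expressions of definite sign.

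First, I would unify the zoo of admissible forces. A force of a chord, of a multicup, or of an infinity satisfies the ODE~\eqref{DifForcePoU}, i.e.\ $F'=f'''-F/\eps$ (with the appropriate sign for $\Fl$). A force of a chordal domain~$\Ch([a,b],*)$ satisfies, by Proposition~\ref{difforcepou}, the same ODE on the part of its tail lying outside $[a,b]$, but on the part lying inside $[b_1,b_0]$ (respectively $[a_0,a_1]$ for the left force) it satisfies $\Fr'=f'''-2\Fr/\ell(u)$ (resp.\ $\Fl'=-f'''+2\Fl/\ell(u)$). So on any subinterval of a tail, $\Fr$ and $\Fl$ each obey one of two possible ODEs.

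The second step is the case analysis. Fix $u$ in the intersection of the tails. The two chordal domains (if any) cannot cover the same point because they sit on opposite sides of $u$ by the definition of the balance equation. So exactly one of the following happens:
\begin{itemize}
\item $u$ lies outside both generating chordal domains (or the forces are not of chordal type). Then
\[
\big(\Fr+\Fl\big)'(u)=\Big(f'''(u)-\tfrac{\Fr(u)}{\eps}\Big)+\Big(-f'''(u)+\tfrac{\Fl(u)}{\eps}\Big)=\tfrac{\Fl(u)-\Fr(u)}{\eps}>0.
\]
\item $u$ lies inside the chordal domain generating $\Fr$ (so $u\in[b_1,b_0]$), while $\Fl$ is governed by the $1/\eps$-equation at $u$. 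Then
\[
\big(\Fr+\Fl\big)'(u)=-\tfrac{2\Fr(u)}{\ell(u)}+\tfrac{\Fl(u)}{\eps}>0,
\]
since $\Fr(u)<0$ and $\Fl(u)>0$.
\item The symmetric case, where $u$ lies inside the chordal domain generating $\Fl$, gives $\big(\Fr+\Fl\big)'(u)=\Fl(u)\cdot\tfrac{2}{\ell(u)}-\tfrac{\Fr(u)}{\eps}>0$.
\end{itemize}

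In every case $\big(\Fr+\Fl\big)'(u)>0$ almost everywhere on the intersection of tails, and the forces are continuous, so $\Fr+\Fl$ is strictly increasing there. The only genuine obstacle is purely bookkeeping: making sure the case analysis truly exhausts the admissible combinations of force types (chord, chordal domain, multicup, infinity) and that the two generating chordal domains are disjoint at the relevant point; once the ODEs are written down correctly, the positivity of the derivative is immediate from the sign conventions built into Definition~\ref{Tail}.
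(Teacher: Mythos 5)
Your proof is correct and is essentially the paper's own argument: differentiate, observe that the $f'''$ terms cancel, and conclude by the sign conventions in Definition~\ref{Tail}. The paper unifies your three cases into a single formula~\eqref{sumforceder} by adopting the convention $\ell(u)=2\eps$ wherever $u$ lies outside the generating chordal domain (or the force comes from a chord, multicup, or infinity), but the content is identical to your case analysis.
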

\begin{proof}
Using formulas~\eqref{leftder} and~\eqref{rightder}, we see that
\begin{equation}\label{sumforceder}
\big(\Fr + \Fl\big)'(u) = - \frac{2\Fr(u)}{\ell_{\mathrm{L}}(u)} + \frac{2\Fl(u)}{\ell_{\mathrm{R}}(u)}
\end{equation}
if we assume~$\ell(u) = 2\eps$ when~$u$ lies outside the corresponding chordal domain, or multicup, or when the force comes from infinity. Both summands are positive in the intersection of the tails (except, possibly, for one or two points~$u$).
\end{proof}
\begin{Le}\label{StrictMonotonicityBalanceEquation}
Let~$\Fr$ and~$\Fl$ be a right and a left force of chordal domains\textup, infinities\textup, or multicups. Suppose that the closures of their tails intersect\textup, and let~$w \in [\tl,\tr]$ be the root of the corresponding balance equation. Then\textup, the function~$\Fr+\Fl$ is strictly positive on the right of~$w$ and strictly negative on the left of it.
\end{Le}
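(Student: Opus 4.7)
My plan is to upgrade the non-strict monotonicity of Lemma~\ref{monbaleq} to strict monotonicity on $[\tl,\tr]$, from which the sign statement follows at once from the vanishing $(\Fr+\Fl)(w)=0$.

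From the proof of Lemma~\ref{monbaleq} I have the derivative formula~\eqref{sumforceder},
\begin{equation*}
(\Fr+\Fl)'(u)=-\frac{2\Fr(u)}{\ell_{\mathrm{L}}(u)}+\frac{2\Fl(u)}{\ell_{\mathrm{R}}(u)},
\end{equation*}
with the convention $\ell_{\mathrm{L}}(u)=2\eps$ (respectively $\ell_{\mathrm{R}}(u)=2\eps$) whenever $u$ lies outside the chordal domain or multicup generating $\Fr$ (respectively $\Fl$), or when that force comes from an infinity. By Definition~\ref{Tail}, $\Fr$ is strictly negative on its open right tail and $\Fl$ is strictly positive on its open left tail, with the only possible exceptions being isolated transitional values of $u$ at which $u$ coincides with the right endpoint of the top chord of the chordal domain generating $\Fr$ (for which the corresponding differential could vanish by Corollary~\ref{RootCorollary}) or with the left endpoint of the top chord generating $\Fl$. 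Hence at every point of the open intersection $(\tl,\tr)$ both nonnegative summands in the derivative formula are strictly positive, save perhaps at the (at most two) transitional points, at which one summand may vanish while the other remains strictly positive.

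Since $\Fr+\Fl$ is continuous on $[\tl,\tr]$, has nonnegative derivative on the interior by Lemma~\ref{monbaleq}, and has strictly positive derivative outside a finite exceptional set, it is strictly increasing on $[\tl,\tr]$. Combined with $(\Fr+\Fl)(w)=0$, this forces $(\Fr+\Fl)(u)<0$ for $u\in[\tl,\tr]$ with $u<w$ and $(\Fr+\Fl)(u)>0$ for $u\in[\tl,\tr]$ with $u>w$, which is the claimed conclusion.

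The one point deserving a brief check is that the vanishing of $\Fr$ or $\Fl$ at a transitional value of $u$ does not spoil strict monotonicity; it does not, since at such a point the other summand of the derivative remains strictly positive, and a finite exceptional set cannot support an interval of constancy of $\Fr+\Fl$. Otherwise the statement is essentially immediate from the derivative computation already carried out in the proof of Lemma~\ref{monbaleq}, so no serious obstacle arises.
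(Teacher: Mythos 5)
Your argument only establishes the sign of $\Fr + \Fl$ \emph{inside} the interval $[\tl,\tr]$, but the statement (and, more importantly, its applications, e.g.\ the proof of Proposition~\ref{InductionStepForRightTrolleybus}, where one picks $a^-$ to the right of $a_0 = \tr$) requires the sign condition to hold in a right/left \emph{neighborhood} of $w$ even when $w$ is an endpoint of the intersection of the tails. Strict monotonicity on $[\tl,\tr]$ gives you nothing past $\tr$: if $w = \tr$, then you conclude $\Fr + \Fl < 0$ on $[\tl, w)$, but there are simply no points of $[\tl,\tr]$ to the right of $w$, so the claim "$\Fr + \Fl > 0$ on the right of $w$" is entirely unaddressed. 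The same gap occurs symmetrically when $w = \tl$. Note also that $\Fr$ changes sign at $\tr$ by definition of the tail, so you cannot expect the summand structure you used inside the tails to persist: just past $\tr$, $\Fr$ may be $\geq 0$, and the derivative computation~\eqref{sumforceder} no longer has two nonnegative summands.

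This boundary case is exactly where the paper's proof does its real work. After invoking Lemma~\ref{monbaleq} to dispose of the case $w\in(\tl,\tr)$ (which your argument does handle), the paper specializes to $w = \tr$, observes that if $(\Fr+\Fl)'(w) = 0$ then $\Fl(w)=0$, and then analyzes the sign of $f'''$ in a right neighborhood of $w$ using Condition~\ref{reg}: if $f'''\geq 0$ near $w$, then $\Fr\geq 0$ there via~\eqref{RightForce} (while $\Fl>0$), so the sum is positive; if $f'''<0$ near $w$, then $\Fr<0$ there, and formula~\eqref{sumforceder} yields $(\Fr+\Fl)'>0$, which together with the vanishing at $w$ again gives positivity. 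You would need some argument of this type to extend the sign statement past $\tr$; monotonicity on the closed interval alone cannot do it.
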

\begin{proof}
By Lemma~\ref{monbaleq}, it suffices to consider the case where~$w$ coincides with the endpoint of one of two tails. So, let~$w = \tr$. If~$(\Fr+\Fl)'(w) > 0$, then the statement is obvious. Therefore~$(\Fr+\Fl)'(w) = 0$ and~$\Fl(w) = 0$ by formula~\eqref{sumforceder}. This can happen in two cases: either~$\tr = \tl$ (the closures of the tails intersect by one point) or~$\Fl$ is a force of a chordal domain with zero left differential, and~$w$ coincides with the left endpoint of its upper chord. In both cases~$\Fl > 0$ on the right of~$w$. 

Consider the following two cases:~$f''' \geq 0$ in a right neighborhood of~$w$ and~$f''' < 0$ in a right neighborhood of~$w$ (one of these two cases occurs by Condition~\ref{reg}). Looking at formula~\eqref{RightForce} (and~\eqref{RightForceInfinity}), we see that in the first case~$\Fr \geq 0$ in a right neighborhood of~$w$, and thus~$\Fr + \Fl$ is also positive there. In the second case,~$\Fr < 0$ in a right neighborhood of~$w$, thus, by formula~\eqref{sumforceder}, we have~$(\Fr + \Fl)' > 0$ there. Since~$\Fr(w)+ \Fl(w) = 0$, we also have~$\Fr+\Fl > 0$ on the right of~$w$.  
\end{proof}
\begin{Rem}\label{SolidRootRemark}
With the same reasoning\textup, one can prove the following. Suppose that~$\tr < \tl$\textup, but~$\Fr + \Fl = f''' = 0$ on~$[\tr,\tl]$ \textup(the root of the balance equation is solid and almost intersects the tails\textup). Then the function~$\Fr+\Fl$ is strictly positive in a  right neighborhood of~$[\tr,\tl]$ and strictly negative in a left.
\end{Rem}

\section{Local evolutional theorems}\label{s43}
The form of all theorems in this section is: if for some~$\eps$ one can build a Bellman candidate on a specific domain using specific formulas, then, for a slightly bigger~$\eps$, he can also build a Bellman candidate on a perturbed domain using similar formulas with perturbed parameters. The statements are rather formal and somewhat bulky, so, before each statement we give a short heuristic explanation. We also recall our convention that the differentials are strictly negative inside chordal domains and that inequalities~$m'' < 0$ and~$m'' > 0$ are fulfilled for right and left tangent domains correspondingly.

The proposition below says that any full chordal domain (with nonzero differentials of the upper chord) surrounded by tangent domains grows with~$\eps$ (in other words, in view of Definition~\ref{FlowOfChordalDomains}, a full chordal domain generates a full flow of chordal domains starting from it). 
 
\begin{St}[{\bf Induction step for a chordal domain}]\label{InductionStepForChordalDomain}\index{domain! chordal domain}
Let~$\eta_1 < \eps_{\infty}$\textup, let~$\Ch([a_0,b_0],*)$ be a full chordal domain \textup(i.e.~$b_0-a_0=2\eta_1$\textup)\textup, let~$u_1 < a_0$\textup,~$b_0 < u_2$. Let a continuous function~$B$ coincide with the standard candidates on~$\Ch([a_0,b_0],*)$\textup,~$\Lt(u_1,a_0;\eta_1)$\textup, and~$\Rt(b_0,u_2;\eta_1)$. 
If~$\Slt(a_0,b_0) < 0$ and~$\Srt(a_0,b_0) < 0$\textup, then there exists~$\eta_2 > \eta_1$ and functions~$a$ and~$b$ acting from~$[2\eta_1,2\eta_2]$ to~$\mathbb{R}$ such that~$a(2\eta_1) = a_0$\textup,~$b(2\eta_1) = b_0$\textup,~$b(2\eps) - a(2\eps) = 2\eps$\textup,~$\eps \in [\eta_1,\eta_2]$\textup, and these functions satisfy the assumptions of Proposition~\textup{\ref{LightChordalDomainCandidate}}\textup, and for each~$\eps \in [\eta_1,\eta_2]$ there exists a continuous function~$B_{\eps}$ which coincides with the standard candidates on $\Lt(u_1,a(2\eps);\eps)$\textup,~$\Ch([a(2\eps),b(2\eps)],*)$\textup, and~$\Rt(b(2\eps),u_2;\eps)$. 
\end{St}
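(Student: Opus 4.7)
The plan is to apply the implicit function theorem to the cup equation $\Phi(a,b) = 0$ from formula~\eqref{Phi} under the constraint $b - a = 2\eps$, with initial data $a(2\eta_1) = a_0$, $b(2\eta_1) = b_0$. On the manifold $\{\Phi = 0\}$ we have $d\Phi = \Slt\,da + \Srt\,db$, and by hypothesis $\Slt + \Srt < 0$ at $(a_0,b_0)$, so this manifold is locally a smooth curve transverse to the diagonal direction. Hence there exist $C^1$-functions $a, b\colon [2\eta_1, 2\eta_2]\to \mathbb{R}$ solving the cup equation with $b(2\eps) - a(2\eps) = 2\eps$; their derivatives are given by formulas~\eqref{Derivativesabb}, so $a' < 0$ and $b' > 0$. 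Shrinking $\eta_2$ if necessary, we keep $\Slt(a(2\eps), b(2\eps))$ and $\Srt(a(2\eps), b(2\eps))$ strictly negative on the whole segment $[\eta_1, \eta_2]$ by continuity, and Proposition~\ref{LightChordalDomainCandidate} delivers a standard candidate on each $\Ch([a(2\eps), b(2\eps)],*)$.

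Next, I would verify that the tangent wings $\Lt(u_1, a(2\eps); \eps)$ and $\Rt(b(2\eps), u_2; \eps)$ admit standard candidates. By Propositions~\ref{CupMeetsRightTangents} and~\ref{CupMeetsLeftTangents}, the required inequalities $m'' > 0$ and $m'' < 0$ translate, via formulas~\eqref{InequalitiesForConcatenationRight} and~\eqref{InequalitiesForConcatenationLeft}, into the requirements that $\Fl(u; a(2\eps), b(2\eps); \eps) > 0$ for $u \in (u_1, a(2\eps))$ and $\Fr(u; a(2\eps), b(2\eps); \eps) < 0$ for $u \in (b(2\eps), u_2)$. At $\eps = \eta_1$ these hold by hypothesis, which precisely means that $u_1$ and $u_2$ lie inside the closures of the corresponding tails of $[A_0,B_0]$. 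The family $\{\Ch([a(2\eps), b(2\eps)],*)\}_{\eps \in [\eta_1,\eta_2]}$ is a full flow of chordal domains in the sense of Definition~\ref{FlowOfChordalDomains}, so Lemma~\ref{FullCupEvolutionTails} guarantees that the tails only grow as $\eps$ increases; combined with the continuity of $a(2\eps)$ and $b(2\eps)$, the intervals $(u_1, a(2\eps))$ and $(b(2\eps), u_2)$ remain strictly inside the left and right tails for all $\eps \in [\eta_1, \eta_2]$, after possibly shrinking $\eta_2$ once more.

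Finally, I would assemble $B_\eps$ as the concatenation of the three standard candidates, choosing the tangent parameters $m(b(2\eps))$ and $m(a(2\eps))$ by formulas~\eqref{m(b_0)} and~\eqref{m(a_0)}; these choices ensure continuity across the chord~$[A(2\eps), B(2\eps)]$, while Propositions~\ref{CupMeetsRightTangents} and~\ref{CupMeetsLeftTangents} upgrade the concatenation to a $C^1$-smooth Bellman candidate on the union~$\Lt(u_1, a(2\eps); \eps) \cup \Ch([a(2\eps), b(2\eps)],*) \cup \Rt(b(2\eps), u_2; \eps)$. The core of the argument is a perturbation around $\eps = \eta_1$; the only nontrivial ingredient is making the monotone growth of tails (Lemma~\ref{FullCupEvolutionTails}) dominate the continuous motion of $a(2\eps)$ and $b(2\eps)$, and this is precisely what was prepared in Subsection~\ref{s422}. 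The strict negativity of the differentials throughout $[\eta_1,\eta_2]$ propagates along the flow by continuity and is what permits the implicit function theorem to be applied uniformly.
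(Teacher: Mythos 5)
Your proof is correct and follows essentially the same route as the paper: the existence of the chordal flow comes from the implicit function theorem applied to the cup equation (which is exactly Lemma~\ref{SmoothChordalDomain}), the strict negativity of the differentials is preserved by shrinking $\eta_2$, and the admissibility of the tangent wings is controlled by the tail-growth result, Lemma~\ref{FullCupEvolutionTails}. One tiny superfluity: no further shrinking of $\eta_2$ is needed for the wings, since Lemma~\ref{FullCupEvolutionTails} gives strict growth of the tails for all $\eps$ in the interval, so the inclusions $(u_1,a(2\eps))\subset$ left tail and $(b(2\eps),u_2)\subset$ right tail follow automatically from the hypothesis at $\eps=\eta_1$.
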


\begin{proof}
By Lemma~\ref{SmoothChordalDomain}, there exist a number~$\eta_2$ and functions~$a$ and~$b$ acting from~$[2\eta_1,2\eta_2]$ to~$\mathbb{R}$ such that~$a(2\eta_1) = a_0$,~$b(2\eta_1) = b_0$,~$a(2\eps) - b(2\eps) = 2\eps$ and the pair~$(a(2\eps),b(2\eps))$ always satisfies the cup equation. By continuity, the inequalities~$\Slt(a(2\eps),b(2\eps)) < 0$ and~$\Srt(a(2\eps),b(2\eps)) < 0$ are also valid when~$\eps$ is not far from~$\eta_1$. Changing~$\eta_2$, we may assume that they hold on the whole interval~$[2\eta_1,2\eta_2]$.

By our assumptions,~$u_2$ belongs to the right tail of~$\Ch([a_0,b_0],*)$ and~$u_1$ belongs to its left tail. Due to Lemma~\ref{FullCupEvolutionTails},~$u_1$ and~$u_2$ belong to the left and the right tail of~$\Ch([a(2\eps), b(2\eps)],*)$,~$\eps \in [\eta_1,\eta_2]$, correspondingly. This yields the possibility to define~$B_\eps$ to be a standard candidate on~$\Ch([a(2\eps),b(2\eps)],*)\cup \Lt(u_1,a(2\eps);\eps) \cup \Rt(b(2\eps),u_2;\eps)$ for~$\eps \in [\eta_1,\eta_2]$.  
\end{proof}

By Remark~\ref{FCDMFF}, the functions
\begin{equation*}
\begin{aligned}
&\FFr(u;\eps) = \Fr(u;[a(2\eps),b(2\eps)];\eps),\quad \eps \in (\eta_1,\eta_2),\; u\in (b(2\eta_2),\infty);\\
&\FFl(u;\eps) = \Fl(u;[a(2\eps),b(2\eps)];\eps),\quad \eps \in (\eta_1,\eta_2),\; u\in (-\infty, a(2\eta_2))
\end{aligned}
\end{equation*}
are the right and left monotone force flows.

The following proposition says how a non-full multicup (i.e. a multicup~$\MTC(\{\mathfrak{a}_i\}_{i=1}^k;\eps)$ with~$\mathfrak{a}_k^{\mathrm{r}} - \mathfrak{a}_1^{\mathrm{l}} > 2\eps$) evolves in~$\eps$.

\begin{St}[{\bf Induction step for a multicup}]\label{InductionStepForMulticup}\index{multicup}
Let~$\eta_1 < \eps_{\infty}$\textup, let~$\MTC(\{\mathfrak{a}_i\}_{i=1}^k;\eta_1)$ be a multicup such that~$\mathfrak{a}_k^{\mathrm{r}} - \mathfrak{a}_1^{\mathrm{l}} > 2\eta_1$\textup, let~$u_1 < \mathfrak{a}_1^{\mathrm{l}}$ and~$\mathfrak{a}_k^{\mathrm{r}} < u_2$. Let a continuous function~$B$ coincide with the standard candidates on~$\MTC(\{\mathfrak{a}_i\}_{i=1}^k;\eta_1)$\textup,~$\Lt(u_1,a_0;\eta_1)$\textup, and~$\Rt(b_0,u_2;\eta_1)$. 
Then there exists~$\eta_2$\textup, $2\eta_1 < 2\eta_2 \leq \mathfrak{a}_k^{\mathrm{r}} - \mathfrak{a}_1^{\mathrm{l}}$\textup, such that for each~$\eps \in [\eta_1,\eta_2]$ there exists a continuous function~$B_{\eps}$ that coincides with the standard candidates on $\Lt(u_1,\mathfrak{a}_1^{\mathrm{l}};\eps)$\textup,~$\MTC(\{\mathfrak{a}_i\}_{i=1}^k;\eps)$\textup, and~$\Rt(\mathfrak{a}_k^{\mathrm{r}},u_2;\eps)$.
\end{St}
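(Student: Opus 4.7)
The plan is to keep the combinatorial type of the foliation unchanged and only verify that the three analytic ingredients of Proposition~\ref{MulticupCandidate} -- (i) the cup equation and ``common-line'' condition on the points $\mathfrak a_i^{\mathrm l},\mathfrak a_i^{\mathrm r}$, (ii) the continuity relations \eqref{mlta0}, \eqref{mrtb0} for the gluing parameters $\mlt(\mathfrak a_1^{\mathrm l})$ and $\mrt(\mathfrak a_k^{\mathrm r})$, and (iii) the sign conditions $\mlt''>0$ on $(u_1,\mathfrak a_1^{\mathrm l})$ and $\mrt''<0$ on $(\mathfrak a_k^{\mathrm r},u_2)$ -- remain valid on some right-neighborhood $[\eta_1,\eta_2]$ of~$\eta_1$.

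Conditions (i) depend only on the fixed arcs $\mathfrak a_i$ and on $f$, so they are automatically preserved; in particular the standard candidate on the multicup itself is the $\eps$-independent linear function of \eqref{CandidateInMultifigure}--\eqref{coef}. The initial values required by (ii) are continuous functions of~$\eps$, so the Cauchy problems \eqref{difeq2} and \eqref{difeq} produce a family of standard candidates $B_\eps$ on the tangent domains that depends continuously on~$\eps$; the continuity of~$B_\eps$ across the two gluing seams is built into the choice of $\mlt(\mathfrak a_1^{\mathrm l})$ and $\mrt(\mathfrak a_k^{\mathrm r})$. Shrinking $\eta_2$ to satisfy $2\eta_2\le\mathfrak a_k^{\mathrm r}-\mathfrak a_1^{\mathrm l}$ keeps the multicup non-full, so its combinatorial type (in particular, its graph) does not change.

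The core of the argument is therefore (iii). Using \eqref{InequalitiesForConcatenationRight}--\eqref{InequalitiesForConcatenationLeft} together with Definitions~\ref{Forces} and~\ref{ForceForMulticup}, these inequalities translate into $\Fl(u;\mathfrak a_1^{\mathrm l},\mathfrak a_k^{\mathrm r};\eps)>0$ on $(u_1,\mathfrak a_1^{\mathrm l})$ and $\Fr(u;\mathfrak a_1^{\mathrm l},\mathfrak a_k^{\mathrm r};\eps)<0$ on $(\mathfrak a_k^{\mathrm r},u_2)$; equivalently, the compact segments $[u_1,\mathfrak a_1^{\mathrm l}]$ and $[\mathfrak a_k^{\mathrm r},u_2]$ sit inside the closures of the left and right tails of the multicup. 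These strict inequalities hold at $\eps=\eta_1$ by hypothesis, and since the chord $[\mathfrak A_1^{\mathrm l},\mathfrak A_k^{\mathrm r}]$ is held fixed, Lemma~\ref{EvolutionalTailGrowthLemma} together with Remark~\ref{TailGrowthInfinity} guarantees that on the closures of the tails the quantities $\Fl$ and $-\Fr$ are \emph{strictly} increasing in~$\eps$. Hence the inequalities only strengthen as $\eps$ grows, and by joint continuity of the forces in $(u,\eps)$ they persist uniformly on the two fixed compact segments for all $\eps$ in some interval $[\eta_1,\eta_2]$; this $\eta_2$ (shrunk if needed to meet $2\eta_2\le\mathfrak a_k^{\mathrm r}-\mathfrak a_1^{\mathrm l}$) is the one claimed in the proposition. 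The main subtle point is precisely the invocation of the strict monotonicity on the \emph{closed} portions of the tails that actually touch $u_1$ and $u_2$ -- this is the content of Lemma~\ref{EvolutionalTailGrowthLemma} outside $[\mathfrak a_1^{\mathrm l},\mathfrak a_k^{\mathrm r}]$ -- and nothing extra is required because the chord sits strictly in the interior of the parabolic strip thanks to $\mathfrak a_k^{\mathrm r}-\mathfrak a_1^{\mathrm l}>2\eta_1$.
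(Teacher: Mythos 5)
Your proof is correct and follows essentially the same route as the paper: reduce the claim to the two force inequalities $\Fl>0$ on $[u_1,\mathfrak a_1^{\mathrm l}]$ and $\Fr<0$ on $[\mathfrak a_k^{\mathrm r},u_2]$, note that the multicup and hence the generating chord are $\eps$-independent, and invoke Lemma~\ref{EvolutionalTailGrowthLemma} to conclude these inequalities strengthen as $\eps$ grows. The only difference is cosmetic: since the monotonicity already carries the inequalities to the entire admissible range, the paper simply takes the maximal value $\eta_2 = \frac12(\mathfrak{a}_k^{\mathrm{r}} - \mathfrak{a}_1^{\mathrm{l}})$ directly, making your final continuity-and-compactness ``shrink $\eta_2$ if needed'' step unnecessary.
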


\begin{proof}
Actually, we may set~$\eta_2 = \frac12(\mathfrak{a}_k^{\mathrm{r}} - \mathfrak{a}_1^{\mathrm{l}})$. Take any~$\eps$ from the interval prescribed. The only non-obvious thing is why we can define the standard candidates on each domain and glue them continuously. The necessary and sufficient conditions for that are 
\begin{equation*}
\begin{aligned}
&\Fr(u;\MTC(\{\mathfrak{a}_i\}_{i=1}^k);\eps) < 0,\quad & u \in [\mathfrak{a}_k^{\mathrm{r}}, u_2];\\
&\Fl(u;\MTC(\{\mathfrak{a}_i\}_{i=1}^k);\eps) > 0,\quad & u \in [u_1,\mathfrak{a}_1^{\mathrm{l}}].
\end{aligned}
\end{equation*}
By the assumptions, these inequalities hold true for~$\eps = \eta_1$. Then, by Lemma~\ref{EvolutionalTailGrowthLemma}, these inequalities are also true for all~$\eps > \eta_1$.
\end{proof}
Trivially, the functions
\begin{equation*}
\begin{aligned}
&\FFr(u;\eps) = \Fr(u;[\mathfrak{a}_1^{\mathrm l},\mathfrak{a}_k^{\mathrm r}];\eps),\quad \eps \in (\eta_1,\eta_2),\; u\in (\mathfrak{a}_k^{\mathrm r},\infty);\\
&\FFl(u;\eps) = \Fl(u;[\mathfrak{a}_1^{\mathrm l},\mathfrak{a}_k^{\mathrm r}];\eps),\quad \eps \in (\eta_1,\eta_2),\; u\in (-\infty,\mathfrak{a}_1^{\mathrm l})
\end{aligned}
\end{equation*}
are the right and left monotone force flows.

The following proposition says that a long chord with nonzero tails gives rise to a chordal domain. We note that this generalizes Proposition~\ref{InductionStepForChordalDomain} (in the latter case the differentials are nonzero, and thus the tails of the upper chord are nonzero). However, for didactic reasons, we prefer to separate these two propositions. In a sense, the cases where one or both differentials are zero differ from what is described in Proposition~\ref{InductionStepForChordalDomain}. Indeed, suppose that one of the differentials is zero and we have a chordal domain below~$[A_0,B_0]$. By the proposition below, after we increase~$\eps$, we can build a chordal domain above~$[A_0,B_0]$, but we cannot join the two chordal domains into a single one. We have to paste a fictious vertex corresponding to the chord~$[A_0,B_0]$ between them. We also note that the proposition below may be applied to the upper chord of a full multicup (i.e. a multicup with~$\mathfrak{a}_k^{\mathrm{r}} - \mathfrak{a}_1^{\mathrm{l}} = 2\eps$). 

\begin{St}[{\bf Induction step for a long chord}]\label{InductionStepForLongChords}
Let~$\eta_1$ be smaller than~$\eps_{\infty}$. Suppose that~$b_0 - a_0 = 2\eta_1$ and the pair~$(a_0,b_0)$ satisfies the cup equation. Let also~$u_1 < a_0$ and~$u_2 > b_0$ be numbers such that~$u_1$ belongs to the left tail of~$[A_0,B_0]$ and~$u_2$ belongs to the right tail of~$[A_0,B_0]$. 
Then there exists~$\eta_2 > \eta_1$\textup, and functions~$a,b\colon [2\eta_1,2\eta_2]\to\mathbb{R}$ satisfying the assumptions of Proposition~\textup{\ref{LightChordalDomainCandidate}} and such that~$b(2\eps) - a(2\eps) = 2\eps$\textup, and continuous functions~$B_\eps$ that coincide with the standard candidates on~$\Ch([a(2\eps),b(2\eps)],[a_0,b_0])$\textup, $\Lt(u_1,a(2\eps);\eps)$\textup, $\Rt(b(2\eps),u_2;\eps)$ for all~$\eps \in [\eta_1,\eta_2]$.
%
%
\end{St}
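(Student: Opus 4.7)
The plan is to construct the upper boundary of the growing chordal domain using the geometric machinery of Subsection~\ref{s411} and then certify the surrounding tangent families through the tail-monotonicity results of Subsection~\ref{s422}.

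First I would verify the geometric hypothesis of Lemma~\ref{nonsmoothgrowth} for the pair~$(a_0,b_0)$: that the graph of~$f'$ lies strictly below~$L_{a_0,b_0}$ in a right neighborhood of~$b_0$, and strictly above it in a left neighborhood of~$a_0$. If~$\Srt(a_0,b_0)<0$, this is an immediate slope comparison~$f''(b_0)<\av{f''}{[a_0,b_0]}$ combined with the piecewise monotonicity of~$f''$ from Condition~\ref{reg}. If~$\Srt(a_0,b_0)=0$, the hypothesis that~$u_2$ lies in the right tail of~$[A_0,B_0]$ makes this tail nonzero, so Lemma~\ref{zerodifrootsOutside} forces~$f''$ to decrease strictly to the right of~$b_0$; this again places~$f'$ strictly below~$L_{a_0,b_0}$ there. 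The argument on the left at~$a_0$ is symmetric. With these hypotheses in hand, Corollary~\ref{ChordDomFromAChord} produces a chordal domain with bottom chord~$[A_0,B_0]$ and top chord of length~$2\eta_1+2\delta_0$ for some~$\delta_0>0$ satisfying the assumptions of Proposition~\ref{LightChordalDomainCandidate}. Parametrizing this domain by chord length and setting~$\eta_2\df\eta_1+\delta_0$ yields $C^1$-smooth functions~$a,b\colon[2\eta_1,2\eta_2]\to\mathbb{R}$ with~$b(2\eps)-a(2\eps)=2\eps$, $a'<0$, $b'>0$, and strictly negative differentials throughout the interior (the latter by the construction and Lemma~\ref{nonvanishdif2}).

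For the tangent pieces I would regard the family~$\{\Ch([a(2\eps),b(2\eps)],[a_0,b_0])\}$ as a full flow of chordal domains in the sense of Definition~\ref{FlowOfChordalDomains} with $\mathfrak{l}(\eps)=2\eps$. By Corollary~\ref{FlowMonotonicityCorollary} together with Remark~\ref{FCDMFF}, this flow induces left and right monotone force flows on the tangent regions, so the tails of~$\Ch([a(2\eps),b(2\eps)],[a_0,b_0])$ grow continuously out of the tails of the degenerate chordal domain~$[A_0,B_0]$ at~$\eps=\eta_1$. Since by hypothesis~$u_1$ and~$u_2$ lie strictly inside the left and right tails of~$[A_0,B_0]$ at~$\eta_1$, after possibly shrinking~$\eta_2$ they remain strictly inside the corresponding tails of~$\Ch([a(2\eps),b(2\eps)],[a_0,b_0])$ for every~$\eps\in[\eta_1,\eta_2]$. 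This makes the standard candidates on~$\Lt(u_1,a(2\eps);\eps)$ and~$\Rt(b(2\eps),u_2;\eps)$ well defined, and Propositions~\ref{CupMeetsRightTangents} and~\ref{CupMeetsLeftTangents}, applied to the upper chord of the chordal piece, glue the three standard candidates into a single continuous~$B_\eps$.

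The hard part will be the degenerate case~$\Slt(a_0,b_0)=0$ or~$\Srt(a_0,b_0)=0$: this is precisely what distinguishes this proposition from Proposition~\ref{InductionStepForChordalDomain}, where the implicit function theorem produces the upper boundary directly via Lemma~\ref{SmoothChordalDomain}. When a differential vanishes, the implicit function theorem fails at~$[A_0,B_0]$, and the non-triviality of the tail (the hypothesis on~$u_1$ or~$u_2$) must be channeled through the structural Lemma~\ref{zerodifrootsOutside} into the nonperturbative geometric picture underlying Lemmas~\ref{nonsmoothgrowth} and~\ref{nonvanishdif2}; this is the replacement for the implicit function theorem that powers the upward opening of the cup.
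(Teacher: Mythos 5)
Your proof is correct and follows essentially the same route as the paper's. The paper branches into two cases (when both differentials are strictly negative it simply invokes Lemma~\ref{SmoothChordalDomain} and Proposition~\ref{InductionStepForChordalDomain}; only when a differential vanishes does it route through Lemma~\ref{zerodifrootsOutside} and Corollary~\ref{ChordDomFromAChord}), whereas you uniformly verify the geometric hypothesis of Lemma~\ref{nonsmoothgrowth} in both cases and feed everything through Corollary~\ref{ChordDomFromAChord}; this is a cosmetic repackaging, since in the strict case your slope check is just continuity of~$f''$ and reproduces what Lemma~\ref{SmoothChordalDomain} gives directly. Your tail argument via Corollary~\ref{FlowMonotonicityCorollary} and Remark~\ref{FCDMFF} is the same as the paper's direct appeal to Lemma~\ref{FullCupEvolutionTails}, and no shrinking of~$\eta_2$ is actually required there because the tails only grow.
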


\begin{proof}
Since the tails of~$[A_0,B_0]$ are not empty,~$\Slt(a_0,b_0) \leq 0$,~$\Srt(a_0,b_0) \leq 0$. If both these inequalities are strict, then we can simply apply Lemma~\ref{SmoothChordalDomain} and Proposition~\ref{InductionStepForChordalDomain}. So, assume that one or both differentials are zero. 

If~$\Srt(a_0,b_0) = 0$, then, by Lemma~\ref{zerodifrootsOutside}, the graph of~$f'$ in a right neighborhood of~$b_0$ lies below~$L_{a_0,b_0}$ (and similarly for~$a_0$ and~$\Slt(a_0,b_0)$). So, we are in the assumptions of Corollary~\ref{ChordDomFromAChord}. By our assumptions,~$u_1$ belongs to the left tail of~$[A_0,B_0]$ and~$u_2$ belongs to the right tail of~$[A_0,B_0]$. By Lemma~\ref{FullCupEvolutionTails},
$\tl(\Ch([a(2\eps),b(2\eps)],[a_0,b_0]);\eps)$
decreases in~$\eps$, so,
\begin{equation*}
u_1 > \tl(\Ch([a(2\eps),b(2\eps)],[a_0,b_0]);\eps)
\end{equation*}
for each~$\eps$. Similarly,
\begin{equation*}
u_2 < \tr(\Ch([a(2\eps),b(2\eps)],[a_0,b_0]);\eps).
\end{equation*}
These inequalities allows us to construct the desired function~$B_\eps$.
\end{proof}

Now we turn to trolleybuses. The next two propositions claim that the base of a trolleybus shrinks when~$\eps$ increases. On a more formal way, there exists a decreasing flow of chordal domains such that on each chordal domain we can build a trolleybus for its~$\eps$.

\begin{St}[{\bf Induction step for a right trolleybus}]\label{InductionStepForRightTrolleybus}\index{trolleybus}
Let~$\eta_1<\eps_{\infty}$. Suppose that~$u_1 < a_0 < b_0 \leq u_2$ and~$b_0 - a_0 \leq 2\eta_1$. Suppose that~$\FFr$ is a right monotone force flow on~$(u_1,+\infty)\times[\eta_1,\eta_3]$\textup, let~$a_0$ belong to the closure of the tail of~$\FFr(\cdot\,;\eta_1)$. Suppose that there exists a continuous function~$B$ that coincides with the standard candidates on~$\Rt(u_1,a_0;\eta_1)$\textup, $\RTroll(a_0,b_0;\eta_1)$\textup, $\Ch([a_0,b_0],*)$\textup, and $\Rt(b_0,u_2;\eta_1)$. Then there exists~$\eta_2$\textup, $\eta_1<\eta_2<\eps_\infty$\textup, and a decreasing continuous function~$\mathfrak{l}\colon [\eta_1,\eta_2] \to \mathbb{R}$ such that~$\mathfrak{l}(\eta_1) = b_0-a_0$\textup, and there exists a continuous function $B_\eps$ that coincides with the standard candidates on~$\Rt(u_1,a(\mathfrak{l}(\eps));\eps)$\textup, $\RTroll(a(\mathfrak{l}(\eps)),b(\mathfrak{l}(\eps));\eps)$\textup, $\Ch([a(\mathfrak{l}(\eps)),b(\mathfrak{l}(\eps))],*)$\textup, and $\Rt(b(\mathfrak{l}(\eps)),u_2;\eps)$ for any $\eps \in [\eta_1,\eta_2]$\textup, where $a$ and $b$ are the functions corresponding to the chordal domain~$\Ch([a_0,b_0],*)$.
\end{St}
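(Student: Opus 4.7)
The plan is to define $\mathfrak{l}$ implicitly by the balance equation that governs the continuity of $B$ across the left boundary of the trolleybus. Since the chord $[A_0,B_0]$ is the upper chord of a trolleybus sitting on a chordal domain, the convention on chordal domains gives $\Slt(a_0,b_0) < 0$ and $\Srt(a_0,b_0) < 0$. Lemma~\ref{SmoothChordalDomain} then supplies a $C^1$ family $\ell \mapsto (a(\ell),b(\ell))$ of cup-equation pairs defined in a neighborhood of $\ell_0 \df b_0 - a_0$, with $a'(\ell) < 0$ and $b'(\ell) > 0$. The gluing condition \eqref{mrta0} at the left endpoint, rewritten using \eqref{RightTrollBalanceLeft} and the identity $\Fl(a;a,b;\eps) = -\Slt(a,b)$, is equivalent to $\eps\mrt''(a) = \Slt(a,b)$; since $\eps\mrt''(u) = \FFr(u;\eps)$ on the left tangent domain by the definition of the monotone force flow, the full condition becomes
$$H(\eps,\ell) \;\df\; \FFr\bigl(a(\ell);\,\eps\bigr)\;-\;\Slt\bigl(a(\ell),b(\ell)\bigr) \;=\; 0,$$
which is satisfied at $(\eta_1,\ell_0)$ by the continuity of the given $B$.

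The next step is to apply the implicit function theorem at $(\eta_1,\ell_0)$. Since $\FFr$ is a force, outside its source chord it satisfies $\partial_u \FFr = f''' - \FFr/\eps$ by \eqref{DifForcePoU}; combining this with formula \eqref{dDlt} for $d\Slt$ on the manifold $\Phi = 0$ and the expression $a'(\ell) = -\Srt/(\Slt+\Srt)$ from \eqref{Derivativesabb}, the $f'''$-terms cancel. Evaluating at a root of $H$, where $\FFr(a;\eps) = \Slt(a,b)$, one obtains
$$\frac{\partial H}{\partial \ell} \;=\; -\,a'(\ell)\,\Slt\bigl(a(\ell),b(\ell)\bigr)\!\left(\frac{1}{\eps} + \frac{2}{\ell}\right),$$
which is strictly negative since $a' < 0$ and $\Slt < 0$. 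By Definition~\ref{MFF}, $\partial H/\partial \eps = \partial_\eps \FFr(a;\eps) < 0$ as well. Hence the IFT produces a unique continuous $\mathfrak{l}\colon[\eta_1,\eta_2] \to \mathbb{R}$ with $\mathfrak{l}(\eta_1) = \ell_0$ solving $H = 0$, and $\mathfrak{l}'(\eps) = -(\partial_\eps H)/(\partial_\ell H) < 0$, so $\mathfrak{l}$ is strictly decreasing.

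Finally, one checks that the remaining strict inequalities required by Proposition~\ref{S15} persist on $[\eta_1,\eta_2]$ after possibly shrinking $\eta_2$. The differentials $\Slt(a(\mathfrak{l}(\eps)),b(\mathfrak{l}(\eps)))$ and $\Srt(a(\mathfrak{l}(\eps)),b(\mathfrak{l}(\eps)))$ remain strictly negative by continuity from $\eta_1$. The right tangent domain $\Rt(b(\mathfrak{l}(\eps)),u_2;\eps)$ requires $\Fr(u;a(\mathfrak{l}(\eps)),b(\mathfrak{l}(\eps));\eps) < 0$ on $[b(\mathfrak{l}(\eps)),u_2]$; this is a uniform perturbation of the $\eta_1$-situation, where the force is strictly negative on the compact set $[b_0,u_2]$, and the left endpoint moves continuously. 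The left tangent domain $\Rt(u_1,a(\mathfrak{l}(\eps));\eps)$ requires $\FFr(\cdot;\eps) < 0$ on $[u_1,a(\mathfrak{l}(\eps))]$: on $[u_1,a_0]$ this follows from $\FFr(u;\eps) \leq \FFr(u;\eta_1) < 0$ by the MFF property, while on the small expansion $[a_0, a(\mathfrak{l}(\eps))]$ (which appears as $a$ shifts rightward when $\ell$ shrinks) the balance $\FFr(a(\mathfrak{l}(\eps));\eps) = \Slt < 0$ at the right endpoint, together with continuity, yields strict negativity for $\eps$ close enough to $\eta_1$. Assembling the standard candidates on the four subdomains with $\mathfrak{l}(\eps)$ in place of $b_0-a_0$ and using Proposition~\ref{S15} produces the desired $B_\eps$. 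The most delicate part is precisely this last extension of the sign condition on $\FFr$ past the original tail endpoint $a_0$, since the MFF monotonicity alone controls only the tail at $\eta_1$; the argument has to exploit the balance equation at the new endpoint together with continuity on a compact interval.
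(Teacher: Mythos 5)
Your setup of the balance equation $H(\eps,\ell) = \FFr(a(\ell);\eps) - \Slt(a(\ell),b(\ell)) = 0$ is exactly the right one, and the computation that the $f'''$-terms cancel in $\partial_\ell H$ (so that $\partial_\ell H = -a'(\ell)\,\Slt\,(\tfrac{1}{\eps}+\tfrac{2}{\ell})<0$ at a root) is correct and is a legitimate re-derivation of the $u$-monotonicity from Lemma~\ref{monbaleq}. However, the invocation of the implicit function theorem rests on a claim that does not hold: you write \emph{``By Definition~\ref{MFF}, $\partial H/\partial\eps = \partial_\eps\FFr(a;\eps) < 0$,''} but Definition~\ref{MFF} asserts only that $\FFr$ is continuous and \emph{strictly decreasing} in $\eps$ on the tails. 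It does not assert differentiability in $\eps$, and for good reason: in the inductive setting of the global evolution, $\FFr$ is typically the force of a decreasing flow of chordal domains $\{\Ch(\cdot),\mathfrak{l}_{\mathrm{prev}}\}$, where $\mathfrak{l}_{\mathrm{prev}}$ has itself only been established to be continuous and decreasing by earlier applications of this same proposition. So $\FFr(\cdot;\eps)$ need not be $C^1$ in $\eps$, the hypotheses of the classical IFT are not met, and the formula $\mathfrak{l}' = -(\partial_\eps H)/(\partial_\ell H)$ has no content. The paper avoids this entirely by using only what the MFF definition grants: $H(\eps,\ell_0)<0$ for $\eps>\eta_1$ (strict monotonicity), $H(\eps,\ell^-)>0$ for $\ell^-<\ell_0$ and $\eps$ near $\eta_1$ (continuity from $H(\eta_1,\cdot)$), and then the intermediate value theorem for existence, with uniqueness coming from Lemma~\ref{monbaleq}. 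Continuity and monotonicity of $\mathfrak{l}$ follow from uniqueness, not from differentiation. This is the repair you need: replace ``IFT'' by the monotone-continuous inverse-function argument, which requires no derivatives in $\eps$.

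The verification paragraph also has a soft spot. For $\Rt(u_1,a(\mathfrak{l}(\eps));\eps)$ you argue negativity of $\FFr(\cdot;\eps)$ on $[a_0,a(\mathfrak{l}(\eps))]$ from the value at the right endpoint plus continuity, but continuity near one endpoint does not control the whole interval; the paper instead arranges $w(\eps)=a(\mathfrak{l}(\eps))$ to lie strictly inside the tail of $\FFr(\cdot;\eps)$ (by comparing with $\tr(\eps)$ and with $a^-$), which gives negativity on all of $(u_1,a(\mathfrak{l}(\eps)))$ by the very definition of a tail. Similarly, for $\Rt(b(\mathfrak{l}(\eps)),u_2;\eps)$ the ``uniform perturbation on a compact set'' argument fails if the right force vanishes as $u\to u_2$, and can also fail when $u_2=+\infty$; the paper uses Corollary~\ref{FlowMonotonicityCorollary}, noting that $\{\Ch([a_0,b_0],*),\mathfrak{l}\}$ is a \emph{decreasing} flow, so the right force becomes strictly more negative and its tail grows — that handles both edge cases cleanly. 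So: the structure is right and close to the paper's proof, but the IFT step is a genuine gap, and the two sign verifications should be re-routed through the tail-in-tail argument and Corollary~\ref{FlowMonotonicityCorollary} respectively.
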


We note that~$\Ch([a_0,b_0],*)$ equipped with~$\mathfrak{l}$ is a decreasing flow of chordal domains.
\begin{proof}
By Lemma~\ref{StrictMonotonicityBalanceEquation}\textup, the function~$u \mapsto \FFr(u;\eta_1) + \Fl(u;\Ch([a_0,b_0],*);\eta_1)$ is strictly positive on the right of~$a_0$, because~$a_0$ is the root of the balance equation for the two indicated forces. Fix~$a^-$ in a right neighborhood of~$a_0$ to be a point such that
\begin{equation*}
\FFr(a^-;\eta_1) + \Fl(a^-;\Ch([a_0,b_0],*);\eta_1) > 0.
\end{equation*}
We choose~$\eta_2$, $\eta_1<\eta_2< \min(\eta_3,\eps_\infty)$, to be such that
\begin{equation*}
\FFr(a^-;\eps) + \Fl(a^-;\Ch([a_0,b_0],*);\eps) > 0 \quad \hbox{for all~$\eps \in [\eta_1,\eta_2]$},
\end{equation*}
which can be done since the forces are continuous.
By Definition~\ref{MFF} of the monotone force flow,
\begin{equation*}
\FFr(a_0;\eps) + \Fl(a_0;\Ch([a_0,b_0],*);\eps) < 0,
\end{equation*}
since~$\Fl(a_0;\Ch([a_0,b_0],*);\eps)$ does not change with~$\eps$.
Therefore, there exists a point~$w = w(\eps) \in [a_0,a^-]$ that solves the balance equation for~$\FFr(\cdot\,;\eps)$ and~$\Fl(\cdot\,;\Ch([a_0,b_0],*);\eps)$. It is easy to see that we may take~$w(\eps) \in (a_0,\tr(\eps))$, where~$\tr(\eps)$ is the right endpoint of the tail of~$\FFr(\cdot\,;\eps)$ (because the sum of forces is positive at the point~$\tr(\eps)$ if~$\tr(\eps) < a^-$).

So we take~$\mathfrak{l}(\eps)$ to be such that
\begin{equation*}
a(\mathfrak{l}(\eps)) = w(\eps).
\end{equation*}
We note that the function~$w$ is increasing, thus~$\mathfrak{l}$ is decreasing. For the existence of the desired function~$B_\eps$ we only need to verify that 
\begin{equation*}
\Fr(u;a(\mathfrak{l}(\eps)),b(\mathfrak{l}(\eps));\eps) < 0, \quad \eps \in [\eta_1,\eta_2], u \in [b(\mathfrak{l}(\eps)); u_2],
\end{equation*}
which follows from Corollary~\ref{FlowMonotonicityCorollary}, because~$\big\{\Ch([a_0,b_0],*),\mathfrak{l}\big\}$ is a decreasing flow of chordal domains.
\end{proof}

By Remark~\ref{FCDMFF}, the functions
\begin{equation*}
\begin{aligned}
&\FFr(u;\eps) = \Fr\Big(u;a(\mathfrak{l}(\eps)),b(\mathfrak{l}(\eps));\eps\Big),\quad \eps \in (\eta_1,\eta_2),\; u\in (b_0,+\infty);\\
&\FFl(u;\eps) = \Fl\Big(u;a(\mathfrak{l}(\eps)),b(\mathfrak{l}(\eps));\eps\Big),\quad \eps \in (\eta_1,\eta_2),\; u\in (-\infty, a_0)
\end{aligned}
\end{equation*}
are the right and left monotone force flows.

\begin{St}[{\bf Induction step for a left trolleybus}]\label{InductionStepForLeftTrolleybus}
Let~$\eta_1<\eps_{\infty}$. Suppose that~$u_1 \leq a_0 < b_0 < u_2$ and~$b_0 - a_0 \leq 2\eta_1$. Suppose that~$\FFl$ is a left monotone force flow on~$(-\infty,u_2)\times[\eta_1,\eta_3]$\textup, let~$b_0$ belong to the closure of the tail of~$\FFl(\cdot\,;\eta_1)$. Suppose that there exists a continuous function~$B$ that coincides with the standard candidates on~$\Lt(u_1,a_0;\eta_1)$\textup, $\LTroll(a_0,b_0;\eta_1)$\textup, $\Ch([a_0,b_0],*)$\textup, and $\Lt(b_0,u_2;\eta_1)$. Then there exists~$\eta_2$\textup, $\eta_1<\eta_2<\eps_\infty$\textup, and a decreasing continuous function~$\mathfrak{l}\colon [\eta_1,\eta_2] \to \mathbb{R}$ such that~$\mathfrak{l}(\eta_1) = b_0-a_0$\textup, and there exists a continuous function $B_\eps$ that coincides with the standard candidates on~$\Lt(u_1,a(\mathfrak{l}(\eps));\eps)$\textup, $\LTroll(a(\mathfrak{l}(\eps)),b(\mathfrak{l}(\eps));\eps)$\textup, $\Ch([a(\mathfrak{l}(\eps)),b(\mathfrak{l}(\eps))],*)$\textup, and $\Lt(b(\mathfrak{l}(\eps)),u_2;\eps)$ for any $\eps \in [\eta_1,\eta_2]$\textup, where $a$ and $b$ are the functions corresponding to the chordal domain~$\Ch([a_0,b_0],*)$.
%
\end{St}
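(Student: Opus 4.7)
The plan is to mirror verbatim the proof of Proposition~\ref{InductionStepForRightTrolleybus}, with the roles of left and right forces interchanged. At~$\eps = \eta_1$, the existence of the continuous function~$B$ that agrees with the standard candidates on the four listed domains forces~$b_0$ to be a root of the balance equation for~$\FFl(\cdot\,;\eta_1)$ and~$\Fr(\cdot\,;\Ch([a_0,b_0],*);\eta_1)$; this is the symmetric analogue of the fact exploited at the start of the right-trolleybus proof, and it follows from the continuity conditions~\eqref{LeftTrollBalanceRight} and~\eqref{RightTrollBalanceLeft} translated to the left-trolleybus side.

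Next I would invoke Lemma~\ref{StrictMonotonicityBalanceEquation} to obtain that the function
\begin{equation*}
u \mapsto \FFl(u;\eta_1) + \Fr(u;\Ch([a_0,b_0],*);\eta_1)
\end{equation*}
is strictly negative on the left of~$b_0$. Fix a point~$b^+$ slightly to the left of~$b_0$ where this sum is still negative, and then, by continuity of both forces in~$\eps$, choose~$\eta_2 \in (\eta_1,\min(\eta_3,\eps_\infty))$ so that the inequality persists at~$b^+$ for every~$\eps \in [\eta_1,\eta_2]$. Since by Definition~\ref{MFF} the monotone force flow~$\FFl(\cdot\,;\eps)$ grows in~$\eps$ (and, again, $\Fr(\cdot\,;\Ch([a_0,b_0],*);\eps)$ at~$u = b_0$ does not depend on~$\eps$), we have
\begin{equation*}
\FFl(b_0;\eps) + \Fr(b_0;\Ch([a_0,b_0],*);\eps) > 0
\end{equation*}
for~$\eps > \eta_1$. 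By the intermediate value theorem there is~$w(\eps) \in [b^+,b_0]$ solving the balance equation for~$\FFl(\cdot\,;\eps)$ and~$\Fr(\cdot\,;\Ch([a_0,b_0],*);\eps)$; one checks as in the right-trolleybus proof that~$w(\eps)$ can be taken inside the tail~$(\tl(\eps),b_0)$ of~$\FFl(\cdot\,;\eps)$, and that the resulting function~$\eps \mapsto w(\eps)$ is decreasing.

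I would then set~$\mathfrak{l}(\eps)$ by the equation~$b(\mathfrak{l}(\eps)) = w(\eps)$, which gives a continuous decreasing function with~$\mathfrak{l}(\eta_1) = b_0 - a_0$. The pair~$\big(\Ch([a_0,b_0],*),\mathfrak{l}\big)$ is a decreasing flow of chordal domains in the sense of Definition~\ref{FlowOfChordalDomains}, so by Corollary~\ref{FlowMonotonicityCorollary} the left force on the tail grows in absolute value, guaranteeing
\begin{equation*}
\Fl\big(u;a(\mathfrak{l}(\eps)),b(\mathfrak{l}(\eps));\eps\big) > 0, \qquad \eps\in[\eta_1,\eta_2],\ u\in[u_1,a(\mathfrak{l}(\eps))],
\end{equation*}
which is exactly the condition permitting a standard candidate on~$\Lt(u_1,a(\mathfrak{l}(\eps));\eps)$. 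Combined with Propositions~\ref{LightChordalDomainCandidate}, \ref{LeftTangentsCandidate}, and~\ref{S16}, the four standard candidates glue continuously, yielding~$B_\eps$. The only mildly delicate point is keeping track of the signs when transcribing the right-trolleybus argument: for the right trolleybus the balance root is pinned at the left endpoint of the base and the base-width-reducing flow pushes it to the right, whereas here the balance root is pinned at the right endpoint and the flow pulls it to the left; I do not anticipate any genuine new obstacle beyond this bookkeeping.
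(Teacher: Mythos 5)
Your proposal is correct and is exactly the symmetric transcription of the paper's proof of Proposition~\ref{InductionStepForRightTrolleybus}, which is the intended (and in the paper unstated) argument. The sign bookkeeping is handled properly: the balance root is pinned at $b_0$ rather than $a_0$, $\FFl$ grows rather than $\FFr$ decreasing, the root $w(\eps)$ moves left rather than right, and $\mathfrak{l}$ is defined via $b$ rather than $a$; the final verification with Corollary~\ref{FlowMonotonicityCorollary} also carries over correctly to the left tail.
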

\begin{Rem}\label{UniqueTrolley}
It follows from Lemma~\textup{\ref{monbaleq}} that the functions~$\mathfrak{l}$ constructed in Propositions~\textup{\ref{InductionStepForRightTrolleybus}} and~\textup{\ref{InductionStepForLeftTrolleybus}} are unique \textup(at least when~$\eta_2 - \eta_1$ is sufficiently small\textup).
\end{Rem}
The next two propositions describe the evolutional behavior of multitrolleybuses. It appears that each multitrolleybus immediately splits into a trolleybus parade (by formulas~\eqref{RMultitrolleybusDesintegration} and~\eqref{LMultitrolleybusDesintegration}), and each of the trolleybuses decreases.

\begin{St}[{\bf Induction step for a right multitrolleybus}]\label{InductionStepForMultitrolleybus}\index{multitrolleybus}
Let~$\eta_1$ be smaller than~$\eps_{\infty}$. Consider a right multitrolleybus~$\MTTR(\{\mathfrak{a}_i\}_{i=1}^k)$. Let~$u_1 < \mathfrak{a}_1^{\mathrm{l}}$ and let~$\mathfrak{a}_{k}^{\mathrm{r}} \leq u_2$. Let~$\FFr$ be a right monotone force flow on~$(u_1,\infty)\times[\eta_1,\eta_3]$\textup, let~$\mathfrak{a}_1^{\mathrm{l}}$ belong to the closure of the tail of~$\FFr(\cdot\,;\eta_1)$. We also suppose that for each~$i = 1,2,\ldots,k-1$ there are chordal domains~$\Ch([\mathfrak{a}_i^{\mathrm{r}},\mathfrak{a}_{i+1}^{\mathrm{l}}],*)$ with the corresponding functions~$a_i$ and~$b_i$ that satisfy the assumptions of Proposition~\textup{\ref{LightChordalDomainCandidate}}. Suppose that there exists a continuous function~$B$ that coincides with the standard candidates on~$\Rt(u_1,\mathfrak{a}_1^{\mathrm{l}};\eta_1)$\textup, $\Rt(\mathfrak{a}_k^{\mathrm{r}},u_2;\eta_1)$\textup, $\MTTR(\{\mathfrak{a}_i\}_{i=1}^k;\eta_1)$\textup, and every~$\Ch([\mathfrak{a}_i^{\mathrm{r}},\mathfrak{a}_{i+1}^{\mathrm{l}}],*)$. Then\textup, there exists a number~$\eta_2 > \eta_1$ and a collection of continuous decreasing functions~$\mathfrak{l}_i\colon[\eta_1,\eta_2] \to \mathbb{R}$\textup,~$\mathfrak{l}_i(\eta_1) = \mathfrak{a}_{i+1}^{\mathrm{l}} - \mathfrak{a}_i^{\mathrm{r}}$\textup,~$i = 1,2,\ldots,k-1$\textup, such that for every~$\eps \in [\eta_1,\eta_2]$ there exists a continuous function~$B_{\eps}$ defined on the domain
\begin{equation*}
\begin{aligned}
\Rt\big(u_1,a_1(\mathfrak{l}_1(\eps));\eps\big) \cup \Big(\cup_{i=1}^{k-2} \Rt\big(b_i(\mathfrak{l}_i(\eps)),a_{i+1}(\mathfrak{l}_{i+1}(\eps));\eps\big)\Big) \cup \Big(\cup_{i=1}^{k-1}\RTroll(a_i(\mathfrak{l}_i(\eps)),b_{i}(\mathfrak{l}_i(\eps));\eps)\Big) \cup \\ \Big(\cup_{i=1}^{k-1}\Ch([a_i(\mathfrak{l}_i(\eps)),b_{i}(\mathfrak{l}_i(\eps))],*)\Big)\cup \Rt\big(b_{k-1}(\mathfrak{l}_{k-1}(\eps)),u_2;\eps\big)
\end{aligned}
\end{equation*}
that coincides with the standard candidate inside each subdomain of the partition.
\end{St}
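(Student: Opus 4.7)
The plan is to reduce to the single‑trolleybus case by means of the disintegration formula~\eqref{RMultitrolleybusDesintegration}, which at time~$\eta_1$ rewrites the multitrolleybus as the alternating chain
\[
\MTTR(\{\mathfrak{a}_i\}_{i=1}^k) = \Big(\biguplus_{i=1}^{k}\MTTR(\{\mathfrak{a}_i\})\Big) \biguplus \Big(\biguplus_{i=1}^{k-1}\RTroll(\mathfrak{a}_i^{\mathrm r},\mathfrak{a}_{i+1}^{\mathrm l})\Big),
\]
of~$k-1$ ordinary right trolleybuses separated by~$k$ single‑arc pieces~$\MTTR(\{\mathfrak{a}_i\})$ (each a possibly degenerate right tangent domain~$\Rt(\mathfrak{a}_i^{\mathrm l},\mathfrak{a}_i^{\mathrm r})$). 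I will then process the~$k-1$ ordinary trolleybuses one at a time, from left to right, applying Proposition~\ref{InductionStepForRightTrolleybus} to each, and set~$\eta_2$ to be the minimum of the resulting existence intervals.

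To run the induction, suppose that for some~$j\in\{1,\ldots,k-1\}$ the decreasing functions~$\mathfrak{l}_1,\ldots,\mathfrak{l}_{j-1}$ have already been constructed on~$[\eta_1,\eta_2^{(j-1)}]$. The input right monotone force flow for the~$j$-th trolleybus~$\RTroll(\mathfrak{a}_j^{\mathrm r},\mathfrak{a}_{j+1}^{\mathrm l})$ is taken to be either the given~$\FFr$ (when~$j=1$) or, for~$j\ge 2$, the right force
\[
u\mapsto\Fr\big(u;\,a_{j-1}(\mathfrak{l}_{j-1}(\eps)),\,b_{j-1}(\mathfrak{l}_{j-1}(\eps));\,\eps\big)
\]
of the chordal flow produced at the previous step, which qualifies as a right monotone force flow by Remark~\ref{FCDMFF} since~$\mathfrak{l}_{j-1}$ is decreasing. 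The tail‑closure hypothesis required by Proposition~\ref{InductionStepForRightTrolleybus} — that~$\mathfrak{a}_j^{\mathrm r}$ lies in the closure of the tail of this input force at~$\eps=\eta_1$ — follows from the standard‑candidate structure of the original~$\MTTR(\{\mathfrak{a}_i\}_{i=1}^k;\eta_1)$: the identity~$\eps m''(u)=\Fr(u)$ valid in any right tangent domain, see~\eqref{ForceFromRightTrolleybus}, propagates the sign~$m''\le 0$ into~$\Fr\le 0$ across the single‑arc piece~$\Rt(\mathfrak{a}_j^{\mathrm l},\mathfrak{a}_j^{\mathrm r};\eta_1)$. Proposition~\ref{InductionStepForRightTrolleybus} then produces a decreasing~$\mathfrak{l}_j$ on some~$[\eta_1,\eta_2^{(j)}]\subseteq[\eta_1,\eta_2^{(j-1)}]$ and extends the candidate across the~$j$-th trolleybus together with its underlying chordal flow.

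The match with the statement at~$\eps=\eta_1$ is automatic: the equality~$\mathfrak{l}_i(\eta_1)=\mathfrak{a}_{i+1}^{\mathrm l}-\mathfrak{a}_i^{\mathrm r}$ delivered by Proposition~\ref{InductionStepForRightTrolleybus} forces~$a_i(\mathfrak{l}_i(\eta_1))=\mathfrak{a}_i^{\mathrm r}$ and~$b_i(\mathfrak{l}_i(\eta_1))=\mathfrak{a}_{i+1}^{\mathrm l}$, so each connecting tangent~$\Rt(b_i(\mathfrak{l}_i(\eta_1)),a_{i+1}(\mathfrak{l}_{i+1}(\eta_1));\eta_1)$ coincides with the single‑arc piece~$\Rt(\mathfrak{a}_{i+1}^{\mathrm l},\mathfrak{a}_{i+1}^{\mathrm r};\eta_1)$ of the parade; for~$\eps>\eta_1$ both endpoints of this tangent migrate strictly into~$\mathfrak{a}_{i+1}$, so the connecting tangent becomes a genuine right tangent domain that now swallows the arc on its fixed boundary. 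Setting~$\eta_2=\eta_2^{(k-1)}>\eta_1$ finishes the construction, and continuity together with~$C^1$‑smoothness of~$B_\eps$ across every interface is obtained at no extra cost from each invocation of Proposition~\ref{InductionStepForRightTrolleybus}.

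The main difficulty, such as it is, lies in the bookkeeping needed to chain the monotone‑force‑flow property across the single‑arc pieces from one trolleybus to the next: at each step one must ensure both strict negativity of the input force at~$\mathfrak{a}_j^{\mathrm r}$ when~$\eps=\eta_1$ (so that the tail genuinely reaches this point) and strict monotonicity in~$\eps$ on a full neighborhood of it (Definition~\ref{MFF}). The former is supplied by the standard‑candidate inequality~$m''<0$ on each right tangent piece, together with~\eqref{InequalitiesForConcatenationRight}; the latter is exactly Remark~\ref{FCDMFF} applied to the decreasing chordal flow produced at the previous step. With these two checks in place the induction closes after~$k-1$ steps, yielding~$\eta_2>\eta_1$.
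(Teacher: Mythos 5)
Your overall strategy --- disintegrate via formula~\eqref{RMultitrolleybusDesintegration} into a trolleybus parade and process the~$k-1$ ordinary trolleybuses left to right, chaining the right force of each decreasing chordal flow into the input monotone force flow of the next --- is exactly the paper's strategy. However, there is a concrete gap: you cannot literally invoke Proposition~\ref{InductionStepForRightTrolleybus} for the~$j$-th trolleybus when~$j \geq 2$. That proposition requires a strict inequality~$u_1 < a_0$, and also that there be a standard candidate on a genuine (nondegenerate) tangent domain~$\Rt(u_1,a_0;\eta_1)$. For~$j \geq 2$ you are forced to take~$u_1 = \mathfrak{a}_j^{\mathrm l}$ and~$a_0 = \mathfrak{a}_j^{\mathrm r}$, and in the generic case where~$\mathfrak{a}_j$ is a single point these coincide: the connecting piece~$\Rt(\mathfrak{a}_j^{\mathrm l},\mathfrak{a}_j^{\mathrm r};\eta_1)$ is degenerate at~$\eps = \eta_1$, so the hypothesis of Proposition~\ref{InductionStepForRightTrolleybus} simply fails. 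The paper flags this explicitly (``note that we can apply it directly only to the leftmost trolleybus'') and instead \emph{reruns} the proof of Proposition~\ref{InductionStepForRightTrolleybus} with small modifications for each subsequent trolleybus.

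Concretely, what the paper does and your proposal omits: at~$\eps=\eta_1$ it applies Lemma~\ref{StrictMonotonicityBalanceEquation} (when~$\mathfrak{a}_i$ is a single point) or Remark~\ref{SolidRootRemark} (when it is a solid root --- a case your argument does not distinguish) to pick reference points~$a_i^-$ strictly to the right of~$\mathfrak{a}_i^{\mathrm r}$ where the balance sum is positive at~$\eta_1$; shrinks~$\eta_2-\eta_1$ so this positivity persists; and then for~$\eps>\eta_1$ derives the negative side of the balance inequality at~$\mathfrak{a}_j^{\mathrm r}$ from Corollary~\ref{FlowMonotonicityCorollary} applied to the decreasing flow produced at the previous step. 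These are precisely the points where invoking the single-trolleybus proposition as a black box breaks down. Your heuristic that~$\eps m''(u)=\Fr(u)$ propagates the sign of~$m''$ across the single-arc piece is sound (it is essentially Lemma~\ref{ThreePointsOnOneLine} applied to chords inside the multifigure), but it is an input to the modified argument, not a substitute for it. So: right structure, right decomposition, but the claim ``apply Proposition~\ref{InductionStepForRightTrolleybus} to each'' needs to be replaced by ``repeat its proof with the adjustments forced by the degenerate connecting tangents and possible solid roots.''
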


In other words, there exist decreasing flows~$\big\{\Ch([\mathfrak{a}^{\mathrm r}_i,\mathfrak{a}^{\mathrm l}_{i+1}],*),\mathfrak{l}_i\big\}$ such that the~$i$th flow and the~$(i+1)$th are balanced at~$a_{i+1}(\mathfrak{l}_{i+1}(\eps))$ at the moment~$\eps$ (and also this point belongs to the right tail of the~$i$th flow).
We note that the function
\begin{equation*}
\FFr(u;\eps) = \Fr\Big(u;a_{k-1}(\mathfrak{l}_{k-1}(\eps)),b_{k-1}(\mathfrak{l}_{k-1}(\eps));\eps\Big),\quad \eps \in [\eta_1,\eta_2),\; u\in (\mathfrak{a}^{\mathrm l}_{k},+\infty)\\
\end{equation*}
is a right monotone force flow, and $\FFr(u;\eta_1)=\Fr(u,\mathfrak{a}^{\mathrm l}_{1}, \mathfrak{a}^{\mathrm r}_{k},\eta_1)$.
\begin{proof}
To prove this proposition, one decomposes~$\MTTR(\{\mathfrak{a}_i\}_{i=1}^k)$ into a union of~$k-1$ right trolleybuses with the help of formula~\eqref{RMultitrolleybusDesintegration}. More or less, we will follow the lines of the proof of Proposition~\ref{InductionStepForRightTrolleybus} applying it with small modifications to each of these trolleybuses (note that we can apply it directly only to the leftmost trolleybus).

We use Lemma~\ref{StrictMonotonicityBalanceEquation} if~$\mathfrak{a}_1$ is a single point or Remark~\ref{SolidRootRemark} if it is an interval to pick a point~$a_{1}^-$ in a right neighborhood of~$\mathfrak{a}_1^{\mathrm r}$ such that
\begin{equation}\label{MTTRProofe1}
\FFr(a_{1}^-;\eps) + \Fl(a_{1}^-;\Ch([\mathfrak{a}_1^{\mathrm r},\mathfrak{a}_2^{\mathrm l}],*);\eps) > 0\qquad \hbox{for}\ \eps = \eta_1.
\end{equation}
Using the same Lemma or Remark for each~$i = 2,3,\ldots,k-1$, we pick a point~$a_{i}^-$ in a right neighborhood of~$\mathfrak{a}_i^{\mathrm r}$ such that
\begin{equation}\label{MTTRProofe2}
\Fr(a_i^-;\Ch([\mathfrak{a}_{i-1}^{\mathrm r},\mathfrak{a}_{i}^{\mathrm l}],*);\eps) + \Fl(a_i^-;\Ch([\mathfrak{a}_i^{\mathrm r},\mathfrak{a}_{i+1}^{\mathrm l}],*);\eps) > 0 \qquad \hbox{for}\ \eps = \eta_1.
\end{equation}
Now we take~$\eta_2-\eta_1$ to be so small that inequalities~\eqref{MTTRProofe1} and~\eqref{MTTRProofe2} (for all~$i=2,3,\ldots,k-1$) hold for any~$\eps \in [\eta_1,\eta_2]$. By Definition~\ref{MFF}, the tail of~$\FFr(\cdot;\eps)$ grows as a function of~$\eps$. In particular, it contains~$\mathfrak{a}_1^{\mathrm r}$ for any~$\eps > \eta_1$ (because~$f''' = 0$ on~$\mathfrak{a}_1$ if it is a solid root). On the other hand, by the same monotonicity property of the force flow,
\begin{equation*}
\FFr(\mathfrak{a}_1^{\mathrm r};\eps) + \Fl(\mathfrak{a}_1^{\mathrm r};\Ch([\mathfrak{a}_1^{\mathrm r},\mathfrak{a}_2^{\mathrm l}],*);\eps) < 0.
\end{equation*}
Thus, we may find a point~$w_1(\eps)$ that is the root of the balance equation for~$\FFr(\cdot\,;\eps)$ and~$\Fl(\cdot\,;\Ch([\mathfrak{a}_1^{\mathrm r},\mathfrak{a}_2^{\mathrm l}],*);\eps)$. Arguing as in the proof of Proposition~\ref{InductionStepForRightTrolleybus}, we may choose this point in such a way that it belongs to the right tail of~$\FFr(\cdot\,;\eps)$. So, we put~$a_1(\mathfrak{l}_1(\eps)) = w(\eps)$ and build the first trolleybus (note that the function~$\mathfrak{l}_1$ is decreasing).

We note that~$\big\{\Ch([\mathfrak{a}_1^{\mathrm r},\mathfrak{a}_2^{\mathrm l}]),\mathfrak{l}_1\big\}$ is a decreasing flow of chordal domains. Thus, by Corollary~\ref{FlowMonotonicityCorollary},
\begin{equation*}
\Fr(\mathfrak{a}_2^{\mathrm r};\Ch([a_1(\mathfrak{l}_1(\eps)),b_1(\mathfrak{l}_1(\eps))],*);\eps) + \Fl(\mathfrak{a}_2^{\mathrm r};\Ch([\mathfrak{a}_2^{\mathrm r},\mathfrak{a}_{3}^{\mathrm l}],*);\eps) < 0, \qquad \eps \in (\eta_1,\eta_2].
\end{equation*} 
We take~$\eta_2 - \eta_1$ to be so small (we diminish this quantity once more if needed) that inequality~\eqref{MTTRProofe2} for~$i=2$ holds true  with~$\Ch([\mathfrak{a}_{1}^{\mathrm r},\mathfrak{a}_{2}^{\mathrm l}],*)$ replaced by~$\Ch([a_1(\mathfrak{l}_1(\eps)),b_1(\mathfrak{l}_1(\eps))],*)$, i.e.
\begin{equation*}
\Fr\Big(a_2^-;\Ch\big([a_1(\mathfrak{l}_1(\eps)),b_1(\mathfrak{l}_1(\eps))],*\big);\eps\Big) + \Fl(a_2^-;\Ch([\mathfrak{a}_2^{\mathrm r},\mathfrak{a}_{3}^{\mathrm l}],*);\eps) > 0,\qquad \eps \in [\eta_1,\eta_2].
\end{equation*}
In such a case, we can find the root~$w_2(\eps)$ of the balance equation
\begin{equation*}
\Fr(\cdot\,;\Ch([a_1(\mathfrak{l}_1(\eps)),b_1(\mathfrak{l}_1(\eps))],*);\eps) + \Fl(\cdot\,;\Ch([\mathfrak{a}_2^{\mathrm r},\mathfrak{a}_{3}^{\mathrm l}],*);\eps) = 0
\end{equation*}
that belongs both to the right tail of~$[A_1(\mathfrak{l}_1(\eps)),B_1(\mathfrak{l}_1(\eps))]$ and the interval~$(\mathfrak{a}_2^{\mathrm r}, a_2^-)$. We choose the function~$\mathfrak{l}_2$ in such a way that~$w_2(\eps) = a_2(\mathfrak{l}_2(\eps))$. Reasoning consecutively in a similar fashion, we construct all the remaining trolleybuses.
\end{proof}

\begin{St}[{\bf Induction step for a left multitrolleybus}]\label{InductionStepForLeftMultitrolleybus}
Let~$\eta_1$ be smaller than~$\eps_{\infty}$. Consider the left multitrolleybus~$\MTTL(\{\mathfrak{a}_i\}_{i=1}^k)$. Let~$u_1 \leq \mathfrak{a}_1^{\mathrm{l}}$ and let~$\mathfrak{a}_{k}^{\mathrm{r}} < u_2$. Let~$\FFl$ be a left monotone force flow on~$(-\infty,u_2)\times[\eta_1,\eta_3]$\textup, let~$\mathfrak{a}_k^{\mathrm{r}}$ belong to the closure of the tail of~$\FFl(\cdot\,;\eta_1)$. We also suppose that for each~$i = 1,2,\ldots,k-1$ there are chordal domains~$\Ch([\mathfrak{a}_i^{\mathrm{r}},\mathfrak{a}_{i+1}^{\mathrm{l}}],*)$ with the corresponding functions~$a_i$ and~$b_i$ that satisfy the assumptions of Proposition~\textup{\ref{LightChordalDomainCandidate}}. Suppose that there exists a continuous function~$B$ that coincides with the standard candidates on~$\Lt(u_1,\mathfrak{a}_1^{\mathrm{l}};\eta_1)$\textup, $\Lt(\mathfrak{a}_k^{\mathrm{r}},u_2;\eta_1)$\textup, $\MTTL(\{\mathfrak{a}_i\}_{i=1}^k;\eta_1)$\textup, and every~$\Ch([\mathfrak{a}_i^{\mathrm{r}},\mathfrak{a}_{i+1}^{\mathrm{l}}],*)$. Then\textup, there exists a number~$\eta_2 > \eta_1$ and a collection of continuous decreasing functions~$\mathfrak{l}_i\colon[\eta_1,\eta_2] \to \mathbb{R}$\textup,~$\mathfrak{l}_i(\eta_1) = \mathfrak{a}_{i+1}^{\mathrm{l}} - \mathfrak{a}_i^{\mathrm{r}}$\textup,~$i = 1,2,\ldots,k-1$\textup, such that for every~$\eps \in [\eta_1,\eta_2]$ there exists a continuous function~$B_{\eps}$ defined on the domain
\begin{equation*}
\begin{aligned}
\Lt\big(u_1,a_1(\mathfrak{l}_1(\eps));\eps\big) \cup \Big(\cup_{i=1}^{k-2} \Lt\big(b_i(\mathfrak{l}_i(\eps)),a_{i+1}(\mathfrak{l}_{i+1}(\eps));\eps\big)\Big) \cup \Big(\cup_{i=1}^{k-1}\LTroll(a_i(\mathfrak{l}_i(\eps)),b_{i}(\mathfrak{l}_i(\eps));\eps)\Big) \cup \\ \Big(\cup_{i=1}^{k-1}\Ch([a_i(\mathfrak{l}_i(\eps)),b_{i}(\mathfrak{l}_i(\eps))],*)\Big)\cup \Lt\big(b_{k-1}(\mathfrak{l}_{k-1}(\eps)),u_2;\eps\big)
\end{aligned}
\end{equation*}
coinciding with the standard candidate on each subdomain of the partition.
\end{St}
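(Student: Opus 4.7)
The plan is to mirror the proof of Proposition~\ref{InductionStepForMultitrolleybus} verbatim, exchanging the roles of left and right throughout, and processing the $k-1$ constituent trolleybuses of the multitrolleybus parade from right to left instead of left to right. Formally, I would decompose $\MTTL(\{\mathfrak{a}_i\}_{i=1}^k)$ using formula~\eqref{LMultitrolleybusDesintegration}, use the hypothesis on $\FFl$ to initiate the induction at the rightmost trolleybus (the one sitting on $\mathfrak{a}_k$), and then propagate leftward, with each already-constructed chordal-domain flow serving as the driving left monotone force flow for the next trolleybus on its left.

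More concretely, the first step would be to use Lemma~\ref{StrictMonotonicityBalanceEquation} (or Remark~\ref{SolidRootRemark} in case $\mathfrak{a}_k$ is a solid root) to choose a point $b_{k-1}^+$ in a left neighborhood of $\mathfrak{a}_k^{\mathrm{l}}$ at which
\begin{equation*}
\Fr\big(b_{k-1}^+;\Ch([\mathfrak{a}_{k-1}^{\mathrm{r}},\mathfrak{a}_k^{\mathrm{l}}],*);\eta_1\big) + \FFl(b_{k-1}^+;\eta_1) > 0,
\end{equation*}
and analogously choose points $b_i^+$ near $\mathfrak{a}_{i+1}^{\mathrm{l}}$ for $i=k-2,k-3,\ldots,1$. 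Shrinking $\eta_2-\eta_1$ by continuity, these strict inequalities persist on the whole interval $[\eta_1,\eta_2]$. The monotone force flow axiom gives $\FFl(\mathfrak{a}_k^{\mathrm{l}};\eps)+\Fr(\mathfrak{a}_k^{\mathrm{l}};\Ch(\cdots);\eps)<0$ on the other side, so the intermediate value theorem produces a root $w_{k-1}(\eps)$ of the balance equation, which (exactly as in the right case) can be taken inside the left tail of $\FFl(\cdot;\eps)$; setting $b_{k-1}(\mathfrak{l}_{k-1}(\eps))=w_{k-1}(\eps)$ defines a decreasing $\mathfrak{l}_{k-1}$.

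To continue the induction, Remark~\ref{FCDMFF} applied to the decreasing flow $\{\Ch([\mathfrak{a}_{k-1}^{\mathrm{r}},\mathfrak{a}_k^{\mathrm{l}}],*),\mathfrak{l}_{k-1}\}$ promotes its left force into a new left monotone force flow, which plays the role formerly played by $\FFl$ for the next trolleybus in line; Corollary~\ref{FlowMonotonicityCorollary} guarantees that the tail inclusions needed to apply the balance argument at the next step hold automatically. Iterating this construction $k-1$ times produces all $\mathfrak{l}_i$. The gluing of standard candidates on the resulting partition is continuous by construction of the roots $w_i(\eps)$ and $C^1$-smooth by Proposition~\ref{S16} and Proposition~\ref{DomainOfLinearityWithTwoPointsOnTheLowerBoundaryMeetsTangentDomain}, giving $B_\eps$.

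No genuinely new obstacle appears relative to the right-multitrolleybus case; the only care needed — and the one place where sloppiness would bite — is to verify at each stage that the chosen root $w_i(\eps)$ lies strictly inside the left tail of the current driving force, so that Remark~\ref{FCDMFF} can be re-applied at the next stage. This is handled exactly as in the proof of Proposition~\ref{InductionStepForMultitrolleybus}: by Corollary~\ref{FlowMonotonicityCorollary} the sum of forces is strictly negative at the relevant endpoint and strictly positive at $b_i^+$, so the root is interior to the intersection of tails.
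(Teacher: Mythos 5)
Your proposal correctly mirrors the paper's proof of Proposition~\ref{InductionStepForMultitrolleybus} (the paper states the left version without proof, clearly intending the symmetric argument), and the skeleton --- decompose by formula~\eqref{LMultitrolleybusDesintegration}, balance at the right endpoint of each constituent trolleybus base, then promote the decreasing chordal-domain flow's left force to a new left monotone force flow via Remark~\ref{FCDMFF} and propagate leftward --- is the intended one. However, both of your key sign assertions are reversed, and as written they contradict the lemmas you invoke. By Lemma~\ref{StrictMonotonicityBalanceEquation} the sum of a right and a left force is strictly \emph{negative} to the left of the balance root, so at $b_{k-1}^+$ (chosen in a left neighborhood of $\mathfrak{a}_k^{\mathrm{l}}$) the display should read
\begin{equation*}
\Fr\big(b_{k-1}^+;\Ch([\mathfrak{a}_{k-1}^{\mathrm{r}},\mathfrak{a}_k^{\mathrm{l}}],*);\eta_1\big) + \FFl(b_{k-1}^+;\eta_1) < 0,
\end{equation*}
not $>0$. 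Similarly, Definition~\ref{MFF} gives $\FFl(v;\eta_1) < \FFl(v;\eps)$ for $\eps>\eta_1$ (a left monotone force flow \emph{increases} in $\eps$), while $\Fr$ of the chordal domain is unchanged at points inside the domain, so at the root one gets
\begin{equation*}
\FFl(\mathfrak{a}_k^{\mathrm{l}};\eps)+\Fr\big(\mathfrak{a}_k^{\mathrm{l}};\Ch([\mathfrak{a}_{k-1}^{\mathrm{r}},\mathfrak{a}_k^{\mathrm{l}}],*);\eps\big) > 0 \quad\text{for } \eps>\eta_1,
\end{equation*}
not $<0$. Because you flipped both signs simultaneously, the intermediate value theorem still yields a root inside $(b_{k-1}^+,\mathfrak{a}_k^{\mathrm{l}})$ and the remainder of the argument (root lies in the tail, $\mathfrak{l}_{k-1}$ decreasing, iterate leftward) survives intact; but the two inequalities as stated are false and do not follow from Lemma~\ref{StrictMonotonicityBalanceEquation} or Definition~\ref{MFF}, so they need to be corrected.
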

\begin{Rem}\label{DegenerateTrolleybus}
We note that in the case where the multitrolleybus~$\MTTL(\{\mathfrak{a}\})$ is sitting on a single arc~$\mathfrak{a}$ with the incoming left monotone force flow~$\FFl$ on~$(-\infty,u_2)\times[\eta_1,\eta_3]$\textup, for any~$\eps \in (\eta_1,\eta_3]$\textup, we have the left tangent domain instead of this multitrolleybus with the force function given by~$\FFl$. 
Similar for the case of a right multitrolleybus.
 
In particular\textup, one can do the same for the case of a fictious vertex~$\Rt(w,w)$ or~$\Lt(w,w)$ of the fifth type.
\end{Rem}
In the next proposition, we show that angles move continuously.

\begin{St}[{\bf Induction step for an angle}]\label{InductionStepAngle}\index{angle}
Let~$\eta_1 < \eps_{\infty}$. Let~$w \in\mathbb{R}$\textup, consider the figure~$\Ang(w;\eta_1)$ with the domains~$\Rt(u_1,w;\eta_1)$ and~$\Lt(w,u_2;\eta_1)$ attached to it. Let also~$\FFr$ be a right monotone force flow on~$(u_1,\infty)\times[\eta_1,\eta_3]$ and let~$\FFl$ be a left monotone force flow on~$(-\infty,u_2)\times [\eta_1,\eta_3]$ such that~$[u_1,w)$ belongs to the tail of~$\FFr(\cdot\,;\eta_1)$ and~$(w,u_2]$ belongs to the tail of~$\FFl(\cdot\,;\eta_1)$. Then there exists~$\eta_2 > \eta_1$ and a continuous function~$w\colon [\eta_1,\eta_2] \to \mathbb{R}$ such that~$w(\eps)$ is the root of the balance equation for~$\FFr(\cdot\,;\eps)$ and~$\FFl(\cdot\,;\eps)$ and~$w(\eps)$ belongs to the intersection of the tails of these forces \textup(in particular\textup, the situation falls under the scope of Proposition~\textup{\ref{AngleProp}}\textup).
\end{St}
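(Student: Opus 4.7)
The plan is to obtain $w(\eps)$ as the implicitly defined root of the balance equation $\FFr(u;\eps)+\FFl(u;\eps)=0$, using monotonicity in $u$ for existence and uniqueness and continuity in $\eps$ to propagate the root slightly to the right of $\eta_1$. By the hypothesis on $\Ang(w;\eta_1)$ together with Proposition~\ref{AngleProp} (rewritten in terms of forces via Definition~\ref{BalanceEquationDefinition}), we have $\FFr(w;\eta_1)+\FFl(w;\eta_1)=0$, and both forces are finite and continuous jointly in $(u,\eps)$ on the relevant domain.

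First, I would fix two auxiliary points $w^-$ and $w^+$ strictly inside the intersection $[u_1,w)\cap(w,u_2]$ of the two tails at $\eps=\eta_1$ so that $w^-<w<w^+$. Lemma~\ref{monbaleq} (and, at the endpoints, Lemma~\ref{StrictMonotonicityBalanceEquation} if one of the tails happens to end exactly at $w$) shows that $u\mapsto \FFr(u;\eta_1)+\FFl(u;\eta_1)$ is strictly increasing on this intersection, so that
\[
\FFr(w^-;\eta_1)+\FFl(w^-;\eta_1)<0<\FFr(w^+;\eta_1)+\FFl(w^+;\eta_1).
\]
By joint continuity of $\FFr$ and $\FFl$, these strict inequalities persist on some interval $[\eta_1,\eta_2]$, i.e. $\FFr(w^-;\eps)+\FFl(w^-;\eps)<0$ and $\FFr(w^+;\eps)+\FFl(w^+;\eps)>0$ for all $\eps\in[\eta_1,\eta_2]$.

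Next, I would verify that $[w^-,w^+]$ is contained in the intersection of the tails of $\FFr(\cdot;\eps)$ and $\FFl(\cdot;\eps)$ for every $\eps\in[\eta_1,\eta_2]$. This is exactly the monotonicity built into Definition~\ref{MFF}: the right endpoint $\tr(\eps)$ of the tail of $\FFr(\cdot;\eps)$ is nondecreasing in $\eps$, and the left endpoint $\tl(\eps)$ of the tail of $\FFl(\cdot;\eps)$ is nonincreasing in $\eps$, so the intersection of the tails only grows as $\eps$ increases from $\eta_1$. Consequently, Lemma~\ref{monbaleq} applies on $[w^-,w^+]$ for every such $\eps$, so the sum $\FFr(\cdot;\eps)+\FFl(\cdot;\eps)$ is strictly increasing there. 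Combined with the sign change established above, the intermediate value theorem yields a unique root $w(\eps)\in[w^-,w^+]$, which lies in the common tail, as required.

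Finally, the continuity of $\eps\mapsto w(\eps)$ is a standard consequence of uniqueness together with continuity and strict monotonicity: given $\eps_0\in[\eta_1,\eta_2]$ and any sub-interval $[w(\eps_0)-\delta,w(\eps_0)+\delta]\subset[w^-,w^+]$, the same sign-change argument applied to the endpoints of this sub-interval shows that $|w(\eps)-w(\eps_0)|<\delta$ for all $\eps$ sufficiently close to $\eps_0$. (Alternatively, since on $[w^-,w^+]\times[\eta_1,\eta_2]$ one has strict monotonicity in $u$, one may invoke a continuous version of the implicit function theorem for monotone maps.) There is no serious obstacle here; the only point one must be slightly careful about is the degenerate situation where $w$ lies at the very endpoint of one of the tails at $\eta_1$, which is handled by Lemma~\ref{StrictMonotonicityBalanceEquation} and Remark~\ref{SolidRootRemark} exactly as in the proofs of Propositions~\ref{InductionStepForRightTrolleybus} and~\ref{InductionStepForMultitrolleybus}.
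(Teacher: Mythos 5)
Your main argument is essentially the paper's: fix $\eta_1$, use Lemma~\ref{StrictMonotonicityBalanceEquation} to establish a strict sign change around $w$, invoke continuity in $\eps$ to propagate the sign change, use monotonicity of tails from Definition~\ref{MFF} to keep the root inside the intersection of tails, and apply the intermediate value theorem plus Lemma~\ref{monbaleq} for uniqueness and continuity. This is the right route.

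Two remarks, one cosmetic and one substantive. Cosmetically, $[u_1,w)\cap(w,u_2]$ is the empty set; what you mean is the intersection of the tails of $\FFr(\cdot;\eta_1)$ and $\FFl(\cdot;\eta_1)$, each of which contains the corresponding tangent-domain interval but typically extends past $w$.

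Substantively, there is a genuine hole in the degenerate case that you flag at the end but don't actually close. If the intersection of the tails at $\eps=\eta_1$ is exactly the singleton $\{w\}$ (i.e. $\tr(\eta_1)=w$ or $\tl(\eta_1)=w$), then no fixed $w^-<w<w^+$ lying in the intersection of the tails exists, so the entire scaffolding of your argument fails to get off the ground. The deferral to ``the same handling as in Propositions~\ref{InductionStepForRightTrolleybus} and~\ref{InductionStepForMultitrolleybus}'' does not automatically apply: in those proofs, the auxiliary point $a^-$ is required to be in the domain of the forces, not in the intersection of the tails, which is a weaker requirement than what your construction of $w^\pm$ needs. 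The paper's actual resolution is different in kind: rather than fixing an auxiliary point at $\eta_1$, it uses the $\eps$-dependent tail endpoint $\tr(\eps)$ directly. For $\eps>\eta_1$, strict monotonicity of the force flow forces $\tr(\eps)>w$ strictly; at $u=\tr(\eps)$ one has $\FFr(\tr(\eps);\eps)=0$ while $\FFl(\tr(\eps);\eps)>0$ as long as $\tr(\eps)$ lies in the domain of $\FFl$, so the sum of forces is positive there and still negative just to the left of $w$; the root of the balance equation therefore sits in $(w-\delta,\tr(\eps))$, which is inside the (now open) intersection of tails. You need this moving comparison point, not a fixed one, and the argument should be symmetrized for the case $\tl(\eta_1)=w$. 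Without that, the degenerate case is not covered.
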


\begin{proof}
By Lemma~\ref{StrictMonotonicityBalanceEquation}, the function~$u \mapsto \FFr(u;\eta_1) + \FFl(u;\eta_1)$ is strictly negative on the left of~$w$ and strictly positive on the right. Thus, its small perturbation~$\FFr(u;\eps) + \FFl(u;\eps)$ always has a root in a small neighborhood of~$w$. It only remains to prove that this root can be found inside the intersection of the tails. If~$\tr(\eta_1)$ (the end of the tail of~$\FFr(\cdot\,;\eta_1)$) and~$\tl(\eta_1)$ are not equal to~$w$, then everything is fine provided~$\eps$ is not far from~$\eta_1$. Consider the case where~$\tr(\eta_1)=w(\eta_1)$. When we increase~$\eps$ a little starting from~$\eta_1$,~$\tr$ strictly grows, thus~$\tr(\eps) > w$. The function~$u \mapsto \FFr(u;\eps) + \FFl(u;\eps)$ is positive at~$\tr(\eps)$ (provided~$\tr(\eps)$ belongs to the domain of~$\FFl(\cdot\,;\eps)$; if not, then there is nothing to prove) and negative on the left of~$w$. Therefore,~$w(\eps)$ can be chosen to be smaller than~$\tr(\eps)$. A similar reasoning shows that~$w(\eps)$ may be chosen to belong to~$(\tl(\eps),\tr(\eps))$.
\end{proof}

\begin{St}[{\bf Induction step for a multibirdie}]\label{MultibirdieDesintegrationSt}\index{birdie}\index{multibirdie}
Let~$\eta_1$ be a positive number\textup,~$\eta_1 < \eps_{\infty}$. Let~$k$ be a natural number\textup, let~$\{\mathfrak{a}_i\}_{i=1}^{k}$ be a collection of disjoint intervals on the real line\textup, let~$u_1 < \mathfrak{a}_1^{\mathrm{l}}$\textup,~$u_2 > \mathfrak{a}_k^{\mathrm{r}}$. Let also~$\FFr$ be a right monotone force flow defined on~$(u_1,\infty)\times[\eta_1,\eta_3]$ and let~$\FFl$ be a left monotone force flow defined on~$(-\infty,u_2)\times [\eta_1,\eta_3]$. Let~$\mathfrak{a}_1^{\mathrm{l}}$ belong to the closure of the tail of~$\FFr(\cdot\,;\eta_1)$\textup, let~$\mathfrak{a}_k^{\mathrm{r}}$ belong to the closure of the tail of~$\FFl(\cdot\,;\eta_1)$. Consider the union domain
\begin{equation}\label{BeforeDesintegration}
\begin{aligned}
\Rt(u_1,\mathfrak{a}_1^{\mathrm{l}};\eta_1) \cup \MTB(\{\mathfrak{a}_i\}_{i=1}^k;\eta_1) \cup \Big(\cup_{i=1}^{k-1} \Ch([\mathfrak{a}_i^{\mathrm{r}},\mathfrak{a}_{i+1}^{\mathrm{l}}],*)\Big) \cup \Lt(\mathfrak{a}_k^{\mathrm{r}},u_2;\eta_1)
\end{aligned}
\end{equation}
and a continuous function~$B$ on this domain that coincides with the standard candidates on each subdomain. Then\textup, there exists a number~$\eta_2 > \eta_1$ and a collection of decreasing functions~$\mathfrak{l}_i\colon[\eta_1,\eta_2]\to \mathbb{R}$\textup,~$\mathfrak{l}_i(\eta_1) =  \mathfrak{a}_{i+1}^{\mathrm{l}} - \mathfrak{a}_i^{\mathrm{r}}$\textup,~$i = 1,2,\ldots,k-1$\textup, such that for every~$\eps \in [\eta_1,\eta_2]$ there exists an integer~$j = j(\eps)$\textup, $1 \leq j \leq k$\textup, and a real~$w(\eps)$\textup,~$b_{j-1}\big(\mathfrak{l}_{j-1}(\eps)\big) \leq w(\eps) \leq a_{j}\big(\mathfrak{l}_j(\eps)\big)$ \textup(here~$a_i$ and~$b_i$ are the functions~$a$ and~$b$ associated with~$\Ch([\mathfrak{a}_i^{\mathrm{r}},\mathfrak{a}_{i+1}^{\mathrm{l}}],*)$ for $1\leq i \leq k-1$\textup, see Subsection~\textup{\ref{s331}}\textup; with the additional agreement\textup: $b_0\df u_1$,~$a_k\df u_2$\textup) such that there exists a continuous function~$B_{\eps}$ on the domain
\begin{gather*}
\Big(\cup_{i=0}^{j-2}\Rt\big(b_i\big(\mathfrak{l}_i(\eps)\big), a_{i+1}\big(\mathfrak{l}_{i+1}(\eps)\big);\eps\big)\Big) \cup\Big(\cup_{i=1}^{j-1}\RTroll\big(a_i\big(\mathfrak{l}_i(\eps)\big),b_i\big(\mathfrak{l}_i(\eps)\big);\eps\big)\Big) \cup
\\ 
\Rt\big(b_{j-1}\big(\mathfrak{l}_{j-1}(\eps)\big),w(\eps);\eps\big) \cup
\Ang\big(w(\eps);\eps\big) \cup \Lt\big(w(\eps),a_{j}\big(\mathfrak{l}_{j}(\eps)\big);\eps\big)\cup
\\
\Big(\cup_{i=j}^{k-1}\LTroll\big(a_i\big(\mathfrak{l}_i(\eps)\big),b_i\big(\mathfrak{l}_i(\eps)\big);\eps\big)\Big) \cup
\Big(\cup_{i=j}^{k-1}\Lt\big(b_i\big(\mathfrak{l}_i(\eps)\big), a_{i+1}\big(\mathfrak{l}_{i+1}(\eps)\big);\eps\big)\Big) \cup\\
\Big(\cup_{i=1}^{k-1} \Ch\big(\big[a_i\big(\mathfrak{l}_i(\eps)\big),b_i\big(\mathfrak{l}_i(\eps)\big)\big],*\big)\Big)
\end{gather*}
that coincides with the standard candidate inside each subdomain of the partition. 
\end{St}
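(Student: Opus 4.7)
The plan is to combine the algebraic decomposition formula~\eqref{MultibirdieDesintegration}, which for any index $j\in\{1,\ldots,k\}$ writes the multibirdie together with the chordal domains under it as a right multitrolleybus on $\{\mathfrak{a}_i\}_{i=1}^{j-1}$, followed by $\MTB(\{\mathfrak{a}_j\})$ (which coincides with $\Ang(\mathfrak{a}_j)$ when $\mathfrak{a}_j$ is a single point), followed by a left multitrolleybus on $\{\mathfrak{a}_i\}_{i=j+1}^{k}$, with the local evolution propositions already proved for multitrolleybuses and for angles. First I would fix one such index $j_0$; if some $\mathfrak{a}_i$ is a single point one may take $j_0$ equal to it, and in the general case any $j_0$ works after reinterpreting the central piece at $\eps=\eta_1$.

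Next I would evolve the two wings. Applying Proposition~\ref{InductionStepForMultitrolleybus} with input force $\FFr$ to the left wing (its hypotheses coincide with what is assumed of $\FFr$ and of the chordal domains with $i<j_0$) produces, on some interval $[\eta_1,\eta_2']$, the decreasing functions $\mathfrak{l}_1,\ldots,\mathfrak{l}_{j_0-1}$, the right trolleybuses $\RTroll(a_i(\mathfrak{l}_i(\eps)),b_i(\mathfrak{l}_i(\eps));\eps)$, and, by Remark~\ref{FCDMFF}, a new right monotone force flow $\FFr^{\mathrm{new}}(\cdot;\eps)=\Fr(\cdot;\Ch([a_{j_0-1}(\mathfrak{l}_{j_0-1}(\eps)),b_{j_0-1}(\mathfrak{l}_{j_0-1}(\eps))],*);\eps)$ emerging from the inner right edge. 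Symmetrically, Proposition~\ref{InductionStepForLeftMultitrolleybus} applied to the right wing yields $\mathfrak{l}_{j_0},\ldots,\mathfrak{l}_{k-1}$, the left trolleybuses, and a new left monotone force flow $\FFl^{\mathrm{new}}$. The boundary cases $j_0=1$ and $j_0=k$ are covered by Remark~\ref{DegenerateTrolleybus}, in which $\FFr^{\mathrm{new}}=\FFr$ or $\FFl^{\mathrm{new}}=\FFl$ respectively.

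Then I would evolve the central angle. The inner right edge $\mathfrak{a}_{j_0}^{\mathrm l}=b_{j_0-1}(\mathfrak{l}_{j_0-1}(\eta_1))$ belongs to the closure of the tail of $\FFr^{\mathrm{new}}(\cdot;\eta_1)$, and symmetrically for the left flow, so Proposition~\ref{InductionStepAngle} applied to the pair $(\FFr^{\mathrm{new}},\FFl^{\mathrm{new}})$ produces, after possibly further shrinking $\eta_2$, the continuous function $w(\eps)$ solving the balance equation inside the intersection of the tails. The standard candidates on all the pieces glue into a single continuous Bellman candidate by Proposition~\ref{AngleBetweenChords}, applied to the innermost angle with its two neighbor tangent domains and the chords $[A_{j_0-1}(\mathfrak{l}_{j_0-1}(\eps)),B_{j_0-1}(\mathfrak{l}_{j_0-1}(\eps))]$ and $[A_{j_0}(\mathfrak{l}_{j_0}(\eps)),B_{j_0}(\mathfrak{l}_{j_0}(\eps))]$; each outer gluing is automatic from the construction of the two new force flows.

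The main obstacle I expect is ensuring that $w(\eps)$ lies in the interval $[b_{j_0-1}(\mathfrak{l}_{j_0-1}(\eps)),a_{j_0}(\mathfrak{l}_{j_0}(\eps))]$, so that the angle does not collide with the neighboring trolleybuses. At $\eps=\eta_1$ this amounts to $w(\eta_1)\in\mathfrak{a}_{j_0}$; when $\mathfrak{a}_{j_0}$ is a single point we choose it to be that point directly, while when $\mathfrak{a}_{j_0}$ is a solid-root interval Remark~\ref{SolidRootRemark} guarantees that $\FFr^{\mathrm{new}}+\FFl^{\mathrm{new}}$ is strictly positive just to the right of $\mathfrak{a}_{j_0}^{\mathrm r}$ and strictly negative just to the left of $\mathfrak{a}_{j_0}^{\mathrm l}$, so that the root of the balance equation remains inside $\mathfrak{a}_{j_0}$ at $\eps=\eta_1$ and then depends continuously on $\eps$. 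Since the proposition only requires the existence of some $j(\eps)$ with the stated properties, shrinking $\eta_2$ further is sufficient to keep $w(\eps)$ inside the chosen window for all $\eps\in[\eta_1,\eta_2]$.
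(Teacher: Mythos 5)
Your proof has a genuine gap: it fixes the index $j_0$ in advance and then evolves the two wings independently, but this is precisely what cannot be done. The decomposition~\eqref{MultibirdieDesintegration} at a fixed $j_0$ does produce a valid Bellman candidate at $\eps=\eta_1$, but for $\eps>\eta_1$ the evolution of the chordal domain $\Ch([\mathfrak{a}_{j_0-1}^{\mathrm r},\mathfrak{a}_{j_0}^{\mathrm l}],*)$ driven by $\FFr$ (call its base length $\mathfrak{l}_{\mathrm{r},j_0-1}$) and the evolution of $\Ch([\mathfrak{a}_{j_0}^{\mathrm r},\mathfrak{a}_{j_0+1}^{\mathrm l}],*)$ driven by $\FFl$ (base length $\mathfrak{l}_{\mathrm{l},j_0}$) are not consistent with the balance point unless $j_0$ happens to be the correct threshold index, and that threshold $j(\eps)$ depends on $\eps$. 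Your final step, where you invoke Remark~\ref{SolidRootRemark} and continuity to keep $w(\eps)$ inside the chosen window, does not help: if $j_0$ is chosen wrongly, the balance point exits the window $[b_{j_0-1}(\mathfrak{l}_{j_0-1}(\eps)),a_{j_0}(\mathfrak{l}_{j_0}(\eps))]$ immediately for $\eps>\eta_1$, so no amount of shrinking $\eta_2$ restores it (this is Remark~\ref{UniqueBirdie} in miniature: building the trolleybus with the larger of the two candidate bases puts the balance point inside that base). The oscillating-birdie example in Section~\ref{s54} shows that for a multibirdie the correct $j(\eps)$ genuinely changes with $\eps$ and cannot be fixed in advance.

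The paper's proof avoids this by evolving \emph{both} a full right multitrolleybus $\MTTR(\{\mathfrak{a}_i\}_{i=1}^k)$ and a full left multitrolleybus $\MTTL(\{\mathfrak{a}_i\}_{i=1}^k)$, obtaining two families $\{\mathfrak{l}_{\mathrm{r},i}\}$ and $\{\mathfrak{l}_{\mathrm{l},i}\}$, and then establishing the key comparison lemma: if $\mathfrak{l}_{\mathrm{r},j}(\eps)\leq\mathfrak{l}_{\mathrm{l},j}(\eps)$ for some $j>1$ then $\mathfrak{l}_{\mathrm{r},j-1}(\eps)<\mathfrak{l}_{\mathrm{l},j-1}(\eps)$, and symmetrically in the other direction. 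This monotonicity guarantees the existence of a threshold $j(\eps)$ splitting the indices, which lets one set $\mathfrak{l}_i=\min(\mathfrak{l}_{\mathrm{r},i},\mathfrak{l}_{\mathrm{l},i})$; only with this choice do the subsequent force-comparison inequalities (using Lemma~\ref{biggercupsmallerforce} and Lemma~\ref{StrictMonotonicityBalanceEquation}) show that the balance point lands between $b_{j-1}(\mathfrak{l}_{\mathrm{l},j-1}(\eps))$ and $a_{j}(\mathfrak{l}_{\mathrm{r},j}(\eps))$, hence in the required window. Your proposal never performs this double evolution, never proves (or uses) the monotonicity of the threshold, and so cannot rule out that the angle collides with a trolleybus.
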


Before turning to the proof, we make several comments about Proposition~\ref{MultibirdieDesintegrationSt}. It is instructive to consider the case where~$k=2$ and~$\mathfrak{a}_1$ and~$\mathfrak{a}_2$ are points. Then, the initial domain~\eqref{BeforeDesintegration} reduces to
\begin{equation*}
\Rt(u_1,\mathfrak{a}_1) \cup \Bird(\mathfrak{a}_1,\mathfrak{a}_2) \cup \Ch([\mathfrak{a}_1,\mathfrak{a}_2],*) \cup \Lt(\mathfrak{a}_2,u_2).
\end{equation*}
In this case, Proposition~\ref{MultibirdieDesintegrationSt} says the following: first, the base of the birdie shrinks, second, the birdie itself desintegrates into a trolleybus and an angle (as prescribed by formulas~\eqref{RTrolleybusPlusAngle} and~\eqref{LTrolleybusPlusAngle}). Unfortunately, it is very hard to decide which of the two formulas~\eqref{RTrolleybusPlusAngle} or~\eqref{LTrolleybusPlusAngle} should the birdie choose. Similarly, the gist of the general case is formula~\eqref{MultibirdieDesintegration} (together with formula~\eqref{FirstFormula}), see Figure~\ref{fig:scenarios}. It should be noted that, though formally the case of a usual birdie is a subcase of the general situation, there is an effect that is present only there. Namely, a birdie can avoid desintegration and shrink by itself (e.g. see Section~\ref{s45} below), whereas a multibirdie of higher complexity dies. 
\begin{figure}
\hbox{
\hskip-40pt\includegraphics[width = 0.65 \linewidth]{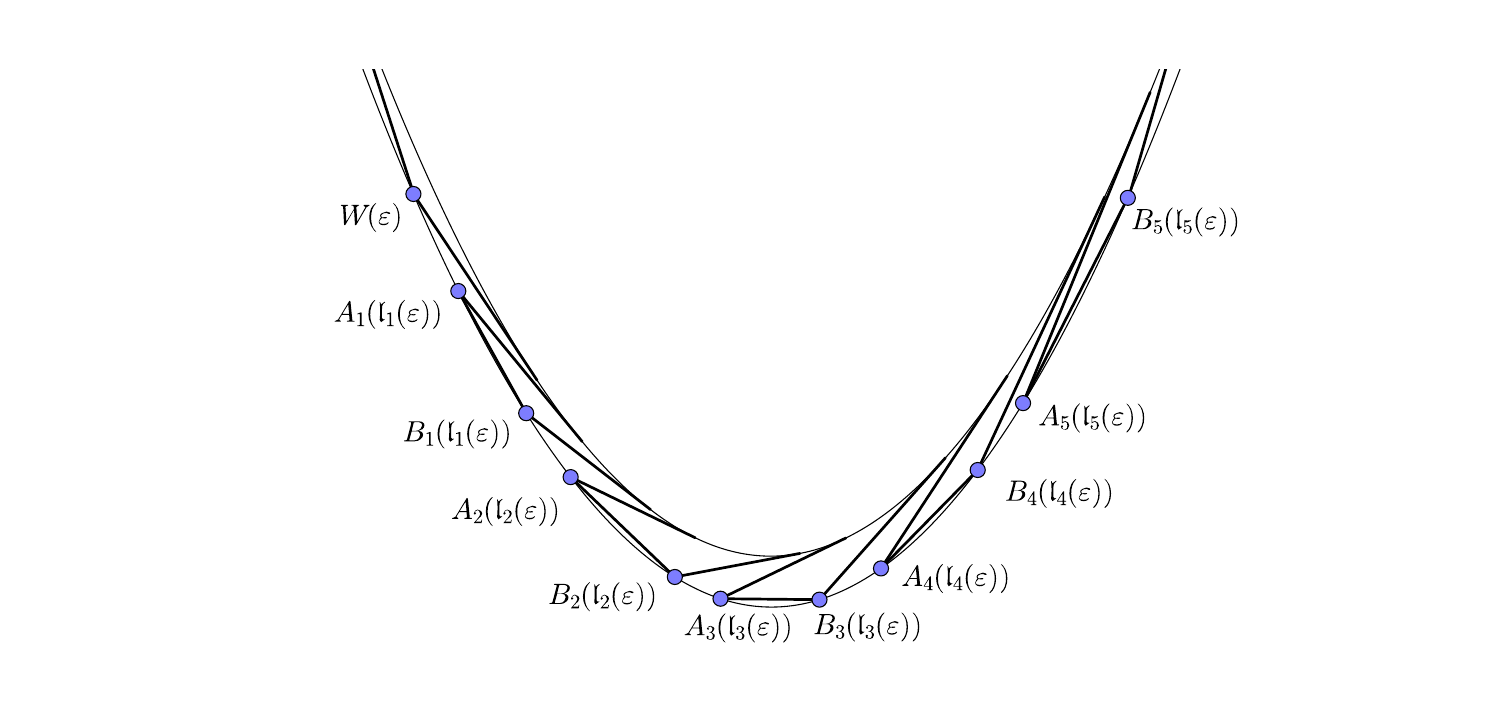}
\hskip-70pt
\includegraphics[width = 0.65 \linewidth]{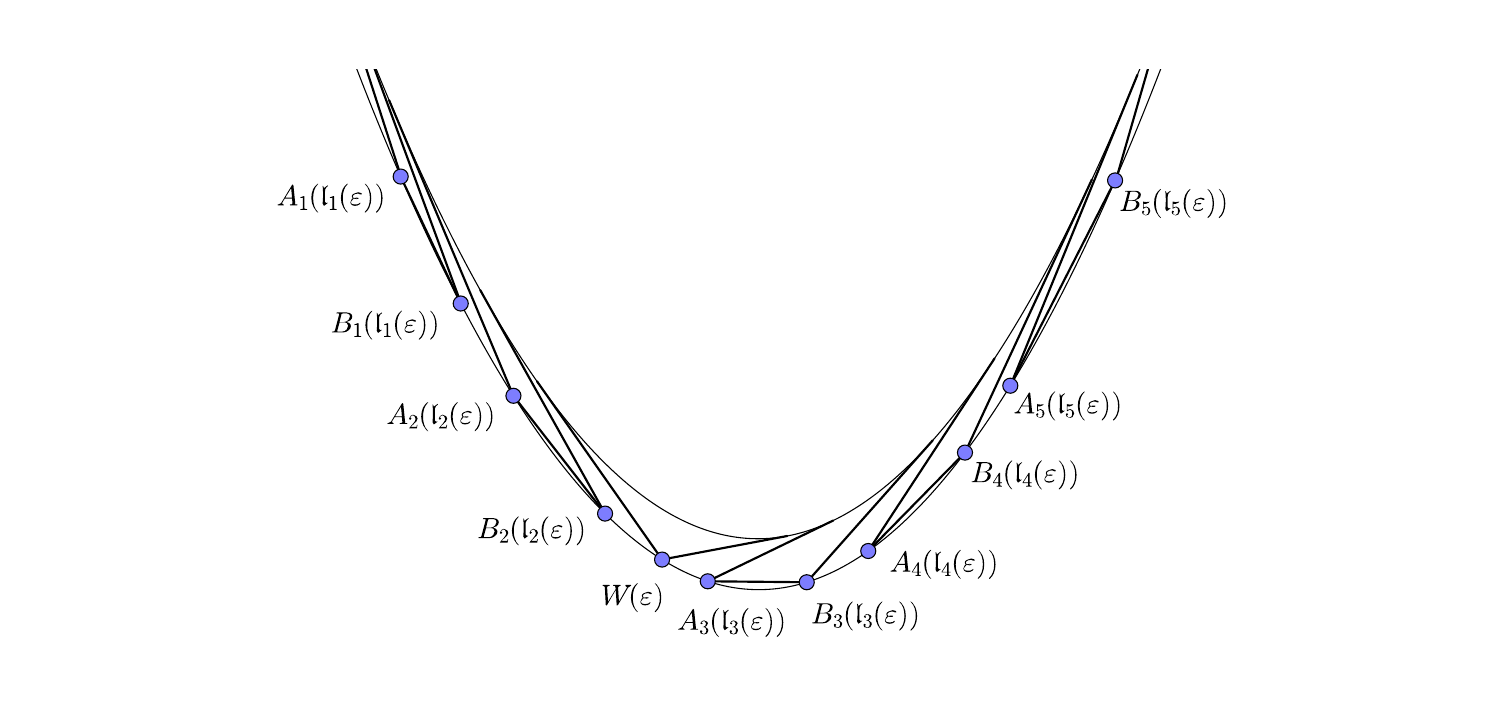}
}
\vskip-10pt
\caption{The cases~$k=5, j=1$ and~$k=5, j = 3$.}
\label{fig:scenarios}
\end{figure}

The proof of Proposition~\ref{MultibirdieDesintegrationSt} is more complicated than the preceding proofs in this section. We note that this complexity comes from the problem itself, see the second example in Subsection~\ref{s54} for the explanation. To make the reasoning more transparent, we first treat the easier case of a birdie. 

\paragraph{Proof of Proposition~\ref{MultibirdieDesintegrationSt} in the case~$k=2$, $\mathfrak{a}_1$ and~$\mathfrak{a}_2$ are points.} Consider the foliation
\begin{equation*}
\Rt(u_1,\mathfrak{a}_1) \cup \RTroll(\mathfrak{a}_1,\mathfrak{a}_2) \cup \Ch([\mathfrak{a}_1,\mathfrak{a}_2],*).
\end{equation*}
By Proposition~\ref{InductionStepForRightTrolleybus}, there exist a number~$\eta_2^{\mathrm{r}} > \eta_1$ and a continuous decreasing function~$\mathfrak{l}_{\mathrm{r}}\colon[\eta_1,\eta_2^{\mathrm{r}}]\to\mathbb{R}$ such that the foliation
\begin{equation*}
\Rt\Big(u_1,a(\mathfrak{l}_{\mathrm{r}}(\eps));\eps\Big) \cup \RTroll\Big(a(\mathfrak{l}_{\mathrm{r}}(\eps)),b(\mathfrak{l}_{\mathrm{r}}(\eps));\eps\Big) \cup \Ch\Big([a(\mathfrak{l}_{\mathrm{r}}(\eps)),b(\mathfrak{l}_{\mathrm{r}}(\eps))],*\Big) \cup \Rt\Big(b(\mathfrak{l}_{\mathrm{r}}(\eps)),\mathfrak{a}_2;\eps\Big)
\end{equation*}
and the continuous function that coincides with the standard candidate on each subdomain satisfy the assumptions of Proposition~\ref{S15}. Similarly, by Proposition~\ref{InductionStepForLeftTrolleybus}, there exists a number~$\eta_2^{\mathrm{l}} > \eta_1$ and a continuous decreasing function~$\mathfrak{l}_{\mathrm{l}}\colon[\eta_1,\eta_2^{\mathrm{l}}]\to\mathbb{R}$ such that the foliation
\begin{equation*}
\Lt\Big(\mathfrak{a}_1,a(\mathfrak{l}_{\mathrm{l}}(\eps));\eps\Big) \cup \LTroll\Big(a(\mathfrak{l}_{\mathrm{l}}(\eps)),b(\mathfrak{l}_{\mathrm{l}}(\eps));\eps\Big) \cup \Ch\Big([a(\mathfrak{l}_{\mathrm{l}}(\eps)),b(\mathfrak{l}_{\mathrm{l}}(\eps))],*\Big) \cup \Lt\Big(b(\mathfrak{l}_{\mathrm{l}}(\eps)),u_2;\eps\Big)
\end{equation*}
and the continuous function that coincides with the standard candidate on each subdomain satisfy the assumptions of Proposition~\ref{S16}. Choose~$\eta_2 = \min(\eta_2^{\mathrm{r}},\eta_2^{\mathrm{l}})$ and~$\mathfrak{l}(\eps) = \min(l_{\mathrm{r}}(\eps),l_{\mathrm{l}}(\eps))$,~$\eps \in [\eta_1,\eta_2]$. We prove that this choice of the function~$\mathfrak{l}$ allows us to construct the desired foliation. Fix~$\eps$ and assume that~$\mathfrak{l}(\eps) = \mathfrak{l}_{\mathrm{r}}(\eps)$ (the remaining case is symmetric). In such a case, we can build the right trolleybus, and we only have to paste the angle between~$b(\mathfrak{l}(\eps))$ and~$u_2$. To do this, consider the forces~$\Fr(\cdot\,;a(\mathfrak{l}(\eps)),b(\mathfrak{l}(\eps));\eps)$ and~$\FFl$ (the force flow coming from the right), we are interested in the balance equation for them. First, by Lemma~\ref{StrictMonotonicityBalanceEquation} and continuity of forces, the solution~$w = w(\eps)$ of the balance equation is a continuous function of~$\eps$ (in particular it is not greater then~$u_2$ provided~$\eps - \eta_1$ is sufficiently small). Second, by Lemma~\ref{biggercupsmallerforce},
\begin{equation*}
\Fr\Big(b(\mathfrak{l}_{\mathrm{l}}(\eps));a(\mathfrak{l}(\eps)),b(\mathfrak{l}(\eps));\eps\Big) + \FFl\Big(b(\mathfrak{l}_{\mathrm{l}}(\eps));\eps\Big) \leq \Fr\Big(b(\mathfrak{l}_{\mathrm{l}}(\eps));a(\mathfrak{l}_{\mathrm{l}}(\eps)),b(\mathfrak{l}_{\mathrm{l}}(\eps));\eps\Big) + \FFl\Big(b(\mathfrak{l}_{\mathrm{l}}(\eps));\eps\Big) = 0. 
\end{equation*} 
This shows that~$w(\eps) \geq b(\mathfrak{l}_{\mathrm l}(\eps)) \geq b(\mathfrak{l}(\eps))$. It is easy to see that~$w(\eps)$ can be chosen in such a way that it belongs to the right tail of~$\Ch([a(\mathfrak{l}(\eps)),b(\mathfrak{l}(\eps))],*)$ (we performed such a trick in the proof of Proposition~\ref{InductionStepForRightTrolleybus}). \qed

\begin{Rem}\label{UniqueBirdie}
The function~$\mathfrak{l}$ constructed in the proof is unique. Indeed\textup, by Remark~\textup{\ref{UniqueTrolley}}\textup, there are only two possibilities\textup: one for the case of a right trolleybus and one for a left\textup, and if we choose the trolleybus with the bigger base\textup, we are not able to paste the angle \textup(the balance point is inside the base of the trolleybus\textup).  
\end{Rem}
\paragraph{Proof of Proposition~\ref{MultibirdieDesintegrationSt}.} We consider two multitrolleybuses~$\MTTR(\{\mathfrak{a}_i\}_{i=1}^k)$ and~$\MTTL(\{\mathfrak{a}_i\}_{i=1}^k)$. Application of Propositions~\ref{InductionStepForMultitrolleybus} and~\ref{InductionStepForLeftMultitrolleybus} to these foliations gives us two collections of functions~$\{\mathfrak{l}_j\}_{j=1}^{k-1}$ (which act on an interval~$[\eta_1,\eta_2]$), which we call~$\{\mathfrak{l}_{\mathrm{r},j}\}$ (for~$\MTTR$) and~$\{\mathfrak{l}_{\mathrm{l},j}\}$ correspondingly. Let~$\eps \in (\eta_1,\eta_2]$ be fixed.

We claim that if~$\mathfrak{l}_{\mathrm{r},j} \leq \mathfrak{l}_{\mathrm{l},j}$,~$j > 1$, then~$\mathfrak{l}_{\mathrm{r},j-1} < \mathfrak{l}_{\mathrm{l},j-1}$. Indeed, by Lemma~\ref{biggercupsmallerforce},
\begin{equation*}
\begin{aligned}
\Fr\Big(b_{j-1}\big(\mathfrak{l}_{\mathrm{l},j-1}(\eps)\big);a_{j-1}\big(\mathfrak{l}_{\mathrm{l},j-1}(\eps)\big),b_{j-1}\big(\mathfrak{l}_{\mathrm{l},j-1}(\eps)\big);\eps\Big) + \Fl\Big(b_{j-1}\big(\mathfrak{l}_{\mathrm{l},j-1}(\eps)\big);a_{j}\big(\mathfrak{l}_{\mathrm{r},j}(\eps)\big),b_{j}\big(\mathfrak{l}_{\mathrm{r},j}(\eps)\big);\eps\Big) \geq\\
 \Fr\Big(b_{j-1}\big(\mathfrak{l}_{\mathrm{l},j-1}(\eps)\big);a_{j-1}\big(\mathfrak{l}_{\mathrm{l},j-1}(\eps)\big),b_{j-1}\big(\mathfrak{l}_{\mathrm{l},j-1}(\eps)\big);\eps\Big) + \Fl\Big(b_{j-1}\big(\mathfrak{l}_{\mathrm{l},j-1}(\eps)\big);a_{j}\big(\mathfrak{l}_{\mathrm{l},j}(\eps)\big),b_{j}\big(\mathfrak{l}_{\mathrm{l},j}(\eps)\big);\eps\Big) = 0.
\end{aligned}
\end{equation*}
On the other hand, by Lemma~\ref{StrictMonotonicityBalanceEquation} (we note that~$b_{j-1}\big(\mathfrak{l}_{\mathrm{l},j-1}(\eps)\big)$ is inside the left tail of~$\Ch([a_{j}\big(\mathfrak{l}_{\mathrm{r},j}(\eps)\big),b_{j}\big(\mathfrak{l}_{\mathrm{r},j}(\eps)\big)],*)$ by Corollary~\ref{TailsGrowthNonEv} and the fact that it lies inside the left tail of~$\Ch([a_{j}\big(\mathfrak{l}_{\mathrm{l},j}(\eps)\big),b_{j}\big(\mathfrak{l}_{\mathrm{l},j}(\eps)\big)],*)$),
\begin{equation*}
\begin{aligned}
\Fr\Big(b_{j-1}\big(\mathfrak{l}_{\mathrm{l},j-1}(\eps)\big);\Ch\big(\big[a_{j-1}\big(\mathfrak{l}_{\mathrm{r},j-1}(\eps)\big),b_{j-1}\big(\mathfrak{l}_{\mathrm{r},j-1}(\eps)\big)\big];*\big);\eps\Big) +\\ \Fl\Big(b_{j-1}\big(\mathfrak{l}_{\mathrm{l},j-1}(\eps)\big);a_{j}\big(\mathfrak{l}_{\mathrm{r},j}(\eps)\big),b_{j}\big(\mathfrak{l}_{\mathrm{r},j}(\eps)\big);\eps\Big) < 0,
\end{aligned}
\end{equation*}
since these forces are balanced at~$a_j\big(\mathfrak{l}_{\mathrm{r},j}(\eps)\big)$ and~$b_{j-1}\big(\mathfrak{l}_{\mathrm{l},j-1}(\eps)\big) < a_j\big(\mathfrak{l}_{\mathrm{r},j}(\eps)\big)$. Therefore,
\begin{equation*}
\begin{aligned}
\Fr\Big(b_{j-1}\big(\mathfrak{l}_{\mathrm{l},j-1}(\eps)\big);\Ch\big(\big[a_{j-1}\big(\mathfrak{l}_{\mathrm{r},j-1}(\eps)\big),b_{j-1}\big(\mathfrak{l}_{\mathrm{r},j-1}(\eps)\big)\Big];*\big);\eps\Big) <\\ \Fr\Big(b_{j-1}\big(\mathfrak{l}_{\mathrm{l},j-1}(\eps)\big);a_{j-1}\big(\mathfrak{l}_{\mathrm{l},j-1}(\eps)\big),b_{j-1}\big(\mathfrak{l}_{\mathrm{l},j-1}(\eps)\big);\eps\Big),
\end{aligned}
\end{equation*}
which, by Lemma~\ref{biggercupsmallerforce}, proves the claim.

Similarly, if~$\mathfrak{l}_{\mathrm{r},j} \geq \mathfrak{l}_{\mathrm{l},j}$,~$j < k-1$, then~$\mathfrak{l}_{\mathrm{r},j+1} > \mathfrak{l}_{\mathrm{l},j+1}$. These claims together show that there exists~$j = j(\eps) \in \{1,2,\ldots, k\}$ such that for any~$i < j$ the inequality~$\mathfrak{l}_{\mathrm{r},i} \leq \mathfrak{l}_{\mathrm{l},i}$ holds true and for any~$i \geq j$, one has~$\mathfrak{l}_{\mathrm{r},i} \geq \mathfrak{l}_{\mathrm{l},i}$. We define~$\mathfrak{l}_i(\eps) = \mathfrak{l}_{\mathrm{r},i}(\eps)$ for~$i < j$ and~$\mathfrak{l}_i(\eps) = \mathfrak{l}_{\mathrm{l},i}(\eps)$ for~$i \geq j$. Now we prove that the foliation constructed with this~$j$ as described in the formulation of the proposition, provides a function satisfying the conditions (we consider the case~$j\ne 1,k$ as a ``generic'' one, the two remaining cases are verbatim to the case of a simple birdie treated before). 

We only have to prove that the root~$w(\eps)$ of the balance equation for~$\Fr(\cdot\,;a_{j-1}(\mathfrak{l}_{j-1}(\eps)),b_{j-1}(\mathfrak{l}_{j-1}(\eps)),\eps)$ and~$\Fl(\cdot\,;a_{j}(\mathfrak{l}_{j}(\eps)),b_{j}(\mathfrak{l}_{j}(\eps)),\eps)$ lies between~$b_{j-1}(\mathfrak{l}_{j-1}(\eps))$ and~$a_{j}(\mathfrak{l}_{j}(\eps))$. We will prove that this root may be found even between~$b_{j-1}\big(\mathfrak{l}_{\mathrm{l},j-1}(\eps)\big)$ and~$a_{j}\big(\mathfrak{l}_{\mathrm{r},j}(\eps)\big)$ (we note that then~$w(\eps)$ automatically belongs to the intersection of the tails of the forces it balance). This will follow from the inequalities
\begin{equation*}
\begin{aligned}
\Fr\Big(b_{j-1}\big(\mathfrak{l}_{\mathrm{l},j-1}(\eps)\big);a_{j-1}\big(\mathfrak{l}_{j-1}(\eps)\big),b_{j-1}\big(\mathfrak{l}_{j-1}(\eps)\big),\eps\Big) + \Fl\Big(b_{j-1}\big(\mathfrak{l}_{\mathrm{l}, j-1}(\eps)\big);a_{j}\big(\mathfrak{l}_{j}(\eps)\big),b_{j}\big(\mathfrak{l}_{j}(\eps)\big),\eps\Big) \leq 0,\\
\Fr\Big(a_{j}\big(\mathfrak{l}_{\mathrm{r},j}(\eps)\big);a_{j-1}\big(\mathfrak{l}_{j-1}(\eps)\big),b_{j-1}\big(\mathfrak{l}_{j-1}(\eps)\big),\eps\Big) + \Fl\Big(a_{j}\big(\mathfrak{l}_{\mathrm{r},j}(\eps)\big);a_{j}\big(\mathfrak{l}_{j}(\eps)\big),b_{j}\big(\mathfrak{l}_{j}(\eps)\big),\eps\Big) \geq 0.
\end{aligned}
\end{equation*}
The first inequality follows from the fact~$\mathfrak{l}_{j-1} \leq \mathfrak{l}_{\mathrm{l},j-1}$ and~$\mathfrak{l}_{j} = \mathfrak{l}_{\mathrm{l},j}$ and Lemma~\ref{biggercupsmallerforce}, the second one is symmetric.\qed

\section{Global evolution}\label{s44}
Before passing to formal statements, we describe the rules of the evolution. 

Consider any foliation with the graph~$\Gamma$ assigned to it. The vertices of~$\GammaFree$ are multicups, angles, trolleybuses, multitrolleybuses, birdies, multibirdies, fictious vertices of the first, third (corresponding to long chords), fourth, and fifth type. There are chordal domains attached to some of them from below. To each edge~$\mathfrak{E}$ of~$\GammaFree$, we assign a force that controls the slope coefficient~$m$ 
in the domain it represents. Surely, the force depends on the domain the tangent domain starts from. For example, it is natural to equip the edge~$\Lt(u_1,a_0)$ on Figure~\ref{fig:cup_graph} with~$\Fl(\cdot\,;a_0,b_0;\eps)$; and it is natural to assign the force~$\Fr(\cdot\,;a_0,b_0,*);\eps)$ to~$\Rt(b_0,u_2)$ on the left graph of Figure~\ref{fig:gtr}; the edges~$\Lt(u_1,\mathfrak{a}_1)$ and~$\Rt(\mathfrak{a}_4,u_2)$ of the graph on Figure~\ref{fig:multicup4} match~$\Fl(\cdot\,;\mathfrak{a}_1,\mathfrak{a}_4;\eps)$ and~$\Fr(\cdot\,;\mathfrak{a}_1,\mathfrak{a}_4;\eps)$ correspondingly. 

We describe formally the rules by which we assign forces to tangent domains in the table below (there is the type of vertex that is the beginning of the edge in the first colon, the name for its numerical parameters in the second, the force that is assigned to the tangent domain if it lies on the left of the figure in the third, and the force that is assigned to the tangent domain lying on the right of the figure in the last). 

\medskip
\centerline{
\begin{tabular}{|l|c|c|c|}
\hline 
{\bf Vertex type}&{\bf Parameters}&{\bf Left Force}&{\bf Right Force}\\
\hline
Right trolleybus&$(a_0,b_0)$&&$\Fr(\cdot\,;a_0,b_0;\eps)$\\
\hline
Left trolleybus&$(a_0,b_0)$&$\Fl(\cdot\,;a_0,b_0;\eps)$&\\
\hline
Multicup&$\{\mathfrak{a}_i\}_{i=1}^k$&$\Fl(\cdot\,;\mathfrak{a}_1^{\mathrm l},\mathfrak{a}_k^{\mathrm r};\eps)$&$\Fr(\cdot\,;\mathfrak{a}_1^{\mathrm l},\mathfrak{a}_k^{\mathrm r};\eps)$\\
\hline
Right multitrolleybus&$\{\mathfrak{a}_i\}_{i=1}^k$&&$\Fr(\cdot\,;\mathfrak{a}_1^{\mathrm l},\mathfrak{a}_k^{\mathrm r};\eps)$\\
\hline
Left multitrolleybus&$\{\mathfrak{a}_i\}_{i=1}^k$&$\Fl(\cdot\,;\mathfrak{a}_1^{\mathrm l},\mathfrak{a}_k^{\mathrm r};\eps)$&\\
\hline
Fictious vertex of the first type&$(a_0,b_0)$&$\Fl(\cdot\,;a_0,b_0;\eps)$&$\Fr(\cdot\,;a_0,b_0;\eps)$\\
\hline
Fictious vertex of the third type&$(a_0,b_0)$&$\Fl(\cdot\,;a_0,b_0;\eps)$&$\Fr(\cdot\,;a_0,b_0;\eps)$\\
\hline
Fictious vertex of the fourth type&$-\infty$&&$\Fr(\cdot\,;-\infty;\eps)$\\
\hline
Fictious vertex of the fourth type&$+\infty$&$\Fl(\cdot\,;+\infty;\eps)$&\\
\hline
Right fictious vertex of the fifth type&$c_i$&&$\Fr(\cdot\,;c_i,c_i;\eps)$\\
\hline
Left fictious vertex of the fifth type&$c_i$&$\Fl(\cdot\,;c_i,c_i;\eps)$&\\
\hline
\end{tabular}
}
\medskip

The rule all the foliations generated during the evolution satisfy, is the following: if~$\Rt(u_1,u_2)$ or~$\Lt(u_1,u_2)$ is represented by the edge~$\mathfrak{E}$ in~$\GammaFree$, then~$(u_1,u_2)$ belongs to the tail of the force corresponding to~$\mathfrak{E}$. This requirement for the foliation will be called the \emph{non-degeneracy force condition}\index{condition! non-degeneracy force condition}.

\begin{Cond}\label{NonDegeneracyForceCondition}
For any edge~$\mathfrak{E}$ in~$\GammaFree$ corresponding to a tangent domain~$\Rt(u_1,u_2)$ or~$\Lt(u_1,u_2)$\textup, the interval~$(u_1,u_2)$ belongs to the tail of the force assigned to~$\mathfrak{E}$.
\end{Cond}
A short inspection of definitions shows that Condition~\ref{NonDegeneracyForceCondition} holds true for all the graphs we have constructed. In other words, all the forces in right tangent domains are strictly negative, whereas in left tangent domains  they are strictly positive. 
In particullar, the following remark is important.
 
\begin{Rem}\label{nondegforcecondSimplePictureRem}
We note that the simple Bellman candidates constructed in Section~\textup{\ref{s41}} fulfill the non-degeneracy force Condition~\textup{\ref{NonDegeneracyForceCondition}}.
\end{Rem}
As has already been said, the main rule of the evolution is that the forces grow in absolute value on their tails, see Subsection~\ref{s422}. As a consequence, the tails themselves strictly grow (by this we mean that the~$\tr$ increase and the~$\tl$ decrease). Thus, full chordal domains grow (Proposition~\ref{InductionStepForChordalDomain}), the multicups are stable\footnote{In a sense, they also grow: the border tangents rise; however, the numerical parameters do not change.} (Proposition~\ref{InductionStepForMulticup}), the trolleybuses shrink (Propositions~\ref{InductionStepForRightTrolleybus} and~\ref{InductionStepForLeftTrolleybus}), the angles continuously wander from side to side (Proposition~\ref{InductionStepAngle}). These figures can be described as stable. If there are multitrolleybuses or multibirdies in the foliation for fixed~$\eps$, they immediately disintegrate (Propositions~\ref{InductionStepForMultitrolleybus},~\ref{InductionStepForLeftMultitrolleybus}, and~\ref{MultibirdieDesintegrationSt}). These figures are unstable. As for the birdie, it can shrink, but ``generically'' desintegrates (Proposition~\ref{MultibirdieDesintegrationSt} in its model case). So, it is half-stable. 

There is also one useful condition all our graphs will satisfy. It is of structural character (and thus relies on Definition~\ref{roots}) and concerns mostly fictious vertices. It is called the \emph{leaf-root condition}\index{condition! leaf-root condition}.
\begin{Cond}\label{Leaf-root condition}
Any arc of any multifigure that is not a single point coincides with one of the roots~$c_i$\textup;
numeric parameters of the fictious vertices of the second type are some roots~$c_i$ that are single points\textup;
each fictious vertex of the third type corresponding to~$[A_0,B_0]$ such that~$\Srt(a_0,b_0)=0$
satisfies the condition~$b_0 = c_i$ for some~$c_i$ that is a single point\textup, if~$\Slt(a_0,b_0) = 0$\textup, then~$a_0 = c_i$\textup;
numeric parameter of each vertex of the fifth type is a root~$c_i$ that is a single point.
\end{Cond}
\begin{Rem}
All simple graphs constructed in Section~\textup{\ref{s41}} fulfill the leaf-root Condition~\textup{\ref{Leaf-root condition}}.
\end{Rem}
\begin{Def}\label{Admissible graph}\index{graph! admissible graph}
Let~$\eps < \eps_{\infty}$. We say that a graph~$\Gamma$ is admissible for~$f$ and~$\eps$ if all figures corresponding to the vertices and edges of~$\Gamma$ satisfy their local propositions.
\end{Def}
By ``all figures corresponding to the vertices and edges of~$\Gamma$ satisfy their local propositions''  we mean the following: for each vertex or edge in~$\Gamma$ the parameters satisfy the assumptions of the proposition which number is indicated for this vertex or edge in the table below (in the third colon).

\medskip
\centerline{
\begin{tabular}{|l|c|c|c|}
\hline 
{\bf Vertex or edge type}&{\bf Formulas}&{\bf Verification}&{\bf Evolutional rule}\\
\hline
Right tangent domain&\eqref{linearity},~\eqref{ExplicitFormulaForm},~\eqref{minfty}&\ref{RightTangentsCandidate}, \ref{RightTangentsCandidateInfty}&\\
\hline
Left tangent domain&\eqref{linearity},~\eqref{ExplicitFormulaForm2},~\eqref{minfty2}&\ref{LeftTangentsCandidate}, \ref{LeftTangentsCandidateInfty}&\\
\hline
Chordal domain&\eqref{vallun}&\ref{LightChordalDomainCandidate}&\\
\hline
Angle&
\eqref{StandardCandidateAngleFormula}&\ref{AngleProp}&\ref{InductionStepAngle}\\
\hline
Right trolleybus&\eqref{LinearInTrolleybus},~\eqref{coef}&\ref{S15}&\ref{InductionStepForRightTrolleybus}\\
\hline
Left trolleybus&\eqref{LinearInTrolleybus},~\eqref{coef}&\ref{S16}&\ref{InductionStepForLeftTrolleybus}\\
\hline
Birdie&\eqref{LinearInTrolleybus},~\eqref{coef}&\ref{S17}&\ref{MultibirdieDesintegrationSt}\\
\hline
Multicup&\eqref{CandidateInMultifigure},~\eqref{coef}&\ref{MulticupCandidate}&\ref{InductionStepForMulticup}\\
\hline
Full multicup&\eqref{CandidateInMultifigure},~\eqref{coef}&\ref{MulticupCandidate}&\ref{InductionStepForLongChords}\\
\hline
Right multitrolleybus&\eqref{CandidateInMultifigure},~\eqref{coef}&\ref{MultitrolleybusCandidate}&\ref{InductionStepForMultitrolleybus}\\
\hline
Left multitrolleybus&\eqref{CandidateInMultifigure},~\eqref{coef}&\ref{LeftMultitrolleybusCandidate}&\ref{InductionStepForLeftMultitrolleybus}\\
\hline
Multibirdie&\eqref{CandidateInMultifigure},~\eqref{coef}&\ref{MultibirdieCandidate}&\ref{MultibirdieDesintegrationSt}\\
\hline
Closed multicup&\eqref{CandidateInMultifigure},~\eqref{coef}&\ref{ClosedMulticupCandidate}&\\
\hline
Fictious vertex of the first type&\eqref{vallun}&\ref{CupMeetsRightTangents},~\ref{CupMeetsLeftTangents}&\ref{InductionStepForChordalDomain}\\
\hline
Fictious vertex of the third type, long chord&\eqref{vallun}&\ref{CupMeetsRightTangents},~\ref{CupMeetsLeftTangents}&\ref{InductionStepForLongChords}\\
\hline
Fictious vertex of the fifth type& \eqref{Beta2InTheFifthType},~\eqref{StandardCandidateAngleFormula}&
&\ref{DegenerateTrolleybus}\\
\hline
\end{tabular}
}
\medskip

In the first colon, there is the type of the vertex or edge, in the second there is a reference to formulas with which the canonical function~$B$ is constructed in this figure,
 in the third colon the number of the proposition that guarantees that this~$B$ is a Bellman candidate are stored. Finally, the last colon contains the number of the proposition that describes the local evolution of the parameters for the figure. We have omitted fictious vertices of the second and fourth types (as well as the vertices of the third type that correspond to short chords), because the value of the function~$B$ in the domains corresponding to them is defined trivially, and these figures are stable and have no evolutional scenarios.

Now we describe how to construct the function~$B$ from a graph. First, one constructs this function for all the vertices and edges that participate in~$\Gamma \setminus \GammaFree$, because for their figures there is no additional information needed to construct~$B$ (no information from other figures). Second, we construct the function~$B$ for all multicups and all chordal domains (and thus all fictious vertices of the first, second, and third types). Again, this does not need any information from other figures. Note that we have built the function~$B$ for all figures that correspond to the roots of~$\GammaFree$. Now we will construct~$B$ consecutively, moving from the roots to leaves. We note that for each edge~$\mathfrak{E}$ in~$\GammaFree$, the values of~$B$ in the figure corresponding to its beginning define the boundary value of the slope coefficient~$m$, so one may construct~$B$ in the tangent domain corresponding to~$\mathfrak{E}$ if he knows the values of~$B$ on the domain of its source\footnote{There is one exception: for tangent domains coming from infinity, one does not need any boundary data.}. Similarly, to construct~$B$ for a vertex in~$\GammaFree$, it suffices to know the values of~$B$ in the tangent domains that correspond to its incoming edges. So, one can construct the function~$B$ consecutively, moving from the roots of~$\GammaFree$ to its leaves. Note that if~$\Gamma$ fulfills Condition~\ref{NonDegeneracyForceCondition}, then the restriction of~$B$ to each figure is a standard Bellman candidate there. 

\begin{Rem}\label{AdmissibleCondidate}
The function~$B$ constructed from an admissible graph is a Bellman candidate. 
\end{Rem} 
\begin{proof}
We look at the table above and use the corresponding verification proposition for each vertex or edge. We see that the function~$B$ is~$C^1$-smooth and locally concave (indeed, the verification propositions say that the function~$B$ is locally concave in a neighborhood of any point). The last condition of Definition~\ref{candidate} follows from the construction of the function~$B$. There only thing we have not discussed before is~$C^1$-smoothness of the concatenation of two chordal domains in a neighborhood of a fictious vertex of the third type, but such a smoothness is an easy consequence of formula~\eqref{BsubX2Chords}.
\end{proof}

Since during the evolution some figures grow and angles move, two figures might crash. For example, the vertex of an angle may coincide with the right endpoint of a long chord. In such a case, we look at formula~\eqref{CupPlusAngleR}, and see that now they form a trolleybus. Therefore, the graph of the foliation changes at this moment~$\eps$. We call such moments \emph{the critical points of the evolution} (the precise definition will be given later). The idea is that if a crash occurs, then the two crashed figures compose one new (with the help of formulas from Subsection~\ref{s344}), and we can proceed the evolution. Unfortunately, there might be infinitely many critical points (see the second example in Section~\ref{s54} below). However, if one restricts his attention only to those critical points, at which the structure of the graph~$\GammaFixed$ essentially changes, he finds only a finite number of critical points. Such points are called \emph{essentially critical}\index{essentially critical points} (we will give the rigorous definition in the proof of Theorem~\ref{BC}). The following definition is also useful.
\begin{Def}\label{Smoothgraph}\index{graph! smooth graph}
We say that a graph~$\Gamma$ is smooth if there are no vertices representing full multicups\textup, multitrolleybuses\textup, multibirdies\textup, fictious vertices of the third type that represent long chords\textup, and fictious vertices of the fifth type in~$\Gamma$. 
\end{Def}

\subsection{Induction step of the first type}\label{s441}
\begin{Th}\label{InductionStepOfTheFirstKind}
Let~$\eta_1 < \eps_{\infty}$\textup,~$\Gamma$ an admissible graph for~$f$ and~$\eta_1$ satisfying Conditions~\textup{\ref{NonDegeneracyForceCondition}} and~\textup{\ref{Leaf-root condition}}. Then\textup, there exists~$\eta_2 > \eta_1$ such that for every~$\eps \in (\eta_1,\eta_2]$ there exists a smooth graph~$\Gamma(\eps)$ admissible for~$f$ and~$\eps$ that satisfies Conditions~\textup{\ref{NonDegeneracyForceCondition}} and~\textup{\ref{Leaf-root condition}} as well. Moreover\textup, each vertex in~$\Gamma$ is replaced in~$\Gamma(\eps)$ by the local evolutional rule prescribed to its type. 
\end{Th}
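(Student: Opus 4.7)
The plan is to process the graph $\Gamma$ by induction on the height in $\GammaFree$ (Definition~\ref{Height}), applying at each vertex the local evolutional rule prescribed by the table of Subsection~\ref{s44}, and then to take the minimum of all the resulting $\eta_2$'s produced. The key observation is that every local rule (Propositions~\ref{InductionStepForChordalDomain}--\ref{MultibirdieDesintegrationSt} together with Remark~\ref{DegenerateTrolleybus}) consumes monotone force flows entering its vertex, produces a monotone force flow on each outgoing edge, and replaces the vertex by a smooth configuration (possibly a disintegration into several vertices connected by new edges). Thus the induction on height propagates monotone force flows from the roots of $\GammaFree$ down to its leaves in a compatible way.

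First I would handle the subgraph $\Gamma\setminus\GammaFree$: the chordal domains, closed multicups and fictious vertices of the second and third (short chord) types. These do not interact with the free boundary and, since the differentials strictly decrease with $\eps$ only through the evolution of their upper chord, they are already determined as soon as one knows how the chord bounding them from above moves. Next I would initialise the induction at the roots of $\GammaFree$. Roots are of three mutually exclusive types: fictious vertices of the first or third type (long chord), multicups, and vertices at infinity. For long chords and full multicups I apply Propositions~\ref{InductionStepForLongChords} and~\ref{InductionStepForChordalDomain} (or Proposition~\ref{InductionStepForMulticup} if the multicup is non-full); they produce a full or decreasing flow of chordal domains and, by Remark~\ref{FCDMFF}, right and left monotone force flows along the two outgoing tangent edges, replacing the fictious vertex by a genuine chordal domain (hence smooth). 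For vertices at infinity the outgoing force flow is the $\eps$-dependent infinite force $\Fr(\,\cdot\,;-\infty;\eps)$ or $\Fl(\,\cdot\,;+\infty;\eps)$, which is a monotone force flow by Lemma~\ref{EvolutionalTailGrowthLemma}.

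Then I would walk down $\GammaFree$ in order of increasing height. At a vertex of height $h$ the incoming edges already carry monotone force flows produced on the previous levels, so the applicable local rule is determined by the vertex type. Stable vertices (angle, trolleybus, left trolleybus) are handled directly by Propositions~\ref{InductionStepAngle},~\ref{InductionStepForRightTrolleybus},~\ref{InductionStepForLeftTrolleybus}; each produces a slightly smaller $\eta_2$ and outgoing monotone force flows. Unstable vertices must be replaced by their smooth desintegration: a right or left multitrolleybus is split into a trolleybus parade by Proposition~\ref{InductionStepForMultitrolleybus} or~\ref{InductionStepForLeftMultitrolleybus}, a multibirdie by Proposition~\ref{MultibirdieDesintegrationSt}, and a fictious vertex of the fifth type is absorbed into the tangent domain via Remark~\ref{DegenerateTrolleybus}; a plain birdie is treated as the model case of Proposition~\ref{MultibirdieDesintegrationSt}. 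In each case the local statement supplies an $\eta_2>\eta_1$ and outgoing monotone force flows, and the replacement configuration introduces only angles, trolleybuses, and genuine chordal/tangent domains, i.e. only smooth vertices.

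Finally I would take $\eta_2$ to be the minimum of the (finitely many) $\eta_2$'s produced at every vertex and edge. The resulting graph $\Gamma(\eps)$ is smooth by construction, since every non-smooth feature of $\Gamma$ has been replaced at its own vertex by its smooth evolutional image. Condition~\ref{NonDegeneracyForceCondition} is preserved because every local rule picks the new root of a balance equation inside the intersection of the relevant tails of its incoming monotone force flows, and shrinks tangent domains accordingly. Condition~\ref{Leaf-root condition} is preserved because the only new fictious vertices that may appear are of the first or third type, and they are produced by Propositions~\ref{InductionStepForLongChords} and~\ref{InductionStepForChordalDomain} precisely from data satisfying Lemmas~\ref{zerodifrootsInside}--\ref{zerodifrootsOutside} and Corollary~\ref{RootCorollary}; any solid roots inherited from $\Gamma$ are untouched. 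Admissibility follows from the corresponding verification proposition listed in the table of Subsection~\ref{s44}, because the parameters produced by each local evolutional rule are exactly those demanded by the verification statement of the target configuration. The main obstacle is therefore not any single step but the combinatorial coherence of the induction: showing that when a non-smooth vertex desintegrates, the newly-created edges can be fitted into the monotone-force-flow framework of their downstream neighbours. This is precisely where the ``balance $+$ tail'' framework of Section~\ref{s42} (Lemmas~\ref{biggercupsmallerforce}--\ref{StrictMonotonicityBalanceEquation} and Corollary~\ref{FlowMonotonicityCorollary}) enters, providing the uniform strict inequalities that persist after the perturbation in $\eps$.
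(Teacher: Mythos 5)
Your proposal matches the paper's proof in both strategy and detail: the paper likewise proceeds root-to-leaf along $\GammaFree$ (roots, then intermediate vertices, then leaves), invokes the same local evolutional propositions, propagates monotone force flows as the key invariant, imposes finitely many constraints on $\eta_2-\eta_1$ (including that no edge degenerates to zero length), and verifies smoothness and Conditions~\ref{NonDegeneracyForceCondition}, \ref{Leaf-root condition} by noting that no new non-smooth vertices are created and the vertices governed by Condition~\ref{Leaf-root condition} are inherited from $\Gamma$. One minor remark: the paper's verification of Condition~\ref{Leaf-root condition} is simpler than yours --- it observes that all relevant vertex types in $\Gamma(\eps)$ lie in $\Gamma(\eps)\setminus\GammaFree(\eps)$ and are inherited unchanged from $\Gamma$, so the appeal to Lemmas~\ref{zerodifrootsInside}--\ref{zerodifrootsOutside} and Corollary~\ref{RootCorollary} is not needed here (those are needed in the induction step of the second kind, Theorem~\ref{InductionStepOfSecondType}, where fictious vertices of the third type are actually created).
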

Informally, the theorem says that if the Bellman candidate of prescribed structure is given for some~$\eps$, then we can build a Bellman candidate for a slightly bigger~$\eps$ with almost the same formula, i.e. for each vertex of~$\Gamma$ we simply look at the big table after Definition~\ref{Admissible graph} and replace this vertex as prescribed by the corresponding proposition. The graph of the obtained Bellman candidate does not contain vertices that represent unstable figures.
\begin{proof}
The idea of the proof is to apply the local evolutional theorems consecutively along~$\GammaFree$. They will allow to change the parameters of the graph slightly to pass from an admissible graph for~$\eta_1$ to an admissible graph for~$\eps$. The numerical parameters of vertices in~$\Gamma \setminus \GammaFree$ will be unchanged, as well as all the edges that are not connected with~$\GammaFree$. The numerical parameters for vertices and edges of~$\GammaFree$ will be changed consecutively, going from the roots to the leaves. So, the proof naturally splits into four stages: the description of changes for the roots of~$\GammaFree$, for intermediate vertices and edges, and for the leaves, and finally, the verification of smoothness of the constructed graph as well as Conditions~\textup{\ref{NonDegeneracyForceCondition}} and~\textup{\ref{Leaf-root condition}}. For each vertex we will use its own local evolutional proposition (they are listed in the big table right after Definition~\ref{Admissible graph}). Each step of the construction will impose some restrictions on the smallness of~$\eta_2-\eta_1$. Since the parameters will be changed continuously, we may require that~$\eta_2-\eta_1$ is so small that every edge that is present in~$\Gamma$ still has non-zero length after the change (i.e. we make such a small perturbation that the figures corresponding to vertices of~$\Gamma$ do not meet each other).

\paragraph{Roots of~$\GammaFree$.} The roots of~$\GammaFree$ are multicups, infinities, and fictious vertices of the first and third types. Our aim here is to explain how do the numerical parameters (or the figures themselves) change when we vary~$\eps$, and explain that the forces assigned to the outcoming edges of the figures form monotone force flows (as functions of~$u$ and~$\eps$). 

In the case of a multicup, we consider two cases. If our multicup is not full (i.e. it is~$\MTC(\{\mathfrak{a}_i\}_{i=1}^k)$ and~$\mathfrak{a}_k^{\mathrm r} - \mathfrak{a}_1^{\mathrm l} > 2\eps$), we apply Proposition~\ref{InductionStepForMulticup} and see that if~$\eps-\eta_1$ is sufficiently small and the parameters of the multicup remain the same, then the forces in the two tangent domains adjacent to it, grow in absolute value on their tails (thus forming monotone force flows). If the multicup is full, we are going to apply Proposition~\ref{InductionStepForLongChords} to the chord~$[\mathfrak{A}_1^{\mathrm l},\mathfrak{A}_k^{\mathrm{r}}]$ (we use the traditional notation for the parameters of the multicup). To do this, we need to show that the tails of this chord are non-zero. This follows from the fact that~$\Gamma$ is under Condition~\ref{NonDegeneracyForceCondition}. So, we apply Proposition~\ref{InductionStepForLongChords} and build a full flow of chordal domains. We see that the full multicup becomes a closed one, and we also paste a new chordal domain and a long chord above it. The forces inside the adjacent domains form monotone force flows because they are forces of a full flow of chordal domains (see Remark~\ref{FCDMFF}).

In the case of the infinities, we do not have to change the figure. The forces assigned to the outcoming edges grow by Remark~\ref{TailGrowthInfinity} (and thus form a monotone force flow).

For fictious vertices of the first type, we apply Proposition~\ref{InductionStepForChordalDomain}. Such a fictious vertex generates a full flow of chordal domains. For each~$\eps$, we simply take the corresponding full chordal domain from it. The forces assigned to the outcoming edges form a monotone force flow (when we restrict each force to the intersection of their domains) because they are the forces of a full flow of chordal domains.

The case of a fictious vertex of the third type representing a long chord is considered similarly to the case of a full multicup (we add one more fictious vertex of the first type and link it to the considered vertex by a chordal domain). 

\paragraph{Intermediate vertices.} They are: trolleybuses, multitrolleybuses, and fictious vertices of the fifth type. Each of them has one incoming and one outcoming edge in~$\GammaFree$. Consider any vertex~$\mathfrak{L}$ of height~$1$ (see Definition~\ref{Height}). Since the beginning of its incoming edge is a root of~$\GammaFree$, we have already built modifications of its parameters for some interval~$\eps \in (\eta_1,\eta_2]$. So, we see that the forces assigned to the incoming edge of~$\mathfrak{L}$ for~$\eps \in (\eta_1,\eta_2]$ form a monotone force flow. Consider different cases of the type of~$\mathfrak{L}$. 

If~$\mathfrak{L}$ represents a trolleybus, then we apply Proposition~\ref{InductionStepForRightTrolleybus} or~\ref{InductionStepForLeftTrolleybus} to it (depending on the orientation of the trolleybus). Such an application is permitted since the tail of force for~$\eps = \eta_1$ of the incoming edge contains the whole tangent domain it represents (since~$\Gamma$ satisfies Condition~\ref{NonDegeneracyForceCondition}). Propositions~\ref{InductionStepForRightTrolleybus} or~\ref{InductionStepForLeftTrolleybus} say that a chordal domain below the trolleybus evolves as a decreasing flow of chordal domains. Therefore, its forces assigned to the outcoming edge form a monotone force flow due to Remark~\ref{FCDMFF}. 

If~$\mathfrak{L}$ represents a multitrolleybus, then we apply Proposition~\ref{InductionStepForMultitrolleybus} or~\ref{InductionStepForLeftMultitrolleybus} to it (depending on the orientation). In this case,~$\mathfrak{L}$ is changed for a subgraph as prescribed by formulas~\eqref{RMultitrolleybusDesintegration} and~\eqref{LMultitrolleybusDesintegration}, i.e. the multitrolleybus is split into several trolleybuses and tangent domains, and the trolleybuses are sitting on decreasing flows of chordal domains, and therefore the forces assigned to the outcoming edge form a monotone force flow. There is a peculiarity in the case where the multitrolleybus is sitting on a single arc. In this case we see that it immediately dissapears and its incoming and outcoming edges become a single one, and the peculiarity is that the monotone force flow for the outcoming edge is defined in a special way as described in Remark~\ref{DegenerateTrolleybus}. 

The case where~$\mathfrak{L}$ is a fictious vertex of the fifth type is similar to the case of a multitrolleybus on a single arc (use Remark~\ref{DegenerateTrolleybus}).

So, we have described how do the parameters change in the case when the height of~$\mathfrak{L}$ equals one. We see that the forces assigned to the outcoming edges form monotonic flows, and we also see that what we have used for the case of height one is Condition~\ref{NonDegeneracyForceCondition} for~$\Gamma$ and that the forces assigned to incoming edges form monotone flows. So, we can apply the same argument to the vertices of height two, then three, and so on (and surely, the procedure is finite, so there will be only a finite number of constraints on the smallness of~$\eta_2-\eta_1$). 

\paragraph{Leaves of~$\GammaFree$.} They are angles and multibirdies (including usual birdies). As we have seen from the previous construction, the forces that correspond to the incoming edges form monotone force flows. 

In the case of an angle, we apply Proposition~\ref{InductionStepAngle}. Again, we only have to verify the condition that the tails of the forces of incoming edges contain the vertex of the angle in their closure for~$\eps = \eta_1$. This follows from Condition~\ref{NonDegeneracyForceCondition}. We see that we can paste an angle between the two tangent domains corresponding to its incoming edges.

In the case of a multibirdie, we apply Proposition~\ref{MultibirdieDesintegrationSt} absolutely similar to the previous case. We only note that in this case the graph changes: instead of a single vertex representing a multibirdie, we get a small graph as prescribed by formulas~\eqref{MultibirdieDesintegration} and~\eqref{FirstFormula}. 

\paragraph{Conditions and smoothness verification.} First, we claim that the achieved graph~$\Gamma(\eps)$ is smooth. Indeed, we did not obtain new full multicups, multitrolleybuses, multibirdies, fictious vertices of the third type representing long chords, fictious vertices of the fifth type during the modification of our graph. And we have also seen that all figures of these types that were present in~$\Gamma$, have disappeared. Second, we see that Condition~\ref{Leaf-root condition} holds true for~$\Gamma(\eps)$, again due to the same reason (among the types vertices described in this condition,~$\Gamma(\eps)$ may possess only those ones belonging to~$\Gamma(\eps) \setminus \GammaFree(\eps)$, but these vertices are inherited from~$\Gamma$).

The verification of Condition~\ref{NonDegeneracyForceCondition} is left to the reader (in fact, its verification is contained in local induction steps of Section~\ref{s43}).
\end{proof}
\begin{Def}\label{SmoothFlow}\index{graph! smooth flow of graphs}
Let~$\Gamma$ be a smooth admissible for~$f$ and~$\eta_1$ graph\textup,~$\eta_1 < \eps_{\infty}$\textup, suppose that it satisfies Conditions~\textup{\ref{NonDegeneracyForceCondition}} and~\textup{\ref{Leaf-root condition}}. Let~$\eta_2$ be such that~$\eta_1 < \eta_2 <  \eps_{\infty}$. Let the edges of~$\Gamma$ representing full chordal domains be the~$\mathfrak{E}_{i}^+$\textup, let the edges representing chordal domains adjacent to trolleybuses and birdies be the~$\mathfrak{E}_i^-$. Let~$\{\mathfrak{E}_i^+(\eps)\}_{\eps \in [\eta_1,\eta_2)}$ and~$\{\mathfrak{E}_i^-(\eps)\}_{\eps \in [\eta_1,\eta_2)}$ be the full and decreasing flows of chordal domains starting from them. We call a collection of smooth admissible graphs~$\Gamma(\eps)$\textup, $\eps \in [\eta_1,\eta_2)$\textup, a \emph{smooth flow of graphs} if each~$\Gamma(\eps)$ satisfies Conditions~\textup{\ref{NonDegeneracyForceCondition}} and~\textup{\ref{Leaf-root condition}} and is such that~$\Gamma\setminus\GammaFree$ differs from~$\Gamma(\eps)\setminus\GammaFree(\eps)$ only in the edges~$\mathfrak{E}_{i}^+$ and~$\mathfrak{E}_i^-$, which are changed for $\mathfrak{E}_{i}^+(\eps)$ and~$\mathfrak{E}_i^-(\eps)$ correspondingly. We also require that the multicups and fictious vertices of the fourth type in~$\Gamma(\eps)$ are the same as in~$\Gamma$. 
\end{Def}

The chordal domains~$\mathfrak{E}_{i}^{\pm}(\eps)$ completely determine the graphs~$\Gamma(\eps)$. Indeed, we know everything about~$\Gamma(\eps)$ except for the orientation of trolleybuses and the places where the angles are situated. But this information is provided by the roots of the balance equations for chordal domains assigned to~$\mathfrak{E}_i^{\pm}(\eps)$. Each such balance equation has not more than one root in the intersection of the tails of forces by Lemma~\ref{monbaleq}, thus the graph is uniquely defined by the corresponding chordal domains. 

Definition~\ref{SmoothFlow} might seem abstract and sophisticated. Informally, a smooth flow is the description of the local in time evolution of a smooth admissible graph satisfying Conditions~\textup{\ref{NonDegeneracyForceCondition}} and~\textup{\ref{Leaf-root condition}}. As we have already said, full chordal domains grow, while trolleybuses shrink. However, we do not require the trolleybuses to preserve their orientation, moreover, some of them may become birdies. Only angles may change their location (i.e. an angle may attack a trolleybus, form a birdie, which may desintegrate, etc.). We also note that the angles cannot attack long chords (i.e. full chordal domains), because the collection of these edges is preserved along the flow.

Lemma~\ref{monbaleq} also shows that a trolleybus cannot change its orientation immediately, i.e. if~$\eps-\eta_1$ is sufficiently small (depending on the flow in question), then all the trolleybuses in~$\Gamma(\eps)$ have the same orientation as in~$\Gamma$. Indeed, consider a trolleybus whose base is represented by the edge~$\mathfrak{E}_i^-$ in~$\Gamma$, let it be a right trolleybus. Then, we see that the balance equation for the right force of~$\mathfrak{E}_i^-$ and the force coming to it from the right has the root somewhere strictly on the right of the trolleybus for~$\eps = \eta_1$. Since all the forces are continuous functions of~$\eps$, by Lemma~\ref{monbaleq}, this root is also continuous as a function of~$\eps$. So, it cannot immediately come to the base of the trolleybus.  
\begin{Th}\label{SmoothFlowTheorem}
Let~$\eta_1 < \eps_{\infty}$. Let~$\Gamma$ be a smooth graph admissible for~$f$ and~$\eta_1$ satisfying Conditions~\textup{\ref{NonDegeneracyForceCondition}} and~\textup{\ref{Leaf-root condition}}. Let~$\eta_2$\textup,~$\eta_1 < \eta_2 < \eps_{\infty}$\textup, be such that~$\eta_2 - \eta_1$ is sufficiently small. Then there exists a unique smooth flow of graphs defined on~$(\eta_1,\eta_2)$ starting from~$\Gamma$. 
\end{Th}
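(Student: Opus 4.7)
The plan is to assemble the flow by applying the local evolutional propositions of Section~\ref{s43} simultaneously to every vertex and every edge of $\Gamma$, then show that the resulting structure matches Definition~\ref{SmoothFlow} and is the only possible one. Since $\Gamma$ is smooth, the big table following Definition~\ref{Admissible graph} tells us that $\Gamma\setminus\GammaFree$ contains only: (a) edges $\mathfrak{E}_i^+$ representing full chordal domains sitting below fictious vertices of the first type (those are the only roots of $\GammaFree$ that carry a full chord, since full multicups and long‑chord fictious vertices of the third type are forbidden in a smooth graph), and (b) edges $\mathfrak{E}_i^-$ representing chordal domains immediately below trolleybuses and birdies. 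All closed multicups in $\Gamma$ sit strictly below these through the subforest $\GammaFixed$ and need not be touched: their parameters are static because no new closed multicup can appear at the level of an induction step of the first type.

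First I would produce the chordal‑domain flows $\{\mathfrak{E}_i^\pm(\eps)\}$. For each $\mathfrak{E}_i^+$, Proposition~\ref{InductionStepForChordalDomain} yields a continuous full flow on some interval $[\eta_1,\eta_2^{(i,+)}]$; by Remark~\ref{FCDMFF} the forces it emits on the two adjacent tangent edges of $\GammaFree$ form monotone force flows. For each $\mathfrak{E}_i^-$, the orientation of its source vertex (right trolleybus, left trolleybus, or birdie) selects one of Propositions~\ref{InductionStepForRightTrolleybus}, \ref{InductionStepForLeftTrolleybus}, or \ref{MultibirdieDesintegrationSt}; each gives a continuous decreasing flow on a small interval $[\eta_1,\eta_2^{(i,-)}]$, and again Remark~\ref{FCDMFF} promotes the downward force to a monotone force flow. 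Take $\eta_2$ smaller than the minimum of all $\eta_2^{(i,\pm)}$.

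Second, I would extend the construction along $\GammaFree$ from roots to leaves, processing vertices by the height introduced in Definition~\ref{Height}, exactly as in the proof of Theorem~\ref{InductionStepOfTheFirstKind}. The roots of $\GammaFree$ are fictious vertices of the first type (handled above), non‑full multicups (handled by Proposition~\ref{InductionStepForMulticup}, whose parameters do not change), and vertices at infinity (Remark~\ref{TailGrowthInfinity}). Since $\Gamma$ is smooth, all intermediate vertices are ordinary trolleybuses and all leaves are either angles (Proposition~\ref{InductionStepAngle}) or birdies (Proposition~\ref{MultibirdieDesintegrationSt}); no multitrolleybuses, multibirdies, or fifth‑type vertices appear that could alter the combinatorics. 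For every such application the input monotone force flow is produced by the previously treated vertex or edge, and Condition~\ref{NonDegeneracyForceCondition} for $\Gamma$ at $\eps=\eta_1$ gives exactly the hypothesis that the interval representing each tangent domain lies in the closure of the tail of the incoming force. Shrinking $\eta_2-\eta_1$ finitely many more times, we may assume none of these local perturbations is spoiled. Crucially, by Lemma~\ref{monbaleq} applied to the balance equation at each trolleybus base, the root depends continuously on $\eps$ and stays strictly away from the endpoints of the base for $\eps$ close to $\eta_1$, so no trolleybus reverses orientation and no birdie desintegrates in the process; this is what keeps the constructed $\Gamma(\eps)$ smooth and of the prescribed topology. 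Remark~\ref{AdmissibleCondidate} gives admissibility, and the verification that Conditions~\ref{NonDegeneracyForceCondition} and~\ref{Leaf-root condition} persist is identical to the one in Theorem~\ref{InductionStepOfTheFirstKind}.

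Finally, uniqueness. Any smooth flow starting from $\Gamma$ must, by Definition~\ref{SmoothFlow}, preserve the multicups, the fourth‑type vertices, and the full portion of $\GammaFixed$; the only degrees of freedom are the chordal domains $\mathfrak{E}_i^\pm(\eps)$ and, via them, the positions of trolleybus bases and angle vertices. Propositions~\ref{InductionStepForChordalDomain} and Lemma~\ref{SmoothChordalDomain} pin down $\mathfrak{E}_i^+(\eps)$ uniquely, while Remarks~\ref{UniqueTrolley} and~\ref{UniqueBirdie} pin down each $\mathfrak{E}_i^-(\eps)$ uniquely; once the chordal flows are fixed, each angle vertex is the unique root of the balance equation in the intersection of tails, again by Lemma~\ref{monbaleq}. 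The hard part, and the one that really needs Condition~\ref{NonDegeneracyForceCondition} plus the strict inequalities of Lemma~\ref{StrictMonotonicityBalanceEquation}, is making sure that during the induction on height no root of a balance equation collides with the boundary of the relevant tangent domain on the interval $(\eta_1,\eta_2)$; shrinking $\eta_2$ again by a uniform amount handles this and closes the argument.
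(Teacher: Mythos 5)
Your proposal takes essentially the same route as the paper: existence by re-running Theorem~\ref{InductionStepOfTheFirstKind}, then uniqueness by pinning the chordal-domain flows down one at a time via Lemma~\ref{SmoothChordalDomain}, Remark~\ref{UniqueTrolley}, Remark~\ref{UniqueBirdie}, and Lemma~\ref{monbaleq}, working from roots upward. The paper handles existence in one sentence (citing Theorem~\ref{InductionStepOfTheFirstKind}) and spends its effort on uniqueness, running the argument as a contradiction and being explicit that the induction is on the height of the vertex; your uniqueness paragraph lists the same lemmas but somewhat compresses the height induction, so it reads as if the $\mathfrak{E}_i^-(\eps)$ could all be pinned down in parallel, whereas Remark~\ref{UniqueTrolley} needs the incoming monotone force flow, which is only known after the lower-height parts are settled. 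Worth making this explicit, but it is the same mechanism.

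One genuine misstatement, though not load-bearing: you write that ``no birdie desintegrates in the process; this is what keeps the constructed $\Gamma(\eps)$ smooth and of the prescribed topology.'' In fact, a birdie generically \emph{does} desintegrate the instant $\eps$ moves past $\eta_1$ (Proposition~\ref{MultibirdieDesintegrationSt} in the $k=2$ case splits it into a trolleybus and an angle), and the free-boundary subgraph $\GammaFree(\eps)$ is allowed to change along a smooth flow --- Definition~\ref{SmoothFlow} only constrains $\Gamma\setminus\GammaFree$ and the multicups and fourth-type vertices; the remarks right after the definition even say explicitly that trolleybuses may change orientation and birdies may desintegrate. What Lemma~\ref{monbaleq} actually gives you is that trolleybuses cannot flip orientation \emph{instantly} and the balance-equation root moves continuously; smoothness of $\Gamma(\eps)$ comes from the fact that the desintegration products of a birdie (trolleybus, tangent, angle) are themselves figures allowed in a smooth graph, not from the birdie surviving intact. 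The conclusion of your argument survives, but the justification as written would mislead a reader about what the definition permits.
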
 
\begin{proof}
The existence of such a flow follows from Theorem~\ref{InductionStepOfTheFirstKind}\footnote{To be more honest, it follows from the construction used in the proof}. So, we turn to the uniqueness. Assume the contrary, let there be two smooth flows of graphs~$\digamma_1$ and~$\digamma_2$. Without loss of generality, we may assume that~$\digamma_1(\eps_n)\ne\digamma_2(\eps_n)$ when~$\eps_n\searrow \eta_1$. As it was said, the orientations of all trolleybuses in~$\digamma_1$ and~$\digamma_2$ are the same when~$\eps-\eta_1$ is sufficiently small.

The uniqueness for full chordal domain flows follows from Lemma~\ref{SmoothChordalDomain}. So, the collections of full chordal domain flows in~$\digamma_1$ and~$\digamma_2$ are the same. Therefore, the corresponding monotone force flows generated by them also coincide. 

Consider any trolleybus of height one in~$\Gamma$. We know that in~$\digamma_1(\eps)$ and~$\digamma_2(\eps)$,~$\eps-\eta_1$ sufficiently small, it has one and the same orientation. By Remark~\ref{UniqueTrolley}, it is also unique in the sense that the decreasing chordal domain flow that generates its base is the same for~$\digamma_1$ and~$\digamma_2$. Again, we see that the force of this decreasing flow is also the same in~$\digamma_1$ and~$\digamma_2$. Arguing in that way (increasing the height step by step), we prove that all the decreasing chordal domain flows whose starting chordal domain sat under a trolleybus in~$\Gamma$, are the same in~$\digamma_1$ and~$\digamma_2$.    

It remains to deal with birdies and angles. The birdies are treated by Remark~\ref{UniqueBirdie}, for the angles we use Lemma~\ref{monbaleq}. 

So,~$\digamma_1(\eps)=\digamma_2(\eps)$ provided~$\eps -\eta_1$ is sufficiently small, a contradiction.
\end{proof}

\subsection{Induction step of the second type}\label{s442}
Let~$\Gamma(\eps)$,~$\eps \in (\eta_1,\eta_2)$, be a smooth flow of graphs. In this section, we are concerned with the question: what does the limit of~$\Gamma(\eps)$ as~$\eps \to \eta_2$ may look like? By the very definition, a smooth flow is generated by a collection of several chordal domain flows (either full or decreasing), and for them there is only one way to pass to the limit. The question is if one can define a reasonable limit for the corresponding graphs~$\Gamma(\eps)$,~$\eps\to \eta_2$.

There are not very many figures that depend on~$\eps$ in~$\Gamma(\eps)$. We see that the problem of passing to the limit is actual only for angles, birdies, and trolleybuses (and for the latter figures, only the orientation is unclear, i.e. we know the limit of their bases).
\begin{Rem}
Let~$\{\Ch([a_1^{\mathrm{top}},b_1^{\mathrm{top}}],*),\mathfrak{l}_1\}$ and~$\{\Ch([a_2^{\mathrm{top}},b_2^{\mathrm{top}}],*),\mathfrak{l}_2\}$ be two flows of chordal domains with the corresponding functions $a_1,b_1$ and $a_2,b_2$ \textup($b_1\circ\mathfrak{l}_1\leq a_2\circ\mathfrak{l}_2$\textup)\textup, either decreasing or full\textup, parametrized by~$(\eta_1,\eta_2)$\textup,~$0\leq \eta_1 < \eta_2< \eps_{\infty}$. Let~$\Fr$ be the right force of the first flow\textup, let~$\Fl$ be the left force of the second flow. Suppose that for any~$\eps \in (\eta_1,\eta_2)$ the root~$w(\eps)$ of the balance equation between these two forces belongs both to the intersection of the closures of their tails  and to~$\big[b_1(\mathfrak{l}_1(\eps)),a_2(\mathfrak{l}_2(\eps))\big]$. Then\textup, the function~$\eps \mapsto w(\eps)$ has a limit as~$\eps \to \eta_2$. One or both chordal domains may be replaced by a multicup or the corresponding infinity \textup(in the second case\textup, the limit may be infinite\textup).
\end{Rem}
\begin{proof}
We consider the case of two chordal domains only, the other cases are even easier. It suffices to prove that the function~$w$ cannot have two limit points at~$\eta_2$ (because it definitely has a limit point). The functions~$\mathfrak{l}_1$ and~$\mathfrak{l}_2$ are monotone, thus, they have limits at~$\eta_2$, which we denote by~$\mathfrak{l}_1(\eta_2)$ and~$\mathfrak{l}_2(\eta_2)$. Then, by the continuity of forces, any limit point of~$w$ at~$\eta_2$ is a root of the balance equation for
\begin{equation*}
\Fr\Big(\cdot\,; a_1\big(\mathfrak{l}_1(\eta_2)\big),b_1\big(\mathfrak{l}_1(\eta_2)\big);\eta_2\Big)\quad \hbox{and}\quad \Fl\Big(\cdot\,; a_2\big(\mathfrak{l}_2(\eta_2)\big),b_2\big(\mathfrak{l}_2(\eta_2)\big);\eta_2\Big).
\end{equation*}
We note that since the tails grow, a limit point belongs to the intersection of the closures of the tails. So, if~$w$ has two limit points at~$\eta_2$, we get a contradiction with Lemma~\ref{monbaleq}.
\end{proof}
This remark shows that we may pass to the limit for roots of balance equations. So, we can define the limit of the graph~$\Gamma(\eps)$ as~$\eps \to \eta_2$. Call it simply~$\Gamma(\eta_2)$.

Of course,~$\Gamma(\eta_2)$ can still be a smooth admissible graph satisfying Conditions~\ref{NonDegeneracyForceCondition} and~\ref{Leaf-root condition}. But what if not? It is still a graph (i.e. one can construct a Bellman candidate from it using local formulas), but two unusual things may happen: some vertex may change its type or dissappear, or some edge may become ``of zero length'', i.e. a chordal domain may shrink to a single chord, or a tangent domain may shrink to a single tangent. However, we are going to prove that after certain reconstruction with the help of formulas from Subsection~\ref{s344}, the limit graph is admissible and verifies Conditions~\ref{NonDegeneracyForceCondition} and~\ref{Leaf-root condition}.

Let us look at the evolution of local figures along a smooth flow. We are interested in those types of vertices whose representatives may change their type by themselves after passing to the limit. These are multicups (some of them may become full) and fictious vertices of the first type (some of them may become fictious vertices of the third type).

As for the edges, the situation is more difficult. For example, suppose that there is a decreasing flow of chordal domains in our flow of graphs, whose limit is a single chord. Then the obtained picture in a neighborhood of this chord depends heavily on what was below it (either a closed multicup or a fictious vertex of the second or third type). The situation with edges representing tangent domains is even more difficult, because if several consecutive tangent domains vanish, we may get a rather complicated ensemble of joint linearity domains. We claim that what we get as a result of such a union is one of our domains described in Section~\ref{s34}. We describe all elementary unions (those in which only two figures participate) in three tables below.

The first table describes the concatenation of two figures adjacent to a chord. In each row there is the type of figure attached to the chord from below, in each colon --- the type of figure attached from above; the formula that we use for the concatenation is written in the cell on of their intersection (if the result of such a concatenation is obvious, we do not have a formula but simply write the result).

\medskip
\centerline{\large\bf Concatenation of figures adjacent to a chord.}\nopagebreak
\centerline{
\begin{tabular}{|l|c|c|}
\hline 
&Trolleybus&Birdie\\
\hline
Closed multicup& \eqref{ClosedMulticupTrolleybusR} and~\eqref{ClosedMulticupTrolleybusL} &\eqref{ClosedMulticupBirdie}\\
\hline
Fictious vertex of the second type& Fictious vertex of the fifth type& Angle\\
\hline
Fictious vertex of the third type& Trolleybus& Birdie\\ 
\hline
\end{tabular}
}
\medskip

Now let us turn to the case of a right tangent. As we have already said, there might be multiple concatenations. So, some figures that are not present in smooth graphs (e.g. multitrolleybuses) might be involved in formulas. Generally, any figure that is present in general graphs is under consideration. We see that the figure that is attached to the right tangent from the left might be either a right (multi)trolleybus, or a multicup, or fictious vertex of the first, third,  fourth, and fifth types. From the right there might be an angle, a right (multi)trolleybus, a (multi)birdie, or a fictious vertex of the fifth type. Again, the table below explains how to concatenate the figures in all these cases.

\medskip
\centerline{\large\bf Concatenation of figures adjacent to a right tangent.}\nopagebreak
\centerline{
\begin{tabular}{|l|c|c|c|c|c|}
\hline 
&Angle&Right trolleybus&Right multitrolleybus&Multibirdie&Fictious vertex 5\\
\hline
Right trolleybus&\eqref{RTrolleybusPlusAngle}&\eqref{RMultitrolleybusDesintegration}&\eqref{RMultitrolleybusDesintegration}&\eqref{MultibirdieDesintegration}&Right trolleybus\\
\hline
Multicup&\eqref{AnglePlusMulticupRight}&\eqref{MTCplusMTTR}&\eqref{MTCplusMTTR}&\eqref{MTCplusBirdieR}&Multicup\\
\hline
Right multitrolleybus&\eqref{AnglePlusMultiTrollebusR}&\eqref{RMultitrolleybusDesintegration}&\eqref{RMultitrolleybusDesintegration}&\eqref{MultibirdieDesintegration}&Right multitrolleybus\\
\hline
Fictious vertex 1&\eqref{CupPlusAngleL}&\eqref{ChordalDomainPlusMultitrolleybusR}&\eqref{ChordalDomainPlusMultitrolleybusR}&\eqref{ChordalDomainPlusMultibirdieL}&Fictious vertex 1\\
\hline
Fictious vertex 3&\eqref{CupPlusAngleL}&\eqref{ChordalDomainPlusMultitrolleybusR}&\eqref{ChordalDomainPlusMultitrolleybusR}&\eqref{ChordalDomainPlusMultibirdieL}&Fictious vertex 3\\
\hline
Fictious vertex 4&$-\infty$&&&&\\
\hline
Fictious vertex 5&Angle&Right trolleybus&Right multitrolleybus&Multibirdie&\\
\hline
\end{tabular}
}
\medskip

The last but one row is almost empty, because the~$-\infty$ vertex may be attached only to an angle (this means that the angle tends to minus infinity as~$\eps \to \eta_2$). Here is the table for a left tangent.

\medskip
\centerline{\large\bf Concatenation of figures adjacent to a left tangent.}\nopagebreak
\centerline{
\begin{tabular}{|l|c|c|c|c|c|}
\hline 
&Angle&Left trolleybus&Left multitrolleybus&Multibirdie&Fictious vertex 5\\
\hline
Left trolleybus&\eqref{LTrolleybusPlusAngle}&\eqref{LMultitrolleybusDesintegration}&\eqref{LMultitrolleybusDesintegration}&\eqref{MultibirdieDesintegration}&Left trolleybus\\
\hline
Multicup&\eqref{AnglePlusMulticupLeft}&\eqref{MTCplusMTTL}&\eqref{MTCplusMTTL}&\eqref{MTCplusBirdieL}&Multicup\\
\hline
Left multitrolleybus&\eqref{AnglePlusMultiTrollebusL}&\eqref{LMultitrolleybusDesintegration}&\eqref{LMultitrolleybusDesintegration}&\eqref{MultibirdieDesintegration}&Left multitrolleybus\\
\hline
Fictious vertex 1&\eqref{CupPlusAngleR}&\eqref{ChordalDomainPlusMultitrolleybusL}&\eqref{ChordalDomainPlusMultitrolleybusL}&\eqref{ChordalDomainPlusMultibirdieR}&Fictious vertex 1\\
\hline
Fictious vertex 3&\eqref{CupPlusAngleR}&\eqref{ChordalDomainPlusMultitrolleybusL}&\eqref{ChordalDomainPlusMultitrolleybusL}&\eqref{ChordalDomainPlusMultibirdieR}&Fictious vertex 3\\
\hline
Fictious vertex 4&$+\infty$&&&&\\
\hline
Fictious vertex 5&Angle&Left trolleybus&Left multitrolleybus&Multibirdie&\\
\hline
\end{tabular}
}
\medskip

Now we explain how to make a reconstruction of a general limit graph~$\Gamma(\eta_2)$. First, we look at all fictious vertices of the first type, find those of them whose differentials vanish, and rename them into the vertices of the third type. Second, we temporarily forget about all the edges in our graph that have non-zero length, and consider the connectivity components~$C_i$ inside the remaining graph. We are interested in the components consisting of more than one vertex (they are exactly the ones that will become new vertices, i.e. each~$C_i$ will be changed for a single vertex in the modified graph). For each~$i$,~$C_i$ can be described as a chain of edges representing tangent domains with several single edges corresponding to chords attached to this chain from below (we note that these edges representing chords are separated, they do not have common endpoints). So, we first use the rules for chords from the first table above to get rid of edges corresponding to chords. After that we use formulas for right and left tangents to join all the vertices in the connectivity component into a single vertex (this is done step by step, each time we join two vertices). After such procedure is applied to each component, we get a graph that has no edges of zero length. Since formulas from Subsection~\ref{s344} do not change the Bellman candidate, the achieved graph is admissible.  

On the level of graphs, the reconstruction procedure is nothing but a gluing of all the vertices inside~$C_i$ together into a single vertex. 
\begin{Def}\label{LimitOfSmmoothFlow}\index{graph! smooth flow of graphs! modified limit of a smooth flow of graphs}
Let~$0 \leq \eta_1 < \eta_2 < \eps_{\infty}$. Let~$\Gamma = \Gamma(\eps)$ be a smooth flow of graphs parametrized by~$(\eta_1,\eta_2)$. Let~$\Gamma(\eta_2)$ be the limit graph \textup(with\textup, possibly some zero length edges\textup). Let the~$C_i$ be the connectivity components of the graph spanned by zero-length edges of~$\Gamma(\eta_2)$. The graph~$\G$ obtained from~$\Gamma(\eta_2)$ by changing those vertices of the first type whose one or both differentials vanish by fictious  vertices of the third type\textup, and replacing each~$C_i$ by the appropriate type vertex as described above\textup, is called the \emph{modified limit} of the flow~$\Gamma(\eps)$.  
\end{Def}
\begin{Rem}
As has already been said\textup, a modified limit of a smooth flow of graphs parametrized with~$(\eta_1,\eta_2)$ is admissible for~$f$ and~$\eta_2$. 
\end{Rem}
\begin{Th}\label{InductionStepOfSecondType}
Let~$0 \leq \eta_1< \eta_2 < \eps_{\infty}$\textup, let~$\Gamma = \Gamma(\eps)$ be a smooth flow of graphs. Then its modified limit~$\G$ satisfies Conditions~\textup{\ref{NonDegeneracyForceCondition}} and~\textup{\ref{Leaf-root condition}}.
\end{Th}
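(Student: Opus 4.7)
The plan is to verify the two conditions separately, in both cases by tracing the graph data as $\eps \nearrow \eta_2$ and accounting for the reconstruction prescribed by Definition~\ref{LimitOfSmmoothFlow}. Every edge of $\G$ has strictly positive length in the limit, and I would first observe that the force assigned to it in $\G$ is the limit, as $\eps \to \eta_2$, of the force assigned to the corresponding edge in $\Gamma(\eps)$: the reconstruction only relabels source vertices through the identities of Subsection~\ref{s344}, and those identities preserve the Bellman candidate (hence the slopes of the outgoing tangents encoded by~\eqref{difeq} and~\eqref{difeq2}) on the union domain.

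To establish Condition~\ref{NonDegeneracyForceCondition}, I would fix a tangent edge $\mathfrak{E}$ of $\G$ with endpoints $u_1 < u_2$, which by continuity are the limits $u_i = \lim_{\eps \to \eta_2} u_i(\eps)$. Along the flow one has $u_1(\eps) > \tl(\eps)$ and $u_2(\eps) < \tr(\eps)$ by Condition~\ref{NonDegeneracyForceCondition} for $\Gamma(\eps)$. Strict growth of tails along the flow (Lemma~\ref{EvolutionalTailGrowthLemma}, Corollaries~\ref{TailsGrowthCorollary} and~\ref{FlowMonotonicityCorollary}, Remark~\ref{FCDMFF}) combined with continuity of forces in $\eps$ gives $\tl(\eta_2) \le \tl(\eps)$ and $\tr(\eta_2) \ge \tr(\eps)$ for every $\eps < \eta_2$, whence $u_1 \ge \tl(\eta_2)$ and $u_2 \le \tr(\eta_2)$. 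Since the tail at $\eta_2$ is an open interval, the open interval $(u_1, u_2)$ sits inside it, which is what the condition demands.

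For Condition~\ref{Leaf-root condition}, I would separate the ``new'' vertices of $\G$ into several kinds. Fictitious vertices of the third type produced by the renaming in Definition~\ref{LimitOfSmmoothFlow} satisfy the condition by Corollary~\ref{RootCorollary} applied at $\eps = \eta_2$; the non-zero-tails hypothesis of that corollary is precisely the Condition~\ref{NonDegeneracyForceCondition} just proved, and the endpoint where the differential vanishes cannot lie in the interior of a solid root because the leaf-root condition for $\Gamma$ already placed every solid root inside a multicup whose arcs are preserved by the flow. Fictitious vertices of the second or fifth type produced by the collapse of a decreasing chordal-domain flow or of a multitrolleybus sitting on a single arc inherit their location from the corresponding root in $\Gamma$: for the fifth type, Remark~\ref{DegenerateTrolleybus} together with the leaf-root condition on the collapsing multitrolleybus gives the required single-point root; for the second type, a similar limit argument together with Corollary~\ref{RootCorollary} applies to the shrinking cup. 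Finally, every multifigure arising as a collapsed component $C_i$ has its arc set equal to the disjoint union of the arcs of the constituent multifigures in $\Gamma$, as one reads off directly from the identities listed in the tables of Subsection~\ref{s442}; hence the leaf-root condition is inherited.

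The main obstacle I anticipate is the case analysis in the third paragraph: precisely identifying the ``new'' figures produced inside each component $C_i$ and matching the single-point root requirement to the correct endpoint. This is not conceptually hard but is bookkeeping-heavy, because each row of the three tables in Subsection~\ref{s442} must be checked, and one must trace through which numerical parameter of the merged figure corresponds to which parameter of the constituents. The monotonicity of tails and the strictness provided by Lemma~\ref{StrictMonotonicityBalanceEquation} are enough to ensure that no pathological limit configuration escapes this tabulation, so no genuinely new analytic input beyond the material of Sections~\ref{s42} and~\ref{s43} is needed.
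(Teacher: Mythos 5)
Your proposal follows the same two-part structure as the paper's own proof, and most of the steps are sound. The Condition~\ref{NonDegeneracyForceCondition} argument is the paper's: monotone force flows, growth of tails, and continuity give the limit inclusion. You state the force-identification step (the force assigned to $\mathfrak{E}$ in $\G$ equals that in $\Gamma(\eta_2)$) a bit loosely by appealing to preservation of the Bellman candidate; the paper nails this via Lemma~\ref{ThreePointsOnOneLine} applied to the two possible source points, but your route works. Your replacement of the paper's proof-by-contradiction for the non-zero-tail hypothesis (just citing the already-proved Condition~\ref{NonDegeneracyForceCondition} plus positivity of edge lengths after reconstruction) encodes the same fact and is, if anything, tidier.

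There is, however, a real flaw in the step where you argue that the root $c_i$ arising for a fictitious vertex of the third type is a single point. You claim it cannot lie in a solid root ``because the leaf-root condition for $\Gamma$ already placed every solid root inside a multicup whose arcs are preserved by the flow.'' Condition~\ref{Leaf-root condition} does not say that: it asserts only that non-degenerate arcs of multifigures coincide with roots $c_i$, not that every solid root is the arc of some multifigure. So the condition alone does not rule out $b_0$ sitting in a solid root, and your wording also only excludes the interior, not the endpoints, of a solid root. The correct and direct argument — the one the paper actually uses — is Lemma~\ref{zerodifrootsInside} (giving $f''$ strictly increasing on the left of $b_0$) combined with Lemma~\ref{zerodifrootsOutside} applied with the non-zero tail you just established (giving $f''$ strictly decreasing on the right of $b_0$); together these force $c_i = \{b_0\}$ to be a point. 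A small additional inaccuracy: fictitious vertices of the second type are never ``produced by collapse'' — none of the concatenation formulas of Subsection~\ref{s344} output one, so they are purely inherited from $\Gamma(\eps)$, which is the short reason they satisfy Condition~\ref{Leaf-root condition}.
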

\begin{proof}
The proof naturally splits into two parts.

\paragraph{Verification of Condition~\ref{NonDegeneracyForceCondition}.} We remind the reader that Condition~\ref{NonDegeneracyForceCondition} says that for each edge~$\mathfrak{E}$ in~$\GFree$ that represents a tangent domain on the interval~$(u_1,u_2)$,~$(u_1,u_2)$ is contained in the tail of the force assigned to~$\mathfrak{E}$. So, let~$\mathfrak{E}$ be an arbitrary edge in~$\GFree$, let it represent~$\Rt(u_1,u_2)$,~$u_1 < u_2$, (here~$u_1$ may equal~$-\infty$,~$u_2$ may equal~$\infty$); as usual, the case of a left tangent domain is symmetric.

Since during the modification of~$\Gamma(\eta_2)$ we did not add new tangent domains,~$\mathfrak{E}$ is present in~$\Gamma(\eta_2)$, and thus for any~$\eps$ sufficiently close to~$\eta_2$, there exists a tangent domain~$\Rt(u_1(\eps),u_2(\eps))$ in~$\Gamma(\eps)$ such that~$u_1(\eps) \to u_1$ and~$u_2(\eps) \to u_2$. Let us look first on the forces assigned to~$\mathfrak{E}$ in~$\Gamma(\eta_2)$ and in~$\G$. Both forces are the forces of some chords with the right endpoint~$u_1$ (see the table right before Condition~\ref{NonDegeneracyForceCondition}), let these chords be~$[A_1,U_1]$ and~$[A_2,U_1]$. Let~$\mathfrak{L}$ be the vertex that is the beginning of~$\mathfrak{E}$ in~$\G$. A short inspection of the modification procedure shows that the both~$A_1$ and~$A_2$ belong to the figure corresponding to~$\mathfrak{L}$. So, by Lemma~\ref{ThreePointsOnOneLine} and formula~\eqref{RightForce}, the two forces are equal. Thus, it suffices to prove that~$(u_1,u_2)$ belongs to the tail of the force assigned to~$\mathfrak{E}$ in~$\Gamma(\eta_2)$. We note that this force is the limit of forces assigned to~$\Rt(u_1(\eps),u_2(\eps))$. Let these forces be~$\Fr(\cdot\,;\eps)$. As we have seen in the proof of Theorem~\ref{InductionStepOfTheFirstKind}, for every~$\eps^* \in (\eta_1,\eta_2)$, the function~$\FFr(u\,;\eps) = \Fr(u\,;\eps)$ defined on the intersection of the domains of the forces~$\{\Fr(\cdot\,;\eps)\}_{\eps \in [\eps^*,\eta_2]}$, is a right monotone force flow (see Definition~\ref{MFF}). Since when~$\eps^* \to \eta_2$, the domain of the force flow covers~$(u_1,u_2)$, the tails of a monotone force flow grow, and~$(u_1(\eps),u_2(\eps))$ lies in the tail of~$\Fr(\cdot\,;\eps)$ for each~$\eps$ (because Condition~\ref{NonDegeneracyForceCondition} is required for a smooth flow of graphs),~$(u_1,u_2)$ belongs to the tail of~$\Fr(\cdot\,;\eta_2)$.

\paragraph{Verification of Condition~\ref{Leaf-root condition}.} This condition splits into several requirements. First, we note that we both did not add new solid arcs to the multifigures (i.e. each multifigure present in~$\G$ inherits its arcs from some multifiures in~$\Gamma(\eta_2)$, which were present in~$\Gamma(\eps)$) and did not add new fictious vertices of the second type. Since~$\Gamma(\eps)$ fulfills Condition~\ref{Leaf-root condition}, we see that any solid arc of any multifigure in~$\G$ coincides with some solid root~$c_i$ of~$f'''$, as well as all fictious vertices of the second type coincide with some roots~$c_i$ that are points.

Next, we have to verify the assertion about the endpoints of the chords corresponding to the fictious veritces of the third type. It is easy to see that fictious vertices of the third type that represent short chords are inherited from~$\Gamma(\eps)$ and thus satisfy the condition. Let~$[A_0,B_0]$ be a long chord corresponding to a fictious vertex of the third type in~$\G$. Suppose that its right differential vanishes,~$\Srt(a_0,b_0) = 0$ (the case of the left differential is symmetric). By the modification construction, there is a full flow of chordal domains~$\{\Ch([a_0,b_0],*),\eps \mapsto 2\eps\}$ in the flow~$\Gamma(\eps)$. By Lemma~\ref{zerodifrootsInside},~$f''$ increases on the left of~$b_0$. Thus, it suffices to prove that~$[A_0,B_0]$ has non-zero right tail, because in such a case, by Lemma~\ref{zerodifrootsOutside},~$f''$ is decreasing on the right of~$b_0$, and thus~$b_0 = c_i$ for some~$i$. Assume the contrary, let~$[A_0,B_0]$ have zero tail. This means that the tail of the chord~$[A(2\eps),B(2\eps)]$ is always contained in~$[B(2\eps),B_0]$, because these tails grow in~$\eps$. But there must be the root of the balance equation (= the vertex of the angle in~$\Gamma(\eps)$) with the force coming from the right inside this tail. Therefore, there is an angle in~$\Gamma(\eps)$, whose vertex tends to~$b_0$ as~$\eps \to \eta_2$. That means that~$[A_0,B_0]$ is not a fictious vertex of the third type in~$\G$, a contradiction.

Finally, a fictious vertex of the fifth type in~$\G$ might appear only from from a fictious vertex of the second type in~$\Gamma(\eta_2)$, which is present in~$\Gamma(\eps)$ for all~$\eps$. Since the graphs inside a smooth flow fulfill Condition~\ref{Leaf-root condition}, any tangent corresponding to a fictious vertex of the fifth type in~$\G$ sits on some~$c_i$. 
\end{proof}

\subsection{Construction of Bellman candidate}\label{s443}
\begin{Th}\label{BC}
For any~$\eps < \eps_{\infty}$\textup, there exists a graph~$\Gamma(\eps)$ admissible for~$f$ and~$\eps$.
\end{Th}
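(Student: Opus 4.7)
The plan is to proceed by continuation in $\eps$, chaining together the local evolution theorems already established. Let $E$ denote the set of $\eps\in(0,\eps_\infty)$ for which there exists an admissible graph for $f$ and $\eps$ satisfying Conditions~\ref{NonDegeneracyForceCondition} and~\ref{Leaf-root condition}; the goal is $E=(0,\eps_\infty)$. Theorem~\ref{SimplePicture} together with Remark~\ref{nondegforcecondSimplePictureRem} supplies a nonempty initial interval $(0,\eps_1)\subset E$.

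Given any $\eta_1\in E$, Theorem~\ref{InductionStepOfTheFirstKind} produces a smooth admissible graph on some $[\eta_1,\eta_2]$, and Theorem~\ref{SmoothFlowTheorem} extends this uniquely to a maximal smooth flow of graphs $\Gamma(\eps)$, $\eps\in(\eta_1,\eta^*)$. At the right endpoint $\eta^*$, Theorem~\ref{InductionStepOfSecondType} says that the modified limit of Definition~\ref{LimitOfSmmoothFlow} is again admissible and satisfies both Conditions, so $\eta^*\in E$ and we may restart the construction at $\eta^*$. Iterating yields a strictly increasing sequence of endpoints $\eta_1<\eta^*<\eta^{**}<\dots$ in $E$; the theorem is proved once we show this sequence cannot accumulate at a point below $\eps_\infty$.

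The main obstacle is precisely this no-accumulation statement. I would classify each transition at $\eta^*_k$ as \emph{essential} or \emph{inessential}. Essential transitions are those in which either $\GammaFixed$, the number of closed multicups, the arc-structure of some multifigure, or the type of a fictious vertex of the first/third/fifth kind changes: for instance, a chordal-domain flow shrinks to a single chord and the adjacent trolleybus or birdie collapses via formulas~\eqref{ClosedMulticupTrolleybusR}--\eqref{ClosedMulticupBirdie}; a differential $\Slt$ or $\Srt$ of a long chord vanishes, which by Lemma~\ref{zerodifrootsOutside} anchors the chord at some root $c_i$ of $f'''$; or two multifigures merge via the formulas of Subsection~\ref{s344}. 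By Condition~\ref{Leaf-root condition} every essential transition is tied to one of the finitely many essential roots of $f'''$ (Condition~\ref{reg}) and strictly changes a bounded nonnegative integer invariant of $\Gamma$ (the total number of multifigure-arcs plus closed multicups plus third-type fictious vertices, counted with the appropriate multiplicities). Hence only finitely many essential transitions can occur in $(0,\eps_\infty)$.

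Inessential transitions are angle-only crashes, resolved by formulas~\eqref{CupPlusAngleR}--\eqref{LTrolleybusPlusAngle} and Proposition~\ref{MultibirdieDesintegrationSt}; they leave $\GammaFixed$ and the arc-structure untouched. It remains to bound their number between two consecutive essential events. On such a subinterval $\GammaFixed$ is constant, the attached chordal-domain flows are unique by Theorem~\ref{SmoothFlowTheorem}, and the associated force flows on edges of $\GammaFree$ are strictly monotone in $\eps$ by the lemmas of Subsection~\ref{s422}. The angle vertices are then the unique roots of balance equations (Lemma~\ref{monbaleq}) and depend monotonically and continuously on $\eps$, so each angle can cross a given trolleybus base at most once, and each (possibly newly created) birdie disintegrates immediately in a definite direction determined by the monotone force flow. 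This gives a uniform bound on the number of inessential crashes between two consecutive essential ones; hence inessential crashes cannot accumulate, the continuation reaches $\eps_\infty$, and $E=(0,\eps_\infty)$.
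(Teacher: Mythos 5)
Your overall architecture---continuation via smooth flows, modified limits at terminal points, and a finiteness argument for the ``essential'' critical points---matches the paper's. However, there are two genuine gaps.

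First, the attempt to bound the number of inessential (angle-only) crashes between two consecutive essential events is both unnecessary and wrong. It is unnecessary because a smooth flow of graphs (Definition~\ref{SmoothFlow}) is explicitly designed to absorb such crashes: the collection~$\Gamma\setminus\GammaFree$ and the multicups and fourth-type fictious vertices stay fixed while angles wander, trolleybuses change orientation, birdies form and desintegrate---all within a single smooth flow. The flow only terminates when something in~$\GammaFixed$ degenerates or a non-smooth vertex type appears. Thus the~$\eta^*$'s you are iterating over are \emph{only} the essential critical points; inessential crashes never appear as endpoints of your sequence. It is wrong because the assertion that ``angle vertices~\dots\ depend monotonically~\dots\ on~$\eps$, so each angle can cross a given trolleybus base at most once'' is false. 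The forces assigned to the two edges at an angle grow in absolute value with opposite signs, so their balance root need not be monotone. The paper's ``Wandering angle'' example (Section~\ref{s54}) exhibits an angle that revisits the same point infinitely often, and the ``Oscillating birdie'' example shows a birdie that desintegrates in alternating directions infinitely many times on a bounded~$\eps$-interval; the paper explicitly states that the number of such crashes can be infinite. Your proposed bound therefore contradicts the examples the paper was constructed to accommodate.

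Second, the finiteness of essential transitions is asserted through ``a bounded nonnegative integer invariant'' (arcs plus closed multicups plus third-type fictious vertices), but it is not established that this single quantity is monotone under all essential transitions, and it is in fact not: for instance, when two multicups are absorbed into one via a vanishing intermediate chordal domain, the arc count of the resulting multifigure decreases relative to the sum of its parts and the third-type vertex count does not obviously compensate. The paper's proof instead uses a lexicographic chain of monotone quantities: first the number of roots of~$\G(\eps_n)$ (non-increasing), then the number of leaves (non-increasing), then, once these stabilize, the ``full'' subsets of the root set~$\{c_i\}$ (non-shrinking), then the partition~$c_i^{\mathrm{parent}}(n)$ (non-growing), and finally a direct count of multicup closings and new third-type fictious vertices (each bounded by the number of roots~$c_i$, via Condition~\ref{Leaf-root condition}). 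Collapsing this to one invariant is not established in your proposal, so the finiteness of essential critical points remains a real gap.
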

\begin{proof}
We will prove a stronger statement: for any~$\eps$ there exists a graph~$\Gamma(\eps)$ admissible for~$f$ and~$\eps$ that satisfies Conditions~\ref{NonDegeneracyForceCondition} and~\ref{Leaf-root condition}.

By Theorem~\ref{SimplePicture}, for every~$\eps$ sufficiently small, there exists a simple graph~$\Gamma(\eps)$ that is admissible for~$f$ and~$\eps$ and it also satisfies the aforementioned conditions (see Remark~\ref{nondegforcecondSimplePictureRem}). Fix some~$\eps_0$ that is sufficiently small such that the graph~$\Gamma(\eps_0)$ is smooth and satisfies Conditions~\ref{NonDegeneracyForceCondition} and~\ref{Leaf-root condition}. We construct a sequence $\{\eps_n\}_{n\geq 0}$ inductively. If $\eps_n$ is given, we apply Theorem~\ref{InductionStepOfTheFirstKind} to get smooth admissible for~$f$ and~$\eps$ graphs~$\G(\eps)$ that satisfy Conditions~\ref{NonDegeneracyForceCondition} and~\ref{Leaf-root condition}, here~$\eps \in (\eps_n,\eps_n^+)$ for some $\eps_n^+>\eps_n$. We fix a moment~$\eps_n' \in (\eps_n,\eps_n^+)$ and consider the set~$\evo_{n+1} \subset \mathbb{R}$ given by the formula
\begin{equation*}
\evo_{n+1}\df\Bigg\{\eta \in (\eps_n',\eps_{\infty})\,\Bigg|\; \begin{aligned}\hbox{there exists a smooth flow of graphs starting from~$\G(\eps_n')$}\\ \hbox{and parametrized by~$(\eps_n',\eta)$}\end{aligned}\Bigg\}.
\end{equation*} 
By Theorem~\ref{SmoothFlowTheorem} (the existence part), the set~$\evo_{n+1}$ is non-empty. We define its supremum by~$\eps_{n+1}$. If~$\eps_{n+1} =\eps_{\infty}$, we finish. Otherwise, by the uniqueness part of Theorem~\ref{SmoothFlowTheorem}, there exists a smooth flow of graphs starting from~$\G(\eps_n')$ and parametrized by~$(\eps_{n}',\eps_{n+1})$. We apply Theorem~\ref{InductionStepOfSecondType}  and see that the modified limit~$\G = \G(\eps_{n+1})$ of this flow is admissible for~$f$ and~$\eps_{n+1}$ and satisfies Conditions~\ref{NonDegeneracyForceCondition} and~\ref{Leaf-root condition}. The induction step is finished.

It suffices to prove that the algorithm stops eventually, i.e.~$\eps_n = \eps_{\infty}$ for some~$n$. This happens due to Condition~\ref{Leaf-root condition}, the rules of evolution (written down in the table right after Definition~\ref{Admissible graph}), and the modification procedure. We will find out that several combinatorial characteristics of~$\G(\eps_n)$ satisfy certain monotonicity laws, are bounded, and thus have to stabilize when~$n$ is sufficiently big. After they all have stabilized, the set~$\evo_n$ has~$\eps_{\infty}$ as its supremum. The first characteristics is the number of roots in~$\G$. 

\paragraph{The number of roots in~$\G(\eps_n)$ does not increase.} It follows from local evolutional rules that  (in the above terminology)
\begin{equation*}
\Big|\big\{\hbox{Roots of~$\G(\eps_{n})$}\big\}\Big| = \Big|\big\{\hbox{Roots of~$\G(\eps_{n}')$}\big\}\Big|.
\end{equation*}  
Indeed, a root of a graph is either a multicup, or a fictious vertex of the first, third, or fourth type. There are no fictious vertices of the third type representing long chords in~$\G(\eps_n')$ since this graph is smooth. As for multicups, they all correspond to some non-full multicups in~$\G(\eps_n)$. The same with fictious vertices of the fourth type (they are the same). Fictious vertices of the first type originated in~$\G(\eps_n')$ either from a full multicup or from a fictious vertex of the first or third types. So, we see that the roots of~$\G(\eps_n')$ are in bijection with the roots of~$\G(\eps_n)$.

It is also clear that two graphs extracted from a smooth flow of graphs have the same collections of roots. Thus, to prove that
\begin{equation*}
\Big|\big\{\hbox{Roots of~$\G(\eps_{n+1})$}\big\}\Big| \leq \Big|\big\{\hbox{Roots of~$\G(\eps_{n})$}\big\}\Big|,
\end{equation*} 
it suffices to show that the number of the roots of a graph does not increase during the modification procedure. This is clear, since the modification is nothing but gluing several vertices of a graph into a single vertex. Such a change of a graph clearly does not increase the number of roots (we use the fact that our graphs are oriented trees).    

\paragraph{The number of leaves in~$\G(\eps_n)$ does not increase.} As in the previous case, it suffices to prove an inequality and an equality,
\begin{equation*}
\Big|\big\{\hbox{Leaves of~$\G(\eps_{n+1})$}\big\}\Big| \leq \Big|\big\{\hbox{Leaves of~$\G(\eps_{n}')$}\big\}\Big| = \Big|\big\{\hbox{Leaves of~$\G(\eps_{n})$}\big\}\Big|.
\end{equation*}
The proof of the inequality is totally similar to the previous case and follows from the fact that gluing together several vertices in an oriented tree does not increase the number of leaves in it. To prove the equality, we need to consider the leaves in~$\G(\eps_n')$. They are angles, closed multicups sitting on single arcs, fictious vertices of the second and, maybe, fourth types. Angles in~$\G(\eps_n')$ appear from angles, birdies, and multibirdies of~$\G(\eps_n)$. As for fictious vertices of the second and fourth types, they are the same in~$\G(\eps_n)$ and~$\G(\eps_n')$. Therefore, we see that the number of leaves in~$\G(\eps_n')$ equals the same number for~$\G(\eps_n)$.

We may pick a number~$n_0$ such that  the number of roots and leaves in~$\G(\eps_n)$ does not change for~$n > n_0$. In what follows, we assume that~$n > n_0$. Consider now all the leaves in~$\G(\eps_n)$ that have only one incoming edge. They are the closed multicups sitting on single arcs, fictious vertices of the second type, and infinities. By Condition~\ref{Leaf-root condition}, the closed multicups sitting on single arcs as well as fictious vertices of the second type coincide with some roots~$c_i$ (in a sense, the infinities also satisfy such a condition).  Let~$\mathfrak{L}$ be a vertex in~$\GFree(\eps_n)$, consider the set~$\mathfrak{L}^{\mathrm{sons}}$ of all leaves in~$\G(\eps_n)$ subordinated to it in the sense Definition~\ref{Ordering}. As we have seen,~$\mathfrak{L}^{\mathrm{sons}}$ may be identified with a subset of the set~$\{c_i\}$. Thus, for every~$n$ we have a collection of non-intersecting subsets of the set~$\{c_i\}$ generated by the sets~$\mathfrak{L}^{\mathrm{sons}}$. A set from this collection is called full if the vertex~$\mathfrak{L}$ that generates it, represents a root of~$\G(\eps_n)$ (i.e. it is either a multicup, or a fictious vertex of the first and third type).

\paragraph{The full subsets of~$\{c_i\}$ do not shrink in~$n$.} By this we mean that if~$A(n)$ is a full subset of~$\{c_i\}$ for~$\eps_n$, then for any~$m > n$ there exists a full subset~$A(m)$ generated by~$\GFree(\eps_m)$ such that~$A(n) \subset A(m)$. We note that in general, such  statement is incorrect. However, if we assume that the number of roots and leaves of~$\G(\eps_n)$ has stabilized, it becomes true. Surely, it suffices to prove the statement for the case~$m=n+1$.

Let~$A(n)$ be generated by the vertex~$\mathfrak{L}$ of~$\G(\eps_n)$. The vertex~$\mathfrak{L}$ may represent a multicup (full or not) or a fictious vertex of the first or third type. In any case, we can easily find a root~$\mathfrak{L}'$ of~$\G(\eps_n')$ that generates~$A(n)$ for~$\eps_n'$. The roots of the graph do not change along a smooth flow of graphs. So, it suffices to prove that there is a root of~$\G(\eps_{n+1})$ that generates a set that contains~$A(n)$. Consider the component~$C_i$ the vertex~$\mathfrak{L}'$ belongs to in the limit graph before the modification. Since the number of the roots of the graph does not decrease during the modification (we have assumed that it had already stabilized), the vertex that is present in the modified graph instead of~$C_i$, is a root (because~$C_i$ contains a root vertex, therefore, if~$C_i$ is replaced by a non-root vertex, the number of roots decreases). This modified vertex generates a set that contains~$A(n)$.

We have proved that full sets do not decrease during the evolution. Thus, there is a moment~$n_1 \geq n_0$ such that for any~$n > n_1$ the collection of full subsets of~$\{c_i\}$ does not change\footnote{In fact, there is not more than two full subsets.}. In what follows, we assume that~$n > n_1$. Consider now any root~$c_i$ that does not belong to a full set. Let~$c_i^{\mathrm{parent}}(n)$ be the set of the partition of~$\{c_i\}$ generated by~$\Gamma(\eps_n)$ such that~$c_i \subset c_i^{\mathrm{parent}}(n)$.

\paragraph{The set~$c_i^{\mathrm{parent}}(n)$ does not increase in~$n$ by inclusion.} By this we mean that~$c_i^{\mathrm{parent}}(n)$ generated by~$\G(\eps_n)$ contains the set~$c_i^{\mathrm{parent}}(n+1)$ generated by~$\G(\eps_{n+1})$ (as in the previous case, such a claim is not true in general, the assumption that the full sets have stabilized is required). It is easy to see that the sets of the partition do not increase when we pass from~$\G(\eps_n)$ to~$\G(\eps_n')$ (because~$c_i$ is situated somewhere under a trolleybus, or a multitrolleybus, or a multibirdie, or a fictious vertex of the fifth type; in the first case the corresponding set~$c_i^{\mathrm{parent}}(n)$ does not change, in the second, the third, and the fourth it shrinks and may even disappear from the graph). While we move along a smooth flow, nothing changes. Thus, we have to show that the set~$c_i^{\mathrm{parent}}(n)$ cannot catch other roots~$c_j$ during the modification. By the assumption that the full sets in the partition are stable, we see that tangent domains cannot have zero length in the limit graph (except, maybe, for tangent domains coming from the infinities and a trivial case of a tangent domain between a trolleybus and an angle). Indeed, if we have a tangent domain that has zero length (and it is not a tangent domain between a trolleybus and an angle), then this is not a tangent domain between the two decreasing flows of chordal domains (simply because they decrease); therefore, it was connected either to a full chordal domain flow, or a multicup; in both cases the collection of full sets changed after the modification. So, the only modification available is a splitting of a trolleybus or a birdie into several similar figures, which leads to the diminishing of the partition set~$c_i^{\mathrm{parent}}(n)$.

There exists a moment~$n_2 \geq n_1$ such that for all~$n > n_2$ the partition of~$\{c_i\}$ does not change at all. After that we may have only a very limited number of evolutional scenarios: during the passage from~$\G(\eps_n)$ to~$\G(\eps_n')$ a full multicup may become closed (with a chordal domain and a fictious vertex on its top) and a fictious vertex of the third type that represents a long chord may give rise to a new chordal domain above it together with the fictious vertex of the first type representing the upper chord. As for the modification of the limiting graph (the creation of~$\G(\eps_{n+1})$), there might be only a renaming of fictious vertex of the first type into a fictious vertex of the third type. 

It is clear that there might be only a finite number of multicup closings. The situation with the fictious vertices of the third type is a bit more difficult. It suffices to prove that there might be only a finite number of them born during modifications. For that we use Condition~\ref{Leaf-root condition} to see that each fictious vertex of the third type has one of the points~$(c_i,c_i^2)$ as its endpoint. Thus, there might be only a finite number of them in a graph, because when~$n > n_2$, each such new-born chord generates one of the full subsets of~$\{c_i\}$. So, the number of such chords does not exceed the number of the full subsets of~$\{c_i\}$ times the number of the roots~$c_i$.

We have proved that~$\eps_n = \eps_{\infty}$ when~$n$ is sufficiently big and thus built admissible graphs for all~$\eps \in (0,\eps_{\infty})$.
\end{proof}
By Remark~\ref{AdmissibleCondidate}, we have built a Bellman candidate for all~$\eps \in (0,\eps_{\infty})$. After we prove that the Bellman candidate built from an admissible graph coincides with the Bellman function~\eqref{Bf}, we will prove the uniqueness of such a Bellman candidate. Therefore, the moments~$\eps_n$ do not depend on the construction, but on the function~$f$ only. It is natural to call them \emph{essential critical points}\index{essentially critical points} of the evolution. We have proved that there is only a finite number of them.

One may say that the definition of an essential critical point is unfair, because at the moment when an angle attacks a trolleybus, the graph of the foliation changes. It is natural to consider the moment~$\eps$ when such a crash happens as a critical point. However, the second example of Section~\ref{s54} shows that the number of such critical points may be infinite. 

The proof of Theorem~\ref{BC} provides an algorithm for calculating the Bellman candidate. It constructs the Bellman candidate for all~$\eps \in (0,\eps_{\infty})$ except, possibly, for a finite number of intervals (each interval starts at essentially critical point), which can be chosen to be as small as we want, in a finite number of steps. By a step of this algorithm we mean a differentiation, an integration, or a solution of an algebraic equation. We will not describe it rigorously, because such a description will be essentially a repetition of the proof of Theorem~\ref{BC}. The reader is welcome to read the next section (and especially the paper~\cite{CrazySine}) to see how the algorithm works in practice.

\section{Examples}\label{s45}
\paragraph{The sine monster.}  Here we list several especially complicated foliations that appear
during the evolution with the boundary condition~\eqref{SineMonsterFormula}. We only survey the results, the reader is welcome to consult~\cite{CrazySine} for the hints to calculations. 

Let~$\alpha_n$ be the~$n$th positive root of the equation
\begin{equation}
\label{tan}
\tan \zeta=\zeta,
\end{equation}
i.e.~$\alpha_n\in\big(n\pi,(n+1)\pi\big)$. Consider the case when
$\alpha\in\big((2N-1)\pi,\alpha_{2N-1}\big)$. Then at some moment $\eps=\eps_1(\alpha)\in(0,\alpha)$ 
the most left and the most right angles escape to infinity. Or more exactly, at this moment these
angles touch the corresponding infinite multicups on the solid roots $(-\infty,-\alpha)$ and 
$(\alpha,+\infty)$ turning them into multitrolleybusses which immediately disappear.
All other figures evolve independently. Namely, at the moment~$\eps=\pi$ all~$2N-1$ cups touch 
their neighbors and the~$2N-2$ angles in between form a common multicup. This multicup completes 
its evolution at the moment $\eps=(2N-1)\pi$, when it becomes a full multicup. After this moment
the full chordal domain grows above this multicup till infinity. But for $\alpha=\alpha_{2N-1}$
the situation is very special, because the chordal domain above the closed multicup grows 
till the moment $\eps=\alpha_{2N-1}$, when it catches the angles at the last moment of escaping 
to the corresponding infinities, that is the moment when they form infinite linearity domains. 
So, a domain of linearity is formed above the chord of length~$2\alpha_{2N-1}$ 
and it is a stable infinite multicup. This means that for~$\eps>\alpha_{2N-1}$ the Bellman 
function does not change anymore, only its domain is increasing. The graph of the corresponding
foliation is presented on Fig.~\ref{InfiniteMcup}
\begin{figure}[h]
\vskip-20pt
\centering{\includegraphics{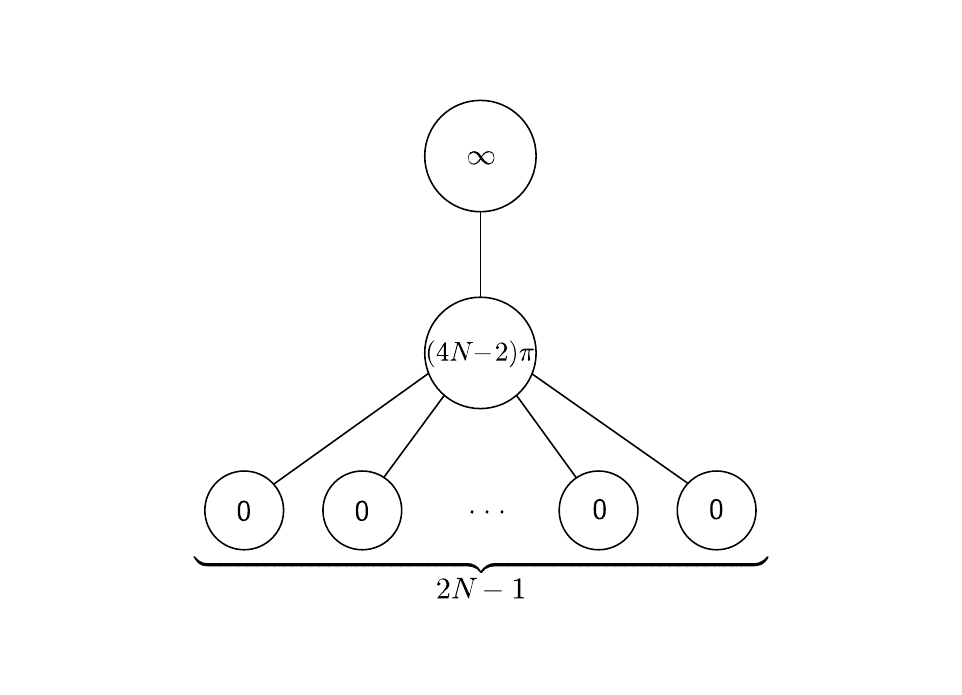}}
\vskip-20pt
\caption{The foliation for $\eps>\alpha=\alpha_{2N-1}$.}
\label{InfiniteMcup}
\end{figure}

If the parameter $\alpha$ is slightly bigger, the vertex of the rightmost angle attains its rightmost 
position (which is smaller than~$\alpha$) and turns back moving to the left until the moment 
when the angle meets the chordal domain above the big multicup. Due to symmetry, the left and
right angles meet the chordal domain simultaneously forming a birdie. It shrinks as 
$\eps$ increases, but never disappears if $\alpha_{2N-1}<\alpha\le(2N-\frac12)\pi$.
If~$\alpha>(2N-\frac12)\pi$, then one more essentially critical point of evolution appears. 
Namely, this is the moment when the base of the birdie shrinks to the size~$(4N-2)\pi$, i.e.
when the birdie turns into a multibirdie. At the same moment the 
multibirdie splits into~$2N-1$ pieces: a central birdie and~$N-1$ pairs of symmetrical 
trolleybuses, and after that these figures are shrinking, but none of them dies. 

Such kind of evolution occurs if $\alpha<\lambda_{2N-1}$, where $\lambda_n$ is the 
unique solution~$\zeta$ of the equation
\begin{equation}
\label{lambda-n}
n^2\pi^2\cos\zeta+n\pi\sin\zeta-e^{\frac\zeta{n\pi}-1}=0
\end{equation}
belonging to~$\big((n+\frac12)\pi,(n+1)\pi\big)$.
If~$\alpha>\lambda_{2N-1}$, the evolution changes slightly: the multicup does not have sufficiently much time 
to complete its evolution, and two symmetrical angles hit the multicup forming a
multibirdie. But except this moment, the rest of the evolution is the same: the multibirdie
immediately splits into a central birdie and~$N-1$ pairs of symmetric
trolleybuses, and after that these figures are shrinking, but none of them dies.

Such evolution occurs if $\alpha<\mu_{2N-1}$, where $\mu_n$ is the unique solution 
$\zeta$ of the equation
\begin{equation}
\label{mu-n}
\pi^2\cos\zeta+\pi\sin\zeta-e^{\frac\zeta\pi-n}=0\,
\end{equation}
belonging to~$\big((n+\frac12)\pi,(n+1)\pi\big)$. For bigger values of~$\alpha$, the angle
hits the neighbor cup earlier (at a moment $\eps<\pi$), when all the cups are still 
separated and there is no multicup. For this last value of the parameter
$\alpha=\mu_{2N-1}$ we have the only essentially critical point~$\eps=\pi$, when all~$4N-1$ figures ($2N-1$ cups and $2N$ angles) meet together 
and all angles turn into a common multibirdie. Due to symmetry, this multibirdie immediately 
splits into a central birdie and~$N-1$ symmetric pairs of trolleybuses. 

Now we can add a few comments about the crumbling of the multibirdie in all described cases. 
Since our boundary values are symmetric,  the multibirdie disintegrates into a symmetric
picture with a birdie in the middle.
However, as it is described in Section~\ref{s54} below, we can add an arbitrarily small and smooth 
perturbation to the boundary values in such a way that the crumbling of the multibirdie chooses 
an arbitrary scenario.
Namely, in the result, we can get an angle at any place with an arbitrary number of trolleybuses on
the right and left (only their total number~$2N-1$ is fixed). Moreover, before the position
of the angle stabilizes, it could wander between the trolleybuses in arbitrary way changing the
number of the left and right trolleybuses. 

Let us return to the evolution in the case when $\alpha>\mu_{2N-1}$ (or, more exactly,
when $\mu_{2N-1}<\alpha\le2N\pi$). Now, as in all cases considered before, except 
$\alpha=\alpha_{2N-1}$, two infinite multicups on the solid roots $(-\infty,-\alpha)$ and
$(\alpha,\infty)$ are stable and do not participate in the process of evolution. For small values of~$\eps$, when we have a simple picture, all  the angles sit constantly at the points $\pm (2k-1)\pi$,
$1\le k\le N-1$. But the angles originating from the points $\pm(2N-1)\pi$ move to their neighbor
cups and hit them at some moment~$\eps_1(\alpha)$, transforming the cups into trolleybus. For $\eps>\eps_1$,
the next pair of angles sitting at $\pm(2N-3)\pi$ begins to move towards the origin and hits their neighbor
cups at a moment~$\eps_2(\alpha)$. This is the moment when the second pair of trolleybuses arises.
The evolution continues as a ripple effect: we get an increasing sequence of moments~$\eps_k$ such
that for~$\eps<\eps_k<\pi$ the angles sitting at $\pm (2j-1)\pi$, $j\le N-k$, are stable, while the
angles originating from~$\pm (2N-2k+1)\pi$ move towards  the origin. At the moment~$\eps=\eps_k$ they
hit their neighbor cups, a new pair of trolleybuses arises, and the next pair of angles (sitting
at~$\pm (2N-2k-1)\pi$) begins to move. Finally, at the moment $\eps_N$ the last pair of angles
originating from $\pm\pi$ hits the central cup, forming a birdie. After that all the figures shrink
but none of them die.

The same ripple effect occurs for $\alpha\in(2N\pi,(2N+1)\pi)$, but now instead of two
stable multicups at infinite rays we have growing cups originating from the points $\pm2N\pi$.
Due to symmetry, we will discuss the behavior of the cup originating from~$2N\pi$ only.
If the value of $\alpha$ is sufficiently close to~$2N\pi$, then this cup grows infinitely in such a way
that its left end tends to a finite limit not touching the shrinking neighbor trolleybus
that appears as a result of the described ripple effect. But if $\alpha>\alpha_2+2(N-1)\pi$, then 
this cup catches the neighbor trolleybus, forming a multicup. This multicup
completes its evolution, and the chordal domain begins to grow above it. 
If~$\alpha\le\alpha_4+2(N-2)\pi$, this chordal domain grows in such a way that its left
end tends to a finite limit not touching the next trolleybus. However, if
$\alpha>\alpha_4+2(N-2)\pi$, this chordal domain catches the next trolleybus.
So, we have, in a sense, a second ripple effect: when~$\alpha_{2N}<\alpha<(2N-1)\pi$,
all the trolleybuses are caught and at some moment both huge chordal domains
touch the central birdie forming a common multicup. After that moment the evolution is
fairly simple: the multicup at some moment turns into a closed multicup, and a
 chordal domain starts to grow till infinity above it.

Finally, we note that all the critical points of evolution described above
tend to~$\pi$ as $\alpha\to(2N+1)\pi$. And if~$\alpha=(2N+1)\pi$, all these processes
merge into one, when  all cups and angles collide forming a multicup
with $2(N+1)$ boundary points at the moment $\eps=\pi$.

\paragraph{Polynomial of sixth degree: positive leading coefficient.}

This example is written down in full detail to show how to apply the developed machinery to a ``lively'' problem. This material is unnecessary for the general theory, an uninterested reader may skip it without any possible loss of understanding.

Consider a polynomial~$f$ such that~$f'''(t)=t^3-3dt+c$ (the case of an arbitrary polynomial with positive leading coefficient may be reduced to this one with the help of a linear change of variable and Remark~\ref{remm2}).
If~$d\le0$ or~$d>0$ and~$|c|\ge2d^{3/2}$, then~$f'''$ has only one essential root 
and, by Theorem~\ref{AngleProp}, the foliation consists of an angle and two tangent domains. In what follows we will
assume~$d>0$ and~$|c|<2d^{3/2}$. Moreover, without loss of generality we 
may assume~$d=1$ and~$0\le c<2$. The picture for arbitrary~$d$ can
be obtained by rescaling~$c\to d^{3/2}c$,~$x_1\to\sqrt{d}x_1$,~$x_2\to dx_2$,~$\eps\to\sqrt{d}\eps$, see Remark~\ref{remm2}.

Under this assumption~$f'''$ has three essential roots and, by Theorem~\ref{SimplePicture}, the foliation consists of two angles and one cup for sufficiently small~$\eps$. Let us find 
the first critical value~$\eps=\eps_1$ such that the foliation is as stated for all~$\eps\le\eps_1$, but it is not the case for~$\eps>\eps_1$. For this aim we need to calculate  the sum of forces, we call this function~$F$ for brevity.

The cup equation is
\begin{equation}
\label{cupeq}
(a-b)^2\Big[\frac1{120}(2a^3+3a^2b+3ab^2+2b^3)-\frac18(a+b)+\frac{c}{12}\Big]=0.
\end{equation}
The differentials of the cup are
\begin{align*}
\Slt&=(a-b)\Big[\frac1{20}(4a^3+3a^2b+2ab^2+b^3)-\frac12(2a+b)+\frac{c}2\Big],
\\
\Srt&=(b-a)\Big[\frac1{20}(a^3+2a^2b+3ab^2+4b^3)-\frac12(a+2b)+\frac{c}2\Big].
\end{align*}
Using the cup equation we rewrite them in the form
\begin{align*}
\Slt&=\frac1{20}(a-b)^2(2a^2+2ab+b^2-5),
\\
\Srt&=\frac1{20}(a-b)^2(a^2+2ab+2b^2-5).
\end{align*}
If we introduce\footnote{Attention: here~$\ell$ is not the one we usually use! Indeed, when treating a chordal domain in Subsection~\ref{s331} we had~$\ell = b-a$. However, to get rid of giant numerical coefficients, we divide~$\ell$ by two in this example.}~$w\df(b+a)/2$ and~$\ell\df(b-a)/2$, then the cup equation turns into
\begin{equation}
\label{cupeq1}
w^3-3w\Big(1-\frac{\ell^2}5\Big)+c=0
\end{equation}
and the expressions for the differentials are
\begin{align*}
\Slt&=\frac15\ell^2(5w^2-2w\ell+\ell^2-5),
\\
\Srt&=\frac15\ell^2(5w^2+2w\ell+\ell^2-5).
\end{align*}
Since the origin of the cup is the middle solution of the equation
$$
t^3-3t+c=0,
$$
we need 
to take the middle solution of equation~\eqref{cupeq1} (when~$\ell^2<5\big[1-(c/2)^{2/3}\big]$), because it tends to the origin of the cup as~$\ell\to0$. So, by writing~$w$ or~$w(\ell)$ we always mean this solution. Since we assume that~$c>0$, the cup 
equation~\eqref{cupeq1} has two positive and one negative root, and therefore, the midpoint
of our cup~$w=w(\ell)$ is always positive. In the case~$c=0$ there is a symmetry and we have~$w=0$
identically.

Now, we calculate the function~$F$.
On the half-line~$u\in[b,\infty)$ we have:
\begin{align*}
\Fl(u;\infty;\eps)&=\eps^{-1}e^{u/\eps}\int\limits^{\infty}_ue^{-t/\eps}\,df''(t)
\\
&=(u^3-3u+c)+3\eps(u^2-1)+6\eps^2u+6\eps^3,
\\
\Fr(u;a,b;\eps)&=\eps^{-1}\Srt e^{-(u-b)/\eps}+\eps^{-1}e^{-u/\eps}\int\limits_b^ue^{t/\eps}\,df''(t)
\\
&=\frac1{5\eps}\ell^2(5w^2+2w\ell+\ell^2-5)e^{-(u-b)/\eps}
\\
&\quad+\big[(u^3-3u+c)-3\eps(u^2-1)+6\eps^2u-6\eps^3\big]
\\
&\quad-\big[(b^3-3b+c)-3\eps(b^2-1)+6\eps^2b-6\eps^3\big]e^{-(u-b)/\eps},
\end{align*}
\begin{equation}
\label{F+}
\begin{aligned}
F(u)&=\mr{u}{b}+\ml{u}{\infty}
\\
&=\frac1{5\eps}\ell^2(5w^2+2w\ell+\ell^2-5)e^{-(u-b)/\eps}
\\
&\quad-\big[(b^3-3b+c)-3\eps(b^2-1)+6\eps^2b-6\eps^3\big]e^{-(u-b)/\eps}
\\
&\quad+2(u^3-3u+c)+12\eps^2u.
\end{aligned}
\end{equation}

Under the assumption~$\ell=\eps$ we find the value~$\eps$ such that the balance equation has a root at~$b$, i.e.~$F(b)=0$:
\begin{align*}
F(b)&=\frac15\eps(5w^2+2w\eps+\eps^2-5)
\\
&\quad-\big[(b^3-3b+c)-3\eps(b^2-1)+6\eps^2b-6\eps^3\big]
\\
&\quad+2(b^3-3b+c)+12\eps^2b
\\
&=\eps w^2+\frac25w\eps^2+\frac15\eps^3-\eps
\\
&\quad+\big((w+\eps)^3-3(w+\eps)+c\big)+3\eps\big((w+\eps)^2-1\big)+6\eps^2(w+\eps)+6\eps^3
\\
&=7\eps w^2+\frac{74}5\eps^2 w+\frac{81}5\eps^3-7\eps.
\end{align*}
Here we have used equation~\eqref{cupeq1}. As a result, we get the following system
of two equations:
\begin{eqnarray}
\label{1+}
35w^2+74\eps w+81\eps^2-35=0;
\\
\label{2+}
5w^3-3(5-\eps^2)w+5c=0,
\end{eqnarray}
whose solution~$\eps=\eps(c)$ is the desired critical value of~$\eps$. We cannot 
find this function explicitly, but it is easy to write down the inverse function~$c(\eps)$
if we express~$w$ in terms of~$\eps$ from equation~\eqref{1+},
$$
w_+(\eps)=\frac{-37\eps+\sqrt{1225-1466\eps^2}}{35}
$$
(we need a positive root) and then plug it into equation~\eqref{2+}:
\begin{equation}\label{c+}
c_+(\eps)=w_+(\eps)\Big(3-\frac35\eps^2-w_+(\eps)^2\Big).
\end{equation}
\begin{figure}[ht]
\centering{\includegraphics[width=12cm]{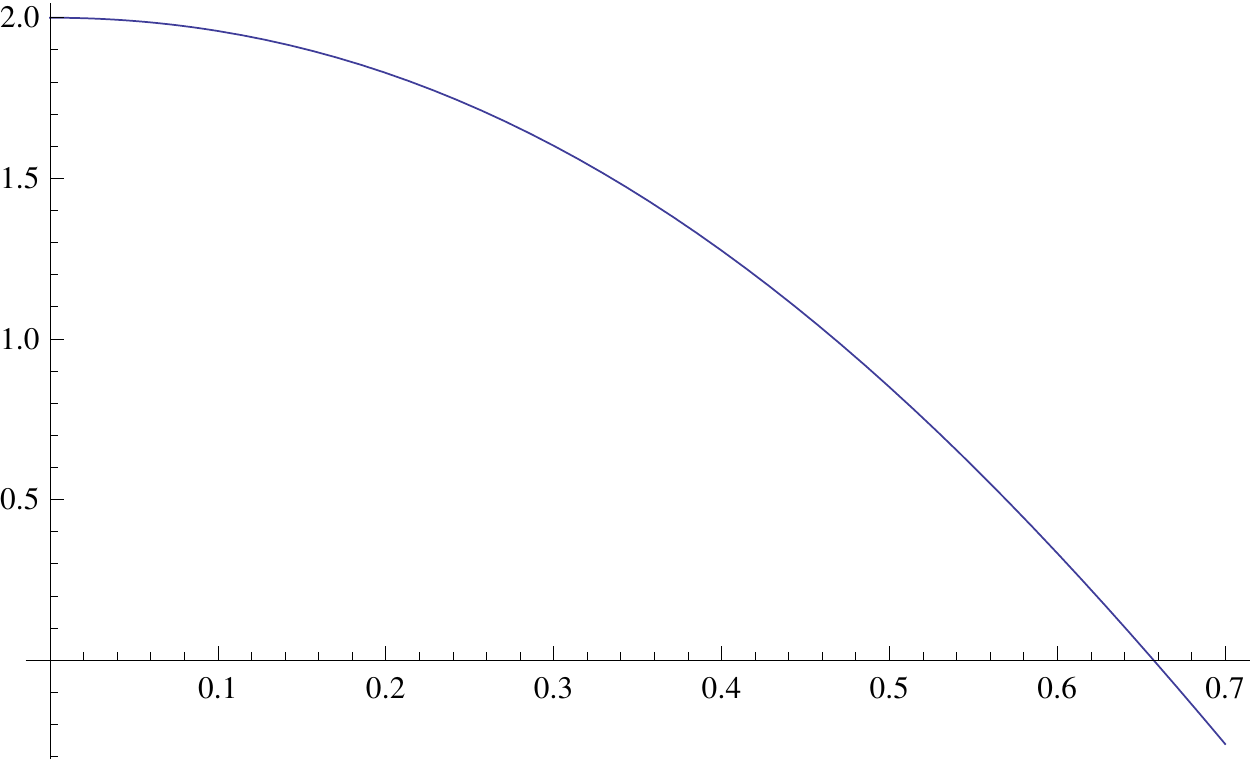}}
\caption{Graph of the function $c_+(\eps)$.}
\label{figc_+}
\end{figure}

We must verify that the function~$c_+$ is strictly decreasing on the interval~$[0,\frac{\sqrt{35}}9]$ from~$2$ to~$0$. First of all, we note that the function~$w_+$ decreases from~$1$ to~$0$ when~$\eps$ runs from~$0$ to~$\frac{\sqrt{35}}9$. 
Furthermore,
$$
c'_+(\eps)=3w'_+(\eps)\Big(1-\frac15\eps^2-w_+(\eps)^2\Big)-\frac65\eps w_+(\eps),
$$
$w_+(\eps)\ge0$, $w'_+(\eps)<0$. Therefore, to verify that $c_+$ is 
decreasing, it is sufficient to check that
$$
1-\frac15\eps^2-w_+(\eps)^2\ge0.
$$
But, using equation~\eqref{1+} for~$w_+$, we get
$$
1-\frac15\eps^2-w_+(\eps)^2=\frac{74\eps\big(w_+(\eps)+\eps\big)}{35}\ge0.
$$

Thus, we have proved that~$c_+$ maps the interval~$[0,\frac{\sqrt{35}}9]$ bijectively
onto~$[0,2]$. Therefore, the inverse function is well-defined on the interval~$[0,2]$,
i.e. for every~$c\in[0,2)$ we define~$\eps_1=\eps_1(c)$ as the solution of the equation~$c_+(\eps)=c$. When~$c$ runs over~$[0,2)$ the corresponding value~$\eps_1$ runs over~$(0,\frac{\sqrt{35}}9]$.

Now, consider the half-line~$(-\infty,a]$:
\begin{align*}
\Fr(u;-\infty;\eps)&=\eps^{-1}e^{-u/\eps}\int\limits_{-\infty}^ue^{t/\eps}\,df''(t)
\\
&=(u^3-3u+c)-3\eps(u^2-1)+6\eps^2u-6\eps^3,
\\
\Fl(u;a,b;\eps)&=-\eps^{-1}\Slt e^{-(a-u)/\eps}+\eps^{-1}e^{u/\eps}\int\limits_u^ae^{-t/\eps}\,df''(t)
\\
&=-\frac1{5\eps}\ell^2(5w^2-2w\ell+\ell^2-5)e^{-(a-u)/\eps}
\\
&\quad-\big[(a^3-3a+c)+3\eps(a^2-1)+6\eps^2a+6\eps^3\big]e^{-(a-u)/\eps}
\\
&\quad+\big[(u^3-3u+c)+3\eps(u^2-1)+6\eps^2u+6\eps^3\big],
\end{align*}
\begin{equation}
\label{F-}
\begin{aligned}
F(u)&=\mr{u}{-\infty}+\ml{u}{a}
\\
&=-\frac1{5\eps}\ell^2(5w^2-2w\ell+\ell^2-5)e^{-(a-u)/\eps}
\\
&\quad-\big[(a^3-3a+c)+3\eps(a^2-1)+6\eps^2a+6\eps^3\big]e^{-(a-u)/\eps}
\\
&\quad+2(u^3-3u+c)+12\eps^2u.
\end{aligned}
\end{equation}

We solve the symmetric equation~$F(a)=0$ under assumption~$\ell=\eps$.
\begin{align*}
F(a)&=-\frac15\eps(5w^2-2w\eps+\eps^2-5)
\\
&\quad-\big[(a^3-3a+c)+3\eps(a^2-1)+6\eps^2a+6\eps^3\big]
\\
&\quad+2(a^3-3a+c)+12\eps^2a
\\
&=-\eps w^2+\frac25w\eps^2-\frac15\eps^3+\eps
\\
&\quad+\big((w-\eps)^3-3(w-\eps)+c\big)-3\eps\big((w-\eps)^2-1\big)+6\eps^2(w-\eps)-6\eps^3
\\
&=-7\eps w^2+\frac{74}5\eps^2 w-\frac{81}5\eps^3+7\eps.
\end{align*}
Here we used relation~\eqref{cupeq1}. As a result, we get a similar system
of two equations,
\begin{gather}
\label{1-}
35w^2-74\eps w+81\eps^2-35=0;
\\
\label{2-}
5w^3-3(5-\eps^2)w+5c=0.
\end{gather}
It turns into system~\eqref{1+}--\eqref{2+} if we replace~$\eps$ by~$-\eps$.
As before, we express~$w$ in terms of~$\eps$ from~\eqref{1-},
$$
w_-(\eps)=\frac{37\eps+\sqrt{1225-1466\eps^2}}{35}
$$
(again, we take the only positive root when~$\eps<\frac{\sqrt{35}}9$). Plugging it 
into~\eqref{2-} we get
$$
c_-(\eps)=w_-(\eps)\Big(3-\frac35\eps^2-w_-(\eps)^2\Big).
$$
\begin{figure}[ht]
\centering{\includegraphics[width=12cm]{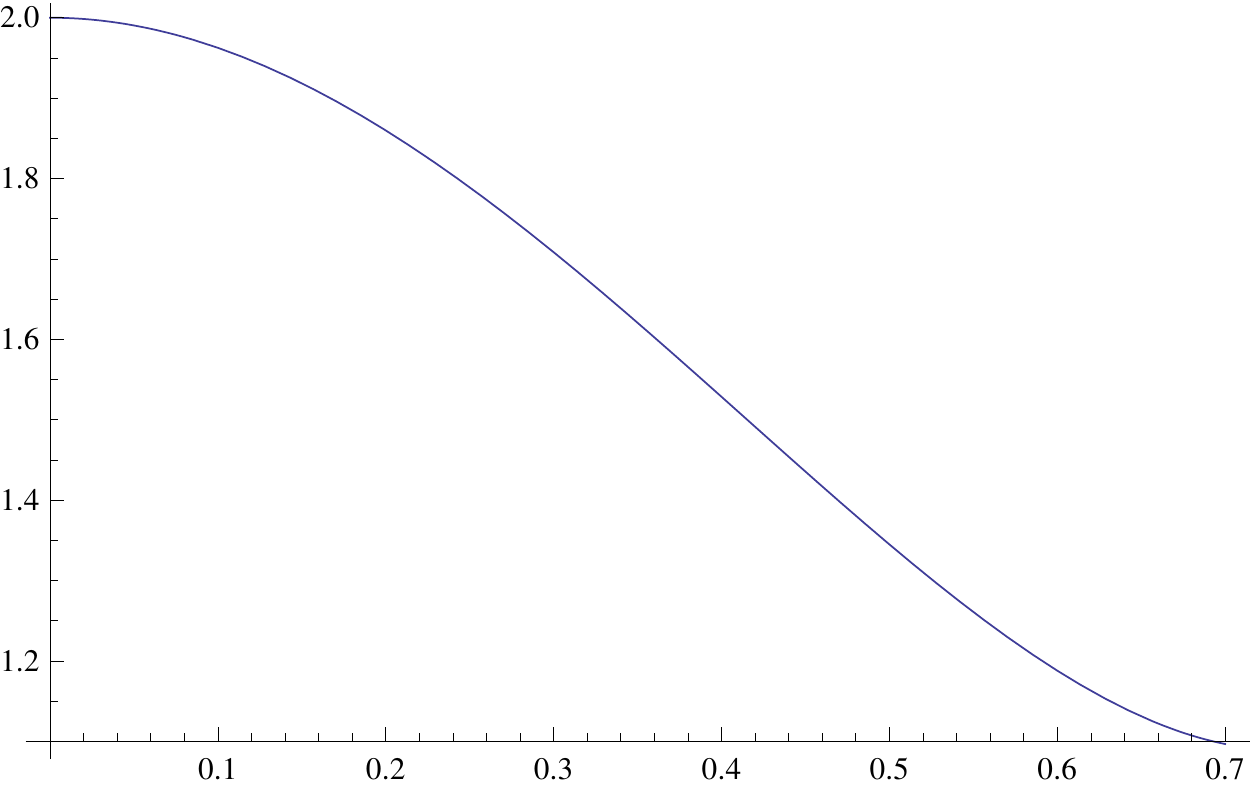}}
\caption{Graph of the function $c_-(\eps)$.}
\label{figc_-}
\end{figure}

We will not check that the function~$c_-$ is a monotone function and the equation~$c_-(\eps)=c$ has a unique solution. What we need to check is that all the possible  
solutions are greater than~$\eps_1$, i.e. greater than the unique solution of the
equation~$c_+(\eps)=c$. To get this, it is sufficient to check that~$c_-(\eps)>c_+(\eps)$
for~$\eps\in(0,\frac{\sqrt{35}}9)$. Subtracting equation~\eqref{2+} (for~$w_+$ and~$c_+$) 
from equation~\eqref{2-} (for~$w_-$ and~$c_-$) we get
$$
5(c_--c_+)=(w_--w_+)(15-3\eps^2-5w_-^2-5w_+^2-5w_-w_+).
$$
Using relations~$w_--w_+=\frac{74}{35}\eps$ and~$w_-w_+=1-\frac{81}{35}\eps^2$, we obtain
$$
c_--c_+=\frac{169756}{35^3}\eps^3.
$$ 

Therefore,~$c_- > c_+$. We can conclude that for all~$\eps < \eps_1$ there is a Bellman candidate with a simple picture, whereas for~$\eps = \eps_1$ the right angle attacks the cup. 

Now we have to
determine the function $\ell(\eps)$, which is not equal to $\eps$ anymore. If $c=0$, then
starting from $\eps=\eps_1$ there is a birdie, for other $c$ we have a left trolleybus
and a separated angle on the left of it. Now, when $\eps_1$ is defined, we rename the 
function $c_+$ and will call it $c_1$, i.e. $\eps_1(c)$ is the unique solution of
the equation $c_1(\eps)=c$, $c\in[0,2)$.

We are going to investigate when the birdie can occur. The necessary and sufficient condition for this consists of two equations,
$F(a)=0$ and $F(b)=0$, see Proposition~\ref{S17}. It is more convenient to replace these two equations by the other two, $F(b)+F(a)=F(b)-F(a)=0$.
Using~\eqref{F+} for $u=b$ and~\eqref{F-} for $u=a$ we get
\begin{align*}
F(b)+F(a)&=\frac4{5\eps}w\ell^3+(b^3+a^3-3(b+a)+2c)+3\eps(b^2-a^2)+6\eps^2(b+a)
\\
&=\frac{2}{5\eps}w(30\eps^3+30\eps^2\ell+12\eps\ell^2+2\ell^3).
\end{align*}
Since the expression in parentheses is positive, the whole 
expression vanishes if and only if $w=0$, i.e. for $c=0$. We restrict ourselves to this case for a while.

Now, we consider the second equation:
\begin{align*}
F(b)-F(a)&=\frac{2\ell^2}{5\eps}(5w^2+\ell^2-5)+(b^3-a^3-3(b-a))
\\
&\qquad\qquad+3\eps(b^2+a^2-2)+6\eps^2(b-a)+12\eps^3
\\
&=\frac{2}{5\eps}\big((\ell^4-5\ell^2)+5\eps(\ell^3-3\ell)
+15\eps^2(\ell^2-1)+30\eps^3\ell+30\eps^4\big).
\end{align*}
Here we use the fact we have already assumed: $w=0$, i.e. $b=\ell$, $a=-\ell$. 
Put $\tau=\eps/\ell$, then 
$\tau\ge1$ and
$$
F(b)-F(a)=0\quad\Longleftrightarrow\quad
\ell^2(30\tau^4+30\tau^3+15\tau^2+5\tau+1)-(15\tau^2+15\tau+5)=0.
$$
As a result, we get a parametric representation of the function $\ell=\ell(\eps)$:
\begin{align*}
\ell&=\sqrt{\frac{15\tau^2+15\tau+5}{30\tau^4+30\tau^3+15\tau^2+5\tau+1}};
\\
\eps&=\sqrt{\frac{15\tau^4+15\tau^3+5\tau^2}{30\tau^4+30\tau^3+15\tau^2+5\tau+1}}.
\rule{0pt}{25pt}
\end{align*}
It is easy to see that $\ell$ is monotonically decreasing from $\frac{\sqrt{35}}9$ to $0$
and $\eps$ is monotonously increasing from $\frac{\sqrt{35}}9$ to $\frac1{\sqrt2}$
when $\tau$ runs from $1$ to $\infty$. Therefore, this relation correctly 
defines a function $\ell(\eps)$ on the interval $[\frac{\sqrt{35}}9,\frac1{\sqrt2}]$, which is
decreasing from $\frac{\sqrt{35}}9$ to $0$. So, for these values of $\eps$ we
have a birdie, which shrinks to an angle for $\eps=\frac1{\sqrt2}$.

We have found the second critical value of $\eps$ for $c=0$, when geometrical picture
of extremal lines changes, $\eps_2=\frac1{\sqrt2}$. Now, we find $\eps_2$ for all other values
of $c$. As we know, for other values of $c$ a birdie does not occur, i.e. the trolleybus dies before
it could be touched by the left angle. Now we find the moment $\eps_2$ of its death.
For $\eps=\eps_2$ the equality $F(b)=0$ occurs with $\ell=0$ (i.e. $a=b=w$). So, in this
case we have
$$
F(b)=3\eps(w^2-1)+6\eps^2w+6\eps^3=0,
$$
hence
$$
w=-\eps+\sqrt{1-\eps^2},
$$
and
\begin{equation}\label{c2}
c=c_2(\eps)=3w-w^3=2(1-\eps^2)^{3/2}-2\eps^3.
\end{equation}
\begin{figure}[ht]
\centering{\includegraphics[width=12cm]{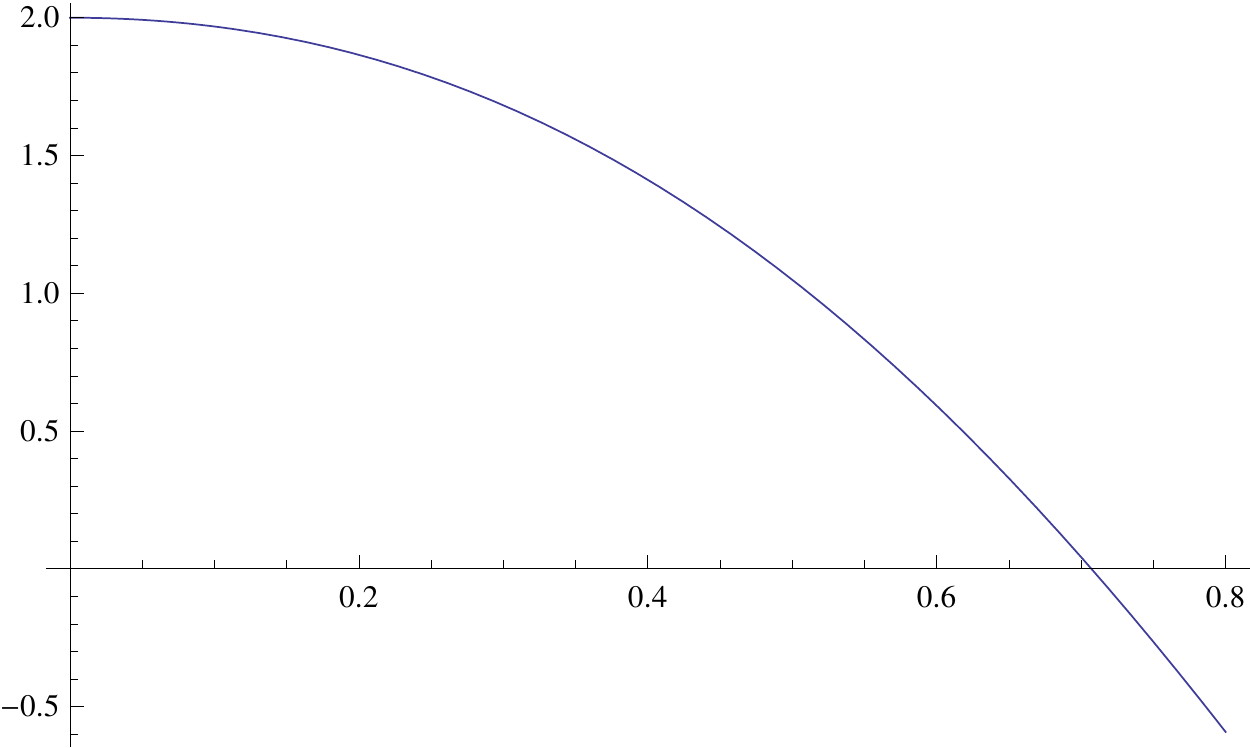}}
\caption{Graph of the function $c_2(\eps)$.}
\label{figc_2}
\end{figure}

The function $\eps\mapsto c_2(\eps)$ is monotonically decreasing and maps $[0,\frac1{\sqrt2}]$
onto $[0,2]$, therefore, the inverse function correctly defines $\eps_2(c)$ as the unique solution
of the equation $c_2(\eps)=c$. For all $\eps\ge\eps_2$ the tails from $\pm\infty$ have nonempty intersection despite that fact that $f'''$ has three essential roots. The foliation consists of a lonely angle. We must verify the latter assertion.
Recall that
\begin{align*}
\Fr(u;-\infty;\eps)&=(u^3-3u+c)-3\eps(u^2-1)+6\eps^2u-6\eps^3,
\\
\Fl(u;\infty;\eps)&=(u^3-3u+c)+3\eps(u^2-1)+6\eps^2u+6\eps^3.
\end{align*}
If $\eps\ge1$, then both these functions are strictly increasing, $\Fr(u;-\infty;\eps)$ 
has a positive root and $\Fl(u;\infty;\eps)$ has a negative one. The interval between these
two roots is nothing but the intersection of tails. 

If $\eps\in[\frac1{\sqrt2},1)$, then 
$\Fl(u;\infty;\eps)$ changes its sign only once, since
$$
\Fl(u;\infty;\eps)=(u+\eps)^3-3(1-\eps^2)(u+\eps)+2\eps^3+c
$$
and
$$
2\eps^3+c\ge2\eps^3\ge2(1-\eps^2)^{3/2}.
$$
Due to relation
\begin{equation}
\label{diff_m}
\Fl(u;\infty;\eps)-\Fr(u;-\infty;\eps)=6\eps u^2+6\eps(2\eps^2-1)
\end{equation}
we see that $\Fl(u;\infty;\eps)\ge\Fr(u;-\infty;\eps)$ for $\eps\ge\frac1{\sqrt2}$, i.e. 
$\Fr(u;-\infty;\eps)$ is negative where $\Fl(u;\infty;\eps)$ is. Again, this means that
the tails from $\pm\infty$ have nonempty intersection.

Finally, we consider the case when $\eps<\frac1{\sqrt2}$. The function 
$\Fl(u;\infty;\eps)$ has local minimum at the point $u=-\eps+\sqrt{1-\eps^2}$.
Its value at this point is
$$
\Fl(-\eps+\sqrt{1-\eps^2};\infty;\eps)=c-2\big((1-\eps^2)^{3/2}-\eps^3\big)=c_2(\eps_2)-c_2(\eps).
$$
Therefore, if $\eps\ge\eps_2$, then $\Fl(u;\infty;\eps)\ge0$ at the point of local minimum,
and therefore, it changes sign only once. On the left of that point the function $\Fr(u;-\infty;\eps)$
is negative. Indeed, $\Fr(u;-\infty;\eps)$ is increasing till the point $u=\eps-\sqrt{1-\eps^2}$
of its local maximum, which is on the right of the root of $\Fr(u;\infty;\eps)$. Hence, it is
sufficient to check that $\Fr(u;-\infty;\eps)\le0$ at the root of $\Fl(u;\infty;\eps)$. We use
equation~\eqref{diff_m} once more. The point $u$, where $\Fl(u;\infty;\eps)$ vanishes, is on
the left of the local maximum of this function, which occurs at the point $-\eps-\sqrt{1-\eps^2}$.
Therefore, $u<-\eps-\sqrt{1-\eps^2}<-\sqrt{1-\eps^2}$, i.e. $u^2>1-\eps^2$, and by equation~\eqref{diff_m}
we have $-\Fr(u;-\infty;\eps)>6\eps^3>0$. Again, the tails have nonempty intersection.

Note that the tail of $+\infty$ has a positive jump at the moment $\eps=\eps_2$, for
$\eps<\eps_2$ this tail ends on the right of the local minimum point $u=-\eps+\sqrt{1-\eps^2}$,
i.e. the tail ends at some positive point. Note, that the tail of $-\infty$ ends at a negative
point, because $\Fr(0;-\infty;\eps)=c+3\eps(1-2\eps^2)>0$ for $\eps<\frac1{\sqrt2}$. Thus, their
intersection is empty.

We know the whole evolution for the case of $f'''(t) = t^3 -3t + c$ (we have seen that the case of an arbitrary sixth degree polynomial with positive leading coefficient follows from this one with the help of linear transformations). If $|c| \geq 2$, then there is no evolution at all: for all $\eps$ the foliation consists of a lonely angle and two families of tangents adjacent to it. If $|c| \leq 2$, the situation is more complicated. Again, by virtue of Remark~\ref{remm2}, we can obtain the foliation for $-c$ from the one obtained for $c$ reflecting it in $x_1$. Thus, we can assume that $c \geq 0$. 

If $c = 0$, then for all $\eps \in [0, \sqrt{\frac{35}{9}})$ the picture is simple, with a cup and two symmetric angles. At the point $\eps = \sqrt{\frac{35}{9}}$ the angles attack the cup, forming a birdie. It decreases as $\eps$ grows, until the moment $\eps = \eps_2$, where $\eps_2 = \frac{1}{\sqrt{2}}$, when it dies. For all bigger $\eps$ there is an angle at $0$ and two families of tangents.

If $c \ne 0$, then $\eps_1$ is the unigue solution of equation~\eqref{c+}. For $\eps \in [\eps_1,\eps_2)$ there is a left trolleybus and an angle on the left of it. The point $\eps_2$ is determined by equation~\eqref{c2}. For $\eps \geq \eps_2$ there is a lonely angle. 
\begin{figure}[ht]
\vskip-30pt
\centering{\includegraphics[width=0.7\linewidth]{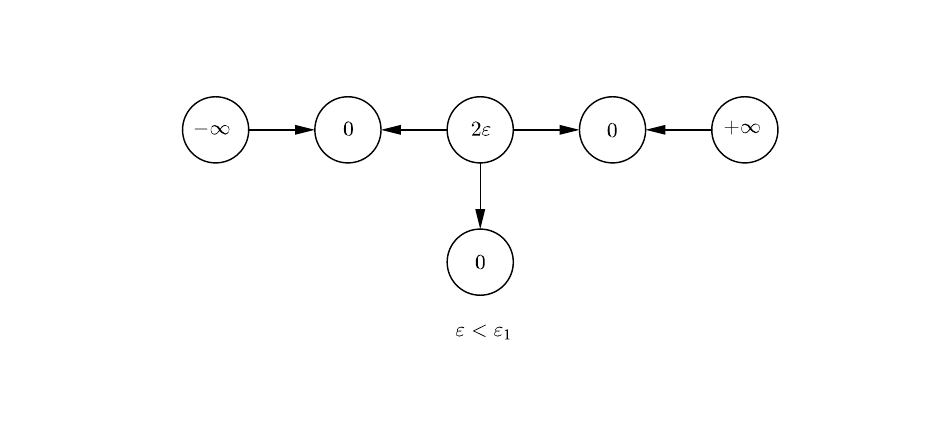}}
\vskip-40pt
\hbox{\hskip-90pt{\includegraphics[width=1.4\linewidth]{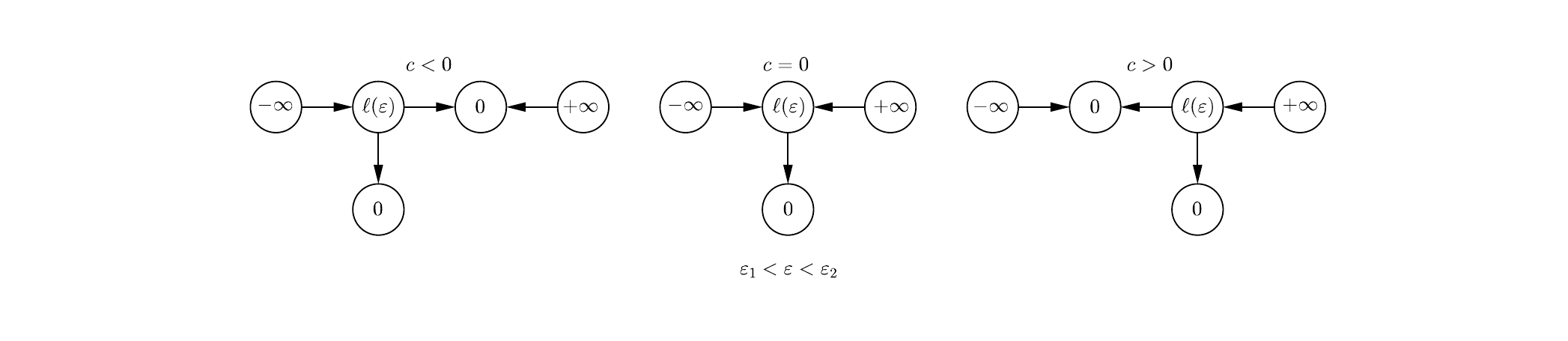}}}
\vskip-40pt
\centering{\includegraphics[width=0.7\linewidth]{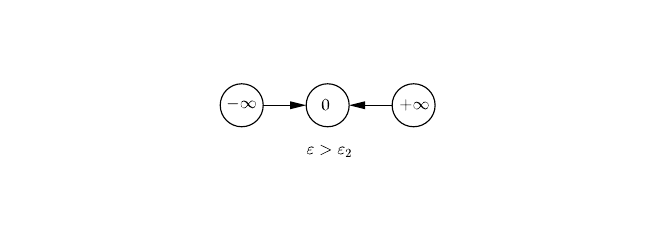}}
\vskip-50pt
\caption{Evolution of the graphs for $f(t)=t^3-3t+c$, $|c|<2$.}
\label{evolutionP6+}
\end{figure}

\paragraph{Polynomial of sixth degree: negative leading coefficient.}
We have built the Bellman function for the polynomial of sixth degree with positive leading coefficient.
Now, we turn to investigation of the case where $f'''=-t^3+3dt-c$.
Again, if $d\le0$ or $d>0$ and $|c|\ge2d^{3/2}$, then $f'''$ has only one root 
and by Theorem~\ref{SimplePicture} and general evolutional principles the foliation consists of a unique cup. So, in what follows we shall
assume $d>0$ and $|c|<2d^{3/2}$. By the same reason as before, without loss 
of generality, we may consider the case $d=1$ and $0\le c<2$. The picture for arbitrary 
$d$ can be obtained by the rescaling $c\to d^{3/2}c$, $x_1\to\sqrt{d}x_1$, 
$x_2\to dx_2$, $\eps\to\sqrt{d}\eps$.

Under this assumption $f'''$ has three roots and the foliation consists of two cups and one angle between them
for sufficiently small $\eps$. Let us find the first critical value $\eps=\eps_1$. For this aim we need to calculate the compatibility 
function $F$.

Now, we have two cups $\Ch([a_i,b_i],*)$, $i=1,2$. Their origins are the
left and the right roots of the equation
$$
-t^3+3dt-c=0.
$$
The cup equation is the same as before, i.e. it is equation~\eqref{cupeq}, because 
it is invariant under replacing $f\to-f$:
$$
(a_i-b_i)^2\Big[\frac1{120}(2a_i^3+3a_i^2b_i+3a_ib_i^2+2b_i^3)-\frac18(a_i+b_i)+
\frac{c}{12}\Big]=0,
$$
and in the expression for the differentials of the cup we have to change sign:
\begin{align*}
D_{{\rm L},i}&=-\frac1{20}(a_i-b_i)^2(2a_i^2+2a_ib_i+b_i^2-5),
\\
D_{{\rm R},i}&=-\frac1{20}(a_i-b_i)^2(a_i^2+2a_ib_i+2b_i^2-5).
\end{align*}

As before, we introduce $w_i\df(b_i+a_i)/2$ and $\ell_i\df(b_i-a_i)/2$, 
then the cup equation turns into
\begin{equation}
\label{cupeq2}
w_i^3-3w_i\Big(1-\frac{\ell_i^2}5\Big)+c=0.
\end{equation}
The expressions for the differentials are
\begin{align*}
D_{{\rm L},i}&=-\frac15\ell_i^2(5w_i^2-2w_i\ell_i+\ell_i^2-5),
\\
D_{{\rm R},i}&=-\frac15\ell_i^2(5w_i^2+2w_i\ell_i+\ell_i^2-5).
\end{align*}
Since the origins of the cups are the left and right solutions of the equation
$$
t^3-3t+c=0,
$$
among three possible solutions of~\eqref{cupeq2} (when $\ell_i^2<5\big[1-(c/2)^{2/3}\big]$) 
we need to take the left one for $i=1$ and the right one for $i=2$. 
These numbers tend to the roots of the cups as $\ell_i\to0$. So, when writing $w_i$ or
$w_i(\ell_i)$ we always assume these solutions. Since we consider $c>0$, the cup 
equation~\eqref{cupeq2} has two positive and one negative root. Therefore, the midpoint
of the left cup $w_1=w_1(\ell_1)$ is always negative and the midpoint of the right 
cup $w_2=w_2(\ell_2)$ is always positive. In the symmetrical case $c=0$ we have 
$\ell_1=\ell_2$ and $w_1=-w_2$ identically.

Now, calculate the function $F$ (this is the sum of two forces arising in the balance equation~\eqref{baleqformula}) on the interval $u\in[b_1,a_2]$:
\begin{align*}
\Fr(u;a_1,b_1;\eps)&=\eps^{-1}D_{{\rm R},1} e^{-(u-b_1)/\eps}+
\eps^{-1}e^{-u/\eps}\int\limits_{b_1}^ue^{t/\eps}\,df''(t)
\\
&=-\frac1{5\eps}\ell_1^2(5w_1^2+2w_1\ell_1+\ell_1^2-5)e^{-(u-b_1)/\eps}
\\
&\quad-\big[(u^3-3u+c)-3\eps(u^2-1)+6\eps^2u-6\eps^3\big]
\\
&\quad+\big[(b_1^3-3b_1+c)-3\eps(b_1^2-1)+6\eps^2b_1-6\eps^3\big]e^{-(u-b_1)/\eps},
\\
\Fl(u;a_2,b_2;\eps)&=-\eps^{-1}D_{{\rm L},2} e^{-(a_2-u)/\eps}+
\eps^{-1}e^{u/\eps}\int\limits^{a_2}_ue^{-t/\eps}\,df''(t)
\\
&=\frac1{5\eps}\ell_2^2(5w_2^2-2w_2\ell_2+\ell_2^2-5)e^{-(a_2-u)/\eps}
\\
&\quad-\big[(u^3-3u+c)+3\eps(u^2-1)+6\eps^2u+6\eps^3\big]
\\
&\quad+\big[(a_2^3-3a_2+c)+3\eps(a_2^2-1)+6\eps^2a_2+6\eps^3\big]e^{-(a_2-u)/\eps}.
\end{align*}
To find the first critical value $\eps_1$ we assume that $\eps<\eps_1$, then both
cups are maximal possible, i.e. $\ell_1=\ell_2=\eps$:
\begin{equation}
\label{F}
\begin{aligned}
F(u)&=\Fr(u;a_1,b_1;\eps)+\Fl(u;a_2,b_2;\eps)
\\
&=\eps\big(-w_1^2+2w_1\eps-\frac{11}5\eps^2+1\big)e^{-(u-w_1-\eps)/\eps}
\\
&\quad+\eps\big(w_2^2+2w_2\eps+\frac{11}5\eps^2-1\big)e^{-(w_2-\eps-u)/\eps}
\\
&\quad-2(u^3-3u+c)-12\eps^2u.
\end{aligned}
\end{equation}
The first critical value $\eps_1$ occurs when the angle touches one of the cups 
forming a trolleybus. Now we will show that if $c>0$, then the angle touches
the right cup. To prove this we calculate the sign of $F(\frac{w_1+w_2}{2})$.
We use Vi\`ete's formula to get $c=w_1w_2(w_1+w_2)$ and $3(1-\frac{\eps^2}{5})=w_1^2+w_2^2+w_1w_2$.

\begin{multline}
F(\frac{w_1+w_2}{2})=e^{-\frac{w_2-w_1-2\eps}{2\eps}}\eps(w_2^2-w_1^2+2\eps(w_2+w_1))-\\
2\left(\Big(\frac{w_1+w_2}{2}\Big)^3-3\frac{w_1+w_2}{2}+c \right)\\
-6\eps^2(w_1+w_2)=(w_1+w_2)\Big(\eps e^{-\frac{w_2-w_1-2\eps}{2\eps}}(w_2-w_1+2\eps)-\\
\frac{(w_1+w_2)^2}{4}+3-2w_1w_2-6\eps^2\Big).
\end{multline}

Let $R=w_2-w_1$. Then $\frac{(w_1+w_2)^2}{4}+2w_1w_2=3(1-\frac{\eps^2}{5})-\frac{3}{4}R^2$. Thus
$$F(\frac{w_1+w_2}{2})=(w_1+w_2)\left(\eps e^{-\frac{R-2\eps}{2\eps}}(R+2\eps)+\frac{3}{4}R^2-\frac{27}{5}\eps^2\right).$$
We want to prove that $F(\frac{w_1+w_2}{2})\leq 0$. Let 
$$\theta(R)=\eps e^{-\frac{R-2\eps}{2\eps}}(R+2\eps)+\frac{3}{4}R^2-\frac{27}{5}\eps^2.$$
Then 
$$\theta'(R)=\frac{R}{2}(3-e^{-\frac{R-2\eps}{2\eps}})>0$$ if $R\geq 2\eps$. Thus $\theta(R)\geq \theta(2\eps)=\frac{8}{5}\eps^2>0$ if $R\geq 2\eps$. But if $\eps\leq \eps_1$, then both cups are full and $R=w_2-w_1\geq 2\eps$. Since $w_1<0$, $w_2>0$ and $c=w_1w_2(w_1+w_2)>0$, we have $w_1+w_2<0$, thus 
$F(\frac{w_1+w_2}{2})=(w_1+w_2)\theta(w_2-w_1)<0$. This means that the vertex of the angle is on the right of $\frac{w_1+w_2}{2}$. So, at the first critical moment $\eps_1$ the angle touches the right cup.

We find the first critical value $\eps=\eps_1$. We know it satisfies the equation $F(a_2)=0$, or
\begin{equation}
\label{eps1}
\Big(w_1^2-2\eps w_1+\frac{11}5\eps^2-1\Big)e^{-(w_2-w_1-2\eps)/\eps}=
7w_2^2-\frac{74}5\eps w_2+\frac{81}5\eps^2-7.
\end{equation}
Here $w_1$ is the minimal solution of the equation
$$
w^3-3w\Big(1-\frac{\eps^2}5\Big)+c=0
$$
and $w_2$ is the maximal one. For any $c$, $0\le c<2$, equation~\eqref{eps1} 
must have exactly one positive root~$\eps_1$ such that 
$\eps_1^2<5\Big[1-\big(\frac c2\big)^{2/3}\Big]$ (we can simply take the smallest one). But we postpone investigation on
this equation and find the next critical point. For $\eps>\eps_1$ we have
a trolleybus and a cup. Consequently, two situations can happen: either the trolleybus dies or
the left cup touches the trolleybus forming a multicup. We find out when
second situation takes place.

Suppose that at the moment $\eps=\eps_2$ the left cup touches the trolleybus, i.e.
$a_2=b_1\df s$. Moreover, the left cup is full, i.e. $\ell_1=\eps$, $w_1=s-\eps$.
For the right cup we denote $\ell_2\df\ell$, $w_2=s+\ell$. Recall that $w_1$ is the
minimal root of the equation
$$
w_1^3-3w_1\Big(1-\frac{\eps^2}5\Big)+c=0
$$
and $w_2$ is the maximal root of the equation
$$
w_2^3-3w_2\Big(1-\frac{\ell^2}5\Big)+c=0.
$$
Subtracting one equation from another we get
$$
(s+\ell)^3-(s-\eps)^3-3(s+\ell)\big(1-\frac{\ell^2}5\big)+3(s-\eps)\big(1-\frac{\eps^2}5\big)=0,
$$
or
\begin{equation}
\label{fff}
s^2-\frac65(\eps-\ell)s+\frac8{15}(\ell^2-\ell\eps+\eps^2)-1=0.
\end{equation}
Now, we write down the balance equation:
\begin{align*}
\mr ss&=-\frac\eps5(5w_1^2+2\eps w_1+\eps^2-5)
\\
\ml ss&=\frac{\ell^2}{5\eps}(5w_2^2-2\ell w_2+\ell^2-5),
\end{align*}
and therefore,
$$
\eps^2(5w_1^2+2\eps w_1+\eps^2-5)-\ell^2(5w_2^2-2\ell w_2+\ell^2-5)=0,
$$
or
$$
5(\eps^2-\ell^2)s^2-8(\eps^3+\ell^3)s+4(\eps^4-\ell^4)-5(\eps^2-\ell^2)=0.
$$
Comparing with~\eqref{fff} we have
\begin{align*}
5(\eps^2-\ell^2)(s^2-1)&=8(\eps^3+\ell^3)s-4(\eps^4-\ell^4)
\\
&=(\eps^2-\ell^2)\Big(6(\eps-\ell)s-\frac83(\eps^2-\eps\ell+\ell^2)\Big).
\end{align*}
Solving this linear equation we get
$$
s=\frac23(\eps-\ell).
$$
After plugging this solution in~\eqref{fff} we obtain a relation between $\ell$ and $\eps$:
$$
\ell^2+\eps\ell+\eps^2-\frac{45}8=0.
$$
We see that this equation has a positive solution $\ell$ if and only if $\eps^2<\frac{45}8$.
For such $\eps$ we have
$$
\ell=-\frac\eps2+\frac32\sqrt{\frac52-\frac{\eps^2}3}.
$$
Assumption $\ell\le\eps$ yields $\eps^2\ge\frac{15}8$.
As a result, we get
\begin{align*}
s&=\frac23(\eps-\ell)=\eps-\sqrt{\frac52-\frac{\eps^2}3}\,,
\\
w_1&=s-\eps=-\sqrt{\frac52-\frac{\eps^2}3}\,,
\\
w_2&=s+\ell=\frac\eps2+\frac12\sqrt{\frac52-\frac{\eps^2}3}\,,
\\
c&=3w_1\Big(1-\frac{\eps^2}5\Big)-w_1^3=
\frac4{15}\Big(\eps^2-\frac{15}8\Big)\sqrt{\frac52-\frac{\eps^2}3}\,.
\end{align*}

We consider the following function (see Fig.~\ref{psi}),
$$
\psi(t)=\frac4{15}\Big(t-\frac{15}8\Big)\sqrt{\frac52-\frac{t}3},
$$
on the interval $t\in[\frac{15}8,\frac{45}8]$.
\begin{figure}[ht]
\centering{\includegraphics[width=12cm]{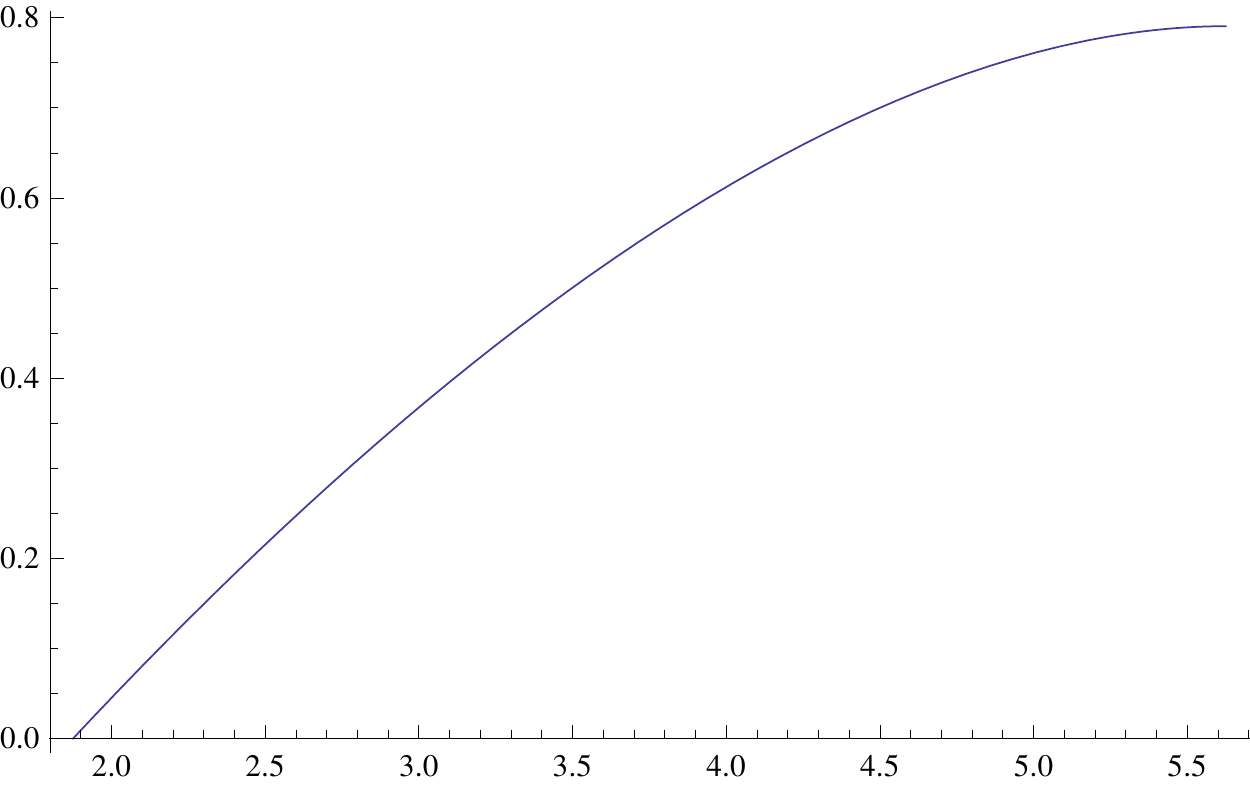}}
\caption{Graph of the function $\psi(t)$.}
\label{psi}
\end{figure}

Since
$$
\psi'(t)=\frac4{30}\cdot\frac{\frac{45}8-t}{\sqrt{\frac52-\frac t3}}\ge0\,,
$$
the function $\psi$ monotonically increases from $\psi(\frac{15}8)=0$ to 
$\psi(\frac{45}8)=\sqrt{\frac58}$. Therefore, if $c\in\Big[0,\sqrt{\frac58}\Big)$, then
the equation $\psi(\eps^2)=c$ has exactly one solution, 
$\eps=\eps_2\in\Big[\sqrt{\frac{15}8},\sqrt{\frac{45}8}\Big)$. If $c=0$, then the picture
is symmetric {\bf(}$w_1=-w_2=\sqrt{3(1-\frac{\eps^2}5)}$\;{\bf)}, the angle is stable with the
vertex at the origin ($s=0$), and both cups touch it simultaneously at the moment
$\eps=\eps_1=\eps_2=\sqrt{\frac{15}8}$, when $w_2=\eps$. They form a multicup,
which lives until $\eps_3=2\eps_2=\sqrt{\frac{15}2}$, when it
becomes closed. The chordal domain over it grows up to infinity.
If $c\in\Big(0,\sqrt{\frac58}\Big)$, then $\eps_1<\eps_2$ and for $\eps\in(\eps_1,\eps_2)$
there exists a complete left cup and a trolleybus separated by a domain foliated
by right tangent lines. At the moment $\eps_2$ the cup touches the trolleybus forming
a non-symmetric multicup, which lives until 
$\eps_3=\eps_2+\ell(\eps_2)=\frac{\eps_2}2+\frac32\sqrt{\frac52-\frac{\eps_2^2}3}$, 
when the chordal domain appears on this multicup. The bigger $c$ is, the smaller is the trolleybus
(the second chordal domain) at the moment of building a multicup. Finally, for $c=\sqrt{\frac58}$
the left cup touches the trolleybus at the moment of its death ($\ell=0$). Thus, for
$c\in\Big[\sqrt{\frac58},2\Big)$ there are only two critical points: $\eps=\eps_1$,
when the angle and the right cup form a trolleybus, and $\eps=\eps_2$, when trolleybus
dies. This value $\eps_2$ is the solution of the equation $\Fr(w_2;a_1,b_1;\eps)=0$, i.e. it occurs
when the right tail of the left cup attains the root of the right cup and therefore jumps at 
that moment till $+\infty$. This equation can be rewritten as follows:
$$
\big(w_1^2-2\eps w_1+\frac{11}5\eps^2-1\big)e^{-(w_2-w_1-\eps)/\eps}=3w_2^2-6\eps w_2+6\eps^2-3\,,
$$
where $w_1$ is the minimal root of the equation $w_1^3-3w_1\bigl(1-\frac{\eps^2}5\bigr)+c=0$
and $w_2$ is the maximal root of the equation $w_2^3-3w_2+c=0$.
The left cup continue its growth till infinity. 

If $c>2$, as we know, there are
no critical points at all, there exists only one cup all the time. 

For $c<0$ the picture is
symmetric and the trolleybus appears after gluing the angle with the left cup.

\begin{figure}[ht]
\vskip-30pt
\centering{\includegraphics[width=0.7\linewidth]{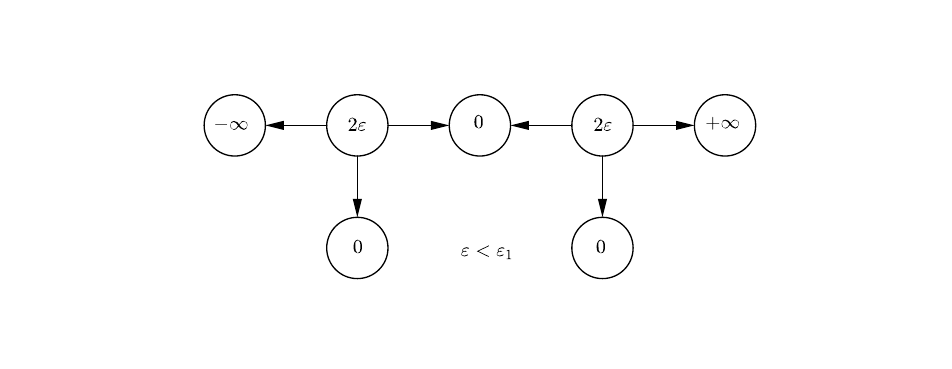}}
\vskip-50pt
\hbox{\hskip-90pt{\includegraphics[width=1.4\linewidth]{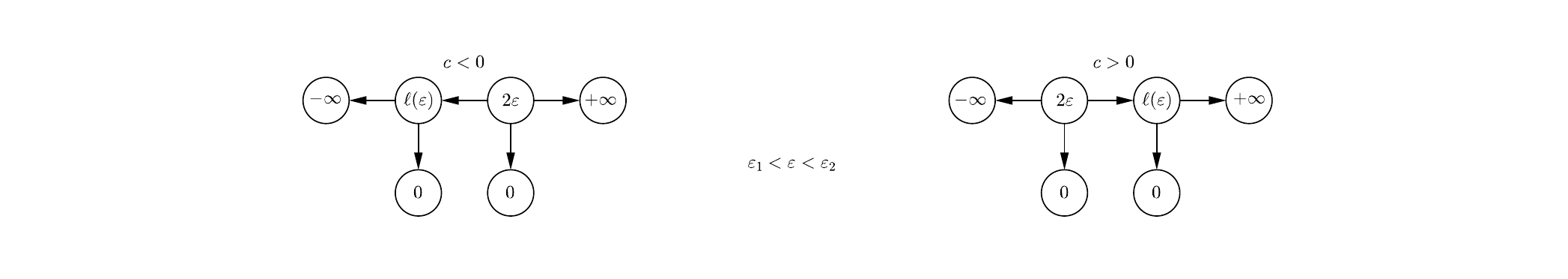}}}
\vskip-30pt
\centering{\includegraphics[width=0.4\linewidth]{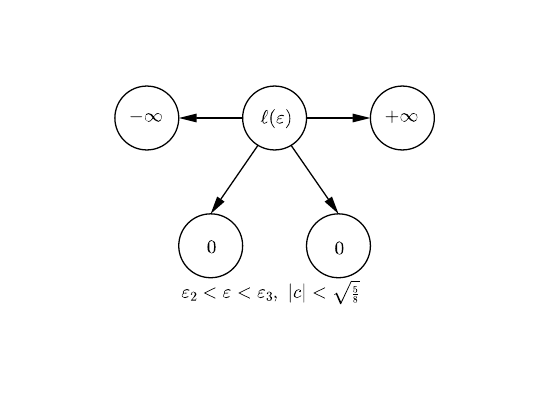}
\includegraphics[width=0.43\linewidth]{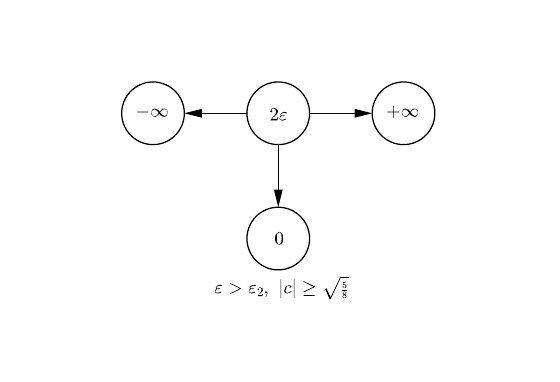}}
\vskip-40pt
\hbox{\hskip52pt{\includegraphics[width=0.35\linewidth]{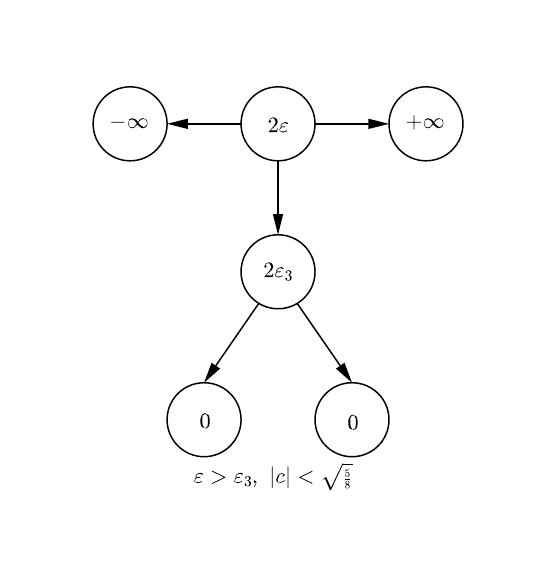}}}
\vskip-30pt
\caption{Evolution of the graphs for $f(t)=-t^3-3t+c$, $|c|<2$.}
\label{evolutionP6-}
\end{figure}

\chapter{Optimizers}
In the previous chapter, we constructed a Bellman candidate~$B$ of a special form (Theorem~\ref{BC} and Remark~\ref{AdmissibleCondidate}). We claim that it coincides with the Bellman function~$\Bell$. Subsection~\ref{s224} suggests a method to prove the claim.  We have to construct an optimizer\index{optimizer}~$\varphi_x$ for each~$x \in \Omega_{\eps}$ (see Definition~\ref{Opt}). Here we will follow the same strategy as when we were constructing Bellman candidates: we will first study the local behavior of the optimizers (i.e. how do optimizers vary when~$x$ runs through one figure), this is done in Section~\ref{s52}, and then ``glue'' these local scenarios together in Section~\ref{s53}. The optimizers for Bellman candidates with simple graphs (and trolleybuses) were built in~\cite{5A} (see the preprint~\cite{5AOld} as well). Here we also describe them (with somehow modified proofs) for the sake of completeness.

\section{Abstract theory}\label{s51}
We begin with an abstract description of how do optimizers look like. First, as it was mentioned in Subsection~\ref{s224}, it is natural to construct monotone optimizers. Second, all our optimizers will be concatenations of different constants and logarithms (however, it is not clear from the very beginning, why should the optimizer be of such a form). It is not difficult to build a monotone function~$\varphi_x$ such that~$\bp{\varphi_x} = x$ and~$B(\varphi_x) = f[\varphi_x]$ (we use the notation introduced in formula~\eqref{functional} and Definition~\ref{BellmanPoint}). The main difficulty is to verify that~$\varphi_x \in \BMO_{\eps}$. It was noticed in~\cite{5A} that it is more natural to argue geometrically. The notion of a \emph{delivery curve}\index{delivery curve} is useful in this context.
\begin{Def}\label{DelCurve}
Let~$\eps < \eps_{\infty}$\textup, let~$B$ be a Bellman candidate. Suppose~$\varphi\colon [l,r] \to \mathbb{R}$ is a summable function. The curve~$\gamma\colon [l,r] \to \Omega_\eps$ given by the formula
\begin{equation}\label{CurveGenerator}
\gamma(\tau) \df \bp{\varphi\big|_{[l,\tau]}} \!\!\!\!\!= \av{(\varphi,\varphi^2)}{[l,\tau]},\quad \tau \in [l,r],
\end{equation}
is called a delivery curve if~$B(\gamma(\tau)) = \av{f(\varphi)}{[l,\tau]}$ for any~$\tau \in [l,r]$ \textup(in particular\textup,~$\gamma \subset \Omega_{\eps}$\textup). The function~$\varphi$ is called the generating function for~$\gamma$.
\end{Def} 
In other words,~$\gamma$ is a curve that ``delivers'' optimizers to the point. The word ``curve'' here means a parametrized curve, because the definition depends on the parametrization. The advantage of considering such a curve is that it allows to verify the condition that~$\varphi$ is a test function (that~$\varphi \in \BMO_{\eps}$). We begin with an obvious remark.
\begin{Rem}\label{Graph}
The curve~$\gamma$ generated by a function~$\varphi$ with the help of formula~\textup{\eqref{CurveGenerator}} is a graph of a function \textup(in the standard coordinates\textup) if and only if the function~$\tau\mapsto \av{\varphi}{[l,\tau]}$ is monotone on~$[l,r]$.
\end{Rem}
However, we can say more. We will not prove the fact below.
\begin{Fact}\label{MonConv}
A curve given by formula~\textup{\eqref{CurveGenerator}} is a graph of a convex function in the standard coordinates if and only if its generating function is monotone.
\end{Fact}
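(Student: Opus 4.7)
I would parametrize the curve by $\tau$, set $m(\tau)=\av{\varphi}{[l,\tau]}$, $M(\tau)=\av{\varphi^2}{[l,\tau]}$, and $V(\tau)=M(\tau)-m(\tau)^2$, and begin with the elementary identity
\begin{equation*}
\gamma'(\tau)=\frac{1}{\tau-l}\bigl(\varphi(\tau)-m(\tau),\;\varphi^2(\tau)-M(\tau)\bigr),
\end{equation*}
which says that the tangent to $\gamma$ at $\gamma(\tau)$ passes through the point $P(\tau)\df(\varphi(\tau),\varphi^2(\tau))$ on the fixed boundary. In particular the signs of $m'(\tau)$ and $\varphi(\tau)-m(\tau)$ coincide, so monotonicity of $\varphi$ in one direction forces monotonicity of $m$ in the same direction, and by Remark~\ref{Graph} this is exactly the condition for $\gamma$ to be a graph in the standard coordinates.

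The heart of the argument is an explicit formula for the slope along the curve, $k(\tau)\df M'(\tau)/m'(\tau)=(\varphi^2(\tau)-M(\tau))/(\varphi(\tau)-m(\tau))$, and for its derivative
\begin{equation*}
k'(\tau)=\frac{(\varphi(\tau)-m(\tau))^2+V(\tau)}{(\varphi(\tau)-m(\tau))^2}\,\varphi'(\tau).
\end{equation*}
This identity drops out of a one-line differentiation of $k(\varphi-m)=\varphi^2-M$: substituting $M'=km'$ and $m'=(\varphi-m)/(\tau-l)$ makes the $(\tau-l)^{-1}$ contributions cancel, leaving $(\varphi-m)k'=(2\varphi-k)\varphi'$, and a direct simplification of $2\varphi-k$ produces the prefactor, which is at least $1$ wherever $\varphi\ne m$. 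Convexity of the graph is equivalent to $k$ being monotone in the same direction as $m$ (i.e. as $x_1$), and the two formulas above show that $\operatorname{sign}m'=\operatorname{sign}(\varphi-m)$ and $\operatorname{sign}k'=\operatorname{sign}\varphi'$; hence convexity of the graph and monotonicity of $\varphi$ become equivalent.

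Concretely, for the forward implication it suffices, by symmetry, to treat $\varphi$ non-decreasing: then $\varphi\ge m$, so $m$ is non-decreasing, $k'\ge 0$, and the graph is convex. For the converse, Remark~\ref{Graph} first gives monotonicity of $m$ (WLOG non-decreasing); convexity then forces $k$ non-decreasing; the key identity forces $\varphi'\ge 0$ wherever $\varphi>m$; and on the residual set $\{\varphi=m\}$ the relation $(\tau-l)\varphi(\tau)=\int_l^\tau\varphi$ differentiates to $\varphi'=0$, so $\varphi$ stays non-decreasing there as well. The only genuine obstacle is regularity: the formula for $k'$ formally invokes $\varphi'$, which need not exist pointwise for a merely summable $\varphi$. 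I would circumvent this either by mollifying $\varphi$ and passing to the limit using continuity of $\gamma$ in the generating function, or by replacing the differential argument with the integrated three-chord inequality $\det[\gamma(\tau_2)-\gamma(\tau_1),\gamma(\tau_3)-\gamma(\tau_1)]\ge 0$ for $\tau_1<\tau_2<\tau_3$, which after expansion reduces to a Chebyshev-type correlation between $\varphi$ and the indicator $\chi_{[\tau_2,\tau_3]}$.
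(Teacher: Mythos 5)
The paper does not prove this fact --- the sentence directly before it says so --- and establishes only the forward implication, as Lemma~\ref{Monotone_Convex}, which is the direction that gets used later. Your proof of the forward direction is essentially the paper's in different coordinates: the paper differentiates \eqref{FirstDerivative} a second time and shows that the sign of the curvature of $\gamma$ coincides with the sign of $\varphi'$; you show that the sign of $k'$, where $k=M'/m'$, coincides with the sign of $\varphi'$. Your identity $(\varphi-m)k'=(2\varphi-k)\varphi'$ with prefactor $(2\varphi-k)/(\varphi-m)=1+V/(\varphi-m)^2\ge 1$ checks out and is exactly what the paper's determinant computation unwinds to, and both handle non-smooth $\varphi$ by the same truncate-and-mollify device. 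So there is nothing new in the forward half, but nothing wrong either. The converse implication, which the paper deliberately skips, is genuine additional content.

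Two steps of your converse need more than you give them. First, on $\{\varphi=m\}$ you differentiate $(\tau-l)\varphi(\tau)=\int_l^\tau\varphi$ to conclude $\varphi'=0$; that is only valid at interior points of the set, which may have empty interior. This is repairable: you only need a monotone representative of $\varphi$, and knowing $\varphi'\ge 0$ a.e.\ on the open set $\{m'\ne 0\}$ plus constancy of $\varphi$ on each component of the interior of $\{m'=0\}$ already pins one down. But as written the step is not established. Second, and more seriously, the two regularity escapes you offer at the end do not both serve the converse. Mollify-and-pass-to-the-limit transfers a hypothesis on $\varphi$ into a conclusion about $\gamma$, so it is a tool for the forward implication only --- mollifying a $\varphi$ whose $\gamma$ is convex does not keep the mollified $\gamma$ convex, so you cannot infer monotonicity from it. That leaves the three-chord route, and the reduction you gesture at, a Chebyshev-type positive correlation between $\varphi$ and an indicator, is again only the monotone-implies-correlated implication. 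Recovering monotonicity of $\varphi$ from positivity of all those second differences is a separate step that still needs an argument, which your sketch does not supply.
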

The main feature we will use is the formula
\begin{equation}\label{FirstDerivative}
 \gamma(\tau) + (\tau-l)\gamma'(\tau) = \big(\varphi(\tau),\varphi^2(\tau)\big),
\end{equation}
which can be obtained by differentiation of formula~\eqref{CurveGenerator}. In particular, this formula shows that the tangent to~$\gamma$ at the point~$\tau$ looks in the direction of the point~$(\varphi(\tau),\varphi^2(\tau))$. So, one can reconstruct the values of~$\varphi$ by looking at the points on the lower parabola that ``are indicated'' by the tangents of the corresponding delivery curve. We will use this principle very often. Moreover, equation~\eqref{FirstDerivative} allows to reconstruct~$\varphi$. 

\begin{Le}\label{Monotone_Convex}
A curve given by formula~\textup{\eqref{CurveGenerator}} is convex if its generating function is monotone.
\end{Le}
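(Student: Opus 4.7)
The plan is to establish the claim first for smooth monotone $\varphi$ and then pass to the limit. Assume temporarily that $\varphi\in C^{1}([l,r])$ is non-decreasing. Differentiating the identity~\eqref{FirstDerivative} once more in $\tau$ yields
\begin{equation*}
(\tau-l)\gamma''(\tau)=\bigl(\varphi'(\tau),\,2\varphi(\tau)\varphi'(\tau)\bigr)-2\gamma'(\tau).
\end{equation*}
Combining this with $\gamma'(\tau)=(\tau-l)^{-1}\bigl((\varphi(\tau),\varphi^{2}(\tau))-\gamma(\tau)\bigr)$ coming from~\eqref{FirstDerivative}, a direct calculation of the signed area $\gamma_{1}'\gamma_{2}''-\gamma_{2}'\gamma_{1}''$ gives
\begin{equation*}
\gamma_{1}'(\tau)\gamma_{2}''(\tau)-\gamma_{2}'(\tau)\gamma_{1}''(\tau)=\frac{\varphi'(\tau)}{(\tau-l)^{2}}\Bigl[\bigl(\varphi(\tau)-\gamma_{1}(\tau)\bigr)^{2}+\bigl(\gamma_{2}(\tau)-\gamma_{1}(\tau)^{2}\bigr)\Bigr].
\end{equation*}
The bracket is non-negative (the summand $\gamma_{2}-\gamma_{1}^{2}$ is the variance of $\varphi$ on $[l,\tau]$), so the planar curvature of $\gamma$ has constant sign throughout $[l,r]$.

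By Remark~\ref{Graph} the monotonicity of $\varphi$ makes $\gamma_{1}$ monotone as well, so $\gamma$ is the graph of a single-valued function $x_{2}=g(x_{1})$; the constancy of the sign of the curvature then forces $g$ to be convex, i.e.\ $\gamma$ is a convex planar curve. The symmetric case of non-increasing $\varphi$ produces the same conclusion, since the signs of $\varphi'$ and $\gamma_{1}'$ flip together.

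For a general monotone $\varphi$, I would approximate it by a sequence $\varphi_{n}$ of $C^{1}$-smooth monotone functions with $\varphi_{n}\to\varphi$ in $L^{1}([l,r])$ (e.g.\ by mollification, which preserves monotonicity). Each generated curve $\gamma_{n}$ is convex by the preceding case, and~\eqref{CurveGenerator} makes it immediate that $\gamma_{n}\to\gamma$ uniformly on every compact subinterval of $(l,r]$, which is enough to transfer convexity to $\gamma$. The main technical nuisance I anticipate is the $(\tau-l)^{-1}$ factor near $\tau=l$; however, $\gamma(\tau)\to(\varphi(l),\varphi^{2}(l))$ at linear rate, so both the curve and its tangent direction extend continuously to $\tau=l$, and convexity on $(l,r]$ extends to the closed interval.
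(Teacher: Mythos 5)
Your argument is essentially the paper's: differentiate the tangent identity~\eqref{FirstDerivative} once more, observe that the curvature of $\gamma$ carries the sign of $\varphi'$, and remove the smoothness assumption by mollifying. The only cosmetic difference is that you verify positivity of the determinant factor algebraically, as $(\varphi-\gamma_1)^2+(\gamma_2-\gamma_1^2)$ (a square plus a variance), whereas the paper reads the same quantity geometrically (since $\gamma(\tau)\in\Omega_\eps$ lies above the tangent line to the lower parabola at $(\varphi,\varphi^2)$); the computation, the approximation scheme, and the conclusion coincide.
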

\begin{proof}
Let us assume for a while that the function~$\varphi$ is sufficiently smooth ($C^2$ will do). In such a case, we may differentiate equation~\eqref{FirstDerivative} once more and get
\begin{equation*}
(\tau-l)\gamma''(\tau) = -2\gamma'(\tau) + \varphi'(\tau)(1,2\varphi(\tau)). 
\end{equation*}
Thus, the curvature of~$\gamma$, which is~$|\gamma'|^{-3} \det\binom{\gamma'}{\gamma''}$, has the same sign as~$\varphi'\det\binom{\gamma'}{(1,2\varphi)}$. We use equation~\eqref{FirstDerivative} once more to express~$\gamma'$ and rewrite the determinant as
\begin{equation*}
\det\binom{\gamma'}{(1,2\varphi)}=
\frac{1}{\tau-l}\det
\begin{pmatrix}
\varphi - \gamma_1 & \varphi^2 - \gamma_2\\
1 & 2\varphi
\end{pmatrix}.
\end{equation*}
This expression is positive,~because~$(1,2\varphi(\tau))$ is the tangent vector to~$\FixedBoundary\Omega_{\eps}$ at~$(\varphi(\tau),\varphi^2(\tau))$ and~$\gamma(\tau)$ belongs to~$\Omega_{\eps}$. Therefore, the sign of the curvature coinsides with the sign of~$\varphi'$.

We only have to get rid of the smoothness assumption. For that purpose we use an easy observation: if~$\gamma_n\colon[l,r] \to \mathbb{R}^2$ is a sequence of convex curves (in the sense that these curves are graphs of convex functions in the standard coordinates) and~$\gamma_n \to \gamma$ pointwise, where~$\gamma$ is a graph of a function, then this function is convex. Thus, our aim is to approximate~$\varphi$ by smooth monotone functions~$\varphi_n$ in such a way that the curves~$\gamma_n$ generated by~$\varphi_n$ converge to~$\gamma$ pointwise. This can be done in many ways, for example, like this: first truncate the function~$\varphi$ on a big level from below and above (i.e. consider the functions~$\varphi_{-n,n}$,~$n\to \infty$, given by formula~\eqref{truncation}), second, extend the obtained truncation to~$\mathbb{R}$ by the formula
\begin{equation*}
\bar{\varphi}_n(\tau) = \begin{cases} \lim\limits_{t\to l+}\varphi_{-n,n}(t),\quad &\tau < l;\\
\varphi_{-n,n}(\tau),\quad &\tau \in [l,r];\\
\lim\limits_{t \to r-}\varphi_{-n,n}(t),\quad &\tau > r,
\end{cases}
\end{equation*}
and finally, convolve the extension~$\bar{\varphi}_n$ with sufficiently sharp approximation of identity to obtain the function~$\varphi_n$ (we restrict the convolution to~$[l,r]$). 
\end{proof}

\begin{figure}[h!]
\begin{center}
\includegraphics{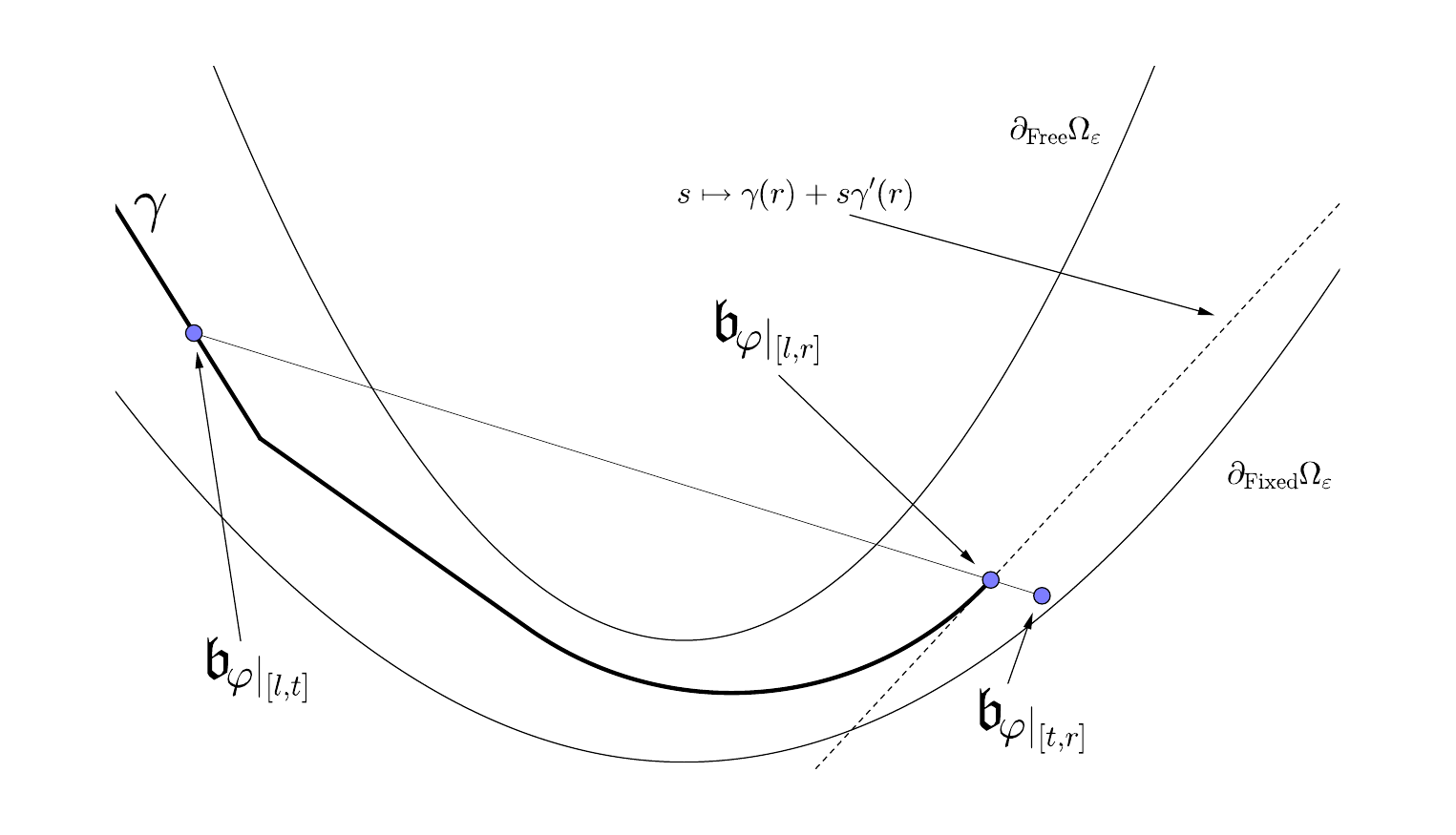}
\caption{Illustration to the proof of Lemma~\ref{DeliveryCurveLemma}.}
\label{fig:DCLemma}
\end{center}
\end{figure}

The following lemma links the condition that~$\gamma$ is convex with the condition~$\varphi \in \BMO_{\eps}$ (see Figure~\ref{fig:DCLemma} for visualization of the proof).

\begin{Le}\label{DeliveryCurveLemma}
Suppose~$\gamma$ to be a curve on~$[l,r]$ given by formula~\textup{\eqref{CurveGenerator}}. Let it be convex in the sense that it is a graph of a convex function in the standard coordinates. Suppose also that the tangent line~$s\mapsto \gamma(r) + s\gamma'(r)$\textup,~$s\in\mathbb{R}$\textup, does not cross the free boundary~$\FreeBoundary\Omega_{\eps}$. Then\textup, for any~$t \in [l,r)$\textup, we have~$\bp{\!\varphi\big|_{[t,r]}}\!\!\!\!\!\in\Omega_{\eps}$.
\end{Le}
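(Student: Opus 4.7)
The goal is to show that the tail Bellman point $P := \bp{\varphi|_{[t,r]}}$ belongs to $\Omega_{\eps}$. My plan begins with a direct geometric identification of $P$ in terms of $\gamma$. From~\eqref{CurveGenerator} one has $(\tau-l)\gamma(\tau)=\int_l^\tau(\varphi,\varphi^2)\,ds$, and subtracting at $\tau=t$ and $\tau=r$ gives
\[
P \;=\; \frac{(r-l)\gamma(r) - (t-l)\gamma(t)}{r-t},
\qquad\text{equivalently}\qquad \gamma(r) \;=\; \tfrac{t-l}{r-l}\,\gamma(t) + \tfrac{r-t}{r-l}\,P.
\]
Hence $P$ lies on the line through $\gamma(t)$ and $\gamma(r)$, with the three points $\gamma(t),\gamma(r),P$ appearing in this order (the coefficient $\tfrac{t-l}{r-t}$ in $P-\gamma(r)=\tfrac{t-l}{r-t}\big(\gamma(r)-\gamma(t)\big)$ is strictly positive). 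The inclusion $P \in \Omega_\eps$ then splits into the lower inequality $P_2 \geq P_1^2$ and the free-boundary inequality $P_2 \leq P_1^2 + \eps^2$.

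The lower inequality is automatic: it is exactly Cauchy--Schwarz for $\varphi|_{[t,r]}$, so no information about $\gamma$ is needed (cf.\ Proposition~\ref{parstr}). For the upper inequality I would compare $P$ with the tangent line $L(s):=\gamma(r)+s\gamma'(r)$. By hypothesis, $L$ avoids $\FreeBoundary\Omega_{\eps}$; since $L$ contains $\gamma(r)\in\Omega_{\eps}$, the whole line $L$ must lie weakly below the free parabola (a straight line disjoint from the strictly convex graph $\{x_2 = x_1^2+\eps^2\}$ that meets a point beneath it stays beneath it throughout). By Fact~\ref{MonConv} and the monotonicity of $\gamma_1(\tau)=\av{\varphi}{[l,\tau]}$ in $\tau$, the curve $\gamma$ is a graph of a convex function $g$ in standard coordinates, and (when $\varphi$ is non-decreasing) $P_1>\gamma(r)_1$. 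The three-slope lemma for $g$ at $\gamma(r)_1$ gives that the slope of the chord from $\gamma(t)$ to $\gamma(r)$ does not exceed the slope of $L$; since the two lines agree at $x_1=\gamma(r)_1$, the extended secant lies weakly below $L$ for all $x_1\geq\gamma(r)_1$. Evaluating at $x_1=P_1$ gives $P_2 \leq L\text{-value at }P_1 \leq P_1^2+\eps^2$.

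The main obstacle I anticipate is purely bookkeeping. First, the non-increasing case ($P_1<\gamma(r)_1$) reverses the three-slope inequality but, after combining with $P_1-\gamma(r)_1<0$, yields the same conclusion that the extended secant lies below $L$ at $P_1$. Second, the convex function $g$ need not be differentiable at $\gamma(r)_1$; however $L$ is a prescribed support line of $g$ at that point, so the three-slope inequality against the slope of $L$ still holds. Geometrically, convexity of $\gamma$ bends the chord away from the tangent on the far side of $\gamma(r)$, and the hypothesis on $L$ ensures that there is exactly enough room between $L$ and the free parabola to accommodate this bending.
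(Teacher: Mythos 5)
Your proof is correct and follows essentially the same route as the paper: the key observation in both is that $\gamma(r)$ is a convex combination of $\gamma(t)$ and $P=\bp{\varphi|_{[t,r]}}$, so $P$ lies on the far side of $\gamma(r)$ along the line through $\gamma(t)$ and $\gamma(r)$, and since the tangent line $L$ at $\gamma(r)$ supports the convex curve and avoids the free boundary, $P$ must lie weakly below $L$ and hence below the free boundary. The paper phrases this as a single separation statement; you unpack it via the three-slope lemma and handle the orientation and non-differentiability caveats explicitly, which is the same argument written out in more detail.
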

\begin{proof}
We note that~$\bp{\!\varphi\big|_{[l,r]}}$ is a convex combination of~$\bp{\!\varphi\big|_{[t,r]}}$ and~$\bp{\!\varphi\big|_{[l,t]}}$. Thus,~$\bp{\!\varphi\big|_{[t,r]}}$ is separated from~$\{x \in \mathbb{R}\mid x_2 > x_1^2 + \eps^2\}$ by the line~$s\to \gamma(r) + s\gamma'(r)$,~$s\in\mathbb{R}$. On the other hand,~$\bp{\varphi\big|_{[t,r]}}$ surely belongs to~$\{x \in \mathbb{R}\mid x_1^2 \leq x_2\}$, so it belongs to~$\Omega_{\eps}$. 
\end{proof}

\begin{Cor}\label{DeliveryCurveCorollary}
Suppose~$\gamma$ to be a delivery curve on~$[l,r]$. Let it be convex in the sense that it is a graph of a convex function in the standard coordinates. Suppose also that the tangent line~$s\mapsto \gamma(\tau) + s\gamma'(\tau)$\textup,~$s\in\mathbb{R}$\textup, does not cross the free boundary~$\FreeBoundary\Omega_{\eps}$ for any~$\tau \in [l,r]$. Then\textup, the function~$\varphi$ that generates~$\gamma$ belongs to~$\BMO_{\eps}$.
\end{Cor}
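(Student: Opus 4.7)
The plan is to reduce the claim to a direct application of Lemma~\ref{DeliveryCurveLemma} on every subinterval of $[l,r]$. By Definition of $\BMO_\eps$, it suffices to prove that for every subinterval $J\subseteq[l,r]$ one has $\av{\varphi^2}{J}-\av{\varphi}{J}^2\le\eps^2$, which is exactly the statement $\bp{\varphi|_J}\in\Omega_\eps$.

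Fix any subinterval $J=[t_1,t_2]\subseteq[l,r]$ with $t_1<t_2$ (the degenerate case is trivial). The first step is to restrict the delivery curve: consider $\gamma|_{[l,t_2]}$. Formula~\eqref{CurveGenerator} depends only on the values of $\varphi$ up to the current parameter, so the restriction is itself a curve of the form~\eqref{CurveGenerator} on $[l,t_2]$ with the same left endpoint. Convexity of $\gamma|_{[l,t_2]}$ follows immediately from convexity of $\gamma$, since being a graph of a convex function in the standard coordinates is inherited by subarcs. Moreover, the tangent line to $\gamma|_{[l,t_2]}$ at its right endpoint $t_2$ is just the tangent line to $\gamma$ at $\tau=t_2$, and this line does not cross $\FreeBoundary\Omega_\eps$ by the hypothesis of the corollary.

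Now apply Lemma~\ref{DeliveryCurveLemma} to the curve $\gamma|_{[l,t_2]}$ with the choice of interior parameter $t=t_1$. The conclusion is precisely $\bp{\varphi|_{[t_1,t_2]}}\in\Omega_\eps$, which gives $\av{\varphi^2}{J}-\av{\varphi}{J}^2\le\eps^2$. Since $J\subseteq[l,r]$ was arbitrary, taking the supremum over such $J$ yields $\BNorm{\varphi}\le\eps$, i.e.\ $\varphi\in\BMO_\eps([l,r])$, as required.

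I do not expect any serious obstacle here: the entire content of the corollary is the observation that the hypotheses of Lemma~\ref{DeliveryCurveLemma} are preserved under restriction to any initial subinterval $[l,t_2]$, so the pointwise containment statement of the lemma upgrades to the full $\BMO$ bound by varying the endpoints. The only small point worth checking carefully is that when one truncates the parameter interval on the right the tangent-non-crossing hypothesis is inherited unchanged (it is, since it is stated as a uniform condition over all $\tau\in[l,r]$).
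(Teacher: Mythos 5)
Your proof is correct and is exactly the intended argument: the corollary is stated without proof in the paper precisely because it follows by applying Lemma~\ref{DeliveryCurveLemma} to the restriction $\gamma|_{[l,t_2]}$ for each $t_2$, which is what you do. The one small point you rightly flag --- that the uniform tangent hypothesis is what lets you invoke the lemma at the movable right endpoint $t_2$ rather than only at $r$ --- is indeed the only content here, and you have handled it correctly.
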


Before we pass to constructing specific delivery curves, we should postulate a heuristic principle that will help us to guess them. Since a delivery curve ``consists of optimizers'', it has to avoid the directions in which the Bellman candidate is non-linear. So, we guess that delivery curves should go either along the extremals, or along the free boundary (the first case corresponds to the parts of the optimizer where it is constant, in the second case it is locally a logarithm). 

\section{Local behavior of optimizers}\label{s52}

\subsection{Optimizers for tangent domains}\label{s521}\index{domain! tangent domain}
Consider a tangent domain~$\Rt(u_1,u_2)$ and a standard candidate~$B$ on it. Suppose~$\psi$ is an optimizer for the point~$\big(u_1-\eps,(u_1-\eps)^2 + \eps^2\big)$ (which is the upper parabola endpoint of the right tangent passing through~$(u_1,u_1^2)$; see Figure~\ref{fig:RtOpt} below) defined on the interval~$[l,l_1]$. Our aim is to build the optimizers for all the points inside~$\Rt(u_1,u_2)$. We start with the points lying on the free boundary. For that purpose, we look for a function~$\varphi$ on~$[l,r]$, for some $r>l_1$, such that~$\varphi = \psi$ on~$[l,l_1]$ and its delivery curve~$\gamma$ goes along the upper parabola from~$\big(u_1-\eps,(u_1-\eps)^2 + \eps^2\big)$ to~$\big(u_2-\eps,(u_2-\eps)^2 + \eps^2\big)$ on~$[l_1,r]$. 

Our curve satisfies the equation~$\gamma_2 = \gamma_1^2 + \eps^2$ and~\eqref{FirstDerivative} on $[l_1,r]$. Therefore,
\begin{equation*}
(\tau-l)(\gamma_1',2\gamma_1'\gamma_1) + (\gamma_1,\gamma_1^2+ \eps^2) = (\varphi,\varphi^2), \quad \gamma = \gamma(\tau);\varphi = \varphi(\tau); \tau \in(l_1,r).
\end{equation*}
Thus,
\begin{equation*}
\Big((\tau-l)\gamma_1' + \gamma_1\Big)^2 = 2(\tau-l)\gamma_1'\gamma_1 + \gamma_1^2 + \eps^2,
\end{equation*}
which shows that
\begin{equation*}
\gamma_1(\tau) = \pm\eps\log(\tau-l) + c
\end{equation*}
for some constant~$c$. The curve~$\gamma$ moves to the right, thus we choose the sign~``$+$''. Since we wanted our curve to be at the point~$\big(u_1-\eps,(u_1-\eps)^2 + \eps^2\big)$ when~$\tau = l_1$, we adjust~$c$ to satisfy this condition:
\begin{equation*}
\gamma_1(\tau) = \eps\log\Big(\frac{\tau-l}{l_1-l}\Big) + u_1 - \eps.
\end{equation*}
We recover the function~$\varphi$:
\begin{equation}\label{OptimizerRightTangentsUpperParabola}
\varphi(\tau) = 
\begin{cases}
\psi(\tau),\quad & \tau \in [l,l_1);\\
\eps\log(\frac{\tau-l}{l_1-l}) + u_1,\quad & \tau \in [l_1,r].
\end{cases}
\end{equation}
In particular,~$r$ is defined by the equation~$\varphi(r) = u_2$ (one can see that from equation~\eqref{FirstDerivative}). We claim that~$\gamma$ is a delivery curve.

To prove the claim, it suffices to show that~$B(\gamma(s)) = \av{f(\varphi)}{[l,s]}$ for~$s \in [l_1,r]$, because for $s \in [l,l_1]$ this follows from the fact that~$\psi$ is an optimizer. The function~$B$ is given by formulas~\eqref{linearity},~\eqref{ExplicitFormulaForm} (see Proposition~\ref{RightTangentsCandidate}) and our assumption~$B(u_1-\eps,(u_1-\eps)^2 + \eps^2) = \av{f(\psi)}{[l,l_1]}$. The equality wanted can be rewritten as follows:
\begin{equation}\label{NewtonLeibniz}
(s-l)B(\gamma(s)) - (l_1-l)B(\gamma(l_1)) = \int\limits_{l_1}^sf(\varphi(\tau))\,d\tau.
\end{equation}
Consider the right-hand side and make the change of variable~$t = \varphi(\tau)$ (in other words,~$\frac{\tau - l}{l_1 - l}=e^{\frac{t-u_1}{\eps}}$):
\begin{equation}\label{NewtonLeibnizRewritten}
\int\limits_{l_1}^sf(\varphi(\tau))\,d\tau = \int\limits_{l_1}^s f\Big(\eps\log\big(\frac{\tau-l}{l_1-l}\big) + u_1\Big)\,d\tau = \frac{l_1 - l}{\eps}\int\limits_{\varphi(l_1)}^{\varphi(s)} f(t)e^{\frac{t-u_1}{\eps}}\,dt.
\end{equation}
To compute the left-hand side of~\eqref{NewtonLeibniz}, we see that in our case formula~\eqref{linearity} becomes (we denote~$u = \varphi(s)$)
\begin{equation*}
\begin{aligned}
B(\gamma(s)) = -\eps m(\varphi(s)) + f(\varphi(s)) \stackrel{\scriptscriptstyle{\eqref{ExplicitFormulaForm}}}{=} -\eps e^{-\frac{u}{\eps}}\Big(e^{\frac{u_1}{\eps}}m(u_1) + \eps^{-1}\int\limits_{u_1}^u f'(t)e^{\frac{t}{\eps}}\,dt\Big) + f(u)=\\
-\eps e^{-\frac{u}{\eps}}\Big(e^{\frac{u_1}{\eps}}m(u_1) + \eps^{-1}f(u)e^{\frac{u}{\eps}} - \eps^{-1}f(u_1)e^{\frac{u_1}{\eps}} - \eps^{-2}\int\limits_{u_1}^u f(t)e^{\frac{t}{\eps}}\,dt\Big) + f(u) = \\
-\eps e^{\frac{u_1-u}{\eps}}m(u_1) + e^{\frac{u_1-u}{\eps}}f(u_1) + \eps^{-1}\int\limits_{u_1}^u f(t) e^{\frac{t-u}{\eps}}\,dt.
\end{aligned}
\end{equation*}
So, by equality~$\frac{s - l}{l_1 - l}=e^{\frac{u-u_1}{\eps}}$, the left-hand side of~\eqref{NewtonLeibniz} equals
\begin{equation*}
(s-l)\Big(-\eps e^{\frac{u_1-u}{\eps}}m(u_1) + e^{\frac{u_1-u}{\eps}}f(u_1) + \eps^{-1}\!\!\int\limits_{u_1}^u f(t) e^{\frac{t-u}{\eps}}dt\Big) - (l_1-l)\Big(-\eps m(u_1) + f(u_1)\Big) = \frac{s-l}{\eps}\!\int\limits_{u_1}^u f(t) e^{\frac{t-u}{\eps}}dt,
\end{equation*}
which coinsides with the right-hand side of~\eqref{NewtonLeibnizRewritten} due to the same equality. Formula~\eqref{NewtonLeibniz} is proved. And we have constructed the optimizers for the points on the upper parabola (under the assumption that~$\gamma$ is convex). How to construct the optimizers for all the other points? The answer is suggested by formula~\eqref{linearity}.

\begin{figure}[h!]
\begin{center}
\includegraphics{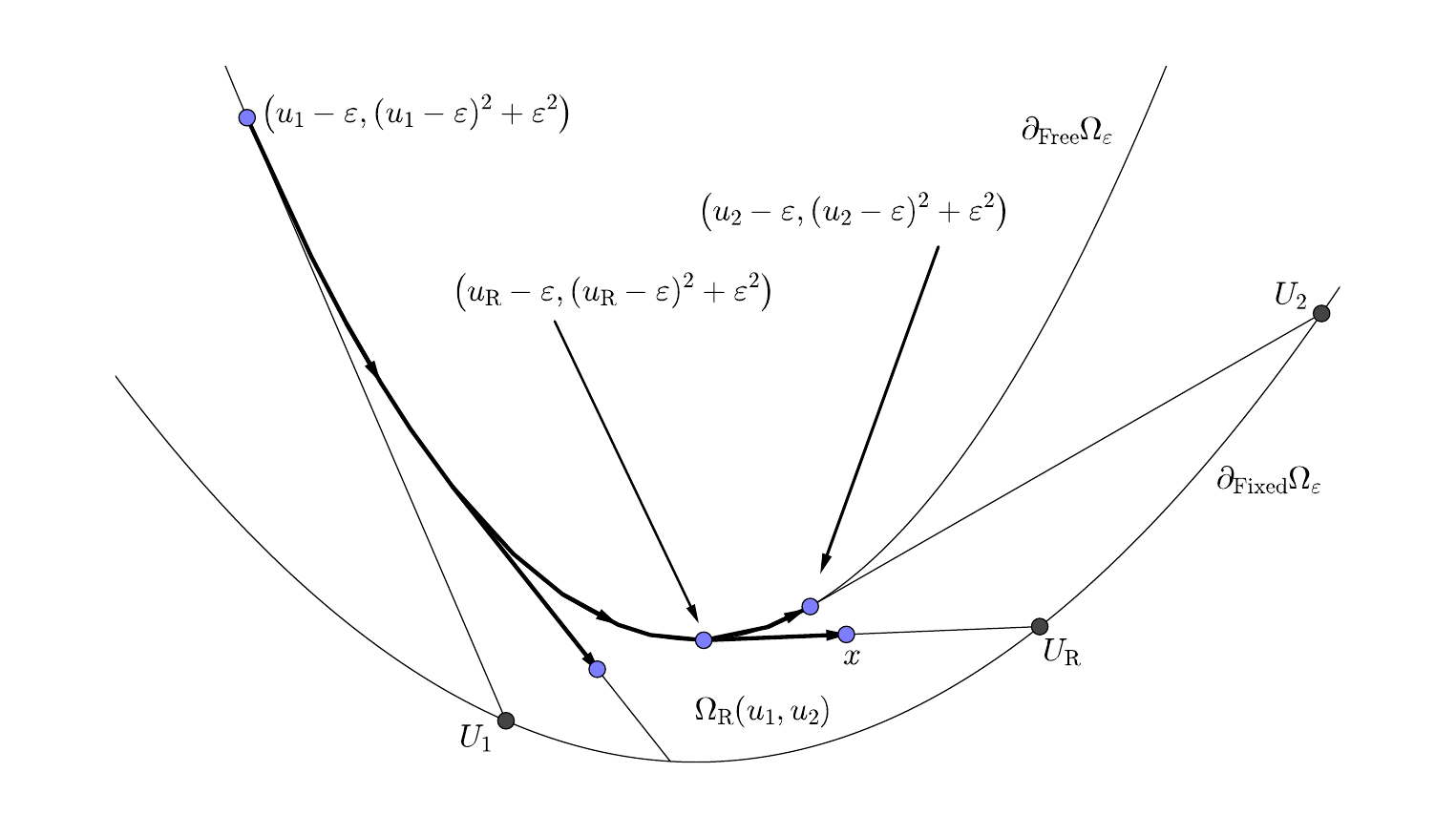}
\caption{Delivery curves inside a tangent domain.}
\label{fig:RtOpt}
\end{center}
\end{figure}

Let~$x \in \Rt(u_1,u_2)$, let~$\Ur$ be the point where the right tangent passing through~$x$ intersects the fixed boundary, i.e~$\ur$ is given by formula~\eqref{ur}. Suppose~$\psi \in \BMO_{\eps}([l,l_1])$ to be an optimizer for the point~$(\ur - \eps, (\ur - \eps)^2 + \eps^2)$. Formula~\eqref{linearity} suggests us the view of the optimizer~$\varphi\colon [l,r] \to \mathbb{R}$ for~$x$ (the delivery curve first goes from~$(u_1 - \eps, (u_1 - \eps)^2 + \eps^2)$ to~$(\ur - \eps, (\ur - \eps)^2 + \eps^2)$ and then to~$x$ along the tangent, see Figure~\ref{fig:RtOpt} for a visualization):
\begin{equation}\label{OptimizerRightTangentsAll}
\varphi(\tau) = 
\begin{cases}
\psi(\tau),\quad & \tau \in [l,l_1);\\
\ur,\quad & \tau \in [l_1,r].
\end{cases}
\end{equation}
The value of the parameter~$r$ is defined uniquely by the equation~$\av{\varphi}{[l,r]} = x_1$. The equality~$\av{B(\varphi)}{[l,r]} = B(x)$ follows from formula~\eqref{linearity}. So, we have constructed the optimizers for all the points in~$\Rt(u_1,u_2)$ (under the assumption that the obtained curve is convex).
\begin{St}\label{OptimizersRightTangentDomain}
Let~$B$ be a standard candidate on~$\Rt(u_1,u_2)$. Suppose that there exists an optimizer~$\psi$ for~$B$ at the point~$(u_1 - \eps, (u_1 - \eps)^2 + \eps^2)$\textup, which is non-decreasing and such that~$\psi \leq u_1$. Then\textup, there exists a non-decreasing optimizer~$\varphi_x$ for~$B$ at every point~$x \in \Rt(u_1,u_2)$\textup, moreover\textup,~$\varphi_x \leq u_2$.
\end{St}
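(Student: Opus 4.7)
The plan is to follow the two-stage recipe spelled out in the discussion preceding the statement. Given $x \in \Rt(u_1, u_2)$, set $\ur = \ur(x) \in [u_1, u_2]$ via~\eqref{ur}. First I would extend $\psi$ on an interval $[l_1, l_2]$ by the logarithm of~\eqref{OptimizerRightTangentsUpperParabola} (with $u_2$ replaced by $\ur$) to produce an intermediate candidate~$\tilde\psi$ whose delivery curve runs along $\FreeBoundary\Omega_{\eps}$ from~$(u_1-\eps,(u_1-\eps)^2+\eps^2)$ to~$(\ur-\eps,(\ur-\eps)^2+\eps^2)$. Then, as in~\eqref{OptimizerRightTangentsAll}, I would extend~$\tilde\psi$ by the constant~$\ur$ on an interval $[l_2, r]$, with $r$ chosen uniquely so that $\av{\varphi_x}{[l,r]} = x_1$.

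With~$\varphi_x$ in hand, I would first dispatch the soft properties. Monotonicity and the bound~$\varphi_x \le u_2$ follow from~$\psi \le u_1 \le \ur \le u_2$ together with the fact that the log part takes values in~$[u_1,\ur]$ and the tail equals~$\ur$. The identity~$\bp{\varphi_x}=x$ is built into the construction: the delivery curve first traces the free boundary between the two prescribed endpoints and then travels along the right tangent at~$\Ur$, ending at~$x$ by the choice of~$r$. The equality~$f[\varphi_x]=B(x)$ is then obtained by splicing two calculations already carried out in the paragraphs above: the change of variables~\eqref{NewtonLeibnizRewritten} on the log part, and the linearity of~$B$ along the tangent~\eqref{linearity} on the constant part, after subtracting off the contribution already supplied by~$\psi$.

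The delicate step is showing~$\varphi_x \in \BMO_{\eps}$, which I would extract from Corollary~\ref{DeliveryCurveCorollary}. Convexity of the delivery curve~$\gamma$ is immediate from Lemma~\ref{Monotone_Convex}, since~$\varphi_x$ is monotone. For the tangent-line condition, formula~\eqref{FirstDerivative} shows that the tangent to~$\gamma$ through~$\gamma(\tau)$ passes through $(\varphi_x(\tau),\varphi_x^2(\tau))$ on the fixed boundary. On the log interval this line is exactly the right tangent to~$\FreeBoundary\Omega_{\eps}$ at~$\gamma(\tau)$ (writing $v = \gamma_1(\tau)$, the identity $2v(v+\eps) - v^2 + \eps^2 = (v+\eps)^2$ shows that the tangent to the upper parabola at $(v, v^2+\eps^2)$ meets the lower parabola precisely at $(v+\eps,(v+\eps)^2) = (\varphi_x(\tau), \varphi_x^2(\tau))$); on the constant interval it is the right tangent at~$\Ur$, which by definition of~$\Rt(u_1,u_2)$ touches~$\FreeBoundary\Omega_{\eps}$ at a single point. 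In both cases the tangent merely kisses the free boundary and does not cross it.

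The main obstacle is the tangent-line condition on the $\psi$-part~$[l,l_1]$, where the hypothesis only tells us that~$\psi$ is an optimizer in~$\BMO_{\eps}$ with~$\psi \le u_1$. The plan is to deduce the non-crossing property from the combination of the bound~$\psi \le u_1$, the convexity of~$\gamma_\psi$ furnished by Lemma~\ref{Monotone_Convex} (using monotonicity of~$\psi$, which is inherited from the construction), and the $\BMO_\eps$ property of~$\psi$ applied to every suffix~$[\tau,l_1]$, which forces $\bp{\psi|_{[\tau,l_1]}} \in \Omega_{\eps}$ and so pins the admissible slopes of~$\gamma_\psi$ at its free-boundary endpoint. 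In practice, when this proposition is invoked inside the global gluing scheme of Section~\ref{s53}, the~$\psi$ supplied will already have been constructed so that the non-crossing tangent property holds on every subinterval, so the conclusion propagates inductively and the proof closes.
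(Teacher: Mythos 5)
Your construction of $\varphi_x$ matches the paper's, but the verification that $\varphi_x \in \BMO_\eps$ contains a genuine gap exactly at the step you flag as ``the main obstacle,'' and the heuristic fix you offer does not close it. You plan to invoke Corollary~\ref{DeliveryCurveCorollary}, which requires the tangent to $\gamma$ to avoid $\FreeBoundary\Omega_\eps$ for \emph{every}~$\tau$ in the parameter interval, including on the $\psi$-part. That condition is not a consequence of the stated hypotheses. By formula~\eqref{FirstDerivative}, the tangent to $\gamma$ at $\tau = l_1$ passes through $\gamma(l_1) = (u_1-\eps,(u_1-\eps)^2+\eps^2)$ and $(\psi(l_1),\psi^2(l_1))$. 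If $\psi(l_1) < u_1$, this line has slope strictly smaller than $2(u_1-\eps)$, the slope of the tangent to $\FreeBoundary\Omega_\eps$ at $\gamma(l_1)$; and since the free boundary is a strictly convex parabola, any non-tangent line through one of its points crosses it a second time, so the tangent-line condition fails. The hypothesis ``$\psi$ is a non-decreasing optimizer with $\psi \le u_1$'' does not force $\psi(l_1) = u_1$, so your appeal to Corollary~\ref{DeliveryCurveCorollary} is not justified. Your closing remark that ``in practice'' the globally supplied $\psi$ will already have the extra non-crossing property is probably true, but it amounts to proving a weaker statement with an undeclared additional hypothesis, not the proposition as written.

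The paper never uses the corollary. It verifies $\bp{\varphi|_{[s,t]}} \in \Omega_\eps$ directly, splitting on the \emph{right} endpoint~$t$. If $t \le l_1$, then $\varphi|_{[s,t]} = \psi|_{[s,t]}$ and the inclusion is immediate from $\psi \in \BMO_\eps$, with no tangent consideration at all. If $t > l_1$, one applies Lemma~\ref{DeliveryCurveLemma} — which requires the tangent condition only at the \emph{single} right endpoint — to the restricted curve $\gamma|_{[l,t]}$: this curve is still convex by Lemma~\ref{Monotone_Convex}, and its tangent at $\gamma(t)$ is either the tangent to $\FreeBoundary\Omega_\eps$ (on the logarithmic part) or the right tangent through $\Ur$ (on the constant part), both of which touch but do not cross the free boundary. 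The lemma then yields $\bp{\varphi|_{[s,t]}} \in \Omega_\eps$ for all $s < t$. Applying Lemma~\ref{DeliveryCurveLemma} with a variable right endpoint, rather than Corollary~\ref{DeliveryCurveCorollary} once over the whole curve, is the device you are missing, and it is what makes the difficulty you ran into disappear.
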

\begin{proof}
Consider the delivery curve~$\gamma_{\psi}$ that corresponds to~$\psi$ on~$[l,l_1]$. We build~$\varphi$ with the help of formula~\eqref{OptimizerRightTangentsUpperParabola} on~$[l,r]$ for some~$r>l_1$, and consider the delivery curve it generates. This delivery curve~$\gamma_{\varphi}$ is a continuation of~$\gamma_{\psi}$ along the free boundary. Since~$\psi \leq u_1$, the function~$\varphi$ is monotone, and thus, by Lemma~\ref{Monotone_Convex},~$\gamma_{\varphi}$ is convex. We have to prove that~$\bp{\varphi|_{[s,t]}} \in \Omega_\eps$ for $l\leq s <t \leq r$. But for $t\leq l_1$ this follows from the fact that~$\psi$ is an optimizer and for $t>l_1$ this follows from Lemma~\ref{DeliveryCurveLemma}. Thus the function~$\varphi$ is an optimizer for the point~$x$ on the upper parabola.

To treat the points that are not on the upper parabola, we use formula~\eqref{OptimizerRightTangentsAll}. Again, the delivery curve is convex, and Lemma~\ref{DeliveryCurveLemma} justifies that the function we have built is an optimizer.
\end{proof}
We briefly state a symmetric proposition
\begin{St}\label{OptimizersLeftTangentDomain}
Let~$B$ be a standard candidate on~$\Lt(u_1,u_2)$. Suppose that there exists an optimizer~$\psi$ for~$B$ at the point~$(u_2 + \eps, (u_2 + \eps)^2 + \eps^2)$\textup, which is non-increasing and such that~$\psi \geq u_2$. Then\textup, there exists a non-increasing optimizer~$\varphi_x$ for~$B$ at every point~$x \in \Lt(u_1,u_2)$\textup, moreover\textup,~$\varphi_x \geq u_1$.
\end{St}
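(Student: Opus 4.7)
The plan is to mirror the proof of Proposition~\ref{OptimizersRightTangentDomain} with the obvious symmetric adjustments, since $\Lt(u_1,u_2)$ is the reflected analog of $\Rt(u_1,u_2)$ and formula~\eqref{linearity} together with~\eqref{ExplicitFormulaForm2} plays the role of~\eqref{ExplicitFormulaForm} in the construction of the standard candidate. Throughout, monotonicity of the generated function will imply convexity of the delivery curve by Lemma~\ref{Monotone_Convex}, and then $\BMO_{\eps}$-membership will follow from Corollary~\ref{DeliveryCurveCorollary} (the tangent lines to the upper parabola and to the tangents inside $\Omega_\eps$ never leave the parabolic strip).

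First, I would construct an optimizer for points on the upper parabola by continuing $\psi$ along the free boundary, starting from $(u_2+\eps,(u_2+\eps)^2+\eps^2)$ and moving to the \emph{left} toward $(u_1+\eps,(u_1+\eps)^2+\eps^2)$. Repeating the computation that led to~\eqref{OptimizerRightTangentsUpperParabola}, but choosing the sign that makes $\gamma_1$ decrease, gives the candidate
\begin{equation*}
\varphi(\tau) =
\begin{cases}
\psi(\tau),\quad & \tau \in [l,l_1);\\
-\eps\log\!\Big(\dfrac{\tau-l}{l_1-l}\Big) + u_2,\quad & \tau \in [l_1,r],
\end{cases}
\end{equation*}
where $r$ is determined by $\varphi(r)=u_1$. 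Since $\psi\ge u_2$ and the logarithmic piece takes values in $[u_1,u_2]$ and is non-increasing in~$\tau$, the concatenation is non-increasing, so by Lemma~\ref{Monotone_Convex} its delivery curve $\gamma$ is convex; the part of $\gamma$ on $[l_1,r]$ coincides with the free-boundary arc traversed from right to left.

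Next, for a general $x \in \Lt(u_1,u_2)$, I would use the value $\ul(x)$ given by~\eqref{ul} and attach to the previous construction (applied up to the tangency point $(\ul+\eps,(\ul+\eps)^2+\eps^2)$) a constant piece of value $\ul$, symmetric to~\eqref{OptimizerRightTangentsAll}. The length of the constant segment is fixed by the equation $\av{\varphi}{[l,r]}=x_1$, and the delivery curve then travels along the left tangent through $x$. Monotonicity of $\varphi$ is preserved because $\ul \le u_2 \le \psi$, so again Lemma~\ref{Monotone_Convex} applies.

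The remaining point to verify, analogous to~\eqref{NewtonLeibniz}, is the energy identity $B(\gamma(s)) = \av{f(\varphi)}{[l,s]}$ on the logarithmic piece; I would check it by the change of variables $\frac{\tau-l}{l_1-l}=e^{-(t-u_2)/\eps}$ and formula~\eqref{ExplicitFormulaForm2}, mirroring the computation done after~\eqref{NewtonLeibnizRewritten}. On the constant piece it follows from linearity of $B$ along the left tangent, exactly as in the right case. The main obstacle, as in Proposition~\ref{OptimizersRightTangentDomain}, is not the algebra but the verification via Lemma~\ref{DeliveryCurveLemma} that the ``head'' averages $\bp{\varphi|_{[s,t]}}$ remain in $\Omega_{\eps}$; this is where convexity of $\gamma$ together with the fact that each tangent of $\gamma$ either lies on the upper parabola or is a left tangent of $\Omega_{\eps}$ becomes essential.
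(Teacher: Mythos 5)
Your proposal is correct and coincides with what the paper does: the paper simply states that the left case is symmetric to Proposition~\ref{OptimizersRightTangentDomain}, records formula~\eqref{OptimizerLeftTangentsUpperParabola} (which matches your logarithmic piece) and formula~\eqref{OptimizerLeftTangentsAll} (your constant piece with~$\ul$), and leaves the routine mirroring of the computations and the appeals to Lemma~\ref{Monotone_Convex} and Lemma~\ref{DeliveryCurveLemma} to the reader.
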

As usual, all the constructions are symmetric. We only mention that formula~\eqref{OptimizerRightTangentsUpperParabola} should be replaced with
\begin{equation}\label{OptimizerLeftTangentsUpperParabola}
\varphi(\tau) = 
\begin{cases}
\psi(\tau),\quad & \tau \in [l,l_1);\\
-\eps\log(\frac{\tau - l}{l_1-l}) + u_2,\quad & \tau \in [l_1,r],
\end{cases}
\end{equation}
provided~$\psi$ is defined on~$[l,l_1]$, formula~\eqref{OptimizerRightTangentsAll} is changed for
\begin{equation}\label{OptimizerLeftTangentsAll}
\varphi(\tau) = 
\begin{cases}
\psi(\tau),\quad & \tau \in [l,l_1);\\
\ul,\quad & \tau \in [l_1,r],
\end{cases}
\end{equation}
where~$\ul$ is given by formula~\eqref{ul} and~$\psi$ is an optimizer for~$(\ul+\eps, (\ul+\eps)^2 + \eps^2)$.

Similar propositions for infinite domains need additional study.
\begin{St}\label{OptimizerRightTangentsInfty}
Let~$B$ be the standard candidate on~$\Rt(-\infty,u_2)$. There exists a non-decreasing optimizer~$\varphi_x$ for~$B$ at every point~$x \in \Rt(-\infty,u_2)$\textup, moreover\textup,~$\varphi_x \leq u_2$.
\end{St}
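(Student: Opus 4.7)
The plan is to imitate Proposition~\ref{OptimizersRightTangentDomain}, but with the initial optimizer $\psi$ replaced by a pure logarithmic segment whose domain ``starts at $-\infty$.'' Concretely, for a point $x$ on the free boundary, $x = (u - \eps, (u-\eps)^2 + \eps^2)$ with $u \leq u_2$, I would define the candidate optimizer
\begin{equation*}
\varphi_x(\tau) = \eps \log \tau + u, \qquad \tau \in (0,1].
\end{equation*}
A direct computation using $\int_0^\tau \log s\,ds = \tau\log\tau - \tau$ shows that the delivery curve it generates is
\begin{equation*}
\gamma(\tau) = \big(\eps\log\tau - \eps + u,\; (\eps\log\tau - \eps + u)^2 + \eps^2\big),
\end{equation*}
which is precisely the portion of the upper parabola ending at $x$. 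For a general $x \in \Rt(-\infty, u_2)$ not on the free boundary, I would follow the recipe~\eqref{OptimizerRightTangentsAll}: let $u = \ur(x) \leq u_2$ be defined by~\eqref{ur}, take the logarithmic piece $\varphi_{(u-\eps,(u-\eps)^2+\eps^2)}$ on $(0,1]$, and extend it by the constant value $u$ on $[1,r]$, where $r$ is chosen so that $\av{\varphi}{(0,r]} = x_1$.

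For the verification, since $\varphi_x$ is non-decreasing, Lemma~\ref{Monotone_Convex} gives convexity of $\gamma$; the tangent line to the free parabola at any point touches but does not cross $\FreeBoundary\Omega_\eps$ by convexity of the parabola, so Corollary~\ref{DeliveryCurveCorollary} yields $\varphi_x \in \BMO_\eps$ together with $\bp{\varphi_x|_{[s,t]}} \in \Omega_\eps$ for every $[s,t] \subset (0,1]$. For the extended function on $(0,r]$ the same conclusions hold because the tangent segment lies inside $\Rt(-\infty,u_2)$ and the delivery curve remains convex. By construction $\varphi_x \leq u \leq u_2$.

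To check $f[\varphi_x] = B(x)$ on the free boundary, the change of variable $t = \eps \log \tau + u$ gives
\begin{equation*}
f[\varphi_x] = \int_0^1 f(\eps\log\tau + u)\,d\tau = \frac{1}{\eps}\int_{-\infty}^{u} f(t)\,e^{(t-u)/\eps}\,dt,
\end{equation*}
and on the other hand, from~\eqref{linearity} and~\eqref{minfty},
\begin{equation*}
B(x) = -\eps m(u) + f(u), \qquad m(u) = \eps^{-1}e^{-u/\eps}\int_{-\infty}^{u} f'(t)e^{t/\eps}\,dt.
\end{equation*}
Integration by parts in the formula for $m$, together with the vanishing boundary term $f(t)e^{t/\eps} \to 0$ as $t \to -\infty$ provided by Lemma~\ref{emb}, reduces $\eps m(u)$ to $f(u) - \eps^{-1}\int_{-\infty}^u f(t) e^{(t-u)/\eps}\,dt$, which yields the required identity. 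For points off the free boundary the identity follows from linearity of $B$ along the tangent, exactly as in the finite case.

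The main obstacle, in contrast to Proposition~\ref{OptimizersRightTangentDomain}, is the absence of a built-in ``initial segment'' $\psi$: the delivery curve must trace the entire free parabola from $(-\infty,+\infty)$ down to $x$, which forces $\varphi_x$ to be unbounded below. The integrability of $f\circ\varphi_x$ near $\tau = 0$ and the vanishing of the boundary term at $-\infty$ in the integration by parts are both non-trivial and rely essentially on Condition~\ref{sum} and the pointwise decay in Lemma~\ref{emb}; without these, the identity $f[\varphi_x] = B(x)$ fails and the construction collapses.
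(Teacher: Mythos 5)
Your construction is essentially the same as the paper's: a pure logarithmic generating function whose delivery curve runs along the free boundary down to the target point, the identity $f[\varphi_x]=B(x)$ verified by the change of variable $t=\varphi_x(\tau)$ plus integration by parts with the boundary term killed by Lemma~\ref{emb}, and the off-boundary points handled via~\eqref{OptimizerRightTangentsAll}. The only cosmetic difference is the parametrization (you use $(0,1]$ where the paper uses $(l,r]$), and you make the $\BMO_\eps$ verification explicit via Lemma~\ref{Monotone_Convex} and Corollary~\ref{DeliveryCurveCorollary}, which the paper leaves implicit; both are sound.
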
 
\begin{proof}
We begin with the points on the upper parabola. The finite case formula~\eqref{OptimizerRightTangentsUpperParabola} suggests us to take the function~$\varphi_x\colon (l,r] \to \mathbb{R}$ as follows:
\begin{equation*}
\varphi_x(\tau) = \eps\log(\tau-l),
\end{equation*}
where the value~$r$ is such that~$\varphi(r) = x_1 + \eps$,~$r = e^{\frac{x_1+\eps}{\eps}} + l$. Indeed, in such a case, the corresponding curve~$\gamma$ goes along the upper parabola from~$-\infty$ to~$x$. It remains to verify the identity~$B(x) = \av{f(\varphi_x)}{[l,r]}$. Again, we use the change of variable~$t = \varphi(\tau)$ or~$\tau = e^{\frac{t}{\eps}} + l$ for the right-hand side (and as usually, write~$u = x_1+\eps$)
\begin{equation*}
\frac{1}{r-l}\int\limits_{l}^r f(\varphi(\tau))\,d\tau = \frac{1}{r-l}\int\limits_{l}^r f\Big(\eps\log(\tau-l)\Big)\,d\tau = \frac{1}{\eps(r-l)}\int\limits_{-\infty}^{u} f(t)e^{\frac{t}{\eps}}\,dt.
\end{equation*}
As for the left-hand side, we take formula~\eqref{linearity}:
\begin{equation*}
B(x) = -\eps m(u) + f(u) \stackrel{\scriptscriptstyle{\eqref{minfty}}}{=} -e^{-u/\eps}\int\limits_{-\infty}^u f'(t)e^{t/\eps}\,dt + f(u) \stackrel{\hbox{\tiny Lem.}~\scriptscriptstyle{\ref{emb}}}{=} \frac{e^{-u/\eps}}{\eps}\int\limits_{-\infty}^u f(t)e^{t/\eps}\,dt.
\end{equation*}
Therefore, the right-hand and left-hand sides are equal by the formula~$r-l = e^{\frac{u}{\eps}}$.

The optimizers for all the other points of~$\Rt(-\infty,u_2)$ are constructed by formula~\eqref{OptimizerRightTangentsAll}.
\end{proof}
\begin{St}\label{OptimizerLeftTangentsInfty}
Let~$B$ be the standard candidate on~$\Lt(u_1,\infty)$. There exists a non-increasing optimizer~$\varphi_x$ for~$B$ at every point~$x \in \Lt(u_1,\infty)$\textup, moreover\textup,~$\varphi_x \geq u_1$.
\end{St}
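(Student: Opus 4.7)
The plan is to mirror the argument of Proposition~\ref{OptimizerRightTangentsInfty}, exploiting the symmetry $u \leftrightarrow -u$ that swaps right and left tangent domains. As in the right-tangent case, I would treat first the points $x$ on the free boundary $\FreeBoundary\Omega_\eps$ and then use the linear interpolation formula \eqref{OptimizerLeftTangentsAll} to cover the rest of $\Lt(u_1,\infty)$.

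For a point $x \in \FreeBoundary\Omega_\eps \cap \overline{\Lt(u_1,\infty)}$, set $u = x_1 - \eps = \ul(x)$ and define $\varphi_x\colon (l,r]\to \mathbb R$ by
\begin{equation*}
\varphi_x(\tau) = -\eps\log(\tau - l) + c,
\end{equation*}
where $c$ is chosen so that $\varphi_x(r) = u$ for some (any) $r > l$; concretely $r - l = e^{(c-u)/\eps}$, and one may take $l = 0$, $c = 0$, so $r = e^{-u/\eps}$. This $\varphi_x$ is strictly decreasing and tends to $+\infty$ as $\tau \to l^+$. Its delivery curve $\gamma$ moves along the free boundary from $+\infty$ down to $x$, which can be checked by plugging $\gamma_2 = \gamma_1^2 + \eps^2$ into \eqref{FirstDerivative}, exactly as was done in Section~\ref{s52} for the right-tangent case (the sign of $\gamma_1'$ now being negative). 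Monotonicity of $\varphi_x$ gives convexity of $\gamma$ via Lemma~\ref{Monotone_Convex}, and Lemma~\ref{DeliveryCurveLemma} then yields $\varphi_x \in \BMO_\eps$ since each tangent to $\gamma$ lies under the free boundary. Clearly $\varphi_x \geq u \geq u_1$.

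It remains to verify $B(x) = \av{f(\varphi_x)}{[l,r]}$, which is the step that needs actual computation. On the right-hand side, make the substitution $t = \varphi_x(\tau)$, i.e.\ $\tau - l = e^{-(t-c)/\eps}$, so that
\begin{equation*}
\frac{1}{r-l}\int_{l}^{r} f(\varphi_x(\tau))\,d\tau
= \frac{e^{u/\eps}}{\eps}\int_u^{+\infty} f(t)\,e^{-t/\eps}\,dt.
\end{equation*}
On the left-hand side, \eqref{linearity} and \eqref{minfty2} give $B(x) = \eps\, m(u) + f(u)$ with
$m(u) = \eps^{-1}e^{u/\eps}\int_u^{\infty} f'(t)e^{-t/\eps}\,dt$; integrating by parts in this expression and using Lemma~\ref{emb} (to kill the boundary term at $+\infty$) produces exactly the right-hand side above. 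The main technical point is thus this integration-by-parts/substitution identity, which is the precise analogue of the computation carried out in the proof of Proposition~\ref{OptimizerRightTangentsInfty}; the summability assumption (Condition~\ref{sum}) together with Lemma~\ref{emb} guarantees absolute convergence of all integrals involved.

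Finally, for an arbitrary $x \in \Lt(u_1,\infty)$, let $\ul = \ul(x) \in (u_1,\infty)$ be given by \eqref{ul} and let $\psi$ be the optimizer just constructed for the free-boundary endpoint $(\ul+\eps,(\ul+\eps)^2+\eps^2)$, defined on some $[l,l_1]$. Define $\varphi_x$ on $[l,r]$ by the concatenation rule \eqref{OptimizerLeftTangentsAll}, extending $\psi$ by the constant value $\ul$ on $[l_1,r]$, with $r$ chosen so that $\av{\varphi_x}{[l,r]} = x_1$. Then $\varphi_x$ is non-increasing, $\varphi_x \geq \ul \geq u_1$, and the identity $B(x) = \av{f(\varphi_x)}{[l,r]}$ follows from linearity of $B$ along the left tangent (formula \eqref{linearity}) combined with the already-established identity at $(\ul+\eps,(\ul+\eps)^2+\eps^2)$. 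As before, convexity of the delivery curve plus Lemma~\ref{DeliveryCurveLemma} give $\varphi_x \in \BMO_\eps$, completing the proof.
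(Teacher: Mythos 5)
Your proof is correct and is essentially the paper's own argument: the paper leaves Proposition~\ref{OptimizerLeftTangentsInfty} unproved as the symmetric counterpart of Proposition~\ref{OptimizerRightTangentsInfty}, and you have spelled out exactly that mirror argument — the logarithmic generating function $\varphi_x(\tau)=-\eps\log(\tau-l)+c$, the substitution $\tau-l=e^{-(t-c)/\eps}$ producing $\frac{e^{u/\eps}}{\eps}\int_u^\infty f(t)e^{-t/\eps}\,dt$, integration by parts in \eqref{minfty2} with Lemma~\ref{emb} killing the boundary term, and the extension to the rest of $\Lt(u_1,\infty)$ by the constant-segment rule \eqref{OptimizerLeftTangentsAll}.
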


\subsection{Optimizers for all other figures}\label{s522}
It is very easy to construct the optimizers for chordal domains. Indeed, the proposition below is a straightforward consequence of formula~\eqref{vallun}. The function~$\ell$ is defined at the beginning of Subsection~\ref{s331}.
\begin{St}\label{OptimizersChordalDomain}\index{domain! chordal domain}
Let~$B$ be the standard candidate on~$\Ch([a_0,b_0],[a_1,b_1])$. Then\textup, for any point~$x \in \Ch([a_0,b_0],[a_1,b_1])$\textup, the optimizer~$\varphi_x\colon \big[a\big(\ell(x)\big),b\big(\ell(x)\big)\big] \to \mathbb{R}$ for~$B$ at~$x$ is given by the formula
\begin{equation*}
\varphi_x(\tau) = 
\begin{cases}
b\big(\ell(x)\big), \quad & \tau\in \big[a\big(\ell(x)\big),x_1\big);\\
a\big(\ell(x)\big), \quad & \tau\in \big[x_1,b\big(\ell(x)\big)\big].
\end{cases}
\end{equation*}
\end{St}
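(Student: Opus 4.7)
The plan is very short because the proposed optimizer is explicit and its structure makes each verification almost immediate. Concretely, writing $a = a(\ell(x))$ and $b = b(\ell(x))$, the function~$\varphi_x$ takes only the two values~$a$ and~$b$, with proportions chosen so that its mean equals~$x_1$. My strategy splits the verification into the three conditions bundled into Definition~\ref{Opt}: (i) $\varphi_x \in \BMO_\eps([a,b])$, (ii) $\bp{\varphi_x} = x$, and (iii) $f[\varphi_x] = B(x)$.

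First, I would compute~$\bp{\varphi_x}$ directly. The identity $\av{\varphi_x}{[a,b]} = \frac{(x_1-a)b + (b-x_1)a}{b-a} = x_1$ is immediate, and the analogous calculation for $\av{\varphi_x^2}{[a,b]}$ yields $x_1(a+b) - ab$, which is the $x_2$-coordinate of the unique point on the chord $[A,B]$ having first coordinate $x_1$. Since $x$ lies on that chord by construction, this gives~(ii). For~(iii), the value~$f[\varphi_x] = \frac{(x_1-a)f(b) + (b-x_1)f(a)}{b-a}$ is precisely the right-hand side of~\eqref{vallun}, so $f[\varphi_x]=B(x)$.

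The only substantive point is~(i). For an arbitrary subinterval $J \subset [a,b]$, the restriction $\varphi_x|_{J}$ is either constant (if $J$ avoids the cut point $x_1$) or still takes only the two values $a$ and $b$. In the non-trivial case write $p = |\{\tau \in J \mid \varphi_x(\tau)=a\}|/|J|$. Then
\begin{equation*}
\av{\varphi_x^2}{J} - \av{\varphi_x}{J}^2 = p(1-p)(b-a)^2 \le \frac{(b-a)^2}{4}.
\end{equation*}
The chord $[A,B]$ lies in $\Omega_\eps$, which forces $b-a \le 2\eps$ (the midpoint condition $\tfrac{a^2+b^2}{2}\le \bigl(\tfrac{a+b}{2}\bigr)^2 + \eps^2$ is equivalent to this), so the right-hand side is bounded by~$\eps^2$.

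There is no real obstacle here; the mild subtlety worth recording is the observation that the BMO norm of a two-valued step function is controlled by a quarter of the squared gap between its values, and that this gap is at most $2\eps$ precisely because the defining chord of the chordal domain is constrained to $\Omega_\eps$. All three conditions verified, $\varphi_x$ is an optimizer, concluding the proof.
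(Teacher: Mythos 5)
Your proposal is correct and follows essentially the same route as the paper: it verifies $\bp{\varphi_x}=x$, $f[\varphi_x]=B(x)$ by direct computation, and then shows $\varphi_x\in\BMO_\eps$ by controlling the variance over subintervals. The only cosmetic difference is that the paper phrases the BMO check geometrically (``all Bellman points of $\varphi_x|_J$ lie on the chord $[A,B]\subset\Omega_\eps$'') while you unpack the same fact into the explicit bound $p(1-p)(b-a)^2\le(b-a)^2/4\le\eps^2$; both rest on the same observation that the defining chord sits inside the strip.
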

\begin{proof}
It is obvious that~$B(x) = \av{f(\varphi_x)}{[a(\ell(x)),b(\ell(x))]}$ and~$\bp{\varphi_x} = x$. It is easy to see that all the Bellman points of~$\varphi$ lie on $[a(l(x)),b(l(x))] \subset \Omega_\eps$, therefore,~$\varphi_x \in \BMO_{\eps}$.
\end{proof}

\begin{Rem}
The optimizer~$\varphi_x$ we suggest for a chordal domain is non-increasing. One can construct a non-decreasing optimizer simply taking the function~$\tau\mapsto \varphi_x\big(a(\ell(x)) + b(\ell(x)) - \tau\big)$ defined on the same interval.
\end{Rem}

Let us now pass to the case of a closed multicup.
\begin{St}\label{OptimizersClosedMulticup}\index{multicup! closed multicup}
Let~$B$ be the standard candidate on~$\ClMTC(\{\mathfrak{a}_i\}_{i=1}^k)$. Then\textup, there exists a monotone  optimizer~$\varphi_x$ for~$B$ at any point~$x \in \ClMTC(\{\mathfrak{a}_i\}_{i=1}^k)$\textup, moreover\textup,~$\mathfrak{a}_1^{\mathrm l} \leq \varphi_x \leq \mathfrak{a}_k^{\mathrm r}$. 
\end{St}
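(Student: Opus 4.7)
The plan is to construct $\varphi_x$ as a monotone step function whose values all lie in the set $\cup_{i=1}^k \mathfrak{a}_i$. The reason this suffices is the identity $f(a) = P_{\mathfrak{L}}(a)$ for $a \in \cup_i \mathfrak{a}_i$ (see~\eqref{PolynomialForLinearityDomain} and~\eqref{CandidateInMultifigure}): for any test function $\varphi$ with values in $\cup_i \mathfrak{a}_i$ we have
\begin{equation*}
f[\varphi] = \av{P_{\mathfrak{L}}(\varphi)}{I} = \beta_0 + \beta_1 \av{\varphi}{I} + \beta_2 \av{\varphi^2}{I} = B(\bp{\varphi}),
\end{equation*}
so any monotone test function with values in $\cup_i \mathfrak{a}_i$ and $\bp{\varphi} = x$ is automatically a monotone optimizer at~$x$.

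The key geometric observation I would use is that $\ClMTC(\{\mathfrak{a}_i\}_{i=1}^k)$ is a convex planar set whose extreme points are exactly $\{(a,a^2)\mid a \in \cup_i \mathfrak{a}_i\}$. Convexity is verified by noting that the lower boundary (the zigzag of arcs $\mathfrak{A}_i$ and chords $[\mathfrak{A}_i^{\mathrm r},\mathfrak{A}_{i+1}^{\mathrm l}]$) is the graph of a convex function, since the slope is monotone increasing along it: on $\mathfrak{A}_i$ the slope is $2t$, and at the junction $\mathfrak{A}_i^{\mathrm r}$ the slope jumps from $2\mathfrak{a}_i^{\mathrm r}$ to the chord slope $\mathfrak{a}_i^{\mathrm r}+\mathfrak{a}_{i+1}^{\mathrm l} > 2\mathfrak{a}_i^{\mathrm r}$, and similarly at $\mathfrak{A}_{i+1}^{\mathrm l}$. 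Together with the upper half-plane defined by the chord $[\mathfrak{A}_1^{\mathrm l},\mathfrak{A}_k^{\mathrm r}]$, this gives convexity. Strict convexity of the lower parabola makes every arc point extreme, while the only corners on the upper chord are $\mathfrak{A}_1^{\mathrm l}$ and $\mathfrak{A}_k^{\mathrm r}$, already arc points.

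Applying Carath\'eodory's theorem in $\mathbb{R}^2$, I would then write $x = \sum_{j=1}^{3} \lambda_j (a_j, a_j^2)$ with $\lambda_j \ge 0$, $\sum\lambda_j = 1$, $a_j \in \cup_i \mathfrak{a}_i$, ordered so that $a_1 \le a_2 \le a_3$. Define $\varphi_x$ on $I = [0,1]$ as the non-decreasing step function equal to $a_j$ on the subinterval of length $\lambda_j$. Then $\bp{\varphi_x} = x$ by construction. To verify the $\BMO_\eps$-bound, observe that $\varphi_x$ takes values in $[\mathfrak{a}_1^{\mathrm l},\mathfrak{a}_k^{\mathrm r}]$, and the defining inequality of a closed multicup is $\mathfrak{a}_k^{\mathrm r} - \mathfrak{a}_1^{\mathrm l} < 2\eps$; hence for every subinterval $J \subset I$ the elementary variance estimate gives
\begin{equation*}
\av{\varphi_x^2}{J} - \av{\varphi_x}{J}^2 \le \tfrac{1}{4}\bigl(\mathfrak{a}_k^{\mathrm r} - \mathfrak{a}_1^{\mathrm l}\bigr)^2 < \eps^2,
\end{equation*}
so $\BNorm{\varphi_x} < \eps$. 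The monotonicity of $\varphi_x$, the bound $\mathfrak{a}_1^{\mathrm l} \le \varphi_x \le \mathfrak{a}_k^{\mathrm r}$, and optimality all follow.

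The only genuine obstacle is the geometric lemma about convexity and extreme points of $\ClMTC$; everything else is either routine verification or a direct application of the slope-monotonicity argument above. Since the proposition only asks for existence of a monotone optimizer (not one of a specific form matching adjacent figures), the crude three-step construction above is enough; for gluing purposes in Section~\ref{s53} one may later refine the choice of $\lambda_j$ so that boundary points of $\ClMTC$ lying on $[\mathfrak{A}_i^{\mathrm r},\mathfrak{A}_{i+1}^{\mathrm l}]$ or on the upper chord use only the two endpoint values of the corresponding chord, matching the chord-optimizer of Proposition~\ref{OptimizersChordalDomain}.
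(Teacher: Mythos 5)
Your construction is correct and follows the same route as the paper: write $x$ as a convex combination of three parabola points $A_1,A_2,A_3$ with $a_j\in\cup_i\mathfrak{a}_i$ by Carath\'eodory, and take the monotone three-step function. The only place you diverge is the $\BMO_\eps$ verification: the paper observes that for any subinterval $J$ the Bellman point $\bp{\varphi_x|_J}$ is again a convex combination of the $A_j$, hence lies in the closed multicup (which is contained in $\Omega_\eps$), while you invoke the elementary variance bound $\av{\varphi_x^2}{J}-\av{\varphi_x}{J}^2\le\tfrac14(\mathfrak{a}_k^{\mathrm r}-\mathfrak{a}_1^{\mathrm l})^2<\eps^2$; both are valid, and yours is arguably slightly more self-contained since it uses only $\mathfrak{a}_k^{\mathrm r}-\mathfrak{a}_1^{\mathrm l}<2\eps$ rather than convexity of the region.
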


\begin{proof}
Fix~$x\in \ClMTC(\{\mathfrak{a}_i\}_{i=1}^k)$. By elementary geometry considerations (alternatively, one may invoke the Caratheodory theorem),~$x$ is a convex combination of the three points~$A_1,A_2,A_3 \in \cup_{i=1}^k\mathfrak{A}_i$:
\begin{equation*}
x = \alpha_1 A_1 + \alpha_2 A_2 + \alpha_3 A_3;\, \quad \alpha_1+\alpha_2+\alpha_3=1, \quad \alpha_j \geq 0.
\end{equation*}
Without loss of generality, we may assume that~$a_1 \leq a_2 \leq a_3$. We put
\begin{equation*}
\varphi_x(\tau)=
\begin{cases}
a_1,\quad & \tau \in [0,\alpha_1);\\
a_2,\quad & \tau \in [\alpha_1,\alpha_1+\alpha_2);\\
a_3,\quad & \tau \in [\alpha_1+\alpha_2,1).
\end{cases}
\end{equation*}
The equality~$\av{\varphi_x}{[0,1]} = x$ is evident. The equality~$B(x) = \av{f(\varphi_x)}{[0,1]}$ follows from linearity of the Bellman candidate inside the closed multicup. The Bellman points of~$\varphi_x$ lie in the multicup, thus,~$\varphi_x \in \BMO_{\eps}$.
\end{proof}

\begin{St}\label{OptimizersMulticup}\index{multicup}
Let~$B$ be the standard candidate on~$\MTC(\{\mathfrak{a}_i\}_{i=1}^k)$. Then\textup, there exists a monotone optimizer~$\varphi_x$ for~$B$ at any point~$x \in \MTC(\{\mathfrak{a}_i\}_{i=1}^k)$\textup, moreover\textup,~$\mathfrak{a}_1^{\mathrm l} \leq \varphi_x \leq \mathfrak{a}_k^{\mathrm r}$. 
\end{St}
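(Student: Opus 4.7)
The strategy generalizes the closed-multicup argument (Proposition~\ref{OptimizersClosedMulticup}): I take $\varphi_x$ to be a non-decreasing step function with values in $\bigcup_i \mathfrak{a}_i$. The only novelty for the non-closed case is that the three Carath\'eodory points may need to be refined to a denser monotone chain in order to keep every partial average inside $\Omega_\eps$.

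\textbf{Geometric input.} The key observation is the inclusion $\MTC(\{\mathfrak{a}_i\}_{i=1}^k) \subseteq \operatorname{conv}\big(\bigcup_i \mathfrak{A}_i\big)$. The lower boundaries of both sets coincide, consisting of the arcs $\mathfrak{A}_i$ and the chords $[\mathfrak{A}_i^{\mathrm r}, \mathfrak{A}_{i+1}^{\mathrm l}]$. The upper boundary of $\operatorname{conv}\big(\bigcup_i \mathfrak{A}_i\big)$ is the long chord $[\mathfrak{A}_1^{\mathrm l}, \mathfrak{A}_k^{\mathrm r}]$, whereas the upper boundary of $\MTC$ consists (in order of increasing $x_1$) of the left border tangent from $\mathfrak{A}_1^{\mathrm l}$, the free-boundary arc, and the right border tangent from $\mathfrak{A}_k^{\mathrm r}$. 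A short computation using $\mathfrak{a}_k^{\mathrm r} - \mathfrak{a}_1^{\mathrm l} \geq 2\eps$ shows that each of these three pieces lies at or below the long chord.

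\textbf{Construction and verification.} Fix $x \in \MTC$. By Carath\'eodory's theorem in $\mathbb{R}^2$, write $x = \sum_{l=1}^{3} \alpha_l\, (c_l, c_l^2)$ with $c_1 \leq c_2 \leq c_3$ in $\bigcup_i \mathfrak{a}_i$ and $\alpha_l \geq 0$, $\sum_l \alpha_l = 1$. If consecutive $c_l$'s are more than $2\eps$ apart, insert extra points from $\bigcup_i \mathfrak{a}_i$ to obtain a monotone chain $a_1 < \cdots < a_n$ with $a_{j+1} - a_j \leq 2\eps$; this is possible because the gaps between consecutive arcs are at most $2\eps$ by hypothesis, and arcs of length exceeding $2\eps$ can be subdivided internally. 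Assign weights $w_j$ summing to $1$ with $w_j = \alpha_l$ when $a_j = c_l$ and $w_j = 0$ otherwise, and set $\varphi_x$ to be the monotone step function taking value $a_j$ on an interval of length $w_j$, concatenated in increasing order. Then $\bp{\varphi_x} = x$ holds by construction and $\mathfrak{a}_1^{\mathrm l} \leq \varphi_x \leq \mathfrak{a}_k^{\mathrm r}$. The multifigure collinearity assumption combined with the boundary condition yields $f(a) = \beta_0 + \beta_1 a + \beta_2 a^2$ for every $a \in \bigcup_i \mathfrak{a}_i$, with the $\beta_l$ given by~\eqref{coef}, so $f(\varphi_x) = \beta_0 + \beta_1 \varphi_x + \beta_2 \varphi_x^2$ pointwise, whence $\av{f(\varphi_x)}{[0,1]} = \beta_0 + \beta_1 x_1 + \beta_2 x_2 = B(x)$. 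Finally, $\varphi_x \in \BMO_\eps$ follows from Corollary~\ref{DeliveryCurveCorollary}: the delivery curve is the polygonal path through $(a_1, a_1^2), \ldots, (a_n, a_n^2)$, convex by Lemma~\ref{Monotone_Convex}, and each segment lies inside $\Omega_\eps$ thanks to $a_{j+1} - a_j \leq 2\eps$.

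\textbf{Main obstacle.} The geometric inclusion $\MTC \subseteq \operatorname{conv}\big(\bigcup_i \mathfrak{A}_i\big)$ is the most delicate step. Its verification uses the multicup hypothesis $\mathfrak{a}_k^{\mathrm r} - \mathfrak{a}_1^{\mathrm l} \geq 2\eps$ in an essential way, since it is precisely this inequality that forces both border tangents and the intervening free-boundary arc to stay at or below the long chord $[\mathfrak{A}_1^{\mathrm l}, \mathfrak{A}_k^{\mathrm r}]$; without it, parts of $\MTC$'s upper boundary would rise above the top of the convex hull and the step-function ansatz would fail by the elementary variance bound $x_2 - x_1^2 \leq (x_1 - \mathfrak{a}_1^{\mathrm l})(\mathfrak{a}_k^{\mathrm r} - x_1)$.
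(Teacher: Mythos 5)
Your geometric observation that $\MTC(\{\mathfrak{a}_i\}_{i=1}^k) \subseteq \operatorname{conv}\big(\bigcup_i \mathfrak{A}_i\big)$ is correct (the variance bound you cite is exactly the right check), as is the step-function construction and the verification that $\bp{\varphi_x}=x$ and $\av{f(\varphi_x)}{} = B(x)$. The gap lies entirely in the claim that $\varphi_x \in \BMO_\eps$.

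The delivery curve of a monotone step function is \emph{not} the polygonal path through $(a_1,a_1^2),\dots,(a_n,a_n^2)$. By formula~\eqref{CurveGenerator}, its vertices are the partial Bellman points $\gamma(\tau_j) = \bp{\varphi_x|_{[0,\tau_j]}}$, $\tau_j = \sum_{l\le j} w_l$, and only the first of these, $(a_1,a_1^2)$, lies on the fixed boundary. By formula~\eqref{FirstDerivative}, the $j$-th segment lies on the line through $\gamma(\tau_{j-1})$ (an \emph{interior} point of $\Omega_\eps$, a convex combination of $(a_l,a_l^2)$ for $l<j$) and $(a_j,a_j^2)$. Your bound $a_{j+1}-a_j \le 2\eps$ controls neither these lines nor these vertices. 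Moreover, since you assign weight $w_j=0$ to every inserted point, the refinement leaves the step function — and hence its delivery curve — literally unchanged, so it accomplishes nothing.

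The hypotheses of Corollary~\ref{DeliveryCurveCorollary} in fact fail here, as the paper warns explicitly in the middle of its own proof: ``if one draws the delivery curve for $\varphi_x$, he sees that in some cases it does not fall under the scope of Lemma~\ref{DeliveryCurveLemma} (the tangent may cross the free boundary).'' Concretely, take $\mathfrak{a}_1=\{0\}$, $\mathfrak{a}_2=\{2\eps\}$, $\mathfrak{a}_3=\{4\eps\}$ and $x=(2\eps,5\eps^2)$; Carath\'eodory gives $(\alpha_1,\alpha_2,\alpha_3)=(1/8,3/4,1/8)$, all gaps already $\le 2\eps$. Then $\gamma(7/8)=(12\eps/7,\,24\eps^2/7)$, and the final segment lies on the line through this point and $(4\eps,16\eps^2)$,
\[
x_2 = \tfrac{11}{2}\eps\, x_1 - 6\eps^2,
\]
which meets $x_2=x_1^2+\eps^2$ at $x_1=2\eps$ and $x_1=\tfrac{7}{2}\eps$, hence crosses the free boundary. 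Corollary~\ref{DeliveryCurveCorollary} cannot be invoked.

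For this reason the paper verifies $\varphi_x\in\BMO_\eps$ by a different mechanism: it separates $x$ from the convex hull of $\{x_2 > x_1^2+\eps^2\}\cup\{\mathfrak{A}_1^{\mathrm l},\mathfrak{A}_k^{\mathrm r}\}$ by a line $\varkappa$, reads off at most four fixed-boundary points $A_1,\dots,A_4$ from where $\varkappa$ meets $\partial\MTC$, and then checks $\av{\varphi_x}{J}\in\Omega_\eps$ for each subinterval $J$ by cases: either $\av{\varphi_x}{J}$ lands on one of the short chords $[A_1,A_2]$ or $[A_3,A_4]$ (of projection length $\le 2\eps$), or it is separated from the free boundary by $\varkappa$. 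You should replace your delivery-curve step with an argument of this type.
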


\begin{proof}
Consider the convex open set~$\Omega'$ that is the convex hull of~$\{x \in \mathbb{R}^2\mid x_1^2 + \eps^2 < x_2\}$ and the points~$\mathfrak{A}_1^{\mathrm{l}}$ and~$\mathfrak{A}_k^{\mathrm{r}}$. Since~$x \notin \Omega'$, by the separation theorem, there exists a line~$\varkappa = \varkappa(x)$ that passes through~$x$ and does not intersect~$\Omega'$. This line crosses the boundary of~$\MTC(\{\mathfrak{a}_i\}_{i=1}^k)$ twice. There may be two variants of such a crossing: either~$\varkappa$ crosses an arc~$\mathfrak{A}_i$,~$i \in \{1,2,\ldots,k\}$, or~$\varkappa$ crosses a segment~$[\mathfrak{A}_i^{\mathrm{r}},\mathfrak{A}_{i+1}^{\mathrm{l}}]$,~$i \in \{1,2,\ldots,k-1\}$ (by the condition~$\varkappa \cap \Omega' = \varnothing$, this line cannot cross the part of the boundary consisting of tangents and the arc of the upper parabola). Anyway, let these two points of intersection be~$y$ and~$z$, suppose that~$y_1 < z_1$. 

\begin{figure}[h!]
\begin{center}
\includegraphics[width = 0.9\linewidth]{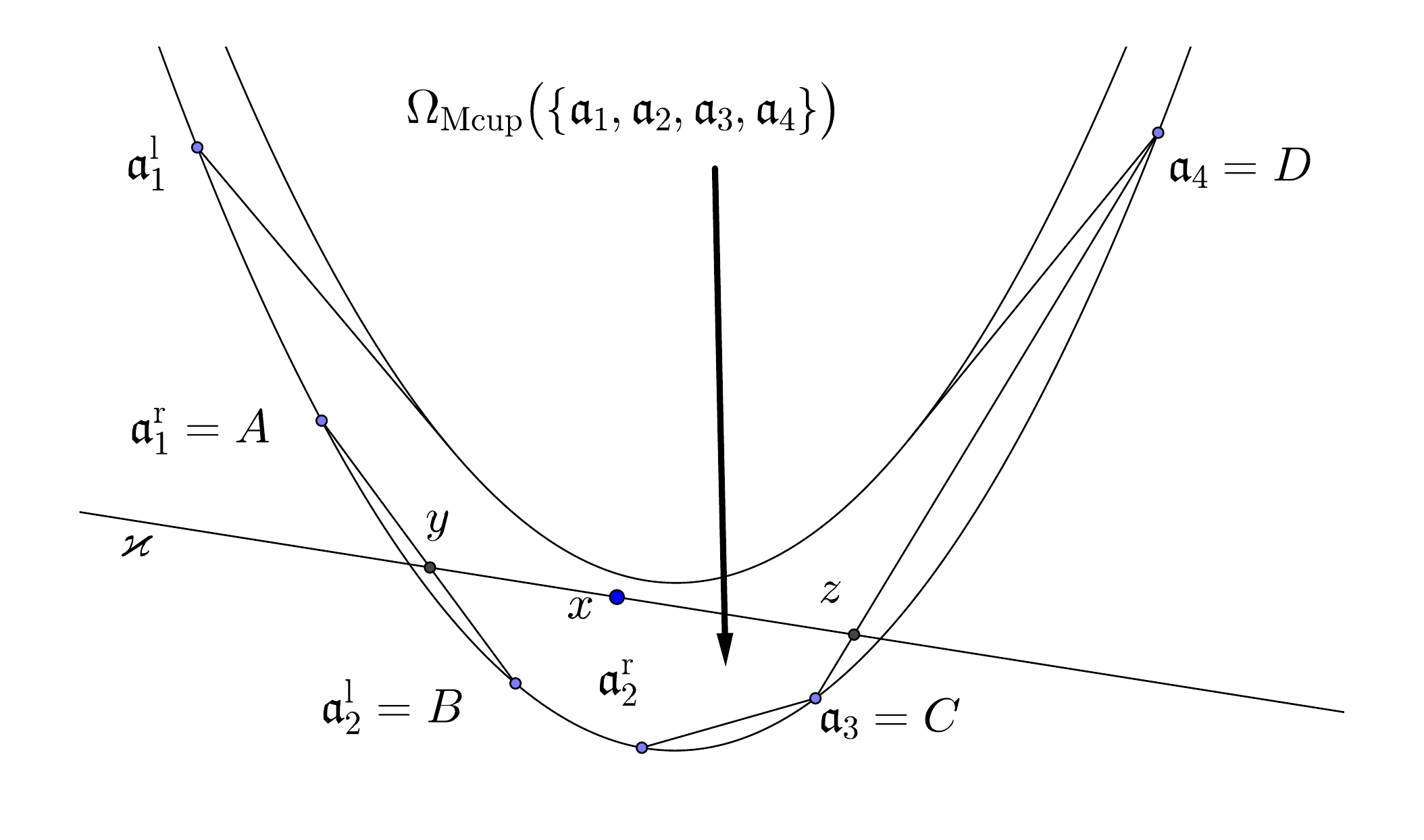}
\caption{Construction of optimizer in multicup.}
\label{fig:DCMC}
\end{center}
\end{figure}

If~$y$ lies on the chord~$[\mathfrak{A}_i^{\mathrm{r}},\mathfrak{A}_{i+1}^{\mathrm{l}}]$, then we can write
\begin{equation*}
y= \alpha_y \mathfrak{A}_i^{\mathrm{r}} + \beta_y \mathfrak{A}_{i+1}^{\mathrm{l}};\quad\alpha_y + \beta_y = 1, \quad \alpha_y,\beta_y \geq 0.
\end{equation*}
Similarly, if~$z$ lies on the chord~$[\mathfrak{A}_j^{\mathrm{r}},\mathfrak{A}_{j+1}^{\mathrm{l}}]$, then
\begin{equation*}
z= \alpha_z \mathfrak{A}_j^{\mathrm{r}} + \beta_z \mathfrak{A}_{j+1}^{\mathrm{l}};\quad\alpha_z + \beta_z = 1, \quad \alpha_z,\beta_z \geq 0.
\end{equation*}
So, in any case,~$y= \alpha_y A_1 + \beta_y A_2$,~$z = \alpha_z A_3 + \beta_z A_4$ (see Figure~\ref{fig:DCMC}), where~$A_1,A_2,A_3,A_4$ are some points on~$\MTC(\{\mathfrak{a}_i\}_{i=1}^k) \cap \FixedBoundary \Omega_{\eps}$ (if~$y$ is an intersection of~$\varkappa$ with an arc, then we may take~$A_1=A_2$; similarly with~$z$) such that
\begin{equation*}
a_1 \leq y_1 \leq a_2 \leq a_3 \leq z_1 \leq a_4.
\end{equation*} 
Define the optimizer on the interval~$[y_1,z_1]$ by the formula
\begin{equation}\label{OptimizerFormulaMulticup}
\varphi_x(t) = 
\begin{cases}
a_1,\quad &t \in [y_1,y_1 + \alpha_y(x_1 - y_1));\\
a_2,\quad &t \in [y_1 + \alpha_y(x_1 - y_1), x_1);\\
a_3,\quad &t \in [x_1, x_1+ \alpha_z(z_1 - x_1));\\
a_4,\quad &t \in [x_1+ \alpha_z(z_1 - x_1),z_1].
\end{cases}
\end{equation}
As usual, the equalities~$\bp{\varphi_x} = x$ and~$\av{f(\varphi_x)}{[y_1,z_1]} = B(x)$ are evident. However, if one draws the delivery curve for~$\varphi_x$, he sees that in some cases it does not fall under the scope of Lemma~\ref{DeliveryCurveLemma} (the tangent may cross the free boundary), so, we have to verify the condition~$\varphi_x \in \BMO_{\eps}$.

We claim that a point~$\av{\varphi_x}{J}$, where~$J \subset [y_1,z_1]$, either belongs to one of the segments~$[A_1,A_2]$ and~$[A_3,A_4]$, or is separated from the upper parabola by~$\varkappa$. Once the claim is proved, we see that~$\varphi_x \in \BMO_{\eps}$.

We will consider different cases of disposition of~$J$ inside~$[y_1,z_1]$. If~$J$ intersects all four intervals in formula~\eqref{OptimizerFormulaMulticup}, then we may represent~$x$ as a linear combination of~$\av{\varphi_x}{J}$,~$A_1$, and~$A_4$. Since~$A_1$ and~$A_4$ lie above~$\varkappa$ (i.e. in the same half-plane with the free boundary),~$\av{\varphi_x}{J}$ lies below~$\varkappa$.

So, we may suppose that~$J$ intersects at most three intervals from formula~\eqref{OptimizerFormulaMulticup}. Without loss of generality, we may assume that~$J \cap [x_1+ \alpha_z(z_1 - x_1),z_1] = \varnothing$. Then,~$\av{\varphi_x}{J}$ is a linear combination of the points~$A_1, A_2, A_3$.  Again, let~$J \cap [x_1, x_1+ \alpha_z(z_1 - x_1)) \ne \varnothing$ and~$J \cap[y_1,y_1 + \alpha_y(x_1 - y_1)) \ne \varnothing$. In such a case,~$\av{\varphi_x}{J}$ is a linear combination of~$A_3$ and a point from~$[y,A_2]$ (since~$[y_1 + \alpha_y(x_1 - y_1), x_1)\subset J$). Thus, if~$J \cap [x_1, x_1+ \alpha_z(z_1 - x_1)) \ne \varnothing$ and~$J \cap[y_1,y_1 + \alpha_y(x_1 - y_1)) \ne \varnothing$, then~$\av{\varphi_x}{J}$ is separated from the free boundary by~$\varkappa$.

Finally, if~$J \cap [x_1, x_1+ \alpha_z(z_1 - x_1)) = \varnothing$, then~$\av{\varphi_x}{J} \in [A_1,A_2]$; if~$J \cap[y_1,y_1 + \alpha_y(x_1 - y_1)) = \varnothing$, then~$\av{\varphi_x}{J} \in [A_2,A_3]$. So, we have verified the condition~$\varphi_x \in \BMO_{\eps}$.
\end{proof}

\begin{St}\label{OptimizersAngle}\index{angle}
Let~$B$ be the standard candidate in~$\Ang(w)$. Let~$\psi_1$ be a non-decreasing optimizer for the point~$(w-\eps, (w-\eps)^2 + \eps^2)$ such that~$\psi_1 \leq w$\textup, let~$\psi_2$ be a non-increasing optimizer for the point~$(w+\eps, (w+\eps)^2 + \eps^2)$ such that~$\psi_2 \geq w$. Then\textup, there exists an optimizer for every point~$x \in \Ang(w)$.
\end{St}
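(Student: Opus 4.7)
The plan is to use the fact that $B$ is affine on $\Ang(w)$ together with the three ``corner points'' $y_{-}=(w-\eps,(w-\eps)^{2}+\eps^{2})$, $W=(w,w^{2})$, $y_{+}=(w+\eps,(w+\eps)^{2}+\eps^{2})$ whose optimizers are known (namely $\psi_{1}$, the constant $w$, and $\psi_{2}$). Since the angle sits inside the closed triangle $\triangle(y_{-},W,y_{+})$, every $x\in\Ang(w)$ has a unique barycentric representation
\[
x=\alpha_{-}y_{-}+\beta W+\alpha_{+}y_{+},\qquad \alpha_{-}+\beta+\alpha_{+}=1,\ \ \alpha_{\pm},\beta\geq 0.
\]
I would then define $\varphi_{x}$ on $[0,1]$ by concatenation: an affinely rescaled copy of $\psi_{1}$ on $[0,\alpha_{-}]$, the constant $w$ on $[\alpha_{-},\alpha_{-}+\beta]$, and an affinely rescaled copy of the reversed $\psi_{2}$ on $[\alpha_{-}+\beta,1]$. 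Because $\psi_{1}\leq w$ and $\psi_{2}\geq w$, the resulting $\varphi_{x}$ is non-decreasing.

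The identities $\bp{\varphi_{x}}=x$ and $\av{f(\varphi_{x})}{[0,1]}=B(x)$ will follow from a direct splitting of the average into the three phases, using $\bp{\psi_{1}}=y_{-}$, $\bp{\psi_{2}}=y_{+}$, the optimizer equalities $\av{f(\psi_{i})}=B(y_{\pm})$, and the linearity of $B$ on $\Ang(w)$. The real work is to verify $\varphi_{x}\in\BMO_{\eps}$, i.e.\ $\var(\varphi_{x}|_{J})\leq\eps^{2}$ for every subinterval $J\subset[0,1]$. If $J$ lies inside a single phase, this is immediate from $\psi_{1},\psi_{2}\in\BMO_{\eps}$ (or from the constant part), so the only nontrivial case is a $J=[s,t]$ that straddles the constant-$w$ phase, possibly reaching into both phases $1$ and $3$.

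The main obstacle, and the step I expect to require the most care, is the variance estimate in this straddling case. Let $p_{1},p_{2},p_{3}$ be the relative lengths of $J$ in the three phases, $\sigma_{1}^{2},\sigma_{3}^{2}\leq\eps^{2}$ the within-phase variances, and $A_{1},A_{3}$ the phase averages. The crucial observation is that because $J$ extends into the constant-$w$ phase, $J\cap\mathrm{phase}\,1$ is a \emph{right} subinterval of the $\psi_{1}$-phase and $J\cap\mathrm{phase}\,3$ is a \emph{left} subinterval of the (reversed) $\psi_{2}$-phase; monotonicity of $\psi_{1}$ together with $\av{\psi_{1}}=w-\eps$ forces $A_{1}\in[w-\eps,w]$, and symmetrically $A_{3}\in[w,w+\eps]$. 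Setting $a=w-A_{1}\in[0,\eps]$ and $b=A_{3}-w\in[0,\eps]$ and decomposing the variance as
\[
\var(\varphi_{x}|_{J})=p_{1}\sigma_{1}^{2}+p_{3}\sigma_{3}^{2}+\bigl(p_{1}(1-p_{1})a^{2}+2p_{1}p_{3}ab+p_{3}(1-p_{3})b^{2}\bigr),
\]
a short algebraic bound $V\leq\eps^{2}(1-p_{2})p_{2}$ combined with $\sigma_{i}^{2}\leq\eps^{2}$ yields
\[
\var(\varphi_{x}|_{J})\leq\eps^{2}(1-p_{2})+\eps^{2}(1-p_{2})p_{2}=\eps^{2}(1-p_{2}^{2})\leq\eps^{2}.
\]
This completes the verification of the BMO condition and thus the proof; the degenerate cases $\alpha_{-}=0$, $\beta=0$, or $\alpha_{+}=0$ are handled by simply omitting the corresponding phase, and the two-phase subintervals reduce to the general case by setting the absent weight to zero.
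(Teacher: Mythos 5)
The structure of the optimizer and the first part of the verification are the same as in the paper: decompose $x$ in barycentric coordinates of $(w-\eps,(w-\eps)^{2}+\eps^{2})$, $W$, $(w+\eps,(w+\eps)^{2}+\eps^{2})$, concatenate rescaled copies of $\psi_{1}$, the constant $w$, and the reversed $\psi_{2}$, and check that $\bp{\varphi_{x}}=x$, $\av{f(\varphi_{x})}{}=B(x)$. Where you depart from the paper is in the verification that $\varphi_{x}\in\BMO_{\eps}$: the paper uses a separating line $\varkappa$ through $x$ together with the delivery-curve Lemma~\ref{DeliveryCurveLemma}, whereas you attempt a direct algebraic variance bound. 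That is a legitimate alternative in principle, but the key step as written is false.

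The claimed ``short algebraic bound'' $V\le\eps^{2}(1-p_{2})p_{2}$ does not hold when $J$ reaches into both outer phases. Evaluating $V$ at $a=b=\eps$ (which is admissible, since you only use $a,b\le\eps$) gives
\begin{equation*}
V=\eps^{2}\bigl[p_{1}(1-p_{1})+2p_{1}p_{3}+p_{3}(1-p_{3})\bigr]
 =\eps^{2}\bigl[(1-p_{2})-(p_{1}-p_{3})^{2}\bigr],
\end{equation*}
and since $|p_{1}-p_{3}|\le p_{1}+p_{3}=1-p_{2}$ we always have
$(1-p_{2})-(p_{1}-p_{3})^{2}\ge(1-p_{2})-(1-p_{2})^{2}=(1-p_{2})p_{2}$,
with strict inequality whenever $p_{1}p_{3}>0$. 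So your inequality is in fact the reverse of the true one, and the chain $\var\le\eps^{2}(1-p_{2})(1+p_{2})$ collapses. Concretely, for $p_{1}=p_{3}=0.45$, $p_{2}=0.1$, $a=b=\eps$ one gets $V=0.9\,\eps^{2}$, far above the claimed $0.09\,\eps^{2}$.

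The conclusion $\var\le\eps^{2}$ is nevertheless true, but it requires an extra constraint that your argument never produces. When $J$ meets both outer phases it must contain the entire constant phase, so $q_{2}:=p_{2}|J|=\beta$ and $q_{1}:=p_{1}|J|\le\alpha_{-}$, $q_{3}:=p_{3}|J|\le\alpha_{+}$. On the other hand the very condition $x\in\Omega_{\eps}$, written in barycentric coordinates, reads $x_{2}-x_{1}^{2}=2\eps^{2}(1-\beta)-\eps^{2}(\alpha_{+}-\alpha_{-})^{2}\le\eps^{2}$, which rearranges to $4\alpha_{-}\alpha_{+}\le\beta^{2}$. Hence $4p_{1}p_{3}\le p_{2}^{2}$. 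With this constraint, $(p_{1}-p_{3})^{2}=(1-p_{2})^{2}-4p_{1}p_{3}\ge(1-p_{2})^{2}-p_{2}^{2}$, so $V\le\eps^{2}p_{2}$ (not $\eps^{2}(1-p_{2})p_{2}$) and then $\var\le\eps^{2}(1-p_{2})+\eps^{2}p_{2}=\eps^{2}$, which does close the argument. Without this input the algebraic route cannot work, because the bound $\eps^{2}\bigl[2(1-p_{2})-(p_{1}-p_{3})^{2}\bigr]$ you effectively obtain exceeds $\eps^{2}$ exactly when $4p_{1}p_{3}>p_{2}^{2}$.

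So the gap is twofold: the stated intermediate inequality is wrong, and the corrected inequality needs the geometric fact $4\alpha_{-}\alpha_{+}\le\beta^{2}$ that the proposal never invokes. The paper's separating-line argument -- choosing $\varkappa$ supporting the free boundary at $x$, showing that $\gamma_{\psi_{1}}$ and $\gamma_{\psi_{2}}$ lie above $\varkappa$, and then observing that $\bp{\varphi_{x}|_{J}}$ must lie below $\varkappa$ whenever $J$ contains the full constant phase -- encodes precisely this constraint automatically, which is why it avoids the arithmetic altogether. You would also need an argument for subintervals meeting only one outer phase that lands them in $\Omega_{\eps}$ rather than merely having variance $\le\eps^{2}$ on each piece; there the observation that $\bp{\psi_{1}|_{J_{1}}}$ lies below the tangent $T_{-}$ through $W$ and $y_{-}$ (and hence below the free boundary) is the clean route.
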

\begin{proof}
The proof of this proposition is very similar to the proof of Proposition~\ref{OptimizersMulticup}. First, there exist numbers~$\alpha_1,\alpha_2,\alpha_3$ such that
\begin{equation*}
x = \alpha_1(w-\eps, (w-\eps)^2 + \eps^2) + \alpha_2(w,w^2) + \alpha_3(w+\eps, (w+\eps)^2 + \eps^2);\quad \alpha_1 + \alpha_2+\alpha_3 = 1,\quad \alpha_j \geq 0.
\end{equation*}
Second, by Remark~\ref{Rescaling}, we may model~$\psi_1$ and~$\psi_2$ on any interval. Suppose that~$\psi_1$ is adjusted to~$[0,\alpha_1]$,~$\psi_2$ is adjusted to~$[0,\alpha_3]$. Define the optimizer~$\varphi_x$ on~$[0,1]$ by the formula
\begin{equation*}
\varphi_x(\tau) = 
\begin{cases}
\psi_1(\tau),\quad & \tau \in [0,\alpha_1);\\
w,\quad & \tau \in [\alpha_1,\alpha_1+\alpha_2);\\
\psi_2(1 - \tau),\quad & \tau \in [\alpha_1+\alpha_2,1].
\end{cases}
\end{equation*}
Again, the equalities~$\bp{\varphi_x} = x$ and~$\av{f(\varphi_x)}{[0,1]} = B(x)$ are evident. We have to verify that~$\varphi_x \in \BMO_{\eps}$. Let~$\varkappa$ be a line passing through~$x$ that separates it from~$\{x \in \mathbb{R}^2 \mid x_1^2 + \eps^2 < x_2\}$. First, we prove that~$\gamma_{\psi_1}$ lies above~$\varkappa$ and~$\gamma_{\psi_2}$ lies above~$\varkappa$ as well. As usual, it suffices to prove the claim about the former curve only. Since~$\psi_1$ is non-increasing, Lemma~\ref{Monotone_Convex} says that the curve~$\gamma_{\psi_1}$ is a graph of a convex function. The condition~$\psi_1 \leq w$ shows that the derivative of this function is not greater than~$2(w-\eps)$, by virtue of formula~\eqref{FirstDerivative}. Therefore,~$\gamma_{\psi_1}$ lies above the tangent to the upper parabola at the point~$(w-\eps,(w-\eps)^2 + \eps^2)$ and on the left of this point. So,~$\gamma_{\psi_1}$ lies above~$\varkappa$.

Let~$J \subset [0,1]$ be an interval, we have to prove that~$\av{\varphi_x}{J} \in \Omega_{\eps}$. Consider several cases of disposition of~$J$ inside~$[0,1]$. 

Suppose that~$[\alpha_1,\alpha_1+\alpha_2] \subset J$. Then we claim that~$\av{\varphi_x}{J}$ is separated from the upper parabola by~$\varkappa$. Since~$x$ is a linear combination of~$\av{\varphi}{[0,\alpha_1) \setminus J}$,~$\av{\varphi}{[\alpha_1+\alpha_2,1] \setminus J}$, and~$\av{\varphi}{J}$, and the first two points lie above~$\varkappa$,~$\av{\varphi}{J}$ lies below~$\varkappa$ indeed.

So, we may suppose that~$[\alpha_1,\alpha_1+\alpha_2]$ is not contained in~$J$. In such a case, Lemma~\ref{DeliveryCurveLemma} may be applied to~$\varphi|_{J}$, because~$\varphi|_{J}$ is a monotone function, and the tangent to the corresponding curve at the endpoint of~$J$ does not intersect the upper parabola.
\end{proof}

\begin{figure}[h!]
\begin{center}
\includegraphics{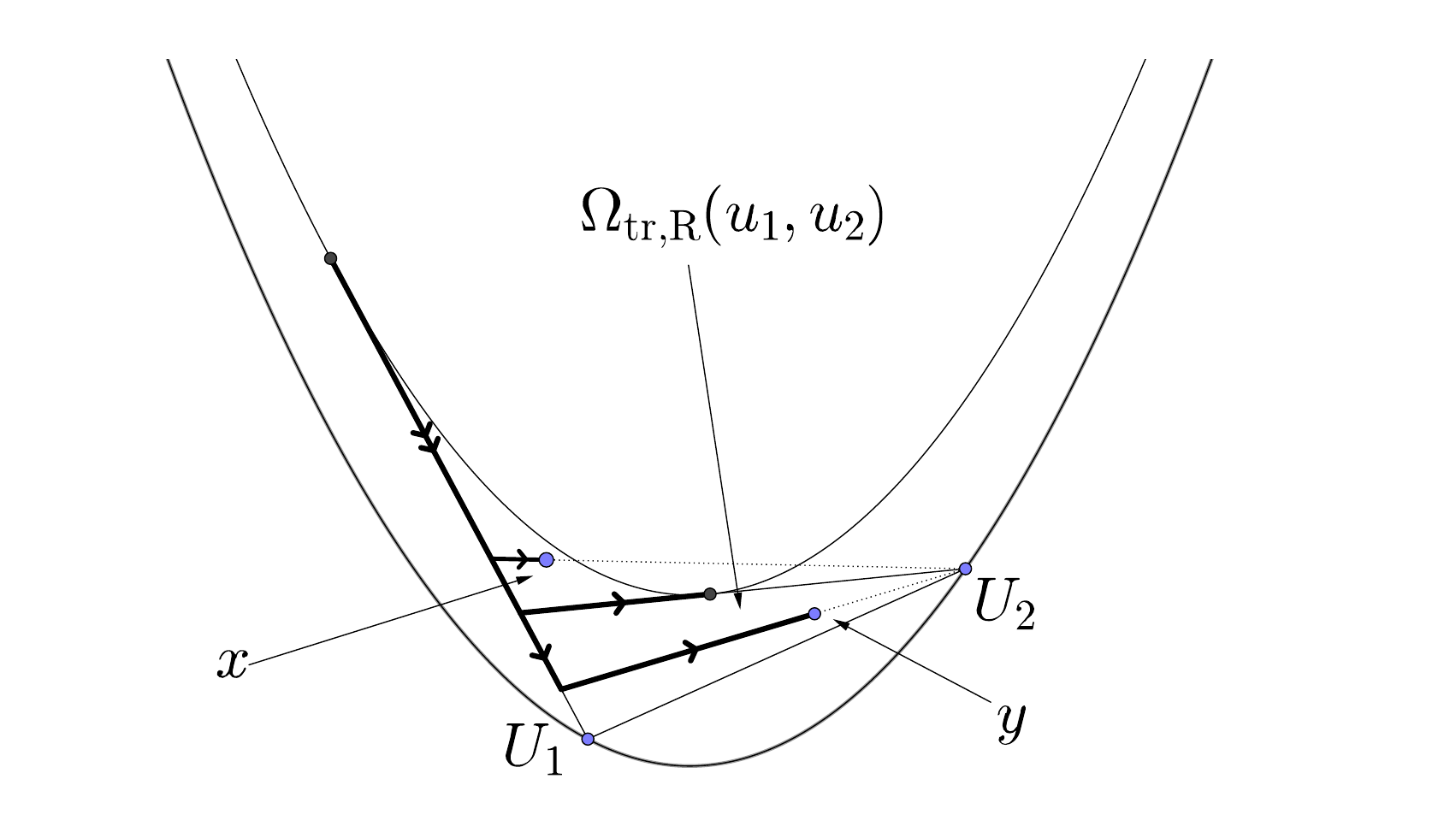}
\caption{Delivery curves inside a trolleybus.}
\label{fig:RTroll_DC}
\end{center}
\end{figure}

\begin{St}\label{OptimizersTrolleybusR}\index{trolleybus}
Let~$B$ be the standard candidate in~$\RTroll(u_1,u_2)$. Suppose~$\psi$ to be a non-decreasing optimizer for the point~$(u_1 - \eps, (u_1 - \eps)^2 + \eps^2)$ such that~$\psi \leq u_1$. Then\textup, for any~$x \in \RTroll(u_1,u_2)$ there exists a non-decreasing optimizer that does not exceed~$u_2$.  
\end{St}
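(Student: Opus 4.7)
I would prove Proposition~\ref{OptimizersTrolleybusR} by writing each $x\in\RTroll(u_1,u_2)$ as a convex combination of three boundary points at which optimizers are already known, and concatenating the corresponding optimizers in non-decreasing order. Set $P_1=(u_1-\eps,(u_1-\eps)^2+\eps^2)$, $P_4=(u_2-\eps,(u_2-\eps)^2+\eps^2)$, $U_1=(u_1,u_1^2)$, $U_2=(u_2,u_2^2)$. The three base points will be $P_1$ (with optimizer $\psi$), $U_1$ and $U_2$ (with trivial constant optimizers $u_1$ and $u_2$). The geometric input needed is that $\RTroll(u_1,u_2)\subset\mathrm{conv}(P_1,U_1,U_2)$: since the upper parabola is convex, the arc from $P_1$ to $P_4$ lies below its own chord, so the trolleybus is contained in the convex hull of its four corners, and a direct barycentric calculation using $L:=u_2-u_1\le2\eps$ gives
\[
P_4=\tfrac{2\eps-L}{2\eps+L}\,P_1+\tfrac{L}{2\eps+L}\,U_1+\tfrac{L}{2\eps+L}\,U_2,
\]
whose coefficients are all non-negative, so $P_4$ itself already lies in the triangle $\mathrm{conv}(P_1,U_1,U_2)$.

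Given $x=\alpha P_1+\beta U_1+\gamma U_2$ with $\alpha+\beta+\gamma=1$ and $\alpha,\beta,\gamma\ge0$, I would rescale $\psi$ to the interval $[0,\alpha]$ (by Remark~\ref{Rescaling}) and set
\[
\varphi_x(\tau)=\begin{cases}\psi(\tau),&\tau\in[0,\alpha);\\ u_1,&\tau\in[\alpha,\alpha+\beta);\\ u_2,&\tau\in[\alpha+\beta,1].\end{cases}
\]
Then monotonicity and the bound $\varphi_x\le u_2$ are immediate from $\psi\le u_1\le u_2$, and additivity of Bellman points gives $\bp{\varphi_x}=\alpha P_1+\beta U_1+\gamma U_2=x$. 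The linearity of $B$ on $\RTroll$ together with $B(U_i)=f(u_i)$ reduces the identity $\av{f(\varphi_x)}{[0,1]}=B(x)$ to
\[
\alpha\, B(P_1)+\beta f(u_1)+\gamma f(u_2)=B(\alpha P_1+\beta U_1+\gamma U_2),
\]
which holds because $\psi$ is itself an optimizer at $P_1$.

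The principal obstacle is the verification that $\varphi_x\in\BMO_\eps$, i.e.\ that $\bp{\varphi_x|_J}\in\Omega_\eps$ for every subinterval $J\subset[0,1]$. I would proceed by a case analysis on the position of $J$ relative to the discontinuities at $\alpha$ and $\alpha+\beta$. Subintervals contained in a single piece are either trivial (two constant pieces) or controlled by $\psi\in\BMO_\eps$. For $J$ straddling only $\alpha+\beta$, the restricted measure has two atoms at $u_1$ and $u_2$ and variance at most $(u_2-u_1)^2/4\le\eps^2$, where this bound is exactly the defining condition of a trolleybus. The delicate case is $J=[a,b]$ straddling $\alpha$: denoting by $m,v$ the mean and variance of $\psi|_{[a,\alpha]}$ and $q=(\alpha-a)/(b-a)$, the variance of $\varphi_x|_J$ is $qv+q(1-q)(m-u_1)^2$, and one must show this stays below $\eps^2$. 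Here one uses $v\le\eps^2$ from $\psi\in\BMO_\eps$, together with a ``cut-off'' argument in the spirit of Proposition~\ref{Stcutoff}: replacing the tail $[a,\alpha]$ of $\psi$ by its own mean $m\le u_1$ and then appending the constant $u_1$ cannot produce more variance than what the single $\BMO_\eps$ bound already allows, essentially because the two-atom concatenation is dominated by a monotone rearrangement whose endpoints are separated by at most the $\BMO$-range of $\psi$. The delivery-curve picture provided by Lemma~\ref{Monotone_Convex} organizes all these cases and guarantees that the final $\bp{\varphi_x|_J}$ stays inside $\Omega_\eps$.
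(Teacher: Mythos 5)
Your construction is exactly the paper's: you write $x$ as a convex combination of $P_1$, $U_1$, $U_2$, rescale $\psi$ to $[0,\alpha]$, and define $\varphi_x$ by concatenating $\psi$, $u_1$, $u_2$. The barycentric computation showing $P_4\in\mathrm{conv}(P_1,U_1,U_2)$ is a nice addition (the paper simply states the containment of $\RTroll(u_1,u_2)$ in the triangle without proof), and the verification of $\bp{\varphi_x}=x$ and $\av{f(\varphi_x)}{[0,1]}=B(x)$ is routine and correct.

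The problem is the $\BMO_\eps$ verification, and there is a genuine gap. Your case analysis treats subintervals $J$ inside a single piece, $J$ straddling only $\alpha+\beta$, and $J=[a,b]$ ``straddling $\alpha$'' with a variance formula
\[
\var(\varphi_x|_J) = qv+q(1-q)(m-u_1)^2,\qquad q=\frac{\alpha-a}{b-a},
\]
but this formula is valid only when $b\le\alpha+\beta$, i.e.\ when $J$ does not also enter the $u_2$-piece. You never address the case in which $J\supset[\alpha,\alpha+\beta]$ and $J$ reaches into both the $\psi$-piece and the $u_2$-piece, which is precisely the case that does not reduce to a two-piece computation. The paper handles it by a separating-line argument: pick a line $\varkappa$ through $x$ separating $x$ from the free boundary and from $U_2$; since $x$ is a convex combination of $\av{\varphi_x}{J}$, $\bp{\psi|_{[0,a]}}$ and $U_2$, and the latter two lie on the far side of $\varkappa$, $\av{\varphi_x}{J}$ must lie on the near side, hence in $\Omega_\eps$. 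Nothing in your proposal plays the role of this step.

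Moreover, even for the case you do consider, the justification is heuristic rather than a proof. The claim $qv+q(1-q)(m-u_1)^2\le\eps^2$ does not follow from $v\le\eps^2$ and the truncation fact of Proposition~\ref{Stcutoff} alone (separately the two summands can each nearly saturate $\eps^2$); what actually controls it is the geometry of the delivery curve: since $\psi$ is non-decreasing with $\psi\le u_1$, the extended delivery curve is convex (Lemma~\ref{Monotone_Convex}), and its tangent inside the constant $u_1$-piece is the segment $[P_1,U_1]$, which touches but does not cross the free boundary, so Lemma~\ref{DeliveryCurveLemma} applies directly. You gesture at this but do not cash it out, whereas the paper's three-case split (both breaks covered: use $\varkappa$; $J$ disjoint from the $u_2$-piece: use Lemma~\ref{DeliveryCurveLemma}; $J$ disjoint from the $\psi$-piece: $\av{\varphi_x}{J}\in[U_1,U_2]$) is clean and complete. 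To fix your argument you would add the separating-line case and replace the rearrangement heuristic by the explicit appeal to Lemma~\ref{DeliveryCurveLemma} for the $J\cap(\alpha+\beta,1]=\varnothing$ case.
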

\begin{proof}
Choose any point~$x \in \RTroll(u_1,u_2)$. The trolleybus~$\RTroll(u_1,u_2)$ lies inside the triangle with the vertices~$U_1,U_2, (u_1 - \eps, (u_1 - \eps)^2 + \eps^2)$. So, there exist~$\alpha_0,\alpha_1,\alpha_2$ such that
\begin{equation*}
x = \alpha_0(u_1 - \eps, (u_1 - \eps)^2 + \eps^2) + \alpha_1 U_1 + \alpha_2 U_2;\quad \alpha_0+\alpha_1+\alpha_2 = 1, \quad \alpha_j \geq 0.
\end{equation*}
Again, by Remark~\ref{Rescaling}, we may assume that~$\psi$ is adjusted to~$[0,\alpha_0]$. Define the optimizer~$\varphi_x\colon [0,1] \to (-\infty,u_2]$ by the formula
\begin{equation*}
\varphi_x(\tau) = 
\begin{cases}
\psi(\tau),\quad & \tau \in [0,\alpha_0);\\
u_1,\quad & \tau \in [\alpha_0,\alpha_0+\alpha_1);\\
u_2,\quad & \tau \in [\alpha_0+\alpha_1,1].
\end{cases}
\end{equation*}
As usual, we only have to verify the condition~$\varphi_x \in \BMO_{\eps}$, or~$\av{\varphi_x}{J} \in \Omega_{\eps}$,~$J \subset[0,1]$. Select a line~$\varkappa = \varkappa(x)$ that passes through~$x$ and separates it from the free boundary and the point~$U_2$. As in the proof of Propositions~\ref{OptimizersMulticup} and~\ref{OptimizersAngle}, we will have to consider different cases of location of~$J$ inside~$[0,1]$.

If~$[\alpha_0,\alpha_0+\alpha_1) \subset J$, then~$\av{\varphi_x}{J}$ is separated from the upper parabola by~$\varkappa_x$ (this standard reasoning had already been done during the proof of Propositions~\ref{OptimizersMulticup} and~\ref{OptimizersAngle}).

If~$J \cap (\alpha_0+\alpha_1,1] = \varnothing$, then the situation falls under the scope of Lemma~\ref{DeliveryCurveLemma}.

If~$J \cap [0,\alpha_0) = \varnothing$, then~$\av{\varphi_x}{J} \subset [U_1,U_2]$.

So, we have considered all the cases and verified that~$\varphi_x \in \BMO_{\eps}$.
\end{proof}
As usual, we have a symmetric proposition.

\begin{St}\label{OptimizersTrolleybusL}
Let~$B$ be the standard candidate in~$\LTroll(u_1,u_2)$. Suppose~$\psi$ to be a non-increasing optimizer for the point~$(u_2 + \eps, (u_1 + \eps)^2 + \eps^2)$ such that~$\psi \geq u_2$. Then\textup, for any~$x \in \LTroll(u_1,u_2)$ there exists a non-increasing optimizer that is not less than~$u_1$.  
\end{St}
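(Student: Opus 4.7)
The plan is to mirror the proof of Proposition~\ref{OptimizersTrolleybusR} with the natural symmetric modifications. First I would use the geometry of $\LTroll(u_1,u_2)$ to observe that this region lies inside the triangle with vertices $U_1$, $U_2$, and $(u_2+\eps,(u_2+\eps)^2+\eps^2)$; indeed, the latter point is the tangency with the free boundary of the left tangent issuing from $U_2$, and the picture is the mirror image of Fig.~\ref{fig:RTroll_DC}. Hence, given any $x\in\LTroll(u_1,u_2)$, I can write
\begin{equation*}
x = \alpha_0\big(u_2+\eps,(u_2+\eps)^2+\eps^2\big) + \alpha_2 U_2 + \alpha_1 U_1,\qquad \alpha_0+\alpha_1+\alpha_2=1,\quad \alpha_j\ge 0.
\end{equation*}

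Next I would invoke Remark~\ref{Rescaling} to rescale $\psi$ so that it is defined on $[0,\alpha_0]$, and then set
\begin{equation*}
\varphi_x(\tau)=\begin{cases}\psi(\tau), & \tau\in[0,\alpha_0);\\ u_2, & \tau\in[\alpha_0,\alpha_0+\alpha_2);\\ u_1, & \tau\in[\alpha_0+\alpha_2,1].\end{cases}
\end{equation*}
Since $\psi\ge u_2$ and $u_2>u_1$, this function is non-increasing and bounded below by $u_1$. The equality $\bp{\varphi_x}=x$ is automatic from the averaging, and $\langle f(\varphi_x)\rangle_{[0,1]}=B(x)$ follows from the linearity of the standard candidate on the trolleybus together with $B\big(u_2+\eps,(u_2+\eps)^2+\eps^2\big)=\langle f(\psi)\rangle_{[0,\alpha_0]}$ and the boundary condition.

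The only nontrivial point—and the analogue of the main obstacle in the right-trolleybus proof—is to check that $\varphi_x\in\BMO_\eps$, i.e.\ that $\langle\varphi_x\rangle_J\in\Omega_\eps$ for every subinterval $J\subset[0,1]$. For this I would pick a line $\varkappa=\varkappa(x)$ through $x$ that separates $x$ simultaneously from $\FreeBoundary\Omega_\eps$ and from $U_1$ (such a line exists because $x$ lies in the convex hull of the free boundary and the two points $U_1,U_2$, but is disjoint from the open convex hull of $\FreeBoundary\Omega_\eps\cup\{U_1\}$), and then split into the symmetric three cases used in Proposition~\ref{OptimizersTrolleybusR}: if $[\alpha_0,\alpha_0+\alpha_2)\subset J$ then $\langle\varphi_x\rangle_J$ lies on the opposite side of $\varkappa$ from the free boundary by the standard argument (writing $x$ as a convex combination involving $\langle\varphi_x\rangle_J$ and points above $\varkappa$); if $J\cap[0,\alpha_0)=\varnothing$ then $\langle\varphi_x\rangle_J\in[U_1,U_2]\subset\Omega_\eps$; and if $J\cap[\alpha_0+\alpha_2,1]=\varnothing$, the restriction $\varphi_x\big|_J$ is non-increasing, its delivery curve is convex by Lemma~\ref{Monotone_Convex}, and the tangent at the right endpoint of the curve does not cross $\FreeBoundary\Omega_\eps$ (its direction is controlled by $\psi\ge u_2$ and the geometry of the tangency point), so Corollary~\ref{DeliveryCurveCorollary} applies directly. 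Putting the three cases together yields $\varphi_x\in\BMO_\eps$ and completes the construction.
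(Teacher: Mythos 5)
Your proof is correct and is exactly the mirror-image argument the paper intends: it simply says ``As usual, we have a symmetric proposition'' after Proposition~\ref{OptimizersTrolleybusR}, and you have carried out the symmetric decomposition, the symmetric separating line $\varkappa$ (now isolating $x$ from the free boundary together with $U_1$ rather than $U_2$), and the same three-case analysis, in the same order and at the same level of detail as the proof of Proposition~\ref{OptimizersTrolleybusR}. You also correctly read the apparent typo in the statement as $(u_2+\eps,(u_2+\eps)^2+\eps^2)$ for the incoming node, which is what the symmetry and the table of nodes in Section~\ref{s53} confirm.
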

It remains to construct the optimizers for multitrolleybuses, birdies, and multibirdies. Formulas from Subection~\ref{s344} will help us in this business.

\begin{St}\label{OptimizersMultitrolleybusR}\index{multitrolleybus}
Let~$B$ be the standard candidate in~$\MTTR(\{\mathfrak{a}_i\}_{i=1}^k)$. Suppose~$\psi$ to be a non-decreasing optimizer for the point~$(\mathfrak{a}_1^{\mathrm{l}} - \eps, (\mathfrak{a}_1^{\mathrm{l}} - \eps)^2 + \eps^2)$ such that~$\psi \leq \mathfrak{a}_1^{\mathrm{l}}$. Then\textup, for any~$x \in \MTTR(\{\mathfrak{a}_i\}_{i=1}^k)$ there exists a non-decreasing optimizer that does not exceed~$\mathfrak{a}_k^{\mathrm r}$.  
\end{St}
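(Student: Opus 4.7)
My plan is to generalize the constructions in the proofs of Propositions~\ref{OptimizersRightTangentDomain} and~\ref{OptimizersTrolleybusR}. Since $\MTTR(\{\mathfrak{a}_i\}_{i=1}^k)$ is a convex linearity domain on which $B$ is a single affine function, the optimizer $\varphi_x$ will be a concatenation of a rescaled copy of $\psi$, several logarithmic pieces that push the delivery curve along the free boundary over single arcs, and constant pieces with values in $\bigcup_i \mathfrak{a}_i$ arranged in increasing order. The crucial distinction from a single tangent domain is that, while the quadratic polynomial $P$ from~\eqref{PolynomialForLinearityDomain} associated with $\MTTR$ coincides with $f$ on each arc $\mathfrak{a}_i$, they need not agree on the gaps $(\mathfrak{a}_i^{\mathrm r}, \mathfrak{a}_{i+1}^{\mathrm l})$, which prevents extending $\psi$ along the whole upper-parabola arc of $\MTTR$ by one uninterrupted logarithm.

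\noindent\textbf{Chain of seed optimizers.} Write $P_i^{\mathrm l} = (\mathfrak{a}_i^{\mathrm l} - \eps, (\mathfrak{a}_i^{\mathrm l} - \eps)^2 + \eps^2)$ and similarly $P_i^{\mathrm r}$, so that $P_1^{\mathrm l}$ is the point at which $\psi$ is given. I would build non-decreasing optimizers $\psi_i^{\mathrm l}$ and $\psi_i^{\mathrm r}$ at $P_i^{\mathrm l}$ and $P_i^{\mathrm r}$ recursively, starting from $\psi_1^{\mathrm l} = \psi$ and alternating two extensions. Inside a single arc $\mathfrak{a}_i$, $\psi_i^{\mathrm l}$ is extended to $\psi_i^{\mathrm r}$ by the logarithmic formula~\eqref{OptimizerRightTangentsUpperParabola}; since $f(u) = P(u)$ for $u \in \mathfrak{a}_i$, the computation~\eqref{NewtonLeibniz} from the proof of Proposition~\ref{OptimizersRightTangentDomain} applies verbatim. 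To cross the gap from $\mathfrak{a}_i^{\mathrm r}$ to $\mathfrak{a}_{i+1}^{\mathrm l}$, an elementary convex-geometry check (using the inequality $\mathfrak{a}_{i+1}^{\mathrm l} - \mathfrak{a}_i^{\mathrm r} \leq 2\eps$ built into the definition of the multitrolleybus) shows that $P_{i+1}^{\mathrm l}$ lies on or above the chord $[\mathfrak{A}_i^{\mathrm r}, \mathfrak{A}_{i+1}^{\mathrm l}]$ and inside the triangle with vertices $P_i^{\mathrm r}, \mathfrak{A}_i^{\mathrm r}, \mathfrak{A}_{i+1}^{\mathrm l}$, so it admits a representation $P_{i+1}^{\mathrm l} = \alpha_0 P_i^{\mathrm r} + \alpha_1 \mathfrak{A}_i^{\mathrm r} + \alpha_2 \mathfrak{A}_{i+1}^{\mathrm l}$ with non-negative weights summing to one. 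I then take $\psi_{i+1}^{\mathrm l}$ to be $\psi_i^{\mathrm r}$ rescaled to $[0, \alpha_0]$, followed by the constants $\mathfrak{a}_i^{\mathrm r}$ and $\mathfrak{a}_{i+1}^{\mathrm l}$; the identity $B(P_{i+1}^{\mathrm l}) = \av{f(\psi_{i+1}^{\mathrm l})}{[0,1]}$ is immediate from the linearity of $B$ on $\MTTR$.

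\noindent\textbf{Optimizer at a generic point.} With the seed optimizers in hand, for arbitrary $x \in \MTTR(\{\mathfrak{a}_i\}_{i=1}^k)$ I use the right tangent through $x$ and the corresponding $\ur = \ur(x)$ from~\eqref{ur}. If $\ur \in \mathfrak{a}_i \subset \bigcup_j \mathfrak{a}_j$, I further extend $\psi_i^{\mathrm l}$ by the logarithmic formula inside $\mathfrak{a}_i$ to reach an optimizer at $(\ur - \eps, (\ur-\eps)^2+\eps^2)$ and then invoke formula~\eqref{OptimizerRightTangentsAll}. If $\ur$ falls in a gap $(\mathfrak{a}_i^{\mathrm r}, \mathfrak{a}_{i+1}^{\mathrm l})$, then $x$ lies in the triangle $P_i^{\mathrm r} \mathfrak{A}_i^{\mathrm r} \mathfrak{A}_{i+1}^{\mathrm l}$; I decompose $x$ as a convex combination of these three vertices and concatenate $\psi_i^{\mathrm r}$ with the constants $\mathfrak{a}_i^{\mathrm r}$ and $\mathfrak{a}_{i+1}^{\mathrm l}$, exactly as in the proof of Proposition~\ref{OptimizersTrolleybusR}. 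In both cases $\varphi_x$ is non-decreasing because $\psi \leq \mathfrak{a}_1^{\mathrm l} \leq \mathfrak{a}_1^{\mathrm r} \leq \cdots \leq \mathfrak{a}_k^{\mathrm r}$, and $\varphi_x \leq \mathfrak{a}_k^{\mathrm r}$.

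\noindent\textbf{Main obstacle.} The identities $\bp{\varphi_x} = x$ and $\av{f(\varphi_x)}{[0,1]} = B(x)$ follow from the convex-combination structure and the linearity of $B$ on $\MTTR$. As in all earlier propositions of this section, the delicate step is verifying $\varphi_x \in \BMO_{\eps}$, i.e.\ that $\av{\varphi_x}{J} \in \Omega_{\eps}$ for every subinterval $J \subset [0,1]$. By Lemma~\ref{Monotone_Convex} the delivery curve is convex, but it is now a patchwork of free-boundary arcs (from the logarithmic pieces) and straight segments (from the constant pieces), and the case analysis becomes substantially more involved than in the single-trolleybus setting. I would handle it by a separating-line argument generalizing those of Propositions~\ref{OptimizersMulticup},~\ref{OptimizersAngle}, and~\ref{OptimizersTrolleybusR}: pick a line $\varkappa$ through $x$ that separates $x$ from $\{y : y_2 > y_1^2 + \eps^2\}$ together with the extremal vertices of $\MTTR$ lying above $\varkappa$, and verify case by case that $\av{\varphi_x}{J}$ either falls below $\varkappa$ by a convex-combination argument, or is sandwiched between two consecutive points of $\bigcup_i \mathfrak{A}_i$. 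Close to the free boundary, Lemma~\ref{DeliveryCurveLemma} applied piecewise to the logarithmic arcs controls the partial averages. This last verification, which must cope with $O(k)$ alternating logarithmic and constant pieces, is what I expect to be the main technical obstacle.
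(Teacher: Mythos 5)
Your construction is essentially the same as the paper's: decompose $\MTTR(\{\mathfrak{a}_i\}_{i=1}^k)$ via formula~\eqref{RMultitrolleybusDesintegration} into an alternating chain of single-arc tangent pieces and trolleybuses $\RTroll(\mathfrak{a}_i^{\mathrm r},\mathfrak{a}_{i+1}^{\mathrm l})$, then push the seed optimizer through the chain, using a logarithmic extension inside each arc and a triangle decomposition across each gap.

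Where you diverge is in assessing the final $\BMO_\eps$ verification. The paper's proof is two lines precisely because it invokes Propositions~\ref{OptimizersRightTangentDomain} and~\ref{OptimizersTrolleybusR} as black boxes, and each of these already contains its own $\BMO_\eps$ check (both conclude that the constructed function is an \emph{optimizer}, which by Definition~\ref{Opt} means it is a test function, i.e.\ lies in $\BMO_\eps$). The outcoming node of $\Rt(\mathfrak{a}_i^{\mathrm l},\mathfrak{a}_i^{\mathrm r})$ coincides with the incoming node of $\RTroll(\mathfrak{a}_i^{\mathrm r},\mathfrak{a}_{i+1}^{\mathrm l})$, whose outcoming node in turn coincides with the incoming node of $\Rt(\mathfrak{a}_{i+1}^{\mathrm l},\mathfrak{a}_{i+1}^{\mathrm r})$; moreover the monotonicity and the bound $\varphi \leq u$ ($u$ the relevant endpoint) propagate through each step. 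Hence after $i$ steps you already hold in hand a certified $\BMO_\eps$ optimizer at $P_i^{\mathrm r}$ or $P_{i+1}^{\mathrm l}$, and the final invocation of one local proposition produces $\varphi_x \in \BMO_\eps$ outright. The $O(k)$-piece separating-line case analysis you anticipate as the ``main obstacle'' never has to be performed globally; it has already been done locally, once, inside the proof of Proposition~\ref{OptimizersTrolleybusR}.
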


\begin{proof}
We apply formula~\eqref{RMultitrolleybusDesintegration} and decompose a multitrolleybus in an alternating sequence of right trolleybuses and right tangent domains. Applying Proposition~\ref{OptimizersTrolleybusR} to the trolleybuses and Proposition~\ref{OptimizersRightTangentDomain} to the tangent domains consecutively, we build optimizers for all the points inside~$\MTTR(\{\mathfrak{a}_i\}_{i=1}^k)$. 
\end{proof}

\begin{St}\label{OptimizersMultitrolleybusL}
Let~$B$ be the standard candidate in~$\MTTL(\{\mathfrak{a}_i\}_{i=1}^k)$. Suppose~$\psi$ to be a non-increasing optimizer for the point~$(\mathfrak{a}_k^{\mathrm{r}} + \eps, (\mathfrak{a}_k^{\mathrm{r}} + \eps)^2 + \eps^2)$ such that~$\psi \geq \mathfrak{a}_k^{\mathrm{r}}$. Then\textup, for any~$x \in \MTTL(\{\mathfrak{a}_i\}_{i=1}^k)$ there exists a non-increasing optimizer that is not less than~$\mathfrak{a}_1^{\mathrm l}$.  
\end{St}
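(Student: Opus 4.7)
The plan is to mimic the proof of Proposition~\ref{OptimizersMultitrolleybusR} verbatim, with everything reflected left-to-right. Concretely, I would apply the decomposition formula~\eqref{LMultitrolleybusDesintegration},
\begin{equation*}
\MTTL(\{\mathfrak{a}_i\}_{i=1}^k) = \Big(\biguplus_{i=1}^{k}\MTTL(\{\mathfrak{a}_i\})\Big) \biguplus \Big(\biguplus_{i=1}^{k-1}\LTroll(\mathfrak{a}_i^{\mathrm r},\mathfrak{a}_{i+1}^{\mathrm l})\Big),
\end{equation*}
which presents the left multitrolleybus as a finite alternating union of left tangent domains (each sitting above a single arc $\mathfrak{a}_i$; for $\mathfrak{a}_i$ a single point this degenerates to $\Lt(\mathfrak{a}_i,\mathfrak{a}_i)$) and left trolleybuses $\LTroll(\mathfrak{a}_i^{\mathrm r},\mathfrak{a}_{i+1}^{\mathrm l})$, arranged from right to left. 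By the~$\biguplus$ convention recalled after~\eqref{FirstFormula}, the restriction of~$B$ to each summand is the standard candidate there, so it suffices to exhibit monotone optimizers on each summand and then concatenate.

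The construction proceeds inductively from right to left, the hypothesis on~$\psi$ being exactly what is needed to start it off. First I would apply Proposition~\ref{OptimizersLeftTangentDomain} to the rightmost left tangent domain (sitting over $\mathfrak{a}_k$, with right endpoint $\mathfrak{a}_k^{\mathrm r}$), fed with $\psi$ at the tangency point $(\mathfrak{a}_k^{\mathrm r}+\eps,(\mathfrak{a}_k^{\mathrm r}+\eps)^2+\eps^2)$. This yields non-increasing optimizers on that tangent domain, bounded below by $\mathfrak{a}_k^{\mathrm l}$, and in particular at the left-most tangency point $(\mathfrak{a}_k^{\mathrm l}+\eps,\,\cdot\,)$ we obtain a non-increasing optimizer bounded below by $\mathfrak{a}_k^{\mathrm l}$, which is precisely the input required by Proposition~\ref{OptimizersTrolleybusL} applied to the next summand $\LTroll(\mathfrak{a}_{k-1}^{\mathrm r},\mathfrak{a}_k^{\mathrm l})$. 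That proposition in turn outputs non-increasing optimizers bounded below by $\mathfrak{a}_{k-1}^{\mathrm r}$, which feeds Proposition~\ref{OptimizersLeftTangentDomain} on the next left tangent domain (over $\mathfrak{a}_{k-1}$), and so on. After finitely many alternations we exit through the leftmost left tangent domain (over $\mathfrak{a}_1$), obtaining a non-increasing optimizer $\geq \mathfrak{a}_1^{\mathrm l}$ at every point of $\MTTL(\{\mathfrak{a}_i\}_{i=1}^k)$, as required.

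I do not expect a genuine obstacle: at each step the conclusion of the invoked proposition is worded exactly so as to match the hypothesis of the next one (the non-increasing monotonicity and the pointwise lower bound propagate along the chain). The only minor care concerns degenerate stages, where some $\mathfrak{a}_i$ is a single point so that the ``left tangent domain'' over it collapses to a single tangent $\Lt(\mathfrak{a}_i,\mathfrak{a}_i)$; there the application of Proposition~\ref{OptimizersLeftTangentDomain} is trivial in the sense that the optimizer is simply transported unchanged across the single tangent (alternatively, one invokes Remark~\ref{DegenerateTrolleybus}). The bookkeeping with the parameter intervals on which each optimizer is defined is handled, as in the proof of Proposition~\ref{OptimizersMultitrolleybusR}, by Remark~\ref{Rescaling} to rescale onto the interval dictated by the convex combination coefficients at each stage.
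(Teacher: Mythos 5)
Your proof is correct and is exactly the intended one: the paper proves the right-handed version (Proposition~\ref{OptimizersMultitrolleybusR}) by decomposing via \eqref{RMultitrolleybusDesintegration} and feeding Propositions~\ref{OptimizersRightTangentDomain} and~\ref{OptimizersTrolleybusR} consecutively, then leaves the left-handed statement unproved as the symmetric case. Your right-to-left induction through \eqref{LMultitrolleybusDesintegration}, alternating Propositions~\ref{OptimizersLeftTangentDomain} and~\ref{OptimizersTrolleybusL}, with the monotonicity and lower-bound hypotheses threading through as you observe, is precisely that symmetric argument, including the correct handling of the degenerate single-tangent stages.
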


\begin{St}\label{OptimizersMultiBirdie}\index{multibirdie}
Let~$B$ be the standard candidate in~$\MTB(\{\mathfrak{a}_i\}_{i=1}^k)$. Let~$\psi_1$ be a non-decreasing optimizer for the point~$(\mathfrak{a}_1^{\mathrm{l}}-\eps, (\mathfrak{a}_1^{\mathrm{l}}-\eps)^2 + \eps^2)$ such that~$\psi_1 \leq \mathfrak{a}_1^{\mathrm l}$\textup, let~$\psi_2$ be a non-increasing optimizer for the point~$(\mathfrak{a}_k^{\mathrm{r}}+\eps, (\mathfrak{a}_k^{\mathrm{r}}+\eps)^2 + \eps^2)$ such that~$\psi_2 \geq \mathfrak{a}_k^{\mathrm r}$. Then\textup, there exists an optimizer for every point~$x \in \MTB(\{\mathfrak{a}_i\}_{i=1}^k)$.
\end{St}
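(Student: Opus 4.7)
The plan is to reduce to cases already handled by applying the disintegration formula~\eqref{MultibirdieDesintegration} with an index $j=j(x)$ chosen according to the location of $x$. Given $x \in \MTB(\{\mathfrak{a}_i\}_{i=1}^k)$, I would pick $j \in \{1,\dots,k\}$ so that $x$ lies in the subdomain of the decomposition corresponding to the central piece $\MTB(\{\mathfrak{a}_j\})$; concretely, $j$ is the unique index such that $x$ is not separated from the arc $\mathfrak{A}_j$ by any of the chords $[\mathfrak{A}_i^{\mathrm r},\mathfrak{A}_{i+1}^{\mathrm l}]$ (if $x$ lies instead in one of the trolleybus or tangent pieces of the parade, the construction is strictly easier and is handled by direct appeal to Propositions~\ref{OptimizersTrolleybusR},~\ref{OptimizersTrolleybusL},~\ref{OptimizersRightTangentDomain},~\ref{OptimizersLeftTangentDomain}, propagating $\psi_1$ from the left or $\psi_2$ from the right; I focus on the harder case where $x$ sits above $\mathfrak{A}_j$).

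Next I would propagate $\psi_1$ rightward through the ``right trolleybus parade'' portion of~\eqref{MultibirdieDesintegration}. Iterating Propositions~\ref{OptimizersRightTangentDomain} and~\ref{OptimizersTrolleybusR} exactly as in the proof of Proposition~\ref{OptimizersMultitrolleybusR} converts $\psi_1$ into a non-decreasing optimizer $\tilde{\psi}_1$ at the point $\bigl(\mathfrak{a}_j^{\mathrm l}-\eps,(\mathfrak{a}_j^{\mathrm l}-\eps)^2+\eps^2\bigr)$ with $\tilde{\psi}_1 \le \mathfrak{a}_j^{\mathrm l}$. Symmetrically, Propositions~\ref{OptimizersLeftTangentDomain} and~\ref{OptimizersTrolleybusL} turn $\psi_2$ into a non-increasing optimizer $\tilde{\psi}_2$ at $\bigl(\mathfrak{a}_j^{\mathrm r}+\eps,(\mathfrak{a}_j^{\mathrm r}+\eps)^2+\eps^2\bigr)$ with $\tilde{\psi}_2 \ge \mathfrak{a}_j^{\mathrm r}$. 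After this step the problem is reduced to building an optimizer inside $\MTB(\{\mathfrak{a}_j\})$ from the pair $\tilde{\psi}_1,\tilde{\psi}_2$.

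If $\mathfrak{a}_j$ is a single point, then $\MTB(\{\mathfrak{a}_j\}) = \Ang(\mathfrak{a}_j)$ and Proposition~\ref{OptimizersAngle} applies verbatim. If $\mathfrak{a}_j$ is a genuine interval, I would mimic the proof of Proposition~\ref{OptimizersAngle}: choose a line $\varkappa$ through $x$ separating $x$ from $\{x_2 > x_1^2+\eps^2\}$, let $y,z$ be its intersections with $\partial \MTB(\{\mathfrak{a}_j\})$, and decompose
\begin{equation*}
x = \alpha_1\bigl(\mathfrak{a}_j^{\mathrm l}-\eps,\,(\mathfrak{a}_j^{\mathrm l}-\eps)^2+\eps^2\bigr) + \alpha_2 A + \alpha_3 A' + \alpha_4\bigl(\mathfrak{a}_j^{\mathrm r}+\eps,\,(\mathfrak{a}_j^{\mathrm r}+\eps)^2+\eps^2\bigr),
\end{equation*}
with $A,A' \in \mathfrak{A}_j$ and $\sum \alpha_i = 1$, $\alpha_i\ge 0$. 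Rescaling $\tilde\psi_1,\tilde\psi_2$ onto intervals of length $\alpha_1,\alpha_4$ respectively (Remark~\ref{Rescaling}), I would set $\varphi_x$ equal to $\tilde\psi_1$, then to the constant value $A_1 \in \mathfrak{a}_j$ corresponding to $A$, then to the constant $A_1' \in \mathfrak{a}_j$ corresponding to $A'$, then to the reversed rescaled $\tilde\psi_2$. The identities $\bp{\varphi_x} = x$ and $\av{f(\varphi_x)}{[0,1]} = B(x)$ are immediate from linearity of $B$ on the multibirdie.

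The main obstacle is verifying $\varphi_x \in \BMO_{\eps}$, i.e.\ $\av{\varphi_x}{J} \in \Omega_{\eps}$ for every subinterval $J$. I would proceed by the same case analysis as in Propositions~\ref{OptimizersAngle} and~\ref{OptimizersTrolleybusR}, using Lemma~\ref{Monotone_Convex} to assert convexity of the delivery curves of $\tilde\psi_1,\tilde\psi_2$: when $J$ is contained in one of the outer pieces the conclusion follows from Lemma~\ref{DeliveryCurveLemma} together with the sign constraints $\tilde\psi_1 \le \mathfrak{a}_j^{\mathrm l}$, $\tilde\psi_2 \ge \mathfrak{a}_j^{\mathrm r}$; when $J$ engulfs both constant middle pieces, a standard splitting of $x$ into an average on $J$ and averages on the complement (both of which lie above $\varkappa$) forces $\av{\varphi_x}{J}$ to lie below $\varkappa$ and hence inside $\Omega_{\eps}$; and when $J$ sits inside the two constant pieces only, $\av{\varphi_x}{J}$ lies on the chord $[A,A'] \subset \{(t,t^2)\}$. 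The only genuinely new feature relative to the angle case is that the ``constant middle'' consists of two steps instead of one, but the separation argument via $\varkappa$ accommodates this without essential modification.
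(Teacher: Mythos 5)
Your overall route coincides with the paper's: apply formula~\eqref{MultibirdieDesintegration}, propagate $\psi_1$ rightward along the right parade (Propositions~\ref{OptimizersRightTangentDomain},~\ref{OptimizersTrolleybusR}) and $\psi_2$ leftward, and then deal with the central piece. Where you diverge is in handling $\MTB(\{\mathfrak{a}_j\})$ when $\mathfrak{a}_j$ is a genuine interval: the paper never needs to argue this case directly, because it further splits $\MTB(\{\mathfrak{a}_j\})$ via formula~\eqref{AnglePlusMultiTrollebusR} (or its mirror) into $\MTTR(\{\mathfrak{a}_j\}) \uplus \Rt(\mathfrak{a}_j^{\mathrm r},\mathfrak{a}_j^{\mathrm r}) \uplus \Ang(\mathfrak{a}_j^{\mathrm r})$, so that the problem again reduces to a tangent domain plus an angle and Propositions~\ref{OptimizersRightTangentDomain} and~\ref{OptimizersAngle} apply. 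Your direct four-step construction $\tilde\psi_1, A_1, A_1', \tilde\psi_2^{\mathrm{rev}}$ can be made to work (it is close in spirit to the multicup proof, Proposition~\ref{OptimizersMulticup}), but it buys nothing: it is strictly more work than the reduction the paper uses.

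There are two points worth tightening in your version. First, $[A,A']$ is not contained in $\{(t,t^2)\}$; what you need (and what is true, since $A,A'$ are derived from intersections of the separating line $\varkappa$ with $\Omega_\eps$, which forces $|A_1-A_1'|\le 2\eps$) is $[A,A']\subset\Omega_\eps$. Second, your three-case split for $J$ omits the mixed cases where $J$ meets one outer piece plus one or both constant middle pieces but not the other outer piece. These cannot all be dispatched by Lemma~\ref{DeliveryCurveLemma} directly: when the right endpoint $t$ of $J$ sits in the $A$- or $A'$-piece, the tangent of the delivery curve at $\gamma(t)$ points toward a point of $\mathfrak{A}_j$ strictly to the right of $\mathfrak{a}_j^{\mathrm l}$, so it crosses the free boundary (the unique line through $(\mathfrak{a}_j^{\mathrm l}-\eps,\ldots)$ staying below the free boundary is the tangent toward $(\mathfrak{a}_j^{\mathrm l},\ldots)$, not toward $A$). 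The correct patch is to use the separating-line argument also in those mixed cases: the complement $[0,1]\setminus J$ averages to a point lying on or above $\varkappa$ (being a convex combination of points of $\gamma_{\tilde\psi_1}$, $A$, $A'$, and $\gamma_{\tilde\psi_2}$, all of which lie on or above $\varkappa$), so $\av{\varphi_x}{J}$ lies below $\varkappa$. Once this is said, your argument closes; but again, the paper's reduction to Proposition~\ref{OptimizersAngle} sidesteps all of this.
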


\begin{proof}
We apply formula~\eqref{MultibirdieDesintegration} (with any choice of~$j$) and decompose the multibirdie into a collection of trolleybuses, tangent domains, and angle. Applying Propositions~\ref{OptimizersTrolleybusR} and~\ref{OptimizersTrolleybusL} to the trolleybuses, Propositions~\ref{OptimizersRightTangentDomain} and~\ref{OptimizersLeftTangentDomain} to the tangent domains, and Proposition~\ref{OptimizersAngle} to the angle consecutively, we build optimizers for all the points inside~$\MTB(\{\mathfrak{a}_i\}_{i=1}^k)$.  
\end{proof}

\section{Global optimizers}\label{s53}
Before passing to global optimizers, we need to introduce the notions of \index{incoming node}incoming and \index{outcoming node}outcoming nodes for a vertex or an edge in the foliation graph. These nodes are points on the upper parabola needed to transfer the delivery curve from one domain to another. As the reader might noticed, for some figures, the optimizers inside them depend on the optimizer coming from the right or from the left (for example, to build the optimizers in the trolleybus, we need an optimizer for a special point on the upper parabola, see Propositions~\ref{OptimizersTrolleybusR} and~\ref{OptimizersTrolleybusL}). So, the incoming node is the point where we start the delivery curve from, whereas the outcoming node is the point where it leaves the figure. In the tables below, we give a precise definition.

\medskip
\centerline{
\begin{tabular}{|p{2.0cm}|p{2.3cm}|p{2.3cm}|p{2.3cm}|p{2.3cm}|p{2.3cm}|}
\hline 
&{\tiny Long chord}~$\scriptstyle[A,B]$&$\scriptstyle\MTC(\{\mathfrak{a}_i\}_{i=1}^k)$&$\RTroll(u_1,u_2)$&$\LTroll(u_1,u_2)$&$\Rt(u_1,u_2)$\\
\hline
Outcoming&$\scriptstyle\frac{A+B}{2}$&$\scriptscriptstyle(\mathfrak{a}_1^{\mathrm{l}}+\eps,(\mathfrak{a}_1^{\mathrm{l}}+\eps)^2 + \eps^2)$, $\scriptscriptstyle(\mathfrak{a}_k^{\mathrm{r}}-\eps,(\mathfrak{a}_k^{\mathrm{r}}-\eps)^2 + \eps^2)$&$\scriptscriptstyle(u_2 - \eps,(u_2-\eps)^2+\eps^2)$&$\scriptscriptstyle(u_1 + \eps,(u_1+\eps)^2+\eps^2)$&$\scriptscriptstyle(u_2 - \eps,(u_2-\eps)^2+\eps^2)$\\
\hline
Incoming&&&$\scriptscriptstyle(u_1 - \eps,(u_1-\eps)^2+\eps^2)$&$\scriptscriptstyle(u_2 + \eps,(u_2+\eps)^2+\eps^2)$&$\scriptscriptstyle(u_1 - \eps,(u_1-\eps)^2+\eps^2)$\\
\hline
\end{tabular}
}
\medskip
\medskip
\centerline{
\begin{tabular}{|p{2.0cm}|p{2.3cm}|p{2.3cm}|p{2.3cm}|p{2.3cm}|p{2.3cm}|}
\hline 
&$\Lt(u_1,u_2)$&$\scriptstyle \MTTR(\{\mathfrak{a}_i\}_{i=1}^k)$&$\scriptstyle \MTTL(\{\mathfrak{a}_i\}_{i=1}^k)$&$\scriptstyle\Ang(w)$&$\scriptstyle\MTB(\{\mathfrak{a}_i\}_{i=1}^k)$\\
\hline
Outcoming&$\scriptscriptstyle(u_1 + \eps,(u_1+\eps)^2+\eps^2)$&$\scriptscriptstyle (\mathfrak{a}_k^{\mathrm{r}}-\eps,(\mathfrak{a}_k^{\mathrm{r}}-\eps)^2 + \eps^2)$&$\scriptscriptstyle (\mathfrak{a}_1^{\mathrm{l}}+\eps,(\mathfrak{a}_1^{\mathrm{l}}+\eps)^2 + \eps^2)$&&\\
\hline
Incoming&$\scriptscriptstyle(u_2 + \eps,(u_2+\eps)^2+\eps^2)$&$\scriptscriptstyle (\mathfrak{a}_1^{\mathrm{l}}-\eps,(\mathfrak{a}_1^{\mathrm{l}}-\eps)^2 + \eps^2)$&$\scriptscriptstyle (\mathfrak{a}_k^{\mathrm{r}}+\eps,(\mathfrak{a}_k^{\mathrm{r}}+\eps)^2 + \eps^2)$&$\scriptscriptstyle (w-\eps,(w-\eps)^2+\eps^2)$ $\scriptscriptstyle (w+\eps,(w+\eps)^2+\eps^2)$&$\!\!\scriptscriptstyle (\mathfrak{a}_1^{\mathrm{l}}-\eps,(\mathfrak{a}_1^{\mathrm{l}}-\eps)^2+\eps^2)$ $\scriptscriptstyle (\mathfrak{a}_k^{\mathrm{r}}+\eps,(\mathfrak{a}_k^{\mathrm{r}}+\eps)^2+\eps^2)$\\
\hline
\end{tabular}
}
\medskip

All the other figures do not have incoming or outcoming nodes at all. Now we see that the propositions of Subsections~\ref{s521} and~\ref{s522} are of the following form: if there is a monotone optimizer for the incoming node satisfying a certain bound  (if there are incoming nodes for the figure in question), then we can build monotone optimizers for the whole figure satisfying a similar bound for the outcoming nodes (if there are any). So, these propositions are very well suited for induction. To state the general theorem about optimizers, we need Definition~\ref{Admissible graph} (see Remark~\ref{AdmissibleCondidate} and the paragraph before it, where it is explained that an admissible graph generates a Bellman candidate).
\begin{Th}\label{GlobalOptimizer}
Let~$\eps < \eps_{\infty}$\textup, let~$\Gamma(\eps)$ be an admissible for~$\eps$ and~$f$ graph. The Bellman candidate~$B$ generated by~$\Gamma(\eps)$ admits an optimizer at every point of~$\Omega_{\eps}$.  
\end{Th}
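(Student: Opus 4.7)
The plan is to perform an induction on the graph~$\Gamma(\eps)$, moving from the roots toward the leaves, and to glue together the local optimizers built in Section~\ref{s52}. The key observation is that each local proposition in Section~\ref{s52} is formulated in exactly the ``inductive'' form we need: given a monotone optimizer at the \emph{incoming node(s)} of a figure (with the appropriate one-sided bound, e.g.~$\psi\le u_1$ or~$\psi\ge u_2$), it produces monotone optimizers at every interior point of the figure together with a monotone optimizer at each \emph{outcoming node} satisfying the analogous bound. Thus the outcoming node of one figure is exactly the data needed as the incoming node of the next figure along an edge of~$\GammaFree$.

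First, I would treat the vertices and edges of~$\Gamma\setminus\GammaFree$ separately, since these figures have no incoming nodes: for every chordal domain (edge) apply Proposition~\ref{OptimizersChordalDomain}; for every closed multicup apply Proposition~\ref{OptimizersClosedMulticup}; and for every fictious vertex of the second or third type the optimizer is trivial (a constant or two constants). This produces optimizers for every point of the subgraph~$\Gamma\setminus\GammaFree$. Next, I would handle the roots of~$\GammaFree$: multicups (Proposition~\ref{OptimizersMulticup}), infinity vertices (Propositions~\ref{OptimizerRightTangentsInfty}, \ref{OptimizerLeftTangentsInfty} applied to the degenerate outcoming tangent domains), and fictious vertices of the first and third types representing long chords (handled by Proposition~\ref{OptimizersChordalDomain} applied to the chordal domain sitting directly below, whose optimizer naturally gives the required boundary values at the endpoints of the long chord, which are the outcoming nodes). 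In each case the output is a monotone optimizer at every outcoming node with the correct one-sided bound.

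The induction step then proceeds by ordering the edges and vertices of~$\GammaFree$ according to the height function of Definition~\ref{Height}. Processing vertices in order of increasing height, for every edge representing a tangent domain~$\Rt(u_1,u_2)$ or~$\Lt(u_1,u_2)$, I apply Proposition~\ref{OptimizersRightTangentDomain} or Proposition~\ref{OptimizersLeftTangentDomain}; then the outcoming node of the source (with the required bound) provides exactly the ``$\psi$'' needed. For each vertex I then apply its corresponding proposition: Proposition~\ref{OptimizersTrolleybusR} or~\ref{OptimizersTrolleybusL} for trolleybuses, Proposition~\ref{OptimizersMultitrolleybusR} or~\ref{OptimizersMultitrolleybusL} for multitrolleybuses, Proposition~\ref{OptimizersAngle} for angles, and Proposition~\ref{OptimizersMultiBirdie} for (multi)birdies. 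In particular, for leaves of~$\GammaFree$ (angles and multibirdies) both incoming nodes are available by the time we reach them, so the required two-sided data is in place.

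The main obstacle lies in verifying that the hypotheses on the incoming nodes are actually met along the whole graph. The two potential difficulties are: (i)~confirming the monotonicity and the one-sided bound (such as~$\psi\le u_1$ or~$\psi\ge u_2$) for the optimizer at an outcoming node, and (ii)~ensuring that this outcoming node indeed coincides with the incoming node of the adjacent figure. Point~(i) is essentially automatic from the construction: at an outcoming node on the free boundary the local formulas~\eqref{OptimizerRightTangentsUpperParabola}, \eqref{OptimizerLeftTangentsUpperParabola} yield a logarithmic delivery curve running along~$\FreeBoundary\Omega_\eps$ with monotone generator, and every inductive step in Section~\ref{s52} is arranged to preserve monotonicity and the bound. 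Point~(ii) is where the admissibility of~$\Gamma(\eps)$ is crucial: the non-degeneracy force Condition~\ref{NonDegeneracyForceCondition} guarantees that the boundary tangent separating two adjacent figures really touches the free boundary at the point serving simultaneously as outcoming node for one and incoming node for the other; and the gluing formulas of Subsection~\ref{s344} together with Condition~\ref{Leaf-root condition} ensure that no mismatch of one-sided bounds occurs at fictious vertices of the third or fifth type. Once both checks are in place, the induction terminates at the leaves of~$\GammaFree$ in finitely many steps (the graph is a finite oriented tree), producing the desired optimizer at every point of~$\Omega_\eps$.
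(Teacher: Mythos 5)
Your proposal is correct and takes essentially the same route as the paper's proof: induction from the roots of~$\GammaFree$ to its leaves, ordered by height, feeding the monotone optimizer with its one-sided bound at each outcoming node into the incoming node of the next figure via the local propositions of Section~\ref{s52}. The only minor difference is that you invoke Conditions~\ref{NonDegeneracyForceCondition} and~\ref{Leaf-root condition} as a separate ingredient; the paper absorbs the necessary matching of nodes and bounds directly into the definition of an admissible graph (Definition~\ref{Admissible graph}), which already requires the local concatenation propositions to hold, so no further hypothesis is needed.
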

\begin{proof}
We begin with building optimizers for the points~$x\in \Omega_{\eps}$ that belong to chordal domains, closed multicups, and multicups. For them, the optimizers are given by Proposition~\ref{OptimizersChordalDomain} (chordal domains and fictious vertices of the first and third types),~\ref{OptimizersClosedMulticup} (closed multicups), and~\ref{OptimizersMulticup} (multicups), and do not depend on the global structure of~$\Gamma(\eps)$.

Note that all other points belong to the figures from~$\GammaFree(\eps)$. We have built the optimizers for the roots of this graph (which are fictious vertices of the first type, fictious vertices of the third type representing long chords, multicups, and fictious vertices of the fourth type). Now we turn to the intermediate vertices of~$\GammaFree(\eps)$, i.e. vertices that are neither roots nor leaves, and edges. We are going to build the optimizers (delivery curves) consecutively, moving from roots to the leaves along~$\GammaFree(\eps)$ and inducting on height of the vertex in question (see Definition~\ref{Height}). It is natural to extend the notion of height to edges of~$\GammaFree$: the height of an edge is the height of its beginning plus one.

We will prove the following statement by induction: for any point inside the figure corresponding to an edge or an intermediate vertex of height~$h$, there exists an optimizer; moreover, if this edge or vertex is right-oriented (i.e. a right tangent domain, or a right trolleybus, or a right multitrolleybus, or a right fictious vertex of the fifth type), then the optimizer does not exceed~$u+\eps$, where~$u$ is the first coordinate of the outcoming node of the figure in question; if the figure is left oriented, then the optimizer is not less than~$u-\eps$.

Let~$h=1$. Consider the edges first, let~$\Rt(u_1,u_2)$ be represented by such an edge. Since we have built the optimizers for all the roots of~$\GammaFree(\eps)$, there is an optimizer~$\psi$ for the incoming node of~$\Rt(u_1,u_2)$. A brief inspection of the optimizers we have built for the roots of~$\GammaFree(\eps)$ shows that~$\psi$ is bounded from above by~$u_1$ and also may be chosen to be non-decreasing. Thus, we may apply  Proposition~\ref{OptimizersRightTangentDomain} and proceed this optimizer to the whole tangent domain. The optimizer in the outcoming node will be bounded by~$u_2$. If~$u_1 = -\infty$, then we simply use Proposition~\ref{OptimizerRightTangentsInfty}. So, we have built all the optimizers for all tangent domains of height one.

Now we are able to pass to intermediate vertices of height~$1$. Each such vertex is either a trolleybus, or multitrolleybus, or fictious vertex of the fifth type (for which the optimizers are already built). We see that its incoming node coincides with the outcoming node of its incoming edge. Since we have already built the monotone optimizer that fulfills the required bound for this point, we may apply Propositions~\ref{OptimizersTrolleybusR} and~\ref{OptimizersTrolleybusL} to trolleybuses and Propositions~\ref{OptimizersMultitrolleybusR} and~\ref{OptimizersMultitrolleybusL} to multitrolleybuses to build the optimizers in these figures. These optimizers are monotone and satisfy the bound from below or above (depending on the orientation) by the first coordinate of the outcoming node shifted by~$\pm\eps$ correspondingly. So, we have considered the case~$h=1$.

We note that the case~$h=2$ is totally similar: we first proceed the optimizers from outcoming nodes of the vertices of height~$1$ to edges of height~$2$ with the help of Prpositions~\ref{OptimizersRightTangentDomain} and~\ref{OptimizersLeftTangentDomain}, and then proceed the optimizers further inside the domains represented by vertices of height~$2$. Inducting on height, we build the optimizers for all the edges of~$\GammaFree(\eps)$ and all intermediate vertices of this graph. 

It remains to construct the optimizers for the leaves of~$\GammaFree(\eps)$. They are angles, multibirdies, and, maybe, fictious vertices of the fourth type (for which we do not have to construct anything). For any of the leaves, we have constructed the optimizers for its incoming nodes, because they coincide with the outcoming nodes of its incoming edges. So, we simply apply Propositions~\ref{OptimizersAngle} and~\ref{OptimizersMultiBirdie}.  
\end{proof}
\begin{Th}\label{Final}
For any~$\eps < \eps_{\infty}$ there exists a Bellman candidate~$B_{\eps}$ that coincides with~$\Bell$. 
\end{Th}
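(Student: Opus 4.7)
The plan is to combine the two principal results of the paper: the construction of an admissible Bellman candidate (Theorem~\ref{BC}) and the existence of optimizers on it (Theorem~\ref{GlobalOptimizer}). Concretely, fix $\eps < \eps_{\infty}$. First I would invoke Theorem~\ref{BC} to obtain an admissible graph $\Gamma(\eps)$ for $f$ and $\eps$, and then Remark~\ref{AdmissibleCondidate} to get the associated function $B_{\eps}$, which is a Bellman candidate on the whole of $\Omega_{\eps}$ in the sense of Definition~\ref{candidate}.

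Next I would establish the two-sided comparison $B_{\eps} = \Bell$. For the upper bound $\Bell \le B_{\eps}$, the function $B_{\eps}$ is continuous, locally concave, and satisfies $B_{\eps}(u,u^{2}) = f(u)$ by construction (all of these are built into the notion of a Bellman candidate and Definition~\ref{candidate}, together with the boundary formulas used in every figure of Section~\ref{s34}). Hence $B_{\eps} \in \Lambda_{\eps,f}$, and Lemma~\ref{LMaj} immediately gives $\Bell(x;f) \le B_{\eps}(x)$ for all $x \in \Omega_{\eps}$.

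For the reverse inequality, I would fix $x \in \Omega_{\eps}$ and apply Theorem~\ref{GlobalOptimizer} to produce an optimizer $\varphi_{x}$ for $B_{\eps}$ at $x$, i.e.\ a test function (Definition~\ref{TestFunction}) with $\bp{\varphi_{x}} = x$ and $B_{\eps}(x) = f[\varphi_{x}]$. By the very definition~\eqref{Bf} of the Bellman function as a supremum over all such test functions, $f[\varphi_{x}] \le \Bell(x;f)$. Combining with the previous step yields $B_{\eps}(x) = \Bell(x;f)$ pointwise on $\Omega_{\eps}$.

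The main obstacle is not located in this theorem itself, but rather in the inputs it relies on: Theorem~\ref{BC} required the entire evolutional construction of Chapter~$4$ (with the control on essentially critical points), and Theorem~\ref{GlobalOptimizer} rested on the local optimizer analyses of Section~\ref{s52} together with the inductive gluing of Section~\ref{s53}. Once these are granted, the proof of Theorem~\ref{Final} is a short two-sided comparison, and the only verification worth stating carefully is that an admissible Bellman candidate does belong to $\Lambda_{\eps,f}$, so that Lemma~\ref{LMaj} applies verbatim.
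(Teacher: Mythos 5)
Your proof is correct and follows exactly the paper's own argument: Theorem~\ref{BC} plus Remark~\ref{AdmissibleCondidate} produce the candidate $B_{\eps}$, Lemma~\ref{LMaj} gives $\Bell \le B_{\eps}$, and Theorem~\ref{GlobalOptimizer} supplies the optimizers yielding $B_{\eps} \le \Bell$. No differences from the paper's proof worth noting.
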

\begin{proof}
By Theorem~\ref{BC}, there exists an admissible for~$\eps$ and~$f$ graph~$\Gamma(\eps)$. By Remark~\ref{AdmissibleCondidate} it generates a Bellman candidate~$B_{\eps}$. By Definition~\ref{candidate},~$B_{\eps}$ is locally concave, thus, by Lemma~\ref{LMaj},~$\Bell \leq B_{\eps}$. On the other hand, by Theorem~\ref{GlobalOptimizer}, for each~$x \in \Omega_{\eps}$, there exists an optimizer~$\varphi_x$ and, thus,~$B_{\eps} \leq \Bell$.
\end{proof}
\centerline{\bf In particular, we have proved Theorem~\ref{MT}.}

\section{Examples}\label{s54}
\paragraph{Wandering angle.} In this example, we show how the general knowledge about evolution helps to build the Bellman function. Our aim here is to construct a function~$f$ such that its foliation consists of a single angle for all~$\eps$, and this angle wanders around zero (i.e. there are infinite number of moments~$\eps$ such that the angle is sitting at the point~$0$).

Define~$f$ by the formula
\begin{equation*}
f''' = \sum\limits_{k=1}^{\infty} (-1)^k\chi_{(-1)^kI_k},\quad \hbox{where}\quad I_k = [2^{2^k},2^{2^k+1}]. 
\end{equation*} 
We see that the function~$f$ satisfies Conditions~\ref{reg} and~\ref{sum} (with~$\eps_{\infty} = \infty$), because the only essential root of~$f'''$ is the interval between~$-I_1$ and~$I_2$. Since~$f$ has only one essential root, which is~$v_1$ (see Definition~\ref{roots}), there are only two scenarios of the evolution here: for sufficiently small~$\eps$ there is an angle with two tangent domains coming from infinity, the angle may either live forever, or escape as it did in the first example of Subsection~\ref{s353}. We will prove that the angle lives forever and, moreover, it visits the point~$0$ infinitely many times.

Let us show that~$\Fr(0;-\infty;2^{2^l}) + \Fl(0;\infty;2^{2^l}) > 0$  if~$l$ is odd and~$\Fr(0;-\infty;2^{2^l}) + \Fl(0;\infty;2^{2^l}) < 0$ if~$l$ is even, provided~$l$ is sufficiently big. Using formulas~\eqref{RightForceInfinity} and~\eqref{LeftForceInfinity}, we see that
\begin{equation*}
\Fr(0;-\infty;\eps) + \Fl(0;\infty;\eps) = \int\limits_{\mathbb{R}}e^{\frac{-|t|}{\eps}}f'''(t)\,dt,
\end{equation*}
which leads to
\begin{equation}\label{BalanceForWandAng}
\Fr(0;-\infty;\eps) + \Fl(0;\infty;\eps) = \sum\limits_{k=1}^{\infty}(-1)^k\eps e^{\frac{-2^{2^k}}{\eps}}\Big(e^{\frac{-2^{2^k}}{\eps}}-1\Big).
\end{equation}
Now let~$\eps = 2^{2^l}$. If~$k < l$, then
\begin{equation*}
\Big|e^{\frac{-2^{2^k}}{2^{2^l}}}\Big(e^{\frac{-2^{2^k}}{2^{2^l}}}-1\Big)\Big| \leq\Big|e^{-2^{-2^{l-1}}}-1\Big| \leq 2^{-2^{l-1}}.
\end{equation*}
If~$k > l$, then
\begin{equation*}
\Big|e^{\frac{-2^{2^k}}{2^{2^l}}}\Big(e^{\frac{-2^{2^k}}{2^{2^l}}}-1\Big)\Big| \leq e^{-2^{2^{k-1}}}.
\end{equation*}
So, plugging these two estimates into formula~\eqref{BalanceForWandAng}, we see that
\begin{equation*}
\begin{aligned}
\Fr(0;-\infty;2^{2^l}) + \Fl(0;\infty;2^{2^l}) = 2^{2^{l}}\Big((-1)^{l}e^{-1}(e^{-1}-1) + O\big(\sum\limits_{k<l}2^{1-2^{l-1}}\big) + O\big(\sum\limits_{k>l}e^{-2^{2^{k-1}}}\big)\Big) =\\ 2^{2^{l}}\big((-1)^{l}e^{-1}(e^{-1}-1) + o(1)\big).
\end{aligned}
\end{equation*}
The claim is proved. 

It follows that there exists a sequence of numbers~$\{\eps_n\}_n$ tending to infinity such that
\begin{equation*}
\Fr(0;-\infty;\eps_n) + \Fl(0;\infty;\eps_n) = 0.
\end{equation*}
Thus,~$f$ and~$\eps_n$ fall under the scope of Proposition~\ref{AngleProp}, which means that there exists a Bellman candidate with the foliation~$\Rt(-\infty,0)\cup\Ang(0)\cup\Lt(0,\infty)$. By Theorem~\ref{GlobalOptimizer}, it coincides with the Bellman function. Therefore, the angle cannot dissapear during the evolution. Due to Lemma~\ref{monbaleq} and ``balance inequalities'' proved above, when~$\eps =2^{2^l}$ and~$l$ is even, the angle is situated on the right of~$0$, whereas for odd~$l$ it is on the left.

\paragraph{Oscillating birdie.} In this example, our aim is to construct a function~$f$ that has a birdie at the moment~$\eps$, and this birdie desintegrates in a non-regular way.

Let~$f_0 \in C^{\infty}$ be the polynomial of sixth degree that has a birdie (see Section~\ref{s45}, we take~$f_0'''(t) = t^3 -3t$, see Subsection~\ref{s413} as well). Take~$\eps' \in \big(\sqrt{\frac{35}{9}},\frac{1}{\sqrt{2}}\big)$, then for all~$\eps \in [\eps', \eps' + \delta)$ ($\delta$ is a small number) the foliation for~$f_0$ consists of a single symmetric birdie~$\Bird\big(a(\eps),b(\eps)\big)$, $a(\eps) = - b(\eps)$, and two families of tangents surrounding it. What is more, we recall that~$f_0'''$ and the differentials of the base of the birdie are separated from zero in neighborhoods of $a(\eps')$ and $b(\eps')$.

Let~$\{\eps_k\}_{k \in \mathbb{N}}$ be a decreasing sequence such that $\eps_1 < \eps' + \delta$ and $\eps_k \to \eps'$ as $k \to \infty$. For brevity, we denote~$b_k \df b(\eps_k)$ and~$a_k \df a(\eps_k)$. Therefore, $a_k$ decreases downto~$a \df a(\eps')$ and~$b_k$ increases to~$b \df b(\eps')$. We are going to perturb the function~$f_0$ by a~$C^{\infty}$ function in such a way that the foliation of the perturbed function does not change at the moments~$\eps_k$, but it consists of a trolleybus and an angle for some moment~$\eps$ between each~$\eps_{k+1}$ and~$\eps_k$ (what is more, we can construct the perturbation in such a fashion that the trolleybus changes its orientation each time). This will justify the difficulties we had with the evolutional step for birdies and multibirdies in Proposition~\ref{MultibirdieDesintegrationSt}. 

The perturbation will be of the form~$h = \sum_{k = 1}^{\infty}h_k$, where the sum converges in~$C^{3}$. The perturbed function~$f_0 + h$ is denoted by~$f$. Each function~$h_k$ is supported either inside~$(b_k,b_{k+1})$ or inside~$(a_{k+1},a_k)$, what is more, it has sufficiently small $C^{3}$-norm,~$\frac{C}{k^2}$ will do; here~$C$ is chosen to be so small that the perturbation neither harms the roots of~$f'''$ (we add a perturbation on the intervals where~$f'''$ is uniformly separated from zero) nor changes the sign of the differentials of~$f'''$. 

It is easy to see that the pairs~$(a_k,b_k)$ satisfy the cup equation~\eqref{urlun} for the function~$f$. Indeed, the functions~$h_k$ are supported inside the intervals~$(b_k,b_{k+1})$ or~$(a_{k+1},a_k)$, therefore,~$f'(a_k) = f'_0(a_k)$, and the same equalities hold for all higher derivatives and the points of the type~$b$ as well. However, to control the forces coming from the infinities, we need to ask for some additional properties of the functions~$h_k$. Suppose that~$h_k$ is supported on the right of zero, then these conditions look like this:
\begin{equation*}
\int\limits_{b_k}^{b_{k+1}} h_k'''(t)e^{-\frac{t}{\eps_l}}\, dt = 0, \quad l = 1,\ldots,k.
\end{equation*}
For those~$h_k$ that lie on the left of zero, the conditions are symmetric:
\begin{equation*}
\int\limits_{a_{k+1}}^{a_k} h_k'''(t)e^{\frac{t}{\eps_l}}\, dt = 0, \quad l = 1,\ldots,k.
\end{equation*}
Surely, each function~$h_k$ is under a finite number of integral conditions, therefore, such functions exist (what is more, they can have as small $C^3$-norm as wanted). One can easily see that the forces coming from the infinities do not change their values at the points~$a_k$ and~$b_k$ after the perturbation. Therefore, the foliation for the function~$f$ at the moment~$\eps_k$ consists of the birdie~$\Bird(a_k,b_k)$ and two families of tangents surrounding it. 

We only have to verify that for each~$k$ there is some~$\eps$,~$\eps \in (\eps_{k+1},\eps_k)$, such that the foliation consists of a trolleybus and an angle at the moment~$\eps$ (in such a case one can obtain the needed orientation of the trolleybus by reflecting the function~$h_k$ with respect to zero). For example, this can be done in the following way: we split the interval~$(b_k,b_{k+1})$ into two intervals,~$\big(b_k,b(\eps'_k)\big)$ and~$\big(b(\eps'_k),b_{k+1}\big)$ for some~$\eps'_k \in (\eps_{k+1},\eps_k)$ and say that~$h_k$ is supported in~$[b(\eps'_k),b_{k+1}]$. Then we ask for one more integral condition, namely,
\begin{equation*}
\int\limits_{b(\eps_k')}^{b_{k+1}} h_k'''(t)e^{-\frac{t}{\eps_k'}}\, dt > 0 \quad \text{ and } \quad \int\limits_{b_m}^{b_{m+1}} h_m'''(t)e^{-\frac{t}{\eps_k'}}\, dt \ = \int\limits_{a_{m+1}}^{a_m} h_m'''(t)e^{\frac{t}{\eps_k'}}\, dt = 0 \quad  \text{ for } m > k.
\end{equation*}
Surely, this condition breaks the birdie into a trolleybus and an angle. The functions~$h_k$ that satisfy all the conditions required exist, because the conditions are linearly independent and each function~$h_k$ is under a finite number of conditions.  

The evolution in~$[\eps', \eps' + \delta]$ for the function~$f$ is as follows: at the  points~$\eps_k$, the foliation is
\begin{equation*}
\Rt(-\infty,a_k) \cup \Bird(a_k,b_k) \cup\Ch([a_k,b_k],[0,0]) \cup \Lt(b_k,\infty),
\end{equation*}
whereas for the points between the~$\eps_k$, the foliation consists of a trolleybus, an angle, a chordal domain, and two tangent domains.

\chapter{Related questions and further development}

\section{Related questions}\label{s61}
\subsection{Analytic properties}\label{s611}
In Subsection~\ref{s443}, we proved that there is only a finite number of essential critical points of the evolution. However, this may seem a bit ``unfair'', because these critical point  are only those moments~$\eps$ at which~$\GammaFixed$ changes (see the beginning of Section~\ref{s44} and the proof of Theorem~\ref{BC}). We have not proved that there is only a finite number of moments~$\eps$ where~$\GammaFree$ changes when~$\eps$ belongs to an interval. In particular, it is interesting whether the pattern stabilizes as~$\eps \searrow \eps'$ in Proposition~\ref{MultibirdieDesintegrationSt}. Informally, does a birdie or a multibirdie choose one of many possible ways to desintegrate? 
In general, the answer is ``no'', see the second example in Section~\ref{s54} (this emphasizes that the difficulties we had to overcome in Proposition~\ref{MultibirdieDesintegrationSt} had their origin in the nature of the evolution, not in the authors' heads).

The answer is ``yes'' if one assumes that~$f$ is piecewise analytic. Then, the algorithm is finite in the sense that the graph~$\Gamma(\eps)$ changes only a finite number of times when~$\eps \in (0,\eps_{\infty})$. Indeed, if~$f$ is piecewise analytic, then all the functions~$a$ and~$b$ associated with chordal domains are piecewise analytic (by the implicit function theorem). Thus, the forces are piecewise analytic as functions of~$u$. So, the roots of balance equations are also piecewise analytic as functions of~$\eps$ (again, by the implicit function theorem). Therefore, the graph~$\Gamma(\eps)$ changes only a finite number of times, since each such critical point is a root of an equation between several piecewise analytic functions (and there are only finite number of them). We do not mark this reasoning as a proof, because the precise statement and a rigorous reasoning will take much place. As the reader have seen, in all the examples, the picture stabilizes in a reasonably small number of steps (the only known example where there are many such steps is the one considered in~\cite{CrazySine}, but it was designed to illustrate as many evolutional scenarios as possible).

However, there is a small hope that the behavior of angles (birdies and multibirdies as well) is much ``smoother'' than one can conjecture from the first sight. The proposition below deals with the easiest case of a lonely angle surrounded by tangent domains that last to infinity. We show that such an angle cannot oscillate near a point.

\begin{St}\label{AnalyticAngle}
Let~$f$ be a function satisfying Conditions~\textup{\ref{reg},~\ref{sum}}. Suppose that for any~$\eps$ in the interval~$[\eps',\eps''] \subset (0,\eps_{\infty})$ there is a point~$w(\eps)$ such that the foliation
\begin{equation*}
\Rt(-\infty,w(\eps))\cup\Ang(w(\eps)) \cup \Lt(w(\eps),\infty)
\end{equation*}
provides a Bellman candidate for~$f$ and~$\eps$ \textup(i.e. the function~$f$ falls under the scope of Proposition~\textup{\ref{AngleProp}}\textup). Then\textup, either the equation~$w(\eps) = z$ has only finite number of roots when~$\eps \in [\eps',\eps'']$ for any point~$z \in \mathbb{R}$\textup, or the angle is stable \textup($w(\eps)$ is constant when~$\eps \in [\eps',\eps'']$\textup).
\end{St}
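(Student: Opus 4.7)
The plan is to reduce the dichotomy to the identity theorem applied to an explicit real analytic function of $\eps$. Introduce the sum of forces
\[
F(u,\eps)\;\df\;\Fr(u;-\infty;\eps)+\Fl(u;+\infty;\eps)\;=\;\int_{-\infty}^{\infty} e^{-|u-t|/\eps}\,df''(t),
\]
so that by Proposition~\ref{AngleProp} and the hypothesis we have $F(w(\eps),\eps)=0$ for every $\eps\in[\eps',\eps'']$, and by Lemma~\ref{StrictMonotonicityBalanceEquation} this root is unique. For a fixed candidate point $z\in\mathbb{R}$, the condition $w(\eps)=z$ is equivalent to $g(\eps)\df F(z,\eps)=0$, and the whole claim amounts to the following: either $\{\eps\in[\eps',\eps'']\mid g(\eps)=0\}$ is finite, or $g$ vanishes on all of $[\eps',\eps'']$ (in which case, by uniqueness of the balance root, $w\equiv z$ throughout).

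The core of the proof is to show that $g$ is real analytic on $(0,\eps_{\infty})$. Substituting $s=1/\eps$, one writes $g(\eps)=h(1/\eps)$ where
\[
h(s)\;\df\; e^{-sz}\!\int_{-\infty}^{z}e^{st}\,df''(t)+e^{sz}\!\int_{z}^{\infty}e^{-st}\,df''(t),
\]
and I would prove that $h$ extends to a holomorphic function on the half-plane $\Pi\df\{s\in\mathbb{C}\mid\mathrm{Re}\,s>1/\eps_{\infty}\}$. Indeed, on any compact $K\subset\Pi$ one may pick $c>1/\eps_{\infty}$ with $\mathrm{Re}\,s\ge c$ for all $s\in K$; then for $t\le 0$ one has $|e^{st}|\le e^{ct}\le e^{-|t|/\eps_{\infty}}$, for $t\in[0,\max(z,0)]$ the factor $|e^{st}|$ is bounded uniformly on $K$, and symmetric bounds hold for $|e^{-st}|$ on $[z,+\infty)$. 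Together with Condition~\ref{sum} and the local finiteness of $|df''|$, this yields a $|df''|$-integrable majorant uniform in $s\in K$, so differentiation under the integral sign (or, equivalently, a Morera--Fubini argument) shows that each of the two integrals is holomorphic on $\Pi$; multiplication by the entire factors $e^{\pm sz}$ preserves holomorphy, and consequently $g$ is real analytic on $(0,\eps_{\infty})$.

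Once $g$ is real analytic, the rest is a direct application of the identity theorem. If $w(\eps)=z$ holds for an infinite subset $\{\eps_n\}\subset[\eps',\eps'']$, extracting an accumulation point $\eps^*\in[\eps',\eps'']\subset(0,\eps_{\infty})$ shows that $g$ vanishes on a set with an accumulation point inside the connected open set $(0,\eps_{\infty})$, hence $g\equiv 0$ there. By uniqueness of the balance root this forces $w(\eps)=z$ throughout $[\eps',\eps'']$, which is the stability alternative. The main obstacle will be the holomorphic extension of $h$ across the boundary $\{\mathrm{Re}\,s=1/\eps_{\infty}\}$: the dominating estimate exploits the strict inequality $c>1/\eps_{\infty}$ in an essential way, and one must verify that Condition~\ref{sum}, which provides only borderline integrability of $df''$ against $e^{-|t|/\eps_{\infty}}$, is nevertheless sufficient on every compact subset of $\Pi$; splitting $df''$ into its Hahn parts $df''_{+}-df''_{-}$ is a convenient device for handling the two signs separately and dispatching any measurability subtleties.
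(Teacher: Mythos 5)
Your proof is correct and takes essentially the same approach as the paper: rewrite the balance condition as the convolution $\int_{\mathbb{R}} e^{-|z-t|/\eps}\,df''(t)$, observe that it is analytic in $\eps$, and apply the identity theorem. You spell out two details the paper leaves as assertions — the analyticity itself (a holomorphic extension via $s=1/\eps$ to the half-plane $\mathrm{Re}\,s>1/\eps_{\infty}$, with the dominating estimate coming from Condition~\ref{sum}) and the uniqueness step (Lemma~\ref{StrictMonotonicityBalanceEquation}) that passes from $F(z,\cdot)\equiv 0$ to $w\equiv z$.
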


\begin{proof}
The point~$u=w(\eps)$ is the root of the balance equation
\begin{equation*}
\Fr(u;-\infty;\eps) + \Fl(u;\infty;\eps) = 0.
\end{equation*} 
With the help of formulas~\eqref{RightForceInfinity} and~\eqref{LeftForceInfinity}, this equation can be rewritten as
\begin{equation*}
\int\limits_{\mathbb{R}}e^{\frac{-|t-u|}{\eps}}\,df''(t) = 0.
\end{equation*}
We note that the convolution on the left is an analytic function of~$\eps$ for any~$u$ fixed. So, if
\begin{equation*}
\Fr(z;-\infty;\eps) + \Fl(z;\infty;\eps) = 0
\end{equation*} 
infinitely many times on a finite interval, then the same equality holds for all~$\eps \in [\eps',\eps'']$, since an analytic function having infinitely many roots in a finite domain equals zero. 
\end{proof}

We note that an angle may visit a point infinitely many times if we consider infinite time. See the first example in Section~\ref{s54}. 

\subsection{Boundary behavior}\label{s612}
As it was said in Subsection~\ref{s212}, Condition~\ref{sum} is only sufficient for the finiteness of the Bellman function~$\Bell$. It can be weakened slightly. We cite Theorem~$6.2$ from~\cite{SZ}.

\begin{Th}
If a measurable function~$f$ satisfies the condition
\begin{equation*}
\sum\limits_{k\in\mathbb{Z}}e^{-\frac{|k|}{\eps}}\!\!\!\sup\limits_{[\eps(k-2),\eps(k+2)]}\!\!\!|f| < \infty,
\end{equation*}
then the Bellman function~\textup{\eqref{Bf}} is finite.
\end{Th}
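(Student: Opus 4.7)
The plan is to bound the integral functional $\av{f(\varphi)}{I}$ uniformly over the admissible class of test functions $\varphi\in\BMO_\eps(I)$ with prescribed Bellman point $x\in\Omega_\eps$, which by the definition~\eqref{Bf} yields finiteness of~$\Bell(x;f)$.

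First, I would use the change-of-variable identity of Remark~\ref{remm2}, applied with $\alpha=1$ and $\beta=-x_1$, to reduce to the centered case $\av{\varphi}{I}=0$: the problem becomes that of bounding $\Bell(0,x_2-x_1^2;\tilde f)$ where $\tilde f(\cdot)=f(\cdot+x_1)$. The summability hypothesis transfers from $f$ to $\tilde f$, since the shifted window $[\eps(k-2),\eps(k+2)]+x_1$ is covered by finitely many windows of the original indexing and the weight $e^{-|k|/\eps}$ is only modified by a bounded multiplicative factor depending on~$x_1$ and~$\eps$. After this reduction we have $\|\varphi\|_{\BMO(I)}\le\eps$.

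Second, I would invoke the John--Nirenberg inequality~\eqref{JN}, in its sharp form for the quadratic~$\BMO$ norm established in~\cite{SlVa}, to control the distribution function of~$\varphi$:
$$\frac{|\{t\in I:|\varphi(t)|\ge\lambda\}|}{|I|}\le c_1\,e^{-c_2\lambda/\eps},\qquad\lambda\ge 0,$$
where the exponential decay rate is attained by the extremal $\varphi(t)=-\eps\ln t$ on $[0,1]$.

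Third, the heart of the proof is a level-set decomposition of $\av{|\tilde f(\varphi)|}{I}$. Choosing an inner partition $\{J_k\}_{k\in\mathbb{Z}}$ of~$\mathbb{R}$ calibrated so that each $J_k$ is contained in the hypothesis window $[\eps(k-2),\eps(k+2)]$, and setting $E_k=\{t\in I:\varphi(t)\in J_k\}$, one obtains
$$\av{|\tilde f(\varphi)|}{I}\le\sum_{k\in\mathbb{Z}}\frac{|E_k|}{|I|}\cdot\sup_{J_k}|\tilde f|\le C\sum_{k\in\mathbb{Z}}e^{-|k|/\eps}\,M_k<\infty,$$
where $M_k=\sup_{[\eps(k-2),\eps(k+2)]}|\tilde f|$ and the second inequality combines the JN tail bound for $|E_k|/|I|$ with the containment $J_k\subset[\eps(k-2),\eps(k+2)]$. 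Taking supremum over~$\varphi$ gives $\Bell(x;f)<\infty$ for every $x\in\Omega_\eps$.

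The main obstacle is the precise calibration of the JN tail exponent against the weight~$e^{-|k|/\eps}$ in the hypothesis: one must tune the width of the level sets~$\{J_k\}$ so that the contribution of each $|E_k|/|I|$ is bounded by a constant multiple of~$e^{-|k|/\eps}$. This matching is delicate because a naive decomposition with $J_k=[\eps k,\eps(k+1))$ yields a weight of the form~$e^{-c_2|k|}$ rather than the desired~$e^{-|k|/\eps}$; the resolution requires using the sharp constant from the integral form~\eqref{intJN} (equivalently the sharp value of~$\eps_\infty$ for the quadratic~$\BMO$ norm) and, if necessary, rescaling the partition to make the two weights commensurable. Once this reconciliation is achieved, the rest of the argument is straightforward bookkeeping.
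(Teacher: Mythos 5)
Your overall strategy --- recenter, partition the range of~$\varphi$ into windows of scale~$\eps$, and bound~$|E_k|$ by the John--Nirenberg tail --- is the natural one. But the ``reconciliation'' you describe in the last step does not and cannot work, and this is the crux of the matter.

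Trace the exponents. Any partition $\{J_k\}$ with $J_k\subset[\eps(k-2),\eps(k+2)]$ that still covers the line must place~$J_k$ at distance of order~$\eps|k|$ from the origin. The John--Nirenberg inequality, even in the sharpest possible form (tail decay $e^{-\lambda/\eps}$ with a universal constant), then gives
\[
\frac{|E_k|}{|I|}\;\le\;C\,e^{-\eps(|k|-2)/\eps}\;=\;C'\,e^{-|k|},\qquad |k|\ge2.
\]
The $1/\eps$ in the John--Nirenberg exponent cancels exactly against the~$\eps$ set by the scale of the windows. This is not a matter of the constant~$c_2$: the sharp value of $\eps_\infty$ only tells you that no exponent greater than~$1/\eps$ is allowed, and taking the exponent up to~$1/\eps$ from below inflates the multiplicative constant. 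No choice of partition or constant produces the weight $e^{-|k|/\eps}$; what your level-set argument actually proves is finiteness of~$\Bell(x;f)$ under the hypothesis $\sum_k e^{-|k|}\sup_{[\eps(k-2),\eps(k+2)]}|f|<\infty$.

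The deeper point is that the statement as printed here is not correct. Take $f(t)=e^{t/\eps}$ and $\eps<1$. Then $\sup_{[\eps(k-2),\eps(k+2)]}|f|=e^{k+2}$, and the general term $e^{-|k|/\eps}e^{k+2}$ tends to zero as $k\to\pm\infty$ (for $k\to+\infty$ it is $e^{2+k(1-1/\eps)}$), so the hypothesis holds. Yet $\Bell(\cdot;e^{t/\eps})$ is infinite on $\Omega_\eps$: with $\varphi(t)=-\eps\ln t-\eps$ on $I=[0,1]$ one has $\bp{\varphi}=(0,\eps^2)$, $\BNorm{\varphi}=\eps$, and $\av{e^{\varphi/\eps}}{I}=e^{-1}\int_0^1 t^{-1}\,dt=\infty$ --- this is the boundary case in the sharp integral John--Nirenberg inequality of~\cite{SlVa}. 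So the extra $1/\eps$ in the exponent is a misprint in the quotation of Theorem~6.2 of~\cite{SZ}; the intended weight is $e^{-|k|}$ over the windows $[\eps(k-2),\eps(k+2)]$ (equivalently, $e^{-|k|/\eps}$ over windows $[k-2,k+2]$). The discrepancy you flagged as ``the main obstacle'' is not an obstacle to be tuned away but a signal that the hypothesis as written is too weak to imply the conclusion.

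Two smaller remarks. First, the tail estimate you need is the sharp \emph{weak-type} bound $|\{|\varphi|>\lambda\}|/|I|\le C\,e^{-\lambda/\eps}$ with universal~$C$; that is what is established in~\cite{Va,VV2}. Reference~\cite{SlVa} gives the integral form, and a Chebyshev argument from the integral form alone loses a factor polynomial in $\lambda/\eps$, leading to $\sum_k|k|e^{-|k|}M_k$, which the hypothesis does not control. Second, the recentering step changes the weight only by a factor depending on $x_1$ and~$\eps$; this is fine because finiteness is claimed pointwise on~$\Omega_\eps$, but you should say so.
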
 

There is a related question: when is the expression~$\int f(\varphi)$ well defined (and thus formula~\eqref{Bf} is relevant)? Both Condition~\ref{sum} and the theorem above provide conditions on~$|f|$. Maybe, there are some functions~$f$ oscillating at infinity such that~$\int |f(\varphi)|$ is infinite for some~$\varphi \in \BMO_{\eps}$, but the expression~$\int f(\varphi)$ can still be defined correctly? To study this question, we introduce yet another Bellman function:
\begin{equation}\label{Bff}
\Bellb(x_1,x_2;\,f) \;\df 
		\sup\big\{f[\varphi] \mid\; \av{\varphi}{I} = x_1,\av{\varphi^2}{I} = x_2,\;\varphi \in \BMO_{\eps}(I)\cap L_{\infty}(I)\big\}.
\end{equation} 
We note that it satisfies all the trivial properties of the function~$\Bell$ listed in Subsection~\ref{s211} and also Lemma~\ref{LMaj}. Moreover, it is well defined for any measurable function~$f$ that is locally bounded, whereas the function~$\Bell$ needs summability conditions such as Condition~\ref{sum} to be defined (and one also has to prove that the integral is well defined). In the case where~$\Bell$ is defined,~$\Bellb=\Bell$. A natural question arises: is the Bellman function~$\Bell$ well defined provided there exists a finite function~$G \in \Lambda_{\eps,f}$ (see formula~\eqref{Lambda})? Our aim here is to give an example of~$f$ for which the answer is negative. We need Proposition~$6.1$ from~\cite{SZ} (we formulate it in our particular case).

\begin{St}\label{Definitness}
Suppose that~$f$ is locally bounded from below and the minimal function~$\BG$ from~$\Lambda_{\eps,f}$ is finite. Then\textup, the integral~$\int f(\varphi)$ is well defined for all~$\varphi \in \BMO_{\eps}$ if and only if the function~$\Bell(\cdot\,;f_+)$ is finite, where $f_+=\max(f,0)$.
\end{St}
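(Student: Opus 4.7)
The plan is to prove the two implications by different kinds of arguments.

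The direction $(\Leftarrow)$ is almost formal: if $\Bell(\cdot\,;f_+)$ is finite on $\Omega_\eps$, then for every $\varphi\in\BMO_\eps$ one has $\int f_+(\varphi)\le \Bell(\bp{\varphi};f_+)<\infty$, so $\int f(\varphi)=\int f_+(\varphi)-\int f_-(\varphi)$ can never be of the form $\infty-\infty$ and is a well-defined element of $[-\infty,+\infty)$. The key ingredient here is only the definition of $\Bell$ as a supremum.

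For $(\Rightarrow)$ I would argue by contradiction and a gluing construction. The essential preliminary observation is the Bellman-induction bound $\int f(\varphi)\le \BG(\bp{\varphi})$, valid for every bounded $\varphi\in\BMO_\eps\cap L^\infty$. This is exactly the bounded-$\varphi$ stage of the proof of Lemma~\ref{LMaj}: the Bellman induction for such $\varphi$ relies only on local concavity of $\BG$ and continuity of $f$ and $\BG$ on compact subsets of $\Omega_\eps$ (which follow from $\BG\in C(\Omega_\eps)$), and never invokes Condition~\ref{sum}. Now suppose that $\Bell(x_0;f_+)=+\infty$ at some $x_0\in\Omega_\eps$. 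Choose $\psi_n\in\BMO_\eps$ with $\bp{\psi_n}=x_0$ and $\int f_+(\psi_n)\ge 2n^3$ and truncate: $\varphi_n\df(\psi_n\wedge M_n)\vee(-M_n)$ with $M_n$ so large that $\int_{\{|\psi_n|\le M_n\}}f_+(\psi_n)\ge n^3$ and $\bp{\varphi_n}$ lies in a fixed compact neighborhood $K$ of $x_0$ in $\Omega_\eps$; this is possible because $f_+\ge 0$ implies $\int f_+(\varphi_n)\ge\int_{\{|\psi_n|\le M_n\}}f_+(\psi_n)$ and because $\bp{\varphi_n}\to\bp{\psi_n}=x_0$ as $M_n\to\infty$. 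By Proposition~\ref{Stcutoff}, $\varphi_n\in\BMO_\eps$, and the bounded-$\varphi$ bound yields $\int f(\varphi_n)\le \BG(\bp{\varphi_n})\le C\df \sup_K\BG<\infty$, whence
\begin{equation*}
\int f_-(\varphi_n)=\int f_+(\varphi_n)-\int f(\varphi_n)\ge n^3-C\ge \tfrac12 n^3 \quad \text{for large } n.
\end{equation*}
The hypothesis that $f$ is locally bounded from below enters here in ensuring that $\int f_\pm(\varphi_n)$ are individually finite for bounded $\varphi_n$, so this arithmetic is legal.

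Finally, I would concatenate. Split $I=[0,1]$ into disjoint intervals $J_n$ of length $|J_n|=6\pi^{-2}n^{-2}$ and place on each $J_n$ the affine rescaling of $\varphi_n$ from $[0,1]$ to $J_n$; call the resulting function $\varphi^\ast$. For any subinterval $J\subset I$, the Bellman point $\bp{\varphi^\ast|_J}$ is a weighted average of full Bellman points $\bp{\varphi_n}\in K\subset\Omega_\eps$ (for $n$ with $J_n\subset J$) and at most two partial Bellman points $\bp{\varphi_n|_{J_n\cap J}}\in\Omega_\eps$ (boundary indices, in $\Omega_\eps$ because $\varphi_n\in\BMO_\eps$); convexity of $\Omega_\eps$ then yields $\bp{\varphi^\ast|_J}\in\Omega_\eps$, i.e. $\varphi^\ast\in\BMO_\eps$. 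On the other hand,
\begin{equation*}
\int_I f_+(\varphi^\ast)=\sum_n |J_n|\int f_+(\varphi_n)\ge \sum_n \frac{6}{\pi^2 n^2}\cdot n^3=+\infty,
\end{equation*}
and analogously $\int_I f_-(\varphi^\ast)=+\infty$, so $\int f(\varphi^\ast)$ is not well-defined, contradicting the hypothesis.

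The main obstacle is the truncation step: one must verify that the bounded $\varphi_n$ produced by cutting off $\psi_n$ simultaneously keep their Bellman points inside a fixed compact piece of $\Omega_\eps$ and carry a prescribed amount of $\int f_+$-mass; sign-positivity of $f_+$ makes this miraculously easy via monotone convergence on $\{|\psi_n|\le M\}$. Once this step is in place, the remaining work is the routine combination of Bellman induction for bounded test functions (via Lemma~\ref{LMaj}'s bounded case and Proposition~\ref{Stcutoff}) with the standard convexity-of-$\Omega_\eps$ argument that makes the concatenation $\varphi^\ast$ belong to $\BMO_\eps$.
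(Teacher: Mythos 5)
The $(\Leftarrow)$ direction, the truncation step, and the observation that the bounded-$\varphi$ stage of Lemma~\ref{LMaj} (the Bellman induction) does not invoke Condition~\ref{sum} are all correct and nicely organized. For what it is worth, the paper does not prove this proposition at all — it is imported verbatim from \cite{SZ} — so you are on your own here; but the structure you chose (reduce to finding a single $\varphi^\ast\in\BMO_\eps$ with $\int f_+(\varphi^\ast)=\int f_-(\varphi^\ast)=+\infty$) is the right target.

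The gap is in the final concatenation step, and it is fatal as written: $\Omega_\eps$ is \emph{not} a convex set. The lower constraint $x_2\ge x_1^2$ survives convex combinations, but the upper one $x_2\le x_1^2+\eps^2$ does not. Concretely, if $\bp{\varphi_n}$ is near $x_0=(0,\eps^2/2)$ and the ``boundary'' partial Bellman point $\bp{\varphi_m|_{J_m\cap J}}$ happens to sit at $(M,M^2)$ on the lower parabola with $M$ large, a weighted average with small weight $w$ on the second point has variance $w(1-w)M^2+(1-w)\eps^2/2$, which exceeds $\eps^2$ once $M$ is large enough relative to $w$. So the concatenated function $\varphi^\ast$ need not lie in $\BMO_\eps$, and the contradiction you are after does not materialize. (This is exactly the subtlety the paper flags when it says ``the assumption we did ($\varphi\in\BMO_\eps$) is significantly irreducible'' in the discussion before Definition~\ref{LocalConcavity} — the main inequality \eqref{MIneq} itself is \emph{not} a consequence of concavity precisely because concatenation is dangerous.) The step that is missing is some mechanism for making the glued function stay in $\BMO_\eps$: for instance, controlling the partial Bellman points explicitly, building $\varphi^\ast$ via a monotone (delivery-curve) construction so that Lemma~\ref{DeliveryCurveLemma} or Theorem~\ref{MonotonicRearrangement} can be invoked, or inserting carefully chosen buffer intervals. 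As it stands, ``convexity of $\Omega_\eps$'' cannot be the reason $\varphi^\ast\in\BMO_\eps$, because that statement is simply false.
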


Namely, we will construct a function~$f$ and a Bellman candidate~$B$ in~$\Omega_1$ such that~$B$ is finite, but there exists a sequence of intervals~$\{\Delta_k\}_k$ tending to infinity,~$|\Delta_k| = \frac{1}{10}$,~$f(t) \geq e^t$ on~$\Delta_k$. Surely, for such a function~$f$,~$\Bell(\cdot\,;f_+)$ is infinite. Indeed, the value~$\int_{[0,1]} f_+(-\ln t)$ is infinite. So, by Proposition~\ref{Definitness}, there exists a function~$\varphi \in \BMO_{1}$ such that~$\int f(\varphi)$ is not well defined. It remains to prove the lemma below.

\begin{Le}
There exists a~$C^1$-smooth function~$f$\textup, a finite Bellman candidate~$B$ in~$\Omega_1$ for~$f$\textup, and a sequence of intervals~$\{\Delta_k\}_k$ tending to infinity such that~$|\Delta_k| = \frac{1}{10}$ and~$f(t) \geq e^t$ on~$\Delta_k$ for all~$k \in \mathbb{N}$.
\end{Le}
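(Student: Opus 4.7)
The plan is to realize $f$ as a smooth perturbation
\[
f=f_0+\sum_{k=1}^{\infty}h_k
\]
of a strongly concave baseline $f_0(t)=-e^{\alpha t}$, where $\alpha>1$ will be chosen large enough (take $\alpha=3$ for concreteness), and each $h_k\in C^{\infty}_c$ is a ``spike-and-compensator'' bump supported in a fixed-length window around a point $t_k\to+\infty$, with pairwise disjoint supports. The baseline already admits a finite Bellman candidate on $\Omega_1$ via the right-tangent family from Proposition~\ref{RightTangentsCandidateInfty}: direct integration gives
\[
m_0(u)=-\tfrac{\alpha}{\alpha+1}e^{\alpha u},\qquad m_0''(u)=-\tfrac{\alpha^3}{\alpha+1}e^{\alpha u}\leq 0.
\]

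Each $h_k$ will consist of a smooth plateau of height $H_k=2\,e^{\alpha(t_k+1/10)}$ on $\Delta_k=[t_k,t_k+1/10]$, so that $h_k\geq e^t+e^{\alpha t}$ on $\Delta_k$ and hence $f\geq e^t$ there, joined through short $C^{\infty}$ transitions to a negative plateau of depth $C_k>0$ on a longer interval adjacent to the right. The parameters $C_k$ and the length of this negative plateau are fixed by the single integral condition
\[
\int_{\mathbb{R}} e^t h_k(t)\,dt=0,
\]
which, after three integrations by parts, is equivalent to $\int_{\mathbb{R}} e^t h_k'''(t)\,dt=0$. This condition guarantees that
\[
m(u)=e^{-u}\!\int_{-\infty}^{u}\!f'(t)e^t\,dt
\]
coincides with $m_0$ outside every bump's support, and for fixed $u$ involves contributions from only finitely many bumps, so $m(u)$ is finite at every point. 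Consequently $B(x_1,x_2)=m(u)(x_1-u)+f(u)$ with $u=\ur(x_1,x_2)$ is a finite Bellman candidate on $\Omega_1$, and $f\geq e^t$ on each $\Delta_k$ by construction.

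The heart of the argument, and the main obstacle, is the global concavity inequality $m''(u)\leq 0$, equivalently
\[
G(u)\df\int_{-\infty}^{u}\!e^t f'''(t)\,dt\leq 0\quad\text{for every }u\in\mathbb{R}.
\]
The identity produced by three integrations by parts,
\[
\int_{-\infty}^{u}\!e^t h_k'''(t)\,dt=e^u\bigl(h_k''(u)-h_k'(u)+h_k(u)\bigr)-\!\int_{-\infty}^{u}\!e^t h_k(t)\,dt,
\]
shows that inside the support of a single $h_k$ the contribution to $G$ is bounded in absolute value by a constant $C$ (depending only on the chosen transition profile, \emph{not} on $k$) times $H_k e^{t_k}\asymp e^{(\alpha+1)t_k}$, whereas the baseline contributes the strongly negative reservoir $-\tfrac{\alpha^3}{\alpha+1}e^{(\alpha+1)u}$ throughout the same range of $u$. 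Since $\tfrac{\alpha^3}{\alpha+1}$ grows much faster in $\alpha$ than $e^{\alpha/10}$ for moderate $\alpha$, choosing $\alpha$ large enough (any $\alpha\geq 3$ with a suitable transition profile, or more generally any $\alpha$ with $\tfrac{\alpha^3}{(\alpha+1)e^{\alpha/10}}>C$) ensures the baseline dominates the spike contribution uniformly in $k$, giving $G(u)\leq 0$ and hence $m''(u)\leq 0$ everywhere. The resulting $f$ is $C^{\infty}$, the intervals $\Delta_k$ tend to infinity with $|\Delta_k|=1/10$ and $f\geq e^t$ on each $\Delta_k$, and $B$ is the desired finite Bellman candidate.
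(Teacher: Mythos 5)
Your approach is genuinely different from the paper's, but it has a gap at the crucial concavity step. The paper does not perturb a given $f$: it works with \emph{left} tangents, directly \emph{prescribes} $m''(t)=(e^{e^t})''\geq 0$ on each piece so that concavity is automatic, integrates the first-order equation $-m'+m=f'$ to recover $f$, and then uses the linear extension over a multicup $\MTC(\{[u,w]\})$ to boost $f$ upward via the identity $f(u+2)=f(u)+2m(u)\geq 0.6\,e^{e^u}$, which makes $f$ huge precisely because $m$ was engineered to be huge. You instead fix $f_0=-e^{\alpha t}$, add bump perturbations $h_k$, and try to absorb the spike into the sign of $m_0''$. That absorption fails.

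Concretely, the obstruction is in your bound on the spike's contribution to $G(u)=\int_{-\infty}^{u}e^t f'''(t)\,dt$. You need $h_k\geq e^t+e^{\alpha t}$ on $\Delta_k$, forcing $H_k\gtrsim e^{\alpha(t_k+1/10)}$. Since $h_k$ and $h_k'$ vanish at the left endpoint of its support (say $t_k-a$) and $h_k=H_k$ on $\Delta_k$ with zero slope at the matching, the weighted integral $\int_{t_k-a}^{t_k}(t_k-t)\,h_k''(t)\,dt=H_k$ forces $\|h_k''\|_{\infty}\geq 2H_k/a^2$ somewhere in the ascending ramp $[t_k-a,t_k]$. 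At such a point $u^*$, the identity
\[
\int_{-\infty}^{u^*}e^t h_k'''(t)\,dt=e^{u^*}\bigl(h_k''(u^*)-h_k'(u^*)+h_k(u^*)\bigr)-\int_{-\infty}^{u^*}e^t h_k(t)\,dt
\]
has its dominant term $e^{u^*}h_k''(u^*)\gtrsim e^{u^*}H_k/a^2$, while the baseline gives only $-\tfrac{\alpha^3}{\alpha+1}e^{(\alpha+1)u^*}$. The ratio is
\[
\frac{e^{u^*}H_k/a^2}{\tfrac{\alpha^3}{\alpha+1}e^{(\alpha+1)u^*}}
\;\gtrsim\; \frac{\alpha+1}{\alpha^3 a^2}\,e^{\alpha(t_k+1/10-u^*)}
\;\geq\;\frac{\alpha+1}{\alpha^3 a^2}\,e^{\alpha/10},
\]
and in fact the worst $u^*$ lies to the left of $t_k$, so the exponent improves to roughly $\alpha(a+1/10)$. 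Optimizing the transition width $a$ (the optimum is $a\asymp 1/\alpha$) leaves a factor bounded below by a universal multiple of $e^{2+\alpha/10}$ that exceeds $1$ for every $\alpha>0$. So $G(u^*)>0$ for all sufficiently large $k$, i.e.\ $m''(u^*)>0$, and the right-tangent candidate fails to be concave. Your phrase ``the baseline contributes the reservoir $-\tfrac{\alpha^3}{\alpha+1}e^{(\alpha+1)u}$ throughout the same range of $u$'' hides the fact that this reservoir is \emph{weakest} exactly where the spike is steepest (the left end of the ramp), and $\tfrac{\alpha^3}{\alpha+1}$ cannot beat the combination of the $e^{\alpha/10}$ coming from $H_k$ and the $1/a^2$ coming from the $C^1$ transition. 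Neither increasing $\alpha$ nor reshaping or widening the transition fixes this. The paper sidesteps the issue entirely by never asking $f_0''+h_k''$ to retain a sign: it chooses $m''$ first.
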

\begin{proof}
We begin with an abstract construction. Let~$v \in \mathbb{R}$, consider the domain~$\Lt(v,u)$, where~$u$ is very big. Suppose also that the values~$f(v)$,~$f'(v)$ and~$m(v)$ are given (as usual,~$m$ is the slope function given by formula~\eqref{linearity}). By~\eqref{difeq2},
\begin{equation*}
-m' + m = f',
\end{equation*} 
thus,~$m'(v)$ is also given. Let us set~$m''(t) = (e^{e^t})''= e^{e^t}(e^{2t} + e^t)\geq 0$ on this domain. In such a case, we can use the data and the Newton--Leibniz formula to find~$m'(t)$,~$m(t)$, and~$f(t)$, and build a locally concave (on~$\Lt(v,u)$) function~$B$ by formula~\eqref{linearity} with the help of Proposition~\ref{LeftTangentsCandidate}. It is clear that~$f$ is~$C^2$--smooth, and the functions~$f$ and~$m$ satisfy the inequalities (for sufficiently big~$t$)
\begin{equation}\label{Inequalities}
\begin{aligned}
1.1e^{e^t} \geq m(t) \geq 0.9e^{e^t};\\
f'(t) \geq -1.1e^t e^{e^t};\\
f(t) \geq -1.2e^{e^t}. 
\end{aligned}
\end{equation}
We set~$u$ to be so big that all these inequalities are valid at the point~$t = u$. After that, we choose some $w$, $w>u+3$, and extend the function~$B$ linearly (preserving the~$C^1$--smoothness) into the domain~$\MTC(\{[u,w]\})$, thus, defining the value of~$f$ on~$[u,w]$. Using~\eqref{Inequalities}, we see that~$f(u+2) = f(u) + 2m(u) \geq 0.6e^{e^u}$. 

It follows from linearity that~$f \geq 0.2e^{e^u}$ either on~$[u+1.9,u+2]$ or on~$[u+2,u+2.1]$. Indeed, the restriction of~$f$ to~$[u,w]$ is a quadratic polynomial. We know that~$f(u) \geq -1.2e^{e^u}$ and~$f(u+2) \geq 0.6e^{e^u}$. If one considers the two cases of positive and negative leading coefficient, he sees that~$f \geq 0.2e^{e^u}$ either on~$[u+1.9,u+2]$ or on~$[u+2,u+2.1]$. So, one of these two intervals is a good candidate to be one of the~$\Delta_k$ (since~$e^{e^u} \geq e^{u+2.1}$). It remains to extend the function~$B$ linearly to~$\Ang(w)$ and return to the left tangents.

Let us set the parameters~$f(0),m(0)$, and~$f'(0)$, and extend the function~$B$ linearly on the left (so,~$f$ is a quadratic polynomial there). To construct~$B$ on the right, we apply the procedure described above consecutively (i.e. setting~$v := 0$ first, then~$v := w$, and so on). 
\end{proof}



\section{Further development}\label{s62}
\subsection{More historical remarks}\label{s621}
As it was said in the introduction, the main ideas and results of the present text were known to the authors at the time of preparation of~\cite{5A}. There was a strong development in the field since that moment. We describe it briefly.

Several examples of Bellman functions given by formula~\eqref{Bf} for specific~$f$ played a significant role in the papers~\cite{LSSVZ,Osekowski2} (however, the second paper treats the case of very non-smooth~$f$). Though the theory from~\cite{5A} formally was not used, both papers admit inspiration by it. 

From the very beginning, it was clear that the method is not limited by the space~$\BMO$. Indeed, see the papers~\cite{BR,Rez,Va3}, where similar questions are studied on Muckenhoupt classes and Gehring (so called reverse H\"older) classes. All these particular cases were unified into a single extremal problem in~\cite{5AShortNew}. The authors also claim that all the constructions of the~$\BMO$ setting can be transferred to this more general one. It appears that one has to replace each notion (force, differentials of a chord, etc.) by its differential geometric analog. For example, the differentials can be expressed as an outer product of certain three vectors. Some hints to these differential geometric constructions may be found in~\cite{ISZ}. However, there are many technicalities to overcome that are not present in the~$\BMO$ case. That is why we decided first to present the full treatment of the~$\BMO$ case.

Theorem~\ref{MT} is not a miracle now. A relatively short and transparent proof of it (omitting the construction of the Bellman function) was given in~\cite{SZ} (in the general geometric setting of~\cite{5AShortNew}). The idea is that there is a third function (other than~$\Bell$ and the minimal locally concave~$\BG$) built as a solution of a certain optimization problem. The optimization problem is posed on the class of martingales starting from a point, wandering inside~$\Omega_{\eps}$, and ending their way on~$\FixedBoundary \Omega_{\eps}$. The value~$\BM(x)$ is the supremum of~$\E f(M)$ over all the martingales starting from the point~$x$. It is relatively easy to show that~$\BM = \BG$ (and this is a much more general fact). Then one shows that~$\BM = \Bell$ by proving two embedding theorems: each function from~$\BMO_{\eps}$ generates a martingale with ``the same'' final distribution, and vice versa, each martingale of the prescribed type generates a function from~$\BMO_{\eps}$.

We also mention that now one can work with non-quadratic semi-norms on~$\BMO$. Historically, the trick that allows to pass to~$p$-semi-norms, was invented by Slavin. Now it is written down in~\cite{Slavin2}, where the sharp constant for the John--Nirenberg inequality with respect to~$p$-semi-norm was found;~$1 \leq p \leq 2$ there (the case $p>2$ is considered in~\cite{SlVa4}). The trick was used in~\cite{LSSVZ} as well. The Bellman function that plays the major role in~\cite{Slavin2, SlVa4} is defined on the exponential strip and falls under the scope of the general setting in~\cite{5AShortNew}.

One can ask what happens if the underlying space on which the~$\BMO$ space is defined, has dimension~$2$ or greater. The most natural question is: if one defines the~$\BMO$ space on the cube~$Q \in \mathbb{R}^d$ as
\begin{equation*}
	\|\varphi\|_{\BMO(Q)} \!\!\df\; \Big(\sup_{J\subset Q}\av{|\varphi-\av{\varphi}{J}|^2}{J}\Big)^{\frac{1}{2}},
\end{equation*}
where~$J$ runs through all subcubes of~$Q$, what is the supremum of~$\lambda$'s such that~$e^{\lambda \frac{\varphi}{\|\varphi\|_{\BMO}}}$ is summable over~$Q$? It is not hard to see that~$\lambda \leq 1$. However, it is natural to conjecture that~$\lambda < 1$. Unfortunately, it is not clear how to find the optimal~$\lambda$. But one can solve the problem for dyadic classes\index{dyadic problems}.

Let~$Q$ be a cube in~$\mathbb{R}^d$. By~$\mathcal{D}$ we denote the collection of all its dyadic subcubes. Define the~$\BMO^{\mathrm{dyad}}$-semi-norm by the formula
\begin{equation}\label{DyadicBMONorm}
\|\varphi\|_{\BMO^{\mathrm{dyad}}} \!\!\df\; \Big(\sup_{J\in\mathcal{D}(Q)}\av{|\varphi-\av{\varphi}{J}|^2}{J}\Big)^{\frac{1}{2}}.
\end{equation}  
Define the Bellman function by the formula~\eqref{Bf} with the usual~$\BMO$-norm replaced with~$\|\cdot\|_{\BMO^{\mathrm{dyad}}}$. The case~$f(t) = e^{\lambda t}$ (corresponding to the integral form of the John--Nirenberg inequality~\eqref{intJN}) was treated in~\cite{SlVa3}. It appeared that the Bellman function is the restriction of~$\boldsymbol{B}_{\eps'}(\cdot\,;\exp)$ to~$\Omega_{\eps}$, where~$\eps' > \eps$ is some explicit parameter (depending on~$\eps$ and~$d$). It would be also interesting to find sharp constants in the analogs of inequalities~\eqref{bmoequivnorms} and~\eqref{JN} for dyadic-type spaces. A more demanding task is to build a theory for the optimization problems on dyadic classes similar to the one presented here.

In Subsection~\ref{s224}, we said that the monotonic rearrangement operator is related to the optimization problem~\eqref{Bf}. Indeed, Theorem~\ref{MonotonicRearrangement} (from~\cite{Ivo}) helped us to guess the optimizers. There is a reverse influence. It was noticed in~\cite{SZ} that the martingales introduced there allow to prove sharp inequalities for the monotonic rearrangement operator on the class in question (e.g.~$\BMO$, Muckenhoupt classes, Gehring classes, etc.). The paper~\cite{SVZ} provides sharp estimates for the monotonic rearrangement operator from dyadic classes in arbitrary dimension to the corresponding continuous class on the interval.

\subsection{Conjectures and suggestions}\label{s622}
\paragraph{Other measures.} Let~$\mu$ be a Borel measure with finite variation on~$[0,1]$. We may consider a similar problem over this measure. Namely, consider the~$\BMO$ space with the seminorm~\eqref{BMOnorm2} where all averages are taken with respect to~$\mu$, i.e.
\begin{equation*}
\av{\varphi}{J} \df \frac{1}{\mu(J)}\int\limits_{J}\varphi(t)\,d\mu(t).
\end{equation*}
One may then consider the Bellman function given by formula~\eqref{Bf} where all averages are with respect to~$\mu$. It is easy to see that if the measure~$\mu$ does not contain atoms, then the resulting Bellman function is the same as before. It is interesting to see what happens in the other cases.

\paragraph{Other manifolds.} There are only two one-dimensional compact connected manifolds: a segment and a circle. We have studied the problem for the segment, but there is an absolutely similar setting on the circle. Namely, define the space~$\BMO^{\circ}$ of functions on~$\mathbb{T}$ by the seminorm
\begin{equation*}
	\|\varphi\|_{\BMO^{\circ}} \!\!\df\; \Big(\sup_{J\subset \mathbb{T}}\av{|\varphi-\av{\varphi}{J}|^2}{J}\Big)^{\frac{1}{2}},
\end{equation*}
the supremum is taken over all the arcs~$J \subset \mathbb{T}$.
Similarly, one can consider the Bellman function~$\Bell^{\circ}(x;f)$ given by formula~\eqref{Bf} with~$I$ replaced by~$\mathbb{T}$. It is clear that~$\Bell^{\circ} \leq \Bell$, because every function in~$\BMO^{\circ}$ can be identified with a function in~$\BMO(I)$. In some cases, this inequality may turn into equality. Anyway, it is interesting to find~$\Bell^{\circ}$ at least for some reasonable~$f$ (such as the exponential function).

\paragraph{Dyadic problems.}\index{dyadic problems}
Define the dyadic~$\BMO$ on the cube~$Q\subset \mathbb{R}^d$ by formula~\eqref{DyadicBMONorm} and consider the dyadic Bellman function 
\begin{equation}\label{BfDyad}
		\Belld(x_1,x_2;\,f) \;\df 
		\sup\big\{f[\varphi] \mid\; \av{\varphi}{Q} = x_1,\av{\varphi^2}{Q} = x_2,\;\varphi \in \BMO_{\eps}^{\mathrm{dyad}}(Q)\big\}.
	\end{equation} 
	It is easy to see that the domain of this function is~$\Omega_{\eps}$ and it satisfies the usual boundary conditions. As for the main inequality, one can easily prove that
	\begin{equation}\label{DyadicConcavity}
	\Belld(x) \geq 2^{-d}\sum\limits_{k=1}^{2^d}\Belld(x_k)\quad \hbox{provided}\quad x = 2^{-d}\sum\limits_{k=1}^{2^d}x_k,\quad x \in \Omega_{\eps}, \forall k\ x_k\in \Omega_{\eps}.
	\end{equation}
	In particular, the function~$\Belld$ is locally concave on~$\Omega_{\eps}$. However, condition~\eqref{DyadicConcavity} is much stronger. We recall a definition from~\cite{SZ}.
	\begin{Def}
	We call a domain~$\Omega$ an extension of~$\Omega_{\eps}$ if there exists an open convex unbounded set~$\Omega'$ such that~$\Omega = \{x \in\mathbb{R}^2\mid x_2 \geq x_1^2\} \setminus \Omega'$ and the closure of~$\Omega'$ lies in~$\{x \in\mathbb{R}^2\mid x_2 > x_1^2 + \eps^2\}$.
	\end{Def}
	\begin{Conj}\label{ExtensionConjecture}
	For every~$f$\textup,~$\eps$\textup, and~$d$ there exists an extension~$\Omega$ of~$\Omega_{\eps}$ such that
	\begin{equation*}
	\Belld(x) = \inf\Big\{G(x)\,\Big|\; G\,\hbox{is locally concave on}\;\Omega,\quad G(x_1,x_1^2) \geq f(x_1)\;\hbox{for all}\; x_1 \in \mathbb{R}\Big\},\quad x \in \Omega_{\eps}.
	\end{equation*}
	\end{Conj}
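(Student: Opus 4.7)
The plan is to establish the conjecture by adapting the two-sided framework of the paper (upper bound via Bellman induction, lower bound via optimizers/martingales) to the dyadic setting, with the extra geometric work concentrated in the choice of $\Omega$. First I would identify the extension $\Omega$ as (a neighborhood of) the smallest set of the prescribed form such that, for every point $y\in\Omega_\eps$ and every representation $y=2^{-d}\sum_{k=1}^{2^d} x_k$ with $x_k\in\Omega_\eps$, the convex hull of $\{x_k\}$ is contained in $\Omega$. The existence of such an extension of the prescribed form (complement in $\{x_2\ge x_1^2\}$ of an open convex set whose closure sits in $\{x_2>x_1^2+\eps^2\}$) is a problem of convex geometry: one must check that the set of points never appearing in such a convex hull is itself open and convex; this is plausible because the geometric obstructions come from tangency conditions to the free parabola, which determine a convex envelope.

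The upper bound $\Belld(x)\le\inf_G G(x)$ is then a direct dyadic analogue of Lemma~\ref{LMaj}. Given $G$ locally concave on $\Omega$ with $G(u,u^2)\ge f(u)$, and $\varphi\in\BMO_\eps^{\mathrm{dyad}}(Q)\cap L^\infty(Q)$ with $\bp{\varphi}=x$, iterate the $2^d$-point Jensen inequality along the dyadic tree of $Q$. At every step of the induction the relevant convex combination uses $2^d$ Bellman points lying in $\Omega_\eps$ and their equal-weight average, so by the defining property of $\Omega$ the entire convex hull lies in $\Omega$, and local concavity of $G$ there justifies the step. The martingale convergence theorem and dominated/monotone convergence reduce to the boundary estimate $G(u,u^2)\ge f(u)$; the removal of the boundedness assumption on $\varphi$ uses the truncation argument of Proposition~\ref{Stcutoff} exactly as in the continuous case.

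The opposite inequality is the substantive direction: I would build it by extending $\Belld$ to an admissible majorant $\widetilde{\Belld}$ on all of $\Omega$. The natural definition, inspired by the martingale description of $\BG$ in~\cite{SZ}, is
\begin{equation*}
\widetilde{\Belld}(x)\df\sup\Bigl\{\E f(M_\infty)\;\Big|\;M_0=x,\;M\text{ is a }2^d\text{-ary martingale with }M_n\in\Omega_\eps\text{ for }n\ge 1,\;M_\infty\in\FixedBoundary\Omega_\eps\Bigr\}.
\end{equation*}
Three things must then be verified: (i) $\widetilde{\Belld}|_{\Omega_\eps}=\Belld$, which follows from a dyadic analog of the $\BMO$/martingale embedding (a function in $\BMO_\eps^{\mathrm{dyad}}$ generates such a martingale through its averages over dyadic subcubes, and conversely an atomic $2^d$-ary martingale generates a piecewise-constant dyadic $\BMO$ function); (ii) $\widetilde{\Belld}(x_1,x_1^2)=f(x_1)$; (iii) $\widetilde{\Belld}$ is locally concave on $\Omega$. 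Once all three hold, $\widetilde{\Belld}$ is an admissible $G$ and the two-sided inequality closes.

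Step (iii) is where I expect the main obstacle to lie. Local concavity of $\widetilde{\Belld}$ at points $x\in\Omega\setminus\Omega_\eps$ is subtle because the admissible martingales starting at such $x$ are forced to place all their post-initial mass inside $\Omega_\eps$ along a restricted family of directions determined by the geometry of $\partial\Omega$. One must show that, along any segment $[y,z]\subset\Omega$, concatenation of two optimal or near-optimal martingales starting at the endpoints yields a martingale from the midpoint with at least the average value—and that this concatenation respects the admissibility constraint. The difficulty parallels Proposition~\ref{L5} in the continuous setting but without the freedom to shift $\eps$, so the delicate point is a dyadic version of that splitting lemma adapted to the shape of $\Omega$. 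A secondary difficulty is the interaction of this local concavity with the ``dyadic'' rigidity at the free boundary of $\Omega$, where extremal martingales are essentially unique and any concavity deficit propagates; resolving this will likely require choosing $\Omega$ slightly larger than the minimal one described in Step~1, which is compatible with the conjecture since it only asserts existence.
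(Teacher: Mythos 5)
This statement is labeled a \emph{Conjecture} in the paper and is not proved there; the text only remarks that the case~$f(t)=e^t$ is verified in~\cite{SlVa3} and explicitly observes that ``the conjecture says nothing about how to find~$\Omega$.'' So there is no paper proof to compare against, and what you have written is a research plan rather than a proof. That is worth saying plainly: your text honestly flags its own gaps, and those gaps are real.

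Concretely, the parts you label as ``plausible'' or ``I expect the main obstacle to lie here'' are exactly where the argument is missing. First, your candidate~$\Omega$ is defined as the smallest set swallowing all convex hulls of admissible $2^d$-point representations of points of~$\Omega_\eps$, but you do not show that the complement of this set inside~$\{x_2\ge x_1^2\}$ is an \emph{open convex} set with closure in~$\{x_2>x_1^2+\eps^2\}$, which is what the definition of ``extension'' in the paper demands; a union of convex hulls over many configurations need not have convex complement, and establishing this would already be a nontrivial lemma. Second, and more seriously, step~(iii) of your lower bound---local concavity of~$\widetilde{\Belld}$ on~$\Omega$---is unresolved, and the obstruction is structural, not technical: the continuous analogue (your Lemma~\ref{LMaj}) crucially relies on Proposition~\ref{L5}, which splits an interval into two pieces of \emph{freely chosen} relative lengths while relaxing the norm bound to~$\tilde\eps>\eps$. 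In the dyadic setting every split is a rigid equal-weight $2^d$-way split, and there is no~$\tilde\eps$-slack available in your martingale definition, so the concatenation/averaging trick used to prove concavity of a martingale value function does not go through at points of~$\Omega\setminus\Omega_\eps$, where the first martingale step must already land all~$2^d$ children inside~$\Omega_\eps$. Until a dyadic replacement for that splitting lemma (or an alternative mechanism for concavity) is supplied, your~(iii) remains a gap, and therefore so does the proof. Your identification of~$\widetilde{\Belld}$ via $2^d$-ary martingales and the $\BMO^{\mathrm{dyad}}$/martingale correspondence in step~(i) is a sensible direction consistent with~\cite{SZ}, but note that it too requires a genuine embedding theorem in both directions; this is asserted, not established.
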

	This conjecture is verified for the case~$f(t) = e^t$ in the paper~\cite{SlVa3}. In the light of~\cite{5AShortNew}, Conjecture~\ref{ExtensionConjecture} opens the road towards a theory for dyadic Bellman functions similar to the one described in the present paper (however, the conjecture says nothing about how to find~$\Omega$).  
	
	One can pose a similar conjecture for the continuous~$\BMO$ on the cube of arbitrary dimension, however, up to the moment, there is a very little understanding of what happens there.
	
	\paragraph{Campanato spaces.}\index{Campanato spaces} The space~$\BMO$ can be thought of as a point on the scale of Morrey--Campanato spaces (see~\cite{KK} for the general theory of such spaces). Namely, consider the seminorm
	\begin{equation*}
	\|\varphi\|_{C_{2}^{\alpha,1}(I)} \!\!\df\; \Big(\sup_{J\subset I}|J|^{-\alpha}\av{|\varphi-\av{\varphi}{J}|^2}{J}\Big)^{\frac{1}{2}}.
	\end{equation*}
	Here~$\alpha \in (-\frac12,1]$. The space~$C_2^{\alpha,1}$ generated by this seminorm coincides with~$\BMO$ if~$\alpha=0$ and with the Lipschitz class if~$\alpha=1$. It seems reasonable to state a similar Bellman function problem on this space:
	\begin{equation*}
	B(x,w;f,\eps) = \sup\Big\{\av{f(\varphi)}{I}\,\Big|\; \av{\varphi}{I} =x_1,\av{\varphi^2}{I} = x_2, |I|=w, \|\varphi\|_{C_2^{\alpha,1}(I)} \leq \eps\Big\}.
	\end{equation*}
	One has to introduce the third variable~$w$ due to the scaling properties of the problem. Though it may seem questionable whether this function has good analytic properties, some hope comes from the paper~\cite{Osekowski4}, where the sharp equivalence between the Campanato and the classical seminorms is established for the Lipschitz class ($\alpha=1$).

\printindex
\index{z@\addcontentsline{toc}{chapter}{Index}\quad\quad\quad\quad\quad\quad\quad\quad\quad\quad\quad\quad\quad\quad\quad\quad\quad\quad\quad\quad\quad\quad\quad\quad\quad\quad\quad\quad\quad\quad\quad\quad\quad\quad\quad\quad\quad\quad\quad\quad\quad\quad\quad\quad\quad\quad\quad\quad\quad\quad\quad\quad\quad\quad\quad\quad\quad\quad\quad\quad\quad\quad\quad\quad|phantom}

\end{document}